\newcommand\mL{L\kern-0.08cm\char39}
\def\l@section{\@tocline{1}{0pt}{1pc}{}{}}
\def\l@subsection{\@tocline{2}{0pt}{1pc}{4.6em}{}}
\def\l@subsubsection{\@tocline{3}{0pt}{1pc}{7.6em}{}}
\renewcommand{\tocsection}[3]{%
	\indentlabel{\@ifnotempty{#2}{\makebox[2.3em][l]{%
				\ignorespaces#1 #2.\hfill}}}#3}
\renewcommand{\tocsubsection}[3]{%
	\indentlabel{\@ifnotempty{#2}{\hspace*{2.3em}\makebox[2.3em][l]{%
				\ignorespaces#1 #2.\hfill}}}#3}
\renewcommand{\tocsubsubsection}[3]{%
	\indentlabel{\@ifnotempty{#2}{\hspace*{4.6em}\makebox[3em][l]{%
				\ignorespaces#1 #2.\hfill}}}#3}
\newtheorem{thm}{Theorem}[section]
\newtheorem*{main}{\bf Theorem A (Main Theorem)}
\newtheorem*{thmB}{\bf Theorem B}
\newtheorem*{thmC}{\bf Theorem C}
\newtheorem*{thmD}{\bf Theorem D}
\newtheorem{lem}[thm]{Lemma}
\newtheorem{prop}[thm]{Proposition}
\newtheorem{ques}[thm]{Question}
\newtheorem{cor}[thm]{Corollary}
\newtheorem{de}[thm]{Definition}
\newtheorem{example}[thm]{Example}
\theoremstyle{remark}
\newtheorem{rem}[thm]{Remark}
\numberwithin{equation}{section}
\numberwithin{figure}{section}
\newcommand \ra {\rightarrow}
\newcommand \lra {\longrightarrow}
\newcommand \N {\mathbb N}
\newcommand \Z {\mathbb Z}
\newcommand \R {\mathbb R}
\newcommand \Shom{S_{\rm{hom}}}
\newcommand \Smin{S_{\rm{min}}}
\newcommand{\Id}{\mathop{\rm Id }\nolimits}
\newcommand{\Fix}{\mathop{\rm Fix}}
\newcommand{\Per}{\mathop{\rm Per}}
\newcommand{\jump}{\mathop{\rm jump}}
\newcommand{\pre}{\mathop{\rm pre}}
\newcommand{\card}{\mathop{\rm card}}
\newcommand{\const}{\mathop{\rm const}}
\newcommand{\diam}{\mathop{\rm diam}}
\newcommand{\Int}{\mathop{\rm Int}}
\newcommand{\Ent}{\mathop{\rm Ent}}
\newcolumntype{?}{!{\vrule width 1pt}} 
\begin{document}
\title [Topology and Topological Sequence Entropy]
{Topology and Topological Sequence Entropy}
\author{\mL ubom\'\i r Snoha, Xiangdong Ye and Ruifeng Zhang}

\address{\hskip-\parindent
\mL ubom\'\i r Snoha, Department of Mathematics, Faculty of Natural
Sciences, Matej Bel University, Tajovsk\'eho 40, 974 01 Bansk\'a
Bystrica, Slovakia} 
\email{Lubomir.Snoha@umb.sk}
\address{\hskip-\parindent
Xiangdong Ye, Department of Mathematics, University of Science and
Technology of China, Hefei, Anhui 230026, P.R. China}
\email{yexd@ustc.edu.cn}
\address{\hskip-\parindent
Ruifeng Zhang, School of Mathematics, Hefei University of
Technology, Hefei Anhui 230009, P.R. China}
\email{rfzhang@hfut.edu.cn}

\date{}

\begin{abstract}
 Let $X$ be a compact metric space and $T:X\lra X$ be continuous. Let $h^*(T)$ be
		the supremum of topological sequence entropies of $T$ over all subsequences of
		$\Z_+$ and $S(X)$ be the set of the values $h^*(T)$ for all continuous maps $T$
		on $X$. It is known that $\{0\} \subseteq S(X)\subseteq \{0, \log 2, \log 3,
		\ldots\}\cup \{\infty\}$. Only three possibilities for $S(X)$ have been observed
		so far, namely $S(X)=\{0\}$, $S(X)=\{0,\log2, \infty\}$ and $S(X)=\{0, \log 2,
		\log 3, \ldots\}\cup \{\infty\}$.
		
		In this paper we completely solve the problem of finding all possibilities for
		$S(X)$ by showing that in fact for every set
		$\{0\} \subseteq A \subseteq \{0, \log 2, \log 3, \ldots\}\cup \{\infty\}$ there
		exists a one-dimensional continuum $X_A$ with $S(X_A) = A$. In the construction
		of $X_A$ we use Cook continua. This is apparently the first application of
		these very rigid continua in dynamics.
			
		 We further show that the same result is true if one considers only homeomorphisms rather than
con\-ti\-nuous maps. The problem for group actions is also addressed. For some class of group actions
(by homeomorphisms) we provide an analogous result, but in full generality this problem remains open.
The result works also for an analogous class of semigroup actions (by continuous maps).
\end{abstract}

\subjclass[2010]{Primary: 37B40, 54H20. Secondary: 37B45, 54F15}
\keywords{Topological sequence entropy,  rigid continuum, Cook continuum}

\maketitle

\tableofcontents

\section{Introduction}\label{S:intro}

	\subsection{Supremum topological sequence entropy and main results}\label{SS:sup-ent}
	By a \emph{topological dynamical system} (t.d.s. for short)
	we mean a pair $(X,T)$, where $X$ is a nonempty compact metric space and
	$T:X\to X$ is a continuous map. A (metric) \emph{continuum} is a nonempty connected compact metric space.

	In 1958 Kolmogorov associated to any measure preserving
	system $(X,\mathcal{B},\mu,T)$ an
	isomorphic invariant, namely the measure theoretical entropy
	$h_\mu(T)$. Later on, in 1965, Adler, Konheim and McAndrew introduced in any
	t.d.s. an analogous concept, topological entropy $h(T)$. Systems with positive
	entropy are random in certain sense, and systems with zero entropy
	are said to be {\it deterministic} though they may exhibit
	complicated behaviors. There are several ways to distinguish between
	deterministic systems. One way to do this is to introduce the
	concept of entropy with respect to a (strictly increasing) subsequence of $\Z_+$
	(here we think of $\Z_+$ as the sequence $0,1,2, \dots$),
	see \cite{Ku} and \cite{Good1}, or the survey~\cite{CJS3}. Another way to do this is to
	investigate the so called complexity, see~\cite{BHM}.

	To study topological analogues of Kolmogorov systems, the authors in
	\cite{B} and \cite{HYis} introduced the notion of entropy pairs and
	entropy tuples. In \cite{HLSY} and \cite{HMY} the authors
	investigated sequence entropy pairs, and sequence entropy tuples
	and sequence entropy tuples for a measure, respectively. In
	Subsection~\ref{SS:seqentropy} we recall the definitions of a (sequence) entropy
	and a (sequence) entropy tuple. A~tuple is called \emph{intrinsic} if all its
	entries are pairwise different.
	
	In \cite{HYad} the authors defined a notion called maximal pattern entropy. One
	of equivalent definitions is that the \emph{maximal pattern entropy} $h^*(T)$ of
	a t.d.s. $(X,T)$ is the supremum of topological sequence entropies $h^A(T)$ of
	$T$ over all subsequences $A$ of $\Z_+$:
	\begin{equation}\label{Eq:hstarTdef}
	h^*(T)=\sup\{h^A(T) \colon \text{$A$ is a subsequence of $\Z_+$}\}~.
	\end{equation}
	This is why we will, throughout the paper, call the quantity $h^*(T)$ also the
	\emph{supremum topological sequence entropy of  $(X,T)$ or $T$} (some authors denote it
	by $h_{\infty}(T)$). In \cite{HYad}
	it is also showed that
	\begin{equation}\label{Eq:hstarT}
	h^*(T)=\sup\{\log n \colon \text{there is an intrinsic sequence entropy tuple of
		length}\ n\}
	\end{equation}
	(to be precise, we should speak on sequence entropy tuples for the map $T$; here and below we
	however abuse terminology if it is obvious which map is considered).
	Since $h^{k\Z_+}(T) = h(T^k) = kh(T)$, positive entropy implies infinite supremum topological sequence entropy
	(the converse is not true), and hence supremum topological sequence entropy is
	especially useful for zero entropy systems. By~\cite{HYad}, if $T$ is a homeomorphism then $h^*(T^n)=h^*(T)$ for
	all $n\neq 0$. The same proof gives that if $T$ is a continuous map then this is
	true for $n\geq 1$. If $X$ is a countable compact metric space, it is known that $h(T)=0$ for any continuous map $T$ from $X$ into itself.
    Note that this is not the case for the topological sequence entropy \cite{YZ} and our construction  heavily relies on this fact. See Subsection \ref{SS:seqentropy} for the definitions of sequence entropy and basic properties.

	The notion of the supremum topological sequence entropy fits especially well into the interval dynamics. To explain this,
	recall first the Sharkovsky ordering, see~\cite{Sh}, on the set $\mathbb N \cup \{2^{\infty}\}$:
	\begin{equation*}3\succ 5\succ 7\succ \dots \succ 2\cdot 3\succ 2\cdot
	5\succ 2\cdot 7\succ \dots \succ 4\cdot 3  \succ 4\cdot 5\succ 4\cdot 7\succ
	\dots \succ \dotsb \end{equation*}
	\begin{equation*}\succ 2^{n}\cdot 3 \succ 2^{n}\cdot 5\succ 2^{n}\cdot 7
	\succ \dots \succ \dots \succ 2^{\infty }\succ \dots \succ 2^{n} \succ \dots
	\succ 4\succ 2\succ 1.
	\end{equation*}
	We will also use the symbol $\succeq $ in the natural way.
	For $t\in \mathbb{N} \cup \{ 2^{\infty }\} $ we denote by $S(t)$ the set
	$\{k\in \mathbb{N} : t \succeq k \}$ ($S(2^{\infty })$ stands for the set
	$\{ 1,2,4, \dots , 2^{k}, \dots \}$). Denote by $C(I)$ the set of continuous selfmaps of a real compact interval $I$. For $T\in C(I)$
	let $\Per (T)$ be the set of periods of its periodic points. By Sharkovsky theorem, for every $T\in C(I)$ there exists a $t\in \mathbb{N}\cup \{2^{\infty }\}$
	with $\Per (T)=S(t)$ and, on the other hand, for every $t\in \mathbb{N}\cup \{2^{\infty }\}$ there exists a $T\in C(I)$ with
	$\Per (T)=S(t)$. 	
	
	If $\Per (T)=S(t)$, then $T$ is said to be of \emph{Sharkovsky type} $t$.  When
	speaking of types we consider them to be ordered by the Sharkovsky ordering. So if a map $T$ is of type $2^{\infty }$ or greater than
	$2^{\infty }$ or less than $2^{\infty }$, then, respectively,
	$\Per (T)=\{ 1,2,\dots , 2^{k}, \dots \}$ or $T$ has a periodic point
	with period not a power of $2$ or $\Per (T)=\{ 1,2,\dots , 2^{n} \}$ for some
	$n\in \Z_+$.
	
	The topological entropy of $T\in C(I)$ is positive if and only if $T$ is of type
	greater than $2^{\infty}$ (see \cite[Theorem 4.4.20]{ALM} and references therein).
	Further, if $T\in C(I)$ is of type greater than $2^{\infty}$ then it is Li-Yorke chaotic
	and if it is of type less than $2^{\infty}$ then it is not Li-Yorke chaotic, while among the maps of type $2^{\infty}$
	there are both Li-Yorke chaotic and non-chaotic maps, see~\cite{Smi}.
	By~\cite{FS}, $T\in C(I)$ is Li-Yorke chaotic if and only if $h^*(T)>0$. Due to~\cite{CJS}, there are only three possibilities for $T\in C(I)$,
	namely, $h^*(T)$ is either $0$ or $\log 2$ or $\infty$. These facts are shown in Table~\ref{T:interval}.

	\begin{table}[h]
	\begin{footnotesize}
		\begin{center}
		    \begin{tabular}{|c|c|c|c|}
			\hline
			$h(T)>0$ & \multicolumn{3}{|c|}{$h(T)= 0$}         \\ \hline
			type $\succ 2^{\infty}$ (chaotic)& type $2^{\infty}$, chaotic  &  type $2^{\infty}$, non-chaotic  & type $\prec 2^{\infty}$ (non-chaotic) \\ \hline
			$h^*(T)=\infty$ & $h^*(T)=\log 2$   & \multicolumn{2}{|c|}{$h^*(T)= 0$ } \\ \hline
			\end{tabular}
  	    \end{center}
		\end{footnotesize}
         \caption{Supremum topological sequence entropy for interval maps}\label{T:interval}
    \end{table}

    So, in the particular case $X=I$, we know the set
	\begin{equation}\label{Eq:defSX}
	S(X) = \{h^*(T):\, T \text{ is a continuous map } X\to X\}~.
	\end{equation}
	\index{$S(X)$}In the present paper we are interested in how this set can look like for nonempty compact metric spaces.
	By~(\ref{Eq:hstarT}), $h^*(T)=\log n$ for some $n\in \N$ or $h^*(T)=\infty =:
	\log \infty$. Further, if $T$ is a constant map or the identity then $h^*(T)=0$.
	Therefore
	$$
	\{0\} \subseteq S(X)\subseteq \{0, \log 2, \log 3, \ldots\} \cup \{\infty\}~.
	$$
	The set $S(X)$ depends on $X$.
	Table~\ref{T:knownSX} summarizes known results (here $\N$ is the set of positive
	integers, $\N^*=\N\cup\{\infty\}$, $\N_k=\{1,\ldots,k\}$,
	$\N_k^*=\{1, \ldots, k\}\cup\{\infty\}$ and  $\log M$ has obvious meaning for
	any $M\subseteq [1, \infty ]$):

	\begin{table}[h]
		\begin{center}
			\begin{tabular}{|m{1cm}||m{3cm}|m{38mm}|m{55mm}|}
				\hline
				$S(X)$ & $\log \N_1 = \{0\}$ & $\log \N^*_2 = \{0, \log 2\} \cup\{\infty\}$ &
				$\log \N^* = \{0, \log 2, \log 3, \dots\} \cup \{\infty\}$\\
				\hline
				$X$ & finite sets, \newline zero-dimensional spaces with finite derived sets
				~\cite{YZ} & interval~\cite{CJS}, \newline circle~\cite{CJS1}, \newline finite
				trees~\cite{TYZ}, \newline finite graphs~\cite{T} & zero-dimensional spaces with
				infinite derived sets~\cite{TYZ}, \newline some dendrites~\cite{TYZ}, \newline
				manifolds of dimension $\ge 2$~\cite{TYZ}\\
				\hline
			\end{tabular}
		\end{center}
		\caption{Known values of $S(X)$}\label{T:knownSX}
	\end{table}
	
	\noindent Since every dendrite is an absolute retract for the class of all
	compact metric spaces~\cite{Bor}, we in fact have $S(X) = \log \N^*$ not only
	for the manifolds with dimension $\ge 2$, but for any compact metric space $X$
	containing the dendrite from~\cite{TYZ}.
	
	Thus only three possibilities for $S(X)$ have been found so far: $\log \N_1$,
	$\log\N_2^*$ and $\log \N^*$. It is natural to ask whether these are the only
	possibilities for $S(X)$. In fact, at the beginning of our research we conjectured this.
	However, in the present paper we show that this is not the case.
	We completely solve the problem of finding all possibilities
	for $S(X)$ by proving the following theorem.
	
	\begin{main}\label{T:main}
		For every set $\{0\} \subseteq A \subseteq \log \mathbb N^*$ there exists a
		one-dimensional continuum $X_A\subseteq \mathbb R^3$ with $S(X_A) = A$.	
	\end{main}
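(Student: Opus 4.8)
\emph{Building blocks.} The plan is to build $X_A$ from rigid pieces --- copies of Cook continua, which I will call \emph{bricks} --- glued together so that the combinatorics of the gluing encodes prescribed countable-space dynamics, while the extreme rigidity of Cook continua prevents any continuous selfmap of $X_A$ from doing more than those dynamics, up to collapsing bricks to points. In detail: write $A=\{0\}\cup\{\log n:n\in B\}$ for the uniquely determined $B\subseteq\{2,3,4,\dots\}\cup\{\infty\}$; fix one Cook continuum $\mathbb K$ and inside it countably many pairwise disjoint subcontinua $K_0,K^{(2)},K^{(3)},\dots,K^{(\infty)}$. Each $K^{(i)}$ is again a Cook continuum (the defining property passes to subcontinua), and any continuous map from a subcontinuum of one $K^{(i)}$ into another is constant, since by the defining property of $\mathbb K$ it would otherwise have to be an inclusion, impossible between disjoint sets. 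For each $n\in\{2,3,\dots\}\cup\{\infty\}$ I take a countable compact metric t.d.s.\ $(Z_n,f_n)$ with $f_n$ a homeomorphism having a fixed point and $h^*(f_n)=\log n$ --- such systems exist by \cite{YZ}, the very phenomenon highlighted in the Introduction --- and, crucially, chosen so that \emph{every} continuous selfmap of $Z_n$, and more generally every partial dynamics on $Z_n$ induced by collapsing a closed subset and mapping the rest continuously back, has supremum topological sequence entropy $0$ or $\log n$. The gadget $G_n$ is the thickening of $(Z_n,f_n)$: replace each $z\in Z_n$ by a tiny copy $K_z$ of the Cook continuum $K^{(n)}$, glue the bricks following the accumulation structure of $Z_n$ (with gluing points chosen $f_n$-equivariantly and diameters shrinking so that only finitely many bricks exceed any given size), getting a one-dimensional continuum $G_n\subseteq\R^3$ with a marked point $p_n$; let $T_n$ fix $p_n$ and send each brick $K_z$ onto $K_{f_n(z)}$ by the \emph{unique} nonconstant map between the two bricks --- a homeomorphism which in intrinsic coordinates of $K^{(n)}$ is the identity, so $T_n$ is automatically continuous. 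The factor $G_n\to Z_n$ semiconjugates $T_n$ to $f_n$, while the within-brick part of $T_n$ just translates bricks rigidly, so for any finite open cover all but finitely many bricks lie in a single cover element; hence $h^*(T_n)=h^*(f_n)=\log n$.

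\emph{Assembly and the inclusion $A\subseteq S(X_A)$.} Put $X_A=K_0\vee\bigvee_{n\in B}G_n$, the one-point union at a single point $*$ (identified with every $p_n$ and with a point of $K_0$), with $\diam G_n\to0$; then $X_A$ is a compact, connected, one-dimensional subset of $\R^3$. (For $B=\emptyset$ just take $X_A=K_0$: a Cook continuum has only constant and identity selfmaps, so $S(K_0)=\{0\}$; keeping $K_0$ in the general definition makes this a uniform special case.) The constant map gives $0\in S(X_A)$. For $n\in B$, extend $T_n$ to $X_A$ by sending everything outside $G_n$ to $*$; this is continuous (since $T_n(p_n)=*$ and the other gadgets shrink to $*$) and its image lies in $G_n$, so, using the general fact $h^*(S)=h^*(S|_{S(X)})$, its supremum topological sequence entropy equals $h^*(T_n)=\log n$. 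Thus $A\subseteq S(X_A)$.

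\emph{The inclusion $S(X_A)\subseteq A$, and the main obstacle.} Let $T:X_A\to X_A$ be continuous. Everything hinges on a structural lemma: $T$ carries every brick either to a single point or, by the unique nonconstant map, onto another brick; and by the independence of the $K^{(n)}$ a brick of $G_n$ can only be carried onto a brick of the same $G_n$. Granting this, $T$ induces a partial dynamics on each $Z_n$, and the sequence-entropy complexity of $(X_A,T)$ reduces to those on the $Z_n$ (the within-brick contribution being bounded, hence $0$, exactly as for $T_n$); since each of these is $0$ or $\log n$ by our choice of the $Z_n$, we get $h^*(T)\in\{0\}\cup\{\log n:n\in B\}=A$, so $S(X_A)=A$. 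The two genuinely hard points are (a) this structural lemma and (b) the robust choice of the $Z_n$. In (a), the danger is that the image of a brick, being a priori merely a subcontinuum of $X_A$, could ``straddle'' the gluing points and spill over several bricks; ruling this out is where one must use that each $K^{(n)}$ is hereditarily indecomposable (together with the independence of the $K^{(n)}$ and a careful choice of gluing points), so that such a straddling image would force an impossible separation of the indecomposable brick. Point (b) --- manufacturing countable systems all of whose self-dynamics, not merely $f_n$, have pattern entropy $0$ or $\log n$ --- refines the constructions of \cite{YZ}. All the rest (one-dimensionality of a countable union of one-dimensional compacta; embedding countably many shrunken planar Cook continua into $\R^3$; and the behaviour of $h^*$ under factor maps, under restriction to closed invariant subsets, and under bounded transients) is routine.
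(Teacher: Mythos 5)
Your overall blueprint --- bricks that are pairwise independent Cook subcontinua, gadgets $G_n$ with $h^*=\log n$, a one-point union (``flower'') with shrinking petals, and the inclusion $A\subseteq S(X_A)$ via retraction onto a petal --- matches the paper's strategy in broad strokes, and your Step (a) roughly corresponds to the paper's Lemma~\ref{L:map} and Lemma~\ref{L:FXA1-point}. But there is a fatal problem with your Step (b), and it is not a refinement of \cite{YZ}: the countable system $Z_n$ you postulate cannot exist for finite $n\geq 2$. If $h^*(f_n)=\log n>0$ then $Z_n$ is infinite with infinite derived set (finite derived set forces $h^*\equiv 0$, see Table~\ref{T:knownSX}), and then by \cite{TYZ} one already has $S(Z_n)=\log\mathbb N^*$; in particular $Z_n$ admits a selfmap with $h^*=\infty$. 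So ``every continuous selfmap of $Z_n$ has $h^*\in\{0,\log n\}$'' is impossible, and there is no choice of $Z_n$ that pushes the constraint down to the abstract countable level. This is precisely why the paper's restriction $S(X_{\{0,\log n\}})\subseteq\{0,\log n\}$ is proved entirely at the level of the continuum, by exploiting the specific arrangement of pairwise non-homeomorphic bricks in the ``snake'' (the powers-of-two pattern in~(\ref{Kim})) to force any non-constant selfmap $F$ to be either pointwise eventually fixed (so $h^*(F)=0$) or equal to $G^N$ on a neighbourhood of the head (so $h^*(F)=h^*(G)$); see Lemma~\ref{L:ord-pres}, Lemma~\ref{L:jumps}, and Propositions~\ref{P:zero inf}, \ref{P:zero log2}. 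A generic ``thickening'' of $Z_n$ by small copies of $K^{(n)}$ does not produce this structure and has no comparable lever.

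A second, independent gap: the inequality $h^*(T_n)\leq\log n$ is not routine. Your argument via a factor $G_n\to Z_n$ only gives $h^*(T_n)\geq h^*(f_n)$, and ``all but finitely many bricks fit in one cover element'' does not close the other direction --- indeed Remark~\ref{R:careful} of the paper is a warning that $h^*(T)$ can exceed $h^*(T|_{\Omega(T)})$, and in the $\log 2$ construction $\Omega(G)=A$ has $h^*(G|_A)=0$ while $h^*(G)=\log 2$, so any argument of the form ``the nonwandering/limit dynamics controls $h^*$'' is doomed. (Also note that the map you call a factor is not well-defined at gluing points.) The paper obtains $h^*(G)=\log 2$ only after imposing an elaborate extra geometric structure on the placement of the bricks (Subsection~\ref{SS:geometry}) and a delicate IN-pair analysis (Lemma~\ref{L:G-log2-2case}), which your proposal replaces by an assertion. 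Finally, the appeal to hereditary indecomposability of the bricks to prevent the image of a brick from straddling gluing points is unavailable if one uses the planar Cook continuum (Ma\'ckowiak's continuum is hereditarily decomposable, and planarity is what you use to get dimension one in $\R^3$); the paper's Lemma~\ref{L:map} handles straddling by non-homeomorphy of consecutive bricks plus monotone retractions, not indecomposability.
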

	
	The information that $X_A$ exists in $\mathbb R^3$ is superfluous since by Menger-N\"obeling theorem
	every $m$-dimensional compact metrizable space can be embedded in $\mathbb R^{2m+1}$. The formulation we use
	just indicates that we construct the $1$-dimensional $X_A$ already as a subspace of $\mathbb R^3$.
	
	As mentioned in Table~\ref{T:knownSX}, only for some dendrites $X$ we know their set $S(X)$. However,
	it is obvious that for $\mathfrak c$-many sets $A$ ($\mathfrak c$ being the cardinality of the continuum),
	the required space $X_A$ does not exist among dendrites. In fact, since every dendrite $D$ can be retracted
	onto an interval $I\subseteq D$, we get that $S(D)\supseteq S(I) = \{0, \log 2, \infty\}$.
	
	The proof is long and complicated. In fact, Main Theorem itself is repeated in Subsection~\ref{S:proof} as Theorem~\ref{T:main again} and a relatively short proof is given there; however, all the previous sections are a preparation for the proof.
	
	Recall that the definition of $S(X)$, see~\eqref{Eq:defSX}, involves all continuous selfmaps of $X$.
	One can ask whether our Main Theorem still will be true if, instead of continuous maps, only \emph{homeomorphisms} are considered.
    The answer is positive. If we denote
    \[
    \Shom (X)=\{h^{*}(T)\colon T\text{ is a homeomorphism } X \rightarrow X \},
    \]
    then the following is true.

    \begin{thmB}\label{T:thmB}
    	For every set $\{0\} \subseteq A \subseteq \log \mathbb N^*$ there exists a one-dimensional continuum $\tilde{X}_A\subseteq \mathbb R^3$ with $\Shom(\tilde{X}_A) = A$.
    \end{thmB}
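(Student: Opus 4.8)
The plan is to derive Theorem~B from Theorem~A (the Main Theorem) by a standard suspension-type construction that converts a continuous map into a homeomorphism while controlling $h^*$. Given $A$ with $\{0\}\subseteq A\subseteq\log\N^*$, let $X_A\subseteq\R^3$ and the family of continuous maps realizing $S(X_A)=A$ be as provided by Theorem~A. The idea is to build $\tilde X_A$ so that every continuous $T\colon X_A\to X_A$ gives rise to a homeomorphism of $\tilde X_A$ with the same value of $h^*$, and conversely every homeomorphism of $\tilde X_A$ has $h^*$-value lying in $A$; since $0\in A$ always (the identity), this yields $\Shom(\tilde X_A)=A$.

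The natural candidate is the \emph{inverse limit / mapping torus} approach. First I would recall that for a continuous surjection $f\colon Y\to Y$ the shift homeomorphism $\sigma_f$ on the inverse limit $\varprojlim(Y,f)$ satisfies $h^*(\sigma_f)=h^*(f)$; more generally one wants this for the classes of maps used in Theorem~A, and one must check that the inverse limit of the one-dimensional continuum $X_A$ (taken over the relevant maps, or over a single universal map) is again a one-dimensional continuum embeddable in $\R^3$. The dimension claim is delicate: inverse limits of $1$-dimensional continua are $1$-dimensional, but embeddability into $\R^3$ needs care. An alternative, which may be cleaner given the rigidity machinery already developed in the paper with Cook continua, is to directly \emph{rebuild} $\tilde X_A$ by the same scheme as $X_A$ but replacing each ``dynamical gadget'' (the zero-dimensional pieces carrying prescribed sequence entropy, glued via Cook-continuum arcs) by a version on which the model map is already a homeomorphism of a Cantor set. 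Indeed the zero-dimensional examples of \cite{YZ} and \cite{TYZ} realizing $h^*=\log n$ can be taken to be homeomorphisms (e.g. adding-machine-type or odometer-type constructions restricted to suitable subsequences), and the Cook-continuum skeleton only carries the identity on its arcs, so the glued self-map is a homeomorphism precisely when each local model is.

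Concretely, the key steps, in order, would be: (1) revisit the construction of $X_A$ in the earlier sections and isolate the local ``entropy-$\log n$'' building blocks, verifying that each admits a \emph{homeomorphism} model with the same supremum sequence entropy (this uses that on a countable compact metric space one can have a homeomorphism with $h^A = \log n$ for a suitable $A$, cf.\ the fact cited from \cite{YZ}); (2) reassemble these blocks along a Cook continuum exactly as before to get $\tilde X_A$, a $1$-dimensional continuum in $\R^3$, and define for each admissible parameter a self-map $\tilde T$ that is a homeomorphism and has $h^*(\tilde T)$ equal to the prescribed element of $A$ — this gives $A\subseteq\Shom(\tilde X_A)$; (3) prove the reverse inclusion $\Shom(\tilde X_A)\subseteq A$: any homeomorphism $S$ of $\tilde X_A$ must, by the extreme rigidity of the Cook-continuum skeleton, permute the arcs and the attached blocks in a controlled way (the same rigidity lemma used in Theorem~A, which restricts \emph{all} continuous selfmaps, a fortiori homeomorphisms), so $h^*(S)$ is computed from the action on finitely many blocks and lies in the same value set $A$. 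The main obstacle is step~(3) combined with the homeomorphism constraint in step~(1): one must ensure that forcing the model maps to be homeomorphisms does not enlarge the set of achievable $h^*$ values (it cannot shrink it below $A$ because we exhibit the maps), and that the rigidity analysis — originally carried out for arbitrary continuous selfmaps — still pins down exactly which subsets of blocks a homeomorphism can ``see,'' giving no values outside $A$. Since homeomorphisms are a subclass of continuous maps, the upper bound $\Shom(\tilde X_A)\subseteq S(\tilde X_A)$ is automatic once we know $S(\tilde X_A)=A$ for the rebuilt space; thus the real work is only verifying that the rebuilt $\tilde X_A$ still satisfies the Main-Theorem conclusion and that the realizing maps can be chosen to be homeomorphisms.
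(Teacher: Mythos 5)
Your proposal has two concrete gaps. First, the suggestion to use odometer/adding-machine-type \emph{homeomorphism} models on the zero-dimensional local pieces is unworkable: odometers are equicontinuous, hence null, so $h^*=0$ for them, and they cannot realize $\log n$. The paper does something different: it turns the one-sided trajectory $x_1,x_2,\dots$ into a bi-infinite trajectory $\{x_m: m\in\mathbb Z\}$ (with a new limit point $x_{-\infty}$ added) and replaces the pairwise non-homeomorphic gadgets $D_m$ by a $\mathbb Z$-indexed family of mutually homeomorphic copies $\tilde D_m$ of $D_1$. The shift along this bi-infinite chain is then a homeomorphism, its restriction to the old (forward) half still has $h^*=\log m$, and the Cook rigidity shows it is essentially the only nontrivial homeomorphism. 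You are right that rigidity controls what homeomorphisms can do, but the mechanism making the shift invertible is the bi-infinite chain, not a change of local dynamical model.

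Second, and more seriously, the assembly step for $|A|\ge 3$ has an obstruction you do not see. If, as in the proof of the Main Theorem, you just glue the cardinality-two spaces $\tilde X_{\{0,\log k_i\}}$ at a common central point and $A$ is infinite with $\infty\notin A$, you can choose on each petal the distinguished nontrivial shift homeomorphism; the resulting global map $H$ is a self-homeomorphism whose restriction to petal $i$ has $h^*=\log k_i$ with $\sup_i\log k_i=\infty$, so $h^*(H)=\infty\notin A$ (this failure is recorded explicitly in Remark~\ref{R:bad case} of the paper). The paper circumvents this by a more elaborate geometry: it places the petals $L_m=C_m\cup\tilde X_{\{0,\log m\}}$ in shrinking layers converging to a fixed arc $[w,u]\cup[u,v]$, with the backward and forward tails $\tilde D_m^{\pm}$ having diameters tending to $0$; a uniform-continuity argument then forces every homeomorphism of $\tilde X_A$ to be the identity on all but finitely many petals, which caps $h^*$ inside $A$. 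Your closing claim that the inclusion $\Shom(\tilde X_A)\subseteq A$ is ``automatic once we know $S(\tilde X_A)=A$ for the rebuilt space'' does not help: the paper never computes $S$ of the rebuilt spaces (only $\Shom$), and the naive flower does not satisfy $S=A$ either, for the same reason.
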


	The problem can be addressed also for \emph{group actions}. If $G$ is a topological group (with discrete topology) and $\Phi$ is an action (by homeomorphisms) of $G$ on $X$, let $h^*(X,G,\Phi)$ be the supremum topological sequence entropy of this action; for definitions see Subsection~\ref{S:group actions}. If we denote
	\[
	S_G(X)=\{h^*(X, G, \Phi) \colon \, \Phi\ \text{is an action of $G$ on $X$} \},
	\]
	then we have the following theorem.

	 \begin{thmC}\label{T:thmC}
	 	Let $G$ be a topological group such that there is a surjective group homomorphism $G\rightarrow \mathbb{Z}$. Then
	 	for every set $\{0\} \subseteq A \subseteq \log \mathbb N^*$ with $A$ finite or $\infty\in A$, there exists a one-dimensional continuum
	 	$\tilde{X}_A\subseteq \mathbb R^3$ with $S_G(\tilde{X}_A)= A$. If in addition $G$ is also finitely generated, then such a continuum exists
	 	for every set $\{0\} \subseteq A \subseteq \log \mathbb N^*$.
	 \end{thmC}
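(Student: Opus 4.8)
The plan is to deduce Theorem C from Theorem B together with a mechanism for transporting an action of $G$ to a single self-homeomorphism and back. First I would fix a surjective homomorphism $\pi\colon G\to\mathbb Z$ and let $g_0\in G$ with $\pi(g_0)=1$; then $\ker\pi$ is a normal subgroup and $G=\ker\pi\rtimes\langle g_0\rangle$ modulo the obvious identifications. Given the continuum $\tilde X_A$ and homeomorphism $T$ from Theorem B with $\Shom$-value realized by $T$, I would build an action $\Phi$ of $G$ on $\tilde X_A$ by letting $\ker\pi$ act trivially and sending $g_0$ to $T$ (equivalently $\Phi_g=T^{\pi(g)}$). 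Since $h^*(X,G,\Phi)$ for this action is governed by the single transformation $T$ (the orbit of a point under $G$ coincides with its orbit under the cyclic group generated by $T$, and sequence entropy tuples are detected along subsequences of $\mathbb Z_+$), one gets $h^*(\tilde X_A,G,\Phi)=h^*(T)$. Running this over all homeomorphisms $T$ of $\tilde X_A$ shows $S_G(\tilde X_A)\supseteq\Shom(\tilde X_A)=A$.

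For the reverse inclusion $S_G(\tilde X_A)\subseteq A$ I would argue that any $G$-action $\Phi$ on $\tilde X_A$ restricts to the single homeomorphism $\Phi_{g_0}$, and that $h^*(X,G,\Phi)\le h^*(\Phi_{g_0})$ — or more precisely that the supremum topological sequence entropy of a $G$-action is always realized (or bounded) by a value already present in $\Shom(\tilde X_A)$, because the rigidity of the Cook-continuum construction forces every homeomorphism of $\tilde X_A$ into a very restricted form. Here is where the finiteness hypothesis on $A$ (or $\infty\in A$) enters: when $A$ is not finitely generated as needed one cannot necessarily squeeze the full group action down to a cyclic one without losing control, so one settles for $A$ finite or containing $\infty$. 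For the ``in addition finitely generated'' case I would use generators $g_1,\dots,g_k$ of $G$; since $\pi$ is surjective one of the $\pi(g_i)$ generates a finite-index subgroup of $\mathbb Z$, and after a harmless reindexing one may assume $G=\langle g_1,\dots,g_k\rangle$ with $\pi(g_1)\ne 0$; then the same trivial-on-$\ker\pi$ construction yields every $A\subseteq\log\mathbb N^*$, because finite generation removes the obstruction that appeared in the general case.

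The key steps, in order, are: (i) record the structure $G\to\mathbb Z$ and the induced splitting; (ii) prove the transfer lemma that for an action factoring through $\mathbb Z$ via $T$ one has $h^*(X,G,\Phi)=h^*(T)$ (this is essentially bookkeeping with the definitions of sequence entropy tuples from Subsection~\ref{SS:seqentropy} and Subsection~\ref{S:group actions}, using that a subsequence of $\mathbb Z_+$ pushed through $\pi$ is still a subsequence and conversely); (iii) invoke Theorem B to get $\tilde X_A$ with $\Shom(\tilde X_A)=A$; (iv) combine (ii)+(iii) for $\supseteq$, and use the rigidity of $\tilde X_A$ for $\subseteq$; (v) handle the finitely generated refinement separately.

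The main obstacle I expect is step (iv), the inclusion $S_G(\tilde X_A)\subseteq A$ for a \emph{general} $G$-action: a priori a group action could exhibit more sequence-entropy complexity than any single element of the group, so one must exploit the extreme rigidity built into $\tilde X_A$ — that its only homeomorphisms are (up to the structure of the construction) essentially powers of the designed map $T$ composed with ``trivial'' pieces — to conclude that the action's supremum sequence entropy cannot exceed what a cyclic action already achieves. Establishing that no group action sneaks in an entropy value outside $A$, and pinpointing exactly why this forces the hypothesis ``$A$ finite or $\infty\in A$'' in the non-finitely-generated case, is the technical heart; the finitely generated case is easier precisely because there one can realize arbitrary $A$ by the clean semidirect-product construction above without the delicate counting.
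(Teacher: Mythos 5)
Your argument for the inclusion $S_G(\tilde X_A)\supseteq A$ is essentially the paper's: lift a homeomorphism $T$ of $\tilde X_A$ to a $G$-action by $\Phi_g = T^{\pi(g)}$ and check that the supremum sequence entropy is preserved (this is what Lemmas~\ref{L:1-1 corr} and~\ref{L:2actions} are for, and then Proposition~\ref{G-Z}(b),(c)). That part is fine.

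The reverse inclusion $S_G(\tilde X_A)\subseteq A$ is where you have a genuine gap, and both of your proposed remedies fail. The inequality $h^*(\tilde X_A,G,\Phi)\le h^*(\Phi_{g_0})$ is simply false: the supremum in~\eqref{Eq:stse-def} runs over sequences in all of $G$, and elements of $\ker\pi$ may act nontrivially and contribute IN-tuples that $\Phi_{g_0}$ alone cannot produce. Your fallback, that $h^*(\tilde X_A,G,\Phi)$ is ``bounded by a value already present in $\Shom(\tilde X_A)$,'' is also false precisely in the case you are trying to exclude: when $A$ is infinite and $\infty\notin A$, a general group $G$ has room to act nontrivially on infinitely many petals $L_m$ (different group elements on different petals), and then one can assemble IN-tuples of unbounded length across petals, giving $h^*=\infty\notin\Shom(\tilde X_A)$. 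This failure is exactly what Remark~\ref{R:bad case} and the hypothesis ``$A$ finite or $\infty\in A$'' are about, but you do not identify it nor prove anything around it; you only gesture at ``rigidity.''

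The paper's actual mechanism for $\subseteq$ is different from what you sketch: it does not reduce the action to a cyclic one. Instead it (i) replaces $G$ by its image $\{\Phi_s: s\in G\}$ in $\mathcal H(\tilde X_A)$ via Lemma~\ref{L:2actions}(2); (ii) uses the structural fact that every homeomorphism of $\tilde X_A$ preserves each petal, is the identity on the $C_m$'s and the limit petal, and is a power of the distinguished generator on each $\tilde X_{\{0,\log m\}}$, nontrivially on only finitely many of them; (iii) localizes every intrinsic IN-tuple of the action to a single petal and compares $h^*$ with the petal entropies; and (iv) when $A$ is infinite with $\infty\in A$ shows that any putative value $\log t\notin A$ is strictly exceeded by some petal entropy, while when $G$ is finitely generated the finitely many generators force the whole image to be nontrivial on only finitely many petals so $h^*$ is a finite maximum in $\log M\subseteq A$. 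Without steps (ii)--(iv) your proof does not go through.
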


	 So, the problem is solved for groups which have $\mathbb Z$ as a quotient group (though some sets $A$ are not covered if the group is not finitely generated). In full generality this problem remains open.
	
	 Such a theorem is true also for an analogous class of \emph{semigroup actions} (by continuous maps). If $P$ is a topological semigroup with identity (and with discrete topology), let $S_P(X)$ be the set of supremum topological sequence entropies of all $P$-actions on $X$. Then we have the following analogue of the previous theorem.
	
	  \begin{thmD}\label{T:thmD}
	  	 Let $P$ be a topological semigroup with identity such that there is a surjective semigroup homomorphism $P\rightarrow \mathbb{Z}_+$. Then
	  	 for every set $\{0\} \subseteq A \subseteq \log \mathbb N^*$ with $A$ finite or $\infty\in A$, there exists a one-dimensional continuum
	  	 $X_A\subseteq \mathbb R^3$ with $S_P(X_A)= A$. If in addition $P$ is also finitely generated, then such a continuum exists for every set $\{0\} \subseteq A \subseteq \log \mathbb N^*$.
	  \end{thmD}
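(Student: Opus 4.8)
The plan is to re-run the proof of Theorem~C almost word for word, replacing the group $G$ by the semigroup $P$, the surjection onto $\mathbb Z$ by the surjection $\pi\colon P\to\mathbb Z_+$, homeomorphisms by continuous maps, and the continuum $\tilde X_A$ of Theorem~B by the continuum $X_A$ furnished by the Main Theorem. So the required space will be that same one-dimensional $X_A\subseteq\mathbb R^3$, and the whole content of Theorem~D is the equality $S_P(X_A)=S(X_A)$ (recall $S(X_A)=A$), valid under the stated restrictions on $P$ and $A$.

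The inclusion $S(X_A)\subseteq S_P(X_A)$ I would obtain by pulling actions back along $\pi$. For a continuous selfmap $T$ of $X_A$ put $\Phi_T(p)=T^{\pi(p)}$ for $p\in P$; since $\pi$ is a semigroup homomorphism into $\mathbb Z_+$ it sends the identity of $P$ to the unique idempotent $0$ of $\mathbb Z_+$, so $\Phi_T$ is a genuine $P$-action on $X_A$ by continuous maps, and note that only non-negative iterates of $T$ occur — which is exactly why continuity, rather than invertibility, is enough here. As $\pi$ is surjective, every sequence in $\mathbb Z_+$ has the form $(\pi(p_i))_i$ for a sequence $(p_i)_i$ in $P$, and for any such pair and any open cover $\mathcal U$ of $X_A$ one has $\bigvee_{i=1}^{n}\Phi_T(p_i)^{-1}\mathcal U=\bigvee_{i=1}^{n}T^{-\pi(p_i)}\mathcal U$, so the sequence entropy of $\Phi_T$ along $(p_i)_i$ equals the sequence entropy of $T$ along $(\pi(p_i))_i$. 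Taking suprema over covers and sequences (see Subsection~\ref{S:group actions}) gives $h^*(X_A,P,\Phi_T)=h^*(T)$, and letting $T$ range over all continuous selfmaps of $X_A$ yields $S(X_A)\subseteq S_P(X_A)$; this half uses nothing beyond surjectivity of $\pi$.

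For the reverse inclusion $S_P(X_A)\subseteq A$ the real input is the structural description of $X_A$ established in the preparatory sections — the Cook-continuum rigidity that already underpins both $S(X_A)=A$ and Theorem~C. It says, roughly, that every continuous selfmap of $X_A$ either collapses or fixes pointwise each rigid connecting piece, so that all of its nontrivial dynamics is confined to the countably many ``entropy blocks'' out of which $X_A$ is built — one block $C_v$ for each $v\in A\cap\log\mathbb N$, together with an ``$\infty$-block'' $C_\infty$ when $\infty\in A$ — and, crucially, that these blocks sit in pairwise so rigid a position that the contributions of distinct blocks combine by taking the supremum, never by multiplication. Granting this, for an arbitrary $P$-action $\Phi$ on $X_A$ one reads off
\[
h^*(X_A,P,\Phi)=\sup\{\,v\in A : \text{the block }C_v\text{ is nontrivially acted on by }\Phi(p)\text{ for some }p\in P\,\}.
\]
If $P$ is finitely generated, then either some $\Phi(p)$ has infinite supremum sequence entropy, in which case $h^*(X_A,P,\Phi)=\infty$ and, $\Phi(p)$ being a selfmap of $X_A$, also $\infty\in S(X_A)=A$; or every $\Phi(p)$ has finite supremum sequence entropy, so that each activates only finitely many (finite-valued) blocks, a composition of such maps activates no new block (each factor already maps every block into itself or collapses it), and the supremum above is a finite maximum, hence in $A$. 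If $P$ is arbitrary but $A$ is finite, $X_A$ has only finitely many blocks and the supremum is again a maximum in $A$. If $\infty\in A$, the supremum above is the supremum of a subset of $A$, so it is either a finite maximum (in $A$) or, when infinitely many distinct blocks get activated along a single sequence in $P$, equal to $\infty\in A$. In every permitted case $h^*(X_A,P,\Phi)\in A$, and with the previous paragraph this gives $S_P(X_A)=A$.

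The genuine difficulty is not the bookkeeping above but the rigidity analysis of $X_A$, which is precisely what the earlier sections and the proof of Theorem~C provide; once that is in hand, the passage from groups and homeomorphisms to semigroups and continuous maps costs nothing — it is in fact slightly easier, since $\pi$ takes values in $\mathbb Z_+$ and only forward iterates ever enter. Finally, the hypothesis that $A$ be finite or contain $\infty$ when $P$ is not finitely generated is intrinsic to this construction: a non-finitely-generated $P$ can, through a suitable action, act nontrivially on infinitely many distinct entropy blocks along a single sequence, which forces $\infty\in S_P(X_A)$ no matter how $X_A$ is chosen — so outside the listed cases the equality $S_P(X_A)=A$ simply cannot be arranged.
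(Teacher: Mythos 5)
Your strategy is the paper's: take the Main-Theorem flower $X_A$, pull actions back along $\pi\colon P\to\mathbb Z_+$ to get $S(X_A)\subseteq S_P(X_A)$, and derive $S_P(X_A)\subseteq A$ from the rigidity. The pull-back half is correct and matches the semigroup analogue of Proposition~\ref{G-Z}(b), and your account of where the finiteness hypotheses on $P$ and $A$ enter is also essentially right.

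The gap is the displayed identity for $h^*(X_A,P,\Phi)$, which is false under the literal reading of ``nontrivially acted on.'' A map $\Phi_p$ that fixes the centre $z$ leaves every petal $C_v$ invariant and can restrict on infinitely many of them to a non-identity retraction fixing the first snake point; each such restriction has $h^*=0$ by fact~(F2) and so does the whole map, yet your formula's right-hand side would then be $\infty$. The charitable reading of ``nontrivially acted on'' as ``the restricted $P$-action has positive $h^*$'' turns the equation into the very thing that needs to be proved, namely that a $P$-action on a petal $X_{A(k)}$ all of whose acting maps fix the first snake point has zero supremum sequence entropy. This is the semigroup analogue of the Case~1 reduction in Theorem~C, which there could be dispatched because the homeomorphism group of $\tilde{X}_{\{0,\log m\}}$ is infinite cyclic so that every group action reduces to a $\mathbb{Z}$-action; the monoid of continuous selfmaps of $X_{A(k)}$ is far richer (constants, the maps agreeing with $G^N$ near the head, and a whole family of retractions with different fixed sets), so the reduction is genuinely different. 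Your remark that the passage from groups to semigroups ``costs nothing'' and is ``slightly easier'' conceals this step rather than dispatching it. As a smaller point, the final assertion that outside the listed cases ``the equality $S_P(X_A)=A$ simply cannot be arranged'' ``no matter how $X_A$ is chosen'' overreaches: your argument only shows the flower fails in those cases, and whether some other continuum could serve is exactly the open problem recorded in Subsection~\ref{SS:actions case}.
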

	
	  Section~\ref{S:questions} contains several open problems related to the topic of this paper.
	
	  \medskip
	
	  To work with supremum topological sequence entropy, we will use tools developed by David Kerr and Hanfeng Li in~\cite{KL} and tools from~\cite{HYad}. Namely, to compute $h^*(T)$, we will use the formula~\eqref{Eq:hstarT-IN} obtained by comparing the formula~\eqref{Eq:hstarT} with the characterization of sequence entropy tuples in terms of IN-tuples, which is due to Kerr and Li.
	  	
	  Last but not least, we wish to bring the attention of the reader to the remarkable fact that in the construction of the continua $X_A$ and $\tilde{X}_A$ we use amazing {\bf Cook continua} studied in continuum theory since 1960's. Our paper is apparently the first application of these very rigid continua in dynamics. We believe that the way how we use them may turn out to be useful for producing some other examples and counterexamples in dynamics and to solve some problems (including perhaps the problem of possible sets of values of topological entropy, see Question~\ref{Q:ent}).

	\subsection{Rigid spaces, Cook continua and their applications}\label{SS:Cook-appl}
	Roughly speaking, Cook continua are nondegenerate metric continua which are `everywhere' rigid for continuous maps (the definition is given below). They are difficult to construct and were motivated by previously known examples of spaces rigid for homeomorphisms.
	
	A nondegenerate space $X$ is called \emph{rigid for homeomorphisms} \index{rigid for homeomorphisms} if the only homeomorphism $X\to X$ is the identity. De Groot and Wille~\cite{GW} found such spaces in the class of one-dimensional Peano continua in 1958. An example of such a space is a disc with interiors of a dense family of propellers, with different numbers of blades, removed. Another example, attributed by them to de~Iongh, is a dendrite with a dense set of branching points of different orders.
	
	For a topological space $X$, let $H(X,X)$ be the group of all homeomorphisms of $X$ onto $X$, the group operation being the composition.
	Given an abstract group $G$, does there exist a \emph{topological group picture of~$G$}, i.e. a space $X$ such that $H(X,X)$ is isomorphic to $G$?
    Due to de Groot~\cite{G}, we know that the answer is affirmative. He showed that such a space $X$ can always be found in the class of
    connected, locally connected, complete metric spaces of any positive dimension, as well as in the class of compact, connected, Hausdorff spaces.
	Note that in general the topological group picture $X$ does not exist in the class of compact metric spaces because then the cardinality $\card X \leq c$ while there are groups with arbitrarily large cardinalities. Let us also mention that if the group $G$ is countable then $X$ exists in the class of Peano continua, as shown already by de Groot nad Wille~\cite{GW}.
	
	A nondegenerate space $X$ is \emph{rigid for continuous maps} \index{rigid for continuous maps}, or just rigid,\index{rigid} if every continuous map $X\to X$ is either the identity or a constant map. A rigid metric continuum was constructed by Cook~\cite{Cook} in 1967. For a topological space $X$, let $C(X,X)$ be the monoid of all continuous maps of $X$ into $X$, the semigroup operation being the composition (the unit element is the identity). Given an abstract monoid $M$, does there exist a space $X$ such that $C(X,X)$ is isomorphic to $M$? In general the answer is negative. The reason is that $C(X,X)$ contains all the constant maps (denote by $\const_a$ the constant map sending $X$ to the point $a\in X$) and the constant maps are left zeros (left absorbing elements) of the monoid $C(X,X)$, i.e. $\const_a \circ f = \const_a$. However, monoids with many left zeros are rather special. In particular, the monoid $(C(X,X), \circ)$ is never isomorphic to $(\mathbb Z, +)$ (such a space $X$ would be infinite, so $C(X,X)$ would have infinitely many left zeros, while $(\mathbb Z, +)$ has no left zero). What about the family $C(X,X) \setminus \{\const_a\colon a\in X\}$? In general it is not closed under composition. Nevertheless, in some cases it is a monoid. Given an abstract monoid $M$, does there exist a space $X$ such that $C(X,X) \setminus \{\const_a\colon a\in X\}$ is a monoid isomorphic to $M$?\footnote{This problem was posed by de Groot himself, at the Colloquium on Topology in Tihany, Hungary, in 1964.} Trnkov\'a proved that the answer is affirmative. She showed that such a space $X$ always exists in the class of metric spaces~\cite{Tr1} as well as in the class of compact Hausdorff spaces~\cite{Tr1, Tr2}. In particular, the choice of the trivial monoid $M$ gives the existence of a nondegenerate space rigid for continuous maps.
	
	We mentioned that the first example of a rigid metric continuum was given by Cook~\cite{Cook} in 1967. In fact he constructed in that paper what is now usually called a Cook continuum  (see the continuum $M_1$ in~\cite[Theorem 8]{Cook}, for a detailed description see~\cite[Appendix A]{PT}).
	
	\begin{de}\label{D:Cook}
		A \emph{Cook continuum} \index{Cook continuum} $\mathscr C$ is a \emph{nondegenerate} metric continuum such that, for every subcontinuum $K$ and every continuous
		map $f: K\to \mathscr C$, either $f$ is constant (i.e. $f(K)$ is a singleton) or $f(x)=x$ for all $x\in K$ (hence $f(K)=K$).
	\end{de}

	The Cook continuum constructed in~\cite{Cook} is one-dimensional (and hereditarily indecomposable) and so it can be embedded into $\mathbb R^3$,
	but it cannot be embedded into the plane because the construction uses solenoids, see~\cite[Note after Theorem~12]{Cook}.
	A~continuum is \emph{planar} if it is embeddable into the plane. Ma\'ckowiak constructed a Cook continuum in the plane,
	see~\cite[Corollary 6.2 and the discussion below it]{M}. His continuum is arc-like (i.e. chainable, hence planar and non-separating), and hereditarily decomposable.
	
	The fact that there exist spaces rigid for homeomorphisms and even for continuous maps (even Cook continua in the plane) is surprising.
	Such spaces are interested for topologists as, in a sense, pathological objects. They are used in the theory of topological representations of algebraic objects, see~\cite{PT}. Say, the de Groot's result~\cite{G} that every group is isomorphic to the group of homeomorphisms of a (nice) metric space, can be proved as follows. After realizing that every group $G$ is isomorphic to the automorphism group of some directed graph $\mathcal G$ (where by a directed graph we mean a set, possibly infinite, with binary relation), one can replace the edges of $\mathcal G$ by homeomorphic copies of a space $Y$ rigid for homeomorphisms, obtaining in such a way a space $X$ whose group $H(X,X)$ is isomorphic to $G$. Roughly speaking, we take copies of the same rigid space, place them along the edges of a directed graph and glue them at the points where two edges meet.
	
	Can rigid spaces be useful in dynamics? It seems that so far they have been used in dynamics only either in a trivial way or as an inspiration for constructing spaces which are `rigid-like' with respect to some dynamical property:	
	 \begin{itemize}
	 	\item \emph{Rigid spaces used as counterexamples to naive questions.} For instance, one can be interested in whether every nondegenerate space/continuum $X$ admits a continuous map $T\colon X\to X$ such that
	 	\begin{itemize}
	 		\item there exists $x\in X$ whose omega-limit set is not a singleton;
	 		\item there exists a scrambled pair for $T$ (a question really discussed in~\cite[p. 353]{BHS});
	 		\item the topological entropy of $T$ is positive.
	 	\end{itemize}
	 	The existence of rigid continua shows that in all three cases the answer is negative.
	 	
	 	\item \emph{Rigid spaces as inspiration for constructing spaces which are `rigid-like' with respect to some dy\-na\-mi\-cal property.} Say, in~\cite{DST} a continuum $X$ is constructed such that it is rigid-like with respect to minimality, meaning that $H(X,X) = \{T^n:\, n\in \mathbb Z\}$ where all $T^n$, $n\neq 0$, are minimal homeomorphisms and there is no other minimal continuous map $X\to X$. Being inspired by this, Akin and Rautio~\cite{AR} consider compact metric spaces $X$ such that every homeomorphism other than the identity is topologically transitive.	 	
\end{itemize}		
			
In particular, Cook continua are well known in the continuum theory as examples of `very rigid' continua, but apparently they have not been used in topological dynamics yet. The reason is, perhaps, that one can hardly imagine that they could be really useful in topological dynamics, say for the purpose of constructing spaces admitting an interesting nontrivial dynamics. In the present paper we will controvert this by using them to prove our Main Theorem. To obtain required spaces $X_A$, we will use infinitely many pairwise disjoint nondegenerate subcontinua of a planar Cook continuum. We will built infinitely many blocks, usually obtained by gluing together (copies of) infinitely many appropriately chosen Cook continua from that infinite family, and then we will glue together those blocks in an appropriate way (in fact some blocks will not really be `glued' with the rest of the space but, in spite of it, the whole space $X_A$ will be a continuum).

\subsection{Outline of the proof of Main Theorem}

To make the life of the reader easier, we write this outline of the proof of our Main Theorem. As we have mentioned, we use Cook continua, in fact a countably infinite family of pairwise disjoint subcontinua of a fixed planar Cook continuum. For a moment denote this family by $\mathscr F$. The whole proof can be divided into $4$ steps.
\begin{itemize}
	\item {\bf Step 1:}  We prove the theorem for $A=\{0,\infty\}$ (this is done in  Section~\ref{S:zero-infty} and Section~\ref{S:X}, see Proposition~\ref{P:zero inf}).
	\item {\bf Step 2:}  We prove the theorem for $A=\{0, \log 2\}$  (this is done in  Section~\ref{S:X1-T1-log2} and Section~\ref{S:cont zero-log2}, see Proposition~\ref{P:zero log2}).
	\item {\bf Step 3:}  We extend the result of Step 2 to the case $A=\{0, \log m\}$, for any integer $m \geq 3$ (see Section~\ref{S:generalization}, Proposition~\ref{P:zero logm}). This is just an analogue of Step 2.
\end{itemize}	
Each of the continua $X_A$, where $A$ has cardinality $2$ as above, is a disjoint union of two parts called the head and the snake of $X_A$. The snake has a first point and approaches the head (an analogue is the $\sin (1/x)$ continuum; of course, our continua $X_A$ are much more complicated). Moreover, each of the above constructions depends on the choice of the family $\mathscr F$, more precisely, on the choice of \emph{one} Cook continuum from $\mathscr F$ which is used in the construction of the head and on the way how all the \emph{other} Cook continua from $\mathscr F$ are arranged into an injective sequence; they are used in the construction of the snake. When we say here that a Cook continuum is used in the construction of a space, we have in mind that a copy or many copies of this continuum are subsets of that space (by a copy we mean a homeomorphic copy).

Moreover, our constructions are such that if we fix the same family $\mathscr F$ and we make the same choice of the mentioned one Cook continuum and the same way of arranging all the other Cook continua into an injective sequence, then the \emph{snakes} are homeomorphic (but not equivalently embedded in $\mathbb R^3$).

Now split $\mathscr F$ into infinitely many pairwise disjoint infinite families of continua. For a moment denote these families of Cook continua by $\mathscr F_{\infty}, \mathscr F_2, \mathscr F_3, \dots$ (for each of them also choose one Cook continuum and arrange all the other Cook continua into an injective sequence).
\begin{itemize}	
	\item {\bf Step 4:}  Once we know that Main Theorem is true whenever $A$ has cardinality $2$ (and is trivially true also when $A = \{0\})$, we finally prove it for arbitrary set $\{0\} \subseteq A \subseteq \log  \N^*$ (see Section~\ref{S:proof}, Theorem~\ref{T:main again}) as follows.  For every $k\in \N^*\setminus \{1\}$ put $A(k) = \{0, \log k\}$ and consider the continuum $X_{A(k)}$ constructed in one of the Steps 1,2,3, but now using building bricks from the family $\mathscr F_k$. Then choose, in $\mathbb R^3$, a `central' point and such copies of the continua $X_{A(k)}$ which are pairwise disjoint except that the chosen central point is the first point of the snakes of all considered continua $X_{A(k)}$. Moreover, we choose the copies of $X_{A(k)}$ such that the diameters of $X_{A(\infty)}, X_{A(1)}, X_{A(2)}, \dots$ tend to zero. Then the union of all $X_{A(k)}$ is a one-dimensional continuum which looks like a `flower' with infinitely many smaller and smaller `petals' (copies of the continua $X_{A(k)}$) intersecting at the chosen `central' point. Given $A$ with cardinality at least $3$, we obtain the required $X_A$ as the union of the petals which correspond to those $k\in \N^*\setminus \{1\}$ for which $\log k \in A$. The proof that $S(X_A)= A$ is not difficult and it takes only about one page. It uses the properties of the continua $X_{A(k)}$.
\end{itemize}

\medskip

So, the most important and difficult parts are Step 1 and Step 2. We are going to present ideas behind them. Assume that $\mathscr F$ is fixed, one Cook continuum $\mathscr K \in \mathscr F$ is chosen and $\mathscr K_1 , \mathscr K_2, \dots$ is an injective sequence containing all the other Cook continua in $\mathscr F$.
\medskip

In Step 1, to construct a continuum $X$ with $S(X)=\{0,\infty\}$ we first construct a system $(X_1,T)$ with
\[
X_1 = \mathscr K_0 \sqcup C \quad \text{with} \quad C = \{x_1, x_2, \dots\}, \quad
T x_i=x_{i+1}, \, i=1,2,\dots \quad \text{and} \quad T|_{\mathscr K_0}=Id|_{\mathscr K_0}~.
\]
Here $\mathscr K_0$ is a copy of $\mathscr K \in \mathscr F$; it will be the head of $X\supseteq X_1$. The trajectory $x_1, x_2, \dots$ of $T$ can be chosen in such a way that $h^*(T)=\infty$, see Section~\ref{S:zero-infty} for details. We think of $\mathscr K_0$ as lying in a vertical plane, the trajectory $x_1, x_2,\dots$ approaches it from the right. For every $m$, we connect $x_m$ and $x_{m+1}$ by a properly chosen continuum $D_m$. For each $m$, $D_m = D^*_m \sqcup \{x_{m+1}\}$ where the sets $D^*_m$ are of the form:
\begin{equation}
\begin{split}
D^*_1= & \, \text{copy of $\mathscr K_1$ $\sqcup$ copy of
	$\mathscr K_2$ $\sqcup$ copy of $\mathscr K_3$ $\sqcup$ copy of $\mathscr K_4$ $\sqcup$ } \dots, \notag \\
D^*_2= & \, \text{copy of $\mathscr K_2$ $\sqcup$ copy of
	$\mathscr K_4$ $\sqcup$ copy of $\mathscr K_6$ $\sqcup$ copy of $\mathscr K_8$ $\sqcup$ } \dots, \notag \\
D^*_3= & \, \text{copy of $\mathscr K_4$ $\sqcup$ copy of
	$\mathscr K_8$ $\sqcup$ copy of $\mathscr K_{12}$ $\sqcup$ copy of $\mathscr K_{16}$ $\sqcup$ } \dots, \notag \\
\dots & \, ,
\end{split}
\end{equation}
see Figure~\ref{F:3Dm}. The used Cook continua of the form `$\text{copy of $\mathscr K_i$}$' as well as the Cook continuum $\mathscr K_0$ are building `bricks' of the continuum $X$ defined by
\begin{equation*}
X=\mathscr K_0 \sqcup \bigcup_{m=1}^{\infty}D_m =  \mathscr K_0 \sqcup
\bigsqcup_{m=1}^{\infty}D_m^* ,
\end{equation*}
see Figure~\ref{F:spaceX}. The bricks in a set $D^*_m$ form a sequence, each of them has the `first point' and the `last point' (see Section~\ref{S:X} for details), two consecutive bricks intersect at one point (the last point of one of them and the first point of the next one), otherwise they are disjoint. The consecutive continua $D_m$ and $D_{m+1}$ intersect at the point $x_{m+1}$, non-consecutive ones are disjoint.

One can show that $X$, defined as the disjoint union of the `head' $\mathscr K_0$ and the `snake' $\bigcup_{i=1}^{\infty}D_m$, is a one-dimensional continuum, see Lemma~\ref{L:Xiscont}. It has the required property $S(X)= \{0,\infty\}$, see Proposition~\ref{P:zero inf}. The main steps in the proof of this fact are as follows.

First, in Lemma~\ref{L:map} we prove that if $B$ is a  brick then $F(B)$ is either a singleton or a brick homeomorphic to $B$, whenever $F: X\to X$ is a continuous map. Further, the definition of the sets $D_i$ is such that if $m<M$ are positive integers then there is exactly one continuous surjective map of $D_m$ onto $D_M$, but there is no surjective map of $D_M$ onto $D_m$, see Lemma~\ref{L:mapDD}, Figure~\ref{F:sur} and Remark~\ref{R:mapDD}. Already this fact and the fact that all the bricks in $X$ are Cook continua, indicate that the continuum $X$ does not admit `too many' continuous selfmaps.

To study the dynamics of all possible continuous selfmaps of $X$ we first define a particular continuous map $G: X\to X$, a continuous extension of the map $T: X_1\to X_1$, with $h^*(G)=\infty$ (see Lemma~\ref{L:existsG}; this map $G$ sends each continuum $D_m$ onto $D_{m+1}$). Further we show that if $F: X\to X$ is a non-constant continuous map, then there are only two possibilities, see Proposition~\ref{P:zero inf}. The first possibility is that the set of fixed points $\Fix (F)$ of the (non-constant) map $F$ is a `nice' subcontinuum of $X$ intersecting the snake and there is a positive integer $N$ such that $F^N(X) = X$, whence $h^*(F)= 0$. The second possibility is that the (non-constant) map $F$ has no fixed point in the snake and, for some positive integer $N$, the map $F$ coincides with $G^N$ on some neighbourhood of the head, which implies that $h^*(F) = \infty$.

\medskip

In Step 2, the construction of a continuum  $X \subseteq \mathbb R^3$ with $S(X)
= \{0,\log 2\}$  is to some extent similar to that in Step 1, but necessarily there are also important differences and the situation is dramatically more complicated than in Step 1.

Again we start with an auxiliary system $(X_1, T)$, now with $h^*(T)=\log 2$, in the form of a disjoint union of two parts. Now the first part is not a Cook continuum with identity as in Step 1. Instead, it is a countable set $A =\{a_i,i \in \Z\} \cup \{a_{\infty}\}$ in the vertical plane above the point $0$ on the horizontal axis, with $T(a_i)=a_{i+1}$, $i \in \Z$ and $T(a_{\infty})=a_{\infty}$, see Figure~\ref{fig:(A,T_1)}. The second part of $(X_1,T)$ is, similarly as in Step~1, just one trajectory $x_0,x_1,\dots$. This trajectory lies in $(0,1] \times A$ with $T(x_i)=x_{i+1}$, approaching the first part `from the right'. Thus,
\[
X_1 = A \sqcup \{x_0,x_1,\dots\}
\]
with the dynamics given by the map $T$ described above. By choosing the points $x_i$, $i=0,1,\dots$ carefully (this construction is long and complicated, see Section~\ref{S:X1-T1-log2} for the details), we get $h^*(T)=\log 2$, see Theorem~\ref{final-thm}. Here one can see the first complication when compared with Step 1. Even if we restrict ourselves to spaces $X_1$ in the form of the disjoint union of a first part of $X_1$ and just one trajectory approaching it, it is much more easier to construct the system $(X_1,T)$ in such a way that $h^*(T)$ is extremely large, i.e. equal to $\infty$, than to construct it with $h^*(T)$ being exactly $\log 2$, neither larger nor smaller.

Further, similarly as in Step 1, we add Cook continua to get $X$ in the form of the disjoint union of a head and a snake. In Step 1, the head of $X$ was the same as the first part of $X_1$, namely the Cook continuum $\mathscr K_0$. Now we denote the head of $X$ by $\mathscr A_0$ and  in Section~\ref{S:cont zero-log2} we construct it, in the vertical plane containing $A$, by joining the consecutive points of $A$ by copies of the chosen Cook continuum from $\mathscr F$, see shaded continua in Figure~\ref{fig:rhombusZ}. These copies are in fact taken such that each of them is obtained from any other one by a direct similitude, to keep the geometry of $X$ under control (this will be useful later). So, the head of $X$ looks like a necklace of similar copies of the same Cook continuum, together with the point~$a_{\infty}$. Topologically, the snake is obtained from $\{x_0,x_1,\dots\}$ in the same way as in Step 1, so the snake is homeomorphic to that from Step~1 and we have
\begin{equation*}
X= \text{head } \sqcup \text{ snake} = \mathscr A_0 \sqcup \text{ (snake homeomorphic to that from Step 1)}.
\end{equation*}
However, the two snakes have different positions in $\mathbb R^3$, meaning that they are not equivalently embedded in $\mathbb R^3$. We have to be careful with the construction because we want $X$ to be a compact space (i.e., we want that all the cluster points of the snake which lie in the vertical plane containing the head, belong to the head). With some care, the snake can indeed be placed in $\mathbb R^3$ in such a way that $X$ is compact (in fact a one-dimensional continuum), see Lemma~\ref{L:Xcpct}.

Similarly as in Step 1, in Subsection~\ref{SS:defG} we extend the map $T: X_1\to X_1$ to a particular map $G: X\to X$ (we of course want that $h^*(G)=\log 2$). The map $G$ on our snake is just the unique map topologically conjugate to the map $G$ we defined on the snake in Step 1. Further, $G$ sends each Cook continuum in the `necklace' $\mathscr A_0$ onto the next one. Unfortunately, while in Step 1 the particular map $G$ was trivially continuous on $X$ and $h^*(G)=\infty$, now we are facing two problems. First, the restrictions of $G$ to the head and to the snake are clearly continuous but contrary to Step 1, now these two continuous parts of $G$ need to fit together to produce a continuous map on $X$. Further, while it is trivial that $h^*(G)\geq h^*(T)=\log 2$, this inequality alone does not imply that $h^*(G) = \log 2$. (In Step 1 we were in better situation, since there $h^*(G)\geq h^*(T)= \infty$ trivially implied that $h^*(G) = \infty$.) To cope with these two problems, we specify the geometry of $X$ in more details, by adding some additional requirements to our construction of $X$. For the details see Subsection~\ref{SS:geometry}. Then finally $X$ is as we need. In Lemma~\ref{L:Gcont} we prove that  $G$ on such a continuum $X$ is continuous and in Lemma~\ref{L:existsG-log2}, using Lemma~\ref{L:G-log2-2case}, we prove that $h^*(G)=\log 2$. Though only snakes in Step 1 and Step 2 are homeomorphic (while the heads are not), analogous arguments as in Step 1 then easily give that $S(X)=\{0,\log 2\}$, see Proposition~\ref{P:zero log2}.

\bigskip

	\noindent{\bf Organization of the paper.} In Section~\ref{S:prelim} we introduce some
	notions and facts that will be used in the paper. In Section~\ref{S:zero-infty}
	we construct systems with zero topological entropy but infinite supremum
	topological sequence entropy.
	In Section~\ref{S:X}, based on an example from Section~\ref{S:zero-infty}, we
	construct a
	continuum $X$ with $S(X)=\{0,\infty\}$. Then in Sections~\ref{S:X1-T1-log2} and~\ref{S:cont zero-log2}
	we	construct a continuum $X$ with $S(X)=\{0,\log 2\}$. For a generalization of these two Sections
	from $\log 2$ to $\log m$ see Section~\ref{S:generalization}. Finally, the proof of our Main Theorem is
	given in Section~\ref{S:proof}. Also for the proofs of Theorems B, C and D see Section~\ref{S:proof}.
	Readers interested in open problems should consult Section~\ref{S:questions}.

\medskip

	\noindent{\bf Acknowledgments.} This project was started when the first
	two authors visited Max Planck Institute for Mathematics, Bonn, in 2009.
	Much of the work on the project was done also when the first author visited USTC, Hefei in 2011 and 2016 and when the third author visited Matej Bel University in 2018. The authors thank all these institutions for the warm hospitality and financial supports.
    
    Snoha was  supported by the Slovak Research and Development	Agency under the contract No.~APVV-15-0439
	and by VEGA grant 1/0786/15. Ye was supported by NNSF of	China (11371339  and 11431012).
    Zhang was supported by the NNSF of China (11871188 and 11671094).
    
	The authors thank Jerzy Krzempek for providing them with the
	reference~\cite{M},	where the existence of Cook continua in the plane is proved. This enabled the
	authors to simplify the geometry of their constructions.  
	
	Sincere thanks of the authors go to Hanfeng Li for his comments
    on the preliminary version of the paper, which resulted in the homeomorphism case in Subsection~\ref{S:main-homeo},
    the group actions and the semigroup actions cases in Subsection~\ref{S:group actions}, and an open problem in Subsection~\ref{SS:actions case}.


\section{Preliminaries from dynamics and topology}\label{S:prelim}

In this section we introduce some related notions from topological dynamics and topology.

	\subsection{Topological sequence entropy, sequence entropy
		tuples}\label{SS:seqentropy}
	
	Let $(X,T)$ be a t.d.s. Recall that a set $Y\subseteq X$ is called
	\emph{$T$-invariant} if $T(Y)\subseteq Y$.
	
	The topological entropy of $T$ \index{topological entropy}, the topological sequence
	entropy of $T$ \index{topological sequence entropy} with respect to a subsequence $A$ of $\Z_+$ and the
	maximal pattern entropy \index{maximal pattern entropy} of $T$ are denoted by $h(T)$, $h^A(T)$ and
	$h^*(T)$ respectively. To recall the definition of $h^A(T)$, let  $A=\{a_0 <
	a_1<\cdots \}$ be a sequence of nonnegative integers. Given an open cover
	$\mathcal{U}$ of $X$, define
	$$
	h^{A}(T,\mathcal{U})=\limsup_{n \rightarrow
		\infty}\frac{1}{n}\log \mathcal{N} \left (\bigvee_{i=0}^{n-1}T^{-a_i}(\mathcal{U})\right),
	$$
	$\mathcal{N}(\mathcal V)$ being the minimal possible cardinality of a subcover chosen from
	a cover $\mathcal V$. Then
	$$
	h^{A}(T)=\sup \{ h^{A}(T,\mathcal{U}):\, \mathcal{U}\text{ is an open cover of }
	X\}.
	$$
	Note that $h^A(T)$ becomes $h(T)$ for $A=\Z_+$. As already said in Introduction,
	for the maximal pattern entropy of $T$ we have $h^*(T)=\sup_{A}h^A(T)$ \index{$h^*(T)$}, where
	$A$
	is ranging over all subsequences of $\Z_+$, see \cite{HYad} and therefore we
	will call $h^*(T)$ also the supremum topological sequence entropy of $T$.
	
	Recall that a t.d.s. $(X,T)$ is {\it null} if the
	sequence entropy is zero for all subsequences, i.e. if $h^*(T)=0$,
	and it is \emph{tame} if its enveloping semigroup is Fr\'echet (a topological
	space is \emph{Fr\'echet} if for any $A\subseteq X$ and any $x\in \overline {A}$ there
	is a sequence $\{x_n\}$ with $A \ni x_n\lra x$).

Let us state some properties for null systems. For a measure preserving system $(X,\mathcal{B},\mu,T)$ we can define
entropy or sequence entropy with respect to $\mu$. It is a classical result \cite{Ku} that a measure preserving system $(X,\mathcal{B},\mu,T)$
is null if and only if it has discrete spectrum. It is known \cite{HLSY} that if $(X,T)$ is a minimal system and $h^*(T)=0$, then $\pi:X\rightarrow X_{eq}$ (the factor
map to the maximal equicontinuous factor) is an almost one-to-one extension, $(X,T)$ is uniquely ergodic and $\pi$ is also
an isomorphism (in the measurable sense). Recently, the structure of a minimal null system under group actions is determined by Glasner \cite{Eli-G}. See \cite{LR, GR, HLY}
for other results related to nullness and sequence entropy.
	
	The notion of a (sequence) entropy tuple of length $n$ \index{sequence entropy tuple} was defined in \cite{B,
		HYis, HLSY}. An $n$-tuple $(x_i)_{i=1}^{n} \in X^n, n \ge
	2$, is called  a {\it sequence entropy $n$-tuple} for $(X,T)$ if at least two
	points in $(x_i)_{i=1}^{n}$ are different and for any disjoint
	closed neighborhoods $\{U_1, \ldots, U_t\}$ of
	$\{x_{j_1},\ldots,x_{j_t}\}=\{x_1,\ldots,x_n\}$, there exists an
	increasing sequence $A \subseteq \Z_+$ ($A=\Z_+$ in case of entropy tuples) such
	that
	$$h^A(T,\{U_1^c,\ldots,U_t^c\})>0,$$ where $2\le t\le n$ and
	$x_{j_p}\not=x_{j_k}$ when $p\not=k$.
	
	The set of
	entropy tuples and sequence entropy tuples of length $n$ will be
	denoted by $E_n(X,T)$ and $SE_n(X,T)$ respectively. A tuple is said
	to be a pair if $n=2$. Instead of $E_2(X,T)$ and $SE_2(X,T)$ we write
	$E(X,T)$ and $SE(X,T)$, respectively. Note that the maximal zero entropy factor (resp. null factor)
    of a topological dynamical system is induced by the smallest closed invariant equivalence relation containing $E_2(X,T)$ (resp. $SE_2(X,T)$),
    see \cite{B} and \cite{HLSY}.
	
    Originally, entropy and sequence entropy tuples were defined using
	open covers. We will not use the original definitions but we are going
	to state equivalent definitions given in \cite{KL} using the following notion of
	independence.

	\begin{de} Let $(X,T)$ be a t.d.s. and $\tilde{A}=( A_1,\ldots,A_k )$
		be a tuple of subsets of $X$. We say that a subset $J\subseteq\Z_+$ is an
		independence set for $\tilde{A}$
		if for any nonempty finite subset $I\subseteq J$, we have
		$$
		\bigcap_{i\in I}T^{-i}A_{s(i)}\not=\emptyset
		$$
		for any $s\in \{1,\ldots,k\}^I$. If such a set $J$ is finite and has $p$ elements, we also say that it is an independence set, \index{independence set} or independence set of times, of length~$p$.
	\end{de}

	\begin{rem}\label{R:indep-subtuple}
		Note that $J$ in this definition is also an independence set for any sub-tuple
		of $\tilde{A}$. Also, a subset of an independence set for $\tilde{A}$ is again
		an independence set for $\tilde{A}$.
	\end{rem}

	\begin{rem}\label{R:indep-subset}
		In a natural way we will use this terminology also in the following slightly
		more general situation.
		Suppose that $Y\subseteq X$ and $S: Y\to X$. Let $\tilde{A}=(A_1,\ldots,A_k)$ be
		a tuple of subsets of $X$ and $J\subseteq\Z_+$. By saying that $J$ is an
		\emph{independence set for $\tilde{A}$ for the map $S$}, we will mean that for
		any nonempty finite subset $I\subseteq J$ and any map $s: I \to \{1,\ldots,k\}$
		we have $\bigcap_{i\in I}S^{-i}A_{s(i)}\not=\emptyset$, i.e. there is a point
		$y\in Y$ such that, for every $i\in I$, $S^i(y)\in X$ exists and $S^i(y)\in
		A_{s(i)}$. In particular, if such a map $S$ is then extended to a continuous map
		$T: X\to X$ (note that $Y$ is not necessarily $T$-invariant), obviously $J$ will
		be an independence set for $\tilde{A}$ for the t.d.s. $(X,T)$.
	\end{rem}

	In \cite{KL} the authors defined IE-tuples \index{IE-tuples}, IT-tuples \index{IT-tuples} and IN-tuples \index{IN-tuples}
	(here I, E, T and N stand for independent, entropy, tame and null,
	respectively). Though we use only IN-tuples, for completeness recall the three
	definitions.

	\begin{de}\label{D:IE-IT-IN}
		Consider a tuple $\tilde{x}=( x_1,\ldots,x_k)\in X^k$.
		If for every product neighborhood $U_1\times
		\ldots\times U_k$ of $\tilde{x}$ the tuple $(U_1,\ldots,U_k)$ has an
		independence set of \emph{positive density} or an
		\emph{infinite} independence set or \emph{arbitrarily long finite} independence
		sets, then the tuple $\tilde{x}$ is called an \emph{IE-tuple} or an
		\emph{IT-tuple} or an
		\emph{IN-tuple}, respectively.
	\end{de}

	Notice that for tuples we have $IE \Rightarrow IT \Rightarrow IN$.
	Recall that a tuple $(x_i)_{i=1}^n$ is called \emph{intrinsic} if
	$x_i\not=x_j$ when $i\not=j$ and \emph{diagonal} if all its entries
	are equal.

	The following theorem gives the promised equivalent definitions of
	entropy tuples and sequence entropy tuples from \cite{KL} (see also \cite{HYis}
	and \cite{H1}).
	
	\begin{thm}\label{KL-thm} Let $(X,T)$ be a t.d.s.
		\begin{enumerate}
			\item
			A tuple is an entropy tuple if and only if it is a non-diagonal IE-tuple. In
			particular, a system $(X,T)$ has zero entropy if and only if every
			IE-pair is diagonal.
			\item
			A tuple is a sequence entropy tuple if and only if it is a non-diagonal
			IN-tuple. In particular, a system $(X,T)$ is null if and only if
			every IN-pair is diagonal.
			\item
			$(X,T)$ is tame if and only if every IT-pair is diagonal.
		\end{enumerate}
	\end{thm}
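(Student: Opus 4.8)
The plan is to prove all three equivalences through the combinatorial notion of an independence set, following Kerr and Li. I would set up the common framework first. Fix a non-diagonal tuple $\tilde x=(x_1,\dots,x_k)$, say with $x_1\neq x_2$; after deleting repeated entries (cf.\ Remark~\ref{R:indep-subtuple}) assume the $x_j$ are pairwise distinct, and fix pairwise disjoint closed neighbourhoods $U_1,\dots,U_k$ of them. Write $\mathcal U=\{U_1^c,\dots,U_k^c\}$, an open cover of $X$. Everything comes down to comparing, for each $n$, the quantity $\mathcal N\bigl(\bigvee_{i=0}^{n-1}T^{-a_i}\mathcal U\bigr)$ (with $a_i=i$ in the entropy case and $a_i$ a general increasing sequence in the sequence-entropy case) with the number of realizable itineraries
\[
S_n=\Bigl\{\sigma\in\{1,\dots,k\}^{\{0,\dots,n-1\}}:\ \bigcap_{i=0}^{n-1}T^{-a_i}U_{\sigma(i)}\neq\emptyset\Bigr\}.
\]

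I would do the \emph{easy direction} first: independence implies positive cover entropy. If $I\subseteq\Z_+$ is an independence set for $(U_1,\dots,U_k)$, pick a witness point $y_\sigma$ for each $\sigma\in\{1,\dots,k\}^I$; an atom $\bigcap_{i\in I}T^{-i}U_{\tau(i)}^c$ of $\bigvee_{i\in I}T^{-i}\mathcal U$ contains $y_\sigma$ only when $\tau(i)\neq\sigma(i)$ for every $i\in I$, hence covers at most $(k-1)^{|I|}$ of the $k^{|I|}$ witnesses, so $\mathcal N\bigl(\bigvee_{i\in I}T^{-i}\mathcal U\bigr)\geq(k/(k-1))^{|I|}$. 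If $\tilde x$ is an IE-tuple such $I$ exists with positive density, giving $h(T,\mathcal U)\geq\log\frac{k}{k-1}>0$; if it is only an IN-tuple such $I$ exists of every finite length, and stringing these into an increasing sequence $A$ yields $h^A(T,\mathcal U)>0$. So the non-diagonal tuple is an entropy (resp.\ sequence entropy) tuple, and for IT one gets an infinite independence set, obstructing tameness.

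The \emph{hard direction} — positive cover entropy implies independence — is where the real work and the main obstacle lie, and I would import a Sauer--Shelah / Karpovsky--Milman density lemma: for each $k\geq 2$ and $q>1$ there is $\delta(k,q)>0$ such that any $S\subseteq\{1,\dots,k\}^{\{1,\dots,n\}}$ with $|S|\geq q^{\,n}$ shatters some $J$ with $|J|\geq\delta n$ (i.e.\ $S|_J=\{1,\dots,k\}^J$). First one shows, by a covering argument, that positive cover entropy forces $|S_n|$ to grow exponentially along the relevant $n$'s: each atom of the join is the complement-pattern of some realizable itinerary, a single itinerary kills only $(k-1)^n$ atoms, so an exponentially large minimal subcover forces exponentially many realizable itineraries. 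Then the density lemma produces shattered sets $J_n$ of size $\geq\delta n$, and a shattered set is exactly an independence set for $(U_1,\dots,U_k)$. In the entropy case one upgrades linear size to positive density by a standard (sub)additivity/Ramsey argument, obtaining an IE-tuple; in the sequence-entropy case the sizes $\delta n\to\infty$ already give an IN-tuple. The technical nuisance I expect to be most annoying is that shattering with respect to the \emph{complements} $U_j^c$ is the wrong condition for independence of the small sets $U_j$; the fix is to shrink $U_j$ to $U_j'\subseteq\Int U_j$, run the argument with a cover refining $\{U_1',\dots,U_k',(\bigcup_j\overline{U_j'})^{\,c}\}$ so that a shattered coordinate genuinely forces the itinerary into the $U_j'$, and absorb the extra cover element into a bounded-error estimate. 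This bookkeeping is the heart of the Kerr--Li proof; I would simply reproduce it.

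For item (3) I would run the same dictionary with Rosenthal's $\ell^1$ theorem replacing Sauer--Shelah: by the dynamical Bourgain--Fremlin--Talagrand dichotomy, non-tameness of $(X,T)$ is equivalent to the existence of a continuous $f$ and a time sequence along which $\{f\circ T^{a_i}\}$ is equivalent to the $\ell^1$ basis, equivalently (Rosenthal's criterion) an infinite independence set for a pair $\{f\leq\alpha\},\{f\geq\beta\}$, from which a compactness argument extracts a non-diagonal IT-pair; the converse is the easy direction above. The ``in particular'' clauses then follow from the three tuple statements together with the standard fact that $h(T)>0$ (resp.\ $(X,T)$ not null, not tame) is witnessed by a single finite open cover of positive entropy for some time sequence, plus a compactness extraction that converts such a witness into a non-diagonal IE-pair (resp.\ IN-pair, IT-pair) by passing to cluster points in shrinking neighbourhoods: so zero entropy $\Leftrightarrow$ all IE-pairs are diagonal, null $\Leftrightarrow$ all IN-pairs are diagonal, and tame $\Leftrightarrow$ all IT-pairs are diagonal.
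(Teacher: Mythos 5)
The paper does not prove this theorem: it is stated as a known result, with (1) attributed to Huang--Ye~\cite{HYis}, one half of (3) to Huang~\cite{H1}, and the full set (1)--(3) to Kerr--Li~\cite{KL}, who introduced the independence approach. Your sketch faithfully reconstructs the Kerr--Li argument — the $(k/(k-1))^{|I|}$ counting estimate for the easy direction, the Karpovsky--Milman/Sauer--Shelah density lemma together with the cover-shrinking fix for the hard direction, the density upgrade in the IE case, and the Rosenthal $\ell^1$/Bourgain--Fremlin--Talagrand route for (3) — so there is no in-paper proof to compare against and nothing to flag.
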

	
	We remark that (1) was proved by Huang and Ye \cite{HYis} using the
	notion of an interpolating set (which was first used by Glasner and Weiss \cite{GlWe} in the symbolic setting), one half of (3) was proved by Huang in
	\cite{H1} using the notion of a scrambled pair and (1-3) were showed
	by Kerr and Li \cite{KL} using the notion of independence.
	
	Comparing~(\ref{Eq:hstarT}) with Theorem~\ref{KL-thm}(2), we get that
	\begin{equation}\label{Eq:hstarT-IN} \index{$h^*(T)$}
	h^*(T) = \sup\{\log k: \text{there is an intrinsic IN-tuple of length}\ k\}~.
	\end{equation}
	Throughout the paper, this formula will be used to compute $h^*(T)$. Just for
	completeness, we add one formula more. Recall that a set $A\subseteq X$ is
	called a \emph{sequence entropy set}
	if every non-diagonal tuple from $A$ is a sequence entropy tuple, and it is
	\emph{maximal}
	if it is maximal with respect to the inclusion. Comparing this terminology with
	~(\ref{Eq:hstarT}), we get that
	\begin{equation}\label{Eq:hstarT-set}
	h^*(T) = \sup\{\log \# A: \text{$A$ is a sequence entropy set}\}~.
	\end{equation}
	
	Let $(Y,S)$ and $(X,T)$ be t.d.s. If there is a surjective
	continuous map $\pi$ from $Y$ to $X$ such that $\pi\circ S=T\circ
	\pi$ then we say that $(X,T)$ is a \emph{factor} of $(Y,S)$ and
	$(Y,S)$ is an \emph{extension} of $(X,T)$.
	The direct product of $n$ copies of $X$ is denoted by
	$X^n$ and the direct product of $n$ copies of the map $T$ by $T^{(n)}$.
	Let $IE_n(X,T), IN_n(X,T)$ and $IT_n(X,T)$ be the families of all IE-tuples,
	IN-tuples and IT-tuples, respectively, of length $n$. Below, let $P_n(X,T)$
	be one of them, i.e. $P$ is one of $IE$, $IN$, $IT$. The set of non-wandering
	points of $T$ is denoted by $\Omega(T)$.
	
	\begin{prop}\label{P} Let $(X,T)$ and $(Y,S)$ be t.d.s. and let $\pi:(Y,S)\lra
		(X,T)$ be a factor map. Then
		\begin{itemize}
			\item[{(a)}] $P_n(X,T)$ is a closed
			$T^{(n)}$-invariant subset of $X^n$.
			
			\item[{(b)}]
			\begin{itemize}
				\item[{(1)}] If $( x_i)_{i=1}^n \in P_n(X,T)$, then for
				all $1\le i \le n$ there exists $y_i \in Y$ such that $\pi
				(y_i)=x_i$ and $( y_i)_{i=1}^n \in P_n(Y,S).$
				
				\item[{(2)}] If $( y_i)_{i=1}^n \in P_n(Y,S)$,
				then $( \pi (y_i))_{i=1}^n  \in P_n(X,T)$.
			\end{itemize}
			
			\item[{(c)}] If $( x_i)_{i=1}^n \in P_n(X,T)$ then $x_i\in
			\Omega(T)$ for each $1\le i\le n$.
			
			\item[{(d)}] $P_n(X,T)=P_n(X,T^k)$ for any $k\in \N$.
		\end{itemize}
	\end{prop}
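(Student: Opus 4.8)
The plan is to verify the four items directly from Definition~\ref{D:IE-IT-IN}, i.e.\ from the characterisation of a $P$-tuple (here $P\in\{IE,IN,IT\}$) by the existence, for every product neighbourhood, of an independence set which is respectively of positive density, infinite, or arbitrarily long finite. The only arithmetic inputs are that each of these three ``largeness'' properties of a subset of $\Z_+$ is preserved under translating by a fixed integer, dilating by a fixed $k\in\N$, and passing to the largest block of a finite partition (for instance into residue classes mod $k$). Granting this, (a) is quick: closedness holds because a product neighbourhood of $\tilde x=\lim_j\tilde x^{(j)}$ is a product neighbourhood of $\tilde x^{(j)}$ for large $j$, so the tuple of factors inherits a suitable independence set; and $T^{(n)}$-invariance holds because, writing a product neighbourhood of $T^{(n)}\tilde x$ as $V_1\times\dots\times V_n$ and putting $U_i=T^{-1}V_i$, an independence set $J$ for $(U_1,\dots,U_n)$ becomes, after the translation $J\mapsto J+1$, an independence set for $(V_1,\dots,V_n)$, since $T^{-(i+1)}V_{s(i)}=T^{-i}U_{s(i)}$, and translation preserves the type.

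For (b)(2) I would argue similarly: $(\pi^{-1}U_1,\dots,\pi^{-1}U_n)$ is a product neighbourhood of $(y_i)_{i=1}^n$, and any independence set $J$ for it under $S$ is also one for $(U_1,\dots,U_n)$ under $T$, because $\pi\circ S^i=T^i\circ\pi$ carries a witness point of $\bigcap_i S^{-i}\pi^{-1}U_{s(i)}$ into $\bigcap_i T^{-i}U_{s(i)}$, and the type of $J$ is unchanged. For (b)(1), the lifting, I would argue by contradiction and compactness: if no lift of $(x_i)$ lies in $P_n(Y,S)$, then every point of the compact set $\prod_i\pi^{-1}(x_i)\subseteq Y^n$ has a product neighbourhood $W^{(j)}=V^{(j)}_1\times\dots\times V^{(j)}_n$ witnessing failure of the $P$-property, finitely many of these cover $\prod_i\pi^{-1}(x_i)$, and --- since replacing one coordinate of a tuple in this product by an arbitrary element of the corresponding fibre keeps it in the product --- one gets $\pi^{-1}(x_i)\subseteq\bigcup_j V^{(j)}_i$ for each $i$, whence (as $\pi$ is a closed map) open neighbourhoods $U_i\ni x_i$ with $\pi^{-1}(U_i)\subseteq\bigcup_j V^{(j)}_i$. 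A large independence set for $(U_1,\dots,U_n)$ under $T$ (available since $(x_i)\in P_n(X,T)$) lifts fibrewise through $\pi$, using $S^i(\pi^{-1}(x))\subseteq\pi^{-1}(T^i x)$, and a Kerr--Li-style colouring/Ramsey argument then extracts a still-large sub-independence set lying inside a single one of the finitely many ``bad'' pieces $W^{(j)}$, contradicting its choice. These lifting facts for IN- and IT-tuples (and, via the measure-theoretic description, for IE-tuples) are exactly those of~\cite{KL}, so one may instead simply cite~\cite{KL} here.

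For (c): fix $i$ and a neighbourhood $U\ni x_i$, apply the $P$-property to a product neighbourhood of $\tilde x$ whose $i$th factor equals $U$ to obtain an independence set containing two elements $a<b$, and take the constant label $i$ to get a point of $T^{-a}U\cap T^{-b}U$; then $T^{b-a}(U)\cap U\neq\emptyset$ with $b-a\ge1$, so $x_i\in\Omega(T)$. For (d): if $J$ is an independence set for $(U_1,\dots,U_n)$ with respect to $T^k$ then $kJ$ is one with respect to $T$, giving $P_n(X,T^k)\subseteq P_n(X,T)$; conversely, splitting an independence set $J$ for $T$ into residue classes mod $k$ and writing the largest class as $\{\,r+km:m\in M\,\}$, the set $M$ is an independence set for $T^k$ (if $r+km\in J$ for every $m$ in a finite set, non-emptiness of $\bigcap_m T^{-(r+km)}U_{s(m)}$ forces non-emptiness of $\bigcap_m T^{-km}U_{s(m)}$), and all three largeness types survive both operations.

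I expect the only genuine difficulty to be (b)(1): making the covering/closed-map reduction precise enough that the finitely many ``bad'' product pieces really control the fibrewise lift of a long independence set, and then running the correct combinatorial extraction in each regime --- ordinary pigeonhole for IN, infinite Ramsey for IT, and a density-Ramsey argument for IE. The remaining items are routine bookkeeping with the three largeness properties.
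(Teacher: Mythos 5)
Your proofs are correct. The paper itself gives no argument: it simply cites \cite{B}, \cite{HYis} and \cite{KL} for (a) and (b), and remarks that (c) and (d) ``can be proved directly by definition.'' You have instead written out the direct definitional arguments in full. Items (a), (b)(2), (c) and (d) are routine manipulations of independence sets — translation, restriction along a factor map, extracting a pair, dilation and residue-class pigeonhole — and all three largeness regimes (positive density, infinite, arbitrarily long finite) do survive these operations, so your arguments go through. For (b)(1), the lifting of a $P$-tuple against a factor map, your reduction is right: compactness of $\prod_i\pi^{-1}(x_i)$, finite covering by ``bad'' product pieces, the observation that one may replace one coordinate by an arbitrary fibre element, and closedness of $\pi$ to shrink to neighbourhoods $U_i$ with $\pi^{-1}(U_i)\subseteq\bigcup_j V^{(j)}_i$. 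The genuine combinatorial content — a Ramsey or density-Ramsey extraction showing that a large independence set for $(U_1,\dots,U_n)$ yields a large independence set for some single $(V^{(j)}_1,\dots,V^{(j)}_n)$ — is exactly the content of Kerr and Li's lifting lemmas, and you correctly flag that one should either supply that combinatorics or simply cite \cite{KL}, which is what the paper does. So your proposal is a fully unpacked version of what the paper delegates to its references, and it is sound.
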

	
	\begin{proof} (a) and (b) can be found in \cite{B},
		\cite{HYis} and \cite{KL}, (c) and (d) can be proved directly by
		definition.
	\end{proof}

	\begin{prop}\label{P2}
		Let $(X,T)$ be a t.d.s.
		\begin{itemize}
			\item [{(a)}] $h^*(T)=h^*(T|_{T(X)})$.
			\item [{(b)}] Let $T$ be an injective map and $( a_i)_{i=1}^n \in
			P_n(X,T)$.
			Then each of the points $a_i$ has a (unique) preimage and $(T^{-1}a_i)_{i=1}^n \in P_n(X,T)$.
			So, $P_n(X,T)$ is a $(T^{-1})^{(n)}$-invariant subset of $X^n$.
		\end{itemize}
	\end{prop}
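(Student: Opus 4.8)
The plan is to derive both parts from formula~\eqref{Eq:hstarT-IN} and Proposition~\ref{P} by elementary manipulations of independence sets, the only real content being a ``shift by one'' of the index set. First I would record the elementary inclusion $\Omega(T)\subseteq T(X)$: if $x\notin T(X)$, then $U:=X\setminus T(X)$ is an open neighborhood of $x$ with $T^m(U)\subseteq T(X)$ disjoint from $U$ for all $m\ge 1$, so $x$ is wandering. Combined with Proposition~\ref{P}(c), this shows that for $P\in\{IE,IT,IN\}$ every tuple in $P_n(X,T)$ has all its coordinates in $X':=T(X)$.

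For part~(a), note that $X'$ is compact and $T(X')\subseteq X'$, so $T':=T|_{X'}$ makes $(X',T')$ a t.d.s.\ (and $T$ is a factor map $(X,T)\to(X',T')$). The key step is the identity $P_n(X,T)=P_n(X',T')$ for all $n$; granting it, formula~\eqref{Eq:hstarT-IN} applied to both systems (whose intrinsic IN-tuples then coincide) yields $h^*(T)=h^*(T|_{T(X)})$. The inclusion ``$\supseteq$'' is immediate, since an independence set for a tuple $(V_1\cap X',\dots,V_n\cap X')$ of relatively open subsets of $X'$ for the map $T'$ is, the witnessing points lying in $X'$ where $(T')^i=T^i$, an independence set of the same type for $(V_1,\dots,V_n)$ for $(X,T)$ (cf.\ Remark~\ref{R:indep-subset}). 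For ``$\subseteq$'', given $\tilde x=(x_i)\in P_n(X,T)$ and a product neighborhood of $\tilde x$ in $(X')^n$, I would lift it to a product neighborhood $\prod_i V_i$ in $X^n$, take independence sets $J$ for $(V_1,\dots,V_n)$ for $(X,T)$ arranged so that $0\notin J$ (discarding $0$ shortens a finite set by at most one and preserves infiniteness and positive density, hence preserves ``arbitrarily long''), and check that $J-1=\{\,i-1:i\in J\,\}$ is an independence set of the same type for $(V_1\cap X',\dots,V_n\cap X')$ for $(X',T')$: a witness $y$ for $J$ gives the witness $Ty\in X'$ for $J-1$, because $(T')^{i-1}(Ty)=T^i y\in V_{s(i)}\cap X'$ (here all relevant indices $i$ are $\ge 1$, so $T^i y\in T(X)$). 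Since $J\mapsto J-1$ changes neither cardinality nor upper density, $J-1$ is indeed of the same type.

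For part~(b), injectivity plus compactness of $X$ makes $T$ a homeomorphism of $X$ onto $T(X)$, so $T^{-1}:T(X)\to X$ is continuous; since every coordinate $a_i$ of a tuple in $P_n(X,T)$ lies in $T(X)$ by the first paragraph, $a_i$ has a unique preimage $T^{-1}a_i$. To show $(T^{-1}a_i)_{i=1}^n\in P_n(X,T)$, I would take a product neighborhood $\prod_i V_i$ of $(T^{-1}a_i)_i$, use continuity of $T^{-1}$ at each $a_i$ to pick open $W_i\ni a_i$ with $\{\,x\in X:Tx\in W_i\,\}\subseteq V_i$, take independence sets $J$ for $(W_1,\dots,W_n)$ for $(X,T)$ provided by $(a_i)_i\in P_n(X,T)$ (again with $0\notin J$), and verify that $J-1$ is an independence set of the same type for $(V_1,\dots,V_n)$ for $(X,T)$: a witness $y$ for $J$ is also one for $J-1$, since $T(T^{i-1}y)=T^i y\in W_{s(i)}$ forces $T^{i-1}y\in V_{s(i)}$. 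This gives the asserted $(T^{-1})^{(n)}$-invariance of $P_n(X,T)$.

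The only genuinely delicate point in both parts is the index bookkeeping in this ``shift by one'': one must argue with \emph{arbitrarily long} independence sets rather than sets of a fixed length, so that discarding the index $0$ is harmless, and one must confirm that the shifted witness lands in the correct set, which works precisely because after removing $0$ all remaining indices are $\ge1$ and hence $T^i(\cdot)\in T(X)$. Apart from this, the argument is a routine unwinding of the definitions, requiring nothing beyond Proposition~\ref{P}, formula~\eqref{Eq:hstarT-IN}, and the inclusion $\Omega(T)\subseteq T(X)$.
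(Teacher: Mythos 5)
Your proposal is correct and follows essentially the same route as the paper: both parts hinge on the same ``shift by one'' of an independence set, legitimized by Proposition~\ref{P}(c) (the tuple lies in $T(X)$) and the harmless removal of the index $0$. The only cosmetic difference is in (b), where you shrink $W_i$ so that $T^{-1}(W_i)\subseteq V_i$ while the paper extends $T(U(T^{-1}a_i))$ from $T(X)$ to an open $W(a_i)\subseteq X$; these are the same continuity argument in dual guises.
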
	
	
	\begin{proof} (a). Clearly, every intrinsic IN-tuple of $G=T|_{T(X)}$ is also an
		IN-tuple for $T$.
		Conversely, let $( x_1, \dots, x_n )$
		be an IN-tuple for $T$. We show that it is also an IN-tuple for $G$. Due to
		Proposition~\ref{P}(c),
		all the points $x_i$ lie in $T(X)$. For every $i$, let $V_i$ be a neighbourhood
		of $x_i$ in the topology of $T(X)$,
		i.e. $V_i=U_i \cap T(X)$ for some neighbourhood $U_i$ of $x_i$ in the topology
		of $X$. Given $k$, we are going to show that for the neighbourhoods
		$V_1,\dots,V_n$ there is an independence
		set $J$ of times of length $k$ for the map $G$.
		Since $( x_1, \dots, x_n )$ is an IN-tuple for $T$,
		there is an independence set $J^*$ of times of length $k+1$ for the
		neighbourhoods $U_1, \dots, U_n$ and the map $T$.
		This means that for any choice of indices $s(i)\in \{1,\dots, n\}$, $i\in J^*$,
		there is a point
		$x\in X$ such that $T^i(x)\in U_{s(i)}$. Now let $J$ be the set of (nonnegative)
		times of length $k$ obtained from $J^*$
		by removing the smallest element of $J^*$ and subtracting $1$ from every other
		element.
		Since $T(x)\in T(X)$ and $T^{i-1}(T(x))\in U_{s(i)}\cap T(X) = V_{s(i)}$ for any
		$i\in J^*$ which is greater than zero, we see that
		$J$ has the required properties.
		
		(b) Again, since $\Omega(T)\subseteq T(X)$, Proposition~\ref{P}(c) gives that
		$a_i\in T(X)$, $i=1,\dots,n$. So, the tuple $( T^{-1}a_i)_{i=1}^n$
		is well defined.
		
		Consider any tuple of open neighbourhoods $( U(T^{-1}a_i))_{i=1}^n$
		(here $U(T^{-1}a_i)$ is a neighbourhood of $T^{-1}a_i$).
		Since $T: X\to T(X)$, being a continuous bijection between compact metric spaces,
		is a homeomorphism, for every $i$ we get that the set $V(a_i):=T(U(T^{-1}a_i))$
		is an open neighbourhood of $a_i$ in the topology of $T(X)$ and so $V(a_i)=
		W(a_i)\cap T(X)$ for some open neighbourhood $W(a_i)$ of $a_i$ in the topology
		of $X$.
		
		So, for any tuple of open neighbourhoods $( U(T^{-1}a_i))_{i=1}^n$
		there is a tuple of open neighbourhoods $( W(a_i) )_{i=1}^n$ such
		that $T(U(T^{-1}a_i)) \subseteq W(a_i)$ for every $i$. No matter whether $P$ is
		$IE$, $IN$ or $IT$, to finish the proof of (b) it is sufficient to prove the
		following implication (cf. Definition~\ref{D:IE-IT-IN}):
		\begin{equation}\label{Eq:indsetT-1}
		\begin{aligned}
		&\text{$\{t(1)<t(2)<t(3)<\dots\}$ is a (finite or infinite) independence set of
			times for $( W(a_i) )_{i=1}^n$ $\Longrightarrow$} \\
		&\qquad \text{$\Longrightarrow$ $\{t(2)-1,t(3)-1,\dots\}$ is an independence set
			of times for $( U(T^{-1}a_i) )_{i=1}^n$}
		\end{aligned}
		\end{equation}
		So, fix such a set $J_W=\{t(1)<t(2)<t(3)<\dots\}$ (i.e., $t(1)$ is perhaps zero
		but $t(i)\geq 1$ for $i\geq 2$).
		To prove that $J_U=\{t(2)-1,t(3)-1,\dots\}$ is an independence set of times for
		$(U(T^{-1}a_i))_{i=1}^n$, fix a nonempty finite subset
		$I_U\subseteq J_U$ and a function $s: I_U \to \{1,\dots,n\}$.
		We want to find a point $x_s\in X$ such that for every $t(i)-1\in I_U$ (note
		that then $i\geq 2$)
		we have $T^{t(i)-1}(x_s) \in U(T^{-1} a_{s(t(i)-1)})$. Since $I_W = \{t(i):
		t(i)-1\in I_U\}$ is a finite subset of $J_W$
		and $J_W$ is an independence set of times for $( W(a_i) )_{i=1}^n$,
		there
		is a point $z\in X$ such that $T^{t(i)}(z) \in W(a_{s(t(i)-1)})$ for every
		$t(i)\in I_W$. However, for $t(i)\in I_W$
		we have $i\geq 2$, whence $t_i\geq 1$ and so $T^{t(i)}(z) \in T(X)$. The last
		two inclusions give that
		$T^{t(i)}(z) \in W(a_{s(t(i)-1)}) \cap T(X) =  V(a_{s(t(i)-1)}) =
		T(U(T^{-1}a_{s(t(i)-1)}))$. Hence
		$T^{t(i)-1}(z) \in U(T^{-1} a_{s(t(i)-1)})$ and so the choice $x_s=z$ finishes
		the proof of the implication~(\ref{Eq:indsetT-1}).
	\end{proof}

	\begin{rem}\label{R:careful}
		In connection with Proposition~\ref{P2}(a), be careful. It is not already true
		that $h^A(T)= h^A(T|_{\Omega(T)})$ and
		$h^*(T)=h^*(T|_{\Omega(T)})$. See also Remark~\ref{R:arcwise}.
	\end{rem}

	\subsection{Retracts, chains in connected spaces, continua, Cook continua in the
		plane}\label{SS:Cook}
	Throughout the paper, if a space $Y$ is homeomorphic to a space $X$, we say that
	$Y$ is a (homeomorphic) copy of $X$.
	Given a space $X$ and its subspace $Y \subseteq X$, a continuous
	map $r : X \to Y$ is called a \emph{retraction} \index{retraction} if the restriction
	$r|_Y$ is the identity. Then $Y= r(X)$  is called a \emph{retract} \index{retract}
	of $X$. A compact metric space $Y$ is called an \emph{absolute
		retract} for the class of all compact metric spaces (in what
	follows we will sometimes shortly say ``absolute retract") if for any
	compact metric space $Z$, whenever (a copy of) $Y$ is a subspace of $Z$,
	(this copy of) $Y$ is a retract of $Z$.
	
	A classical result of Borsuk \cite[Corollary 13.5, p. 138]{Bor} says that each
	dendrite
	is an absolute retract and that there are no other one-dimensional compact
	metric spaces which are absolute retracts. In particular, any arc
	is an absolute retract (other examples of absolute retracts are the
	$n$-dimensional
	cubes, $n\geq 1$, and the Hilbert cube).
	
	Every connected space has the property that every two points can be joined by an
	$\varepsilon$-chain of points \index{$\varepsilon$-chain}, see e.g.~\cite[p.13]{W}. We will use the
	following modification of this fact.
	
	\begin{lem}\label{L:chains}
		Let $M$ be a nondegenerate connected metric space with a metric $\varrho$, $D$
		be
		a dense subset of $M$ and $a, b \in M$. Let $\varepsilon >0$. Then there is a
		positive integer $n$ such that $a$ and $b$ can be joined by an
		$\varepsilon$-chain of points of length $n+2$ lying (with possible exceptions of
		the points $a$ and $b$) in $D$, i.e. there is a finite chain $a = c_0, c_1, c_2,
		\dots, c_n, c_{n+1} = b$ with $c_1, c_2, \dots, c_n$ in $D\setminus \{a,b\}$ and
		$\varrho (c_i, c_{i+1}) < \varepsilon$ for every $i = 0, \dots , n$. The set of
		such $n$'s is in fact cofinite.
	\end{lem}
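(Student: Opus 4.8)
The plan is to start from the classical fact (already cited in the excerpt from \cite{W}) that in a connected metric space any two points are joined by a $\delta$-chain for every $\delta>0$, and then to push such a chain into $D$ using only the triangle inequality. First I would apply that fact with $\delta=\varepsilon/3$ to get a chain $a=z_0,z_1,\dots,z_k=b$ of points of $M$ with $\varrho(z_i,z_{i+1})<\varepsilon/3$ for all $i$. (For completeness one can reprove the fact by noting that ``being joined by a $\delta$-chain'' is an equivalence relation on $M$ whose classes are open, hence clopen, so by connectedness there is a single class.)

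Before perturbing, I would record two elementary observations that let the perturbed points avoid the two forbidden points $a$ and $b$. Since $M$ is metric (hence $T_1$), connected and nondegenerate, it is infinite (a finite $T_1$ space is discrete, hence disconnected if it has more than one point), and therefore every nonempty open $U\subseteq M$ is infinite (a finite $U$ would be clopen and nonempty, forcing $U=M$). Consequently $D\cap U$ is dense in the infinite metric space $U$, hence infinite, so $D\cap U\setminus\{a,b\}\neq\emptyset$ for every nonempty open $U$. Now, for each interior index $1\le i\le k-1$, I would pick $c_i\in D\cap B(z_i,\varepsilon/3)\setminus\{a,b\}$ and set $c_0=a$, $c_k=b$; the triangle inequality gives $\varrho(c_i,c_{i+1})<\varepsilon$ for all $i$, so $a=c_0,\dots,c_k=b$ is an admissible chain with value $n=k-1$. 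The degenerate possibility $k\le 1$ (which happens iff $\varrho(a,b)<\varepsilon/3$, and includes $a=b$) is handled separately: $B(a,\varepsilon/3)\cap B(b,\varepsilon/3)$ is a nonempty open set, so it contains some $c_1\in D\setminus\{a,b\}$, and $a,c_1,b$ is an admissible chain with $n=1$. Either way, an admissible chain exists for some positive integer $n_0$.

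For the final ``cofinite'' clause I would show that an admissible chain for $n$ produces one for $n+1$: given $a=c_0,c_1,\dots,c_n,c_{n+1}=b$ with $n\ge1$, choose $c_1'\in D\setminus\{a,b\}$ with $\varrho(c_1',c_1)<\varepsilon-\varrho(c_0,c_1)$ (possible since $\varrho(c_0,c_1)<\varepsilon$ and every small ball meets $D\setminus\{a,b\}$); then $\varrho(c_0,c_1')\le\varrho(c_0,c_1)+\varrho(c_1,c_1')<\varepsilon$ and $\varrho(c_1',c_1)<\varepsilon$, so inserting $c_1'$ between $c_0$ and $c_1$ yields an admissible chain for $n+1$. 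By induction every $n\ge n_0$ works, so the set of admissible $n$ is cofinite. I do not expect a genuine obstacle in this lemma; the only points that need a little care are the verification that $D$ is ``abundant'' enough in every nonempty open set to dodge the two excluded points $a,b$, and the bookkeeping in the lengthening step so that each newly created edge still has length $<\varepsilon$.
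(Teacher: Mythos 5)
Your proof is correct, but it takes a somewhat different route from the paper's. The paper argues directly by connectedness: it defines $M_a$ to be the set of points $x\in M$ joinable to $a$ by an admissible chain (with interior points in $D\setminus\{a,b\}$), shows $M_a$ is nonempty, open and closed, and concludes $M_a=M$, so $b\in M_a$. You instead invoke the classical $\delta$-chain statement as a black box (which of course is proved by the same clopen-class argument), extract an $\varepsilon/3$-chain $z_0,\dots,z_k$ in $M$, and then perturb the interior $z_i$ into $D\setminus\{a,b\}$ via the triangle inequality. Both arguments are sound; yours has a little extra bookkeeping (the $\varepsilon/3$ budget and the degenerate case $k\le1$) that the paper avoids by building the ``lands in $D\setminus\{a,b\}$'' constraint into the clopen class from the start. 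For the cofiniteness clause the paper uses the cheapest possible trick — repeat the point $c_n$, since the lemma does not require the interior points to be distinct — while you perturb to insert a genuinely new point $c_1'$; your version is more work but has the side benefit of producing chains with pairwise distinct interior points, which the paper only remarks on parenthetically. Your preliminary observations (that $M$ is infinite, every nonempty open set is infinite, hence $D\cap U\setminus\{a,b\}\neq\emptyset$) are the right justification for why one can always dodge $a$ and $b$, and they also quietly patch a minor imprecision in the paper, which only writes $D\setminus\{a\}$ when establishing $a\in M_a$.
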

	
	\begin{proof}
		Let $M_a$ be the set of all points $x\in M$ such that $a$
		can be joined with $x$ by such a chain (with $n$ depending on $x$).
		Since $D$ is dense, the set $M_a$ is nonempty (it contains $a$ because there is
		an $\varepsilon$-chain $a,c_1, a$ with $c_1\in D\setminus\{a\}$) and it is
		easily seen that $M_a$ is both open and closed. Hence $M_a = M$ and so $b\in
		M_a$. By repeating the point $c_n$ one can see that the lengths of
		$\varepsilon$-chains joining $a$ and $b$ form a cofinite set (by small
		perturbations, one can even construct such chains consisting of pairwise
		different points, with possible exception of the endpoints $a$ and $b$, provided
		they coincide).
	\end{proof}

	Recall that, throughout the paper, a \emph{continuum} \index{continuum} is a nonempty compact
	connected
	metric space. Thus a singleton is a continuum. However, we will be interested in
	nondegenerate continua. A nondegenerate continuum has cardinality $\mathfrak c$ (in fact,
	every nonempty perfect Polish space has cardinality $\mathfrak c$).

	The following is Boundary Bumping Theorem \index{Boundary Bumping Theorem}, see e.g.~\cite[Theorem 5.4]{N}.
	
	\begin{thm}\label{T:BBT}
		Let $X$ be a continuum, and let $U$ be a nonempty, proper, open subset of $X$.
		If $K$ is a component of $\overline U$, then $K$ intersects the boundary of $U$
		(equivalently, since $K\subseteq \overline U$ and $U$ is open, $K$ intersects
		$X\setminus U$).
	\end{thm}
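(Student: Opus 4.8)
The plan is to argue by contradiction. Suppose $K$ is a component of $\overline{U}$ with $K\cap(X\setminus U)=\emptyset$. Since $U$ is open, the boundary of $U$ equals $\overline{U}\setminus U = \overline{U}\cap(X\setminus U)$, so (as $K\subseteq\overline U$) the assumption says precisely that $K\subseteq U$ and $K$ misses the boundary of $U$. The goal is to manufacture from $K$ a nonempty proper clopen subset of $X$, contradicting connectedness of $X$.

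First I would invoke the standard fact that in a compact metric space components and quasi-components coincide; more precisely, if $Y$ is compact Hausdorff, $K$ is a component of $Y$, and $A\subseteq Y$ is closed with $K\cap A=\emptyset$, then there is a set $C$ that is clopen in $Y$ with $K\subseteq C$ and $C\cap A=\emptyset$. (This is itself proved by a compactness argument: using normality separate the two would-be pieces of the quasi-component of a point $x\in K$ by disjoint open sets, cover the compact complement of their union by complements of clopen sets containing $x$, intersect finitely many of these, and split the resulting clopen set along the two open sets; alternatively one simply cites this standard lemma.) I apply it with $Y=\overline{U}$, which is compact as a closed subset of the compact space $X$, and with $A=\overline{U}\setminus U$, which is closed in $X$ and, by the assumption, disjoint from $K$. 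This yields a set $C$ clopen in $\overline{U}$ with $K\subseteq C$ and $C\cap(\overline{U}\setminus U)=\emptyset$, i.e. $K\subseteq C\subseteq U$.

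The decisive step is then to upgrade $C$ to a clopen subset of $X$. Since $C$ is closed in $\overline{U}$ and $\overline{U}$ is closed in $X$, the set $C$ is closed in $X$. Since $C$ is open in $\overline{U}$, we may write $C=W\cap\overline{U}$ for some open $W\subseteq X$; because $C\subseteq U\subseteq\overline{U}$ we get $C=W\cap\overline{U}\cap U=W\cap U$, which is open in $X$. Hence $C$ is a clopen subset of $X$, and it is nonempty, since $U\neq\emptyset$ forces $\overline{U}\neq\emptyset$, hence $K\neq\emptyset$, hence $C\neq\emptyset$. As $X$ is connected, $C=X$, so $X=C\subseteq U$, contradicting the hypothesis that $U$ is a proper subset of $X$. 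Therefore $K$ must meet $X\setminus U$, equivalently the boundary of $U$.

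I expect the only genuine obstacle to be the component/quasi-component lemma for compact spaces; once that is on the table, the remainder is the bookkeeping above, whose one subtle point is that the passage from ``clopen in $\overline{U}$'' to ``clopen in $X$'' really uses that $C$ happens to lie inside the \emph{open} set $U$ — in general a relatively clopen piece of $\overline{U}$ need not be open in $X$, and this is exactly where the openness of $U$ enters.
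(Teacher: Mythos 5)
The paper does not prove this theorem; it simply cites Nadler's textbook (Theorem 5.4 of \cite{N}), so there is no in-paper argument to compare against. Your self-contained proof is correct and is essentially the standard one: the heavy lifting is the Šura--Bura-type lemma (in a compact Hausdorff space, a component equals the corresponding quasi-component, so a component can be separated from any disjoint closed set by a relatively clopen set), applied to $Y=\overline U$ and $A=\overline U\setminus U$, followed by the observation that a clopen-in-$\overline U$ set lying inside the open set $U$ is clopen in $X$ — and you correctly flag that this last promotion genuinely uses the openness of $U$, since a relatively clopen subset of $\overline U$ touching the boundary would not be open in $X$. The small finishing details (closed-in-closed-is-closed; $C=W\cap\overline U=W\cap U$ once $C\subseteq U$; nonemptiness of $C$ from nonemptiness of $K$; connectedness of $X$ forcing $C=X$, contradicting $C\subseteq U\subsetneq X$) are all in order.
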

	
	From this we get the following fact, see e.g.~\cite[Corollary 5.5]{N}.

	\begin{lem}\label{L:small continua}
		If $X$ is a nondegenerate metric continuum then for every open set $\emptyset
		\neq U\subseteq X$ there is a nondegenerate continuum $K\subseteq U$.
	\end{lem}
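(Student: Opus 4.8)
The plan is to obtain this as a direct corollary of the Boundary Bumping Theorem (Theorem~\ref{T:BBT}). First I would fix a point $x\in U$ and choose a nonempty open set $V$ with $x\in V\subseteq\overline V\subseteq U$ which is moreover a \emph{proper} subset of $X$, i.e. $\overline V\neq X$. This is possible using only that $X$ is a nondegenerate metric space: if $U\neq X$, then since $U$ is open there is $r>0$ with the ball $B(x,r)\subseteq U$, and $V=B(x,r/2)$ has $\overline V\subseteq B(x,r)\subseteq U\subsetneq X$; if $U=X$, pick any $y\neq x$ and set $V=B(x,\tfrac13\varrho(x,y))$, so that $y\notin\overline V$ and hence $\overline V\neq X$. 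In either case $V$ is a nonempty proper open subset of $X$ with $\overline V\subseteq U$.

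Next I would let $K$ be the component of $\overline V$ that contains $x$. Being a component of the compact metric space $\overline V$, the set $K$ is closed in $\overline V$, hence compact, and it is connected by definition; also $K\subseteq\overline V\subseteq U$. It remains to check that $K$ is nondegenerate. Applying Theorem~\ref{T:BBT} with the continuum $X$ and the nonempty proper open set $V$, the component $K$ of $\overline V$ must intersect $X\setminus V$. Since in addition $x\in K\cap V$, the continuum $K$ contains the point $x\in V$ together with some point of $X\setminus V$, so it has at least two points. Thus $K$ is a nondegenerate continuum contained in $U$, as required.

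Since the statement is essentially a one-line consequence of Theorem~\ref{T:BBT}, there is no real obstacle here; the only point requiring (minor) care is arranging the auxiliary open set $V$ to be a proper subset of $X$ so that the Boundary Bumping Theorem is applicable, and this is precisely where the hypotheses that $X$ is nondegenerate and metrizable (hence regular) are used.
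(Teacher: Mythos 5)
Your proof is correct, and it follows essentially the same route as the paper, which derives this lemma directly from the Boundary Bumping Theorem (the paper simply cites it as Nadler's Corollary 5.5 without spelling out the argument). Your careful arrangement of the auxiliary open set $V$ so that it is both proper and has closure inside $U$ is exactly the detail one needs to make the application of Theorem~\ref{T:BBT} legitimate.
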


	We say that two continua are \emph{comparable by continuous maps} \index{comparable by continuous maps} if one of them
	can be continuously mapped onto the other. Otherwise, they are
	\emph{incomparable by continuous maps}.\index{incomparable by continuous maps}
	
	\begin{lem}\label{L:Cook-properties}
		\begin{enumerate}
			\item A nondegenerate metric continuum is a Cook continuum if and only if it has
			the property that no two different nondegenerate subcontinua of it are
			comparable by continuous maps.
			\item Every nondegenerate subcontinuum of a Cook continuum is a Cook continuum.
			\item If two Cook continua $\mathscr C_1$ and $\mathscr C_2$ are homeomorphic,
			then they are \emph{uniquely homeomorphic}, i.e. there exists a unique
			homeomorphism $\varphi \colon \mathscr C_1 \to \mathscr C_2$. Moreover,
			if $\sigma \colon \mathscr C_1\to \mathscr C_2$ is a continuous surjection then $\sigma = \varphi$.
		\end{enumerate}
	\end{lem}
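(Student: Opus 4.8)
The plan is to deduce everything directly from the definition of a Cook continuum, treating (2), (3) and one direction of (1) as short formal consequences and concentrating the real work on the other direction of (1).

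\emph{The easy parts.} For (2): given a nondegenerate subcontinuum $K$ of a Cook continuum $\mathscr C$, a subcontinuum $L\subseteq K$ and a continuous $g\colon L\to K$, I would compose with the inclusion $K\hookrightarrow\mathscr C$ and view $g$ as a map $L\to\mathscr C$, which by hypothesis is constant or the identity; hence $K$ is a Cook continuum. For (3): if $\varphi,\psi\colon\mathscr C_1\to\mathscr C_2$ are homeomorphisms, then $\psi^{-1}\circ\varphi$ is a continuous selfmap of the nondegenerate continuum $\mathscr C_1$, hence constant or the identity; being a bijection it is not constant, so $\varphi=\psi$. And if $\sigma\colon\mathscr C_1\to\mathscr C_2$ is only assumed to be a continuous surjection, then $\varphi^{-1}\circ\sigma$ is a continuous surjection of $\mathscr C_1$ onto itself, hence non-constant, hence the identity, so $\sigma=\varphi$. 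For the ``only if'' direction of (1): if $\mathscr C$ is Cook and $g\colon K_1\to K_2$ is a continuous surjection between distinct nondegenerate subcontinua, then $g$ regarded as a selfmap of $\mathscr C$ (restricted to $K_1$) is non-constant since $K_2$ is nondegenerate, hence equals the identity on $K_1$, forcing $K_2=g(K_1)=K_1$, a contradiction.

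\emph{The substantive direction of (1).} Assume $\mathscr C$ has property (P): no two distinct nondegenerate subcontinua of it are comparable by continuous maps; I want to show $\mathscr C$ is Cook, i.e.\ that every non-constant continuous $f\colon K\to\mathscr C$ with $K$ a subcontinuum is the identity. First, $K$ is nondegenerate, $f(K)$ is a nondegenerate subcontinuum, and $f$ maps $K$ onto $f(K)$, so (P) forces $f(K)=K$; applying this to every nondegenerate subcontinuum $L\subseteq K$ gives the dichotomy: \emph{either $f|_L$ is constant or $f(L)=L$}. So I am reduced to showing that a continuous surjection $f\colon K\to K$ of a nondegenerate subcontinuum onto itself must be the identity. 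Suppose $f(x_0)\neq x_0$. Using continuity I choose an open ball $B\ni x_0$ in $K$ with $f(B)\cap B=\emptyset$ (whence $f(\overline B)=\overline{f(B)}\subseteq K\setminus B$), and, using the Boundary Bumping Theorem (Theorem~\ref{T:BBT}) as in the proof of Lemma~\ref{L:small continua}, I note that every point of $B$ lies in a nondegenerate subcontinuum contained in $B$. Any such subcontinuum $L$ satisfies $f(L)\subseteq f(B)$, which is disjoint from $L$, so $f(L)\neq L$ and hence, by the dichotomy, $f|_L$ is constant. Therefore each connected component $C'$ of $B$ is nondegenerate and $f$ is constant on $\overline{C'}$; in particular, with $C_0$ the component of $x_0$ and $N=\overline{C_0}$, the set $N$ is a nondegenerate subcontinuum with $f(N)=\{f(x_0)\}$ and $N\neq K$ (else $f$ would be constant, contradicting surjectivity onto the nondegenerate $K$). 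The plan is then to examine the components of $f^{-1}(N)$ (equivalently of $f^{-1}(f(x_0))$), invoking the Boundary Bumping Theorem once more together with the fact that a nondegenerate continuum is perfect, so as to contradict the surjectivity of $f$.

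\emph{Main obstacle.} The reductions above are routine; the crux is this very last step — proving that a continuous surjection of a nondegenerate subcontinuum onto itself has no non-fixed point. The collapsing phenomenon is easy to extract, but turning ``$f$ is constant on the closure of every component of $B$'' into a genuine contradiction with surjectivity is delicate precisely because $\mathscr C$ need not be locally connected (components of open sets need not be open, and $B$ need not be ``thick'' anywhere); making this airtight by a careful boundary-bumping argument inside the continuum — or by invoking the pertinent rigidity results from continuum theory (e.g.\ \cite{Cook,PT}) — is where the real content lies.
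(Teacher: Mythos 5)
Your treatment of (2), (3), and the ``only if'' direction of (1) is correct and matches the paper's argument essentially verbatim. The gap is in the substantive direction of (1), which you leave incomplete and say so explicitly in your final paragraph, so the proposal is not a finished proof.

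Concerning the point at which you stop: the reduction to ``a continuous surjection $f\colon K\to K$ of a nondegenerate subcontinuum onto itself is the identity'' and the observation that $f$ collapses every nondegenerate subcontinuum of a ball $B\ni x_0$ with $f(B)\cap B=\emptyset$ are both correct. But your tentative plan — showing $f$ is constant on $\overline{C'}$ for each component $C'$ of $B$, then examining the components of $f^{-1}(N)$ — runs head-on into exactly the local-connectedness issue you flag: components of the open set $B$ need not be open, need not be nondegenerate, and even if you grant that, constancy on their closures does not obviously contradict surjectivity of $f$ onto the whole of $K$.

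The paper avoids this entirely by a sharper choice of the open set on which to apply Boundary Bumping. Instead of a ball around $x_0$, fix a non-fixed point $b$ and a preimage $a$ of $b$ with $a\neq b$. Using Lemma~\ref{L:small continua} and continuity, find a nondegenerate continuum $A\ni a$ with $b\notin A$ and $b\in g(A)$, $a\notin g(A)$; by property (P), $g(A)$ is degenerate, so $g(A)=\{b\}$. Now take an open $U\supseteq g^{-1}(b)$ with $b\notin\overline U$, and let $K^*$ be the component of $\overline U$ containing $A$. Theorem~\ref{T:BBT} guarantees $K^*$ hits a point $c\in K\setminus U$, so $K^*$ is automatically nondegenerate, $g(K^*)\ni b$ and $g(K^*)\ni g(c)\neq b$ is nondegenerate, and $g(K^*)\neq K^*$ because $b\in g(K^*)\setminus K^*$ — contradicting (P). The crucial move, which your outline lacks, is applying BBT to the closure of a neighbourhood of the preimage set rather than to the ball $B$ itself: this produces a single nondegenerate component for free, with no need to control the local structure of $K$ near $x_0$. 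You should replace your $B$-based plan with this argument; otherwise the worry in your last paragraph is unresolved.
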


	\begin{proof}
		(1) If $\mathscr C$ is a Cook continuum then it clearly has that property.
		Conversely, let a nondegenerate metric continuum $C$ have that property. Let $K$
		be a subcontinuum of $C$ and $f: K\to C$ be continuous such that the continuum
		$f(K)$ is not a singleton. By the property, $f(K)=K$. To prove that $C$ is a
		Cook continuum, we show that $g:=f|_K$ is the identity. From now on we work in
		the topology of $K$. Suppose, on the contrary, that a point $b\in K$ is not
		fixed for $g$. Choose a $g$-preimage $a$ of $b$. So, $a\neq b$ and $g(a)=b\neq
		g(b)$. By continuity and Lemma~\ref{L:small continua}, there is a continuum
		$A\subseteq K$ such that $A$ contains $a$ but not $b$ and $g(A)$ contains~$b$
		but not~$a$. Due to the property, the continuum $g(A)$ has to be degenerate, so
		$g(A)=\{b\}$. The closed set $g^{-1}(b)$ contains $A$ and is disjoint from
		$\{b\}$. Let $U$ be an open neighbourhood (recall that in the topology of $K$)
		of $g^{-1}(b)$ such that $b \notin \overline U$. Let $K^*$ be that component of
		$\overline U$ which contains the continuum $A$. By Theorem~\ref{T:BBT}, $K^*$
		contains a point $c\in K\setminus U$. Then the continuum $f(K^*)=g(K^*)$
		contains both $b$ and $g(c)\neq b$ and so is both different from $K^*$ and
		nondegenerate. This contradicts the property.
		
		(2) A subcontinuum of a continuum is a continuum. So, (2) follows from the
		definition or from (1).
		
		(3) If also $\psi: \mathscr C_1 \to \mathscr C_2$ is a homeomorphism, then
		$\psi^{-1} \circ \varphi$ is a homeomorphism of $\mathscr C_1$ onto $\mathscr
		C_1$ and since $\mathscr C_1$ is a Cook continuum, it is the identity, whence
		$\varphi = \psi$. If $\sigma \colon \mathscr C_1\to \mathscr C_2$ is a continuous surjection
		then $\varphi^{-1}\circ \sigma$ is a continuous surjection $\mathscr C_1 \to \mathscr C_1$ and, since
		$\mathscr C_1$ is a Cook continuum, it is the identity. Hence $\sigma = \varphi$.
	\end{proof}
	
Cook continua are known to be at most two-dimensional, hereditarily indecomposable Cook
continua are always one-dimensional, see e.g. the very end of~\cite{Krz} and references therein, cf.~\cite{Krz2}.
Though the planar Cook continuum constructed by Ma\'ckowiak~\cite{M} is hereditarily decomposable,
it is still one-dimensional. This is even because of the following trivial reason.

\begin{lem}\label{L:1-dim}
	Every Cook continuum $\mathscr C$ in $\mathbb R^2$
	(even every rigid space $\mathscr C \subseteq \mathbb R^2$) is one-dimensional.
\end{lem}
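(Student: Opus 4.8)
The plan is to argue that a planar rigid space cannot be two-dimensional, by invoking the classical fact that every two-dimensional subset of $\mathbb R^2$ has nonempty interior. First I would recall the following basic dimension-theoretic facts about subsets of the plane: a subspace $\mathscr C \subseteq \mathbb R^2$ is always at most two-dimensional (monotonicity of covering dimension under subspaces, $\dim \mathbb R^2 = 2$), and if $\dim \mathscr C = 2$ then $\mathscr C$ must contain an open (in $\mathbb R^2$) subset — equivalently, $\Int_{\mathbb R^2} \mathscr C \neq \emptyset$. This last statement is standard: a zero-dimensional-boundary argument, or more directly the theorem that a subset of $\mathbb R^n$ with empty interior has dimension at most $n-1$ (see e.g. Engelking or Hurewicz--Wallman). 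So it suffices to show that a nondegenerate rigid space $\mathscr C \subseteq \mathbb R^2$ has empty interior in $\mathbb R^2$.

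Suppose not: let $U$ be a nonempty open subset of $\mathbb R^2$ with $U \subseteq \mathscr C$. Pick a small closed round disc $\overline{B}$ with $\overline{B} \subseteq U \subseteq \mathscr C$. A closed disc admits plenty of continuous self-maps that are neither constant nor the identity — for instance, a nontrivial rotation about its center, or a radial contraction toward an interior point (which fixes that point, is not the identity, and whose image is a smaller concentric disc, hence not all of $\overline B$, so it is not a ``branch'' of the identity either). Composing such a map of $\overline B$ with the inclusion $\overline B \hookrightarrow \mathscr C$ produces a continuous map $f \colon \overline B \to \mathscr C$ that is neither constant nor equal to the inclusion. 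But $\overline B$ is a subcontinuum of $\mathscr C$ (it is compact, connected, and contained in $\mathscr C$), so if $\mathscr C$ is a Cook continuum this directly contradicts Definition~\ref{D:Cook}; and if $\mathscr C$ is merely rigid, we use the characterization in Lemma~\ref{L:Cook-properties}(1) — no two distinct nondegenerate subcontinua of $\mathscr C$ are comparable by continuous maps — applied to $\overline B$ and the radial image $f(\overline B)$, a smaller concentric disc, which is a distinct nondegenerate subcontinuum that $\overline B$ maps continuously onto. Either way we reach a contradiction, so $\Int_{\mathbb R^2} \mathscr C = \emptyset$, hence $\dim \mathscr C \leq 1$; and since $\mathscr C$ is nondegenerate it is not zero-dimensional (a zero-dimensional space is totally disconnected, but a nondegenerate continuum is connected), so $\dim \mathscr C = 1$.

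The only mild subtlety — and the step I would be most careful about — is the passage from ``$\mathscr C$ is a rigid space, not necessarily a continuum'' to applying the comparability characterization, since Lemma~\ref{L:Cook-properties}(1) is literally stated for continua; for the genuinely rigid (non-continuum) case one should instead argue directly from the definition of rigidity on the subspace $\overline B \cup f(\overline B) \subseteq \mathscr C$, or simply note that for the purposes of this lemma we only ever apply it to the Cook continuum of Ma\'ckowiak, so the Cook case of the argument suffices. I expect no real obstacle here: the heart of the matter is the elementary dimension fact ``a planar set of dimension $2$ has interior points,'' together with the trivial observation that a disc is not rigid.
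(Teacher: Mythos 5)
Your argument shares the paper's key dimension-theoretic ingredient: the reference to~\cite[Theorem IV.3]{HW} in the paper's proof is exactly the statement ``a subset of $\mathbb{R}^2$ has dimension $2$ if and only if it has nonempty interior in $\mathbb{R}^2$'', which is the fact you rederive and then use. For the \emph{Cook continuum} case, your rotation-of-a-disk argument is correct and works cleanly: a closed disk $\overline B \subseteq \mathscr C$ is a nondegenerate subcontinuum, and a nontrivial rotation gives a continuous map $\overline B \to \mathscr C$ that is neither constant nor the inclusion, directly contradicting Definition~\ref{D:Cook}. The zero-dimensional part for this case (a nondegenerate continuum is connected, hence not zero-dimensional) is also fine.

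The genuine gap is in the general \emph{rigid-space} case, which is what the paper actually proves (and needs, since the parenthetical claim is the stronger one). You invoke Lemma~\ref{L:Cook-properties}(1), the incomparability of subcontinua, but that lemma \emph{characterizes Cook continua}, not rigid spaces; rigidity (all self-maps $\mathscr C \to \mathscr C$ are constant or the identity) does not imply that no two nondegenerate subcontinua of $\mathscr C$ are comparable by continuous maps, so the appeal to that lemma is not available here. Your proposed patch --- ``argue directly from the definition of rigidity on the subspace $\overline B \cup f(\overline B)$'' --- also fails, since rigidity is a property of self-maps of $\mathscr C$ and says nothing about self-maps of a proper subspace. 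The correct repair, and the step the paper actually takes, is to use the absolute-retract property of the $2$-cube (or disk): since $\mathscr C$ contains a $2$-cube $Q$ as a proper closed subset, there is a retraction $r\colon \mathscr C \to Q$, which is a continuous self-map of $\mathscr C$ that is neither constant (its range is $Q$) nor the identity ($Q \subsetneq \mathscr C$); this contradicts rigidity directly, with no need for the rotation or for any property of subcontinua. Falling back to ``the Cook case suffices for the paper's applications'' proves a weaker statement than the lemma asserts, so it would not serve as a proof of Lemma~\ref{L:1-dim} as stated.
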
	

\begin{proof}
	Let $\mathscr C \subseteq \mathbb R^2$ be rigid. Then $\mathscr C$ is not zero-dimensional,
	since a disconnected space obviously admits a continuous selfmap whose range consists of two points.	
	So, $\mathscr C$ is at least one-dimensional. However, it is not two-dimensional,
	otherwise, see e.g.~\cite[Theorem IV.3]{HW}, it properly contains
	a $2$-dimensional cube and since the cube is an absolute retract for the class of
	all compact metric spaces, there is a continuous retraction of $\mathscr C$ onto that
	cube, a contradiction with rigidity of $\mathscr C$.
\end{proof}

	\begin{lem}\label{L:Cook-family}
		Let $\mathscr Q$ be a planar Cook continuum.
		Then there exist planar Cook continua
		\begin{equation}\label{Eq:Cook-family}
		\mathscr{K}_0, \mathscr{K}_1, \mathscr{K}_2, \dots ,
		\end{equation}
		in fact pairwise disjoint subcontinua of $\mathscr Q$,
		with the following properties for every $i,j =0,1, \dots$.
		\begin{itemize}
			\item [(a)] Whenever $K\subseteq \mathscr{K}_i$ is a  continuum and $g: K\to
			\mathscr{K}_i$ is continuous, then $g$ is constant or identity.
			\item [(b)] Whenever $K\subseteq \mathscr{K}_i$ is a  continuum and $f: K\to
			\mathscr{K}_j$, $i\neq j$, is continuous, then $f$ is constant (in particular,
			$\mathscr K_i$ and $\mathscr K_j$ are incomparable by continuous maps and hence
			non-homeomorphic).
		\end{itemize}
	\end{lem}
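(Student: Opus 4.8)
The plan is to observe that, once one has the deep fact that planar Cook continua exist at all (Ma\'ckowiak's construction, quoted above), the statement is a soft consequence of Lemma~\ref{L:small continua} together with Lemma~\ref{L:Cook-properties}. Concretely, the only thing that really has to be produced is a sequence of pairwise disjoint nondegenerate subcontinua of $\mathscr Q$; the incomparability assertions in (b) will then come essentially for free from the defining property of a Cook continuum.

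So first I would produce the family. Fix a metric $d$ on $\mathscr Q$. Since $\mathscr Q$ is a nondegenerate continuum it is infinite (it has cardinality $\mathfrak c$), so it contains a sequence of pairwise distinct points; by compactness, after passing to a subsequence we may assume these points form a sequence $p_1,p_2,\dots$ converging to some $p$ with $p_n\neq p$ for all $n$, and after thinning once more we may assume $d(p_{n+1},p)<\tfrac12 d(p_n,p)$. Then the sets $B_n=\{x\in\mathscr Q:\ d(x,p_n)<\tfrac14 d(p_n,p)\}$ are pairwise disjoint nonempty open subsets of $\mathscr Q$, and by Lemma~\ref{L:small continua} each $B_n$ contains a nondegenerate subcontinuum of $\mathscr Q$; relabel these continua as $\mathscr K_0,\mathscr K_1,\mathscr K_2,\dots$. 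By construction the $\mathscr K_i$ are pairwise disjoint, and each is planar because $\mathscr Q\subseteq\mathbb R^2$.

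It remains to check (a) and (b). For (a): each $\mathscr K_i$ is a nondegenerate subcontinuum of the Cook continuum $\mathscr Q$, hence is itself a Cook continuum by Lemma~\ref{L:Cook-properties}(2), and (a) is precisely Definition~\ref{D:Cook} applied to $\mathscr K_i$. For (b): let $K\subseteq\mathscr K_i$ be a continuum and $f\colon K\to\mathscr K_j$ continuous with $i\neq j$. Composing with the inclusion $\mathscr K_j\hookrightarrow\mathscr Q$ and regarding $K$ as a subcontinuum of $\mathscr Q$, Definition~\ref{D:Cook} forces $f$ to be constant or the identity on $K$; the identity is impossible, since it would give $\emptyset\neq K=f(K)\subseteq\mathscr K_i\cap\mathscr K_j$, contradicting disjointness. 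Hence $f$ is constant. The parenthetical consequences follow immediately: a continuous surjection $\mathscr K_i\to\mathscr K_j$ would be constant by the case $K=\mathscr K_i$ of (b), which is absurd because $\mathscr K_j$ is nondegenerate; so $\mathscr K_i$ and $\mathscr K_j$ are incomparable by continuous maps, and in particular non-homeomorphic.

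I do not anticipate a genuine obstacle: the substantive ingredient is the existence of the planar Cook continuum $\mathscr Q$, which is cited, and everything else is bookkeeping around Lemma~\ref{L:small continua} and Lemma~\ref{L:Cook-properties}. If one prefers to avoid the explicit ball estimate in the second paragraph, one may simply invoke the elementary fact that every infinite compact metric space admits infinitely many pairwise disjoint nonempty open subsets, and then apply Lemma~\ref{L:small continua} inside each of them.
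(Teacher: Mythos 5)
Your proof is correct and takes essentially the same route as the paper: produce infinitely many pairwise disjoint nonempty open subsets of $\mathscr Q$, use Lemma~\ref{L:small continua} to extract a nondegenerate subcontinuum from each, get (a) from Lemma~\ref{L:Cook-properties}(2), and get (b) by regarding $f$ as a map $K\to\mathscr Q$ and noting that the identity case is excluded by disjointness. The only difference is that you make explicit (via the converging sequence and the $\tfrac14 d(p_n,p)$-balls) the standard fact that an infinite compact metric space has infinitely many pairwise disjoint nonempty open sets, which the paper simply asserts; your closing remark already acknowledges this equivalence.
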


	\begin{proof}
		Since there is an infinite family of  pairwise disjoint nonempty open sets in
		$\mathscr Q$, Lemma~\ref{L:small continua} shows that in $\mathscr Q$
		there are pairwise disjoint nondegenerate subcontinua $\mathscr{K}_0,
		\mathscr{K}_1, \mathscr{K}_2, \dots$. By Lemma~\ref{L:Cook-properties}(2) they
		are Cook continua, hence (a).
		To prove (b), fix nonnegative integers $i\neq j$, a nondegenerate subcontinuum
		$K\subseteq \mathscr{K}_i$ (if $K$ is a singleton, the claim is trivial) and a
		continuous map  $f: K\to \mathscr{K}_j$. Then $f$ can be viewed as
		a continuous map $K\to \mathscr Q$ and so $f$ is either constant or identity.
		However, the latter possibility is excluded, because $\mathscr{K}_i$ and
		$\mathscr{K}_j$ are disjoint.
	\end{proof}

	The following fact is now obvious, but it will be needed below and therefore we
	state it explicitly.
	
	\begin{cor}\label{C:Cook-family}
		Replace the continua $\mathscr{K}_0, \mathscr{K}_1, \mathscr{K}_2, \dots$
		in~(\ref{Eq:Cook-family})
		by homeomorphic copies $\widetilde{\mathscr{K}}_0, \widetilde{\mathscr{K}}_1,
		\widetilde{\mathscr{K}}_2, \dots$
		of them, respectively. Then these new continua are still Cook continua
		(embeddable into the plane) and
		they still have the following properties.
		\begin{itemize}
			\item [($\widetilde{a}$)] Whenever $\widetilde{K}\subseteq
			\widetilde{\mathscr{K}}_i$ is a  continuum and
			$\widetilde{g}: \widetilde{K}\to \widetilde{\mathscr{K}}_i$ is continuous, then
			$\widetilde{g}$ is constant or identity.
			\item [($\widehat{a}$)]
			Let $\widehat {\mathscr K}_i$ be a copy of $\mathscr K_i$, possibly different
			from $\widetilde{\mathscr{K}}_i$. Whenever $\widetilde{K}\subseteq
			\widetilde{\mathscr{K}}_i$ is a  continuum and
			$\widehat{g}: \widetilde{K}\to \widehat{\mathscr{K}}_i$ is continuous, then
			$\widehat{g}$ is either constant or a homeomorphism $\widetilde{K}\to
			\widehat{g}(\widetilde{K})$.
			\item [($\widetilde{b}$)] Whenever $\widetilde{K}\subseteq
			\widetilde{\mathscr{K}}_i$ is a  continuum and
			$\widetilde{f}: \widetilde{K}\to \widetilde{\mathscr{K}}_j$, $i\neq j$, is
			continuous, then $\widetilde{f}$ is constant (in particular,
			$\widetilde{\mathscr{K}}_i$ and $\widetilde{\mathscr{K}}_j$ are incomparable by
			continuous maps and hence non-homeomorphic).
		\end{itemize}
	\end{cor}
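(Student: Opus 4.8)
The plan is to reduce Corollary~\ref{C:Cook-family} to Lemma~\ref{L:Cook-family} by transporting everything through the homeomorphisms $\Psi_i\colon \mathscr K_i \to \widetilde{\mathscr K}_i$ used to define the copies. First I would note the preservation of topological properties: being a Cook continuum and being planar are topological invariants, so each $\widetilde{\mathscr K}_i$ is again a planar Cook continuum; by Lemma~\ref{L:Cook-properties}(2) the same holds for nondegenerate subcontinua. This takes care of the preamble of the statement and reduces us to verifying $(\widetilde a)$, $(\widehat a)$ and $(\widetilde b)$.

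For $(\widetilde a)$, given a continuum $\widetilde K \subseteq \widetilde{\mathscr K}_i$ and a continuous map $\widetilde g\colon \widetilde K \to \widetilde{\mathscr K}_i$, I would set $K = \Psi_i^{-1}(\widetilde K) \subseteq \mathscr K_i$, which is a continuum, and consider $g = \Psi_i^{-1}\circ \widetilde g\circ \Psi_i|_K \colon K\to \mathscr K_i$. By Lemma~\ref{L:Cook-family}(a), $g$ is constant or the identity; conjugating back, $\widetilde g = \Psi_i \circ g\circ \Psi_i^{-1}$ is correspondingly constant or the identity on $\widetilde K$. For $(\widehat a)$, let $\Theta_i\colon \mathscr K_i\to \widehat{\mathscr K}_i$ be the (unique, by Lemma~\ref{L:Cook-properties}(3)) homeomorphism onto the other copy, and given $\widehat g\colon \widetilde K\to \widehat{\mathscr K}_i$ consider $g = \Theta_i^{-1}\circ \widehat g\circ \Psi_i|_{\Psi_i^{-1}(\widetilde K)}\colon \Psi_i^{-1}(\widetilde K)\to \mathscr K_i$: by Lemma~\ref{L:Cook-family}(a) it is constant or the identity, so $\widehat g = \Theta_i\circ g\circ \Psi_i^{-1}$ is either constant or equal to $\Theta_i\circ \Psi_i^{-1}$ restricted to $\widetilde K$, hence a homeomorphism onto its image. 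For $(\widetilde b)$, with $i\neq j$ and $\widetilde f\colon \widetilde K\to \widetilde{\mathscr K}_j$, set $f = \Psi_j^{-1}\circ \widetilde f\circ \Psi_i|_{\Psi_i^{-1}(\widetilde K)}\colon \Psi_i^{-1}(\widetilde K)\to \mathscr K_j$; Lemma~\ref{L:Cook-family}(b) forces $f$ constant, hence $\widetilde f$ constant.

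The statement is called ``obvious'' in the excerpt, and indeed there is no real obstacle: the only point requiring a moment's care is $(\widehat a)$, where one must invoke unique homeomorphy of Cook continua (Lemma~\ref{L:Cook-properties}(3)) to identify the map $\Theta_i$ and to ensure that the composite conjugating map $\Theta_i\circ \Psi_i^{-1}\colon \widetilde{\mathscr K}_i\to \widehat{\mathscr K}_i$ is the unique homeomorphism between those two copies — so that ``homeomorphism onto its image'' is the sharpest one can say. Everywhere else it is a purely formal conjugation argument, and the parenthetical incomparability/non-homeomorphism remarks in $(\widetilde b)$ follow immediately since a non-constant continuous map between nondegenerate continua would yield, by conjugation, a non-constant map between the corresponding $\mathscr K_i,\mathscr K_j$, contradicting Lemma~\ref{L:Cook-family}(b).
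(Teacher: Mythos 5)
The paper gives no proof of this corollary, calling it ``obvious,'' and your conjugation argument is precisely the routine verification that makes it so; it is correct. One minor inaccuracy in your closing remark on $(\widehat a)$: you do not actually need uniqueness of the homeomorphism $\Theta_i\colon \mathscr K_i\to\widehat{\mathscr K}_i$ (Lemma~\ref{L:Cook-properties}(3)) to run the argument --- mere existence of some homeomorphism, which is given since $\widehat{\mathscr K}_i$ is by hypothesis a copy, already suffices to reduce to Lemma~\ref{L:Cook-family}(a), and uniqueness is then a bonus consequence rather than a prerequisite.
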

	
	The following simple fact will be used in Section~\ref{S:cont zero-log2}
	for Cook continua in the plane.

	\begin{lem}\label{L:arc in nbhd}
		Let $C$ be a continuum in $\mathbb R^n$ and $V$ an open set containing $C$.
		Then each two points of $C$ can be joined by a polygonal arc in $V$.
	\end{lem}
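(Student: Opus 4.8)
The plan is to combine a compactness (Lebesgue‑number type) argument with the $\varepsilon$‑chain Lemma~\ref{L:chains} and then to replace a polygonal path by a polygonal arc. We may assume $C$ is nondegenerate, since otherwise the two given points coincide and there is nothing to prove; so fix distinct $a,b\in C$.

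First I would choose a margin within which one is allowed to leave $C$. Since $C\subseteq V$ is compact and $V$ is open, the function $x\mapsto \operatorname{dist}(x,\mathbb R^n\setminus V)$ is continuous and strictly positive on $C$, hence it attains a positive minimum there; thus there is $\varepsilon>0$ such that the open $\varepsilon$‑neighbourhood $N_\varepsilon(C)=\{x\in\mathbb R^n:\operatorname{dist}(x,C)<\varepsilon\}$ is contained in $V$. Applying Lemma~\ref{L:chains} to the connected metric space $M=C$ (with, say, $D=C$), we obtain a finite chain $a=c_0,c_1,\dots,c_n,c_{n+1}=b$ of points of $C$ with $\|c_i-c_{i+1}\|<\varepsilon$ for $i=0,\dots,n$. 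Concatenating the straight line segments $[c_i,c_{i+1}]$ gives a polygonal path from $a$ to $b$; since every point of $[c_i,c_{i+1}]$ lies within distance $\|c_i-c_{i+1}\|<\varepsilon$ of the point $c_i\in C$, the image $P:=\bigcup_{i=0}^{n}[c_i,c_{i+1}]$ of this path satisfies $P\subseteq N_\varepsilon(C)\subseteq V$.

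It remains to extract from this polygonal path a genuine (i.e. simple) polygonal arc lying in $P$. The set $P$ is a connected finite union of line segments; after merging every maximal family of pairwise overlapping collinear segments into a single segment and then subdividing each resulting segment at the finitely many points at which it meets the other segments (and at $a$ and $b$), $P$ becomes the underlying space of a finite connected graph. A shortest edge‑path in this graph joining $a$ to $b$ is simple, hence is an arc, and by construction it is polygonal and contained in $P\subseteq V$. (Alternatively one may simply invoke the standard fact that any polygonal path in $\mathbb R^n$ can be trimmed to a polygonal arc with the same endpoints and with image contained in the image of the path.) This last reduction is the only point requiring a little care; the rest is routine.
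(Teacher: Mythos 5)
Your proof is correct, but it takes a genuinely different route from the paper's. The paper's argument is a two-line reduction: it observes that $V$ contains a \emph{connected open} set $U$ with $C\subseteq U\subseteq V$ (for instance the $\varepsilon$-neighbourhood $N_\varepsilon(C)$ you construct is such a set, since it is a union of balls each meeting the connected set $C$), and then cites the standard theorem of Hocking--Young that any two points of a connected open subset of $\mathbb R^n$ can be joined by a polygonal arc lying in that subset. You instead build the polygonal path from scratch: you extract $\varepsilon$ from compactness, invoke Lemma~\ref{L:chains} to produce an $\varepsilon$-chain in $C$, connect consecutive chain points by straight segments (which then lie in $N_\varepsilon(C)\subseteq V$), and finally prune the resulting polygonal \emph{path} down to a simple polygonal \emph{arc} via a shortest-path argument in the induced finite graph (or by the folklore trimming lemma). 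Both approaches are sound. The paper's is shorter because it outsources the hard combinatorial content to a black-box reference; yours is more self-contained and makes pleasant use of Lemma~\ref{L:chains}, which is already in the paper, at the cost of having to carry out the path-to-arc reduction by hand. Your handling of the degenerate case $C=\{a\}=\{b\}$, needed because Lemma~\ref{L:chains} assumes $M$ nondegenerate, is the right precaution and is correctly dealt with.
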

	
	\begin{proof}
		Each two points of a connected open set $U$ in $\mathbb R^n$ can be joined by a
		polygonal arc in $U$, see e.g.~\cite[Theorem 3-5]{HY}. However, $V$ does contain
		a connected open set $U$ such that $C\subseteq U \subseteq V$.
	\end{proof}


\section{Systems with zero entropy and infinite supremum sequence
		entropy}\label{S:zero-infty}

	As already mentioned in Introduction, $h(T)>0$ implies $h^*(T)=\infty$. What can
	be said on the supremum sequence entropy $h^*(T)$ if $h(T)=0$? As we know from
	\cite{CJS1,CJS,T,TYZ},  if the phase space $X$ is a unit interval or a unit
	circle or a finite graph, then $h(T)=0$ implies that $h^*(T)$ is \emph{finite}
	(zero or $\log 2$). However, in general a space $X$ can admit a continuous
	selfmap $T$ with $h(T)=0$ and $h^*(T)=\infty$. The next subsection
	is devoted to the construction of such an example. It will be helpful in the
	next section.
	
	If $A$ and $B$ are disjoint sets, $A\cup B$ will sometimes be denoted by
	$A\sqcup B$. We will also use the notations
	\[
	1/\mathbb N  = \{1/n:\, n\in \mathbb N\} \quad \text{and} \quad
	1/\mathbb N^{\uparrow k}  = \{1/n:\, n = k, k+1, \dots \}, \quad k\in \mathbb
	N~.
	\]\index{$1/\mathbb N^{\uparrow k}$} \index{$1/\mathbb N$}
	Further, $\Id_A$ \index{$\Id_A$} denotes the identity on $A$.

	\subsection{A map $T: X_1 \to X_1$ with $h(T)=0$ and
		$h^*(T)=\infty$}\label{SS:cont+orbit}
	
	We are going to define such a map on a space which differs from a prescribed
	continuum $\mathscr C_0$ by one orbit only.
	
	Start by fixing any nondegenerate metric continuum $\mathscr C$ with metric
	$\varrho$ and a countable dense set $E=\{e^1, e^2,\dots \} \subseteq \mathscr
	C$. Our space $X_1$ will be a subset of $\left(\{0\}\cup 1/\mathbb N \right)
	\times \mathscr C$ endowed with the maximum metric (we have the Euclidean metric
	in the first coordinate and the metric $\varrho$ in the second coordinate).
	For $n\in \mathbb N$ and $i\in \mathbb N$, put $\mathscr C_0 = \{0\}\times
	\mathscr C$, $\mathscr C_n = \{1/n\}\times \mathscr C$, $E_0 = \{0\}\times E$,
	$e^i_0 = \{0\}\times \{e^i\}$, $E_n = \{1/n\}\times E$, $e^i_n = \{1/n\}\times
	\{e^i\}$. Let $P_1: \bigsqcup_{n=0}^\infty \mathscr C_n \to \{0\}\cup 1/\mathbb
	N$ and $P_2: \bigsqcup_{n=0}^\infty \mathscr C_n \to \mathscr C$ be the
	projections onto the first and the second coordinates.
	
	The set $X_1\subseteq \mathscr C_0 \sqcup \bigsqcup_{n=1}^\infty \mathscr C_n$
	will be of the form
	\begin{equation}\label{Eq:X1C}
	X_1 = \mathscr C_0 \sqcup C \quad \text{with} \quad C = \{x_1, x_2, \dots\}
	\subseteq \left( 1/\mathbb N \right) \times E~.
	\end{equation}
	Define $T: X_1\to X_1$ by
	\begin{equation}\label{Eq:T}
	T|_{\mathscr C_0} = \Id_{\mathscr C_0} \quad \text{ and } \quad T(x_n) =
	x_{n+1}~.
	\end{equation}
	Then the set $C$ and the sequence $(x_n)_{n=1}^\infty$ are the orbit and the
	trajectory (under $T$) of the point $x_1$.
	
	So, we only need to choose the points $x_n$ for all `times' $n$ (we want
	compactness of $X_1$, continuity of $T$, $h(T)=0$ and $h^*(T)=\infty$). We will
	do it step by step, for \emph{blocks of times}
	\begin{equation}\label{Eq:blocks times}
	\{1,2,\dots, k_1\}, \,  \{k_1+1, k_1+2,\dots, k_2\}, \, \ldots, \, \{k_{n-1}+1,
	k_{n-1}+2,\dots, k_n\}, \, \ldots
	\end{equation}
	where the integers $1<k_1<k_2<\dots$ are not specified yet.  The pieces of
	trajectory corresponding to the blocks of times in~(\ref{Eq:blocks times}) (and
	only to these blocks of times) will be conveniently called \emph{blocks (of
		trajectory)}.
	For simplicity, our first requirements on the choice of the points $x_n$ are:
	\begin{enumerate}
		\item [(1)] $P_1(x_n) = 1/n$, $n=1,2,\ldots$ (in view of~(\ref{Eq:X1C}) this
		implies compactness of $X_1$).
		\item [(2)]
		$P_2(x_1)=P_2(x_{k_1})=P_2(x_{k_1+1})=P_2(x_{k_2})=P_2(x_{k_2+1})=\cdots=e^1$
		(so, the points $x_1, x_{k_1}, x_{k_1+1}, x_{k_2}, \dots$ are already
		determined and, by~(\ref{Eq:T}), $T(x_{k_n}) = x_{k_n+1}$, $n=1,2,\dots$).
	\end{enumerate}
	Since  the $T$-trajectory  of the point $x_1=\{1\}\times \{e^1\}$ will, due to
	(1), (\ref{Eq:X1C}) and~(\ref{Eq:T}), approach the
	set $\mathscr C_0$ on which $T$ is the identity, the `jumps'  performed by our
	trajectory have to tend to zero (in order not to destroy the continuity of $T$
	at the points of $\mathscr C_0$). Since the sequence $P_1(x_i)$ tends to zero,
	we only need that
	also $\varrho (P_2(x_i), P_2(x_{i+1})) \to 0$ for $i\to \infty$. To ensure this,
	it is sufficient to fix positive reals $\varepsilon_n\searrow 0$ and to choose
	$(x_i)_{i=1}^\infty$ such that if consecutive points $x_i$ and $x_{i+1}$ belong
	to the $n$-th block of the trajectory, then $\varrho (P_2(x_i), P_2(x_{i+1})) <
	\varepsilon_n$ (if they belong to different blocks, then $i=k_n$ for some $n$
	and so, by (2), we even have $\varrho (P_2(x_i), P_2(x_{i+1}))=0$). In other
	words, our requirement is (we put $k_0=0$):
	\begin{enumerate}
		\item [(3a)] for every $n=1,2,\dots$, the sequence
		$(P_2(x_i))_{i=k_{n-1}+1}^{k_n}$ is an $\varepsilon_n$-chain of points lying in
		the countable dense set $E =\{e^1, e^2, \dots\}\subseteq \mathscr C$ and, not to
		violate (2), it starts and ends with $e^1$. (This together with (1) imply that
		$T$ defined by~(\ref{Eq:T}) is continuous and $h(T) = h(T|_{\Omega (T)}) =
		h(T|_{\mathscr C_0}) = h(\Id_{\mathscr C_0}) = 0$.)
	\end{enumerate}
	We still need something more from the sequence $(x_i)_{i=1}^\infty$ because we
	want also $h^*(T)=\infty$. Therefore we add the following requirement on the
	choice of $(x_i)_{i=1}^\infty$:
	\begin{enumerate}
		\item [(3b)] for some positive integers $t_{1,1}$, $t_{2,1} < t_{2,2}$, $\dots$,
		$t_{n,1} < t_{n,2} < \dots < t_{n,n}$, $\dots$, we have the following:
		\begin{itemize}
			\item $\{0, t_{1,1}\}$ is an independence set of times for the map $T$ restricted
			to
			the first block $\{x_1,\dots, x_{k_1}\}$ and for the family of $2$ sets
			$(1/\mathbb N) \times \{e^j\}$, $j= 1, 2$ (in the sense of
			Remark~\ref{R:indep-subset}),
			\item $\{0, t_{2,1}, t_{2,2}\}$ is an independence set of times for the map $T$
			restricted to the second block $\{x_{k_1+1},\dots, x_{k_2}\}$ and for the family
			of $3$ sets $\left(1/\mathbb N^{\uparrow (k_1+1)}\right) \times \{e^j\}$, $j= 1,
			2, 3$,
			
			$\dots$
			
			\item $\{0, t_{n,1}, t_{n,2}, \dots, t_{n,n}\}$ is an independence set of times
			for the map $T$ restricted to the $n$-th block $\{x_{k_{n-1}+1},\dots,
			x_{k_n}\}$
			and for the family of $n+1$ sets $\left(1/\mathbb N^{\uparrow (k_{n-1}+1)}\right
			)\times \{e^j\}$, $j= 1, 2, \dots, n+1$.
			
			$\dots$
		\end{itemize}
	\end{enumerate}
	Realize that (3b) really implies $h^*(T)=\infty$. Indeed,
	for every $j$, every neighborhood  (in the topology of $X_1$) of the point
	$e^j_0 \in E_0 \subseteq \mathscr C_0$ contains the sets $\left(1/\mathbb
	N^{\uparrow (k_{n-1}+1)}\right )\times \{e^j\}$
	for all sufficiently large $n$. Hence we easily come to the conclusion that
	\emph{each finite subset}
	of the countable infinite set $E_0$ forms an IN-tuple for $T$ (for any choice of
	neighbourhoods of these points, the tuple of the neighbourhoods has arbitrarily
	long finite independence sets of times; see Remark~\ref{R:indep-subtuple}). In
	view of~(\ref{Eq:hstarT-IN}), it follows that $h^*(T)=\infty$. Note that our
	argument shows that even for every positive integer $r$, the set
	$S_{r}:= \mathscr C_0 \sqcup \{x_r, x_{r+1}, \dots \}$ (notice that this is a
	$T$-invariant subset of $X_1$) has the property
	\begin{equation}\label{Eq:hstarsub}
	h^*(T|_{S_{r}})=\infty~.
	\end{equation}
	
	Thus, it remains to prove that the choice of $(x_i)_{i=1}^\infty$ can be done in
	such a way that all the requirements (1), (2), (3a) and (3b) are fulfilled
	simultaneously. To prove that, notice that due to cofiniteness emphasized in
	Lemma~\ref{L:chains}, one can find a positive integer $t_{1,1}$, as large as we
	need/wish, such that in $\mathscr C$
	\begin{itemize}
		\item there are four $\varepsilon_1$-chains of the same length $t_{1,1}+1$ --
		namely chains from $e^1$ to $e^2$ (the chain starts at $e^1$ at time $0$ and
		ends at $e^{2}$ at time $t_{1,1}$), from $e^2$ to $e^1$, from $e^1$ to $e^1$ and
		from $e^2$ to $e^2$.
	\end{itemize}
	By concatenating these four chains and possibly some intermediate
	$\varepsilon_1$-chains in an appropriate order, we obtain an
	$\varepsilon_1$-chain $e^{i(1)}, \dots, e^{i(k_1)}$ in $E\subseteq \mathscr C$
	with some length $k_1$ (this will be the length of the first block
	in~(\ref{Eq:blocks times})) and $i(1) = i(k_1)=1$. Then we choose corresponding
	points $x_1, \dots, x_{k_1}$ with $P_2(x_n)=e^{i(n)}$ and, to fulfill (1),
	$P_1(x_n) =1/n$, $n=1,\dots,k_1$ (the points $x_1, x_{k_1}$ have in fact already
	been defined in this way, see (2)). We define $T$ at the first $k_1$ points by
	putting $T(x_i)=x_{i+1}$ for $i=1,\dots, k_1$. By the construction, $\{0,
	t_{1,1}\}$ is an independence set of times for the map $T$ restricted to the
	first block $\{x_1,\dots, x_{k_1}\}$, even for $T$ restricted to $\{x_1,\dots,
	x_{k_1-1}\}$, and for the family of $2$ sets $(1/\mathbb N) \times \{e^j\}$, $j=
	1, 2$. The first block of the trajectory of $T$ may look like in
	Figure~\ref{F:block1} (where, however, proper scales are ignored).

	\begin{figure} [h]
		\includegraphics[width=14cm]{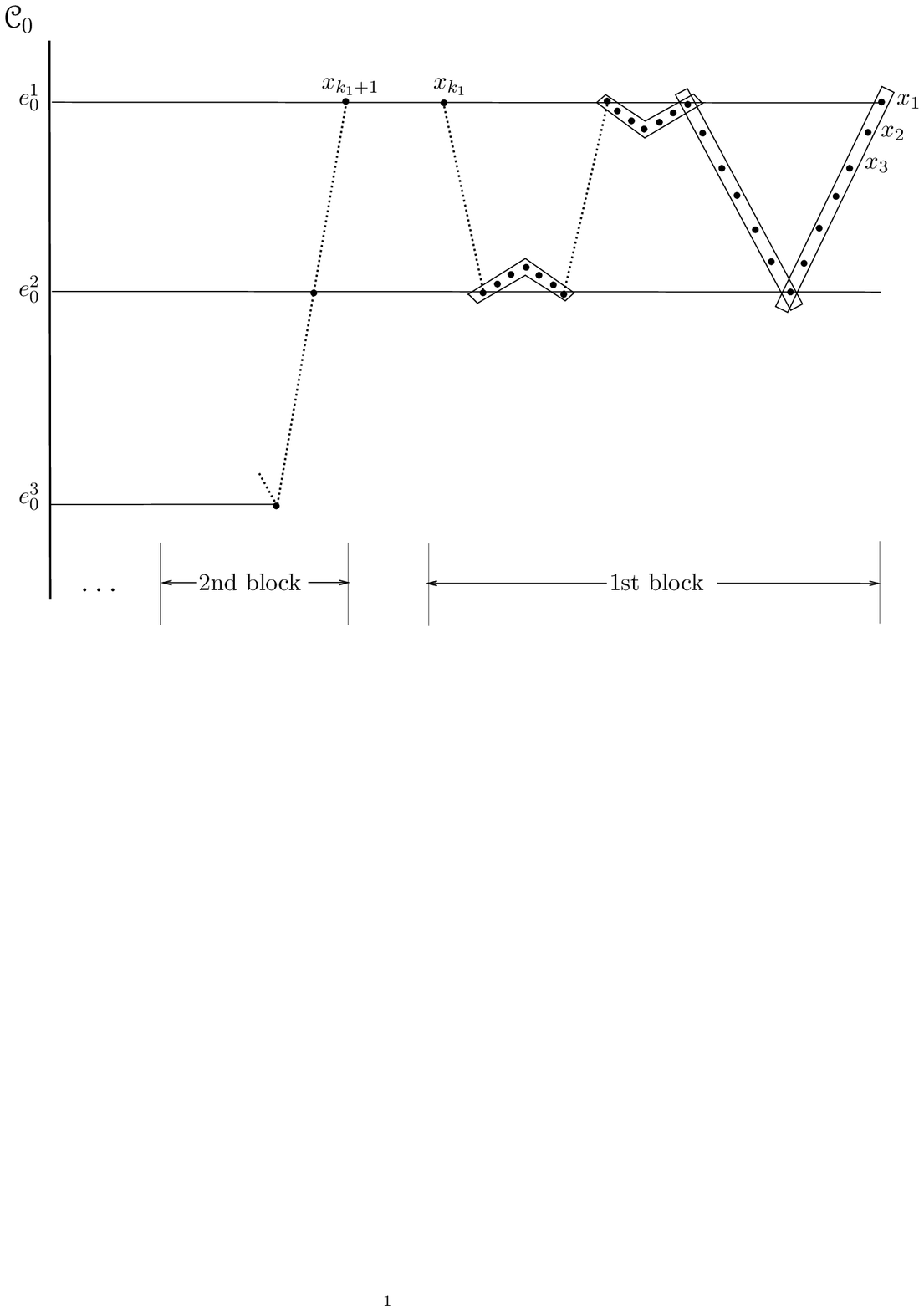}\\
		\caption{The first block (the parts corresponding to the four
			$\varepsilon_1$-chains of the same length are framed; we do not care
			whether they cover the whole block $\{x_1,\dots, x_{k_1}\}$ or not).}\label{F:block1}
	\end{figure}

	To define the second block of the trajectory, we proceed similarly. Again, using
	cofiniteness in Lemma~\ref{L:chains}, we find integers $0 < t_{2,1} < t_{2,2}$,
	perhaps very large ones, such that in $\mathscr C$ there are 27
	$\varepsilon_2$-chains such that
	\begin{itemize}
		\item for any of the 27 choices of not necessarily distinct integers $j_1, j_2,
		j_3$ in the set $\{1,2,3\}$, one of these $\varepsilon_2$-chains is such that it
		starts at $e^{j_1}$ at time $0$, then hits $e^{j_2}$ at time $t_{2,1}$ and ends
		at $e^{j_3}$ at time $t_{2,2}$.
	\end{itemize}
	By concatenating these 27 chains and possibly some intermediate
	$\varepsilon_2$-chains in an appropriate order, we obtain an
	$\varepsilon_2$-chain $e^{i(k_1+1)}, \dots, e^{i(k_2)}$ in $E\subseteq \mathscr
	C$ with some length $k_2-k_1$ (this will be the length of the second block
	in~(\ref{Eq:blocks times})) and $i(k_1+1) = i(k_2)=1$. Then we choose points
	$x_n$, $n=k_1+1,\dots, k_2$ with $P_2(x_n)=e^{i(n)}$ and $P_1(x_n) =  1/n$.
	Again, we put  $T(x_i)=x_{i+1}$ for $i=k_1+1,\dots, k_2$.
	
	Continuing this way, by induction we construct the whole set $C$ and the map
	$T|_C$. Then, since (1), (2), (3a) and (3b) are fulfilled, we know that $X_1$ is
	compact, $T$ is continuous, $h(T)=0$ and $h^*(T)=\infty$ as shown above. The set
	$E_0$ is a sequence entropy set and so the same is true for its closure
	$\mathscr C_0$. The set $\mathscr C_0$ is in fact the unique maximal sequence
	entropy set of the system.

	\begin{rem}\label{R:SX1}
	The space $X_1$ is quite simple and we know that $S(X_1) \supseteq \{0,\infty\}$. Unfortunately, this space admits
	too many continuous selfmaps and it is probable that $S(X_1)$ is larger than $\{0,\infty\}$. Moreover, we prefer to
	have a continuum, while $X_1$ is a disconnected space.
	\end{rem}
	
	Before going to a construction of a continuum $X$ with $S(X)=\{0, \infty\}$, it
	will be instructive to show how we can modify the construction from
	Subsection~\ref{SS:cont+orbit} if we wish the phase space to be a continuum.
	
	\subsection{Modifying $X_1$ to get a continuum}\label{SS:sine}
	In the construction of the space $X_1$ from the previous subsection, let
	$\mathscr C$ be just a straight line segment in the plane. Then the system
	$(X_1, T)$ lives in the plane. If we denote by $I_i$ the straight line segment
	with the endpoints $x_i$ and $x_{i+1}$, then the union of $X_1$ and all the
	segments $I_i$, $i\in \mathbb N$ is a continuum $X^{\Join}_1$.\footnote{In the
		next section we use this trick to get a continuum $X$ with $S(X)=\{0, \infty\}$
		but, instead of arcs, we will use much more complicated continua. Then the space
		$X$ will be complicated, but the dynamics of all possible continuous selfmaps of
		$X$ will be relatively simple and so we will be able to prove that
		$S(X)=\{0,\infty\}$.} The continuous map $T$ sends the endpoints of each $I_i$
	to the endpoints of $I_{i+1}$ and so it can be extended to a continuous map
	which sends $I_i$ onto $I_{i+1}$, $i=1,2,\dots$. What we get is obviously a map
	$T^{\Join} : X^{\Join}_1 \to X^{\Join}_1$ with $h(T^{\Join})=0$ and
	$h^*(T^{\Join})=\infty$.
	
	Moreover, the $\varepsilon_n$-chains in the construction can be chosen and the
	full orbit $\{x_1, x_2, \dots\}$  of the point $x_1$ (here $T(x_i)=x_{i+1}$ for
	every $i$) can be placed in the plane in such a way that the phase space of the
	system $(X^{\Join}_1, T^{\Join})$ is the topologist's sine curve.

	\begin{rem}\label{R:arcwise}
		Still slightly modifying the above construction, one can get a map (even a homeomorphism)
		$T^{\Join}\colon X^{\Join}_1 \to X^{\Join}_1$ with $h(T^{\Join})=0$ and
		$h^*(T^{\Join})=\infty$, where $X^{\Join}_1$ is the Warsaw circle.
		Notice that the Warsaw circle is a uniquely arcwise connected
		continuum, though not locally connected,  and still it admits a
		map with infinite supremum sequence entropy but with zero entropy. It
		is also worth noticing that in all the above examples $h^*(\mathscr T) =\infty$
		while $h^*(\mathscr T|_{\Omega(\mathscr T)})=0$, where $\mathscr T$ stands for
		$T$ or $T^{\Join}$.
	\end{rem}

\section{A continuum $X$ with $S(X) = \{0,\infty\}$}\label{S:X}
	
In this section, we will  construct a continuum $X \subseteq \mathbb R^3$ with $S(X) =
	\{0,\infty\}$.\footnote{We do not know whether $X$ could be found in the plane.}
	In the next subsection we outline the construction and then  we provide necessary
	details in the rest part of the section.
	
	\subsection{Bricks and outline of the construction of $X$}\label{SS:outline}
	
	Now we outline the construction.
	\begin{itemize}
		\item To construct $X$, we use Cook continua $\mathscr{K}_0, \mathscr{K}_1,
		\mathscr{K}_2, \dots \subseteq \R^2$ from Lemma~\ref{L:Cook-family} as building
		bricks (they are subcontinua of, say, the Cook continuum constructed in the
		plane by Ma\'ckowiak). To avoid cumbersome notations below, we will however
		denote them as
		\begin{equation}\label{Eq:bricks}
		\mathscr K, \mathscr{K}_1, \mathscr{K}_2, \dots
		\end{equation}
		(i.e. we omit the index in the notation of zeroth continuum).
		More precisely, our \emph{bricks} will be ho\-meo\-morphic copies of these
		continua, placed in $\mathbb R^3$. We will use only one copy of $\mathscr K$,
		namely in the form $\mathscr K_0 = \{0\}\times \mathscr K$ (zeroth brick), and a
		sequence of sequences of other bricks in $\mathbb R^3$:
		\begin{equation}\label{Kim}
		\begin{split}
		(\mathscr K_i^1)_{i=1}^\infty & = \text{copy of $\mathscr K_1$, copy of
			$\mathscr K_2$, copy of $\mathscr K_3$, copy of $\mathscr K_4$}, \dots \\
		(\mathscr K_i^2)_{i=1}^\infty & = \text{copy of $\mathscr K_2$, copy of
			$\mathscr K_4$, copy of $\mathscr K_6$, copy of $\mathscr K_8$}, \dots \\
		(\mathscr K_i^3)_{i=1}^\infty & = \text{copy of $\mathscr K_4$, copy of
			$\mathscr K_8$, copy of $\mathscr K_{12}$, copy of $\mathscr K_{16}$}, \dots \\
		& \dots
		\end{split}
		\end{equation}
		where copies of the same $\mathscr K_n$ (belonging to different sequences) will
		be different, even disjoint and possibly with different diameters. In fact all
		the bricks will be pairwise disjoint, except of that two consecutive bricks in
		one sequence will have one point in common.
		\item We start the construction with the system $(X_1,T)$ from
		Subsection~\ref{SS:cont+orbit}, where the arbitrary continuum $\mathscr C$ is
		now chosen to be the planar Cook continuum $\mathscr K$ from our
		list~(\ref{Eq:bricks}). Thus,
		\begin{equation}\label{Eq:X1K}
		X_1 = \mathscr K_0 \sqcup C \quad \text{with} \quad C = \{x_1, x_2, \dots\}
		\subseteq \left( 1/\mathbb N \right) \times E,
		\end{equation}
		$E$ being a countable dense subset of $\mathscr K$.
		The space $X_1$ lives in $\mathbb R^3$, we think of the Cook continuum
		$\mathscr K_0 =\{0\}\times \mathscr K$ as of a continuum lying in the
		vertical $yz$-plane (the $x$-axis going to the right).
		\item Then, by using the method from Subsection~\ref{SS:sine}, with the arcs
		$I_m$ (joining consecutive points $x_m$ and $x_{m+1}$ of the orbit $C$) replaced
		by appropriate continua $D_m$, we finally obtain our continuum $X\supseteq X_1$
		in $\mathbb R^3$. The continua $D_1, D_2, D_3, \dots$  will have the form of
		`infinite chains' from~(\ref{Kim}). More precisely, for every $m=1,2,\dots$ the
		continuum $D_m$ will be the closure of the union of all sets in the $m$-th
		sequence in~(\ref{Kim}) and $D_m$ and $D_{m+1}$ will have just one point in
		common, namely $x_{m+1}$.
		\item We will need to study continuous selfmaps of $X$. First recall that
		$T: X_1\to X_1$ is an injective map defined by
		\begin{equation}\label{Eq:Tagain}
		T|_{\mathscr K_0} = \Id_{\mathscr K_0} \quad \text{ and } \quad T(x_n) =
		x_{n+1}~.
		\end{equation}
		By Subsection~\ref{SS:cont+orbit}, for the system  $(X_1,T)$ we have $h(T)=0$
		and $h^*(T)=\infty$. In Lemma~\ref{L:existsG} we then extend  $T: X_1\to X_1$ to
		a continuous map $G:X \to X$. Clearly, $h^*(G)=\infty$. We also carefully study
		the dynamics of all non-constant continuous maps $F: X\to X$. We show that they
		are of two kinds. Some of them coincide with an iterate of $G$ on a substantial
		part of $X$, see Lemma~\ref{L:jumps}, and then $h^*(F)=\infty$. The others have
		quite a simple dynamics, see Corollary~\ref{C:FixF} and Lemmas~\ref{L:left}
		and~\ref{L:right}, and then $h^*(F)=0$. Hence $S(X) = \{0,\infty\}$, see
		Proposition~\ref{P:zero inf}.
	\end{itemize}

	\subsection{Notation for bricks and details of the construction of
		$X$}\label{SS:construction of X}
	As described above, we start with the space $X_1\subseteq \mathbb R^3$
	from~(\ref{Eq:X1K}), with the Cook continuum $\mathscr K_0 =\{0\}\times \mathscr
	K$ lying in the vertical $yz$-plane $\pi_0 = \{0\}\times \mathbb R^2$. Thus the
	zero-th brick from the list~(\ref{Eq:bricks}) has already been used. From now on
	we will use only bricks which are homeomorphic copies of the Cook continua in
	the set
	\begin{equation}\label{bricks in snake}
	D= \{\mathscr{K}_1, \mathscr{K}_2, \dots\}.
	\end{equation}
	
	Consider any two consecutive points $x_m, x_{m+1} \in C$. They are in two
	vertical planes, $x_m \in \pi_m = \{1/m\}\times \mathbb R^2$ and $x_{m+1} \in
	\pi_{m+1} = \{1/(m+1)\}\times \mathbb R^2$. In the straight line segment with
	the endpoints $x_m$ and $x_{m+1}$, choose a strictly monotone sequence of points
	starting with $x_m$ and converging to $x_{m+1}$.
	For any two consecutive points in this sequence one can choose a `two cones base
	to base' solid \index{solid}, i.e. the solid obtained by a rotation of a kite along its axis
	of symmetry (we shortly call it a `solid'), such that the two mentioned
	consecutive points are the two vertices of the solid.
	Thus, the points $x_m$ and $x_{m+1}$ are `joined' by a sequence of solids
	\begin{equation}
	\label{Eq:solids}
	S_1^m \ni x_m, S_2^m, S_3^m, \dots
	\end{equation}
	monotonically converging to $x_{m+1}$, see Figure~\ref{F:solids}.

	\begin{figure}[h]
		\begin{minipage}[t]{0.5\linewidth}
			\centering
			\includegraphics[width=8cm]{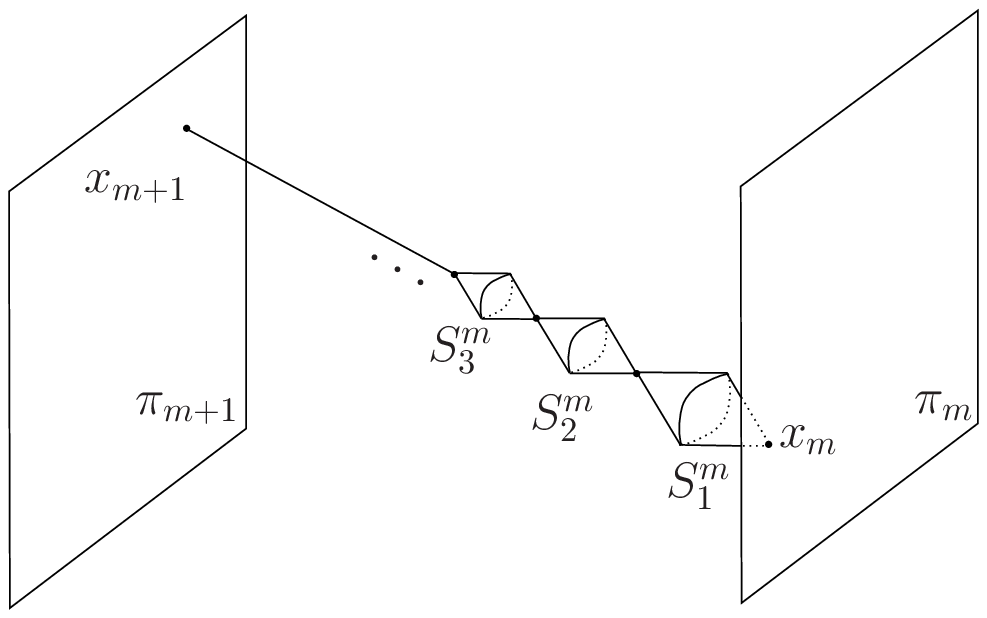}
			\caption{The sequence of solids `joining' $x_m$ and
				$x_{m+1}$.}\label{F:solids}
		\end{minipage}%
		\begin{minipage}[t]{0.5\linewidth}
			\centering
		    \includegraphics[width=8cm]{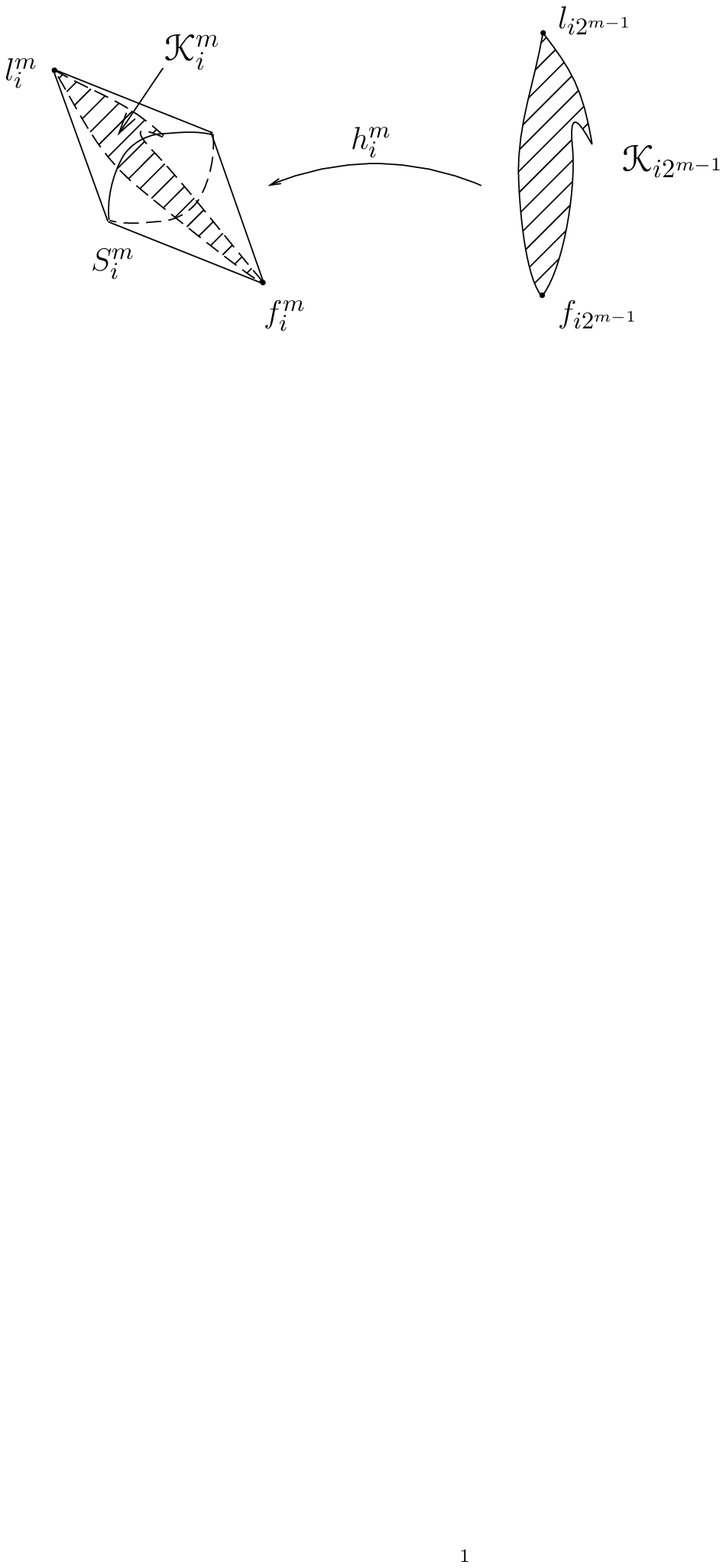}
			\caption{A Cook continuum $\mathscr K_i^m$ inside the solid
				$S_i^m$.}\label{F:in solid}
		\end{minipage}
	\end{figure}
	
	\noindent By choosing sufficiently small cone angles we may assume that:
	\begin{itemize}
		\item each solid has the diameter equal to the distance of its vertices (hence
		the diameters of the solids in the sequence~(\ref{Eq:solids}) tend to zero),
		\item two consecutive solids intersect only at their common vertex,
		\item given a vertex of a solid, no other point of the solid has the same
		$P_1$-projection onto the $x$-axis,
		\item the union of the solids in~(\ref{Eq:solids}) lies strictly between the
		vertical planes $\pi_m$ and $\pi_{m+1}$, with the exception of the point $x_m$
		(one of the vertices of the first solid $S_1^m$) which lies in $\pi_m$.
	\end{itemize}

	Now we are going to specify the choice of copies of $\mathscr K_i$
	in~(\ref{Kim}), i.e. the choice of all bricks
	different from~$\mathscr K_0$. First, for each $\mathscr K_i$, $i=1,2,\dots$
	fix, once and for all,  two \emph{extremal points} \index{extremal points} $f_i, \ell_i \in \mathscr
	K_i$, i.e. points whose (Euclidean) distance equals the diameter of $\mathscr
	K_i$. We will call them the \emph{first point}  and
	the \emph{last point} of $\mathscr K_i$ \index{the first point and last point of $\mathscr K_i$}, respectively.
	For $m=1,2,\dots$ and $i=1,2, \dots$,
	\begin{equation}\label{Eq:hcopy}
	\text{let $\mathscr K_i^m$ be a homeomorphic copy of  $\mathscr K_{i2^{m-1}}$}
	\end{equation}\index{$\mathscr K_i^m$}
	(this corresponds to~(\ref{Kim})) and consider a homeomorphism
	\begin{equation}\label{Eq: hmi}
	h_i^m: \mathscr K_{i2^{m-1}} \to \mathscr K_i^m
	\end{equation}
	(by Lemma~\ref{L:Cook-properties}(3), such a homeomorphism exists exactly one).
	Clearly, we can choose $\mathscr K_i^m$ with the following properties:
	\begin{itemize}
		\item $\mathscr K_i^m \subseteq S_i^m$,
		\item the $h_i^m$-images of extremal points of $\mathscr K_{i2^{m-1}}$, i.e. the
		points $f_i^m := h_i^m(f_{i2^{m-1}})$ and $\ell_i^m := h_i^m(\ell_{i2^{m-1}})$,
		coincide with the two vertices of the solid $S_i^m$, see Figure~\ref{F:in
			solid}.
	\end{itemize}

	\noindent So, when going from $x_m$ towards $x_{m+1}$, we meet the points
	$x_m=f_1^m, \ell_1^m = f_2^m, \ell_2^m = f_3^m, \dots$. The points $f_i^m$ and
	$\ell_i^m$ will be called \emph{the first and the last point (the extremal
		points) of~$\mathscr K_i^m$} \index{the first and the last point (the extremal
		points) of~$\mathscr K_i^m$}.\footnote{By the way, the terminology is in
		accordance with the fact that, due to the choice of $S_i^m$, these points are
		really extremal points of $\mathscr K_i^m$ in the sense of distance.} The other
	points of $\mathscr K_i^m$ are its non-extremal points.

	The sequence $(\mathscr K_i^m)_{i=1}^\infty$ `joins', in a sense, $x_m$ with
	$x_{m+1}$. We adopt the notations \index{$D_m$} \index{$D_m^*$}
	\begin{equation}\label{Eq: DD*}
	D_m= \bigcup_{i=1}^{\infty} \mathscr K_i^m \cup \{x_{m+1}\} \quad \text{ and }
	\quad D_m^* = \bigcup_{i=1}^{\infty} \mathscr K_i^m= D_m \setminus \{x_{m+1}\}.
	\end{equation}
	The first three sets $D_m$ can be seen in Figure~\ref{F:3Dm}; the bricks
	belonging to the same $D_m$ are pairwise non-homeomorphic (because we
	have~(\ref{Kim}) and the continua $\mathscr K_i$ in~(\ref{Eq:bricks}) are not
	homeomorphic).
	
	\begin{figure} [h]
		\includegraphics[width=16cm]{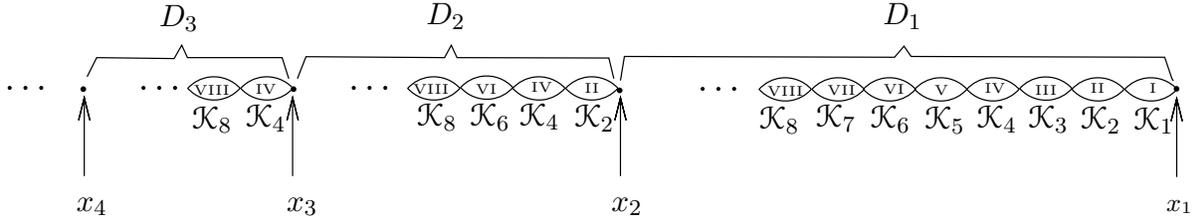}\\
		\caption{The first three sets $D_m$. Instead of ``copy of $\mathscr K_i$" we write just ``$\mathscr K_i$".}\label{F:3Dm}
	\end{figure}

	By the construction, we have the following obvious lemma (the convergence in (4)
	is the convergence in Hausdorff metric derived from the Euclidean metric in
	$\mathbb R^3$; we identify $\{x_{m+1}\}$ with $x_{m+1}$).
	
	\begin{lem}\label{L:block}
		For each $m =1,2,\dots,$ we have:
		\begin{enumerate}
			\item $x_m$ is the first point of $\mathscr K_1^m$, i.e. $x_m = f_1^m$;
			\item $x_{m+1} \not \in \bigcup_{i=1}^\infty \mathscr K_i^m = D_m^*$;
			\item $\mathscr K_i^m\cap \mathscr K_{i+1}^m = \{\ell_i^m\} = \{f_{i+1}^m\}$;
			\item $\mathscr K_n^m \to x_{m+1}$ when $n\rightarrow \infty$.
			\item $D_m= \overline{\bigcup_{i=1}^{\infty} \mathscr K_i^m} =
			\overline{D_m^*}$.
			\item The diameters of $D_m$ tend to zero when $m\to \infty$.
		\end{enumerate}
	\end{lem}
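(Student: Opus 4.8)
The plan is to verify the six assertions one at a time, in each case simply unwinding the defining properties of the solids $S_i^m$ from~\eqref{Eq:solids} and of the bricks $\mathscr K_i^m\subseteq S_i^m$ recorded in the construction; no idea is needed beyond this bookkeeping, which is why the lemma is stated as obvious. Item~(1) is just the labelling convention: $x_m$ is the first vertex of the first solid $S_1^m$ one meets travelling from $x_m$ towards $x_{m+1}$, and $\mathscr K_1^m$ was placed inside $S_1^m$ with its extremal points $f_1^m,\ell_1^m$ equal to the two vertices of $S_1^m$, $f_1^m$ being the one met first; hence $f_1^m=x_m$. For~(2) I would use that the union of the solids in~\eqref{Eq:solids} lies strictly between the planes $\pi_m$ and $\pi_{m+1}$, with the sole exception of the vertex $x_m\in\pi_m$; since $x_{m+1}\in\pi_{m+1}$ and $D_m^*=\bigcup_i\mathscr K_i^m\subseteq\bigcup_iS_i^m$, the point $x_{m+1}$ cannot lie in $D_m^*$. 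For~(3): two consecutive solids $S_i^m$ and $S_{i+1}^m$ meet only at their common vertex, which by the labelling of extremal points equals $\ell_i^m=f_{i+1}^m$; since $\mathscr K_i^m\subseteq S_i^m$ and $\mathscr K_{i+1}^m\subseteq S_{i+1}^m$ we get $\mathscr K_i^m\cap\mathscr K_{i+1}^m\subseteq\{\ell_i^m\}$, while $\ell_i^m=f_{i+1}^m$ is the last point of $\mathscr K_i^m$ and the first point of $\mathscr K_{i+1}^m$ and so lies in both, giving equality.

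For~(4): the vertices of the solids $S_i^m$ form a strictly monotone sequence on the segment $[x_m,x_{m+1}]$ converging to $x_{m+1}$, and each $S_i^m$ has diameter equal to the distance between its two consecutive vertices, which tends to $0$; hence $S_n^m\to\{x_{m+1}\}$ in the Hausdorff metric, and as $\emptyset\neq\mathscr K_n^m\subseteq S_n^m$ also $\mathscr K_n^m\to x_{m+1}$. For~(5): one inclusion is immediate, since $D_m^*\subseteq D_m$ and, by~(4), $x_{m+1}\in\overline{D_m^*}$, whence $D_m\subseteq\overline{D_m^*}$. Conversely, take $y_k\in D_m^*$ with $y_k\to y$, say $y_k\in\mathscr K_{i_k}^m$. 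If the indices $i_k$ stay bounded, some value $i$ recurs along a subsequence and compactness of $\mathscr K_i^m$ forces $y\in\mathscr K_i^m\subseteq D_m^*$; if the $i_k$ are unbounded, pass to a subsequence with $i_{k_j}\to\infty$, so by~(4) $y_{k_j}\to x_{m+1}$ and $y=x_{m+1}$. In either case $y\in D_m$, so $\overline{D_m^*}\subseteq D_m$, and since $D_m^*=\bigcup_i\mathscr K_i^m$ this is precisely~(5). (Incidentally $D_m$ is then compact, as a closed bounded subset of $\mathbb R^3$, and connected, as the closure of the chain $D_m^*$ of continua $\mathscr K_i^m$ from~(3); thus $D_m$ is a continuum.)

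For~(6): given $p\in S_i^m$ and $q\in S_j^m$, pick a vertex $v_i$ of $S_i^m$ and a vertex $v_j$ of $S_j^m$, both on $[x_m,x_{m+1}]$; then
\[
|p-q|\le|p-v_i|+|v_i-v_j|+|v_j-q|\le 3\,|x_m-x_{m+1}|,
\]
since each solid has diameter at most $|x_m-x_{m+1}|$ (its vertices lie on $[x_m,x_{m+1}]$) and $v_i,v_j\in[x_m,x_{m+1}]$. Hence $\diam\bigcup_iS_i^m\le 3\,|x_m-x_{m+1}|$, and since by~(5) $D_m=\overline{\bigcup_i\mathscr K_i^m}\subseteq\overline{\bigcup_iS_i^m}$ we get $\diam D_m\le 3\,|x_m-x_{m+1}|$. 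Finally, the distance between $x_m$ and $x_{m+1}$ tends to $0$: the first coordinate moves by $\tfrac1m-\tfrac1{m+1}\to 0$, while the $\mathscr K$-coordinate moves by $\varrho(P_2(x_m),P_2(x_{m+1}))$, which by requirement~(3a) on the sequence $(x_i)$ is $<\varepsilon_n$ with $\varepsilon_n\searrow 0$ when $x_m,x_{m+1}$ lie in the same block, and equals $0$ across blocks by requirement~(2). Therefore $\diam D_m\to 0$. The one place needing a little care is item~(5), where one must split into the cases of bounded and unbounded brick indices; everything else is a direct reading of the construction.
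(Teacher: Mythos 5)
Your proof is correct, and it is exactly the verification the authors had in mind: the paper states the lemma as an immediate consequence of the construction and gives no argument at all, so the right thing to do is precisely what you did, namely read each item off the listed properties of the solids $S_i^m$ (common vertices, diameters, strict confinement between $\pi_m$ and $\pi_{m+1}$, monotone vertex sequence) and of the bricks $\mathscr K_i^m\subseteq S_i^m$ with extremal points at the vertices. The only items that need any thought are (5) and (6), and you handle both cleanly: the bounded/unbounded index dichotomy for closedness of $D_m$, and the crude $3\lvert x_m-x_{m+1}\rvert$ diameter bound combined with $\lvert x_m-x_{m+1}\rvert\to 0$ (which follows from (1), (2), (3a) of Subsection~\ref{SS:cont+orbit}).
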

	
	Finally, we define
	\begin{equation}\label{X-definition}
	X=\mathscr K_0 \sqcup \bigcup_{m=1}^{\infty}D_m =  \mathscr K_0 \sqcup
	\bigsqcup_{m=1}^{\infty}D_m^* .
	\end{equation}
	Thus $X$ is the union of bricks (planar Cook continua) $\mathscr K_0$ and $\mathscr
	K_i^m$, $m=1,2,\dots$, $i=1,2,\dots$.
	
\begin{lem}\label{L:Xiscont}
	The space $X\subseteq \mathbb R^3$ defined by~\eqref{X-definition} is a one-dimensional continuum.	
\end{lem}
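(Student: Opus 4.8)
The statement has three parts: $X$ is compact, $X$ is connected, and $X$ is one-dimensional. The plan is to verify each in turn, exploiting the geometric control built into the construction (the solids $S_i^m$ shrinking to $x_{m+1}$, the blocks $D_m$ lying strictly between the planes $\pi_m$ and $\pi_{m+1}$, the diameters of $D_m$ tending to zero, and the fact that each brick is a planar Cook continuum, hence a one-dimensional continuum).

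First I would establish \emph{compactness}. Since $X\subseteq\mathbb R^3$, it suffices to show $X$ is closed and bounded. Boundedness is clear from the construction (everything sits in a bounded region around $\mathscr K_0$ and the orbit $C$). For closedness, take a sequence $y_k\in X$ converging to $y\in\mathbb R^3$; I must show $y\in X$. Using the first coordinate $P_1$: if $P_1(y_k)$ is eventually bounded away from $0$, then by the local finiteness of the family $\{D_m\}$ away from the plane $\pi_0$ (each $D_m$ lies strictly between $\pi_m$ and $\pi_{m+1}$, so only finitely many $D_m$ meet any region $\{P_1\ge\varepsilon\}$), the $y_k$ eventually lie in finitely many of the compact sets $D_m$ — note each $D_m=\overline{D_m^*}$ is compact by Lemma~\ref{L:block}(5), being the closure of an increasing union of Cook continua with vertices converging to $x_{m+1}$ — so $y$ lies in one of them. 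If instead $P_1(y_k)\to 0$, then since the diameters of $D_m$ tend to zero (Lemma~\ref{L:block}(6)) and $D_m$ separates the planes $\pi_m,\pi_{m+1}$, the points $y_k$ approach $\mathscr K_0=\{0\}\times\mathscr K$; compactness of $\mathscr K$ gives $y\in\mathscr K_0\subseteq X$. (One also checks the easy sub-case where $P_1(y_k)$ has a subsequence converging to some $1/m>0$: then $y_k$ eventually lies near the plane $\pi_m$, meeting only $D_{m-1},D_m$, both compact.)

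Next, \emph{connectedness}. The snake $\bigcup_{m=1}^\infty D_m$ is connected: each $D_m$ is connected (it is the closure of the chain of continua $\mathscr K_i^m$, consecutive ones sharing the point $\ell_i^m=f_{i+1}^m$ by Lemma~\ref{L:block}(3), together with the limit point $x_{m+1}$), and consecutive blocks $D_m,D_{m+1}$ share the point $x_{m+1}$ by Lemma~\ref{L:block}(1)--(2); so an increasing union of connected sets with common points is connected. Finally $X=\mathscr K_0\sqcup\bigcup_m D_m$ is connected because $\mathscr K_0$ is connected and $\mathscr K_0\subseteq\overline{\bigcup_m D_m}$: indeed $x_m\to$ some point of $\mathscr K_0$ (the orbit $C$ approaches $\mathscr C_0=\mathscr K_0$ by requirement (1) and the chain conditions (3a)), and more strongly the argument in the compactness step shows every point of $\mathscr K_0$ is a limit of points of the snake (the $\varepsilon_n$-chains in the second coordinate become dense in $\mathscr K$ since $E$ is dense). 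Hence $X$ is the union of the connected set $\bigcup_m D_m$ with points in its closure, so it is connected.

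For \emph{dimension}, $X$ is at least one-dimensional since it contains the nondegenerate continuum $\mathscr K_0$. For the upper bound, $X$ is a countable union $X=\mathscr K_0\cup\bigcup_{m,i}\mathscr K_i^m$ of one-dimensional closed subsets (each brick is a Cook continuum in the plane, hence one-dimensional by Lemma~\ref{L:1-dim}), so by the countable sum theorem for covering dimension (valid for separable metric spaces, using that each brick is closed in $X$) we get $\dim X\le 1$. The main obstacle is the compactness argument: one must carefully organize the case analysis on $\lim P_1(y_k)$ and use the precise geometric requirements (diameters of solids and of blocks tending to zero, blocks confined between consecutive planes) to rule out an escaping sequence; the connectedness and dimension parts are then routine given Lemma~\ref{L:block} and Lemma~\ref{L:1-dim}.
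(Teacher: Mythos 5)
Your proof is correct and follows the same route as the paper: compactness via the observation that all cluster points of the snake lying in $\pi_0$ belong to $\mathscr K_0$ (using the shrinking diameters from Lemma~\ref{L:block}(6) together with compactness of $X_1$), and one-dimensionality via the countable sum theorem applied to the closed one-dimensional bricks (Lemma~\ref{L:1-dim}). The paper's proof of this lemma is terser and defers the explicit connectedness argument to Lemma~\ref{L:space}(1), where it is obtained by showing $X$ is the closure of the connected snake; you supply that same step in place, which is consistent with the paper.
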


\begin{proof}
Since $X_1$ is compact and the diameters of $D_m$ tend to zero, see Lemma~\ref{L:block}(6), all those cluster points of the snake which are in $\pi_0$ belong to the head $\mathscr K_0$. Hence $X$ is compact. By Lemma~\ref{L:1-dim}, all the bricks $\mathscr K_0$ and $\mathscr
K_i^m$ are one-dimensional and since the union of countably many closed one-dimensional sets is one-dimensional, also $X$ is one-dimensional, see e.g.~\cite[Theorem III.2]{HW}.
\end{proof}	

The continuum $X$ is depicted in Figure~\ref{F:spaceX}. It lies in $\mathbb R^3$ and we are going to prove that $S(X)=\{0,\infty\}$; this will be done in Proposition~\ref{P:zero inf} below. However, the proof requires some preparation.
	
	\begin{figure}[h]
		\includegraphics[width=14cm]{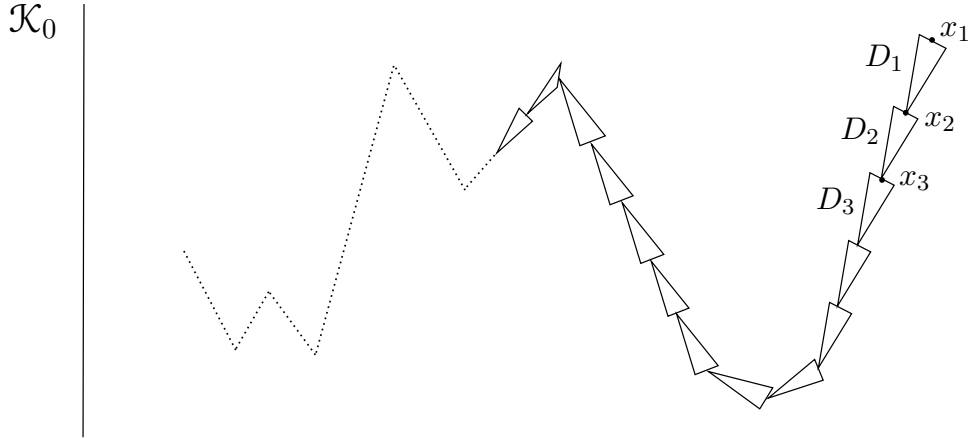}\\
		\caption{The continuum $X$ with $S(X)=\{0,\infty\}$; see also
			Figures~\ref{F:block1} and~\ref{F:3Dm}.}\label{F:spaceX}
	\end{figure}

	\begin{quotation}
		{\bf Standing notation for the rest of Section~\ref{S:X}:}
		In the rest of the section, $X$ denotes the space $X$ constructed above
		in~(\ref{X-definition})
		and $F$ denotes a continuous map $X\to X$.
	\end{quotation}

	\subsection{More terminology for parts of the space $X$. Properties of
		$X$}\label{SS:X is continuum}
	Call the brick $\mathscr K_0$ the \emph{head} \index{head} and the union of all other bricks
	in $X$
	the \emph{snake} \index{snake}. In view of~(\ref{X-definition}), $X$ is the disjoint union of
	the head and the snake.
	Let
	$$
	\Sigma = \{f_i^m, \ell_i^m:\, m, i =1,2,\dots\} = \{f_i^m:\, m, i=1,2,\dots\}
	$$
	be the set of extremal points of the bricks belonging to the snake.
	We will say that $a\in X$ is \emph{to the left of} $b\in X$  and we will write
	$a \prec b$  \index{to the left of} \index{$\prec$}if for their
	first projections we have $P_1(a)<P_1(b)$. In such a case we will
	also say that $b$ is \emph{to the right of} $a$ and write $b \succ a$ \index{to the right of} \index{$\succ$}. If
	$a\prec c \prec b$, we say that
	$c$ is \emph{between} $a$ and $b$ (or between $b$ and~$a$).
	Further, $a \preccurlyeq  b$ \index{$\preccurlyeq$} or $b \succcurlyeq a$ \index{$\succcurlyeq$} means that $a=b$ or $a\prec
	b$  (note that $a\preccurlyeq b$ is not equivalent with $P_1(a)\leq P_1(b)$).
	Clearly, $\preccurlyeq$ is a partial order on $X$. In the obvious meaning, the
	head $\mathscr K_0$ is \emph{on the left end} of $X$ and $x_1$ is
	\emph{the rightmost point} of $X$. Note that the last point $\ell_i^m$ of
	$\mathscr K_i^m$ is its leftmost
	point, similarly its first point $f_i^m$  is its rightmost point. So $\ell^m_i
	\prec f^m_i$ and not conversely.\footnote{The terminology ``first" and ``last"
		is related to the fact that we go along the snake towards the head, i.e. in the
		opposite direction as is the direction of the $x$-axis, while the terminology
		``left" and ``right" corresponds to the direction of the $x$-axis.}
	
	If $A,B$ are any subsets of $X$, we write $A\prec B$ (or $B\succ A$) if $a\prec
	b$ whenever $a\in A$ and $b\in B$.
	In such a case we say that $A$ is to the left of $B$ and $B$ is to the right of
	$A$. If also $C\subseteq X$ and
	$A\prec C \prec B$, we say that $C$ is between $A$ and $B$ (or between $B$ and
	$A$). We are not going to define $A\preccurlyeq B$ if $A,B$ are \emph{arbitrary}
	subsets of $X$. However, for \emph{special} subsets of $X$ we will do that in an
	appropriate, maybe not the most natural, way in Subsection~\ref{SS:induced}.

	If we restrict the partial order $\preccurlyeq$ to the set $\Sigma$, we get a
	linearly ordered set.
	The linearly ordered set $(\Sigma, \succcurlyeq)$, i.e. the one with reversed
	order, is clearly well ordered. Therefore we immediately get the following fact.
	
	\begin{lem}\label{L:antiwell1}
		Every nonempty subset of $(\Sigma, \preccurlyeq)$ has the largest element.
	\end{lem}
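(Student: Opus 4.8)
The plan is to read this off from the structure of $\Sigma$ already set up above. First I would record the (obvious but worth stating) decomposition $\Sigma=\bigsqcup_{m\geq 1}\Sigma_m$, where $\Sigma_m=\{f_i^m:i\geq 1\}$ is the set of extremal points of the bricks $\mathscr K_i^m$ that make up $D_m$; here I use $\ell_i^m=f_{i+1}^m$ (Lemma~\ref{L:block}(3)) to see that $\Sigma_m$ is exactly this set, and $x_{m+1}=f_1^{m+1}\in\Sigma_{m+1}$ together with Lemma~\ref{L:block}(2) and the pairwise disjointness of the bricks to see that the union is genuinely disjoint. Along a single block, the vertices of the solids $S_i^m$ were chosen as a strictly monotone sequence on the segment from $x_m$ to $x_{m+1}$, so $P_1$ strictly decreases: $f_1^m\succ f_2^m\succ\cdots$, converging down to $x_{m+1}$. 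Finally, $D_m$ lies (with the sole exception of $x_m\in\pi_m$) strictly between the vertical planes $\pi_m$ and $\pi_{m+1}$, so the blocks occupy disjoint ranges of the $P_1$-projection with $\Sigma_{m+1}\prec\Sigma_m$. This exhibits $(\Sigma,\succcurlyeq)$ as an increasing $\omega$-chain of increasing $\omega$-chains, i.e. as a well ordered set of order type $\omega\cdot\omega$, which is the ``clearly'' invoked in the paragraph preceding the lemma.

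Given this, for a nonempty $A\subseteq\Sigma$ I would simply take $m_0=\min\{m\geq 1:A\cap\Sigma_m\neq\emptyset\}$ and then $i_0=\min\{i\geq 1:f_i^{m_0}\in A\}$; both minima exist because each is the minimum of a nonempty subset of $\mathbb N$. I would then verify that $f_{i_0}^{m_0}$ is the $\preccurlyeq$-largest element of $A$. Indeed, any $a\in A$ equals $f_j^m$ for some $m\geq m_0$. If $m=m_0$, then $j\geq i_0$ by minimality of $i_0$, and strict monotonicity of $P_1$ along the $m_0$-th block gives $a\preccurlyeq f_{i_0}^{m_0}$. If $m>m_0$, then $P_1(a)\leq 1/m\leq 1/(m_0+1)$ (using that every point of $D_m$ has $P_1\leq 1/m$), while $P_1(f_{i_0}^{m_0})>1/(m_0+1)$ (it is $1/m_0$ if $i_0=1$, and lies in the open interval $(1/(m_0+1),1/m_0)$ if $i_0>1$); hence $a\prec f_{i_0}^{m_0}$.

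There is essentially no obstacle here; the only point requiring a little care is the bookkeeping with the identifications $\ell_i^m=f_{i+1}^m$ and $x_{m+1}=f_1^{m+1}$, so that the decomposition $\Sigma=\bigsqcup_m\Sigma_m$ is disjoint and the two nested $\min$'s are unambiguous. Alternatively, one can bypass the explicit computation altogether and argue in one line: since $(\Sigma,\succcurlyeq)$ is well ordered, every nonempty $A\subseteq\Sigma$ has a least element in that order, and by definition of $\succcurlyeq$ this least element is precisely the largest element of $A$ in $(\Sigma,\preccurlyeq)$.
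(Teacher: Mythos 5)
Your proposal is correct and is essentially the paper's argument: the paper simply observes that $(\Sigma,\succcurlyeq)$ is "clearly" well ordered and lets the lemma follow "immediately," and your final one-line alternative is exactly that observation, while your explicit decomposition $\Sigma=\bigsqcup_m\Sigma_m$ with the two nested minima is just a careful unpacking of why $(\Sigma,\succcurlyeq)$ has order type $\omega\cdot\omega$ and hence is well ordered.
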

	
	If $a, b \in \Sigma$, we denote by
	$\langle \langle a, b \rangle \rangle = \langle \langle b, a \rangle \rangle$ \index{$\langle \langle a, b \rangle \rangle$}
	the union of the set $\{a,b\}$ with the set of all points in $X$
	which are between $a$ and $b$; in particular, $\langle \langle a, a \rangle
	\rangle =\{a\}$,
	$\mathscr K_i^m= \langle \langle f_i^m, \ell_i^m \rangle \rangle$ and
	$D_m=\langle \langle x_m,x_{m+1}\rangle \rangle$. We will also use the notation
	$\langle \langle \mathscr K_0, b \rangle \rangle$\index{$\langle \langle \mathscr K_0, b \rangle \rangle$} for
	the union of the set $\mathscr K_0 \cup \{b\}$ with the set of all points which
	are to the right of all points of
	$\mathscr K_0$ and to the left of $b$. Clearly, $\langle \langle \mathscr K_0,
	x_m \rangle \rangle = \mathscr K_0 \sqcup \bigcup _{i=1}^m D_i$.  Further, we
	put $((\mathscr K_0, b \rangle \rangle = \langle \langle \mathscr K_0, b \rangle
	\rangle \setminus \mathscr K_0$ \index{$((\mathscr K_0, b \rangle \rangle$}. The sets of the form $((\mathscr K_0,
	f^m_i\rangle \rangle$ are said to be \emph{sub-snakes} \index{sub-snake}.

	Let $a\preccurlyeq b$ be in $\Sigma$. Let $r: X\to \langle \langle a, b \rangle
	\rangle$ be  the map sending all the points which are to the left of $a$ to the
	point $a$ and all the points which are to the right of $b$ to the point $b$,
	leaving all the points from $\langle \langle a, b \rangle \rangle$ fixed. Then
	$r$ is obviously continuous, so it is a retraction of $X$ onto $\langle \langle
	a, b \rangle \rangle$. We will call it a \emph{monotone retraction}\index{monotone retraction}.\footnote{In
		fact there are no other retractions of $X$ onto $\langle \langle a, b \rangle
		\rangle$, but this fact is not important for us and so we will not prove it.}
	
	Recall that if $M$ is a connected set then $p\in M$ is called a
	\emph{cut point} \index{cut point} of $M$ if $M\setminus \{p\}$ is not connected.

	\begin{lem}\label{L:space}
		The space $X$ constructed above in~(\ref{X-definition}) has the following
		properties:
		\begin{enumerate}
			\item The sets $D_m^*$, the snake and the sub-snakes are connected.
			The sets $D_m$, $\langle\langle a, b \rangle\rangle$ and $\langle \langle
			\mathscr K_0, b \rangle \rangle$, where $a, b\in \Sigma$, $a\prec b$, are
			continua. The space $X$ is the closure of the snake. The head $\mathscr K_0$ is
			in the closure of any sub-snake.
			\item $X$ is a one-dimensional continuum in $\mathbb R^3$.
			\item Every point from $\Sigma \setminus \{x_1\}$ is a cut point of $X$.
			\item $\Sigma$ is totally disconnected, i.e. it does not contain any
			non-degenerate continuum.
			\item Every brick $\mathscr K_i^{m}$ as well as every set $D_m$ are monotone
			retracts of $X$.
			\item If a non-degenerate continuum $Q \subseteq X$ intersects a brick $\mathscr
			K_i^m$ in a point different from the extreme points of $\mathscr K_i^m$, then
			$Q\cap \mathscr K_i^m$ is a non-degenerate continuum.
		\end{enumerate}
	\end{lem}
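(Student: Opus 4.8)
The plan is to distill from the construction two structural facts about how the first-coordinate projection $P_1$ separates the pieces of $X$, and then to read off (1)--(6) from them; the substantive points will be those two facts and item (6). First I would record: \emph{(S1) each $q\in\Sigma$ is the unique point of $X$ with $P_1$-value $P_1(q)$, and every point of $X$ whose $P_1$-value lies strictly between $P_1(\ell_i^m)$ and $P_1(f_i^m)$ already lies in the brick $\mathscr K_i^m$.} This is read off from the geometry built into the construction: each solid $S_i^m$ lies strictly between the planes $\pi_m,\pi_{m+1}$ (except for the vertex $x_m$ when $i=1$), consecutive solids meet only at a common vertex, no point of a solid other than a given vertex has that vertex's $P_1$-value, and the blocks $D_m$ occupy the $P_1$-intervals $[\frac1{m+1},\frac1m]$ and overlap only in the points $x_{m+1}$. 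Next I would record: \emph{(S2) for each brick, $X=Y_L\cup\mathscr K_i^m\cup Y_R$, where $Y_L:=\overline{\{y\in X:P_1(y)<P_1(\ell_i^m)\}}$ and $Y_R:=\overline{\{y\in X:P_1(y)>P_1(f_i^m)\}}$ are continua, $Y_L\cap Y_R=\emptyset$, $Y_L\cap\mathscr K_i^m=\{\ell_i^m\}$ and $Y_R\cap\mathscr K_i^m=\{f_i^m\}$.} Here (S1) gives that the displayed union is all of $X$ and that the three intersections are as stated; $Y_L$ is the closure of $\mathscr K_0$ together with the head-side tail of the snake up to $\ell_i^m$ — a chain of continua (bricks and whole blocks, linked at points $\ell=f$ and at the points $x_{m+1}$) accumulating onto $\mathscr K_0$ — so $Y_L$ is a continuum by Lemma~\ref{L:block} and the fact from Section~\ref{S:zero-infty} that the trajectory $(x_n)$ accumulates onto all of $\mathscr K_0$; $Y_R$ is a \emph{finite} such chain, hence a continuum (degenerate only when $i=m=1$).

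Then (1)--(5) follow quickly. $D_m^*=\bigcup_i\mathscr K_i^m$ is a chain of connected sets whose consecutive members meet (Lemma~\ref{L:block}(3)), hence connected; the same applies to the snake $\bigcup_m D_m$ and to every sub-snake. $D_m=\overline{D_m^*}$ (Lemma~\ref{L:block}(5)) is a continuum; by (S1) each $\langle\langle a,b\rangle\rangle$ with $a\prec b$ in $\Sigma$ is a finite union of bricks, whole blocks and one or two tails of blocks linked at common extremal points, hence a continuum; and $\langle\langle\mathscr K_0,b\rangle\rangle$ is the closure of a sub-snake, hence a continuum containing $\mathscr K_0$ (argued as for $Y_L$). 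That $X$ is the closure of the snake, and that $\mathscr K_0$ lies in the closure of every sub-snake, again uses that the trajectory accumulates onto all of $\mathscr K_0$ and that $X$ is compact; this finishes (1). Item (2) is Lemma~\ref{L:Xiscont}. For (3): if $q\in\Sigma\setminus\{x_1\}$ then $\{P_1<P_1(q)\}$ and $\{P_1>P_1(q)\}$ are disjoint nonempty open subsets of $X$ (they contain $\mathscr K_0$ and $x_1$ respectively, the latter since $P_1(q)<1=P_1(x_1)$) whose union, by (S1), is $X\setminus\{q\}$; so $q$ is a cut point. Item (4) holds because $\Sigma$ is countable while a nondegenerate continuum has cardinality $\mathfrak c$. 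For (5): $\mathscr K_i^m=\langle\langle\ell_i^m,f_i^m\rangle\rangle$ and $D_m=\langle\langle x_{m+1},x_m\rangle\rangle$ have their endpoints in $\Sigma$, so the monotone retraction onto them (defined in Subsection~\ref{SS:X is continuum}, and well defined by (S1)) realizes them as retracts.

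For (6) I would argue as follows. Let $Q\subseteq X$ be a nondegenerate continuum meeting $\mathscr K_i^m$ at a non-extremal point $p$, put $K:=Q\cap\mathscr K_i^m$, and use $X=Y_L\cup\mathscr K_i^m\cup Y_R$; since $\mathscr K_i^m\cap(Y_L\cup Y_R)=\{f_i^m,\ell_i^m\}$, any subset of $K$ missing $\{f_i^m,\ell_i^m\}$ also misses $Y_L\cup Y_R$. If $K=\{p\}$, then $f_i^m,\ell_i^m\notin Q$, so $Q\setminus\{p\}=(Q\cap Y_L)\cup(Q\cap Y_R)$ is closed in $Q$, whence $\{p\}$ is clopen in $Q$ and $Q=\{p\}$, contradicting nondegeneracy. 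Now suppose $K=A\sqcup B$ with $A,B$ nonempty compact. If one of them, say $B$, misses $\{f_i^m,\ell_i^m\}$, then $B$ misses $Y_L\cup Y_R$, so $Q\setminus B=A\cup(Q\cap Y_L)\cup(Q\cap Y_R)$ is closed in $Q$, making $B$ clopen; then $B=Q$, so $Q\subseteq\mathscr K_i^m$ and $A=\emptyset$, a contradiction. Otherwise both $A$ and $B$ meet the two-point set $\{f_i^m,\ell_i^m\}$; relabelling, $f_i^m\in A$ and $\ell_i^m\in B$. Then $Q=\big(A\cup(Q\cap Y_R)\big)\sqcup\big(B\cup(Q\cap Y_L)\big)$: the union is $Q$ since $\mathscr K_i^m\cup Y_L\cup Y_R=X$, and the two sides are disjoint since $A\cap B=\emptyset$, $A\cap Y_L\subseteq\{\ell_i^m\}$ with $\ell_i^m\in B$, $B\cap Y_R\subseteq\{f_i^m\}$ with $f_i^m\in A$, and $Y_L\cap Y_R=\emptyset$; both sides are nonempty and compact, contradicting connectedness of $Q$. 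Hence $K$ is connected, and being compact with more than one point it is a nondegenerate continuum. I expect the main obstacle to lie in establishing (S1)--(S2) (carefully tracking how $P_1$ and the solids separate the bricks and how $X$ decomposes at each brick into a head-side continuum $Y_L$, the brick, and a tail-side continuum $Y_R$ meeting it in single points) and in the separation argument for (6); everything else is formal.
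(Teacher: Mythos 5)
Your proof is correct. For parts (1)--(5) the argument runs along essentially the same lines as the paper's: connectedness of $D_m^*$, the snake and the sub-snakes via chains of linked continua, compactness and one-dimensionality from Lemma~\ref{L:Xiscont}, the cut-point and retraction claims from the left/right decomposition, and $\Sigma$ totally disconnected since it is countable. The structural facts you label (S1)--(S2) are not stated explicitly in the paper's proof, but they are precisely what the paper uses implicitly; making them explicit is a sound organizational choice and I see no gap there (in particular (S1) does follow from the four bullet conditions imposed on the solids and from the $P_1$-ranges of the sets $D_m$).

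For part (6) your route is genuinely different from the paper's. Both start from the same decomposition (the paper writes $X = L \sqcup \mathscr K \sqcup R$; you write $X = Y_L\cup\mathscr K_i^m\cup Y_R$ with the compact closures $Y_L=L\cup\{\ell\}$, $Y_R=R\cup\{f\}$), but the paper proceeds by a two-step \emph{peeling} argument: it first observes that $Q$ must contain the cut point $\ell$, then shows $Q\setminus L=Q\cap(\mathscr K\sqcup R)$ is a nondegenerate continuum by a small disconnection argument, and then repeats the same step to remove $R$. You instead do a single case analysis on a putative disconnection $K=A\sqcup B$ of $Q\cap\mathscr K_i^m$: if one piece misses $\{f,\ell\}$ it is clopen in $Q$, and if both meet $\{f,\ell\}$ you build an explicit separation $Q=(A\cup(Q\cap Y_R))\sqcup(B\cup(Q\cap Y_L))$. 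I checked the disjointness and compactness of the two sides and the preliminary case $K=\{p\}$; all of it goes through, and you do not actually need the assertion in (S2) that $Y_L,Y_R$ are continua for this, only that they are compact and meet $\mathscr K_i^m$ in exactly $\{\ell\}$ and $\{f\}$. Your version is a bit more direct; the paper's version more visibly reuses the cut-point machinery established in part (3). Either buys the same conclusion with comparable effort.
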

	
	\begin{proof}
		(1-2) $X$ is one-dimensional by the discussion after~\eqref{X-definition}. We prove the rest.
		For every $m$, the bounded set $D_m^* = \bigcup_{i=1}^{\infty} \mathscr
		K_i^m$ is connected because consecutive continua $K_i^m$,  $K_{i+1}^m$
		intersect. Therefore the closure of this set, i.e. the set $D_m$, is a
		continuum. An analogous argument gives that every set $\langle\langle a, b
		\rangle\rangle$ is a continuum. Similarly, the snake $\bigcup_{m=1}^{\infty}D_m$
		is bounded and connected and so its closure is a continuum. For the same reason
		all the sub-snakes are connected.  Thus, to finish the proof of (1-2), it is
		sufficient to show that $X$ coincides with the closure of the snake (since the
		points of the snake which are outside of a given sub-snake have distance from
		the head larger than some positive constant, it is obvious that then $\mathscr
		K_0$ is in the closure of the sub-snake).
		
		Each finite union $\bigcup_{m=1}^{N}D_m$ is compact, so it is sufficient to show
		that the intersection of the closure of the snake with the vertical plane
		$\pi_0$ containing $\mathscr K_0$ is exactly the set $\mathscr K_0$. By
		construction from Subsection~\ref{SS:cont+orbit} this intersection contains the
		set $E_0$ dense in $\mathscr K_0$ and so it contains $\mathscr K_0$. To prove
		the converse inclusion, let $(y_n)_{n=1}^\infty$ be a sequence in the snake
		converging to a point $\omega \in \pi_0$. We need to prove that  $\omega \in
		\mathscr K_0$.
		
		Consider also the sequence $(x_n)_{n=1}^\infty$ of points forming the set $C$
		in~(\ref{Eq:X1K}). Recall that the distance from $x_n$ to $\pi_0$ is $1/n$ and
		if a subsequence of $x_n$ converges to some point, then this point belongs to
		the closure of $E_0$, i.e. is in $\mathscr K_0$. Let $\delta_n$ be the diameter
		of $D_n$. Each $y_n$ belongs to (at least one) $D_{k(n)}$. This means that the
		(Euclidean) distance in $R^3$ between $y_n$ and $x_{k(n)}$ is at most
		$\delta_{k(n)}$. Since $y_n$ converges to $\omega$ and, by Lemma~\ref{L:block},
		$\delta_{k(n)}$ converges to zero, then $x_{k(n)}$ converges to $\omega$. Hence
		$\omega \in \mathscr K_0$.
		
		(3) If $c \in \Sigma \setminus \{x_1\}$ then $X\setminus \{c\}$ is
		the disjoint union  of the set $U$ of all points of $X$ which are to
		the left of $c$ and the set $V$ of all points of $X$ which are to
		the right of $c$. Both $U$ and $V$ are nonempty and open in $X$ and so
		$X\setminus \{c\}$ is not connected.
		
		(4) It is obvious that $\Sigma$ is totally disconnected.
		
		(5) Even more has already been explained when defining the monotone retraction.
		
		(6) To simplify the notation, instead of $\mathscr K_i^m$, $f_i^m$ and
		$\ell_i^m$ write for a moment $\mathscr K$, $f$ and $\ell$. We have $X= L \sqcup
		\mathscr K \sqcup R$ where $L$ or $R$ are the sets of all those points from $X$
		which are to the left of $\ell$ or to the right of $f$, respectively.
		
		By the assumption, some point $b\in Q$ belongs to $\mathscr K\setminus \{\ell,
		f\}$. If $Q\subseteq \mathscr K$ then $Q\cap \mathscr K = Q$ and we are done. So
		assume that $Q$ intersects $L\cup R$. Without loss of generality suppose that
		there is a point $a \in Q\cap L$. Then, since $Q$ contains both $a$ and $b$ and,
		by the proof of (3), $\ell$ separates $a$ and $b$ in $X$, the continuum $Q$
		contains $\ell$. Thus the compact set $Q\cap (\mathscr K \sqcup R)$ contains
		$\ell$ and $b$. We claim that it is a continuum. Otherwise
		$Q\cap (\mathscr K \sqcup R) = A\sqcup B$ where $A,B$ are nonempty compact sets.
		One of them, say $A$, contains $\ell$.
		Then $Q= \left((Q\cap L)\cup A\right) \sqcup B$ is a disjoint union of two
		nonempty compacts sets, which contradicts the fact that it is a continuum.
		
		We have proved that $Q\cap (\mathscr K \sqcup R) = Q\setminus L$ is a
		non-degenerate continuum. If it is disjoint with $R$ then we are done because
		$Q\cap \mathscr K = (Q\setminus L) \setminus R = Q\setminus L$. Otherwise, the
		non-degenerate continuum $Q\setminus L$ (containing $b$ and $\ell$) contains a
		point $c\in R$ and by repeating an argument from above we get that $(Q\setminus
		L)\setminus R = Q\cap \mathscr K$ is a nondegenerate continuum.
	\end{proof}

	\subsection{Properties of continuous selfmaps of $X$}\label{SS:cont-bricks}
	
	The list of all bricks is: $\mathscr K_0, \mathscr K_1^1, \mathscr K_2^1, \dots,
	\mathscr K_1^2, \mathscr K_2^2, \dots, \dots$.
	It follows from the construction that two terms of this list are homeomorphic
	sets if and only if
	they are copies of the same of the continua $\mathscr K_1, \mathscr K_2, \dots$,
	i.e. if and only if they are of the form
	$\mathscr K_i^m$ and $\mathscr K_j^n$ with $i2^{m-1} = j2^{n-1}$.
	
	\begin{lem}\label{L:Kim to Kjn}
		Consider bricks in $X$ and in particular the brick $\mathscr K_0$ and the bricks
		$\mathscr K_i^m = h_i^m (\mathscr K_{i2^{m-1}})$
		and $\mathscr K_j^n = h_j^n (\mathscr K_{j2^{n-1}})$.
		\begin{itemize}
			\item [(a)] The only continuous selfmaps of a brick are constant maps and the
			identity.
			\item [(b)] The only continuous maps $\mathscr K_i^m \to \mathscr K_0$ are
			constant maps.
			\item [(c)] The only continuous maps $\mathscr K_0 \to \mathscr K_i^m$ are
			constant maps.
			\item [(d)] If the sets $\mathscr K_{i2^{m-1}}$ and $\mathscr K_{j2^{n-1}}$ are
			different terms of the sequence $\mathscr K_1, \mathscr K_2, \dots$
			(i.e., $i2^{m-1} \neq j2^{n-1}$), then the only continuous maps $\mathscr K_i^m
			\to \mathscr K_j^n$
			are constant maps.
			\item [(e)] If the sets $\mathscr K_{i2^{m-1}}$ and $\mathscr K_{j2^{n-1}}$ are
			equal (i.e., $i2^{m-1} = j2^{n-1}$) then there exists a non-constant continuous
			map $\varphi : \mathscr K_i^m \to \mathscr K_j^n$. Such a map exists only one,
			it is a homeomorphism, $\varphi (f_i^m) = f_j^n$ and $\varphi (\ell_i^m) =
			\ell_j^n$.
		\end{itemize}
	\end{lem}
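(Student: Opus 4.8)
The plan is to reduce all five cases to the rigidity of Cook continua recorded in Lemma~\ref{L:Cook-properties}, Lemma~\ref{L:Cook-family} and Corollary~\ref{C:Cook-family}. The key starting observation is that, by construction, every brick is a homeomorphic copy of one of the planar Cook continua $\mathscr K, \mathscr K_1, \mathscr K_2, \dots$: the head $\mathscr K_0 = \{0\}\times \mathscr K$ is a copy of $\mathscr K$, while $\mathscr K_i^m$ is a copy of $\mathscr K_{i2^{m-1}}$ via the unique homeomorphism $h_i^m$, with $f_i^m = h_i^m(f_{i2^{m-1}})$ and $\ell_i^m = h_i^m(\ell_{i2^{m-1}})$. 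In particular every brick is itself a Cook continuum.

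Granting this, part (a) is immediate: applying Definition~\ref{D:Cook} (or the first item of Corollary~\ref{C:Cook-family}) with $K$ equal to the whole brick shows that a continuous selfmap of a brick is constant or the identity. For (b), (c) and (d) the only thing to notice is that in each of the three situations the domain brick and the target brick are copies of two \emph{different} members of the family $\mathscr K, \mathscr K_1, \mathscr K_2, \dots$ --- namely of $\mathscr K$ and $\mathscr K_{i2^{m-1}}$ with $i2^{m-1}\ge 1$ in (b) and (c), and of $\mathscr K_{i2^{m-1}}$ and $\mathscr K_{j2^{n-1}}$ with $i2^{m-1}\ne j2^{n-1}$ in (d). The third item of Corollary~\ref{C:Cook-family} says that copies of different members of the family are incomparable by continuous maps, so (applying it with $\widetilde K$ the whole domain brick) every continuous map between them is constant. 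This settles (b)--(d).

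The only case requiring a genuine argument is (e). Put $p := i2^{m-1} = j2^{n-1}$, so $\mathscr K_i^m$ and $\mathscr K_j^n$ are both copies of $\mathscr K_p$. For existence, the composition $\varphi := h_j^n \circ (h_i^m)^{-1}\colon \mathscr K_i^m \to \mathscr K_j^n$ is a homeomorphism, hence a non-constant continuous map, and by its very definition $\varphi(f_i^m) = h_j^n(f_p) = f_j^n$ and $\varphi(\ell_i^m) = h_j^n(\ell_p) = \ell_j^n$. For uniqueness, I would take an arbitrary non-constant continuous $\psi\colon \mathscr K_i^m\to\mathscr K_j^n$; viewing $\mathscr K_j^n$ as a copy of $\mathscr K_p$ possibly different from $\mathscr K_i^m$, the second item of Corollary~\ref{C:Cook-family} (with $\widetilde K$ the whole $\mathscr K_i^m$) gives that $\psi$ is a homeomorphism of $\mathscr K_i^m$ onto the subcontinuum $\psi(\mathscr K_i^m)$ of $\mathscr K_j^n$. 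That image is nondegenerate and homeomorphic to $\mathscr K_i^m \cong \mathscr K_j^n$, so it is comparable by continuous maps with the whole $\mathscr K_j^n$; since $\mathscr K_j^n$ is a Cook continuum, Lemma~\ref{L:Cook-properties}(1) forces $\psi(\mathscr K_i^m) = \mathscr K_j^n$, i.e.\ $\psi$ is a continuous surjection. Now both $\psi$ and $\varphi$ are continuous surjections between the homeomorphic Cook continua $\mathscr K_i^m$ and $\mathscr K_j^n$, so by the ``moreover'' clause of Lemma~\ref{L:Cook-properties}(3) each of them coincides with the unique homeomorphism between these two continua; hence $\psi = \varphi$, which gives uniqueness together with the asserted behaviour on the extremal points.

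I do not expect a serious obstacle here. The one point that genuinely needs care is the surjectivity step in (e): it is precisely what allows us to invoke the ``moreover'' clause of Lemma~\ref{L:Cook-properties}(3) rather than only the part about homeomorphisms, and without it one would get uniqueness of the homeomorphism $\mathscr K_i^m\to\mathscr K_j^n$ but not of all non-constant continuous maps between these bricks.
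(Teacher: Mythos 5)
Your handling of (a)--(d) and the existence part of (e) matches the paper. Where you diverge is the uniqueness step in (e): the paper conjugates the arbitrary non-constant $\varphi$ by the homeomorphisms $h_i^m, h_j^n$ to obtain a continuous self-map of $\mathscr K^* := \mathscr K_{i2^{m-1}}$, and then rigidity of the Cook continuum $\mathscr K^*$ forces this conjugate to be the identity, whence $\varphi = h_j^n\circ (h_i^m)^{-1}$. You instead work ``externally'': first showing via Corollary~\ref{C:Cook-family}($\widehat a$) that $\psi$ is a homeomorphism onto its image, then using Lemma~\ref{L:Cook-properties}(1) to force the image to be the whole target (since a proper nondegenerate subcontinuum of a Cook continuum could not be comparable with the ambient continuum), and finally invoking the ``moreover'' clause of Lemma~\ref{L:Cook-properties}(3) to conclude $\psi$ is the unique homeomorphism. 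Both are correct; the paper's conjugation trick is shorter and bypasses the surjectivity discussion entirely, while your route makes explicit the intermediate fact that non-constant continuous maps between homeomorphic bricks are automatically surjective, which is arguably slightly more illuminating about why the uniqueness holds.
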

	
	\begin{proof}
		(a) follows from Corollary~\ref{C:Cook-family}($\widetilde{a}$).
		(b), (c) and (d) follow from Corollary~\ref{C:Cook-family}($\widetilde{b}$).
		
		(e) Put $\mathscr K^*:= \mathscr K_{i2^{m-1}} = \mathscr K_{j2^{n-1}}$. The
		homeomorphism $H = h_j^n \circ (h_i^m)^{-1}: \mathscr K_i^m \to \mathscr K_j^n$
		obviously has all the required properties. Now let $\varphi : \mathscr K_i^m \to
		\mathscr K_j^n$ be any non-constant continuous map. Then $(h_j^n)^{-1} \circ
		\varphi \circ h_i^m$ is a non-constant continuous map $\mathscr K^*  \to
		\mathscr K^*$. Since $\mathscr K^*$ is a Cook continuum, the only non-constant
		continuous selfmap of $\mathscr K^*$ is the identity. Thus $(h_j^n)^{-1} \circ
		\varphi \circ h_i^m = \Id|_{\mathscr K^*}$ whence $\varphi = h_j^n \circ
		(h_i^m)^{-1} =H$ and so the proof of uniqueness is finished.
	\end{proof}

	\begin{lem}\label{L:map}
		If $B$ is a  brick then $F(B)$ is either a singleton or a brick homeomorphic to
		$B$.
	\end{lem}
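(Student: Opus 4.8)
The plan is to set $Q:=F(B)$, observe that $Q$ is a subcontinuum of $X$ (being the continuous image of the continuum $B$), dispose of the case $Q$ degenerate immediately, and in the remaining case show that $Q$ is a brick homeomorphic to $B$. The argument splits according to whether $Q$ meets a \emph{non-extremal} point of some brick lying in the snake.

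The core of the proof is the claim: \emph{if $Q$ meets a non-extremal point of a snake brick $\mathscr K_i^m$, then $Q=\mathscr K_i^m$} and $B$ is a brick homeomorphic to it. To prove this, apply Lemma~\ref{L:space}(6) to conclude that $Q\cap\mathscr K_i^m$ is a nondegenerate subcontinuum, and compose $F|_B$ with the monotone retraction $r:=r_{\mathscr K_i^m}\colon X\to\mathscr K_i^m$ furnished by Lemma~\ref{L:space}(5). Then $r\circ F|_B\colon B\to\mathscr K_i^m$ is a non-constant continuous map between two bricks, so by Lemma~\ref{L:Kim to Kjn} (cases (b),(c),(d) being excluded) it must be a homeomorphism of $B$ onto $\mathscr K_i^m$ --- either the identity, when $B=\mathscr K_i^m$, or the unique homeomorphism of case~(e); in particular $B$ is a snake brick homeomorphic to $\mathscr K_i^m$. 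Denoting by $f,\ell$ the two extremal points of $\mathscr K_i^m$ and by $a_1,a_2$ their unique $r\circ F|_B$-preimages, one checks that for $x\in B\setminus\{a_1,a_2\}$ the point $r(F(x))$ lies in $\mathscr K_i^m\setminus\{f,\ell\}$; since $r$ maps $X\setminus\mathscr K_i^m$ into $\{f,\ell\}$, this forces $F(x)\in\mathscr K_i^m$ and hence $F(x)=r(F(x))$. Thus $F$ carries the dense subset $B\setminus\{a_1,a_2\}$ of the perfect continuum $B$ onto $\mathscr K_i^m\setminus\{f,\ell\}$, and by continuity $Q=F(B)=\mathscr K_i^m$, proving the claim.

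It remains to treat the case where $Q$ meets no non-extremal point of any snake brick. Since $X\setminus\mathscr K_0=\bigcup_{m,i}\mathscr K_i^m$ and each point there is either a non-extremal point of a (unique) snake brick or a member of $\Sigma$, this forces $Q\subseteq\mathscr K_0\cup\Sigma$. I would then show $Q\subseteq\mathscr K_0$: otherwise pick $\sigma\in Q\setminus\mathscr K_0\subseteq\Sigma$, and, since $\mathscr K_0$ is closed and $Q$ is a nondegenerate metric continuum, choose a small ball $W$ about $\sigma$ relative to $Q$ with $\overline W$ disjoint from $\mathscr K_0$ and $\overline W\neq Q$; the Boundary Bumping Theorem~\ref{T:BBT} applied to the continuum $Q$ then yields a nondegenerate component of $\overline W$ contained in $\overline W\subseteq\Sigma$, contradicting the total disconnectedness of $\Sigma$ (Lemma~\ref{L:space}(4)). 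Hence $Q\subseteq\mathscr K_0$, so $F|_B\colon B\to\mathscr K_0$ is a non-constant continuous map; by Lemma~\ref{L:Kim to Kjn}(a),(b) this is possible only if $B=\mathscr K_0$ and $F|_{\mathscr K_0}=\Id$, whence $Q=\mathscr K_0=B$.

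The step I expect to be the main obstacle is the passage, inside the core claim, from ``$r\circ F|_B$ is a homeomorphism onto $\mathscr K_i^m$'' to ``$F(B)$ itself equals $\mathscr K_i^m$'': a priori the retraction $r$ could collapse a part of $F(B)$ lying outside $\mathscr K_i^m$, so one genuinely needs the density argument above to transfer the conclusion from $r\circ F$ back to $F$. The Boundary Bumping step is a second, smaller subtlety --- it is the only way I see to prevent a nondegenerate continuum contained in $\mathscr K_0\cup\Sigma$ from protruding out of the head into $\Sigma$.
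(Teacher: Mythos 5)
Your proof is correct and rests on the same two pillars as the paper's proof: the monotone retraction $r$ onto the target brick and the rigidity of Cook continua via Lemma~\ref{L:Kim to Kjn}. The structural differences are minor but genuine, and your version is a little tighter. The paper first rules out $F(B)\subseteq\Sigma$, then splits into three cases (image in the head, image in the snake, image meeting both) and, in the snake case, proves $F(B)\supseteq B_k$ and separately argues that the brick $B_k$ hit in a non-extremal point is \emph{unique} (projecting $F(B)$ onto the $x$-axis and noting that consecutive bricks are non-homeomorphic) to conclude equality. You avoid the uniqueness argument entirely: from the fact that $r$ collapses $X\setminus\mathscr K_i^m$ into $\{f,\ell\}$ while $r\circ F|_B$ is a homeomorphism, you deduce $F=r\circ F$ on $B\setminus\{a_1,a_2\}$ and obtain $F(B)=\mathscr K_i^m$ in one stroke. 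Likewise, your two-case split (meets a non-extremal snake point or not) folds the paper's Cases~1 and~3 and its preliminary exclusion of $F(B)\subseteq\Sigma$ into a single Boundary Bumping argument, which is a clean substitute for the paper's total-disconnectedness remark. Both closings are valid; yours is the more direct.
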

	
	\begin{proof} The snake can be written in the form $\bigcup _{i=1}^\infty B_i$
		where $B_i$ are bricks. Fix a brick $B$. It is either the head $\mathscr K_0$ or
		one of the bricks $B_i$. Denote $F|_B$ by $G$. Assume that $G(B)$ is not a
		singleton.  Then it is a	non-degenerate continuum and so, by
		Lemma~\ref{L:space}(4), it
		cannot be a subset of $\Sigma$. To prove that it is a brick,
		distinguish three cases.
		
		\smallskip
		
		\emph{Case 1 : $G(B)$ is a non-degenerate subset of the head $\mathscr K_0$.} By
		Lemma~\ref{L:Kim to Kjn}(b), $B$ cannot be any of those $B_i$.
		Therefore $B= \mathscr K_0$. By Lemma~\ref{L:Kim to Kjn}(a), $G$ is the identity
		and so $G(B)= \mathscr K_0$.

		\emph{Case 2 : $G(B)$ is a non-degenerate subset of the snake.} Let $B_k$  be
		one of those bricks in the snake, which are intersected by $G(B)$ in a
		non-extremal point (i.e. in a point different from the first and last
		point of $B_k$). By Lemma~\ref{L:space}(6) the continuum $G(B)$
		intersects $B_k$ in a non-degenerate subcontinuum. Now let $r_k$ be the monotone
		retraction of $X$ onto $B_k$. The map $r_k\circ G : B\to B_k$
		is continuous and not constant. By Lemma~\ref{L:Kim to Kjn}, $B_k$ is a copy of
		$B$ and
		$r_k\circ G (B) = B_k$. Hence, taking into account the special form of the
		retraction $r_k$,  $G (B)$ contains all the points of $B_k$ with possible
		exceptions of the extremal points of $B_k$. However, $G(B)$ is a compact set,
		therefore $G(B) \supseteq B_k$.
		
		Let us summarize. We have proved that if $k$ is such that $G(B)$ intersects
		$B_k$ in a non-extremal point, then $B_k$ is a copy of $B$ (hence $B$ is not the
		head $\mathscr K_0$, because in the snake we do not have any brick homeomorphic
		to $\mathscr K_0$) and $G(B) \supseteq B_k$. We claim that such a brick $B_k$ is
		only one. Suppose, on the contrary, that $G(B)$ intersects two bricks in the
		snake in non-extremal points. Since the projection of the continuum $G(B)$ onto
		the $x$-axis is again a continuum, $G(B)$ intersects also two \emph{consecutive}
		bricks in the snake in non-extremal points. We know that each of these two
		consecutive bricks is a copy of $B$. However, no two consecutive bricks in the
		snake are homeomorphic, a contradiction.
		
		Thus $G(B)$ intersects only one brick $B_k$ in the snake in a non-extremal
		point. Then $G(B)\supseteq B_k$ and $G(B)$ does not intersect any other brick in
		a non-extremal point. It follows that $G(B)=B_k$ and the proof in this case is
		finished, because the fact that $B_k$ is a copy of $B$ has already been
		established.
		
		\emph{Case 3 : $G(B)$ intersects both the head and the snake.} So, the continuum
		$G(B)$ intersects both $\mathscr K_0$
		and some brick in the snake. Then
		each brick in the snake which is to the left of this one,
		is intersected by $G(B)$, even in non-extremal points. This is in particular
		true for some two consecutive bricks and in the same way as in Case 2 we get
		that these two consecutive bricks in the snake are homeomorphic. This
		contradiction shows that Case 3 is impossible.
	\end{proof}
	
	\begin{cor}\label{C:union}
		Let $\mathscr B$ be a subfamily of the family of all bricks and let $F(\bigcup
		\mathscr B) \supseteq C$ for some brick $C$. Then there is a brick $B\in
		\mathscr B$ such that $B$ is homeomorphic to $C$ and $F(B)=C$.
	\end{cor}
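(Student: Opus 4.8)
The plan is to combine two facts: the family of all bricks is countable, while a brick, being a nondegenerate continuum, has cardinality $\mathfrak c$. First I would recall that the bricks of $X$ are exactly $\mathscr K_0$ together with the continua $\mathscr K_i^m$, $i,m\in\mathbb N$, so the family of all bricks, and hence the given subfamily $\mathscr B$, is countable. By Lemma~\ref{L:map}, for each $B\in\mathscr B$ the image $F(B)$ is either a singleton or a brick homeomorphic to $B$; in the latter case $F(B)$ is literally one of the listed bricks of $X$.

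Next I would invoke the geometry of $X$: by the construction of $X$ (see Subsection~\ref{SS:construction of X} and Lemma~\ref{L:block}), any two distinct bricks of $X$ meet in at most one point. Indeed, the head $\mathscr K_0$ lies in $\pi_0$ while every snake brick $\mathscr K_i^m$ lies strictly to the right of $\pi_0$, so $\mathscr K_0$ is disjoint from each snake brick; two consecutive snake bricks of a common $D_m^*$ meet only at the shared extremal point $\ell_i^m=f_{i+1}^m$; and all remaining pairs of snake bricks are disjoint (in particular $\mathscr K_i^m\cap\mathscr K_j^{m+1}=\emptyset$ for all $i,j$, since $x_{m+1}$ is the only common point of $D_m$ and $D_{m+1}$ and it lies in no $\mathscr K_i^m$). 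Consequently, whenever $B\in\mathscr B$ is such that $F(B)$ is a brick different from $C$, the set $F(B)\cap C$ has at most one point; the same is trivially true when $F(B)$ is a singleton.

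Now split $\mathscr B=\mathscr B'\sqcup\mathscr B''$ with $\mathscr B'=\{B\in\mathscr B:\, F(B)=C\}$, and suppose for contradiction that $\mathscr B'=\emptyset$. Then by the previous paragraph $F(B)\cap C$ has at most one point for every $B\in\mathscr B$, so the hypothesis $F(\bigcup\mathscr B)\supseteq C$ gives that $C=\bigcup_{B\in\mathscr B}\bigl(C\cap F(B)\bigr)$ is a countable union of sets of cardinality at most $1$, hence countable. But $C$ is a brick, thus a nondegenerate continuum, so $\card C=\mathfrak c>\aleph_0$, a contradiction. Therefore $\mathscr B'\neq\emptyset$, and any $B\in\mathscr B'$ satisfies $F(B)=C$ and, by Lemma~\ref{L:map}, is homeomorphic to $C$, which is exactly the assertion.

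There is no deep difficulty here; the only step requiring attention is making precise, from the construction, the claim that two distinct bricks of $X$ intersect in at most one point (in particular the interactions of $\mathscr K_0$ with snake bricks and of $\mathscr K_i^m$ with $\mathscr K_j^{m+1}$). Once that is in hand, the corollary follows from the one-line cardinality count above, so I do not expect any serious obstacle.
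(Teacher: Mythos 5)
Your argument is correct and is essentially the same as the paper's: both rest on the countability of the brick family, the cardinality $\mathfrak c$ of the brick $C$, Lemma~\ref{L:map}, and the fact that distinct bricks share at most one point. You merely spell out the geometric claim and phrase the cardinality step as an explicit contradiction, whereas the paper states it directly.
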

	
	\begin{proof}
		There are only countably many bricks in $\mathscr B$ and $C$ has cardinality
		$\mathfrak c$. Therefore there is a brick $B\in \mathscr B$ such that $F(B)$ intersects
		$C$ in more than one point. Hence, by Lemma~\ref{L:map}, $F(B)$ is a brick
		homeomorphic to $B$ and since different bricks have at most one point in common,
		$F(B)=C$.
	\end{proof}
	
	If $\mathscr K$ is a brick in the snake with extremal points $f$ and $\ell$, put
	$\mathscr K^\circ := \mathscr K \setminus \{f,\ell\}$ \index{$\mathscr K^\circ$}.\footnote{If the brick
		$\mathscr K$ is neither $\mathscr K_0$ nor $\mathscr K^1_1$, then $\mathscr
		K^\circ$ is the interior of $\mathscr K$ in the space $X$.}
	
	\begin{cor}\label{C:brick-brick}
		Let $B$ and $C$ be bricks in the snake such that $F(B)=C$. Then $F|_B: B\to C$
		is a homeomorphism sending the first and the last point of $B$ to the first and
		the last point of $C$, respectively. So, $F(B^\circ) = C^\circ$.	
	\end{cor}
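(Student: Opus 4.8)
The plan is to reduce everything to Lemma~\ref{L:map} and Lemma~\ref{L:Kim to Kjn}(e). First I would observe that $C$, being a brick in the snake, is a nondegenerate continuum, so $F(B)=C$ is not a singleton; in particular $F|_B$ is a non-constant continuous map. By Lemma~\ref{L:map} its image $F(B)=C$ is therefore a brick homeomorphic to $B$. Writing $B=\mathscr K_i^m$ and $C=\mathscr K_j^n$ for suitable indices, the fact that $B$ and $C$ are homeomorphic forces $i2^{m-1}=j2^{n-1}$ (this is exactly the criterion for two bricks in the snake to be homeomorphic, recorded in the discussion preceding Lemma~\ref{L:Kim to Kjn}), i.e. $B$ and $C$ are copies of one and the same Cook continuum.

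Next I would corestrict: since $F(B)=C$, the map $F|_B$ may be regarded as a continuous map $\mathscr K_i^m\to\mathscr K_j^n$, and it is non-constant. Lemma~\ref{L:Kim to Kjn}(e) then applies: there is exactly one such non-constant continuous map, it is a homeomorphism $\varphi$, and it satisfies $\varphi(f_i^m)=f_j^n$ and $\varphi(\ell_i^m)=\ell_j^n$. Hence $F|_B=\varphi$, which is precisely the assertion that $F|_B:B\to C$ is a homeomorphism carrying the first point of $B$ to the first point of $C$ and the last point of $B$ to the last point of $C$.

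Finally, since $F|_B$ is a bijection $B\to C$ mapping $\{f_i^m,\ell_i^m\}$ onto $\{f_j^n,\ell_j^n\}$, it maps $B^\circ=B\setminus\{f_i^m,\ell_i^m\}$ onto $C\setminus\{f_j^n,\ell_j^n\}=C^\circ$, so $F(B^\circ)=C^\circ$. I do not expect any genuine obstacle here; the only points that need a moment's care are that $F|_B$ is really non-constant (immediate from $F(B)=C$ being nondegenerate) and that it is the \emph{corestricted} map $B\to C$, not $F|_B$ viewed as a map into all of $X$, to which Lemma~\ref{L:Kim to Kjn}(e) is being applied.
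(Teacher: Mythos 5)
Your argument is correct and follows the paper's own approach: by Lemma~\ref{L:map} the image $F(B)=C$ is a brick homeomorphic to $B$, and then the uniqueness and extremal-point assertions of Lemma~\ref{L:Kim to Kjn}(e), applied to the corestricted map $F|_B\colon B\to C$, give everything else. The paper's proof also cites Lemma~\ref{L:Kim to Kjn}(a) to cover the case $B=C$, but as your reasoning shows, part~(e) already subsumes that case, so the two proofs are essentially identical.
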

	
	\begin{proof}
		By Lemma~\ref{L:map}, $C$ is homeomorphic to $B$. The rest follows from
		Lemma~\ref{L:Kim to Kjn}(a)(e).	
	\end{proof}
	
	The following immediately follows from Lemma~\ref{L:map} and
	Corollary~\ref{C:brick-brick}.
	
	\begin{cor}\label{C:hull}
		Let $\ell \prec f$ be consecutive points of $\Sigma$ mapped by $F$ to
		consecutive points $F(\ell) \prec F(f)$ of $\Sigma$. Then the brick
		$\langle \langle \ell, f \rangle \rangle$ is mapped onto the brick $\langle
		\langle F(\ell), F(f) \rangle \rangle$.
	\end{cor}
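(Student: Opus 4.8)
The plan is to read the result off directly from Lemma~\ref{L:map} and Corollary~\ref{C:brick-brick}, the only extra work being a small amount of bookkeeping about the linear order $\preccurlyeq$ on $\Sigma$. First I would recall from the construction that consecutive points of $\Sigma$ are precisely the pairs of extremal points $\ell_i^m \prec f_i^m$ of a brick $\mathscr K_i^m$ in the snake, and that $\langle \langle \ell_i^m, f_i^m \rangle \rangle = \mathscr K_i^m$. Hence $B := \langle \langle \ell, f \rangle \rangle$ is a brick lying in the snake, with $\ell$ its last point and $f$ its first point; likewise $C := \langle \langle F(\ell), F(f) \rangle \rangle$ is a brick in the snake whose last point is $F(\ell)$ and whose first point is $F(f)$. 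Note the hypothesis $F(\ell) \prec F(f)$ in particular forces $F(\ell) \neq F(f)$.

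Next I would apply Lemma~\ref{L:map} to the brick $B$: since $F(B)$ contains the two distinct points $F(\ell)$ and $F(f)$, it is not a singleton, so it is a brick homeomorphic to $B$. Because $B$ is a snake brick, and snake bricks are copies of the $\mathscr K_n$ with $n \geq 1$, which are non-homeomorphic to $\mathscr K$ (the head), $F(B)$ cannot be the head; thus $F(B)$ is a brick in the snake. (Alternatively one can argue that $F(\ell), F(f) \in \Sigma$, and $\Sigma$ is disjoint from the head, so $F(B)$ meets the snake and hence, being a brick, is a snake brick.) Now Corollary~\ref{C:brick-brick} applies to the pair $B$, $F(B)$ of snake bricks with $F(B) = F(B)$: the map $F|_B$ is a homeomorphism sending the first point of $B$ to the first point of $F(B)$ and the last point of $B$ to the last point of $F(B)$. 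Since the first and last points of $B$ are $f$ and $\ell$ respectively, the first and last points of $F(B)$ are $F(f)$ and $F(\ell)$. A brick is determined by its unordered pair of extremal points, so $F(B) = \langle \langle F(\ell), F(f) \rangle \rangle = C$, which is the assertion.

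I do not expect a genuine obstacle here; the statement is billed as immediate, and indeed it is. The two points that require a moment's care are: first, confirming that $F(B)$ lies in the snake, so that Corollary~\ref{C:brick-brick} is legitimately applicable and the extremal points of $F(B)$ are points of $\Sigma$; and second, keeping the conventions straight, namely that the ``first point'' of a brick is its $\prec$-\emph{larger} (rightmost) extremal point, which is why ``$\ell \prec f$'' pairs with ``$\ell$ last, $f$ first'' and ``$F(\ell) \prec F(f)$'' pairs with ``$F(\ell)$ last, $F(f)$ first''.
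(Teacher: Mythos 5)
Your proof is correct and takes the same route the paper indicates: the paper states the corollary follows immediately from Lemma~\ref{L:map} and Corollary~\ref{C:brick-brick}, and you have simply unpacked that, with the right bookkeeping about the order $\prec$ (first point is the rightmost extremal point, last point the leftmost) and the observation that $F(B)$ lies in the snake because its extremal points lie in $\Sigma$.
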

	
	\begin{lem}\label{L:const 1}
		\begin{itemize}
			\item [(a)] If a point $b$ of a brick $B$ is fixed for $F$ then either $b$ is
			the only fixed point in $B$ and then $F(B)=\{b\}$, or all points of $B$ are
			fixed.
			\item [(b)] If two points of a brick are fixed then all points of the brick
			are fixed.
			\item [(c)] If $B$ is a brick and two points of $B$ are mapped by $F$ to the
			same point, then $F(B)$ is a singleton.
			\item [(d)] If $B_1$ and $B_2$ are bricks in the snake (different or not,
			homeomorphic or not) and one of the extremal points of $B_1$ is mapped by $F$ to
			a non-extremal point $z\in B_2$ then $F(B_1)=\{z\}$.
			\item [(e)] If $B_1$ and $B_2$ are bricks in the same set $D_m$ and one of the
			points of $B_1$ is mapped by $F$ to a point $z\in B_2\setminus B_1$ then
			$F(B_1)=\{z\}$.
			\item [(f)] If $B$ is a brick in the snake and $F(B)$ intersects the head
			$\mathscr K_0$ then $F(B)$ is a singleton in $\mathscr K_0$.
		\end{itemize}
	\end{lem}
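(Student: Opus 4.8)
The plan is to run all six items through the same machine, whose only substantive input is Lemma~\ref{L:map}: for any brick $B$, $F(B)$ is either a singleton or a brick homeomorphic to $B$, and in the second case Corollary~\ref{C:brick-brick} (when $B$ lies in the snake), respectively Lemma~\ref{L:Kim to Kjn}(a)(e) (in general, in particular when $B=\mathscr K_0$), tells us that $F|_B$ is a homeomorphism onto that brick which carries the first point to the first point and the last point to the last point. Against this I use three structural facts about $X$: distinct bricks meet in at most one point, and such a point is then an extremal point of each of the two bricks, which moreover are consecutive inside a single $D_m$; the snake is disjoint from the head $\mathscr K_0$; and the bricks contained in a fixed $D_m$ are pairwise non-homeomorphic. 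In each item I assume, for contradiction, that $F(B)$ (resp.\ $F(B_1)$) is \emph{not} a singleton, and either reach a contradiction or, in (a), conclude that $F|_B$ is the identity.

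For (a), if $F(B)$ is a singleton it equals $\{F(b)\}=\{b\}$, so $b$ is the unique fixed point of $F$ in $B$ and $F(B)=\{b\}$; otherwise $F(B)=C$ is a brick homeomorphic to $B$ with $b\in B\cap C$. If $B\neq C$, then $B$ and $C$ are two consecutive bricks of some $D_m$ and the shared point $b$ is the last point of one of them and the first point of the other; since $F|_B$ matches first points with first points and last points with last points, this forces the first and last points of $C$ to coincide, impossible for a nondegenerate continuum. Hence $B=C$, and the non-constant selfmap $F|_B$ of the Cook continuum $B$ is the identity by Lemma~\ref{L:Kim to Kjn}(a), so every point of $B$ is fixed. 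Item (b) is then immediate from (a) applied to one of the two fixed points; and for (c), if $F(B)$ were not a singleton then $F|_B$ would be a homeomorphism onto $F(B)$, hence injective, contradicting that two points of $B$ have the same image.

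For (d), (e) and (f) the source brick $B_1$ lies in the snake, so if $F(B_1)$ is not a singleton then $F(B_1)=C$ is a brick of the snake homeomorphic to $B_1$ and $F|_{B_1}\colon B_1\to C$ is a homeomorphism carrying extremal points to extremal points. In (d) the hypothesis then exhibits a point $z\in C\cap B_2$ that is extremal in $C$ but non-extremal in $B_2$; since two distinct bricks can meet only in a common extremal point, $C=B_2$ is impossible as well, so this is a contradiction and $F(B_1)=\{z\}$. In (f), $C$ would be a snake brick meeting the head, contradicting their disjointness, so $F(B_1)$ is a singleton, which then necessarily lies in $\mathscr K_0$. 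In (e), $z\in B_2\setminus B_1$ gives $z\in C\setminus B_1$, so $C\neq B_1$; and $C\neq B_2$ since $C$ is homeomorphic to $B_1$ while the bricks of $D_m$ are pairwise non-homeomorphic; thus $C$ and $B_2$ are distinct bricks meeting at $z$, hence consecutive inside the $D_m$ containing $B_2$, which forces $C$ itself to be a brick of $D_m$ homeomorphic to $B_1$ --- so $C=B_1$ by pairwise non-homeomorphy, contradicting $C\neq B_1$; hence $F(B_1)=\{z\}$.

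The part I expect to require the most care is the bookkeeping in (a) and (e): one must verify that ``$B\neq C$ with $B\cap C\neq\emptyset$'' genuinely forces the two bricks to be consecutive within one $D_m$ (this is where the precise description of how bricks overlap in the construction of $X$ is used, in particular that the junction point $x_m$ between $D_{m-1}$ and $D_m$ lies in no brick of $D_{m-1}$), and, in (e), that a brick meeting $B_2\subseteq D_m$ in a single point is itself trapped inside $D_m$, so that the non-homeomorphy of the bricks of $D_m$ may be invoked. Everything else is a direct application of Lemma~\ref{L:map}, Corollary~\ref{C:brick-brick} and Lemma~\ref{L:Kim to Kjn}.
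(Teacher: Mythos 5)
Your proof is correct and uses essentially the same machinery as the paper's: each item is handled by applying Lemma~\ref{L:map} and Corollary~\ref{C:brick-brick}/Lemma~\ref{L:Kim to Kjn} together with the combinatorics of brick intersections in $X$, which is exactly the paper's strategy. The only divergence is in the details of (a) and (e): in (a) the paper rules out $F(B)\neq B$ by noting that no brick distinct from $B$ but intersecting $B$ is homeomorphic to $B$, whereas you use the extremal-point preservation of Corollary~\ref{C:brick-brick} to force the first and last points of $F(B)$ to coincide; in (e) the paper appeals directly to the fact that a brick outside $D_m$ never meets one inside $D_m$, whereas you first trap $F(B_1)$ inside $D_m$ via the consecutive-brick structure and then finish with pairwise non-homeomorphy inside $D_m$. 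Both routes close the same gaps with equivalent structural facts about $X$.
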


	\begin{proof}
		(a) By Lemma~\ref{L:map}, $F(B) =\{b\}$ or $F(B)$ is a brick homeomorphic to
		$B$ and clearly containing $b$. However, by construction of $X$, the bricks
		which are different from $B$ but intersect $B$ are not homeomorphic to $B$. Thus
		$F(B) =\{b\}$ or $F(B)=B$. In the latter case, all the points of $B$ are fixed
		by Lemma~\ref{L:Kim to Kjn}(a).
		
		(b) This follows from (a).
		
		(c) If not, then by Lemma~\ref{L:map} and Corollary~\ref{C:brick-brick} the map
		$F|_B:B\to F(B)$ is a homeomorphism, contradicting our assumption that it is not
		injective.
		
		(d) Just combine Lemma~\ref{L:Kim to Kjn} and Lemma~\ref{L:map}.
		
		(e) Since $F(B_1)\ni z$, by Lemma~\ref{L:map} either $F(B_1) = \{z\}$ or
		$F(B_1)$ is a brick homeomorphic to $B_1$. Suppose that we are in the latter
		case. Since $z\notin B_1$, the brick $F(B_1) \neq B_1$. Thus $B_1$ and $F(B_1)$
		are two different but homeomorphic bricks.  Hence, since the bricks in $D_m$ are
		pairwise non-homeomorphic, the brick $F(B_1)$ does not belong to $D_m$. However,
		the structure of $X$ is such that a brick which does not belong to $D_m$ never
		contains a point $z$ belonging to a brick in $D_m$. Thus, we have the former
		case $F(B_1) = \{z\}$.
		
		(f) Since no brick homeomorphic with $B$ intersects $\mathscr K_0$, by
		Lemma~\ref{L:map} we get that $F(B)$ is a singleton in~$\mathscr K_0$.
	\end{proof}
	
	The following lemma easily follows from the construction of $X$ and so we omit
	the proof.
	
	\begin{lem}\label{L:metalemma}
		Let $\mathscr P$ be a family of some of the bricks in the snake.  Assume that
		$\mathscr P$ has the following five properties.
		\begin{itemize}
			\item [(P1)] $\mathscr P$ contains at least one brick in the snake.
			\item [(P2)] If $\mathscr K^m_i \in \mathscr P$ then also $\mathscr K^m_{i+1}
			\in \mathscr P$.
			\item [(P3)] If $\mathscr K^m_i \in \mathscr P$ then also $\mathscr K^m_{i-1}
			\in \mathscr P$, provided $i\geq 2$.
			\item [(P4)] If $\mathscr K^m_i \in \mathscr P$ for all $i=1,2,\dots$, then
			also $\mathscr K^{m+1}_1 \in \mathscr P$.
			\item [(P5)] If $\mathscr K^m_i \in \mathscr P$ for all $i=1,2,\dots$, then
			also $\mathscr K^{m-1}_k \in \mathscr P$ for some $k\geq 1$, provided $m\geq 2$.
		\end{itemize}	
		Then all bricks in the snake belong to $\mathscr P$.
	\end{lem}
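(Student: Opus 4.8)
The plan is to prove the lemma by a purely combinatorial induction on the index set $\{(m,i)\colon m\ge 1,\ i\ge 1\}$ parametrizing the bricks $\mathscr K^m_i$ in the snake (these are pairwise distinct, with only consecutive ones meeting, so the indexing is unambiguous). Only the five abstract properties (P1)--(P5) are used; the geometry of $X$ enters elsewhere, namely when one checks that a concrete family $\mathscr P$ actually satisfies these properties.

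First I would establish a \emph{row-filling} step: if $\mathscr K^m_i\in\mathscr P$ for at least one $i$, then $\mathscr K^m_j\in\mathscr P$ for every $j\ge 1$, i.e. the whole chain $D_m$ lies in $\mathscr P$. Indeed, starting from such an $i$ and iterating (P2) gives $\mathscr K^m_j\in\mathscr P$ for all $j\ge i$; and applying (P3) repeatedly — legitimately, since the current index stays $\ge 2$ at each application until it drops to $1$, so it is applied exactly $i-1$ times — gives $\mathscr K^m_j\in\mathscr P$ for all $1\le j< i$ as well. Thus membership of a single brick of $D_m$ forces membership of the entire chain $D_m$.

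Next, by (P1) fix a brick $\mathscr K^{m_0}_{i_0}\in\mathscr P$; by the row-filling step, $D_{m_0}\subseteq\mathscr P$. Then I would run two inductions over the index $m$. \emph{Downwards:} if $D_m\subseteq\mathscr P$ and $m\ge 2$, then (P5) yields some $\mathscr K^{m-1}_k\in\mathscr P$, and row-filling upgrades this to $D_{m-1}\subseteq\mathscr P$; hence $D_m\subseteq\mathscr P$ for all $1\le m\le m_0$. \emph{Upwards:} if $D_m\subseteq\mathscr P$, then (P4) gives $\mathscr K^{m+1}_1\in\mathscr P$, and row-filling upgrades this to $D_{m+1}\subseteq\mathscr P$; hence $D_m\subseteq\mathscr P$ for all $m\ge m_0$. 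Combining the two, $D_m\subseteq\mathscr P$ for every $m\ge 1$, i.e. every brick in the snake belongs to $\mathscr P$.

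There is essentially no obstacle here — the argument is the elementary ``spread within a chain, then move between chains'' induction, which is why the paper omits it. The only point requiring care is that (P3), (P4) and (P5) be invoked only under their stated provisos ($i\ge 2$ for (P3), and $m\ge 2$ for (P5)), which the order of steps above respects; the genuine work lies not in this lemma but in verifying (P1)--(P5) for the particular families $\mathscr P$ of bricks that arise in the analysis of continuous selfmaps of $X$.
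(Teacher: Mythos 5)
Your proof is correct, and since the paper explicitly omits the proof as ``easily follows from the construction,'' the row-filling-then-propagate argument you give is precisely the intended one. The order of operations respects the provisos on (P3) and (P5), and the row-filling step produces exactly the hypothesis (``$\mathscr K^m_i\in\mathscr P$ for all $i$'') needed to invoke (P4) and (P5), so there are no gaps.
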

	
	\begin{lem}\label{L:const 2}
		\begin{itemize}
			\item [(a)] If the snake is not $F$-invariant then $F$ is constant.
			\item [(b)] If the set $\Sigma$ is not $F$-invariant then $F$ is constant.
			\item [(c)] If $F(\mathscr K_0) = \{z_0\}$ for some $z_0\in \mathscr K_0$,
			then $F(X)=\{z_0\}$ and so $F$ is constant.
		\end{itemize}
	\end{lem}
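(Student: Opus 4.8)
I would extract one elementary ``collapsing principle'' and deduce all three parts from it, the real work being concentrated in part (c).

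\emph{Collapsing principle: if every brick in the snake has a singleton $F$-image, then $F$ is constant.} Indeed, two consecutive bricks $\mathscr K^m_i,\mathscr K^m_{i+1}$ of a fixed $D_m$ meet in $\ell^m_i$, so their two constant values agree; hence $F$ is constant on $\bigcup_i\mathscr K^m_i$, and by Lemma~\ref{L:block}(5) and continuity also on $D_m$, say $F(D_m)=\{z^{(m)}\}$. Since $D_m\cap D_{m+1}=\{x_{m+1}\}$ we get $z^{(m)}=z^{(m+1)}$ for all $m$, so $F$ is constant on the snake, and since $X$ is the closure of the snake (Lemma~\ref{L:space}(1)) it is constant on $X$.

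\emph{Reduction of (a) and (b) to (c).} If the snake is not $F$-invariant, then (as $X=\mathscr K_0\sqcup(\text{snake})$) some brick $B$ in the snake has $F(B)\cap\mathscr K_0\neq\emptyset$, hence $F(B)=\{z\}$ with $z\in\mathscr K_0$ by Lemma~\ref{L:const 1}(f). I would then propagate this through all bricks lying between $B$ and the head: whenever a brick has image $\{z\}$, a neighbouring brick $B'$ has $z\in F(B')$, and since the point $z\in\mathscr K_0$ belongs to no brick of the snake, Lemma~\ref{L:map} leaves only $F(B')=\{z\}$; running this through the scheme of Lemma~\ref{L:metalemma} (conditions (P2)--(P4) being now automatic, precisely because the alternative ``$F(B')$ is a brick'' has been excluded) shows $F\equiv z$ on $\bigcup_{m\ge m_0}D_m$, where $B\subseteq D_{m_0}$. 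This part of the snake is a sub-snake, so $\mathscr K_0$ lies in its closure (Lemma~\ref{L:space}(1)), and passing to the limit gives $F(\mathscr K_0)=\{z\}$; now (c) applies. Part (b) reduces to this: if some extremal point of a brick maps into $\mathscr K_0$ the snake is not $F$-invariant, so (a) applies; otherwise some extremal point of a brick $B_1$ maps to a \emph{non-extremal} point $z$ of a snake brick $B_2$, Lemma~\ref{L:const 1}(d) gives $F(B_1)=\{z\}$, and since such a $z$ lies in no other brick the same propagation yields $F(\mathscr K_0)=\{z\}$ (here $z$ need not lie in $\mathscr K_0$, but the proof of (c) below uses only that $F(\mathscr K_0)$ is a single point).

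\emph{Part (c).} Assume $F(\mathscr K_0)=\{z_0\}$ and, towards a contradiction, that $F$ is not constant. By the collapsing principle some brick of the snake has a non-degenerate image; since every nonempty subset of $(\Sigma,\preccurlyeq)$ has a largest element (Lemma~\ref{L:antiwell1}), there is a \emph{rightmost} brick $B^\star=\mathscr K^{m^\star}_{i^\star}$ whose image is a non-degenerate brick. Every brick to the right of $B^\star$ then has a singleton image; by agreement on shared endpoints and continuity these singletons are all equal to one point $w$, which must be the first point of $F(B^\star)$ by Corollary~\ref{C:brick-brick}. It remains to contradict the existence of $B^\star$, and this is exactly where the hypothesis $F(\mathscr K_0)=\{z_0\}$ enters, through continuity: the bricks accumulating on $\mathscr K_0$ have images whose diameters tend to $0$ and which converge (in the Hausdorff metric) to the point $z_0$; combining this with the rigidity of Cook continua (Lemma~\ref{L:Kim to Kjn}) and the fact, clear from the construction, that each brick is homeomorphic to only finitely many bricks, one locates the images of the bricks near the head precisely enough to see that a rightmost non-degenerately-mapped brick cannot exist. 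Hence every brick of the snake has a singleton image, and $F$ is constant by the collapsing principle.

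\textbf{Main obstacle.} Part (c) is the heart of the matter, and within it the delicate step is ruling out non-degenerate brick images. Cook-continuum rigidity only forbids a brick from being mapped onto a \emph{non-homeomorphic} subcontinuum; a brick may legitimately be mapped homeomorphically onto a homeomorphic copy of itself, or onto itself by the identity (the auxiliary map $G:X\to X$ used elsewhere in this section does exactly the former on many bricks and is even the identity on $\mathscr K_0$). So collapsing the head is genuinely indispensable, and the argument must weave together this hypothesis, the finiteness of the families of mutually homeomorphic bricks, and bookkeeping of diameters and positions near $\mathscr K_0$; the remaining steps are routine given the lemmas already in place.
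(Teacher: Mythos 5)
Your reduction of parts (a) and (b) to a strengthened version of part (c) is a genuinely different decomposition from the paper's and is, as far as it goes, correct and attractive. By propagating only \emph{towards} the head you get away with just the easy conditions (P2)--(P4) from Lemma~\ref{L:metalemma}: for a brick $B'$ adjacent to a $\{z\}$-mapped brick, the shared extremal point lands on $z$, and since $z$ is either in $\mathscr K_0$ (so lies in no snake brick) or is a non-extremal point of a single snake brick, Lemmas~\ref{L:const 1}(f) and~\ref{L:const 1}(d) force $F(B')=\{z\}$; continuity handles the passage from $D_m^*$ to $\mathscr K^{m+1}_1$ exactly as in (P4). This neatly side-steps the paper's most delicate step, the verification of (P5). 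The paper goes the other way round: it establishes (a) first, verifying (P5) via a connectedness argument inside a carefully chosen neighbourhood $W=X\cap([0,\varepsilon)\times V)$ of $z_0$, then proves (b) by the same scheme, and finally derives (c) from (a) by showing that a whole sub-snake must be mapped into $\mathscr K_0$, so the snake is not $F$-invariant.

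The gap is in your proof of (c), and it is a real one. You correctly identify the rightmost brick $B^\star$ with non-degenerate image (which exists by Lemma~\ref{L:antiwell1}) and correctly note that all bricks to its right collapse to the single point $w$ equal to the first point of $F(B^\star)$. But the sentence ``one locates the images of the bricks near the head precisely enough to see that a rightmost non-degenerately-mapped brick cannot exist'' is not an argument: $B^\star$ may be arbitrarily far from $\mathscr K_0$ (for instance in $D_1$), and nothing in the hypothesis $F(\mathscr K_0)=\{z_0\}$ or in the observation that bricks near the head have small images constrains a brick that is far from the head. You would need to connect the two, and the connecting step is precisely the connectedness argument with the decomposition of $W$ into $\{0\}\times V$ and pairwise separated ``pieces'' of the snake which the paper carries out in (a) and reuses in (c); your outline never invokes it. Moreover, your reduction of (b) lands on a constant value $z$ lying in the \emph{snake}, not in $\mathscr K_0$, so you need (c) in the stronger form ``$F(\mathscr K_0)$ a singleton anywhere in $X$ implies $F$ constant''. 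You flag this, but the stronger form is strictly more than the paper's (c), and when $z_0$ lies in the interior of a snake brick $B_2$ the neighbourhood structure near $z_0$ is quite different from the neighbourhood structure near a point of $\mathscr K_0$, so that argument cannot be the same one either. As it stands, the proof of the lemma is not complete.
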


	\begin{proof}
		(a) By the assumption, there is a brick $B$ in the snake such that $F(B)$
		contains a point $z_0 \in \mathscr K_0$. By Lemma~\ref{L:const 1}(f) we get
		$F(B) =\{z_0\} \subseteq \mathscr K_0$. So the family $\mathscr P$ of all bricks
		in the snake whose $F$-image is $\{z_0\}$ is nonempty. We are going to show that
		$\mathscr P$ satisfies also (P2)-(P5) from Lemma~\ref{L:metalemma}.
		
		Indeed, (P2) and (P3) follow from Lemma~\ref{L:const 1}(f).
		Further, if $F(\mathscr K^m_i) = \{z_0\}$ for all $i=1,2,\dots$, then continuity
		gives $F(x_{m+1})=z_0$ and by Lemma~\ref{L:const 1}(f) we get $F(\mathscr
		K^{m+1}_1)=\{z_0\}$. So we have (P4). To prove (P5), let $m\geq 2$ and
		$F(\mathscr K^m_i) = \{z_0\}$ for all $i=1,2,\dots$. In particular,
		$F(x_{m})=z_0$.
		We have $z_0=\{0\}\times \{z\}$ where $z$ belongs to $\mathscr K$
		in~(\ref{Eq:bricks}) and $\mathscr K_0=\{0\}\times \mathscr K$.
		Let $\varepsilon > 0$ and let $V$ be an open neighbourhood of $z$ in $\mathscr
		K$, different from the whole $\mathscr K$. Then $W=X\cap ([0,\varepsilon) \times
		V)$ is an open neighbourhood of $z_0$ in $X$.
		The open set $F^{-1}(W)$ contains the point $x_m$ and so there exists $N$ such
		that $F^{-1}(W)$ contains also $\mathscr K^{m-1}_k$ for all $k\geq N$. Put $D=
		\{x_m\}\cup \bigcup_{k=N}^\infty \mathscr K^{m-1}_k$. Then $D$ is a continuum
		and so $F(D)\subseteq W$ is also a continuum and contains the point $z_0\in
		\mathscr K_0$.
		Since $V$ is not the whole $\mathscr K$, the set $W$ consists of $\{0\}\times V$
		and `pieces' of the snake, where each `piece' is a subset of the snake with
		positive distance from $\mathscr K_0$ (hence from $z_0$) and with positive
		distance from the union of all other `pieces'. Therefore the continuum $F(D)$ is
		necessarily a subset of $\{0\}\times V$. So, $F(D)\subseteq \mathscr K_0$. Then
		by Lemma~\ref{L:Kim to Kjn}(b), each brick in $D$ is mapped to a point and since
		the consecutive bricks in $D$ intersect, they are mapped to the same point. Thus
		$F(D)$ is a singleton and so $F(D)=\{z_0\}$. Hence (P5).
		
		By Lemma~\ref{L:metalemma}, all the bricks in the snake are mapped to $z_0$ and
		since $X$ is the closure of the snake, $F(X)=\{z_0\}$.

		(b) If a point from $\Sigma$ is mapped to the head, just use (a). Otherwise
		there is an extremal point of some brick $\mathscr K^r_s$ which is mapped  to a
		non-extremal point $z$ of some brick $\mathscr K^n_j$. By Lemma~\ref{L:const
			1}(d),  $F(\mathscr K^r_s) = \{z\}$.	So the family $\mathscr P$ of all bricks in
		the snake whose $F$-image is $\{z\}$ is nonempty. We claim that $\mathscr P$
		satisfies also (P2)-(P5) from Lemma~\ref{L:metalemma}. Since $z$ is a
		non-extremal point of $\mathscr K^n_j$, (P2) and (P3) follow from
		Corollary~\ref{L:const 1}(d). Further, if $F(\mathscr K^m_i) = \{z\}$ for all
		$i=1,2,\dots$, then continuity gives $F(x_{m+1})=z$ and by
		Corollary~\ref{L:const 1}(d) we get $F(\mathscr K^{m+1}_1)=\{z\}$. So we have
		(P4). To prove (P5), let $m\geq 2$ and $F(\mathscr K^m_i) = \{z\}$ for all
		$i=1,2,\dots$.
		In particular, we have $F(x_{m})=z$.
		Let $U(z) \subseteq (\mathscr K_j^n)^\circ$ be a small neighbourhood of $z$. By
		continuity, there is $N \in \N$ such that $F(\mathscr K_{j}^{m-1})\subseteq
		U(z)$ for all $j \ge N$.
		Then, say by Corollary~\ref{L:const 1}(d), for all $j\geq N$ there are points
		$z_j\in U(z)$ such that $F(\mathscr K_{j}^{m-1})=\{z_j\}$, $j\ge N$. Then the
		$F$-image of the continuum $\{x_{m}\} \cup \bigcup_{j=N}^\infty \mathscr
		K_j^{m-1}$ is the countable set $\{z\}\cup\{z_j:\, j\geq N\}$.
		Since this image has to be a continuum, the only possibility is that $z_j=z$ for
		all $j\geq N$. Hence $\mathscr K^{m-1}_j \in \mathscr P$ for all $j\geq N$ and
		(P5) is proved. Now, by Lemma~\ref{L:metalemma}, all the bricks in the snake are
		mapped to $z$. Since $X$ is the closure of the snake, we get $F(X)=\{z\}$.

		(c) Consider a neighbourhood $W$ of $z_0$ as in the proof of (a). Now the open
		set $F^{-1}(W)$ contains the set $\mathscr K_0$. Since $\mathscr K_0$ is a
		compact set in $X\subseteq [0,1]\times \mathscr K$, $F^{-1}(W)$ necessarily
		contains a whole sub-snake $S= ((\mathscr K_0, f^m_i \rangle\rangle$ for some
		$m$ and~$i$. Since $S$ is connected and has zero distance from $z_0$, its image
		$F(S)\subseteq W$ is connected and has zero distance from $F(z_0)=z_0$.
		Therefore, due to the structure of $W$ described in the proof of (a), $F(S)$ is
		necessarily a subset of $\{0\}\times V$. So, $F(S)\subseteq \mathscr K_0$. Then
		by Lemma~\ref{L:Kim to Kjn}(b), each brick in $S$ is mapped to a point and since
		the consecutive bricks intersect, also each set $D_m \subseteq S$ is mapped to a
		point. Since also consecutive sets $D_m$ intersect, it is easy to see that
		$F(S)$ is a singleton. Since $F(S)$ has zero distance from $z_0$, we get
		$F(S)=\{z_0\}$. Thus the snake is not $F$-invariant and (a) implies that $F$ is
		a constant map. Since $z_0$ is in its range, we get $F(X)=\{z_0\}$.
	\end{proof}

	\begin{lem}\label{L:ord-pres}
		Assume that $F$ is not constant. Then we have the following.
		\begin{itemize}
			\item [(a)] $F(\Sigma) \subseteq \Sigma$ and $F|_{\Sigma}: \Sigma \to \Sigma$
			is \emph{order-preserving}, i.e. $a\preccurlyeq b$ implies $F(a)\preccurlyeq
			F(b)$ whenever $a,b \in \Sigma$.
			\item [(b)] $F$ sends connected subsets of $\Sigma$ to connected subsets of
			$\Sigma$.\footnote{Here we use ``connected" in the sense of the theory of
				ordered sets, not topology.} In particular, two consecutive points of $\Sigma$
			are mapped to one point of $\Sigma$ or again to two consecutive points of
			$\Sigma$.
			\item [(c)] Let $a,b \in \Sigma$, $a\preccurlyeq b$. Then $F(a), F(b) \in
			\Sigma$, $F(a)\preccurlyeq F(b)$ and $F$ maps $\langle \langle a, b \rangle
			\rangle$ onto $\langle \langle F(a), F(b) \rangle \rangle$.
			\item [(d)] For every $m\in \mathbb N$ there is $k(m)\in \mathbb N$ such that
			$F(D_m^*) \subseteq D_{k(m)}^*$ (hence $F(D_m) \subseteq D_{k(m)}$).
			\item [(e)] If $F(D_m^*)$ contains the first point of $D_{k(m)}^*$ and has
			zero distance from the first point of $D_{k(m)+1}^*$, then $F(D_m^*) =
			D_{k(m)}^*$.
		\end{itemize}
	\end{lem}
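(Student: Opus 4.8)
The plan is to exploit that, since $F$ is not constant, Lemma~\ref{L:const 2} already pins down the coarse behaviour of $F$, and then to run two transfinite inductions along $(\Sigma,\succcurlyeq)$, which is well ordered (Lemma~\ref{L:antiwell1}).

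\emph{Part (a).} Since $F$ is not constant, Lemma~\ref{L:const 2}(a) gives that the snake is $F$-invariant, and Lemma~\ref{L:const 2}(b) gives $F(\Sigma)\subseteq\Sigma$, which is the first assertion. For order-preservation I would first treat a \emph{consecutive} pair $\ell\prec f$ in $\Sigma$: by the structure of $X$ this is precisely the pair of extremal points of some snake brick $B=\langle\langle f,\ell\rangle\rangle$, so by Lemma~\ref{L:map} either $F(B)$ is a point, whence $F(\ell)=F(f)$, or $F(B)$ is a (snake) brick $C$ and Corollary~\ref{C:brick-brick} gives $F(\ell)=(\text{last point of }C)\prec(\text{first point of }C)=F(f)$; either way $F(\ell)\preccurlyeq F(f)$. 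Then, for fixed $b\in\Sigma$, I would prove $F(a)\preccurlyeq F(b)$ for all $a\preccurlyeq b$ by transfinite induction on $a$ in the well ordered set $(\{a'\in\Sigma:a'\preccurlyeq b\},\succcurlyeq)$, whose first element is $b$. If $a\prec b$ has an immediate $\preccurlyeq$-successor $a^{+}$ (then $a^{+}\preccurlyeq b$), the consecutive case gives $F(a)\preccurlyeq F(a^{+})$ and the inductive hypothesis gives $F(a^{+})\preccurlyeq F(b)$. If $a$ has no immediate $\preccurlyeq$-successor, then $a=x_{m+1}$ for some $m$ and $f_i^m\searrow a$ as $i\to\infty$; for all large $i$ one has $a\prec f_i^m\preccurlyeq b$, so $F(f_i^m)\preccurlyeq F(b)$ by the inductive hypothesis, i.e.\ $F(f_i^m)\in\langle\langle\mathscr K_0,F(b)\rangle\rangle$, and letting $i\to\infty$ (continuity of $F$ and Lemma~\ref{L:block}) gives $F(a)\in\langle\langle\mathscr K_0,F(b)\rangle\rangle$, i.e.\ $F(a)\preccurlyeq F(b)$.

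\emph{Parts (b) and (c).} The key auxiliary statement is: if $s_1\prec s_2$ in $\Sigma$ and $z\in\Sigma$ with $F(s_1)\prec z\prec F(s_2)$, then $z=F(c^{*})$ for some $c^{*}\in\Sigma$ with $s_1\preccurlyeq c^{*}\preccurlyeq s_2$. I would take $c^{*}$ to be the $\preccurlyeq$-largest element of $\{c\in\Sigma:s_1\preccurlyeq c\preccurlyeq s_2,\ F(c)\preccurlyeq z\}$ (Lemma~\ref{L:antiwell1}); if $F(c^{*})\prec z$, inspecting the immediate $\preccurlyeq$-successor $(c^{*})^{+}$ (by the consecutive case of (a), $F(c^{*})\prec z$ and $F((c^{*})^{+})\succ z$ would be equal or consecutive in $\Sigma$, yet $z\in\Sigma$ lies strictly between them), or, if $(c^{*})^{+}$ does not exist so $c^{*}=x_{r+1}$, passing to the limit along $f_i^r$, gives a contradiction. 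The inclusion $F(\langle\langle a,b\rangle\rangle)\subseteq\langle\langle F(a),F(b)\rangle\rangle$ in (c) is immediate from (a), applied pointwise on $\Sigma$ and brick-by-brick (Corollary~\ref{C:brick-brick}) elsewhere. For the reverse inclusion, a point $z\in\langle\langle F(a),F(b)\rangle\rangle\cap\Sigma$ is covered by the auxiliary statement with $s_1=a,s_2=b$; a point $z$ in the interior of a brick $C\subseteq\langle\langle F(a),F(b)\rangle\rangle$ is covered by applying the auxiliary statement to the extremal points $\ell_C,f_C$ of $C$, obtaining $p_\ell\prec p_f$ in $\langle\langle a,b\rangle\rangle\cap\Sigma$ with $F(p_\ell)=\ell_C$, $F(p_f)=f_C$, and then observing that $F(\langle\langle p_\ell,p_f\rangle\rangle)$ is a continuum which is the union of the $F$-images of the bricks it contains, each of which (Lemma~\ref{L:map}) is a point or a brick contained in $C$, hence $C$ itself (distinct bricks of $X$ are never nested), so some brick maps onto $C$ and $z\in C\subseteq F(\langle\langle a,b\rangle\rangle)$. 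Part (b) then follows: an order-convex $S\subseteq\Sigma$ has $F(S)\subseteq\Sigma$ by (a), and the auxiliary statement applied with $s_1,s_2\in S$ shows $F(S)$ order-convex; the ``in particular'' clause is the case of a consecutive pair.

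\emph{Parts (d) and (e).} By (c), $F(D_m)=F(\langle\langle x_m,x_{m+1}\rangle\rangle)=\langle\langle F(x_m),F(x_{m+1})\rangle\rangle$ is a continuum in the snake with $F(x_m)\succcurlyeq F(x_{m+1})$ by (a); I claim it lies in a single $D_{k(m)}$ (this gives (d), after choosing $k(m)$ sensibly in the degenerate cases where $F(D_m)$ reduces to a boundary point $x_j$). If not, $F(D_m)$ contains, for some $j$, all but finitely many of the bricks $\mathscr K_i^{\,j-1}$ (which converge to $x_j$ from the right) as well as a whole brick $\mathscr K_1^{\,j}$ to the left of $x_j$. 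By Corollary~\ref{C:union} these are the $F$-images of bricks $B_i\subseteq D_m$ and $B_0\subseteq D_m$; by Lemma~\ref{L:map}, Corollary~\ref{C:brick-brick} and (a), $F(B_0)=\mathscr K_1^{\,j}\prec\mathscr K_i^{\,j-1}=F(B_i)$ forces $B_0\prec B_i$ for every large $i$, while by Lemma~\ref{L:Cook-family}(b) the index of $B_i$ in $D_m$ equals $i\,2^{\,j-m-1}$ (whenever this is an integer), hence tends to $\infty$ with $i$; thus the $B_i$ move arbitrarily far left in $D_m$, contradicting that $B_0$ lies to the left of all of them. Finally, for (e): if $F(D_m^{*})$ contains the first point $x_{k(m)}$ of $D_{k(m)}^{*}$ and $\overline{F(D_m^{*})}$ contains $x_{k(m)+1}$, then, since $F(D_m)=\langle\langle F(x_m),F(x_{m+1})\rangle\rangle\subseteq D_{k(m)}$ contains $x_{k(m)}$, we get $F(x_m)=x_{k(m)}$, and since $\overline{F(D_m^{*})}\subseteq F(D_m)$ contains $x_{k(m)+1}$, we get $F(x_{m+1})=x_{k(m)+1}$; hence $F(D_m)=D_{k(m)}$, so $F(D_m^{*})\supseteq D_{k(m)}\setminus\{x_{k(m)+1}\}=D_{k(m)}^{*}$, while $F(D_m^{*})\subseteq D_{k(m)}^{*}$ by (d).

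The steps where I expect the real difficulty are the transfinite bootstrapping from consecutive pairs of $\Sigma$ to arbitrary pairs in (a) (and the parallel auxiliary statement used for (b) and (c)), where the limit points $x_{m+1}$ must be treated by a compactness/continuity argument, and, in (d), the combinatorial control via the self-similar indexing $i\,2^{m-1}=j\,2^{n-1}$ together with order-preservation, which is what rules out a single $D_m$ being smeared across a point $x_j$.
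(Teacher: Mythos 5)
Your proofs of (a)--(c) follow the same fundamental strategy as the paper's: reduce to consecutive pairs of $\Sigma$ via Lemma~\ref{L:map} and Corollary~\ref{C:brick-brick}, then bootstrap along the well-ordered set $(\Sigma,\succcurlyeq)$, handling the limit points $x_{m+1}$ by continuity. You package the bootstrap as a single transfinite induction plus an order-IVT (``auxiliary statement''), where the paper iterates over the levels $\Sigma_m$; these are cosmetic variants of the same idea. For part (d), however, you take a genuinely different route. The paper deduces (d) directly from (b): once $F(f_1^m)\in\Sigma\cap D_{k(m)}^*$, the consecutive-to-consecutive-or-equal property propagates the whole $\Sigma\cap D_m^*$ into $\Sigma\cap D_{k(m)}^*$ --- and crucially into $D_{k(m)}^*$ rather than merely $D_{k(m)}$, because $x_{k(m)+1}$ is a limit of the $f_j^{k(m)}$'s and not consecutive to any of them, so the sequence of images cannot reach it in finitely many steps. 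You instead invoke (c) to write $F(D_m)=\langle\langle F(x_m),F(x_{m+1})\rangle\rangle$ and rule out $F(D_m)$ straddling some $x_j$ by the self-similar brick-index arithmetic of~\eqref{Kim}: a brick $B_i\subseteq D_m$ with $F(B_i)=\mathscr K_i^{j-1}$ must have index $i\,2^{\,j-m-1}$ (or not exist at all), and order-preservation forces $B_0\prec B_i$ while the indices of $B_i$ tend to $\infty$. This is valid and self-contained, but it is longer, and as written it only delivers $F(D_m)\subseteq D_{k(m)}$; the sharper conclusion $F(D_m^*)\subseteq D_{k(m)}^*$ is not addressed outside the degenerate singleton case. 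To close it, you should add the observation (via Lemma~\ref{L:map} and Corollary~\ref{C:brick-brick}) that no brick in $D_{k(m)}$ has $x_{k(m)+1}$ as its last point, so if some $f_i^m$ were mapped to $x_{k(m)+1}$ then the entire $D_m^*$ would collapse to that singleton --- exactly the degenerate case you already set aside. With that patch, your argument is complete. Your part (e) agrees with the paper's terse remark.
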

	
	\begin{proof}
		(a-b) By Lemma~\ref{L:const 2}(b), $\Sigma$ is $F$-invariant. Assume that
		$\ell\preccurlyeq f$ are two consecutive points of $\Sigma$, i.e. the extremal
		points of a brick $B$ in the snake. We are going to prove that $F(\ell) = F(f)$
		or $F(\ell)$ and $F(f)$ are two consecutive points of $\Sigma$, i.e. the last
		point and the first point, respectively, of a brick in the snake (in particular,
		in either case we have $F(\ell)\preccurlyeq F(f)$). If $F(B)$ is a singleton,
		this is trivial. Otherwise, by Lemma~\ref{L:map}, $F(B)$ is a brick $C$
		homeomorphic to $B$. So, also $C$ is in the snake. By
		Corollary~\ref{C:brick-brick}, $\ell$ and $f$ are mapped to the last point and
		to the first point of $C$, respectively, and we are again done.
		
		Recall that $\Sigma = \bigsqcup _{m=1}^\infty \Sigma_m$ where $\Sigma_m:=\Sigma
		\cap D_m^*$. Fix $m$. Using the fact that on any two consecutive points of
		$\Sigma$ the map $F$ is order preserving in the special way proved above (i.e.
		the two consecutive points are mapped either to the  same point or to two
		consecutive points), we easily by induction get that
		$F$ is order preserving on $\Sigma_m$ and $F$ sends connected subsets of
		$\Sigma_m$ to connected subsets of $\Sigma_{k(m)}$ where $k(m)$ is a positive
		integer depending on $m$.
		
		The elements of $\Sigma_m$ form a sequence converging to the largest point
		$f_1^{m+1}$ of $\Sigma_{m+1}$. If $F(\Sigma_m)$ has the smallest element then,
		by continuity of $F$ at $f_1^{m+1}$ we get that $F$ sends  $f_1^{m+1}$ to that
		smallest element. If the set $F(\Sigma_m)$ does not have the smallest element
		then its elements form a sequence converging to $f_1^{k(m)+1}$ and continuity of
		$F$ at $f_1^{m+1}$ gives $F(f_1^{m+1}) = f_1^{k(m)+1}$. In either case, the
		facts that $F$ is order preserving both on $\Sigma_m$ and $\Sigma_{m+1}$ and
		sends connected subsets of these two sets again to connected sets, imply that
		$F$ is order preserving on the set $\Sigma_m \cup \Sigma_{m+1}$ and sends
		connected subsets of this set to connnected subsets of $\Sigma$.  Now it follows
		by induction that $F$ sends connected subsets of $\Sigma$ to connected subsets
		of $\Sigma$.
		
		(c) We know from (a) and (b) that $F(a) \preccurlyeq F(b)$ are in $\Sigma$ and
		$F$ maps $\Sigma \cap \langle \langle a, b \rangle \rangle$ onto $\Sigma \cap
		\langle \langle F(a), F(b) \rangle \rangle$ in the order-preserving way. Now it
		is sufficient to use Lemma~\ref{L:const 1}(c) and Corollary~\ref{C:hull}.
		
		(d) Fix $m$. The set $\Sigma \cap D_m^*$ consists of a sequence of points
		$f^m_1, f^m_2, f^m_3, \dots$. Since $\Sigma$ is $F$-invariant, for some $k(m)$
		we have $F(f^m_1)\in \Sigma  \cap D_{k(m)}^*$ and, by induction and using (b),
		we get that the whole set $\Sigma \cap D_m^*$ is mapped into
		$\Sigma \cap D_{k(m)}^*$. Now use Lemma~\ref{L:const 1}(c) and
		Corollary~\ref{C:hull}, or the result from (c).
		
		(e) This follows from the previous parts.
	\end{proof}

	\begin{lem}\label{L:mapDD}
		Let $m<M$ be positive integers. Then there is exactly one continuous surjective
		map of $D_m$ onto $D_M$.	
	\end{lem}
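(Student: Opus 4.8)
\emph{Setup.} The statement has two halves: the existence of a continuous surjection $D_m\to D_M$ and its uniqueness. Put $p=2^{M-m}$; since $M>m$ we have $p\ge 2$. Recall from~\eqref{Eq:hcopy} that $\mathscr K_i^m$ is a copy of $\mathscr K_{i2^{m-1}}$, so $\mathscr K_{ip}^m$ is a copy of $\mathscr K_{ip\cdot 2^{m-1}}=\mathscr K_{i2^{M-1}}$, which is exactly the type of $\mathscr K_i^M$. Thus $\mathscr K_{ip}^m$ and $\mathscr K_i^M$ are homeomorphic for every $i\ge 1$, while, comparing type indices and using that the $\mathscr K_n$ are pairwise non-homeomorphic, $\mathscr K_j^m$ is homeomorphic to no brick of $D_M$ whenever $p\nmid j$.

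\emph{Existence.} The plan is to write down the obvious ``folding'' map $\phi\colon D_m\to D_M$ and check it works. Define $\phi$ to be: the constant $x_M=f_1^M$ on the initial block $\mathscr K_1^m\cup\dots\cup\mathscr K_{p-1}^m=\langle\langle x_m,f_p^m\rangle\rangle$; the unique homeomorphism of $\mathscr K_{ip}^m$ onto $\mathscr K_i^M$ on each brick $\mathscr K_{ip}^m$ ($i\ge1$); the constant $\ell_i^M=f_{i+1}^M$ on each intermediate block $\mathscr K_{ip+1}^m\cup\dots\cup\mathscr K_{(i+1)p-1}^m=\langle\langle \ell_{ip}^m,f_{(i+1)p}^m\rangle\rangle$ ($i\ge1$); and $\phi(x_{m+1})=x_{M+1}$. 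The pieces agree on the overlap points $f_{ip}^m$ and $\ell_{ip}^m$, since the unique homeomorphism $\mathscr K_{ip}^m\to\mathscr K_i^M$ sends first point to first point and last point to last point (Lemma~\ref{L:Kim to Kjn}(e)) and $\ell_i^M=f_{i+1}^M$ (Lemma~\ref{L:block}(3)). Continuity is clear away from $x_{m+1}$, and at $x_{m+1}$ it follows from Lemma~\ref{L:block}(4),(6) applied to $D_M$: a one-sided neighbourhood of $x_{m+1}$ in $D_m$ built from bricks of large index is mapped into a union of bricks $\mathscr K_i^M$ of large index together with their last points, all contained in an arbitrarily small neighbourhood of $x_{M+1}=\phi(x_{m+1})$. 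Surjectivity is immediate because $x_M\in\mathscr K_1^M$, $\ell_i^M\in\mathscr K_i^M$, and $\bigcup_i\mathscr K_i^M\cup\{x_{M+1}\}=D_M$.

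\emph{Uniqueness.} Let $\psi\colon D_m\to D_M$ be any continuous surjection. Since $D_m$ is a monotone retract of $X$ (Lemma~\ref{L:space}(5)), composing $\psi$ with the monotone retraction $r\colon X\to D_m$ yields a continuous selfmap $F=\psi\circ r$ of $X$ with $F|_{D_m}=\psi$ and $F(X)=D_M$; in particular $F$ is not constant, so Lemmas~\ref{L:map}, \ref{L:Kim to Kjn} and~\ref{L:ord-pres} all apply to it. By Lemma~\ref{L:map}, for each $j$ the set $\psi(\mathscr K_j^m)=F(\mathscr K_j^m)$ is a singleton or a brick homeomorphic to $\mathscr K_j^m$; in the latter case this brick lies in $F(X)=D_M$, hence equals some $\mathscr K_i^M$, which by the type computation above forces $p\mid j$ with $i=j/p$, and then $\psi|_{\mathscr K_j^m}$ is the unique homeomorphism $\mathscr K_j^m\to\mathscr K_{j/p}^M$ (Lemma~\ref{L:Kim to Kjn}(e)). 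In particular $\psi(\mathscr K_j^m)$ is a singleton whenever $p\nmid j$. Conversely, each $\mathscr K_{ip}^m$ is \emph{not} collapsed: since $\psi(D_m^\ast)=\bigcup_{j\ge1}\psi(\mathscr K_j^m)\supseteq D_M\setminus\{\psi(x_{m+1})\}$, its intersection with the nondegenerate continuum $\mathscr K_i^M$ has cardinality $\mathfrak c$, so one of the countably many sets $\psi(\mathscr K_j^m)\cap\mathscr K_i^M$ contains at least two points; this $\psi(\mathscr K_j^m)$ is then a brick (not a singleton) meeting $\mathscr K_i^M$ in more than one point, hence equals $\mathscr K_i^M$ (two distinct bricks meet in at most one point), which forces $j=ip$. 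Therefore $\psi|_{\mathscr K_{ip}^m}$ is the unique homeomorphism onto $\mathscr K_i^M$ for every $i\ge1$. Continuity now forces $\psi(x_{m+1})=x_{M+1}$, since $\psi(\ell_{ip}^m)=\ell_i^M\to x_{M+1}$ as $i\to\infty$ (Lemma~\ref{L:block}(4)). Finally, on each maximal block of consecutive collapsed bricks $\psi$ is constant by connectedness, with value read off from an adjacent homeomorphically mapped brick: $\psi(\mathscr K_j^m)=\{\psi(\ell_{ip}^m)\}=\{\ell_i^M\}$ for $ip<j<(i+1)p$, and $\psi(\mathscr K_j^m)=\{\psi(f_p^m)\}=\{f_1^M\}=\{x_M\}$ for $1\le j<p$. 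This pins down $\psi$ on every brick $\mathscr K_j^m$ and on $x_{m+1}$, hence on all of $D_m$, and the resulting map is precisely $\phi$.

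\emph{Main obstacle.} Nothing here is deep; the step needing genuine care is ``surjectivity forces the homeomorphisms,'' which is exactly the cardinality/pigeonhole idea behind Corollary~\ref{C:union}, combined with the arithmetic bookkeeping (the factor $p=2^{M-m}$) of which bricks of $D_m$ are homeomorphic to which bricks of $D_M$. Everything else is an organization of the already-established rigidity properties of the bricks.
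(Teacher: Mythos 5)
Your proof is correct and follows essentially the same route as the paper's: extend $\psi$ to a selfmap of $X$ via the monotone retraction, apply Lemma~\ref{L:map} to see each brick goes to a singleton or a homeomorphic brick, use a cardinality/pigeonhole argument to force $\psi(\mathscr K_{ip}^m)=\mathscr K_i^M$, invoke Lemma~\ref{L:Kim to Kjn}(e) for uniqueness of those homeomorphisms, and let continuity pin down the collapsed bricks and the last point. The only difference is presentational: you spell out the explicit folding map $\phi$ and the convergence argument at $x_{m+1}$ in more detail, where the paper simply asserts that ``the map acting as just described is obviously continuous and surjective.''
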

	
	\begin{proof}
		Let $M=m+k$, $k>0$. Let $B_1, B_2, B_3, \dots$ be the list of all bricks in
		$D_m^*$ (here $B_i:=\mathscr K^m_i$). Then $B'_{2^k}, B'_{2.2^{k}},
		B'_{3.2^{k}}, \dots$ is the list of all bricks in $D_M^*$. Any two of the bricks
		in the former list are non-homeomorphic and for every $i$, there is a
		homeomorphism $h_i: B_{i.2^{k}}\to B'_{i.2^{k}}$ sending the first point and the
		last point of $B_{i.2^{k}}$ to the first point and the last point of
		$B'_{i.2^{k}}$, respectively.
		
		Suppose that $\varPhi: D_m \to D_M$ is a continuous surjective map. It is the
		restriction of a continuous selfmap of $X$ (consider the monotone retraction of
		$X$ onto $D_m$ composed with $\varPhi$). So, we can apply Lemma~\ref{L:map} to
		get that, for every $j$, $\varPhi (B_j)$ is either a singleton or the brick
		$B'_{i.2^{k}}$, where the latter case is possible only if $j= i.2^{k}$.
		The remaining last point of $D_m$ is mapped to a point. Since countably many
		singletons cannot cover the whole brick $B'_{i.2^{k}}$, the surjective map
		$\varPhi$ necessarily sends $B_{i.2^{k}}$ onto $B'_{i.2^{k}}$, $i=1,2,\dots$. By
		Lemma~\ref{L:Kim to Kjn}(e), $\varPhi$ coincides on $B_{i.2^{k}}$ with the
		homeomorphism $h_i$, $i=1,2,\dots$. If $j$ is not a multiple of $2^k$, $\varPhi
		(B_j)$ is necessarily a singleton. By continuity (remembers also how $h_i$ maps
		extremal points of the bricks), there is no choice: $\varPhi$ necessarily sends
		the bricks $B_1,\dots B_{2^k-1}$ to the first point of $B'_{2^k}$, then the
		bricks $B_{2^k+1}, \dots, B_{2.2^k-1}$ to the point where $B'_{2^k}$ and
		$B'_{2.2^{k}}$ intersect, etc., see Figure~\ref{F:sur}. Also, by continuity,
		$\varPhi$ has to send the last point of $D_m$ to the last point of $D_M$, see
		Lemma~\ref{L:block}(4).

		\begin{figure}[h]
			\includegraphics[width=14cm]{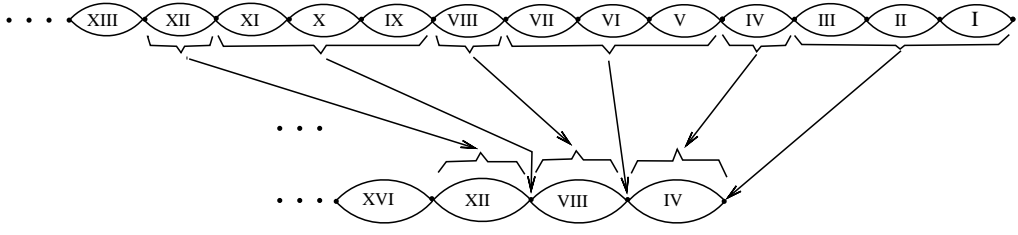}\\
			\caption{The unique continuous surjective map $D_1 \to D_3$.}\label{F:sur}
		\end{figure}

		So, there is at most one continuous surjective~$\varPhi$. Since the map acting
		as just described is obviously continuous and surjective, the proof is finished.
	\end{proof}

	\begin{rem}\label{R:mapDD}
		If $m<M$ then there are (countably) many non-constant continuous non-surjective
		maps of $D_m$ into $D_M$ (say, modify Figure~\ref{F:sur} by sending
		$\bigcup_{i=1}^7\mathscr K^1_i$ to the intersection of $\mathscr K^3_4$ and
		$\mathscr K^3_8$, or by sending the closure of $\bigcup_{i=13}^{\infty}\mathscr
		K^1_i$ to the intersection of $\mathscr K^3_{12}$ and $\mathscr K^3_{16}$, or
		both).
		What about non-constant continuous maps $D_M\to D_m$? There is no such
		surjective map (no brick in $D_M$ can be continuously mapped onto such a brick
		in $D_m$, say onto the first one, which is non-homeomorphic with any of the
		bricks in $D_M$). Non-constant non-surjective maps $D_M \to D_m$ do exist; one
		can show that each of them is the composition of a monotone retraction of $D_M$
		onto a brick $B$ in $D_M$ with the unique homeomorphism from $B$ onto the brick
		in $D_m$ homeomorphic to $B$.
	\end{rem}

	\subsection{Induced function $\widehat F$. The sets $\Fix (\widehat F)$ and
		$\Fix (F)$}\label{SS:induced}
	
	To prove that $S(X)=\{0, \infty\}$, we need to understand the dynamics of all
	continuous maps $F: X\to X$. To this end, it will be convenient first to replace
	every such map $F$ by what we will call an induced function $\widehat F$ \index{$\widehat F$}. It
	will be something like a discrete analogue of $F$, when we are basicly
	interested only in the images of bricks. To avoid a technical problem with the
	fact that two bricks may intersect, we partition the continuum~$X$ into the sets
	(recall that if $\mathscr K$ is a brick in the snake with extremal points $f$
	and $\ell$, then $\mathscr K^\circ := \mathscr K \setminus \{f,\ell\}$)
	\begin{equation}\label{Eq:partition}
	\begin{split}
	\mathscr K_0 & \prec \dots \\
	& \prec \dots \prec (\mathscr K^{m}_i)^\circ \prec \{f^{m}_i\} \prec
	\dots \prec (\mathscr K^m_2)^\circ \prec \{f^m_2\} \prec (\mathscr K^m_1)^\circ
	\prec \{f^m_1\} \\
	& \prec \dots \\
	& \prec \dots \prec  (\mathscr K^2_i)^\circ \prec \{f^2_i\} \prec  \dots
	(\mathscr K^2_2)^\circ \prec \{f^2_2\} \prec (\mathscr K^2_1)^\circ \prec
	\{f^2_1\} \\
	& \prec \dots \prec (\mathscr K^1_i)^\circ \prec \{f^1_i\}
	\prec \dots \prec (\mathscr K^1_2)^\circ \prec \{f^1_2\} \prec (\mathscr
	K^1_1)^\circ \prec \{f^1_1\}
	\end{split}
	\end{equation}
	where $A\prec B$ (or $B\succ A$) has the meaning from the beginning of
	Subsection~\ref{SS:X is continuum}, i.e.
	$a\prec b$ whenever $a\in A$ and $b\in B$. Denote the family of  all sets
	in~(\ref{Eq:partition}) by~$\widehat X$ \index{$\widehat X$}.
	If $A, B\in \widehat X$ then we introduce also the notation $A\preccurlyeq B$
	(or $B\succcurlyeq A$), By definition, for such \emph{special} subsets of $X$,
	this means that $A=B$ or $A\prec B$. Recall that for \emph{arbitrary} subsets of
	$X$ we do not define $A\preccurlyeq B$.\footnote{Note that if $A, B\in \widehat
		X$ then just defined relation $A\preccurlyeq B$ is not equivalent with
		$a\preccurlyeq b$, $a\in A$, $b\in B$. Try $A=B=(\mathscr K^m_i)^\circ$.}
	
	Clearly,
	$(\widehat X, \preccurlyeq)$ is a linearly ordered set and  analogously as in
	Lemma~\ref{L:antiwell1} we have the following obvious fact.
	
	\begin{lem}\label{L:antiwell2}
		Every nonempty subset of $(\widehat X, \preccurlyeq)$ has the largest element.
	\end{lem}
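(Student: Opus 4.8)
The plan is to mirror the proof of Lemma~\ref{L:antiwell1}: the assertion that every nonempty subset of $(\widehat X, \preccurlyeq)$ has a largest element is exactly the assertion that the reversed linearly ordered set $(\widehat X, \succcurlyeq)$ is well ordered, so it suffices to identify the order type of $(\widehat X, \succcurlyeq)$ as a well-ordered set. First I would read off the order type directly from~\eqref{Eq:partition}. The head $\mathscr K_0$ is the $\preccurlyeq$-smallest member of $\widehat X$; every other member of $\widehat X$ is either $(\mathscr K^m_i)^\circ$ or $\{f^m_i\}$ for uniquely determined integers $m, i \geq 1$, and I assign to such a set the triple $(m, i, \varepsilon)$ with $\varepsilon = 0$ in the first case and $\varepsilon = 1$ in the second. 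Using the conventions $D_{m+1}\prec D_m$, $\mathscr K^m_{i+1}\prec \mathscr K^m_i$ and $(\mathscr K^m_i)^\circ \prec \{f^m_i\}$ that are built into~\eqref{Eq:partition}, one checks at once that for two such (non-head) sets $A,B$ with triples $(m_A,i_A,\varepsilon_A)$ and $(m_B,i_B,\varepsilon_B)$ one has $A\prec B$ if and only if $(m_A,i_A,1-\varepsilon_A) >_{\mathrm{lex}} (m_B,i_B,1-\varepsilon_B)$.

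Consequently the map $A\mapsto (m_A,i_A,1-\varepsilon_A)$ is an order isomorphism from $(\widehat X\setminus\{\mathscr K_0\},\succcurlyeq)$ onto $\bigl(\mathbb N\times\mathbb N\times\{0,1\},\leq_{\mathrm{lex}}\bigr)$. Since $\mathbb N$ is well ordered and a finite lexicographic product of well-ordered sets is well ordered, $\bigl(\mathbb N\times\mathbb N\times\{0,1\},\leq_{\mathrm{lex}}\bigr)$ is well ordered, hence so is $(\widehat X\setminus\{\mathscr K_0\},\succcurlyeq)$. Adjoining $\mathscr K_0$, which is the $\succcurlyeq$-largest element of $\widehat X$ (equivalently the $\preccurlyeq$-smallest), amounts to adding one maximal element to a well-ordered set, so $(\widehat X,\succcurlyeq)$ is still well ordered. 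This gives the lemma.

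Equivalently — and this is probably the version I would actually write out, since it needs no reference to lexicographic well-orders — one can argue directly. Given $\emptyset\neq\mathscr S\subseteq\widehat X$: if $\mathscr S=\{\mathscr K_0\}$ there is nothing to prove, so assume $\mathscr S$ meets the snake part. Let $m_0$ be the least $m\in\mathbb N$ for which some member of $\mathscr S$ is contained in $D_m$; then let $i_0$ be the least $i\in\mathbb N$ for which some member of $\mathscr S$ is contained in the brick $\mathscr K^{m_0}_{i_0}$. If $\{f^{m_0}_{i_0}\}\in\mathscr S$ this is the largest element of $\mathscr S$; otherwise $(\mathscr K^{m_0}_{i_0})^\circ\in\mathscr S$ is. Indeed, by minimality of $m_0$ and $i_0$ every member of $\mathscr S$ lying in some $D_m$ with $m>m_0$, or in $\mathscr K^{m_0}_i$ with $i>i_0$, is $\prec$ the chosen set, while within the single brick $\mathscr K^{m_0}_{i_0}$ only the two sets $(\mathscr K^{m_0}_{i_0})^\circ\prec\{f^{m_0}_{i_0}\}$ occur.

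I do not expect a genuine obstacle here; the only point requiring care is bookkeeping of the three reversals (the snake runs toward the head, opposite to the direction of the $x$-axis; within each $D_m$ the bricks are indexed so that larger index means farther left; and within each brick $(\mathscr K^m_i)^\circ\prec\{f^m_i\}$), together with the separate treatment of the head $\mathscr K_0$ as the bottom element.
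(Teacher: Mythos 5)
Your proof is correct and takes essentially the same approach as the paper, which simply asserts (by analogy with Lemma~\ref{L:antiwell1}) that $(\widehat X, \succcurlyeq)$ is "clearly well ordered" and leaves the verification to the reader. Your write-up — both the order-isomorphism onto a lexicographic product of well-orders and the direct argument picking the least $m_0$, then the least $i_0$, then the appropriate one of the two sets in that brick — correctly fills in the details the paper omits, with the reversals correctly tracked.
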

	
	In particular, every element of $\widehat X$ different from $\mathscr K_0$ has
	its \emph{left neighbour} (the largest among those elements which are to the
	left of it). Similarly, one can define the \emph{right neighbour} of an element
	of $X$; however, $\mathscr K_0$ and the elements $\{f^m_1\}$ do not have right
	neighbours.

	\begin{lem}\label{L:existsFhat}
		Let $A$ be any of the sets appearing in~(\ref{Eq:partition}), i.e., let $A\in
		\widehat X$. Then $F(A)\subseteq B$ for exactly one $B\in \widehat X$. So, the
		continuous map $F:X \to X$ naturally \emph{induces a function $\widehat F:
			\widehat X \to \widehat X$} defined by $\widehat F (A) = B$ when $F(A)\subseteq
		B$ for $A, B\in \widehat X$. 	
	\end{lem}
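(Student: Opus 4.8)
The plan is to reduce the statement to a pure existence claim and then dispose of it by a short case analysis according to the three types of members of $\widehat X$ listed in~\eqref{Eq:partition}. First I would note that $\widehat X$ is a partition of $X$ into pairwise disjoint nonempty sets (the singletons $\{f^m_i\}$, the nondegenerate head $\mathscr K_0$, and the sets $(\mathscr K^m_i)^\circ = \mathscr K^m_i\setminus\{f^m_i,\ell^m_i\}$, which are nonempty since every brick is a nondegenerate continuum). Consequently the word ``exactly'' costs nothing: once $F(A)$ is shown to lie inside \emph{some} member $B\in\widehat X$, uniqueness is automatic because distinct members are disjoint and $F(A)\neq\emptyset$. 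So it suffices to prove existence of such a $B$ for each $A\in\widehat X$.

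If $A=\{f^m_i\}$ is a singleton, then $F(A)$ is a single point of $X$, which of course lies in exactly one block of the partition, and we are done. If $A=\mathscr K_0$ is the head, I would apply Lemma~\ref{L:map}: $F(\mathscr K_0)$ is either a singleton (handled as in the previous case) or a brick homeomorphic to $\mathscr K_0$. In the latter case, the only brick of $X$ homeomorphic to $\mathscr K_0$ is $\mathscr K_0$ itself, since every brick in the snake is a homeomorphic copy of some $\mathscr K_{i2^{m-1}}$, $i,m\ge 1$, and by Lemma~\ref{L:Kim to Kjn}(b)(c) (cf.\ Lemma~\ref{L:Cook-family}(b)) these are incomparable by continuous maps with $\mathscr K$, hence not homeomorphic to $\mathscr K_0=\{0\}\times\mathscr K$. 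Thus $F(\mathscr K_0)=\mathscr K_0\in\widehat X$.

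The remaining case is $A=(\mathscr K^m_i)^\circ = B^\circ$, where $B:=\mathscr K^m_i$ is a brick lying in the snake. I would again invoke Lemma~\ref{L:map}, this time applied to the whole brick $B$: either $F(B)$ is a singleton, in which case $F(A)\subseteq F(B)$ is a single point and we are done; or $F(B)=C$ is a brick homeomorphic to $B$. Since $B$ is a copy of $\mathscr K_{i2^{m-1}}$, which is not homeomorphic to $\mathscr K$, the brick $C$ cannot be $\mathscr K_0$, so $C$ lies in the snake. Then Corollary~\ref{C:brick-brick} applies and gives that $F|_B\colon B\to C$ is a homeomorphism carrying the first and last points of $B$ onto the first and last points of $C$; hence $F(A)=F(B^\circ)=C^\circ\in\widehat X$. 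This exhausts all possibilities for $A$.

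The argument is routine and there is no serious obstacle. The single point deserving a moment's care is the last case: one must make sure the image brick $C$ genuinely lies in the snake, so that Corollary~\ref{C:brick-brick} (stated for bricks in the snake) is applicable, and this is precisely where the non-homeomorphy of $\mathscr K_0$ with any brick of the snake is used.
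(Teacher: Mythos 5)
Your proof is correct and follows essentially the same route as the paper's: reduce to existence via the fact that $\widehat X$ is a partition, then a three-way case analysis handled by Lemma~\ref{L:map} (together with Corollary~\ref{C:brick-brick} for the $(\mathscr K^m_i)^\circ$ case). The only difference is that you spell out the small non-homeomorphy step showing a brick homeomorphic to $\mathscr K_0$ must be $\mathscr K_0$ itself, which the paper leaves implicit.
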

	
	\begin{proof}
		We only need to prove the existence of such a set $B$ (since $A$ is nonempty and
		$\widehat X$ is a partition of $X$, such $B$ is then unique). This is trivial if
		$A$ is a singleton $\{f^m_n\}$. If $A=\mathscr K_0$ then, by Lemma~\ref{L:map},
		$F(A)$ is either $\mathscr K_0$ or a singleton, and again such $B$ exists.
		Finally, let $A=(K^{m}_i)^\circ$ for some $m$ and $i$. Then, by
		Lemma~\ref{L:map} and Corollary~\ref{C:brick-brick}, $F(A)$ is a singleton or
		$F(A)= (K^n_j)^\circ$ for some $n$ and $j$, and again we are done.
	\end{proof}
	
	In view of Lemma~\ref{L:map}, $\widehat F (A) = B$ thus means that $F(A)$ is a
	singleton in $B$ or coincides with $B$ (or both, if $B\in \widehat X$ is a
	singleton). Note also that if $A, B \in \widehat X$ and $F(A)$ intersects $B$
	then the previous lemma and the fact that $\widehat X$ is a partition of $X$,
	imply that $F(A)\subseteq B$ and so $\widehat F(A)=B$.
	
	If $\psi$ is any function let $\Fix(\psi)$ denote the \emph{set of fixed points of $\psi$}.  We are going to
	study the set $\Fix (\widehat F)$. It will help us to describe the set $\Fix
	(F)$ which will play a crucial role in description of all possible dynamics on
	$X$.
	
	\begin{lem}\label{L:how br to br}
		Assume that $(\mathscr K^r_s)^\circ$ is a fixed point of $\widehat F$. Then we
		have one of the following.
		\begin{itemize}
			\item [(a)] $F((\mathscr K^r_s)^\circ) = (\mathscr K^r_s)^\circ$ and the
			restriction of $F$ to $\mathscr K^r_s$ is identity.
			\item [(b)] $F((\mathscr K^r_s)^\circ) = \{z\}$ for some $z\in (\mathscr
			K^r_s)^\circ$ and $F(X)=\{z\}$.
		\end{itemize}
		In particular, $F$ has a fixed point in $(\mathscr K^r_s)^\circ$.  	
	\end{lem}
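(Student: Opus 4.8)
The plan is to translate the hypothesis into a statement about $F$ restricted to a single brick and then to invoke the rigidity of bricks. By the definition of the induced function, $(\mathscr K^r_s)^\circ$ being a fixed point of $\widehat F$ means precisely that $F((\mathscr K^r_s)^\circ)\subseteq (\mathscr K^r_s)^\circ$. Write $B=\mathscr K^r_s$ (a brick lying in the snake) with extremal points $f=f^r_s$ and $\ell=\ell^r_s$, so that $(\mathscr K^r_s)^\circ = B\setminus\{f,\ell\}$. Since $B$ is a nondegenerate continuum it has no isolated points, so $B\setminus\{f,\ell\}$ is dense in $B$; hence, by continuity of $F$, $F(B)\subseteq \overline{F(B\setminus\{f,\ell\})}\subseteq \overline{B\setminus\{f,\ell\}}=B$. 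Thus $F|_B$ is a continuous selfmap of the brick $B$, and by Lemma~\ref{L:Kim to Kjn}(a) it is either the identity or a constant map.

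If $F|_B=\Id_B$, then $F((\mathscr K^r_s)^\circ)=(\mathscr K^r_s)^\circ$ and every point of $(\mathscr K^r_s)^\circ$ is fixed, which is exactly case (a). Otherwise $F|_B\equiv z$ for some $z\in B$. Since $\emptyset \neq F((\mathscr K^r_s)^\circ)\subseteq (\mathscr K^r_s)^\circ$, the point $z$ lies in $(\mathscr K^r_s)^\circ$, i.e. it is a non-extremal point of $B$. By the construction of $X$, distinct bricks meet in at most one point, and that point is an extremal point of each; hence a non-extremal point of $B$ lies in no other brick and is not extremal in $B$, so $z\notin\Sigma$. Consequently $F(f)=z=F(\ell)$ with $f,\ell\in\Sigma$ but $z\notin\Sigma$, so $\Sigma$ is not $F$-invariant, and Lemma~\ref{L:const 2}(b) forces $F$ to be constant; since $z$ is in its range, $F(X)=\{z\}$, which is case (b). In either case $F$ has a fixed point in $(\mathscr K^r_s)^\circ$ (all of it in case (a), the point $z$ in case (b)), proving the last assertion.

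I do not anticipate a genuine obstacle: once the hypothesis is read as $F(B)\subseteq B$, the dichotomy is immediate from Lemma~\ref{L:Kim to Kjn}(a), and disposing of the constant case is a one-line application of Lemma~\ref{L:const 2}(b). The only step that needs a moment of care is the observation that a non-extremal point of $B$ cannot belong to $\Sigma$, which is where the combinatorics of the gluing of the bricks (consecutive bricks sharing exactly one, necessarily extremal, point) is used. One could alternatively bypass Lemma~\ref{L:const 2}(b) and argue directly from Lemma~\ref{L:const 1}(d) and Lemma~\ref{L:metalemma}, or use Lemma~\ref{L:map} and Corollary~\ref{C:brick-brick} in place of the density argument, but routing through Lemma~\ref{L:const 2}(b) is the shortest.
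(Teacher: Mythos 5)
Your proof is correct and follows essentially the same route as the paper's: read the hypothesis as $F((\mathscr K^r_s)^\circ)\subseteq (\mathscr K^r_s)^\circ$, upgrade it by continuity to $F(\mathscr K^r_s)\subseteq \mathscr K^r_s$, invoke the Cook‑continuum rigidity (Lemma~\ref{L:Kim to Kjn}(a)) for the identity/constant dichotomy, and in the constant case note $z\in(\mathscr K^r_s)^\circ$ is non‑extremal so $\Sigma$ is not $F$‑invariant, whence Lemma~\ref{L:const 2}(b) gives $F(X)=\{z\}$. You merely make explicit two steps the paper leaves terse — the density argument behind ``by continuity, $\mathscr K^r_s$ is $F$‑invariant'' and the observation that a non‑extremal point of a brick is not in $\Sigma$ — so the two proofs are substantively the same.
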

	
	\begin{proof}
		By continuity, also the Cook continuum $\mathscr K^r_s$	is $F$-invariant. So,
		either $F$ is identity on $\mathscr K^r_s$ and we are in the case (a), or
		$F(\mathscr K^r_s)=\{z\}$ for some $z\in \mathscr K^r_s$. In the latter case
		$z\in (\mathscr K^r_s)^\circ$, because $(\mathscr K^r_s)^\circ$ is
		$F$-invariant. It follows that $\Sigma$ is not $F$-invariant whence $F(X)=\{z\}$
		by Lemma~\ref{L:const 2}(b). So we are in the case (b).
	\end{proof}
	
	\begin{cor}\label{C:gemini}
		Assume that $F$ is not constant and $(\mathscr K^r_s)^\circ \in \Fix (\widehat
		F)$. Then also $\{f^r_s\}$ and $\{\ell^r_s\} = \{f^r_{s+1}\}$ belong to $\Fix
		(\widehat F)$.	
	\end{cor}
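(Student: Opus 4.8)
The plan is to reduce immediately to Lemma~\ref{L:how br to br}. Since by hypothesis $(\mathscr K^r_s)^\circ \in \Fix(\widehat F)$, that lemma offers exactly two alternatives for the behaviour of $F$ on the Cook continuum $\mathscr K^r_s$; the second one, namely $F((\mathscr K^r_s)^\circ) = \{z\}$ with $F(X) = \{z\}$, is excluded because we are assuming $F$ is not constant. Hence alternative (a) must hold, so $F$ restricted to $\mathscr K^r_s$ is the identity; in particular $F(f^r_s) = f^r_s$ and $F(\ell^r_s) = \ell^r_s$.

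What remains is only bookkeeping with the partition $\widehat X$ introduced in~(\ref{Eq:partition}). Recall from Lemma~\ref{L:block}(3) that $\ell^r_s = f^r_{s+1}$, so both $\{f^r_s\}$ and $\{\ell^r_s\} = \{f^r_{s+1}\}$ are among the singleton members $\{f^m_i\}$ of the partition, i.e. they are genuinely elements of $\widehat X$, so it makes sense to ask whether they are fixed points of $\widehat F$. Since $F$ is the identity on $\mathscr K^r_s$, it fixes both extremal points, so $F(\{f^r_s\}) = \{f^r_s\} \subseteq \{f^r_s\}$ and $F(\{\ell^r_s\}) = \{\ell^r_s\} \subseteq \{\ell^r_s\}$. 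By the definition of the induced function in Lemma~\ref{L:existsFhat}, this means precisely $\widehat F(\{f^r_s\}) = \{f^r_s\}$ and $\widehat F(\{\ell^r_s\}) = \{\ell^r_s\}$, i.e. both singletons belong to $\Fix(\widehat F)$, as claimed.

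I do not expect any real obstacle here: essentially all the content has already been packaged into Lemma~\ref{L:how br to br}, and this corollary is just the observation that, once $F$ is the identity on a whole brick whose interior is $\widehat F$-fixed, the two endpoint singletons of that brick (which, via $\ell^r_s = f^r_{s+1}$, are themselves members of $\widehat X$) are automatically $\widehat F$-fixed. The only point deserving a word of care is that $\Fix(\widehat F)$ refers to the induced function on the partition $\widehat X$ rather than to $F$ itself, so one should explicitly record the trivial fact that a singleton cell of $\widehat X$ consisting of an $F$-fixed point is an $\widehat F$-fixed point.
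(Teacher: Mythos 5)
Your proof is correct and follows the paper's argument exactly: the paper also invokes Lemma~\ref{L:how br to br}, rules out the constant case, concludes that $F$ is the identity on $\mathscr K^r_s$, and reads off the fixedness of the two singleton cells. You have merely written out the bookkeeping that the paper leaves implicit.
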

	
	\begin{proof}
		Lemma~\ref{L:how br to br} implies that the restriction of $F$ to $\mathscr
		K^r_s$ is identity.	
	\end{proof}

	\begin{lem}\label{L:2xFix}
		$\Fix (F) \subseteq \bigcup \Fix (\widehat F)$ and $\Fix (F)$ intersects every
		set $A\in \Fix (\widehat F)$.
	\end{lem}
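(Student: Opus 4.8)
The plan is to prove the two assertions separately, each time splitting according to which of the three kinds of member of the partition $\widehat X$ is in play: the head $\mathscr K_0$, a singleton $\{f^m_i\}$, or a brick interior $(\mathscr K^m_i)^\circ$. Most of the work has in fact already been done in the earlier lemmas, so the argument is largely a matter of quoting them.

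For the inclusion $\Fix(F)\subseteq\bigcup\Fix(\widehat F)$, I would take $x\in\Fix(F)$ and let $A$ be the unique element of $\widehat X$ containing $x$. Since $F(x)=x\in A$, the set $F(A)$ meets $A$; by the remark immediately after Lemma~\ref{L:existsFhat} this forces $F(A)\subseteq A$, i.e.\ $\widehat F(A)=A$, so $A\in\Fix(\widehat F)$ and $x\in A\subseteq\bigcup\Fix(\widehat F)$. No case distinction and no extra hypothesis on $F$ is needed here.

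For the second assertion, I would fix $A\in\Fix(\widehat F)$, so that $F(A)\subseteq A$ by the definition of $\widehat F$, and exhibit a fixed point of $F$ inside $A$. If $A=\{f^m_i\}$ this is immediate, since then $F(f^m_i)=f^m_i$. If $A=(\mathscr K^r_s)^\circ$, I would simply invoke Lemma~\ref{L:how br to br}, whose stated conclusion already includes that $F$ has a fixed point in $(\mathscr K^r_s)^\circ$. If $A=\mathscr K_0$, then $F|_{\mathscr K_0}$ is a continuous selfmap of the brick $\mathscr K_0$, hence by Lemma~\ref{L:Kim to Kjn}(a) it is either the identity or a constant map with value in $\mathscr K_0$; in either case $F$ has a fixed point in $\mathscr K_0$.

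The only point worth double-checking is the implication ``$F(A)$ meets $A$ $\Rightarrow$ $F(A)\subseteq A$'' used in the first part: this is exactly the observation following Lemma~\ref{L:existsFhat}, which rests on Lemma~\ref{L:map} (the image of a brick is a singleton or a homeomorphic brick) together with the fact that $\widehat X$ is a partition of $X$. Beyond that there is no real obstacle; the proof should be short.
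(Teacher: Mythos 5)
Your proof is correct and follows essentially the same route as the paper: the first inclusion by noting that $F(A)\cap A\neq\emptyset$ forces $\widehat F(A)=A$ via the observation after Lemma~\ref{L:existsFhat}, and the second assertion by a three-way case analysis on $A$, with Lemma~\ref{L:how br to br} handling the brick-interior case and the rigidity of $\mathscr K_0$ (identity or constant) supplying a fixed point in the head case. The only cosmetic difference is that you cite Lemma~\ref{L:Kim to Kjn}(a) explicitly for the head case where the paper simply appeals to $\mathscr K_0$ having the fixed-point property; these amount to the same thing.
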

	
	\begin{proof}
		Let $x\in \Fix (F)$. Then $x \in A$ for some $A\in \widehat X$ and since $x$ is
		fixed for $F$, we have $F(A)\cap A\neq \emptyset$. Hence $\widehat F(A) = A$ and
		so $x\in A \in \Fix (\widehat F)$.
		
		Now let a set $A\in \widehat X$ be a fixed point of $\widehat F$. If $A=\{a\}$
		then $a$ is a fixed point of $F$. If $A$ is not a singleton then either $A=
		\mathscr K_0$ or $A= (\mathscr K^n_j)^\circ$ for some $n$ and $j$. In the former
		case we have $F(\mathscr K_0) \subseteq \mathscr K_0$ and the Cook continuum
		$\mathscr K_0=A$ contains a fixed point of $F$.
		In the latter case use Lemma~\ref{L:how br to br}.
	\end{proof}

	\begin{lem}\label{L:Darboux}
		Suppose that $A \prec B$ in $\widehat X$ are such that $\widehat F(A) \succ A$
		and $\widehat F(B) \prec B$. Then there exists $C$ between $A$ and $B$ such that
		$\widehat F(C)=C$.\footnote{The lemma works also with $B\prec A$, but we will
			not need it.} 	
	\end{lem}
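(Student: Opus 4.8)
\emph{Plan.} The intended argument is the standard order-theoretic intermediate value (monotone fixed-point) argument on the linearly ordered set $(\widehat X,\preccurlyeq)$, using that $\widehat F$ is order-preserving and that every nonempty subset of $\widehat X$ has a largest element (Lemma~\ref{L:antiwell2}). First I would dispose of the case when $F$ is constant: if $F(X)=\{z\}$ with $z$ in the piece $C_0\in\widehat X$, then $\widehat F\equiv C_0$, so the hypotheses say exactly $A\prec C_0\prec B$, and trivially $\widehat F(C_0)=C_0$. So assume henceforth that $F$ is not constant.

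The key preliminary step is that $\widehat F\colon\widehat X\to\widehat X$ is order-preserving, and this is essentially a repackaging of Lemma~\ref{L:ord-pres}. Indeed, by Lemma~\ref{L:const 2}(b) the set $\Sigma$ is $F$-invariant and $F|_\Sigma$ is order-preserving (Lemma~\ref{L:ord-pres}(a)); for any snake brick $\mathscr K^r_s=\langle\langle\ell^r_s,f^r_s\rangle\rangle$, the map $F$ sends it onto $\langle\langle F(\ell^r_s),F(f^r_s)\rangle\rangle$ (Lemma~\ref{L:ord-pres}(c)), so by Lemma~\ref{L:map} and Corollary~\ref{C:brick-brick} the piece $\widehat F((\mathscr K^r_s)^\circ)$ is either the one-point piece $\{F(f^r_s)\}$ (when $F(\ell^r_s)=F(f^r_s)$) or the interior of the brick $\langle\langle F(\ell^r_s),F(f^r_s)\rangle\rangle$; and when $F$ is not constant one has $F(\mathscr K_0)=\mathscr K_0$ (by Lemma~\ref{L:map} the image is $\mathscr K_0$ or a point, a point of $\mathscr K_0$ being excluded by Lemma~\ref{L:const 2}(c), and a point $z_0$ in the snake being excluded because a small open $W\ni z_0$ with $W\cap\Sigma=\emptyset$ lying inside a single brick would have $F^{-1}(W)$ containing a whole sub-snake, forcing $F(x_m)\in W\cap\Sigma=\emptyset$ for all large $m$). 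Matching these descriptions across the successive pieces of the partition~\eqref{Eq:partition} yields $A_1\preccurlyeq A_2\Rightarrow\widehat F(A_1)\preccurlyeq\widehat F(A_2)$; checking this across the seams of~\eqref{Eq:partition} is routine bookkeeping but is the only mildly delicate point of the whole proof.

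Granting that $\widehat F$ is order-preserving, the lemma follows at once. Set $\mathcal S=\{P\in\widehat X:A\preccurlyeq P\preccurlyeq B\ \text{and}\ \widehat F(P)\succcurlyeq P\}$. Since $\widehat F(A)\succ A$ we have $A\in\mathcal S$, so $\mathcal S\neq\emptyset$ and by Lemma~\ref{L:antiwell2} it has a largest element $C$; since $\widehat F(B)\prec B$ we have $B\notin\mathcal S$, hence $A\preccurlyeq C\prec B$. Now $\widehat F(C)\succcurlyeq C$, and if $\widehat F(C)\succ C$ then: either $\widehat F(C)\preccurlyeq B$, in which case order-preservation applied to $C\preccurlyeq\widehat F(C)$ gives $\widehat F(\widehat F(C))\succcurlyeq\widehat F(C)$, so $\widehat F(C)\in\mathcal S$ lies strictly above $C$, contradicting maximality; or $\widehat F(C)\succ B$, but then $C\prec B$ and order-preservation give $\widehat F(C)\preccurlyeq\widehat F(B)\prec B$, a contradiction. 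Hence $\widehat F(C)=C$, and since $\widehat F(A)\succ A$ forces $C\neq A$, we get $A\prec C\prec B$, i.e.\ $C$ lies between $A$ and $B$, as required. Thus the entire difficulty of the proof is concentrated in the easy-but-fiddly verification that $\widehat F$ is order-preserving; the rest is the textbook monotone fixed-point argument.
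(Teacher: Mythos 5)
Your approach is a genuinely different route from the paper's. The paper argues by contradiction: after first replacing $A$ by an element $A^*$ strictly between $\mathscr K_0$ and $B$ (using continuity to keep $\widehat F(A^*)\succ A^*$), and then by the largest element between $A$ and $B$ mapped to the right, it derives a contradiction from continuity of $F$ at the seams of the partition~\eqref{Eq:partition}. You instead aim to show that $\widehat F$ is order-preserving on the linearly ordered set $(\widehat X,\preccurlyeq)$, which is well-ordered in reverse by Lemma~\ref{L:antiwell2}, and then run the textbook monotone fixed-point argument. That final step is carried out correctly, including the constant case and the check that $C\neq A$.

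There is, however, a genuine gap in your justification that $\widehat F$ is order-preserving, concentrated in the parenthetical claim that $F(\mathscr K_0)=\mathscr K_0$ for every non-constant $F$. You rule out $F(\mathscr K_0)=\{z_0\}$ with $z_0$ in the snake by choosing a small open $W\ni z_0$ with $W\cap\Sigma=\emptyset$; but such a $W$ exists only when $z_0\notin\Sigma$. The case $z_0=f^n_j\in\Sigma$ is not addressed, and the paper never asserts $F(\mathscr K_0)=\mathscr K_0$ (Corollary~\ref{C:FixF} explicitly contemplates $\mathscr K_0\notin\Fix(\widehat F)$). The conclusion you want --- monotonicity of $\widehat F$ --- does still hold in that remaining case, but it requires a further argument: if $\widehat F(\mathscr K_0)=\{f^n_j\}\succ\mathscr K_0$, continuity of $F$ on the head forces $F(x_m)\succcurlyeq f^n_j$ for all large $m$ (the only points of $\Sigma$ near $f^n_j$ are $f^n_j$ itself and, when $j=1$, points of $\Sigma$ to its right), and since every $y\in\Sigma$ satisfies $x_m\preccurlyeq y$ once $m$ is large enough, Lemma~\ref{L:ord-pres}(a) gives $F(y)\succcurlyeq F(x_m)\succcurlyeq f^n_j$, whence $\widehat F(A)\succcurlyeq\widehat F(\mathscr K_0)$ for every $A\in\widehat X$. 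As written, without this supplement, your proof of the key order-preservation step is incomplete. The paper's own argument sidesteps the difficulty entirely by never deciding what $\widehat F$ does at $\mathscr K_0$.
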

	
	\begin {proof}
	Suppose, on the contrary, that there is no such $C$. We may assume that $A \neq
	\mathscr K_0$, since otherwise we can replace $A$ by $A^*\in \widehat X$ which
	is between $A$ and $B$ and sufficiently `close' to $\mathscr K_0$ (then
	continuity of $F$ ensures that $\widehat F(A^*)$ is to the right of $A^*$).
	Further, we may assume that not only $B$, but also every element  between $A$
	and $B$ (if $A$ and $B$ are not neighbours in $\widehat X$) is mapped by
	$\widehat F$ to the left. Otherwise we replace $A$ by the largest element
	between $A$ and $B$ which is mapped by $\widehat F$ to the right. So, it is
	sufficient to deduce a contradiction from the following assumptions: $\mathscr
	K_0 \prec A\prec B$, $A$ is mapped to the right and all $D$ with $A\prec D
	\preccurlyeq B$ are mapped to the left.
	
	First assume that $A$ has its right neighbour. Then either $A= (\mathscr
	K^m_i)^\circ$ is mapped to the right and its right neighbour $\{f^m_i\}$ to the
	left, or $A= \{f^m_i\}$ (with $i\geq 2$) is mapped to the right and its right
	neighbour $(\mathscr K^m_{i-1})^\circ$ to the left. Either case clearly
	contradicts the continuity of $F$.
	
	Now let $A=\{f^m_1\}$ for some $m\geq 2$ (since $A\prec B$, $m\neq 1$). Since
	$A$ is mapped to the right, the continuity of $F$ implies that all elements
	between $A$ and $B$ which are sufficiently `close' to $A$ are also mapped to the
	right, a contradiction with the fact that all $D$ with $A\prec D \preccurlyeq B$
	are mapped to the left.
\end{proof}	

Since the largest element of $\widehat X$ is either a fixed point of $\widehat
X$ or is mapped to the left, this lemma immediately gives the following
corollary.

\begin{cor}\label{C:arrow}
	If there is $A$ in $\widehat X$ which is mapped by $\widehat F$ to the right,
	then $\widehat F$ has a fixed point different from $\mathscr K_0$.
\end{cor}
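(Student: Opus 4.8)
The plan is to derive this directly from Lemma~\ref{L:Darboux} by taking the largest element of $\widehat X$ as the right endpoint $B$ in that lemma. First I would recall, using Lemma~\ref{L:antiwell2} (or just inspecting the partition~(\ref{Eq:partition})), that $\widehat X$ has a largest element, namely the singleton $\{f^1_1\} = \{x_1\}$ — the rightmost point of $X$. Since no point of $X$ lies to the right of $x_1$, the element $\widehat F(\{f^1_1\})$ is either $\{f^1_1\}$ itself or lies strictly to its left; in other words, $\{f^1_1\}$ is a fixed point of $\widehat F$ or $\widehat F(\{f^1_1\}) \prec \{f^1_1\}$. In the first case we are immediately done, because $\{f^1_1\} \neq \mathscr K_0$.

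In the remaining case I would set $B := \{f^1_1\}$, so that $\widehat F(B) \prec B$, and let $A$ be the element given by hypothesis, with $\widehat F(A) \succ A$. Since nothing is mapped to the right of the $\preccurlyeq$-maximal element, $A$ cannot equal $B$, hence $A \prec B$. Thus the hypotheses of Lemma~\ref{L:Darboux} are satisfied and the lemma provides a $C$ with $A \prec C \prec B$ and $\widehat F(C) = C$. Finally, since $\mathscr K_0$ is the $\preccurlyeq$-least element of $\widehat X$ we have $A \succcurlyeq \mathscr K_0$, and $C \succ A$ forces $C \succ \mathscr K_0$, so $C \neq \mathscr K_0$; this $C$ is the desired fixed point of $\widehat F$.

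I do not anticipate any genuine obstacle here: the statement is essentially a one-line consequence of Lemma~\ref{L:Darboux}. The only points needing a word of justification are that the largest element of $\widehat X$ exists and equals the singleton $\{x_1\}$, and that a $\preccurlyeq$-maximal element is never mapped to the right (clear, since there is nothing to its right) — both of which are immediate from the construction.
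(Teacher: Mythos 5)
Your proof is correct and follows essentially the same route as the paper: take the largest element of $\widehat X$, observe that it is either a fixed point of $\widehat F$ (done) or mapped to the left, and in the latter case apply Lemma~\ref{L:Darboux} with the hypothesized $A$ to produce a fixed point $C$ strictly to the right of $A$, hence different from $\mathscr K_0$. You supply a bit more explicit bookkeeping (identifying the largest element as $\{f^1_1\}=\{x_1\}$, checking $A\neq B$, and spelling out why $C\neq\mathscr K_0$), but the argument is the same.
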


A subset of $\widehat X$ is said to be \emph{connected}, if it contains all
elements $C$ between $A$ and $B$ whenever it contains $A$ and $B$.

\begin{lem}\label{L:FixFhat}
	The set $\Fix (\widehat F)$	is nonempty, has the smallest element and the
	largest element, and is connected. Moreover, if $\Fix (\widehat F)$ has more
	than one element, then $\Fix (F) = \bigcup \Fix (\widehat F)$.
\end{lem}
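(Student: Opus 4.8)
The plan is to treat the several assertions of the lemma in turn, reducing everything to the structural facts about the cell partition $\widehat X$ (Lemma~\ref{L:antiwell2}), to the rigidity of the bricks (Lemmas~\ref{L:map}, \ref{L:const 2}, \ref{L:how br to br}, Corollaries~\ref{C:brick-brick}, \ref{C:gemini}), to order-preservation on $\Sigma$ (Lemma~\ref{L:ord-pres}), and to the intermediate-value type Lemma~\ref{L:Darboux} together with Corollary~\ref{C:arrow}.

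\emph{Nonemptiness, largest element, and the identity $\Fix(F)=\bigcup\Fix(\widehat F)$.} If $F$ is constant with $F(X)=\{z\}$, the cell containing $z$ is fixed by $\widehat F$. If $F$ is not constant, then by Lemma~\ref{L:map} $F(\mathscr K_0)$ is either $\mathscr K_0$ (and $\mathscr K_0\in\Fix(\widehat F)$) or a single point $z_0$; by Lemma~\ref{L:const 2}(c) the point $z_0$ cannot lie in $\mathscr K_0$, so it lies in the snake, $\widehat F(\mathscr K_0)\succ\mathscr K_0$, and Corollary~\ref{C:arrow} produces a fixed cell. Hence $\Fix(\widehat F)\neq\emptyset$, and Lemma~\ref{L:antiwell2} gives it a largest element. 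When $|\Fix(\widehat F)|>1$ the map $F$ is not constant; the inclusion $\Fix(F)\subseteq\bigcup\Fix(\widehat F)$ is Lemma~\ref{L:2xFix}, and for the reverse inclusion each $A\in\Fix(\widehat F)$ satisfies $A\subseteq\Fix(F)$: if $A$ is a singleton this is immediate, if $A=(\mathscr K^m_i)^\circ$ then $F|_{\mathscr K^m_i}=\Id$ by Lemma~\ref{L:how br to br}(a), and if $A=\mathscr K_0$ then $F|_{\mathscr K_0}$ is the identity (the constant alternative being excluded by Lemma~\ref{L:const 2}(c)).

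\emph{Smallest element.} The poset $(\widehat X,\preccurlyeq)$ is inf-complete: it has least element $\mathscr K_0$, and the lower bounds of any nonempty subset form a nonempty set, which has a greatest element by Lemma~\ref{L:antiwell2}. Put $A^*=\inf\Fix(\widehat F)$ and suppose $A^*\notin\Fix(\widehat F)$; then $F$ is not constant, since a constant map has a one-element fixed set. As every element of $\Fix(\widehat F)$ is then strictly to the right of $A^*$ with infimum $A^*$, the cell $A^*$ can have no immediate successor, so from~(\ref{Eq:partition}) it is $\mathscr K_0$ or some $\{x_m\}=\{f^m_1\}$, and, the order just to its right being of type $\omega^*$, there is a sequence $A_n$ in $\Fix(\widehat F)$ decreasing to $A^*$. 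Choose $p_n\in A_n\cap\Fix(F)$ by Lemma~\ref{L:2xFix}, taking $p_n$ interior to the corresponding brick when $A_n$ is a brick interior (Lemma~\ref{L:how br to br}). If $A^*=\{x_m\}$, the $A_n$ lie in $D_{m-1}$ with brick indices tending to infinity, so $p_n\to x_m$ by Lemma~\ref{L:block}(4) and continuity forces $F(x_m)=x_m$, i.e.\ $A^*\in\Fix(\widehat F)$, a contradiction. If $A^*=\mathscr K_0$, then $P_1(p_n)\to 0$, so a subsequence of $(p_n)$ converges to a point $q\in X\cap\pi_0=\mathscr K_0$; closedness of $\Fix(F)$ gives $q\in\Fix(F)\cap\mathscr K_0$, whence $q\in F(\mathscr K_0)\cap\mathscr K_0$ and $\widehat F(\mathscr K_0)=\mathscr K_0$, again a contradiction. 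Thus $A^*\in\Fix(\widehat F)$ is the smallest element.

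\emph{Connectedness — the main obstacle.} Assume $|\Fix(\widehat F)|>1$, so $F$ is not constant. One first checks, from Lemma~\ref{L:ord-pres}(a) together with Lemma~\ref{L:map} and Corollary~\ref{C:brick-brick}, that $\widehat F$ is non-decreasing and ``skip-free'': if $B$ is the immediate successor of $A$ in $\widehat X$, then $\widehat F(A)=\widehat F(B)$ or $\widehat F(B)$ is the immediate successor of $\widehat F(A)$ (the two alternatives being that $F$ collapses the brick spanned by $A$ and $B$ to a point, or maps it homeomorphically onto a brick). Supposing, for contradiction, that a non-fixed cell $C$ lies strictly between the smallest and the largest element of $\Fix(\widehat F)$, let $A$ be the largest fixed cell with $A\prec C$ (it exists by Lemma~\ref{L:antiwell2}) and $B$ the largest element of $\Fix(\widehat F)$; by monotonicity $\widehat F(C)\in[A,B]$. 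A case analysis according to whether $\widehat F(C)\succ C$ or $\widehat F(C)\prec C$, to the types (brick interior or singleton) of $A$ and of the neighbours of $C$, and using Corollary~\ref{C:gemini} (a fixed brick interior forces both its endpoint singletons to be fixed) then yields a contradiction. The recurring mechanism is that an equality such as $\widehat F(C)=A$ forces $F$ to collapse an entire brick to a single point; by continuity and the non-homeomorphy of adjacent bricks (Lemma~\ref{L:map}, Corollary~\ref{C:brick-brick}) this collapse propagates along a whole tail of bricks and hence to the cluster point $x_{m+1}$, which contradicts either the maximality of $A$, the monotonicity of $\widehat F$, or the fact that $B$ is fixed. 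This propagation-and-case-chasing is by far the heaviest part of the proof; the remaining assertions are comparatively routine.
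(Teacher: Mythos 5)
Your treatment of nonemptiness, of the extremal elements, and of $\Fix(F)=\bigcup\Fix(\widehat F)$ is correct, and the last of these is actually a mild simplification over the paper: you derive $A\subseteq\Fix(F)$ directly for each fixed cell $A$ (singleton: trivial; brick interior: Lemma~\ref{L:how br to br}(a); $\mathscr K_0$: Lemma~\ref{L:Kim to Kjn}(a) plus Lemma~\ref{L:const 2}(c)), whereas the paper extracts it as a byproduct of the connectedness argument. Your smallest-element proof, reorganised around inf-completeness of $\widehat X$, is in substance the paper's argument (a limit of fixed points is fixed) and is fine.

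The connectedness part, however, is left as a sketch, and that is where the genuine gap lies. The paper's proof turns on a \emph{cardinality observation} you never make: since $F(\langle\langle a,b\rangle\rangle)$ is a subcontinuum containing the fixed endpoints $a$ and $b$, its $P_1$-projection covers $P_1(\langle\langle a,b\rangle\rangle)$; because $\langle\langle a,b\rangle\rangle$ is uncountable while it contains only countably many bricks, each brick $B_2$ of $\langle\langle a,b\rangle\rangle$ must be covered by $F(B_1)$ for some brick $B_1\subseteq\langle\langle a,b\rangle\rangle$, and then Lemma~\ref{L:map} forces $F(B_1)=B_2$ with $B_1\cong B_2$; the arrangement in~\eqref{Kim} guarantees that for every second brick no other brick in the interval is homeomorphic, so those bricks are fixed pointwise and Lemma~\ref{L:const 1}(b) then pins down the whole segment. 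Your replacement, ``collapse propagates along a whole tail of bricks,'' is not unconditionally true: if $F(\mathscr K^m_j)=\{a\}$ with $a\in\Sigma$, the adjacent brick $\mathscr K^m_{j+1}$ may be mapped \emph{homeomorphically} onto a brick having $a$ as an extremal point rather than collapsed (compare the surjection $D_1\to D_3$ in Figure~\ref{F:sur}, where collapsed and non-collapsed bricks alternate). Collapse is forced only when the image point is non-extremal (Lemma~\ref{L:const 1}(d)). Making a skip-free/propagation argument rigorous therefore requires, at each step, ruling out the homeomorphic-image alternative — via Lemma~\ref{L:ord-pres}(a) ($F(\Sigma)\subseteq\Sigma$) together with the non-homeomorphy of bricks — and, crucially, a separate continuity/geometry argument at the limit cells $\{x_m\}$ and $\mathscr K_0$ (which have no right neighbour, so the inductive step halts). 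None of this is carried out in your sketch. The approach can, with considerable extra work, be pushed through, but as written it does not constitute a proof of connectedness, and the uniform cardinality argument the paper uses is both missing and simpler than what your route would require.
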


\begin{proof}
	Either $\mathscr K_0$ is a fixed point for $\widehat F$ or $\mathscr K_0 \prec
	\widehat F(\mathscr K_0)$ and then $\Fix (\widehat F) \neq \emptyset$ by
	Corollary~\ref{C:arrow}. From now on we will assume that $\Fix (\widehat F)$ has
	more than one element (and hence $F$ is not constant), otherwise the rest of the
	lemma is obvious.
	
	$\Fix (\widehat F)$ has the largest element by Lemma~\ref{L:antiwell2}. Suppose
	for a moment that $\Fix (\widehat F)$ has no smallest element.
	In view of Corollary~\ref{C:gemini} it means that either
	\begin{itemize}
		\item [(i)] for some $m$, $\Fix (\widehat F)$ does not contain $\{f^{m+1}_1\}$,
		while containing $\{f^m_i\}$ with arbitrarily large $i$, or
		\item [(ii)] $\Fix (\widehat F)$ does not contain $\mathscr K_0$, while
		containing $\{f^m_i\}$ with arbitrarily large $m$ ($i$~depends on $m$).
	\end{itemize}
	In case (i), the point $f^{m+1}_1$, being the limit of a sequence of fixed
	points of $F$, is obviously fixed for $F$. So $\{f^{m+1}_1\}$ is fixed for
	$\widehat F$, a contradiction. In case (ii), by considering a subsequence of fixed points
	converging to a point in the head, we see that $F$ has necessarily a fixed point
	also in $\mathscr K_0$ and so, by Lemma~\ref{L:map}, $\mathscr K_0 \in \Fix (\widehat
	F)$, a contradiction.

	To prove that $\Fix (\widehat F)$ is connected, let $A, B \in \Fix (\widehat F)$
	and  $A\prec C \prec B$. We need to prove that $C\in \Fix (\widehat F)$. By
	Corollary~\ref{C:gemini}, it is sufficient to consider two cases, namely
	\begin{itemize}
		\item [(I)] $A=\{f^{m+r}_j\} \prec \{x_{m+r}\} \prec \ldots \prec \{x_{m+1}\}
		\prec \{f^m_i\} = B$ where $r\geq 0$, and if $r=0$ then this reduces to
		$A=\{f^{m}_j\} 	\prec \{f^m_i\} = B$ where $j>i$,
		\item [(II)] $A=\mathscr K_0 \prec \{f^m_i\} = B$.
	\end{itemize}
	
	First consider the case (I). To prove that $C\in \Fix (\widehat F)$, it is
	clearly sufficient to show that the map $F$ is identity on $D_{AB}=\langle
	\langle f^{m+r}_j, f^m_i \rangle \rangle$. Let $\mathscr B_{AB}$ be the family
	of all bricks which are subsets of $D_{AB}$ (i.e. bricks `joining' $A$ and $B$).
	Then $F(D_{AB})$ is a sub-continuum of $X$ containing both $f^{m+r}_j$ and
	$f^m_i$. Therefore the first projection of $F(D_{AB})$ contains the whole
	interval whose endpoints are the first projections of $f^{m+r}_j$ and $f^m_i$,
	i.e
	\begin{equation}\label{Eq:proj}
	P_1(F(D_{AB}))\supseteq P_1(D_{AB})~.
	\end{equation}
	Now notice that, by Lemma~\ref{L:map}, if $B_1, B_2 \in \mathscr B_{A,B}$ then
	$F(B_1)$ is either disjoint with $B_2$ (then even their first projections are
	disjoint) or $F(B_1)$ is a singleton in $B_2$ or $F(B_1)=B_2$, where
	$F(B_1)=B_2$ is possible only if $B_1$ is homeomorphic to $B_2$. It follows that
	for every brick $B_2 \in \mathscr B_{AB}$ there is a brick $B_1 \in \mathscr
	B_{AB}$  homeomorphic to $B_2$ (perhaps $B_1=B_2$) such that $F(B_1)=B_2$ (one
	may notice that this strengthens~(\ref{Eq:proj}) to
	$F(D_{AB})\supseteq D_{AB}$). Indeed, otherwise the first projection of the
	$F$-image of each of the countably many bricks in $\mathscr B_{AB}$ covers at
	most one point of the uncountable first projection of $B_2$, a contradiction
	with~(\ref{Eq:proj}).
	
	However, as one can see from the construction of~$X$ (see~(\ref{Kim})
	and~(\ref{Eq: DD*})), every second
	brick among those `joining' $B$ and $x_{m+1}$ is such that no \emph{other} brick
	in $\mathscr B_{AB}$ is homeomorphic with it (if $r=0$ or if $r=1$ and $j=1$
	then necessarily even every brick, not only every second one, has this
	property). Therefore every such brick is mapped onto itself and so, by
	Lemma~\ref{L:Kim to Kjn}(a), $F$ is identity on it.
	
	So, not only $F$ fixes the singletons $A$ and $B$ but $F$ is identity also on
	every second brick joining $B$ and $x_{m+1}$ (joining $B$ and $f^m_j$ if $r=0$).
	It follows that $F$ is identity on $\langle \langle x_{m+1}, f^m_i \rangle
	\rangle$  (on $\langle \langle f^m_j, f^m_i \rangle \rangle$ if $r=0$), see
	Lemma~\ref{L:const 1}(b). If $r=0$ or $r=1$ and $j=1$ (i.e. if the whole
	$D_{AB}$ is a subset of $D_m$), we are already done. Otherwise, reasoning
	similarly but with $B$ replaced by $B'=\{x_{m+1}\} = \{f^{m+1}_1\}$, we get that
	$F$ is identity also on that part of $D_{AB}$ which lies in $D_{m+1}$.
	Continuing this way, after finitely many steps we finish the proof that $F$ is
	identity on the whole set $\langle \langle f^{m+r}_j, f^m_i \rangle \rangle$.
	
	Now consider the case (II). It is sufficient to prove that $F$ is identity on
	$D_{\mathscr K_0B}=\langle \langle \mathscr K_0, f^m_i \rangle \rangle$. The set
	$F(D_{\mathscr K_0B})$ is a sub-continuum of $X$
	containing both a point from $\mathscr K_0$ (because $\mathscr K_0 \in \Fix (\widehat
	F)$ and $\mathscr K_0$ has fixed point property) and the point $f^m_i$. By the same cardinality argument as in the case (I)
	we get that $F$ is identity on the sub-snake $((\mathscr K_0, f^m_i \rangle
	\rangle$ and so it is identity also on $\langle \langle \mathscr K_0, f^m_i
	\rangle \rangle$.
	
	Finally, let $\Fix (\widehat F)$ have at least two elements. In the proofs of the
	cases (I) and (II) we have shown that
	$F$ is identity on every set which is an element of $\Fix (\widehat F)$. This
	together with Lemma~\ref{L:2xFix} give $\Fix (F) = \bigcup \Fix (\widehat F)$.
\end{proof}

\begin{cor}\label{C:FixF}
	If $F$ is not constant, then the set $\Fix (F)$ is either $\mathscr K_0$ or of
	the form $\langle \langle \mathscr K_0, f^m_i \rangle \rangle$ or  $\langle
	\langle  f^n_j, f^m_i \rangle \rangle$ for some $f^n_j \preccurlyeq
	f^m_i$.\footnote{The case $f^n_j = f^m_i = f^1_1$ can in fact be excluded
		because if $\Fix(F) =\{f^1_1\}$ then $F$ can be shown to be constant.}
\end{cor}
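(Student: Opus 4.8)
The plan is to read off $\Fix(F)$ directly from the set $\Fix(\widehat F)$ already analysed in Lemma~\ref{L:FixFhat}, distinguishing according to whether $\Fix(\widehat F)$ is a single element of $\widehat X$ or not. Since $F$ is assumed non-constant, Lemma~\ref{L:FixFhat} tells us that $\Fix(\widehat F)$ is nonempty, order-connected, and has both a smallest element $A_0$ and a largest element $A_1$, while Lemma~\ref{L:2xFix} gives $\Fix(F)\subseteq\bigcup\Fix(\widehat F)$ together with $\Fix(F)\cap A\ne\emptyset$ for every $A\in\Fix(\widehat F)$, so in particular $\Fix(F)\ne\emptyset$.

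First I would treat the case $\Fix(\widehat F)=\{A_0\}$. By Corollary~\ref{C:gemini} the element $A_0$ cannot be of the form $(\mathscr K^r_s)^\circ$, since then its left neighbour $\{\ell^r_s\}$ would also lie in $\Fix(\widehat F)$. If $A_0=\{f^m_i\}$, then $\emptyset\ne\Fix(F)\subseteq\{f^m_i\}$ forces $\Fix(F)=\{f^m_i\}=\langle\langle f^m_i,f^m_i\rangle\rangle$, of the stated form. If $A_0=\mathscr K_0$, then $F(\mathscr K_0)\subseteq\mathscr K_0$; by Lemma~\ref{L:map} this image is either $\mathscr K_0$ or a singleton, and a singleton would make $F$ constant by Lemma~\ref{L:const 2}(c), so $F(\mathscr K_0)=\mathscr K_0$ and hence $F|_{\mathscr K_0}=\Id$ by Lemma~\ref{L:Kim to Kjn}(a); combining $\mathscr K_0\subseteq\Fix(F)$ with $\Fix(F)\subseteq\mathscr K_0$ yields $\Fix(F)=\mathscr K_0$.

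Next, the case $\#\Fix(\widehat F)\ge 2$. Here Lemma~\ref{L:FixFhat} gives the stronger conclusion $\Fix(F)=\bigcup\Fix(\widehat F)$, and by order-connectedness this equals the union of all blocks of the partition~\eqref{Eq:partition} lying weakly between $A_0$ and $A_1$. Applying Corollary~\ref{C:gemini} to the left neighbour of $A_0$ and to the right neighbour of $A_1$, neither $A_0$ nor $A_1$ can be of the form $(\mathscr K^r_s)^\circ$; moreover $A_1\succ A_0\succcurlyeq\mathscr K_0$, so $A_1\ne\mathscr K_0$ and therefore $A_1=\{f^m_i\}$ for some $m,i$. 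If $A_0=\mathscr K_0$, the union of blocks between $\mathscr K_0$ and $\{f^m_i\}$ is exactly $\langle\langle\mathscr K_0,f^m_i\rangle\rangle$; if $A_0=\{f^n_j\}$, then $f^n_j\prec f^m_i$ and the union is $\langle\langle f^n_j,f^m_i\rangle\rangle$. This exhausts all possibilities.

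The bulk of the work has already been carried out in Lemmas~\ref{L:FixFhat} and~\ref{L:2xFix} and in Corollary~\ref{C:gemini}; the only points requiring care are (i) the observation that an endpoint of $\Fix(\widehat F)$ being of type $(\mathscr K^r_s)^\circ$ contradicts Corollary~\ref{C:gemini} via its neighbour on the appropriate side, and (ii) the purely bookkeeping identification of a union of consecutive blocks of~\eqref{Eq:partition} with the corresponding set $\langle\langle\,\cdot\,,\,\cdot\,\rangle\rangle$, which is immediate from the definitions in Subsection~\ref{SS:X is continuum}. I do not expect a genuine obstacle here, only this routine matching-up of the combinatorial picture ($\widehat X$, $\widehat F$) with the topological one.
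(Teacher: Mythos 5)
Your proof is correct and rests on exactly the same ingredients as the paper's (Lemmas~\ref{L:FixFhat}, \ref{L:2xFix}, \ref{L:Kim to Kjn}(a), \ref{L:const 2}(c), Corollary~\ref{C:gemini}); the only difference is organizational — you split cases by the cardinality of $\Fix(\widehat F)$ while the paper splits by which type of element ($\mathscr K_0$, $(\mathscr K^r_s)^\circ$, or a singleton $\{f^m_i\}$) appears in it. Both routes are equivalent and of the same length, so this is essentially the paper's argument.
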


\begin{proof}
	By Lemma~\ref{L:FixFhat}, $\Fix (\widehat F)\neq \emptyset$.
	First assume that $\mathscr K_0\in \Fix (\widehat F)$, i.e. $F (\mathscr
	K_0)\subseteq \mathscr K_0$. By Lemma~\ref{L:Kim to Kjn}(a), $F$ is constant or
	identity on $\mathscr K_0$. The former case implies, by Lemma~\ref{L:const
		2}(c), that $F$ is constant on $X$, a contradiction. Thus $F$ is identity on
	$\mathscr K_0$ and so $\Fix (F) \supseteq \mathscr K_0$. Then, combining
	Corollary~\ref{C:gemini} and Lemma~\ref{L:FixFhat} we get that either $\Fix (F)
	= \mathscr K_0$ or $\Fix (F) = \langle \langle \mathscr K_0, f^m_i \rangle
	\rangle$ for some $m$ and $i$.
	
	If some $(\mathscr K^r_s)^\circ \in \Fix (\widehat F)$, then by
	Corollary~\ref{C:gemini} the set $\Fix (\widehat F)$ has more than one element
	and, by using Lemma~\ref{L:FixFhat}, we get that $\Fix (F)$ is either $\langle
	\langle \mathscr K_0, f^m_i \rangle \rangle$ or  $\langle \langle  f^n_j, f^m_i
	\rangle \rangle$ for some $f^n_j \prec f^m_i$.
	
	Finally, let some $\{f^m_i\}\in \Fix (\widehat F)$, i.e. $f^m_i\in \Fix (F)$. We
	may assume that $\Fix (\widehat F)$ contains neither $\mathscr K_0$ nor any of
	the sets $(\mathscr K^r_s)^\circ$, since these cases have already been
	considered. Thus, since $\Fix (\widehat F)$ is connected, $\{f^m_i\}$ is the
	only element of $\Fix (\widehat F)$. By Lemma~\ref{L:2xFix}, $\Fix (F) =
	\{f^m_i\}$.
\end{proof}

\subsection{Proof that $S(X)=\{0,\infty\}$}\label{SS:dynamicsF}
We show that $X$ does admit more con\-ti\-nuous selfmaps than just the identity
and the constant maps, but not too many of them. Moreover, we show that besides
selfmaps with rather trivial dynamics, there is in a sense only one significant
selfmap of $X$. So, $X$ exhibits some degree of rigidity.

\begin{lem}\label{L:existsG}
	There is a continuous map $G\colon X\to X$, a continuous extension of the map $T\colon X_1\to X_1$,
	 with the following properties.
	\begin{itemize}
		\item [(a)] $h^*(G)=\infty$.
		\item [(b)] For every $r\in \N$, the set $S^{\Join}_r= \langle \langle \mathscr
		K_0, x_r  \rangle \rangle$ is $G$-invariant and $h^*(G|_{S^{\Join}_r})=\infty$.
		\item [(c)] For every $m\in \N$, $G(D_m^*) =D_{m+1}^*$.
	\end{itemize}
\end{lem}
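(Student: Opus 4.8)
The plan is to build $G$ piece by piece on the head, on each set $D_m^*$, and at the points $x_{m+1}$, verify continuity at the junction with the head, and then obtain (a) and (b) immediately from the fact that $G$ extends $T$.

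\textbf{Construction of $G$.} On the head $\mathscr K_0$ put $G = \Id_{\mathscr K_0}$, and at the orbit points set $G(x_n) = x_{n+1}$; this is just $T$ on $X_1$. It remains to define $G$ on each $D_m^* = \bigcup_{i=1}^\infty \mathscr K_i^m$ so that $G(D_m^*) = D_{m+1}^*$ and so that $G$ is continuous on $X$. Recall from~\eqref{Eq:hcopy} that $\mathscr K_i^m$ is a copy of $\mathscr K_{i2^{m-1}}$, so $\mathscr K_i^m$ and $\mathscr K_i^{m+1}$ are copies of $\mathscr K_{i2^{m-1}}$ and $\mathscr K_{i2^m}$ respectively — \emph{not} homeomorphic. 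However, $\mathscr K_{2i}^{m}$ is a copy of $\mathscr K_{i2^m}$, hence homeomorphic to $\mathscr K_i^{m+1}$; by Lemma~\ref{L:Kim to Kjn}(e) there is a unique homeomorphism $\varphi_i^m\colon \mathscr K_{2i}^m \to \mathscr K_i^{m+1}$, and it carries extremal points to extremal points. So I define $G$ on $D_m^*$ as follows: on each $\mathscr K_{2i}^m$ (the even-indexed bricks) let $G = \varphi_i^m$; and on each odd-indexed brick $\mathscr K_{2i-1}^m$ let $G$ be the constant map onto the point $\ell_i^{m+1} = f_{i+1}^{m+1}$, i.e. the common point of $\mathscr K_i^{m+1}$ and $\mathscr K_{i+1}^{m+1}$ (for $i=1$, the brick $\mathscr K_1^m$ is collapsed to $f_1^{m+1} = x_{m+1}$). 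One checks the definitions agree on the one-point overlaps $\mathscr K_i^m \cap \mathscr K_{i+1}^m$: the last point $\ell_{2i}^m = f_{2i+1}^m$ of $\mathscr K_{2i}^m$ is sent by $\varphi_i^m$ to $\ell_i^{m+1}$, and this is exactly the constant value assigned on the adjacent odd brick $\mathscr K_{2i+1}^m$; similarly at $f_{2i}^m = \ell_{2i-1}^m$. The same pattern at $x_{m+1}$: the bricks $\mathscr K_i^m$ with $i$ large collapse to points converging to $x_{m+1}$, matching $G(x_{m+1}) = x_{m+2}$. Thus $G$ is well-defined on $X$, and by construction $G(D_m^*) = \bigcup_i \varphi_i^m(\mathscr K_{2i}^m) = \bigcup_i \mathscr K_i^{m+1} = D_{m+1}^*$, giving~(c). (This is precisely the map whose restriction to $D_m$ realises the unique surjection $D_m \to D_{m+1}$ of Lemma~\ref{L:mapDD}, shifted by one index.)

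\textbf{Continuity.} On each brick $G$ is continuous (a homeomorphism or constant), and finitely many closed bricks glued at single points give continuity on each $D_m$ and on $\langle\langle \mathscr K_0, x_r\rangle\rangle$. The only delicate point is continuity at points of the head $\mathscr K_0$. Here I use that the diameters of $D_m$ tend to zero (Lemma~\ref{L:block}(6)) and that $G(D_m) \subseteq D_{m+1}$: if $y_k \to \omega \in \mathscr K_0$ with $y_k$ in the snake, then $y_k \in D_{m(k)}$ with $m(k)\to\infty$, so $x_{m(k)} \to \omega$ by the argument in the proof of Lemma~\ref{L:space}(1-2), hence $G(y_k) \in D_{m(k)+1}$, whose diameter tends to zero and whose vertex $x_{m(k)+1}$ also tends to $\omega$; therefore $G(y_k) \to \omega = G(\omega)$. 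Points $y_k$ of the head itself are fixed, so there is no issue. This gives Lemma~\ref{L:Gcont}-type continuity, i.e. $G\colon X\to X$ is continuous.

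\textbf{Entropy.} Since $G$ restricted to $X_1$ equals $T$ and $h(T)=0$, $h^*(T)=\infty$ by Subsection~\ref{SS:cont+orbit}, part (a) follows at once from the trivial monotonicity $h^*(G) \ge h^*(G|_{X_1}) = h^*(T) = \infty$ (an IN-tuple for a subsystem is an IN-tuple for the whole system; cf.\ Remark~\ref{R:indep-subset} and formula~\eqref{Eq:hstarT-IN}). For~(b): $S^{\Join}_r = \langle\langle \mathscr K_0, x_r\rangle\rangle = \mathscr K_0 \sqcup \bigcup_{m\ge r} D_m^*$ is $G$-invariant because $G(\mathscr K_0)=\mathscr K_0$ and $G(D_m^*)=D_{m+1}^* \subseteq S^{\Join}_r$ for $m\ge r$. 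The set $S_r = \mathscr K_0 \sqcup \{x_r, x_{r+1},\dots\}$ of Subsection~\ref{SS:cont+orbit} is a $G$-invariant subset of $S^{\Join}_r$ on which $G$ restricts to $T|_{S_r}$, and $h^*(T|_{S_r})=\infty$ by~\eqref{Eq:hstarsub}; hence $h^*(G|_{S^{\Join}_r}) \ge h^*(T|_{S_r}) = \infty$, proving~(b).

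\textbf{Expected obstacle.} The routine-but-fiddly part is bookkeeping the index arithmetic so the gluing really is consistent (which even bricks map homeomorphically, which odd bricks collapse, and to exactly which junction point), together with the continuity check at the head — that is the one genuinely non-formal step, and it is handled by the diameter-shrinking argument above.
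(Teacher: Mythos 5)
Your approach is the same as the paper's: set $G=T$ on $X_1$, let $G$ act as the unique homeomorphism $\mathscr K_{2i}^m\to\mathscr K_i^{m+1}$ on even-indexed bricks and as a constant map on odd-indexed bricks, verify continuity at the head by the shrinking-diameter argument, and obtain (a) and (b) by restricting to the $G$-invariant sets $S_r\subseteq S^{\Join}_r$. However, the general formula you state for the collapse target is off by one and, read literally, breaks the gluing. You write $G(\mathscr K_{2i-1}^m)=\{\ell_i^{m+1}\}=\{f_{i+1}^{m+1}\}$, but the correct target is $G(\mathscr K_{2i-1}^m)=\{f_i^{m+1}\}$ (which for $i\ge 2$ equals $\{\ell_{i-1}^{m+1}\}$), exactly as in the paper. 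Your own parenthetical for $i=1$, which sends $\mathscr K_1^m$ to $f_1^{m+1}=x_{m+1}$, and your gluing check at $\ell_{2i}^m=f_{2i+1}^m$, which requires $\mathscr K_{2i+1}^m$ to collapse to $\ell_i^{m+1}=f_{i+1}^{m+1}$, both implicitly use this corrected formula and so contradict the formula you actually wrote. With your stated formula the map is not well-defined: at the shared point $f_{2i}^m=\ell_{2i-1}^m$ the homeomorphism $\varphi_i^m$ on $\mathscr K_{2i}^m$ gives the value $f_i^{m+1}$, while the constant on $\mathscr K_{2i-1}^m$ gives $f_{i+1}^{m+1}$, a different point. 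This is an index slip rather than a conceptual gap---the picture in your head is clearly right, and the rest of the argument (continuity via Lemma~\ref{L:block}(6) and the cluster-point argument, $G$-invariance of $S^{\Join}_r$, the two entropy bounds via Remark~\ref{R:indep-subset} and~\eqref{Eq:hstarsub}) is exactly the paper's---but the formula must be corrected for the construction to stand.
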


\begin{proof}
	Recall that by the construction of $X$ from Subsection~\ref{SS:outline} we have
	$X\supseteq X_1$ where $X_1$ is the space from Subsection~\ref{SS:cont+orbit},
	with $\mathscr C_0:=\mathscr K_0$ (the arbitrary continuum $\mathscr C_0$ has
	been replaced by the Cook continuum $\mathscr K_0$). For the map $T: X_1 \to
	X_1$ we have $h^*(T)=\infty$, even for every positive integer $r$ we have
	$h^*(T|_{S_r})=\infty$ for the $T$-invariant set $S_r$, see~(\ref{Eq:hstarsub}).
	Note that $S_r \subseteq S_1 = X_1$, $S^{\Join}_r \subseteq S^{\Join}_1 = X$ and
	$S_r \subseteq S^{\Join}_r$. To prove the existence of $G$ with (a) and (b), it
	is therefore sufficient to extend $T: X_1 \to X_1$ to a continuous map $G: X\to
	X$ such that $S^{\Join}_r$ is $G$-invariant. Indeed, $G|_{X_1}=T$ and
	$h^*(T|_{S_r})=\infty$
	trivially imply $h^*(G|_{S^{\Join}_r})=\infty$.
	
	To find such $G$, recall the homeomorphisms $h_i^m: \mathscr K_{i2^{m-1}} \to
	\mathscr K_i^m$ used in the construction of $X$, see~(\ref{Eq: hmi}).
	Define $G: X\to X$ as follows:
	\begin{enumerate}
		\item $G|_{X_1}=T$;
		\item for every $m\in \N$, $G$ maps $D_m^*$ onto $D_{m+1}^*$ (hence (c)) in
		such a way that for every $i \in \N$,
		$$G(\mathscr K_{2i}^{m}) = \mathscr K_{i}^{m+1} \ \text{ with} \ G(x) =
		h^{m+1}_i\circ (h^m_{2i})^{-1} \ \text{ whenever}
		\ x\in \mathscr K_{2i}^{m},$$
		and $G(\mathscr K_{2i-1}^{m})= \{f_i^{m+1}\}$.
	\end{enumerate}
	(One can see that the restriction of $G$ to $\mathscr K_{2i}^{m}$ defined above
	is a homeomorphism and it is in fact the only non-constant continuous
	map $\mathscr K_{2i}^{m} \to \mathscr K_{i}^{m+1}$, see Lemma~\ref{L:Kim to Kjn}(e),
	cf. Lemma~\ref{L:Cook-properties}(3).)  Clearly, $G: X\to X$ is a
	continuous extension of $T$ and every set $S^{\Join}_r$ is $G$-invariant.
\end{proof}

\begin{lem}\label{L:left}
	Assume that $F$ is not constant and $p\in \Sigma$ is the smallest element of
	$\Fix (F)$. Then for every $x\in X$ such that $x\preccurlyeq p$ we have $F^2(x)
	=p$ (but in general not $F(x)=p$).	
\end{lem}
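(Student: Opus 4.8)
The plan is to reduce the statement to two inclusions and to carry out the real argument on the second of them. Since $F$ is not constant, Corollary~\ref{C:FixF} gives that $\Fix(F)$ is $\mathscr K_0$, or $\langle \langle \mathscr K_0, f^m_i \rangle \rangle$, or $\langle \langle f^n_j, f^m_i \rangle \rangle$. The first two possibilities contain $\mathscr K_0$, whose points are not in $\Sigma$, so the hypothesis that the smallest element $p$ of $\Fix(F)$ lies in $\Sigma$ forces $\Fix(F) = \langle \langle f^n_j, f^m_i \rangle \rangle$ with $p = f^n_j$ (here $p \neq x_1$, since $\Fix(F) = \{x_1\}$ would make $F$ constant by the footnote to Corollary~\ref{C:FixF}). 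Put $\mathscr K := \mathscr K^n_j = \langle \langle \ell^n_j, p \rangle \rangle$, the brick with first point $p$, and $L := \langle \langle \mathscr K_0, p \rangle \rangle = \{x \in X : x \preccurlyeq p\}$, which is a subcontinuum of $X$ by Lemma~\ref{L:space}(1). The lemma follows from
\[
F(\mathscr K) = \{p\} \qquad\text{and}\qquad F(L) \subseteq \mathscr K ,
\]
since then $F^2(L) = F(F(L)) \subseteq F(\mathscr K) = \{p\}$, i.e. $F^2(x) = p$ for every $x \preccurlyeq p$.

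The first identity is immediate: $p$ is a fixed point of $F$ lying in the brick $\mathscr K$, and it is the only one there because $\Fix(F) \cap \mathscr K = \langle \langle p, f^m_i \rangle \rangle \cap \langle \langle \ell^n_j, p \rangle \rangle = \{p\}$, so Lemma~\ref{L:const 1}(a) gives $F(\mathscr K) = \{p\}$. To prepare for the second inclusion, note first that $F(L) \subseteq L$: for $q \in \Sigma$ with $q \preccurlyeq p$ one has $F(\langle \langle q, p \rangle \rangle) = \langle \langle F(q), p \rangle \rangle$ by Lemma~\ref{L:ord-pres}(c), while $F(q) \preccurlyeq F(p) = p$ by Lemma~\ref{L:ord-pres}(a); hence $F(\langle \langle q, p \rangle \rangle) \subseteq L$, and taking the union over such $q$ gives $F(((\mathscr K_0, p \rangle \rangle) \subseteq L$. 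Since $L$ is compact and equals the closure of the sub-snake $((\mathscr K_0, p \rangle \rangle$ (Lemma~\ref{L:space}(1)), this gives $F(L) \subseteq L$. As $L$ is the closure of the union of the bricks making up the sub-snake and $F$ sends each brick to a point or to a brick (Lemma~\ref{L:map}), in order to obtain $F(L) \subseteq \mathscr K$ it now suffices to show that $F$ maps every brick of the sub-snake into $\mathscr K$.

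This I would prove by transfinite induction along the bricks of the sub-snake, listed from $\mathscr K$ leftward (a well-ordered list, of order type $\omega^2$), maintaining a ``target'' extremal point $w$ of $\mathscr K$ --- initially $w = p$ --- such that each brick treated so far has been sent by $F$ into $\mathscr K$ and such that $w$ is the $F$-image of the first point of the next brick to be treated (the limit steps, at the junction points $x_{m+1} = f^{m+1}_1$, being handled by continuity). When passing to the next brick $B$, Lemma~\ref{L:map} and Corollary~\ref{C:brick-brick} leave exactly two possibilities: either $F(B) = \{w\} \subseteq \mathscr K$, or $F(B)$ is a brick $C \cong B$ whose first point is $w$; in the latter case $C = \mathscr K$ if $w = p$ (so $B \cong \mathscr K$, $F(B) = \mathscr K$, and one updates $w := \ell^n_j$, the $F$-image of the last point of $B$), while $C = \mathscr K^n_{j+1}$ if $w = \ell^n_j$. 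The last situation, however, never occurs. Indeed, a switch to $w = \ell^n_j$ requires a brick of the sub-snake homeomorphic to $\mathscr K = \mathscr K_{j 2^{n-1}}$ other than $\mathscr K$ itself; such bricks exist only if $2 \mid j$, in which case they are $\mathscr K^{n+1}_{j/2}, \dots, \mathscr K^{n+\alpha}_{j/2^{\alpha}}$, lying in $D_{n+1}, \dots, D_{n+\alpha}$, where $2^{\alpha}$ is the exact power of $2$ dividing $j$. Thus a switch, if it occurs, occurs at a brick in some $D_{n+k}$ with $k \geq 1$; but then $j+1$ is odd, so no brick of $D_{n+1}, D_{n+2}, \dots$ is homeomorphic to $\mathscr K^n_{j+1} = \mathscr K_{(j+1) 2^{n-1}}$, and hence no brick of the sub-snake lying to the left of the switch point is. Consequently $w$ takes only the two values $p, \ell^n_j$, it stabilizes after finitely many bricks, and every brick of the sub-snake is mapped by $F$ into $\mathscr K$; this yields $F(L) \subseteq \mathscr K$ and completes the proof.

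The step I expect to be the main obstacle is precisely this induction: everything hinges on the combinatorial bookkeeping of which bricks in the construction are homeomorphic (to $\mathscr K$ and to $\mathscr K^n_{j+1}$), and it is exactly this rigidity that confines the ``target'' $w$ to two values and forces it to stabilize after finitely many steps --- without it, $F$ could a priori push the image of $L$ arbitrarily far to the left and $F^2$ would fail to collapse $L$ onto $\{p\}$.
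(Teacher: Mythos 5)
Your proof is correct, and it takes a genuinely different organizational route than the paper's, though the combinatorial heart is the same. Where you run a transfinite induction along the $\omega^2$ many bricks of the sub-snake, tracking a ``target'' $w \in \{p,\ell^n_j\}$ and ruling out bad branches via the homeomorphism-type arithmetic $i\,2^{m-1} = j\,2^{n-1}$, the paper instead works globally: after establishing $F(\mathscr K^n_j)=\{p\}$ and that $((\mathscr K_0,p\rangle\rangle$ is $F$-invariant, it does a three-way case analysis on the connected set $F(((\mathscr K_0,p\rangle\rangle)$, namely $\{p\}$, $\mathscr K^n_j$, or a set containing $\mathscr K^n_j\cup\mathscr K^n_{j+1}$. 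The third case is killed by Corollary~\ref{C:union} plus exactly your observation (stated qualitatively in the paper as ``two consecutive bricks of $D_n$ cannot both have homeomorphic copies among $D_{n+1},D_{n+2},\dots$''). The paper's version is considerably shorter because it never needs to track which target a given brick maps to, nor to worry about the limit (junction) steps; it just asks whether the image of the sub-snake spills past $\ell^n_j$, and Corollary~\ref{C:union} extracts the two witnesses $B_1,B_2$ directly. Your approach buys a slightly sharper statement along the way (it shows $F(L)\subseteq\mathscr K^n_j$ rather than only that $F$ of the sub-snake avoids $\mathscr K^n_{j+1}$), but at the cost of the bookkeeping in the induction, in particular the stabilization of $w$ and the continuity argument at the junctions $x_{m+1}$, which you correctly flag but which the paper sidesteps entirely.
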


\begin{proof}
	We have $p= f_j^n$ for some $n$ and $j$. The brick $\mathscr K_j^n$ contains no
	fixed point of $F$ different from $p$. Therefore, by Lemma~\ref{L:const 1}(a),
	$F(\mathscr K_j^n) = \{p\}$. Then, by Lemma~\ref{L:ord-pres}(c), the sub-snake
	$((\mathscr K_0, p\rangle \rangle$
	is $F$-invariant. It is sufficient to prove that the $F^2$-image of this
	sub-snake is just $\{p\}$; since the head is in the closure of this sub-snake,
	the lemma follows. There are three possibilities.
	
	\emph{Case 1 : $F(((\mathscr K_0, p\rangle \rangle) = \{p\}$.} Then there is
	nothing to prove.

	\emph{Case 2 : $F(((\mathscr K_0, p\rangle \rangle) = \mathscr K^n_j$.} Since
	$F(\mathscr K_j^n) = \{p\}$, we get $F^2(((\mathscr K_0, p\rangle \rangle) =
	\{p\}$ as required. To show that here we cannot in general replace $F^2$ by $F$,
	see Figure~\ref{F:3Dm} and consider the following map. Let $p= f^2_2$ (i.e., $p$
	is the first point of the brick $\mathscr K^2_2$, which is a copy of $\mathscr
	K_4$) and let $\Fix (F) = \langle \langle p, x_1 \rangle \rangle$. Now, send the
	points of $D_2$ which are to the left of $p$ into $p$, the brick $\mathscr
	K^3_1$ (which is also a copy of $\mathscr K_4$) homeomorphically onto $\mathscr
	K^2_2$, and all those points of $X$ which are to the left of $\mathscr K^3_1$
	into the last point of $\mathscr K^2_2$.
	
	\emph{Case 3 : $F(((\mathscr K_0, p\rangle \rangle) \supseteq  \mathscr
		K^n_{j+1} \cup \mathscr K^n_j$.} We show that this is impossible. Indeed, by
	Corollary~\ref{C:union}, in this case there are bricks $B_1$ and $B_2$ in
	$((\mathscr K_0, p\rangle \rangle$ such that $F(B_1) = \mathscr K^n_j$ and
	$F(B_2) = \mathscr K^n_{j+1}$.  Since $F(\mathscr K_j^n) = \{p\}$, also
	$F(\mathscr K^n_{j+1})=\{p\}$ (note that $\mathscr K^n_{j+1}$ cannot be mapped
	onto $\mathscr K^n_{j}$ because the bricks $\mathscr K^n_{j+1}$ and $\mathscr
	K^n_{j}$ are not homeomorphic, see Lemma~\ref{L:map}). So, $B_1$ and $B_2$ are
	in $((\mathscr K_0, f^n_{j+2}\rangle \rangle$, $B_1$ is homeomorphic to
	$\mathscr K^n_j$ and  $B_2$ is homeomorphic to $\mathscr K^n_{j+1}$. However, it
	follows from~(\ref{Kim}), cf. Figure~\ref{F:3Dm}, that in $((\mathscr K_0,
	f^n_{j+2}\rangle \rangle$ there may exist such a brick $B_1$ or such a brick
	$B_2$, but not both. The reason is that no two bricks in $D_n$ are homeomorphic
	and when we construct the bricks in the sets $D_{n+1}, D_{n+2}, \dots$, then
	they are always homeomorphic copies of every second of the bricks in the
	previous set. However, $K^n_{j}$ and $K^n_{j+1}$ are two consecutive bricks of
	$D_n$.	
\end{proof}

\begin{lem}\label{L:right}
	Assume that $F$ is not constant and $q \in \Sigma$ is the largest element of
	$\Fix (F)$. Then there exists $N\in \mathbb N$ such that
	$F^N(x) = q$ for all points $x\succcurlyeq q$.	
\end{lem}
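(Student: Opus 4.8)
The plan is to exploit the structure of $\Fix(F)$ given by Corollary~\ref{C:FixF}, the ``left'' behaviour proved in Lemma~\ref{L:left}, and the fact (from Lemma~\ref{L:ord-pres}) that $F$ respects the linear order on $\Sigma$ and maps each $D_m^*$ into some $D_{k(m)}^*$. Write $q = f^M_I$ for the largest element of $\Fix(F)$, which exists by Corollary~\ref{C:FixF} (and by Lemma~\ref{L:FixFhat}). Since $q$ is the largest fixed point, every point strictly to the right of $q$ lies in a brick that is moved by $F$; more precisely, the brick $\mathscr K^M_I$ has $q$ as its only fixed point, so by Lemma~\ref{L:const 1}(a) we get $F(\mathscr K^M_I) = \{q\}$. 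Thus the whole ``right part'' of $X$, namely $R := \langle\langle q, x_1\rangle\rangle$, is $F$-invariant (it is a continuum by Lemma~\ref{L:space}(1), and $F(q)=q$ together with Lemma~\ref{L:ord-pres}(c) forces $F(R)\subseteq R$), and moreover $F(R)$ contains the fixed point $q$.

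First I would reduce the problem to showing that the orbit of $R$ under $F$ ``collapses toward $q$'' in finitely many steps. The key point is that $R$ is a \emph{finite} chain of bricks: since $q = f^M_I$, the set $R = \langle\langle f^M_I, f^1_1\rangle\rangle$ consists of the finitely many bricks $\mathscr K^M_1,\dots,\mathscr K^M_{I}$ together with all of $D_{M-1}, D_{M-2},\dots, D_1$ — a finite union of bricks, say $B_1 \succ B_2 \succ \dots \succ B_s$ with $B_s = \mathscr K^M_I \ni q$. By Lemma~\ref{L:ord-pres}(a)(c), $F$ maps $\Sigma\cap R$ into $\Sigma\cap R$ in an order-preserving way, and by Corollary~\ref{C:hull} / Lemma~\ref{L:map} each brick $B_j$ is sent by $F$ either to a point of $R$ or onto a brick of $R$ homeomorphic to it. Since no brick in $R$ other than $q$ is fixed, and the number of bricks is finite, I would argue that $F|_R$ has no periodic brick-cycle of length $>1$: such a cycle would be a set of $\ge 2$ mutually homeomorphic bricks in $R$ permuted cyclically, hence (by uniqueness of homeomorphisms between Cook continua, Lemma~\ref{L:Cook-properties}(3)) an iterate $F^t$ would be the identity on one of them, producing extra fixed points to the right of $q$ and contradicting maximality of $q$. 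Therefore, under iteration every brick is eventually mapped to a single point; since there are only finitely many bricks, there is a uniform $N$ with $F^N(B_j)$ a singleton for all $j$, i.e. $F^N(R)$ is a finite set.

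Next I would pin down that this finite set is exactly $\{q\}$. Because $F(q)=q$, the point $q$ is in $F^N(R)$. Any other point $z\in F^N(R)$ is a non-extremal or extremal point of some brick lying in $R$, and $z$ is a fixed point of $F$ if $z$ survives one more step — but actually a cleaner route: $F^N(R)$ is a finite set which is the continuous image of the continuum $R$ under $F^N$, hence it must be a single point, namely $q$. This gives $F^N(x) = q$ for all $x\in R$, i.e. for all $x\succcurlyeq q$, which is the claim.

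The main obstacle I expect is the combinatorial/topological argument that $F|_R$ cannot have a nontrivial cycle of bricks and that consequently iteration collapses everything in a \emph{uniform} number of steps $N$ (the same $N$ for all $x\succcurlyeq q$). The finiteness of the chain $R$ is what makes this work: one shows that the induced map $\widehat F$ on the finitely many elements of $\widehat X$ lying in $R$ is an order-preserving selfmap with $q$ as its unique fixed ``interior'' brick, and any order-preserving selfmap of a finite totally ordered set, iterated enough times, becomes constant on each ``interval'' collapsing to its nearest fixed point — here the only fixed point at or above the left end of $R$ is $q$, so everything above $q$ is driven to $q$. Combining this with Lemma~\ref{L:how br to br} (which forces an actual fixed point, hence identity, whenever a brick-class is fixed by $\widehat F$) rules out the identity-on-a-brick alternative for bricks strictly right of $q$, so the collapse is genuine and $N$ can be taken to be (at most) the number of bricks in $R$.
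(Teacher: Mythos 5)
Your proposal contains a genuine gap at its core: the claim that $R = \langle\langle q, x_1\rangle\rangle$ is a \emph{finite} chain of bricks is false. Writing $q = f^M_I$, the part of $D_M$ to the right of $q$ is indeed finitely many bricks $\mathscr K^M_{I-1},\dots,\mathscr K^M_1$, but $R$ also contains \emph{all} of $D^*_{M-1},\dots,D^*_1$, and each $D^*_m = \bigcup_{i=1}^\infty \mathscr K^m_i$ is itself an infinite union of bricks (see~\eqref{Eq: DD*}). So $R$ contains infinitely many bricks, and the whole combinatorial ``finite ordered set, iterate until constant'' engine you invoke has nothing to run on. In particular the final claim that $N$ can be taken to be ``at most the number of bricks in $R$'' is vacuous, and the intermediate step ``since there are only finitely many bricks, there is a uniform $N$ with $F^N(B_j)$ a singleton for all $j$'' is exactly the statement that must be proved by other means.

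The paper's proof, instead of counting bricks, does a descent on the \emph{levels} $D^*_m$, of which there are only finitely many to the right of $q$ (namely $D^*_{M-1},\dots,D^*_1$, plus the tail of $D^*_M$). It first shows $F(\langle\langle q,x_M\rangle\rangle)=\{q\}$ using pairwise non-homeomorphism of the finitely many bricks $\mathscr K^M_{I-1},\dots,\mathscr K^M_1$ (Lemma~\ref{L:map}), then uses Lemma~\ref{L:ord-pres} (order-preservation and $F(D^*_m)\subseteq D^*_{k(m)}$) together with continuity to argue inductively that $F^{M-m+1}(D^*_m)=\{q\}$, yielding the uniform bound $N=M$ (resp.\ $N=M-1$ in the case $I=1$). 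Your argument also has a secondary weak point that would need attention even if $R$ were finite: ruling out a brick-cycle of length $t>1$ by producing ``extra fixed points'' only gives fixed points of $F^t$, not of $F$, so it does not directly contradict the maximality of $q$ in $\Fix(F)$ — one would have to bring in the order-preservation from Lemma~\ref{L:ord-pres} (or Lemma~\ref{L:Darboux} / Corollary~\ref{C:arrow}) to rule out cycles, which is closer in spirit to the paper's level-by-level descent.
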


\begin{proof}
	We have $q= f_j^n$ for some $n$ and $j$. Let $q\prec x_1$, otherwise there is
	nothing to prove.
	
	First assume that $j\geq 2$, i.e. $\mathscr K_j^n$ is not the first brick of
	$D_n$. Since $q$ is fixed, we deduce from Lemma~\ref{L:ord-pres} that the
	$F$-images of the bricks $\mathscr K_{j-1}^n, \dots \mathscr K^n_1$ are subsets
	of $\langle \langle q, x_n \rangle \rangle$. Since these bricks are pairwise
	non-homeomorphic, Lemma~\ref{L:map} shows that they are mapped to singletons.
	Due to continuity of $F$ we then have
	\begin{equation}
	F(\langle \langle q, x_n \rangle \rangle)= \{q\} \notag
	\end{equation}
	and (if $n\geq 2$), using again Lemma~\ref{L:ord-pres} and continuity of $F$,
	$F(D_{n-1}^*) \subseteq \langle \langle q, x_n \rangle \rangle$. The last two
	formulas give $F^2(D_{n-1}^*)= \{q\}$. In particular the point $x_{n-1}$ is
	mapped by $F^2$ to $q$ and so (if $n\geq 3$), by Lemma~\ref{L:ord-pres}  applied
	to the continuous map $F^2$, we get $F^2(D_{n-2}^*) \subseteq \langle \langle q,
	x_n \rangle \rangle$. Hence $F^3(D_{n-2}^*) = \{q\}$. Continuing this way (if
	$n\geq 4$) we get $F^4(D_{n-3}^*) = \{q\}, \dots, F^n (D_1^*) = \{q\}$. So, we
	see that for $N=n$, $F^N(x)=q$ for all points $x\succcurlyeq q$.
	
	Now assume that $j=1$, i.e. $q= f_1^n = x_n$. Since $q\prec x_1$, we have $n\geq
	2$. By Lemma~\ref{L:ord-pres}, either $F(D_{n-1}^*) = \{q\}$ or $F(D_{n-1}^*)
	\subseteq D_{n-1}^*$. The latter case is impossible because all the bricks in
	$D_{n-1}^*$ are pairwise non-homeomorphic and so, by Lemma~\ref{L:map}, each of
	them would be mapped to a singleton or onto itself. However, if a brick in
	$D_{n-1}^*$ is mapped onto itself then the restriction of $F$ to such a brick is
	the identity, contradicting the assumption that $q$ is the maximal element of
	$\Fix (F)$. Therefore we have
	\begin{equation}\label{Eq:Dn-1 to q}
	F(D_{n-1}^*)=\{q\}.
	\end{equation}
	If $n\geq 3$, consider the next set $D_{n-2}^*$. Since $F(x_{n-1}) = q$ and $F$
	is continuous, by Lemma~\ref{L:ord-pres} we get that either $F(D_{n-2}^*)
	=\{q\}$ or $F(D_{n-2}^*) \subseteq D_{n-1}^*$. Using~(\ref{Eq:Dn-1 to q}) we get
	that in either case $F^2(D_{n-2}^*) =\{q\}$. If $n\geq 4$, consider the set
	$D_{n-3}^*$. Since $F^2(x_{n-2}) = q$, by Lemma~\ref{L:ord-pres} applied to
	$F^2$ we get that either $F^2(D_{n-3}^*) =\{q\}$ or $F^2(D_{n-3}^*) \subseteq
	D_{n-1}^*$. Hence $F^3(D_{n-3}^*) =\{q\}$. Continuing this way (if $n\geq 5$) we
	get $F^4(D_{n-4}^*) = \{q\}, \dots, F^{n-1} (D_1^*) = \{q\}$. So, we see that
	for $N=n-1$, $F^N(x)=q$ for all points $x\succcurlyeq q$.
\end{proof}

\begin{lem}\label{L:jumptoanotherblock}
	Assume that $F$ is not constant and has no fixed point in the snake. Let $m\in
	\N$. Then $F(D_m^*) \subseteq D_{k(m)}^*$ for some $k(m)>m$.
\end{lem}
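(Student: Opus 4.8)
The plan is to lean on the discretisation machinery of Section~\ref{S:X}. By Lemma~\ref{L:ord-pres}(d) we already know that $F(D_m^*)\subseteq D_{k(m)}^*$ for some $k(m)\in\mathbb N$, so the whole content of the statement is the strict inequality $k(m)>m$; that is, we must rule out $k(m)=m$ and $k(m)<m$. Both exclusions will come from combining the hypothesis that $F$ has no fixed point in the snake with the order-theoretic behaviour of the induced map $\widehat F\colon\widehat X\to\widehat X$ from Lemma~\ref{L:existsFhat}.

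First I would determine $\Fix(\widehat F)$ exactly. Since $F$ is not constant, Lemma~\ref{L:FixFhat} gives that $\Fix(\widehat F)$ is nonempty and connected. If it had at least two elements then, being a connected subset of the linearly ordered set $\widehat X$, it would contain some element of the form $(\mathscr K^r_s)^\circ$, and then Lemma~\ref{L:how br to br} would force a fixed point of $F$ inside $(\mathscr K^r_s)^\circ$, which lies in the snake --- contradiction. Hence $\Fix(\widehat F)$ is a single element $A_0$. Now $A_0$ cannot be a singleton $\{f^m_i\}$ (that point would be a fixed point of $F$ in the snake, by Lemma~\ref{L:2xFix}), and it cannot be some $(\mathscr K^r_s)^\circ$ (again Lemma~\ref{L:how br to br}); so $A_0=\mathscr K_0$. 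Applying the contrapositive of Corollary~\ref{C:arrow}, no element of $\widehat X$ is mapped by $\widehat F$ to the right, so $\widehat F(A)\prec A$ for every $A\in\widehat X\setminus\{\mathscr K_0\}$.

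Next I would show $k(m)\ge m$. Apply the last observation to $A=(\mathscr K^m_1)^\circ\in\widehat X$. Since $F((\mathscr K^m_1)^\circ)\subseteq F(D_m^*)\subseteq D_{k(m)}^*$ by Lemma~\ref{L:ord-pres}(d), the element $\widehat F((\mathscr K^m_1)^\circ)$ of $\widehat X$ is contained in $D_{k(m)}^*$. The blocks satisfy $D_1^*\succ D_2^*\succ\cdots$, so if $k(m)<m$ then $D_{k(m)}^*\succ D_m^*\supseteq(\mathscr K^m_1)^\circ$, forcing $\widehat F((\mathscr K^m_1)^\circ)$ to lie to the right of $(\mathscr K^m_1)^\circ$, contradicting the previous paragraph. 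Hence $k(m)\ge m$. Finally I would show $k(m)\ne m$: suppose $D_m^*$ is $F$-invariant. The bricks $\mathscr K^m_1,\mathscr K^m_2,\dots$ forming $D_m^*$ are pairwise non-homeomorphic, so by Lemma~\ref{L:map} each $\mathscr K^m_i$ is mapped either to a singleton or onto itself; the latter would make $F$ the identity on $\mathscr K^m_i$ by Lemma~\ref{L:Kim to Kjn}(a), producing fixed points of $F$ in the snake. Thus all these bricks go to singletons; as consecutive ones meet and $D_m^*$ is connected, $F(D_m^*)=\{z\}$ for a single $z\in D_m^*$, whence $F(z)\in\{z\}$, i.e. $z$ is a fixed point of $F$ in the snake --- contradiction. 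Combining the last two paragraphs gives $k(m)>m$.

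I expect the identification $\Fix(\widehat F)=\{\mathscr K_0\}$ (second paragraph) to be the delicate part, since that is where the standing hypotheses are converted into the usable monotonicity statement $\widehat F(A)\prec A$; once that is in hand, the exclusions of $k(m)<m$ and $k(m)=m$ are short and only use Lemmas~\ref{L:map},~\ref{L:Kim to Kjn} and~\ref{L:ord-pres}.
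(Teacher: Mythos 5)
Your proposal is correct, and its overall skeleton matches the paper's: establish that $\widehat F$ fixes nothing in $\widehat X$ except possibly $\mathscr K_0$, deduce via Corollary~\ref{C:arrow} that every other $A$ is mapped strictly to the left, and then conclude $k(m)\ge m$ and separately rule out $k(m)=m$. There are two points of divergence worth noting. First, to establish that no $A\ne\mathscr K_0$ is a fixed point of $\widehat F$, you take a detour through Lemma~\ref{L:FixFhat} (connectedness of $\Fix(\widehat F)$, whence it is a singleton, whence it must be $\mathscr K_0$); the paper dispatches this in one line directly from Lemma~\ref{L:2xFix}, which you in fact also invoke in your step 4, so steps 2--3 are redundant. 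Second, and this is a genuinely different argument, to exclude $k(m)=m$ the paper looks at the single point $x_{m+1}$: it lies in $\overline{D_m^*}$, so would be sent into $\overline{D_m^*}$, and since it is not fixed it would be sent to the right, contradicting the monotonicity already obtained. You instead run a brick-by-brick argument: all bricks of $D_m^*$ are pairwise non-homeomorphic, so by Lemma~\ref{L:map} each is collapsed to a singleton (the alternative would create fixed points), hence the connected set $D_m^*$ collapses to one point $z\in D_m^*$ which is then a fixed point of $F$ in the snake. Both arguments are valid; the paper's is shorter and reuses the monotonicity machinery it has just built, while yours is a little more self-contained in that it extracts the contradiction directly from the Cook-continuum rigidity of the bricks.
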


\begin{proof}
	By Lemma~\ref{L:ord-pres}(d), $F(D_m^*) \subseteq D_{k(m)}^*$ for some $k(m)\in
	\N$. No element $A\in \widehat X \setminus \{\mathscr K_0\}$ is a fixed point
	of~$\widehat F$, otherwise Lemma~\ref{L:2xFix} gives the existence of a fixed
	point of $F$ in the snake, a contradiction. Therefore, by
	Corollary~\ref{C:arrow}, every element  $A\in \widehat X \setminus \{\mathscr
	K_0\}$ is mapped by $\widehat F$ to the left.
	It follows that $k(m)\geq m$. To prove strict inequality, suppose on the
	contrary that $F(D_m^*) \subseteq D_{m}^*$. Consider the point $x_{m+1}$. Since
	it is in the closure of $D_m^*$ and $F(D_m^*) \subseteq D_{m}^*$, it is mapped
	by $F$ to the closure of $D_m^*$. By the assumption, it is not a fixed point of
	$F$ and so it is in fact mapped by $F$ to $D_m^*$. This contradicts the fact
	that every element  $A\in \widehat X \setminus \{\mathscr K_0\}$ is mapped by
	$\widehat F$ to the left, in particular $\widehat F (\{x_{m+1}\}) \prec
	\{x_{m+1}\}$.
\end{proof}

If $F(D_m^*) \subseteq D_{k(m)}^*$, then we say that the \emph{jump of $D_m^*$}
has length $k(m)-m$ and we write $\jump (m) = k(m)-m$. Under the assumptions of
the previous lemma, all the sets $D_m^*$ have jumps of positive length (i.e.,
they `jump' to the left under the action of $F$).

\begin{lem}\label{L:jumps}
	Assume that $F$ is not constant and has no fixed point in the snake.
	\begin{itemize}
		\item [(a)] For every $m$, $\jump(m+1) \in \{\jump(m)-1, \ \jump(m)\}$.
		\item [(b)] The sequence $\jump(1), \jump(2), \dots$ is eventually constant.
		\item [(c)] There exist positive integers $r$ and $N$ such that on
		$S^{\Join}_r= \langle \langle \mathscr K_0, x_r  \rangle \rangle$ we have
		$$
		F|_{S^{\Join}_r} = G^N|_{S^{\Join}_r}
		$$
		where $G$ is the map from Lemma~\ref{L:existsG}.
	\end{itemize}
\end{lem}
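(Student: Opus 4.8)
The plan is to dispatch (a) and (b) by a short bookkeeping argument with the blocks $D_m$, and then to deduce (c) from the uniqueness in Lemma~\ref{L:mapDD} once a surjectivity statement is secured. For (a), recall from Lemma~\ref{L:jumptoanotherblock} that each $\jump(m)$ is a positive integer and $F(D_m^*)\subseteq D_{k(m)}^*$ with $k(m)=m+\jump(m)$. Fix $m$. Since $x_{m+1}\in\overline{D_m^*}$, continuity and Lemma~\ref{L:block}(5) give $F(x_{m+1})\in\overline{D_{k(m)}^*}=D_{k(m)}$; on the other hand $x_{m+1}\in D_{m+1}^*$, so $F(x_{m+1})\in D_{k(m+1)}^*$ where $k(m+1)=m+1+\jump(m+1)$. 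Hence $D_{k(m)}\cap D_{k(m+1)}^*\neq\emptyset$, and since the sets $D_a^*$ are pairwise disjoint while $D_a\cap D_{a+1}^*=\{x_{a+1}\}$, this forces $k(m+1)\in\{k(m),k(m)+1\}$, i.e.\ $\jump(m+1)\in\{\jump(m)-1,\jump(m)\}$. Part (b) is then immediate: $(\jump(m))_m$ is a nonincreasing sequence of positive integers, hence eventually constant.

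For (c), by (b) fix $N\in\mathbb N$ and $r_0$ with $\jump(m)=N$ for all $m\ge r_0$, and put $r=r_0+1$. Applying the computation of (a) with index $m-1$ (which is $\ge r_0$) and with index $m$, one gets for every $m\ge r$ that $F(x_m)=x_{m+N}$ and $F(x_{m+1})=x_{m+1+N}$, and also $F(D_m)=F(\overline{D_m^*})\subseteq\overline{F(D_m^*)}\subseteq D_{m+N}$. The heart of the proof, and the step I expect to be the main obstacle, is to upgrade this to $F(D_m)=D_{m+N}$ for all $m\ge r$.

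To prove this surjectivity I would argue as follows. The continuum $F(D_m)$ is contained in $D_{m+N}$ and contains its two extreme points $x_{m+N}$ and $x_{m+1+N}$, so the interval $P_1(F(D_m))$ contains both endpoints of $P_1(D_{m+N})$ and hence equals $P_1(D_{m+N})$. Therefore, for each brick $\mathscr K_j^{m+N}$ of $D_{m+N}$, the continuum $F(D_m)$ meets $(\mathscr K_j^{m+N})^{\circ}$ (the only points of $D_{m+N}$ projecting into the open projection of that brick), so $F(D_m)\cap\mathscr K_j^{m+N}$ is a nondegenerate continuum by Lemma~\ref{L:space}(6). Composing with the monotone retraction $\rho_j$ of $X$ onto $\mathscr K_j^{m+N}$ (precomposed with the monotone retraction of $X$ onto $D_m$ so that it becomes a selfmap of $X$), the map $\rho_j\circ F$ is continuous and nonconstant on $D_m$; by Lemma~\ref{L:map} it sends each brick of $D_m$ to a point or onto $\mathscr K_j^{m+N}$, the latter being possible only for the unique brick of $D_m$ homeomorphic to $\mathscr K_j^{m+N}$, namely $\mathscr K_{2^N j}^m$. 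If $\mathscr K_{2^N j}^m$ were sent to a point then, since consecutive bricks of $D_m$ share a point, all bricks -- hence $D_m$ -- would be sent to a single point, contradicting nonconstancy. So $\rho_j\circ F$ maps $\mathscr K_{2^N j}^m$ onto $\mathscr K_j^{m+N}$; consequently $F(\mathscr K_{2^N j}^m)$ is nondegenerate, lies in $D_{m+N}^*$, and by Lemma~\ref{L:map} is a brick homeomorphic to $\mathscr K_{2^N j}^m$, which can only be $\mathscr K_j^{m+N}$ itself. Letting $j$ vary gives $F(D_m)\supseteq\bigcup_j\mathscr K_j^{m+N}=D_{m+N}^*$, and since $F(D_m)$ is closed, $F(D_m)\supseteq\overline{D_{m+N}^*}=D_{m+N}$; together with $F(D_m)\subseteq D_{m+N}$ this yields $F(D_m)=D_{m+N}$.

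Once this is established, for each $m\ge r$ both $F|_{D_m}$ and $G^N|_{D_m}$ are continuous surjections of $D_m$ onto $D_{m+N}$ (for $G$ this follows from Lemma~\ref{L:existsG}, since $G(D_k^*)=D_{k+1}^*$ and $G(x_{k+1})=x_{k+2}$ give $G^N(D_m)=D_{m+N}$), so by the uniqueness in Lemma~\ref{L:mapDD} they coincide on $D_m$. Finally, since $F$ is nonconstant with no fixed point in the snake, Corollary~\ref{C:FixF} forces $\Fix(F)=\mathscr K_0$, so $F$ restricts to the identity on the head $\mathscr K_0$; the same holds for $G^N$ because $G|_{\mathscr K_0}=\Id$. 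As $S^{\Join}_r=\mathscr K_0\cup\bigcup_{m\ge r}D_m$, we conclude $F|_{S^{\Join}_r}=G^N|_{S^{\Join}_r}$ with $N$ the eventual value of $\jump$. I expect the bookkeeping for (a)--(b) and the gluing at $\mathscr K_0$ to be routine; the genuinely delicate point is the surjectivity $F(D_m)=D_{m+N}$, which is why the projection/retraction argument above is needed rather than a direct appeal to Lemma~\ref{L:ord-pres}.
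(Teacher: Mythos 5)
Your proof is correct, and it takes a genuinely different route from the paper in the delicate part~(c). For parts~(a) and~(b) you track the single point $F(x_{m+1})$, which must land simultaneously in $D_{k(m)}$ (being a limit of $D_m^*$) and in $D_{k(m+1)}^*$ (being a point of $D_{m+1}^*$); the combinatorics of how the sets $D_a$ and $D_a^*$ intersect then forces $k(m+1)\in\{k(m),k(m)+1\}$. The paper instead argues that $F$ preserves the order on $\Sigma$ (via Lemma~\ref{L:ord-pres}) to get $k(m+1)\ge k(m)$, and then invokes connectedness of $F(D_m^*\cup D_{m+1}^*)$ to rule out a gap; both arrive at the same conclusion and are equally valid. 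The real divergence is in~(c): the paper establishes the surjectivity $F(D_i^*)=D_{i+N}^*$ by invoking Lemma~\ref{L:ord-pres}(e) and arguing that otherwise the connectedness of $F(D_i^*\cup D_{i\pm1}^*)$ would fail, and then upgrades to $F(D_i)=D_{i+N}$ again via Lemma~\ref{L:ord-pres}(e). You instead first nail down $F(x_m)=x_{m+N}$ and $F(x_{m+1})=x_{m+N+1}$ (a clean byproduct of your proof of~(a), absent in the paper), conclude that $P_1(F(D_m))=P_1(D_{m+N})$, and then show brick by brick -- via the monotone retractions $\rho_j$, Lemma~\ref{L:space}(6), Lemma~\ref{L:map}, and the uniqueness of the homeomorphic brick $\mathscr K_{2^Nj}^m$ -- that $F(\mathscr K_{2^Nj}^m)=\mathscr K_j^{m+N}$. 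This is longer than the paper's argument but more self-contained: it does not lean on Lemma~\ref{L:ord-pres}(e) as a black box, and every step is an explicit check against the structure of the bricks. Both proofs finish the same way with the uniqueness in Lemma~\ref{L:mapDD}; you glue at $\mathscr K_0$ via Corollary~\ref{C:FixF} (which forces $\Fix(F)=\mathscr K_0$), whereas the paper simply passes to the closure by continuity -- both work.
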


\begin{proof}
	(a) Fix $m$. By Lemma~\ref{L:ord-pres}, we have $F(D_m^*) \subseteq D_{k(m)}^*$
	and $F(D_{m+1}^*) \subseteq D_{k(m+1)}^*$.
	Since $F|_{\Sigma}$ preserves the order $\preccurlyeq$ on $\Sigma$, and
	$D_{m+1}^*$ is to the left of $D_{m}^*$, either $D_{k(m+1)}^*$ is to the left of
	$D_{k(m)}^*$ or $D_{k(m+1)}^* = D_{k(m)}^*$. Thus, either $k(m+1) > k(m)$ or
	$k(m+1) = k(m)$. On the other hand, we cannot have $k(m+1) \geq k(m) +2$ because
	then there would exists at least one set $D_r^*$ lying between
	the sets $D_{k(m+1)}^*$ and $D_{k(m)}^*$ and so the $F$-image of the connected
	set $D_{m}^* \cup D_{m+1}^*$ would be disconnected. Therefore either $k(m+1) =
	k(m) +1$ or $k(m+1) = k(m)$. In the former case $\jump(m+1) = k(m+1)-(m+1) =
	k(m)- m = \jump(m)$ and in the latter case
	$\jump(m+1) = k(m+1)-(m+1) = k(m)-m-1 = \jump(m) -1$.
	
	(b) This follows from (a) since, under the assumptions, the jumps are positive
	integers.
	
	(c) By (b), there exist positive integers $r\geq 2$ and $N$ such that $\jump(i)
	= N$ for $i\geq r-1$. Thus  $F(D_{i}^*) \subseteq D_{i+N}^*$ for every $i\geq
	r-1$. Here $F(D_{r-1}^*)$ may be a proper subset of $D_{r-1+N}^*$ (cf.
	Remark~\ref{R:mapDD}). However, we claim that $F(D_{i}^*) = D_{i+N}^*$ for
	$i\geq r$. Indeed, suppose that there exists $i \geq r$ with $F(D_{i}^*)
	\subsetneq D_{i+N}^*$. Then, by Lemma~\ref{L:ord-pres}(e), the set $F(D_{i}^*)$
	either does not contain the first point of $D_{i+N}^*$ or has positive distance
	from the first point of $D_{i+N+1}^*$. It contradicts the fact that the
	$F$-images of connected sets $D_{i}^*\cup D_{i-1}^*$ and $D_{i}^*\cup D_{i+1}^*$
	have to be connected.
	
	Once we know that $F(D_{i}^*) = D_{i+N}^*$ for $i\geq r$,
	Lemma~\ref{L:ord-pres}(e) implies that even
	$F(D_{i}) = D_{i+N}$, $i\geq r$. Since $G^N(D_{i}) = D_{i+N}$ for every $i$,
	Lemma~\ref{L:mapDD} gives that $F$ and $G^N$ coincide on $D_i$, $i\geq r$. Hence
	they coincide on the sub-snake $(( \mathscr K_0, x_r  \rangle \rangle$ and, by
	continuity, also on	$S^{\Join}_r$.
\end{proof}

We finally get the following result.

\begin{prop}\label{P:zero inf}
	For the one-dimensional continuum $X\subseteq \mathbb R^3$ constructed above
	in~(\ref{X-definition}) we have $S(X)=\{0,\infty\}$.
	Moreover, if $F: X\to X$ is a continuous map then $h^*(F) = \infty$ if $F$ is
	non-constant and has
	no fixed point in the snake, otherwise $h^*(F) = 0$.
\end{prop}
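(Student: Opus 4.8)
The plan is to break the analysis of a non-constant continuous map $F\colon X\to X$ into the two cases already isolated by the lemmas, and in each case compute $h^*(F)$ exactly. First I would dispose of the trivial inclusion $0\in S(X)$ (constant maps and the identity) and observe that $\infty\in S(X)$ by Lemma~\ref{L:existsG}(a), since $G\colon X\to X$ is continuous with $h^*(G)=\infty$. It then remains to prove that \emph{every} continuous $F\colon X\to X$ has $h^*(F)\in\{0,\infty\}$, and more precisely the dichotomy claimed in the statement: $h^*(F)=\infty$ exactly when $F$ is non-constant and fixed-point-free on the snake, and $h^*(F)=0$ otherwise.

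For the case where $F$ is non-constant and has \emph{no} fixed point in the snake, I would invoke Lemma~\ref{L:jumps}(c): there exist positive integers $r$ and $N$ with $F|_{S^{\Join}_r}=G^N|_{S^{\Join}_r}$, where $S^{\Join}_r=\langle\langle\mathscr K_0,x_r\rangle\rangle$. Since $S^{\Join}_r$ is $G$-invariant, it is also $F$-invariant, and $F|_{S^{\Join}_r}=(G|_{S^{\Join}_r})^N$. By Lemma~\ref{L:existsG}(b), $h^*(G|_{S^{\Join}_r})=\infty$; since supremum topological sequence entropy is unchanged by passing to positive iterates (for a continuous map, $h^*(T^n)=h^*(T)$ for $n\geq 1$, as noted in the Introduction via~\cite{HYad}), we get $h^*(F|_{S^{\Join}_r})=\infty$. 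Finally, an intrinsic IN-tuple for the subsystem $(S^{\Join}_r,F|_{S^{\Join}_r})$ is, by Remark~\ref{R:indep-subset} (or directly, since $S^{\Join}_r$ is $F$-invariant), an intrinsic IN-tuple for $(X,F)$; hence by~\eqref{Eq:hstarT-IN}, $h^*(F)\geq h^*(F|_{S^{\Join}_r})=\infty$, so $h^*(F)=\infty$.

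For the remaining case — $F$ non-constant with \emph{some} fixed point in the snake — I would use Corollary~\ref{C:FixF}, which tells us that $\Fix(F)$ is $\mathscr K_0$, or $\langle\langle\mathscr K_0,f^m_i\rangle\rangle$, or $\langle\langle f^n_j,f^m_i\rangle\rangle$ for some $f^n_j\preccurlyeq f^m_i$; in all these cases $\Fix(F)$ meets the snake, so it contains a point $p\in\Sigma$ which is the smallest element of $\Fix(F)$ and a point $q\in\Sigma$ which is the largest. By Lemma~\ref{L:left}, $F^2(x)=p$ for every $x\preccurlyeq p$, and by Lemma~\ref{L:right} there is $N_0$ with $F^{N_0}(x)=q$ for every $x\succcurlyeq q$. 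Set $N=\max\{2,N_0\}$. Then $F^N$ collapses everything to the left of $p$ onto $p$ and everything to the right of $q$ onto $q$, while on the piece $\langle\langle p,q\rangle\rangle$ the map $F$ is the identity (this is exactly what Lemma~\ref{L:FixFhat}/Corollary~\ref{C:FixF} give when $\Fix(\widehat F)$ has more than one element; if $\Fix(F)$ is the single point $f^m_i$ then $p=q$ and $F^N$ is constant on $X$, forcing $h^*(F)=0$ immediately). Hence $F^N(X)=\langle\langle p,q\rangle\rangle=\Fix(F)$, and $F^N$ restricted to its image $\langle\langle p,q\rangle\rangle$ is the identity. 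Therefore $h^*(F^N)=h^*(F^N|_{F^N(X)})$ by Proposition~\ref{P2}(a), and the latter is $h^*(\Id)=0$. Since $h^*(F)=h^*(F^N)$ for the positive iterate, $h^*(F)=0$.

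The main obstacle is making precise the claim ``$F$ is the identity on $\langle\langle p,q\rangle\rangle$'' and that $F^N(X)=\langle\langle p,q\rangle\rangle$ in the borderline subcases (e.g.\ $\Fix(F)=\mathscr K_0$, where $p=q$ is not literally a point of $\Sigma$ but $\mathscr K_0$ sits at the left end). One handles this by noting that when $\Fix(F)=\mathscr K_0$, Corollary~\ref{C:FixF} and Lemma~\ref{L:FixFhat} say $\Fix(\widehat F)=\{\mathscr K_0\}$, $F$ is the identity on $\mathscr K_0$, and Lemma~\ref{L:right} (applied with $q$ the rightmost point of $\Fix(F)$, interpreting the argument with $\mathscr K_0$ in place of $f^n_j$) together with the fixed-point-freeness to the right gives that some iterate $F^N$ sends all of $X$ into $\mathscr K_0$ on which it acts as the identity; then Proposition~\ref{P2}(a) again yields $h^*(F)=h^*(F^N)=h^*(\Id|_{\mathscr K_0})=0$. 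Assembling the three cases — $F$ constant, $F$ non-constant fixed-point-free on the snake, $F$ non-constant with a fixed point in the snake — gives $S(X)\subseteq\{0,\infty\}$, and combined with $0,\infty\in S(X)$ we conclude $S(X)=\{0,\infty\}$.
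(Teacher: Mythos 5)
Your overall structure matches the paper's proof, and both of your main cases use the same lemmas (Lemma~\ref{L:jumps}(c), Corollary~\ref{C:FixF}, Lemmas~\ref{L:left} and~\ref{L:right}, Proposition~\ref{P2}(a)). But your handling of the subcase $\Fix(F)=\mathscr K_0$ is wrong in two compounding ways, and the error is worth understanding because it concerns precisely the dichotomy the proposition asserts.

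First, a classification error: if $\Fix(F)=\mathscr K_0$, then $F$ has \emph{no} fixed point in the snake (the head $\mathscr K_0$ is disjoint from the snake), so this subcase belongs to your \emph{first} case, not your second. The paper's proof is careful about this: after assuming ``$F$ has a fixed point in the snake'' it invokes Corollary~\ref{C:FixF} but explicitly drops the $\mathscr K_0$ alternative, since that alternative is excluded by the assumption. You instead carry the $\mathscr K_0$ alternative into the second case and then add a ``borderline subcase'' paragraph to deal with it, which is logically misplaced.

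Second, and more seriously, the conclusion in your borderline paragraph is false. You claim that when $\Fix(F)=\mathscr K_0$, some iterate $F^N$ sends all of $X$ into $\mathscr K_0$, forcing $h^*(F)=0$. This cannot happen for a non-constant $F$: by Lemma~\ref{L:const 2}(a), if the snake is not $F$-invariant then $F$ is constant, so for non-constant $F$ the snake is $F$-invariant and hence $F^N(\text{snake})\subseteq\text{snake}$ for every $N$; the snake never enters the head in finite time. (The map $G$ of Lemma~\ref{L:existsG} is exactly of this type — identity on $\mathscr K_0$, no fixed point in the snake — and $h^*(G)=\infty$.) In fact when $\Fix(F)=\mathscr K_0$ the first-case argument applies verbatim and yields $h^*(F)=\infty$, which is what the proposition asserts. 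So as written your proof is internally inconsistent: it derives $h^*(F)=\infty$ from the first case (correctly) and $h^*(F)=0$ from the borderline paragraph (incorrectly) for the same $F$. The fix is simple — delete the borderline paragraph and instead observe, as the paper does, that the hypothesis of having a fixed point in the snake already eliminates the $\mathscr K_0$ alternative from Corollary~\ref{C:FixF}; a small additional remark that Lemma~\ref{L:left} is replaced by ``$F$ is the identity on $\mathscr K_0$'' when $\Fix(F)=\langle\langle\mathscr K_0,f^m_i\rangle\rangle$ (since that set has no smallest element in $\Sigma$) completes the case analysis.
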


\begin{proof}
	Let $F: X\to X$ be a continuous map. If $F$ is constant then $h^*(F)=0$. Now let
	$F$ be non-constant.	
	
	First assume that $F$ has a fixed point in the snake. By Corollary~\ref{C:FixF},
	$\Fix (F)$ is either of the form $\langle \langle \mathscr K_0, f^m_i \rangle
	\rangle$ or  $\langle \langle  f^n_j, f^m_i \rangle \rangle$ for some $f^n_j
	\preccurlyeq f^m_i$. Then, by Lemmas~\ref{L:left} and~\ref{L:right}, there
	exists a positive integer $N$ such that $F^N(X)=\Fix(F)$. This clearly implies
	that $h_A(F)=0$ for any sequence $A$ and so $h^*(F)=0$ (alternatively, use
	Proposition~\ref{P2}(a)).
	
	Now assume that $F$ has no fixed point in the snake. Then, by
	Lemma~\ref{L:jumps}(c), there exist positive integers $r$ and $N$ such that on
	$S^{\Join}_r= \langle \langle \mathscr K_0, x_r  \rangle \rangle$ we have
	$$
	F|_{S^{\Join}_r} = G^N|_{S^{\Join}_r}
	$$
	where $G$ is the map from Lemma~\ref{L:existsG}. So,
	$h^*(G|_{S^{\Join}_r})=\infty$ and since $h^*(T) = h^*(T^N)$ for every $T$,
	we have also $h^*(G^N|_{S^{\Join}_r})=\infty$. Then $h^*(F) \geq
	h^*(F|_{S^{\Join}_r}) = h^*(G^N|_{S^{\Join}_r}) = \infty$.
	
	We have shown that, for every continuous map $F$ on $X$, either $h^*(F)=0$ or
	$h^*(F)=\infty$ and so the proposition is proved.
\end{proof}


\section{A map $T: X_1 \to X_1$ with $h^*(T)=\log
	2$}\label{S:X1-T1-log2}

Recall that in the construction of the continuum $X$ with $S(X)=\{0,\infty\}$ we
started with the auxiliary system $(X_1, T)$ such that $h^*(T)=\infty$. The
(disconnected) space $X_1$ consisted of two parts. The first part was a Cook
continuum (with identity on it), the second one was just one orbit $x_0, x_1,
\dots$ approaching the first part. Having defined the system $(X_1, T)$, for
each $m$ we have `joined' the points $x_m$ and $x_{m+1}$ by a continuum $D_m$ in
the form of a sequence of Cook continua and we obtained in such a way the
continuum $X$ with the required property $S(X)=\{0,\infty\}$.

In this section and the next one, we are going to construct a continuum  $X \subseteq \mathbb R^3$ with $S(X)
= \{0,\log 2\}$  in a similar way; we start with an auxiliary system
$(X_1, T)$ and then we add sequences of Cook continua. However, now the
construction is more complicated. In fact, it is much easier to make all `nontrivial'
continuous selfmaps $T$ to have $h^*(T)$ extremely large, i.e. equal to
$\infty$, than to have it equal exactly to $\log 2$, neither larger nor smaller.

In this section, we will first construct a system $(X_1,T)$ with $h^*(T)=\log 2$ and
then show that $h^*(T)=\log 2$.

\subsection{Construction of a system $(X_1,T)$}
The first part of our system $(X_1, T)$ in $\mathbb R^3$ will be the one point
compactification of $n\mapsto n+1$ on $\mathbb Z$.
Let its countable phase space $A$ be a subset of the unit circle $\mathbb S^1$
lying in the vertical plane $\pi_0$ (containing
the second and third axes), the first axis going to the right.\footnote{We think
	of $\mathbb R^3$ as the product $(-\infty, \infty)\times \pi_0$, each point
	$z\in \mathbb R^3$ being uniquely determined by its projections $P_1(z)$ and
	$P_2(z)$ into $(-\infty, \infty)$ and $\pi_0$, respectively.}

So, let $a_i, i \in \Z$ and $a_{\infty}$ be different points in $\mathbb S^1$,
with $\lim_{i\rightarrow \infty}a_i=\lim_{i\rightarrow
	\infty}a_{-i}=a_{\infty}$, see  Figure~\ref{fig:(A,T_1)}, and put $A=\{a_i| i \in \Z \} \cup\{a_{\infty}\}$.
Define the restriction of $T$ to the set $A$ by putting $T(a_i)=a_{i+1}$ and
$T(a_{\infty})=a_{\infty}$.
\begin{figure}[h]
	\centering
	\includegraphics[width=8cm]{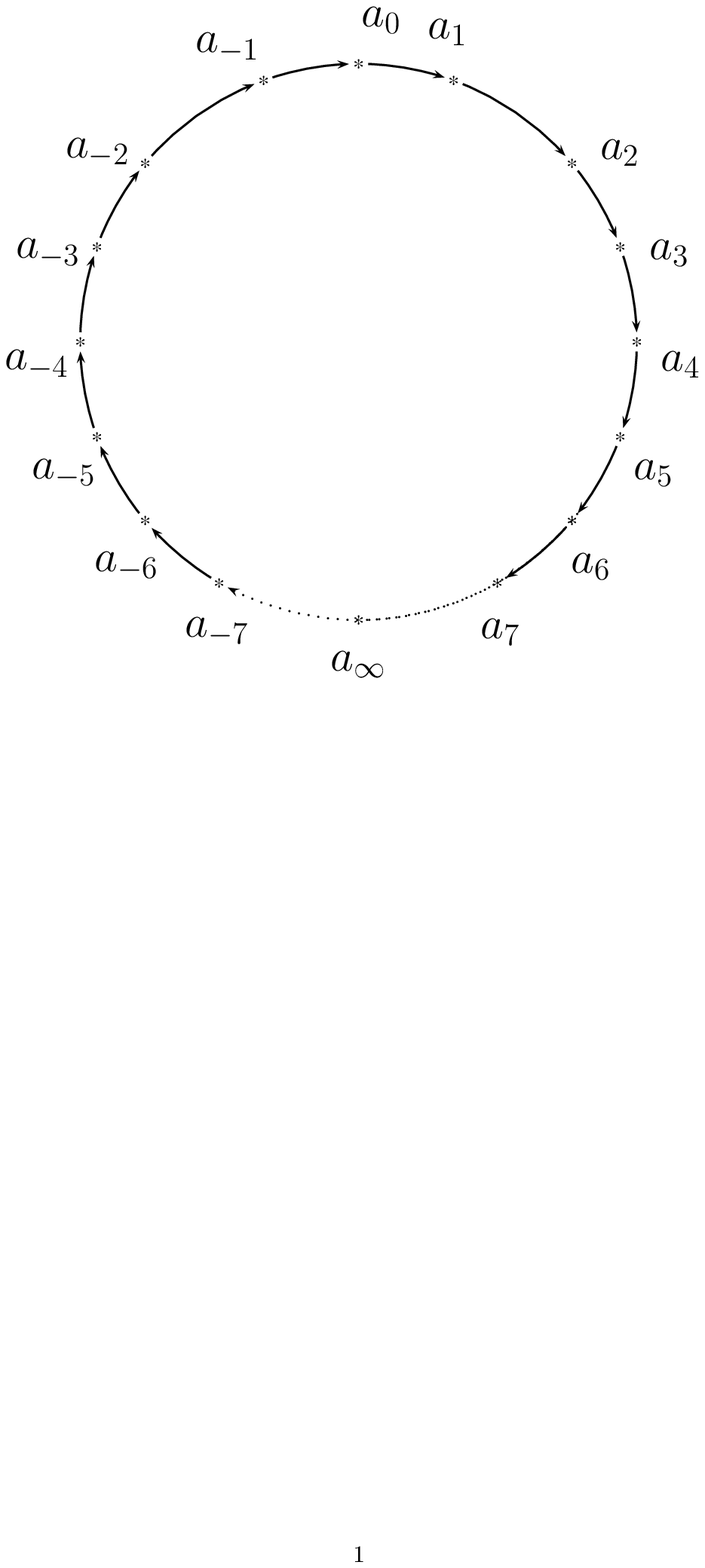}\\
	\caption{The system $(A,T|_A)$ lying in the vertical plane
		$\pi_0$}\label{fig:(A,T_1)}
\end{figure}

To keep under control the continuity of the map $T: X_1\to X_1$, as well as
for investigating IN-tuples, we will work
with neighbourhoods (in $\mathbb R^3$) of the points $a_i, i\in \mathbb Z$ and
$a_{\infty}$.
For each $k\in \mathbb N$ fix a family of \emph{pairwise disjoint} open sets
$U^k(a_i) \ni a_i$, $i\in \mathbb Z$ such that
$U^k(a_i)$ is the product of the interval $(-1/3k,1/3k)$ and a small open disk
in $\pi_0$ centered at $a_i$, such that
$$
\diam (U^k(a_i))< 1/k \text{ \ \ and \ \ } U^{k+1}(a_i) \subseteq U^k(a_i)
\text{ \ \ for every \ } k  \text{ \ \and \ \ } i.
$$
Choose also a neighbourhood $U^{1}(a_{\infty})$ of $a_{\infty}$ such that
\begin{equation}\label{eq:U_0U_infty}
U^{1}(a_{\infty})\cap U^{1}(a_j)= \emptyset \text{ \ \ for \ \ } |j| \le 3.
\end{equation}

The second part of $(X_1, T)$ will be formed by just one trajectory $x_0, x_1,
\dots$ lying in $(0,1] \times A$, with $T(x_i)=x_{i+1}$, approaching our set
$A$. In particular, we choose
\begin{equation}\label{Eq:approachpi0}
\text{(the distance of $x_j$ from $\pi_0$) $\searrow 0 \quad $ as $\quad j\to \infty$.}
\end{equation}
Since the set of all accumulation points of this trajectory will be the set
$A$, all sequence entropy tuples of $(X_1, T)$, if any, will consist only of
points of the set $A$. So, we claim that to get $h^*(T)=\log 2$, it will be
sufficient to fulfill the following list of requirements:
\begin{enumerate}
	\item $(a_0, a_1)$ is an IN-pair for $T$,
	\item $(a_0, a_j)$ is not an IN-pair for $T$ for any $|j|\geq 2$,
	\item $(a_0, a_{\infty})$ is not an IN-pair for $T$.
\end{enumerate}

Indeed, (1) implies $h^*(T)\geq \log 2$. On the other hand, (2) and (3) imply
that there is no intrinsic IN-tuple of length $3$. In fact, suppose that such a
tuple of length $3$ exists. It does not contain $a_{\infty}$, otherwise there is
also an IN-pair of the form $(a_i, a_{\infty})$ for some $i\in \mathbb Z$, but
then, using Proposition~\ref{P}(a) or Proposition~\ref{P2}(b), also $(T^{-i}(a_i), T^{-i}(a_{\infty})) =
(a_0, a_{\infty})$ is an intrinsic IN-pair, contradicting (3).
So, if an intrinsic IN-tuple of length $3$ exists, it is of the form $(a_i, a_j,
a_k)$, where $i<j<k$ are integers. By Proposition~\ref{P}(a) or Proposition~\ref{P2}(b),  $(T^{-i}(a_i),
T^{-i}(a_k)) = (a_0, a_{k-i})$ is an intrinsic IN-pair and since $|k-i|\geq
2$, we have a contradiction with (2).

To fulfill the three above requirements, it is obviously sufficient to fulfill
the following two requirements:

\begin{enumerate}
	\item [(R1)] For every $k$, the tuple $(U^k(a_0), U^k(a_1))$ has an
	independence set of times of cardinality $k+1$ (this is equivalent to (1)).
	\item [(R2)] The tuple $(U^1(a_0), U^1(a_j))$ does not have an independence set
	of times of cardinality $5$ whenever $|j|\geq 2$ or $j=\infty$ (this implies (2)
	and (3)).
\end{enumerate}

Now we are going to describe the sequence $x_0, x_1, \dots$, i.e. the trajectory
of $x_0$ under $T$. It will be a concatenation of infinitely many finite
sequences, some of them will be called \emph{blocks} \index{blocks}, the others will be called
\emph{outer gaps} \index{outer gaps}:
\begin{equation}\label{Eq:block-outgap}
x_0, x_1, \dots = \text{1st block}, \,  \text{1st outer gap}, \, \text{2nd
	block}, \, \text{2nd outer gap}, \, \dots~. 
\end{equation}
Further, for every $k$, the $k$-th block will be a concatenation of finite
sequences called \emph{pieces} \index{pieces} and \emph{inner gaps} \index{inner gaps}:
\begin{equation}\label{Eq:pieces-ingaps}
\text{k-th block} = \text{1st piece}, \, \text{1st inner gap}, \, \text{2nd
	piece}, \, \dots, \text{$(2^{k+1}-1)$st inner gap}, \, \text{$(2^{k+1})$-th
	piece}. 
\end{equation}
A proper choice of pieces will ensure (R1) and proper choices of inner and outer
gaps will ensure (R2).

The $k$-th block and the $k$-th outer gap will be denoted by $B(k)$ \index{$B(k)$} and $OG(k)$ \index{$OG(k)$},
respectively, $k=1,2,\dots$. The $k$-th outer gap $OG(k)$ will have length
$og(k)$ \index{$og(k)$}. The pieces in the $k$-th block (denoted by $P(k,l), 1 \le l\le 2^{k+1}$)\index{$P(k,l)$} will be
finite sequences of the same lengths $n_k^k+1$. The $t$-th inner gap in $k$-th block
(denoted by  $IG(k,t), 1 \le t \le 2^{k+1}-1$)\index{$IG(k,j)$} will have length $ig(k,t)$.
(The detailed definitions of $n_k^k, ig(k,t), og(k), P(k,l), IG(k,t),
OG(k)$ will be given later.) Thus, the structure of the trajectory is as in
Table~\ref{T:BPIGOG}, the 3rd row shows the lengths.

\begin{table}[h]
	\begin{footnotesize}
		\begin{center}
			\begin{tabular}{|c|c|c|c|c|c|c|c|c|c|c|c|c|}
				\hline
				\multicolumn{7}{|c|}{$B(1)$}  & $OG(1)$ & $\dots$ &
				\multicolumn{3}{|c|}{$B(k)$} & $\dots$ \\
				\hline
				$P(1,1)$ & $IG(1,1)$ & $P(1,2)$ & $IG(1,2)$ & $P(1,3)$ & $IG(1,3)$ & $P(1,4)$
				&  & $\dots$ & $P(k,1)$  &$ \dots$ & $P(k,2^{k+1}) $  & $\dots$ \\
				\hline
				$n_1^1+1$ & $ig(1,1)$ & $n_1^1+1$ & $ig(1,2)$& $n_1^1+1$ & $ig(1,3)$
				&$n_1^1+1$ & $og(1)$ & $\dots$ & $n_k^k+1$  &$ \dots$ & $n_k^k+1$  & $\dots$ \\
				\hline
			\end{tabular}
		\end{center}
	\end{footnotesize}
	\caption{Blocks, pieces, gaps and their lengths}\label{T:BPIGOG}
\end{table}

For each $k$, the $2^{k+1}$ pieces in the $k$-th block will be designed so that
(R1) be fulfilled for this $k$. Denote
\begin{equation}\label{eq:notFk}
F(k)=\{0,1\}^{\{0,1,2,\cdots,k\}}=\{s_l: 1 \le l \le 2^{k+1}\}, \quad
s_1=(0,0,\dots,0),\, \dots, \, s_{2^{k+1}}=(1,1,\dots,1).
\end{equation}
So, for each $k$ we fix, once and for all, a choice of $2^{k+1}$ functions $s_l$.
They can be considered as $(k+1)$-tuples of zeros and ones. Say, let them be
ordered from $s_1$ to $s_{2^{k+1}}$ lexicographically (then $s_1$ and
$s_{2^{k+1}}$ are constant, as written above).\footnote{Since the functions
	$s_l$ depend both on $k$ and $l$, we should in fact write $s_{(k,l)}$ rather than just $s_l$. We abuse
	notation here hoping that no misunderstanding will arise.}

For each $l$, the piece $P(k,l)$ will be a finite sequence of length $n_k^k+1$
of the form
\begin{equation}\label{eq:Pkl}
P(k,l) = x_j, x_{j+1}, \dots, x_{j+n_1^k},  \dots, x_{j+n_2^k},  \dots,
x_{j+n_k^k}
\end{equation}
where $j=j(k,l) \geq 0$ will depend on both $k$ and $l$, but $n_1^k, \dots,
n_k^k$ only on $k$ and not on $l$,\footnote{Therefore we write $n_i^k$ rather
	than  $n_i^{(k,l)}$.} such that
\begin{equation}\label{eq: pieces requirment}
x_j \in U^{k}(a_{s_l(0)}), \,\, T^{n_i^k}x_j=x_{j+n_i^k} \in
U^{k}(a_{s_l(i)}), \quad 1 \le i \le k~.
\end{equation}
Equivalently, if we put $n_0^k:=0$, this means that $x_j \in
\bigcap_{i=0}^{k}T^{-n_i^k}U^k(a_{s_l(i)})$.
Since for each $l=1,\dots, 2^{k+1}$ there will be a piece $P(k,l)$ corresponding to $s_l$,
the set
\begin{equation}\label{eq:notNk}
N(k)=\{n_0^k=0, n_1^k, \dots, n_k^k\}
\end{equation}
will be an independence set of times of length $k+1$ for $(U^k(a_0), U^k(a_1))$.

We see from~(\ref{eq:Pkl}) that the piece $P(k,l)$ consists of $k$ shorter
sequences, called \emph{winds} \index{wind}:
\begin{equation}\label{eq:winds}
W^{(k,l)}_1= x_j (= x_{j+n_0^k}), x_{j+1}, \dots, x_{j+n_1^k}, \quad  \dots,
\quad W^{(k,l)}_k = x_{j+n_{k-1}^k}, \dots, x_{j+n_k^k}~.
\end{equation}
The wind $W^{(k,l)}_i$ starts in $x_{j+n_{i-1}^k}\in U^k(a_{s_l(i-1)})$ and ends
in $x_{j+n_{i}^k}\in U^k(a_{s_l(i)})$.
Put $w^{k}_i= |W^{(k,l)}_i|$.\footnote{The wind  $W^{(k,l)}_i$ depends on $k$,
	$l$ and $i$ but its length does not depend on $l$, therefore we write  $w^{k}_i$
	rather than  $w^{(k,l)}_i$.}
The following table shows the structure of $P(k,l)$. The second row shows the
function (tuple) to which the piece corresponds, the last row contains the
lengths of the winds.

\begin{table}[h]
	\begin{footnotesize}
		\begin{center}
			\begin{tabular}{|c|c|c|c|c|}
               \hline
				$P(1,1)$ & $P(1,2)$ & $P(1,3)$ & $P(1,4)$ & $\dots$ 				 \\
				\hline
				$s_1=(0,0)$ & $s_2=(0,1)$ & $s_3=(1,0)$ & $s_4=(1,1)$ & $\dots$  \\
				\hline
				$W^{(1,1)}_1$ & $W^{(1,2)}_1$ & $W^{(1,3)}_1$  & $W^{(1,4)}_1$ & $\dots$ \\
				\hline
				$w^1_1 = n_1^1+1$ & $w^1_1$ & $w^1_1$ & $w^1_1$ & $\dots$  \\
				\hline
				\hline
               \multicolumn{4}{|c|}{$P(k,l) = x_j, \dots, x_{j+n_1^k},  \dots, x_{j+n_2^k},
					\dots, x_{j+n_k^k}$} & $\dots$ \\
				\hline
             	\multicolumn{4}{|c|}{$s_l = (s_l(0), s_l(1), \dots, s_l(k))$}  & $\dots$  \\
				\hline
				$W^{(k,l)}_1$ & $W^{(k,l)}_2$ & $\dots$ & $W^{(k,l)}_k$ & $\dots$ \\
				\hline
				 $w^{k}_1 = n_1^k- n_0^k +1 = n_1^k+1$ & $w^{k}_2 = n_2^k - n_1^k +1$ & $\dots$ & $w^{k}_k=n_k^k
				- n_{k-1}^k +1$ & $\dots$ \\
				\hline
            \end{tabular}
		\end{center}
	\end{footnotesize}
	\caption{Winds and their lengths}\label{T:winds}
\end{table}

Note that each of the elements $x_{j+n_1^k}, \dots, x_{j+n_{k-1}^k}$ belongs to
two of these winds.
So, the winds in $P(k,l)$ are not disjoint, two consecutive winds have one point
in common. However, we will abuse terminology and we still will say that
$P(k,l)$ is a concatenation of these winds. The fact that $P(k,l)$ consists of
$k$ winds (though not pairwise disjoint) will also be expressed by saying that
$P(k,l)$ `winds $k$-times around the set $A$'.

We have seen that already the mere concatenation of the pieces
$$
P(1,1), \, \dots, \,  P(1,4), \, P(2,1),\, \dots, \, P(2,8), \, \dots, \,
P(k,1), \, \dots, \, P(k,2^{k+1}),\, \dots
$$
ensures (R1). Unfortunately, then there is a risk that $(U^{1}(a_0),U^{1}(a_j))$
for some $|j|\ge 2$ or $(U^{1}(a_0),U^{1}(a_{\infty}))$
will have an independence set of times of length $5$. Therefore, to be sure that
also (R2) is fulfilled, we are going to add the gaps as indicated above. Since
the proof that the gaps will really have the required effect will need computations,
we are going to describe the trajectory $x_0, x_1, \dots$ in more details. We
provide also pictures.

Our space $X_1$ will be a subset of $[0,1] \times A \subseteq [0,1] \times
\pi_0$,
so let $P_1$ and $P_2$ be the projections of $[0,1] \times \pi_0$ onto $[0,1]$
and $\pi_0$, respectively.
The trajectory $x_0,x_1,\dots$ will be chosen such that
\begin{equation}\label{Eq:projections}
P_2(x_i) \in A,\, i=0,1,2, \dots, \quad \text{and} \quad P_1(x_i) \searrow 0
\end{equation}
and so all the accumulation points of the trajectory $x_0,x_1,\dots$ will be in
$A$.

Recall that, to ensure (R1), the first block $B(1)$ has $2^2$ pieces. Each piece
$P(1,l)$, corresponding to an $s_l \in \{0,1\}^{\{0,1\}}$, satisfies
\eqref{eq: pieces requirment} for $k=1$. We have $n_0^1=0$ and choose $n_1^1=7$.
Further, $s_1=(0,0)$, $s_2=(0,1)$, $s_3=(1,0)$ and $s_4=(1,1)$.
Figures~\ref{fig:p11}-\ref{fig:og1} show the four pieces and the three inner
gaps of the first block, and the first outer gap. We are going to explain how
the pictures should be understood.

\begin{figure}
	\begin{minipage}{0.45\linewidth}
		\centering
		\includegraphics[width=2.8in]{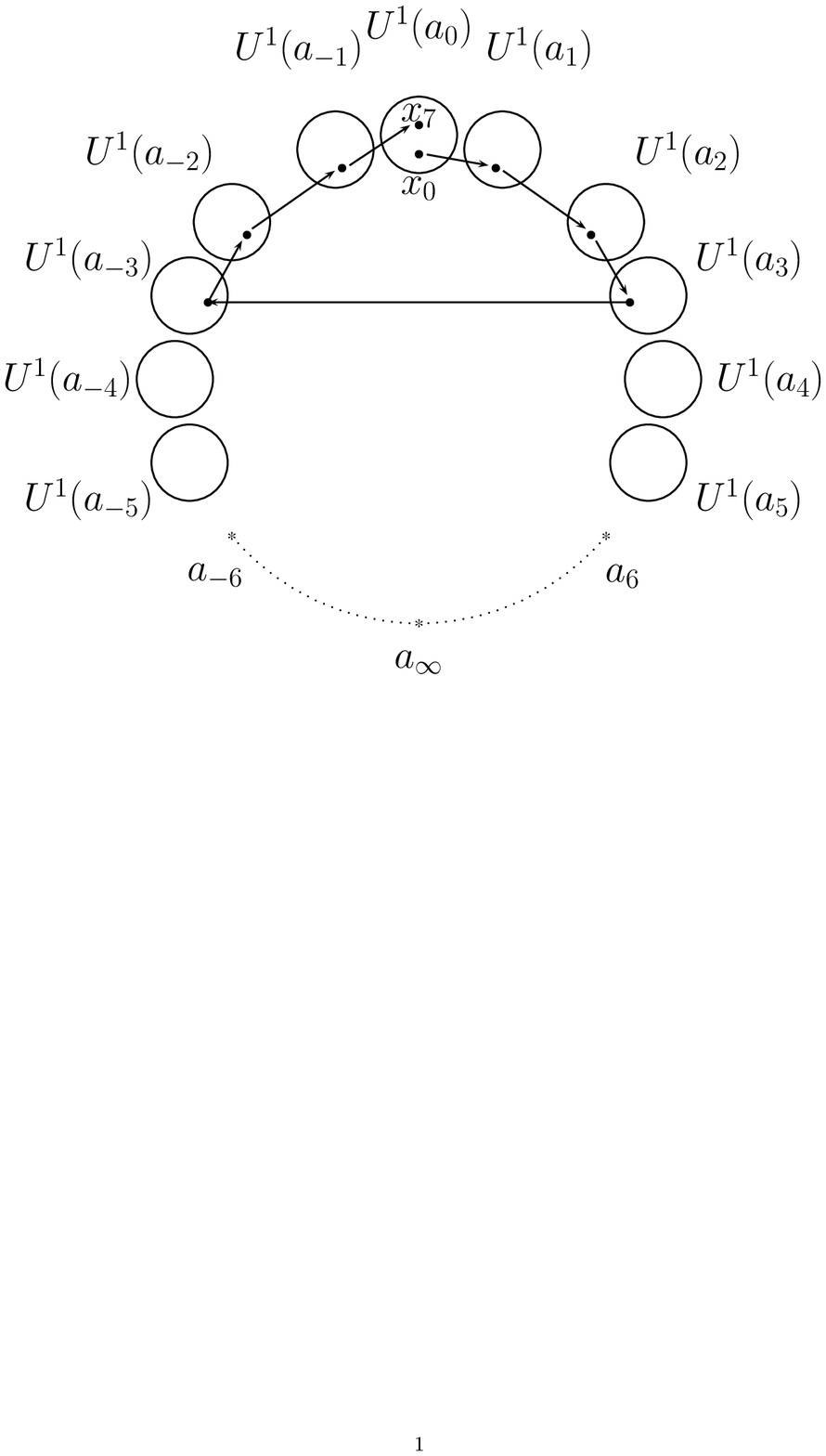}
		\caption{$P(1,1)=\{x_0,x_1,\dots,x_7\}$}
		\label{fig:p11}
	\end{minipage}%
	\begin{minipage}{0.45\linewidth}
		\centering
		\includegraphics[width=2.8in]{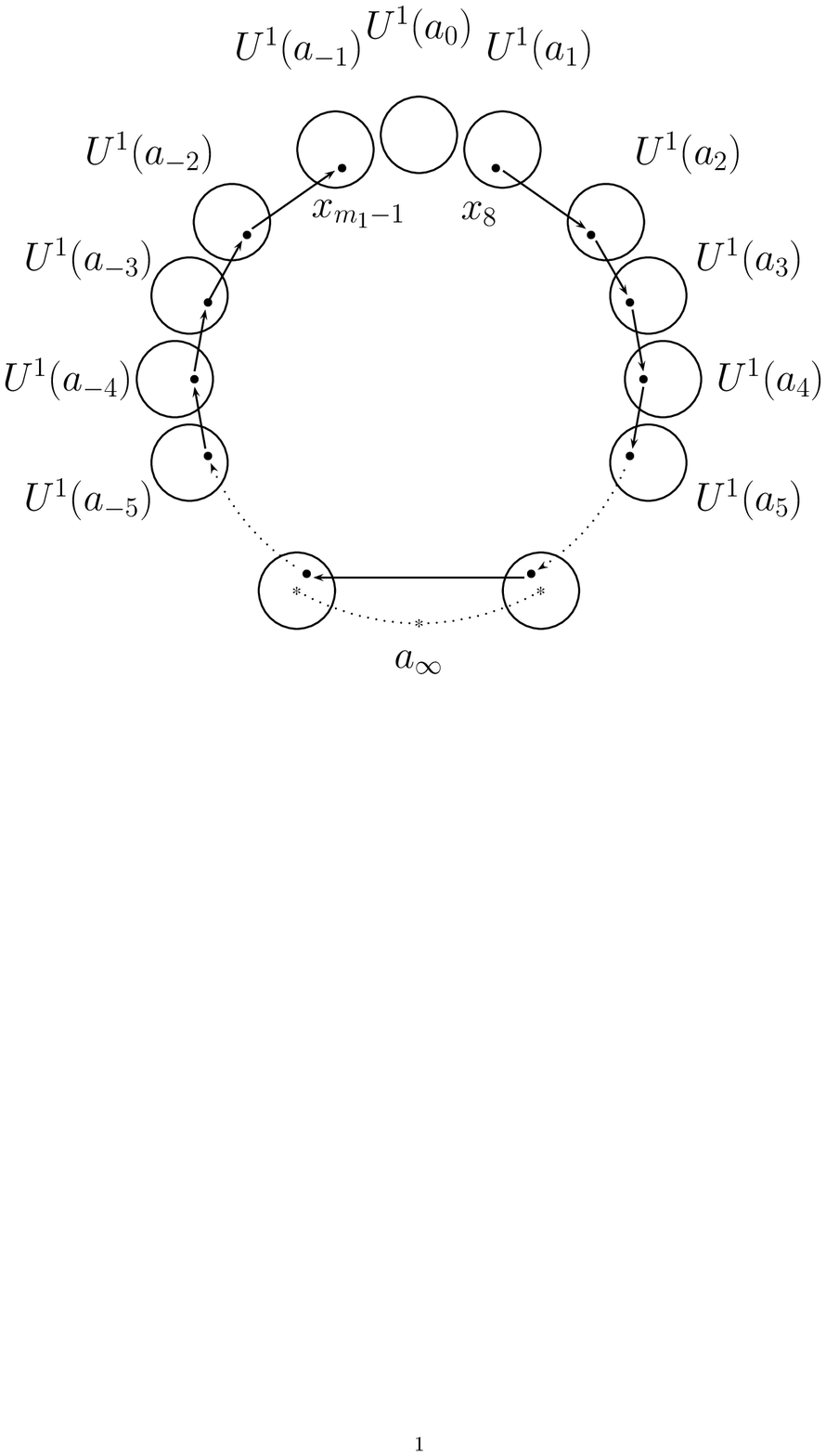}
		\caption{$IG(1,1)=\{x_8,x_9,\dots,x_{m_1-1}\}$ with $m_1-8=ig(1,1)$}
		\label{fig:ig11}
	\end{minipage}
\end{figure}

\begin{figure}
	\begin{minipage}{0.45\linewidth}
		\centering
		\includegraphics[width=2.8in]{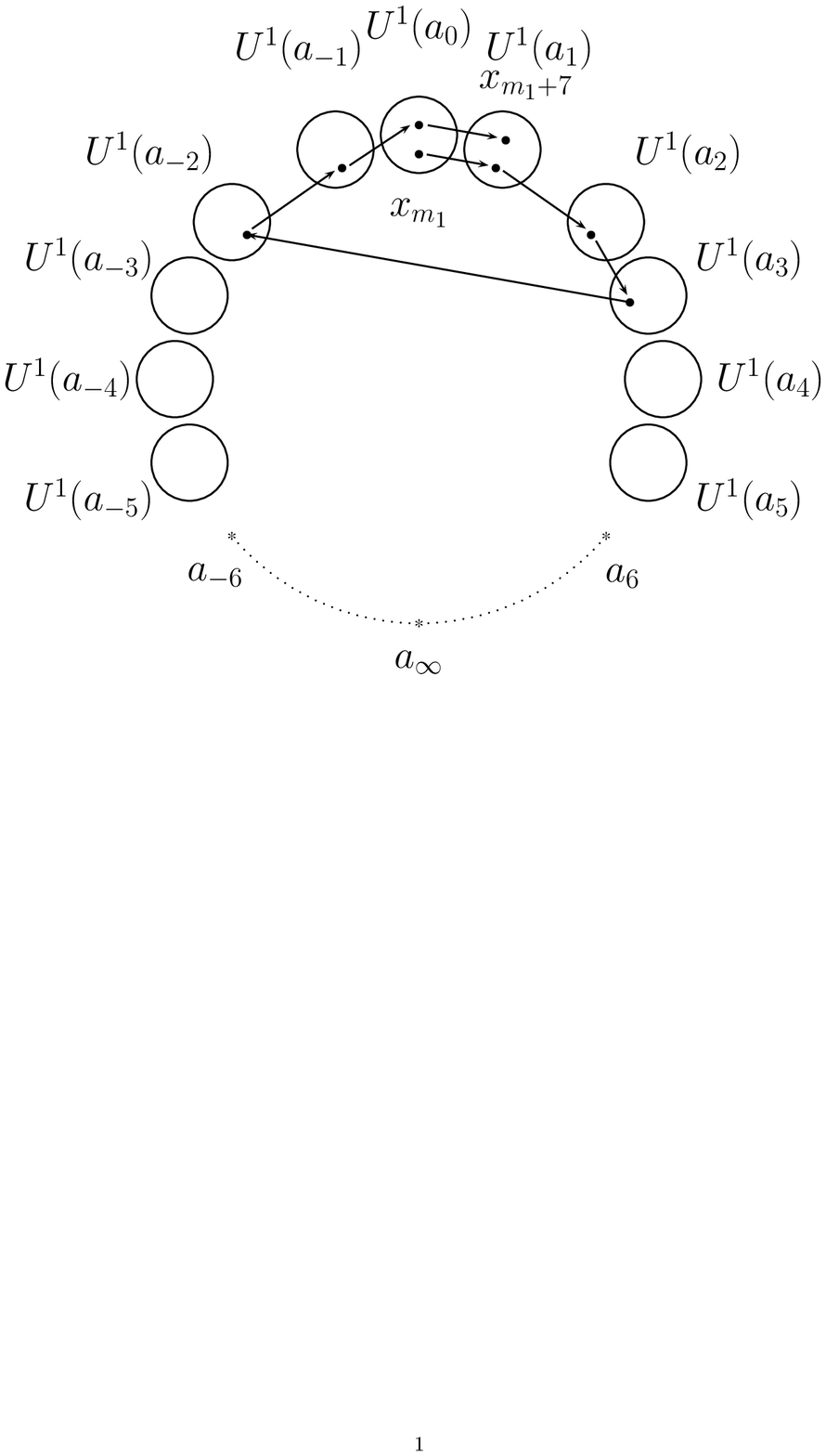}
		\caption{$P(1,2)=\{x_{m_1},x_{m_1+1},\dots,x_{m_1+7}\}$}
		\label{fig:p12}
	\end{minipage}
	\begin{minipage}{0.45\linewidth}
		\centering
		\includegraphics[width=2.8in]{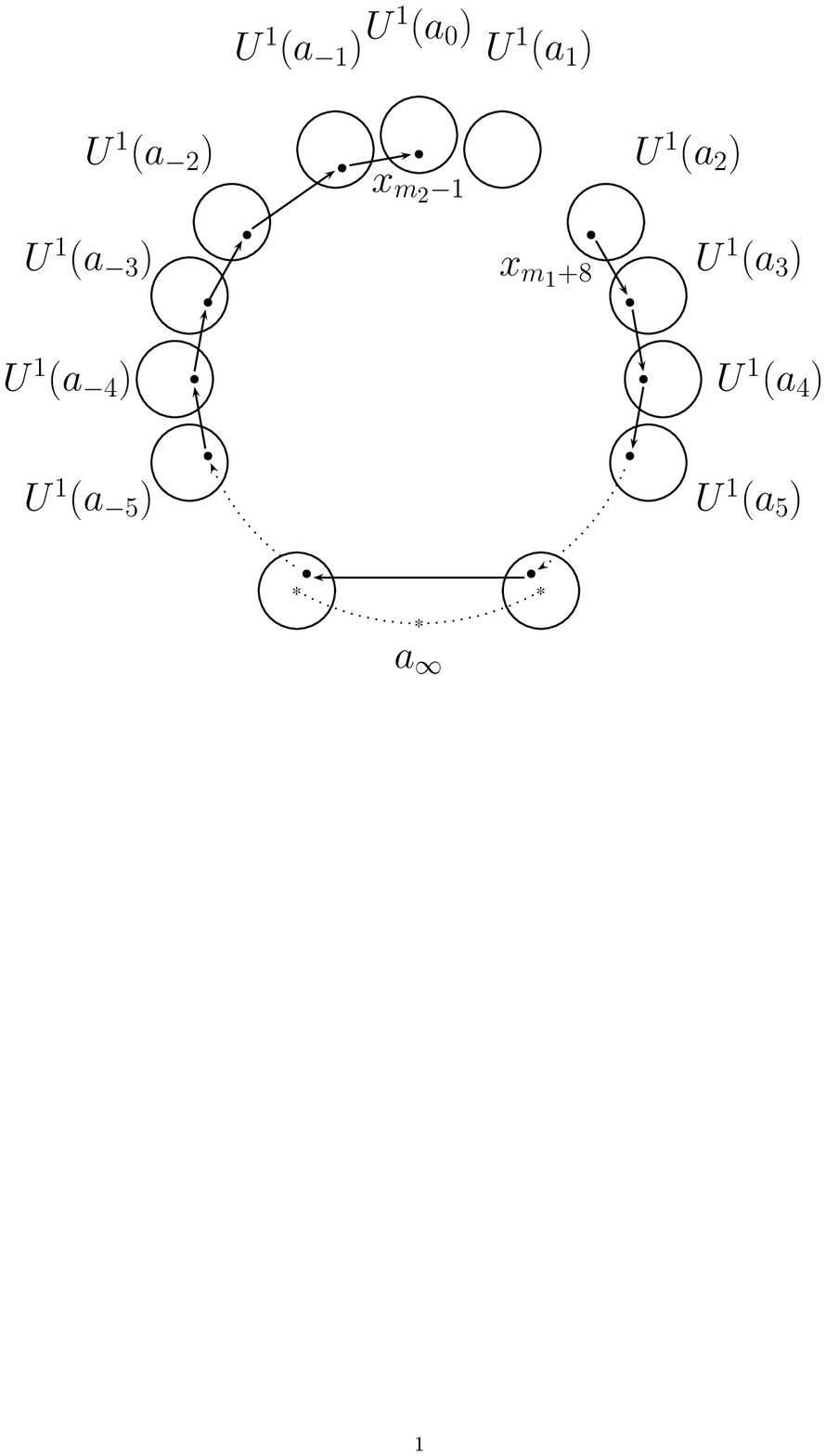}
		\caption{$IG(1,2)=\{x_{m_1+8},x_{m_1+9},\dots,x_{m_2-1}\}$ with
			$m_2-m_1-8=ig(1,2)$}
		\label{fig:ig12}
	\end{minipage}
\end{figure}

\begin{figure}
	\begin{minipage}[h]{0.45\linewidth}
		\centering
		\includegraphics[width=2.8in]{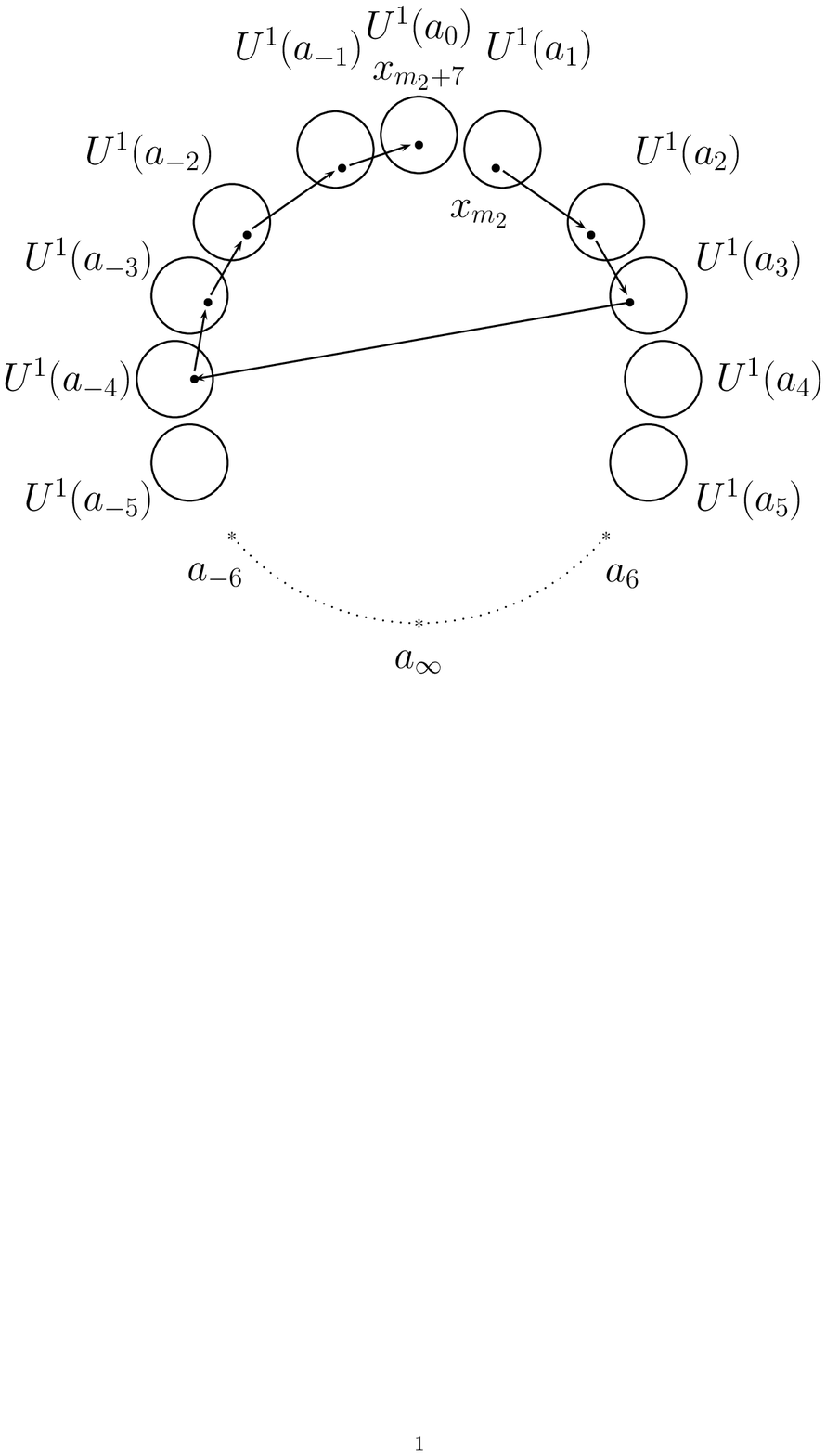}
		\caption{$P(1,3)=\{x_{m_2},x_{m_2+1},\dots,x_{m_2+7}\}$}
		\label{fig:p13}
	\end{minipage}
	\begin{minipage}[h]{0.45\linewidth}
		\centering
		\includegraphics[width=2.8in]{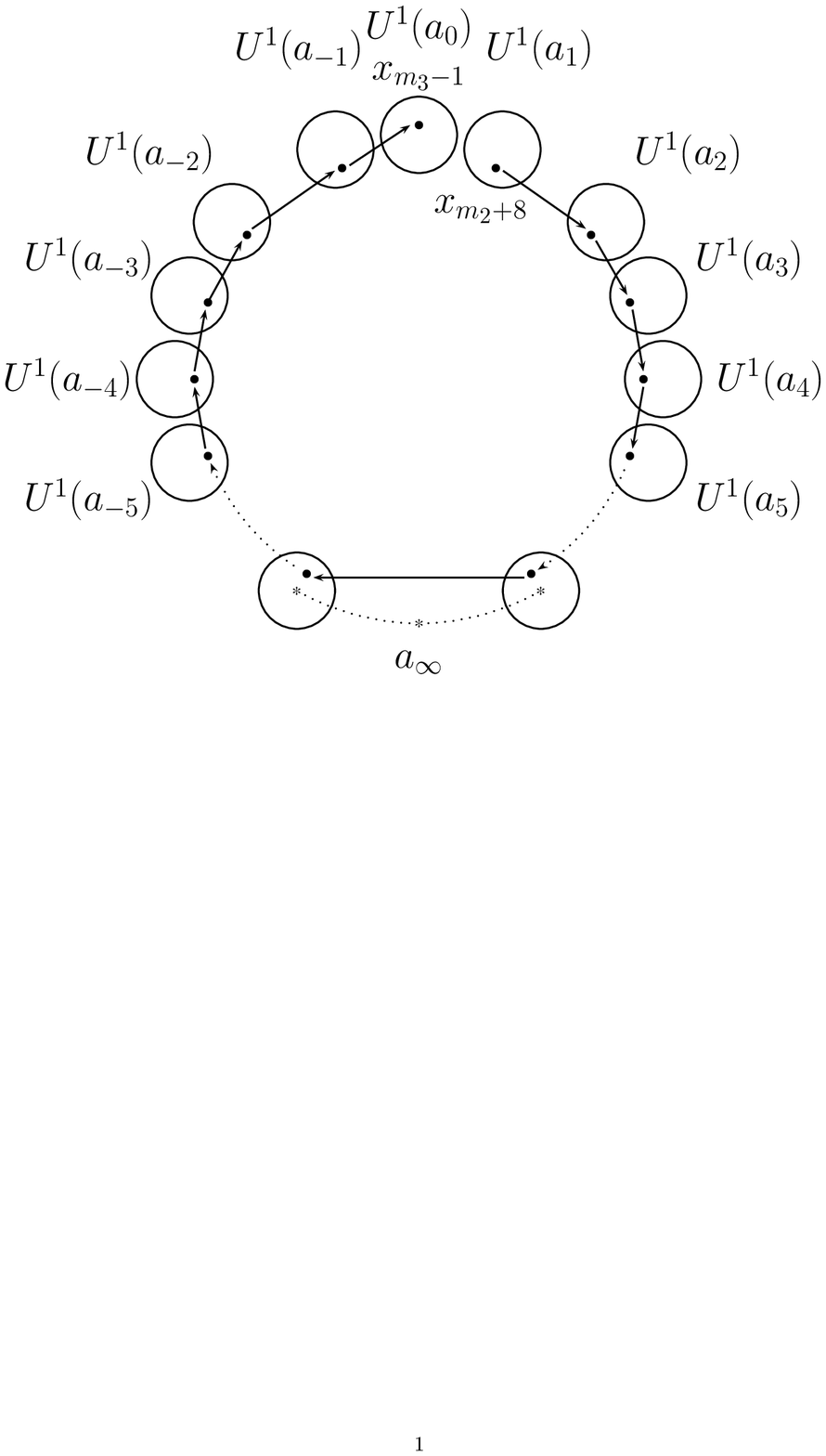}
		\caption{$IG(1,3)=\{x_{m_2+8},x_{m_2+9},\dots,x_{m_3-1}\}$ with
			$m_3-m_2-8=ig(1,3)$}
		\label{fig:ig13}
	\end{minipage}
\end{figure}

\begin{figure}
	\begin{minipage}[h]{0.45\linewidth}
		\centering
		\includegraphics[width=2.8in]{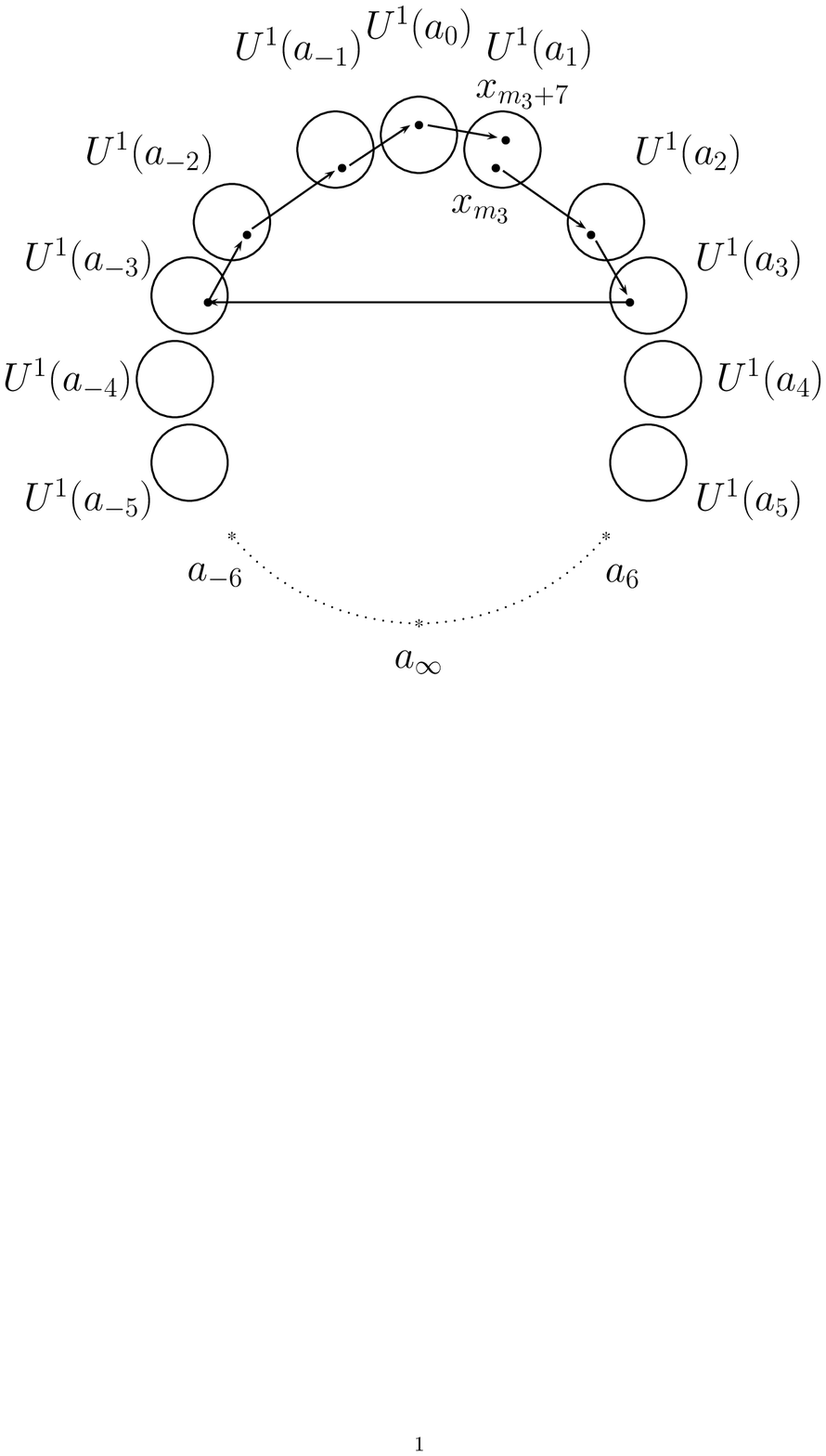}
		\caption{$P(1,4)=\{x_{m_3},x_{m_3+1},\dots,x_{m_3+7}\}$}
		\label{fig:p14}
	\end{minipage}
	\begin{minipage}[h]{0.45\linewidth}
		\centering
		\includegraphics[width=2.8in]{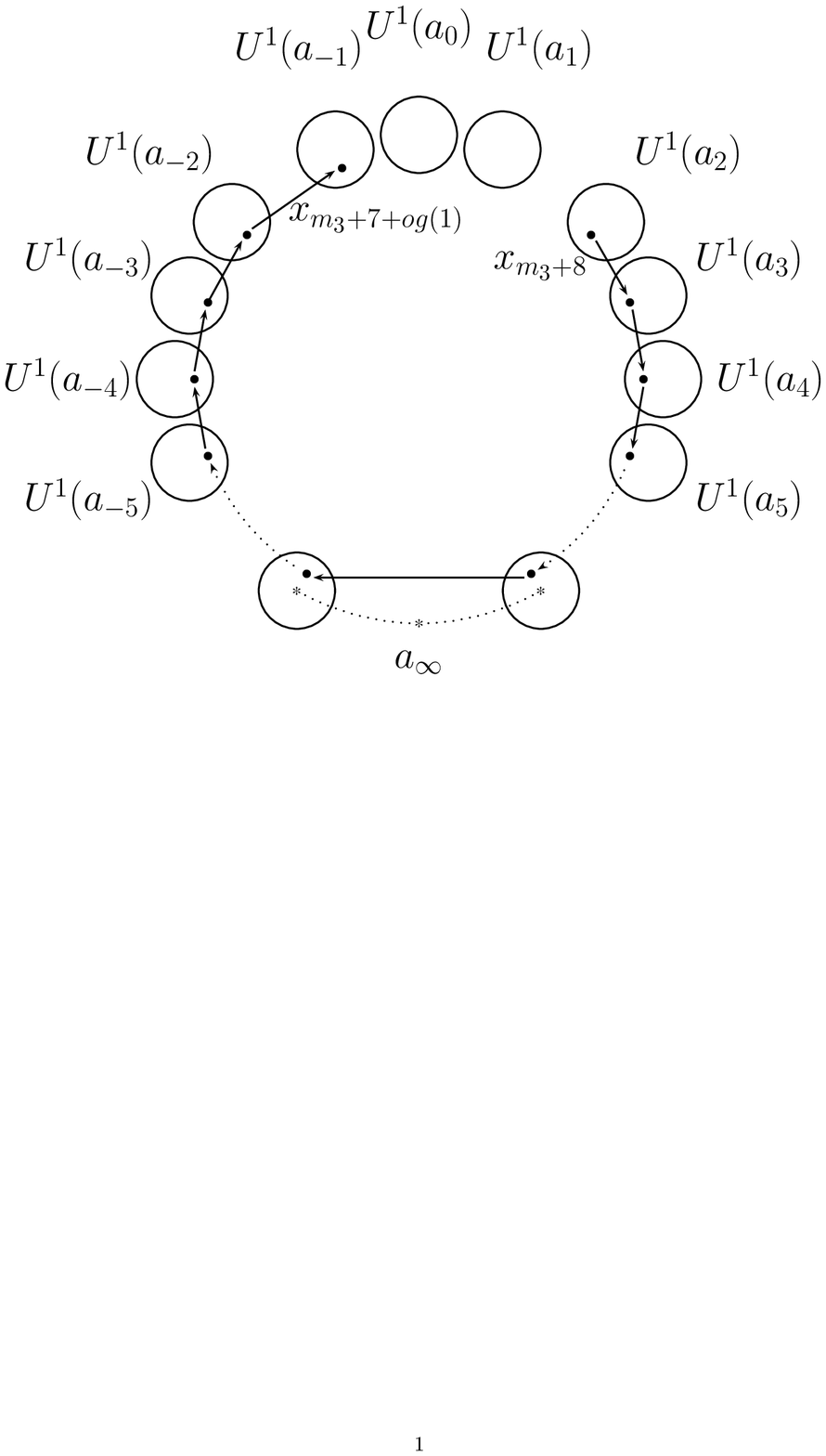}
		\caption{$OG(1)=\{x_{m_3+8},x_{m_3+9},\dots,x_{m_3+7+og(1)}\}$}
		\label{fig:og1}
	\end{minipage}
\end{figure}

Let us start with Figure~\ref{fig:p11} which shows the piece $P(1,1) = \{x_0,
x_1, \dots, x_7\}$. We are looking at our system along the first axis (with the
eyes at the point $+\infty$) and so what we see in that picture is in fact the
projection onto the vertical plane $\pi_0$. Thus, though we consider our
`cylindrical' neighbourhoods $U^1(a_i)$ of the points $a_i$, on the picture we
can see only their projections onto $\pi_0$, i.e. disks. However, we keep the
notation $U^1(a_i)$ rather than $P_2(U^1(a_i))$. Similarly, we consider points
$x_0, x_7 \in (0,1]\times \{a_0\}$ but we can see only their projections onto
$\pi_0$, still keeping the notations $x_0$ and $x_7$. Since $P_2(x_0) = P_2(x_7)
= a_0 = P_2(a_0)$, the projections of both points $x_0$ and $x_7$ should be in
the center of the corresponding disk. However, our  rule when drawing the
picture is that if the projections of points we consider are exactly the centers
of the disks, we draw them as different points in that disk, not to loose the
lucidity of the picture. So, Figure~\ref{fig:p11} means: $x_0, x_7 \in
(0,1]\times \{a_0\}$, $x_1 \in (0,1]\times \{a_1\}$, $\dots$,  $x_4 \in
(0,1]\times \{a_{-3}\}$, $\dots$. Further, arrows of course mean that
$T(x_i)=x_{i+1}$ for $i=0,1,\dots,6$. Recall also that $P_1(x_0)> \dots >
P_1(x_7)$, see~(\ref{Eq:projections}).

The first piece $P(1,1)$ is concatenated with the first inner gap $IG(1,1) =
\{x_8,x_9,\dots,x_{m_1-1}\}$, see Figure~\ref{fig:ig11}. Here $x_8\in
(0,1]\times \{a_1\}$. Of course $T(x_7)=x_8$, but we do not draw the
corresponding arrow (since we even do not draw $x_7$ in Figure~\ref{fig:ig11}).
We continue this way, alternating pieces and inner blocks until the first block
is completed, see Figures~\ref{fig:p12}-\ref{fig:p14}. Finally, before starting
the construction of the second block, we add the first outer gap
$OG(1)=\{x_{m_3+8},x_{m_3+9},\dots,x_{m_3+7+og(1)}\}$, see Figure~\ref{fig:og1}.

The pictures show the role of those finite sequences whose concatenation gives
the $T$-trajectory of $x_0$ and they also show why $T$ is continuous. Let us
go to details.

The pieces $P(1,1)$, $P(1,2)$, $P(1,3)$ and $P(1,4)$ give the existence of
points $x_0$, $x_{m_1}$, $x_{m_2}$ and $x_{m_3}$, respectively, such that $x_0$
is in $U^1(a_0)$ at time $n_0^1 =0$ and again in $U^1(a_0)$ at time $n_1^1 = 7$,
$x_{m_1}$ is in $U^1(a_0)$ at time $n_0^1 =0$ and in $U^1(a_1)$ at time $n_1^1 =
7$, $x_{m_2}$ is in $U^1(a_1)$ at time $n_0^1 =0$ and in $U^1(a_0)$ at time
$n_1^1 = 7$, and $x_{m_3}$ is in $U^1(a_1)$ at time $n_0^1 =0$ and again in
$U^1(a_1)$ at time $n_1^1 = 7$.

If $k>1$, the structure of the pieces in $B(k)$ is more complicated than in the
case $k=1$. To make the life of the reader easier, we add one more picture. For
instance, consider $k=2$. Let the corresponding independence set of times be,
say, $\{n_0^2=0, n_1^2 = 9, n_2^2 = 23\}$ (this is just for an illustration, later in
fact the numbers $n_1^2$ and $n_2^2$ will be chosen much larger). Further,
consider for instance $l=6$. The piece $P(2,6)$, see~(\ref{eq:Pkl}) and
Figure~\ref{fig:p26}, corresponds to $s_6 = (1,0,1)$. So, at time $n_0^2=0$ the
piece starts at $U^2(a_1)$ (because $s_6(0)=1$), at time $n_1^2 = 9$ it visits
$U^2(a_0)$ (because $s_6(1)=0$) and at time $n_2^2 = 23$ it ends in $U^2(a_1)$
(because $s_6(2)=1$).

\begin{figure}[h]
	\centering
	\includegraphics[width=10cm]{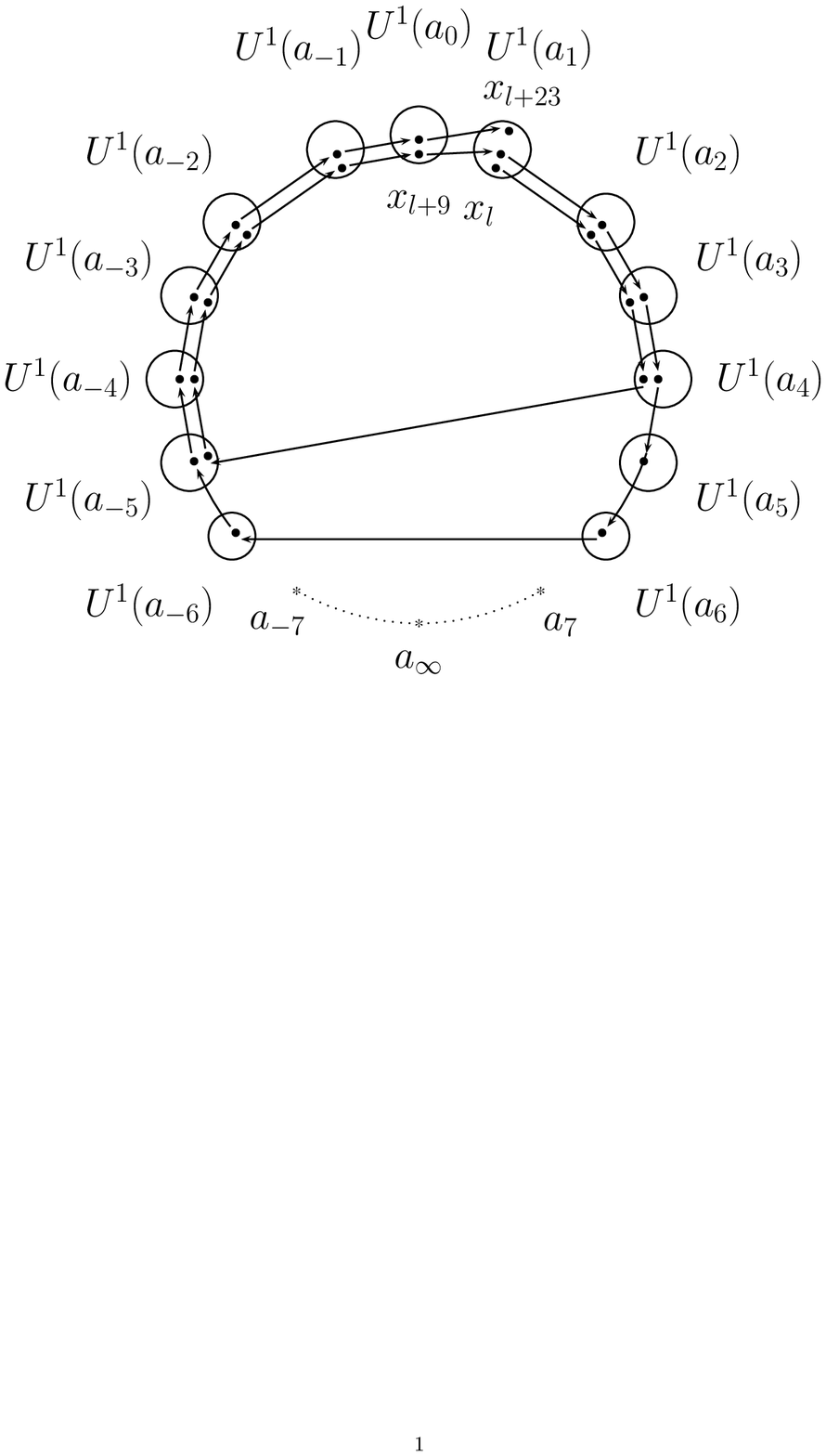}\\
	\caption{The piece $P(2,6)$ corresponding to $s_6 = (1,0,1)$, with $\{n_0^2=0,
		n_1^2 = 9, n_2^2 = 23\}$. It consists of two winds $W^{(2,6)}_1$ and
		$W^{(2,6)}_2$ with lengths $w^2_1 = 10$ and $w^2_2 =15$.}\label{fig:p26}
\end{figure}

We want that $T\colon X_1\to X_1$ be continuous. Therefore, when constructing the
trajectory $x_0,x_1,\dots$ approaching the set $A$ (where $T$ has already been
defined), we use the general rule that the $P_2$-projections of consecutive
points $x_i, x_{i+1}$ are the consecutive points of $A$ (consecutive in the
sense of $T|_A$-dynamics). In rare exceptions from this rule the points $x_i,
x_{i+1}$ are either `far away' from the plane $\pi_0$ (such as $x_3$ and $x_4$ in
Figure~\ref{fig:p11}) or they both are `close' to the fixed point $a_{\infty}$.
Always, when $x_i, x_{i+1}$ is such an exception, the point $x_i$ is on the `right side'
of $a_{\infty}$ and $x_{i+1}$ is on the `left side' of $a_{\infty}$ (see e.g. the
`jump' from $x_3$ on the `right side' of $a_{\infty}$ to $x_4$ on the `left side'
of $a_{\infty}$ in Figure~\ref{fig:p11} or two such `jumps' in Figure~\ref{fig:p26}).
In fact we will have infinitely many exceptions from what we call the general rule,
but they will be closer and closer to the fixed point $a_{\infty}$. Therefore the
continuity of $T$ will not be violated.

The inner gap $IG(1,l)$ is inserted between the pieces $P(1,l)$ and $P(1,l+1)$
and so, following the just mentioned general rule, we take care that
$P_2(\text{first point of }IG(1,l)) = T|_A(P_2(\text{last point of }P(1,l)))$
and
$P_2(\text{first point of }P(1,l+1)) = T|_A(P_2(\text{last point of
}IG(1,l)))$ (check this in our pictures).

The outer gap $OG(1)$ is used for linking the blocks $B(1)$ and $B(2)$. So, the
first point in $OG(1)$ is $x_{m_3+8}$, the first point in $B(2)$ is
$x_{m_3+og(1)+8}$ and when choosing their positions we keep our general rule
mentioned above, cf. Figures~\ref{fig:p14} and~\ref{fig:og1}.

The constructions of $B(2)$ and $OG(2)$ and, generally, $B(k)$ and $OG(k)$ go in
an analogous way. However, note that in $B(1)$
each of the pieces winds around $A$ only once (i.e., it consists of just one
wind). Each of the inner gaps in the first block, as well as
the first outer gap, also wind around $A$ once.
When constructing $B(k)$ and $OG(k)$, all the gaps will still wind around $A$
only once (therefore we do not introduce the notion of a wind in a gap),
but each of the pieces in $B(k)$ will $k$ times wind around $A$. So, each piece
in $B(k)$ consists of $k$ winds.
This is because the role of the pieces in $B(k)$ is to ensure that there exists
an independence set $\{n_0^k=0, n_1^k, \dots, n_k^k\}$ of times
of length $k+1$ for $(U^k(a_0), U^k(a_1))$.
Our way how to do that is as follows. Given $s_l \in \{0,1\}^{\{0,1,\dots,k\}}$,
the piece $P(k,l)$ starts in $U^k(a_{s_l (0)})$ at time $n^k_0=0$, then winds
once around $A$ to come to $U^k(a_{s_l (1)})$ at time $n^k_1$ (this is the last
point of the wind $W^{(k,l)}_1$ and the first point of the wind $W^{(k,l)}_2$),
again winds once around $A$ to come to $U^k(a_{s_l (2)})$ at time $n^k_2$ (this
was the wind $W^{(k,l)}_2$), and so on, finally winds around $A$ to come to
$U^k(a_{s_l (k)})$ at time $n^k_k$ (this was the wind $W^{(k,l)}_k$).
For an example see Figure~\ref{fig:p26} where the piece $P(2,6)$ consists of two
winds.

\medskip

Now the reader has a global picture of the system $(X_1,T)$.

\medskip

Before going to a more detailed specification of parameters in our construction,
we introduce some notation. Let $Y_k$ be the concatenation of $B(k)$ and
$OG(k)$. If we abuse the notation
(making no difference between a finite sequence and its set of values), then
$$
Y_k = B(k) \cup OG(k)
$$
and the second part of $X_1$, i.e. the orbit of $x_0$, is the set $Y =
\bigcup_{k=1}^{\infty} Y_k$.
For $k=1,2,\dots$, the set (or the finite sequence) $Y_k$ \index{$Y_k$} will be called the
\emph{$k$-th level of $Y$} \index{$k$-th level of $Y$}.
So, by saying that a point is in the  $k$-th level we mean that it belongs to
$Y_k$.
Further, let $\mathcal S$ be a finite subsequence of the trajectory
$x_0,x_1,\dots$
(say, $\mathcal S$ is a block, a piece, a gap or a wind). Consider the (finite)
set
$$
\pre (\mathcal S) = \{x_i:\, 0\leq i < s, \, \text{where $x_s$ is the first
	element of $\mathcal S$}\}~.
$$
We will call it \emph{the set of all predecessors of $\mathcal S$} \index{$\pre (\mathcal S)$}. If $s\geq 1$, the point
$x_{s-1}$ will be called
\emph{the immediate predecessor of $\mathcal S$} \index{immediate predecessor}.

Let us also adopt the convention that
\begin{itemize}
	\item $a \gg b$, or $b \ll a$  \index{$a \gg b$} \index{$b \ll a$}, means that $a >100\times b$, and
	\item $(a_i)_{i=1}^N \upuparrows$ \index{$(a_i)_{i=1}^N \upuparrows$} and $(a_i)_{i=1}^{\infty} \upuparrows$ \index{$(a_i)_{i=1}^{\infty} \upuparrows$} mean
	that in the corresponding
	sequence we have $a_i \ll a_{i+1}$ for each considered $i$ (except of $i=N$ in the first
	case). Moreover, in each of
	these cases we will sometimes write just $(a_i) \upuparrows$.
\end{itemize}

If $S$ is a finite set or a finite injective sequence, we will denote by $|S|$
the number of points in $S$.

For proving that $h^*(T)$ is $\log 2$ (and not higher), we still need
to specify appropriately some of the pa\-ra\-me\-ters, namely, for $k \in \N$,
the \emph{independence set of times} $N(k)=\{n_0^k=0,n_1^k,n_2^k\cdots,
n_{k}^k\}$ \index{$N(k)$}, the \emph{outer gap length} $og(k)$ \index{$og(k)$} and the \emph{inner gap lengths}
$ig(k,j)$ \index{$ig(k,j)$}, $ 1 \le j \le 2^{k+1}-1$. When imaging how we construct the
trajectory $x_0, x_1, \dots$, cf. Table~\ref{T:BPIGOG} and~\eqref{eq:Pkl}, we see that we just need to specify the numbers
$n_1^1$, $ig(1,1)$, $ig(1,2)$, $ig(1,3)$, $og(1)$, $n_1^2$, $n_2^2$, $ig(2,1),
\dots$, $ig(2, 7)$, $og(2)$, $n^3_1$, $n^3_2$, $n^3_3$, $ig(3,1),
\dots$~. Notice that to specify the numbers $n^k_i$ is equivalent, cf.
Table~\ref{T:winds}, with specifying the lengths of all winds. However, note
that the lengths of the winds in the pieces $P(k,l)$ do not depend on $l$ and so
we will, for each $k$, specify only the lengths of the winds $W^{(k,1)}_1,
\dots, W^{(k,1)}_k$, which will be equivalent with specification of the numbers
$n^k_1, \dots, n^k_k$. Thus, our task is to specify
$w_1^1$, $ig(1,1)$, $ig(1,2)$, $ig(1,3)$, $og(1)$, $w_1^2$, $w_2^2$, $ig(2,1),
\dots$, $ig(2, 7)$, $og(2)$, $w^3_1$, $w^3_2$, $w^3_3$, $ig(3,1), \dots$.
We need continuity of $T$ and $h^*(T) = \log 2$ but in spite of this we have
much freedom in specification of those numbers.

Inductively, we require only the following (for $k=1,2,\dots$):

\begin{enumerate}
	\item [(L1)] $w^1_1 =8$ or, equivalently, $n_1^1=7$.
	\item [(L2)] $|IG| \gg |\pre (IG)|$	whenever $IG$ is an inner gap.
	\item [(L3)] $|OG| \gg |\pre (OG)|$	whenever $OG$ is an outer gap.
	\item [(L4)] $|W| \gg  |\pre (W)|$	whenever $W$ is a wind in the first piece of
	a block.
	(Recall that the lengths of the winds in $P(k,l)$ are the same as those in
	$P(k,1)$.)
\end{enumerate}

The pieces and the gaps are formed by consecutive points of the trajectory
$x_0,x_1,\dots$, so we may for instance write
$P(k,l)$ as $x_j,x_{j+1},\dots, x_{j+|P(k,l)|-1}$.
Here $j$ depends on $k$ and $l$ but we do not need to know the exact formula for
it.
We simply choose separately the pieces, the inner gaps and the outer gaps, then
concatenate them as in  \eqref{Eq:block-outgap} and \eqref{Eq:pieces-ingaps},
and denote the sequence we obtain as $x_0,x_1,\dots$. So, it will be convenient
to adopt an alternative notation. Instead of ``$x$", we will use ``$y$" for
elements in pieces, ``$z$" for elements in inner gaps and ``$\omega$" for
elements in outer gaps. Moreover, the indices will indicate to which block or
gap the element belongs to. The alternative notation can be seen in the next
table.

\medskip
\begin{table}[h]
	\label{table:alternative notations}
	\begin{footnotesize}
		\begin{center}
			
			\begin{tabular}{|c|c|c|}
				\hline
				$P(1,1)$ & $IG(1,1)$ & $P(1,2)$  \\
				\hline
				$x_0,x_1,\dots,x_{7}$ &$x_8,x_9,\dots,x_{m_1-1}$ &
				$x_{m_1},x_{m_1+1},\dots,x_{m_1+7}$  \\
				\hline
				$y_{(1,1),0},y_{(1,1),1},\dots,y_{(1,1),n_1^1}$ &
				$z_{(1,1),1},z_{(1,1),2},\dots,z_{(1,1),ig(1,1)}$ &
				$y_{(1,2),0},y_{(1,2),1},\dots,y_{(1,2),n_1^1}$  \\
				\hline
				\hline
				$IG(1,2)$ & $P(1,3)$ & $IG(1,3)$  \\
				\hline
				$x_{m_1+8},x_{m_1+8},\dots,x_{m_2-1}$
				&$x_{m_2},x_{m_2+1},\dots,x_{m_2+7}$ &
				$x_{m_2+8},x_{m_2+9},\dots,x_{m_3-1}$   \\
				\hline
				$z_{(1,2),1},z_{(1,2),2},\dots,z_{(1,2),ig(1,2)}$ &
				$y_{(1,3),0},y_{(1,3),1},\dots,y_{(1,3),n_1^1}$
				& $z_{(1,3),1},z_{(1,3),2},\dots,z_{(1,3),ig(1,3)}$ \\
				\hline
				\hline
				$P(1,4)$ & $OG(1)$ & \dots\\
				\hline
				$x_{m_3},x_{m_3+1},\dots,x_{m_3+7}$ &
				$x_{m_3+8},x_{m_3+9},\dots,x_{m_3+7+og(1)}$ & \dots \\
				\hline
				$y_{(1,4),0},y_{(1,4),1},\dots,y_{(1,4),n_1^1}$ &
				$\omega_{1,1},\omega_{1,2},\dots,\omega_{1,og(1)}$& \dots \\
				\hline
			\end{tabular}
		\end{center}
	\end{footnotesize}
	\caption{Alternative notation}
\end{table}

So, $z_{(k,l),t}$ is the $t$-th point in the $l$-th inner gap of the $k$-th
block and $\omega_{k,t}$ is the $t$-th point in the $k$-th outer gap. In both
cases $t$ starts to run from $1$. In contrast, the piece $P(k,l)$ with length
$n_k^k+1$ has its elements denoted by $y_{(k,l),t}$ where $t$ runs from $0$ to
$n^k_k$. Then~(\ref{eq: pieces requirment}) becomes
\begin{equation}\label{eq: pieces req alt}
y_{(k,l),0}\in U^{k}(a_{s_l(0)}) \text{ \ \ and \ \ }
T^{n_i^k}(y_{(k,l),0})=y_{(k,l),n_i^k} \in U^{k}(a_{s_l(i)}), \, \, 1 \le i \le k.
\end{equation}

Recall that all the elements of pieces and gaps are in $(0,1] \times A$ and so
it will be convenient to adopt the convention (in fact used already above, say in
Figures~\ref{fig:p11}-\ref{fig:og1}) that
\begin{equation}\label{eg:pt-centre}
b \in U^{k}(a_j) \, \, \text{ means } \, \, b \in U^{k}(a_j) \text{ \ and \  }
P_2(b)=a_j.
\end{equation}

Recall that $P(k,l)$ (corresponding to $s_l \in \{0,1\}^{\{0,1,2,\cdots,k\}}$)
will wind $k$-times around $A$, i.e. it will `jump' from the `right side' of
$a_{\infty}$ to the `left side' of $a_{\infty}$ $k$-times. As we mentioned
above, when $k\to \infty$, the jumps will be performed closer and closer to
$a_{\infty}$ to ensure the continuity of $T$. For instance, in $P(1,2)$, see
Figure~\ref{fig:p12}), the jump is from  $U^1(a_{3})$ to $U^1(a_{-2})$ and we
put $j^{(1,2)}_1=3$ and $p^{(1,2)}_1=2$.
In general, given $k$ and $l$, the piece $P(k,l)$ performs $k$ jumps and so we
will have $k$ pairs of positive \emph{jump numbers} \index{jump numbers}, namely
$j^{(k,l)}_q, p^{(k,l)}_q$ for $q=1,\dots,k$. The meaning is that
\begin{equation}\label{eq:qth jump}
\text{the $q$-th jump in $P(k,l)$ is performed from $U^k(a_{j^{(k,l)}_q})$ to
	$U^k(a_{-p^{(k,l)}_q})$},
\end{equation}
i.e., this jump starts at a point $x_m$ and ends at a point $x_{m+1}$, with
$P_2(x_m)=a_{j^{(k,l)}_q}$ and $P_2(x_{m+1}) = a_{-p^{(k,l)}_q}$.
In Figure~\ref{fig:p26} we see the jump numbers $j^{(2,6)}_1 = 4,
p^{(2,6)}_1 =5$ and $j^{(2,6)}_2 = 6, p^{(2,6)}_2 = 6$.

The notion of jump numbers can analogously be introduced also for inner and
outer gaps, but we will not need a notation for them.
Notice that in all the above pictures of the inner gaps and outer gaps the jumps
from the `right' to the `left' are `horizontal', meaning that they are performed
from $U^k(a_r)$ to $U^k(a_{-r})$ (the both jump  numbers are the same, equal to
some $r$), and in all the above pictures of pieces we have `horizontal' or
`almost horizontal' jumps, meaning that $|j^{(k,l)}_q - p^{(k,l)}_q| \leq 1$.
Similarly as in the pictures, after fixing the lengths of all gaps and winds,
see (L1-4), the jumps can be chosen in such a way that all the jumps (in pieces
and in inner and outer gaps) are almost horizontal.

Now take into account that, by (L1-4), the lengths of the winds in the pieces of
a block $B(k)$ tend to infinity when $k\to \infty$. Since all the jumps are
almost horizontal, this clearly implies that the jump numbers $j^{(k,l)}_q$ and
$p^{(k,l)}_q$, for $1 \le l \le k$, tend to $\infty$ when $k\to \infty$.
Similarly, by (L1-4), the lengths $ig(k,l)$ of inner gaps as well as the lengths
$og(k,l)$ of outer gaps tend to $\infty$ when $k\to \infty$. Since the gaps wind
around $A$ only once and the jumps in the gaps are almost horizontal, these
jumps are performed closer and closer to $a_{\infty}$ when $k \to \infty$. It
follows that $T$ is continuous.

\medskip

\subsection{Proof of $h^*(T)=\log 2$}
We have finished the construction of $(X_1, T)$ and we already know that
$(X_1, T)$ fulfills (R1). It remains to show that also (R2) is fulfilled.

\medskip

We embark on a complicated proof that (R2) is fulfilled. To find an idea, suppose, on the contrary,
that $l_1<l_2<l_3<l_3<l_5$ is an independence set of times of length $5$ for $(U^{1}(a_0),U^{1}(a_j))$.
What does it mean for $\{l_1,l_2,l_3,l_4,l_5\}$? Among others, there exists a point $z\in \{x_0,x_1,\dots\}$ such that
\[
T^{l_i}z \in U^{1}(a_0), \quad i=1,2,3,4,5~.
\]
However, below we show that there are severe restrictions for the differences
$l_{i+1}-l_i$, $i=1,2,3,4$. These are iterative distances, with respect to $T$,
between the above points (the points in the snake are denoted as
$x_i=T^i(x_0)$ and then the \emph{iterative distance} \index{iterative distance} between $x_s$ and
$x_{s+t}$, $t\geq 0$, is defined to be $t$).

\medskip

Thus, we are going to investigate the iterative distances between elements of the snake
lying in $U^{1}(a_0)$. Notice the following. It follows from the construction of
$x_0,x_1,\dots$, cf. Figures~\ref{fig:p11}-\ref{fig:p26},
that if $x_i \in U^{1}(a_0)$, then either $x_i$ belongs to some piece $P(k,l)$
or, if it belongs to a gap, it is the last point of an inner gap and the next
piece starts in $U^{1}(a_1)$.
It cannot be the last point of an outer gap because $P(k,1)$ always starts in
$U^{1}(a_0)$ (in fact, $s_1 = (0,\dots)$ and not $(1,\dots)$,
see~(\ref{eq:notFk})).
Said in a different way, by the construction the outer gaps do not contain
points from $U^{1}(a_0)$.  Therefore it will be convenient to introduce the
following notion.

For any $k \in \N$ and $1 \le l \le 2^{k+1}$, we denote by $\tilde{P}(k,l)$ \index{$\tilde{P}(k,l)$} the
\emph{$l$-th part in the $k$-th level} \index{$l$-th part in the $k$-th level} defined as follows:
\begin{itemize}
	\item If $y_{(k,l),0} \in U^{1}(a_0)$, then set $\tilde{P}(k,l)=P(k,l) $;
	\item If $y_{(k,l),0} \in U^{1}(a_1)$, then set $\tilde{P}(k,l)=P(k,l) \cup
	\{x_{j-1}\}$ where $x_{j} =y_{(k,l),0}$.
\end{itemize}
The point $x_{j-1}$ with  $x_{j} =y_{(k,l),0}$ does not exist if $k=l=1$ (i.e.
if $j=0$). However, by the construction, $y_{(1,1),0} \in U^{1}(a_0)$ and so
$\tilde{P}(1,1)=P(1,1)$ is defined. More generally, since $P(k,1)$ always starts
in $U^{1}(a_0)$, we have
\begin{equation}\label{eq:tildePk1=Pk1}
\tilde{P}(k,1)=P(k,1), \quad k=1,2,\dots .
\end{equation}
As already shown above,
\begin{equation}\label{eq:no0outsidepart}
\text{if $x_i \in U^{1}(a_0)$, then $x_i \in \tilde{P}(k,l)$ for some $k$ and
	$l$.}
\end{equation}

\begin{lem}\label{L: distance of two 0}
	Let $k>0,l\in \{1,\dots, 2^{k+1}\}$.
	\begin{enumerate}
		\item The piece $P(k,l)$ is of the form
			$$
			P(k,l) = x_j, x_{j+1}, \dots, x_{j+n_1^k},  \dots, x_{j+n_2^k},  \dots,
			x_{j+n_k^k}~.
			$$
		The list of all points from $\tilde{P}(k,l)$ which belong to
		$U^{1}(a_0)$ is then
		\begin{equation}\label{eq:list}
		\text{one of $x_{j+n_0^k}, x_{j+n_0^k -1}$,\,  one of $x_{j+n_1^k},
			x_{j+n_1^k-1}, \dots$, \,  one of  $x_{j+n_k^k},  x_{j+n_k^k-1}$}
		\end{equation}
		(here $n_0^k=0$ and ``one" means ``exactly one"). If $l=1$ then in fact $x_{j+n_0^k}=x_j$ belongs to $U^{1}(a_0)$. If $s_l\in F(k)$ is the function corresponding to $P(k,l)$, then we can write
		\begin{equation}\label{eq:listnew}
		\tilde{P}(k,l) \cap U^{1}(a_0) =\{x_{j+n_0^k-s_l(0)}, x_{j+n_1^k -s_l(1)}, \dots, x_{j+n_k^k-s_l(k)}\}.		\end{equation}
		\item If two points in $\tilde{P}(k,l)\cap U^{1}(a_0)$ have iterative distance
		$t>0$, then
		\begin{equation*}
		t \in \{n_c^k-n_d^k-1, \, n_c^k-n_d^k, \, n_c^k-n_d^k+1\} \quad \text{for
			some} \quad 0\le d<c \le k
		\end{equation*}
		and no other pair of points in $\tilde{P}(k,l)\cap U^{1}(a_0)$ has the same iterative
		distance $t$.
	\end{enumerate}
\end{lem}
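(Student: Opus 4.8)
The plan is to unwind the definitions of the piece $P(k,l)$, its associated function $s_l$, and the extended part $\tilde P(k,l)$, and to exploit the structure imposed by~\eqref{eq: pieces req alt} together with the disjointness of the neighbourhoods $U^k(a_i)$. Recall that $P(k,l) = x_j, x_{j+1}, \dots, x_{j+n_k^k}$ consists of $k$ winds, and~\eqref{eq: pieces req alt} says precisely that for each $i=0,1,\dots,k$ the point $x_{j+n_i^k}=y_{(k,l),n_i^k}$ lies in $U^{k}(a_{s_l(i)})$, hence (by $U^{k}\subseteq U^{1}$) in $U^{1}(a_{s_l(i)})$. Since $s_l(i)\in\{0,1\}$, this point is either in $U^{1}(a_0)$ or in $U^{1}(a_1)$ according to whether $s_l(i)=0$ or $s_l(i)=1$.

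For part (1), I would argue as follows. If $s_l(i)=0$ then $x_{j+n_i^k}\in U^{1}(a_0)$, contributing that point itself to the list. If $s_l(i)=1$ then $x_{j+n_i^k}\in U^{1}(a_1)$ with $P_2$-projection $a_1$, and by the general rule governing the construction of the trajectory ($P_2$ of consecutive points are consecutive under $T|_A$, with the only exceptions being jumps near $a_\infty$ or points far from $\pi_0$), the immediate predecessor $x_{j+n_i^k-1}$ has $P_2$-projection $a_0$ and lies in $U^{1}(a_0)$; here one must check that this predecessor still belongs to $\tilde P(k,l)$, which for $i\geq 1$ is clear since it lies inside the wind ending at $x_{j+n_i^k}$, and for $i=0$ is exactly why $\tilde P(k,l)$ was defined to adjoin $x_{j-1}$ when $y_{(k,l),0}\in U^1(a_1)$. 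The key point needing care is that these are \emph{all} the points of $\tilde P(k,l)$ in $U^1(a_0)$: every other point $x_{j+r}$ of $P(k,l)$ (with $r\notin\{n_0^k,\dots,n_k^k\}$ and $r-1\notin\{n_i^k : s_l(i)=1\}$) has $P_2$-projection equal to some $a_m$ with $m\neq 0$ — this follows from the ``almost horizontal jump'' design and the fact that within a wind the trajectory genuinely winds around $A$ without returning to $a_0$ except at the designated endpoint — and the cylindrical sets $U^1(a_m)$ for $m\neq 0$ are disjoint from $U^1(a_0)$ by construction. This yields~\eqref{eq:list}, the parenthetical ``exactly one'' (the two candidates $x_{j+n_i^k}$ and $x_{j+n_i^k-1}$ cannot both lie in $U^1(a_0)$ since their projections differ), the special case $l=1$ from $s_1(0)=0$, and finally~\eqref{eq:listnew} by writing the chosen point uniformly as $x_{j+n_i^k-s_l(i)}$.

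For part (2), take two distinct points of $\tilde P(k,l)\cap U^{1}(a_0)$; by~\eqref{eq:listnew} they are $x_{j+n_d^k-s_l(d)}$ and $x_{j+n_c^k-s_l(c)}$ for some $0\le d<c\le k$, so their iterative distance is $t=(n_c^k-n_d^k)-(s_l(c)-s_l(d))$, and since $s_l(c),s_l(d)\in\{0,1\}$ we get $s_l(c)-s_l(d)\in\{-1,0,1\}$, giving $t\in\{n_c^k-n_d^k-1,\,n_c^k-n_d^k,\,n_c^k-n_d^k+1\}$ as claimed. For the uniqueness assertion — no other pair has the same iterative distance $t$ — I would use the rapid growth of the wind lengths guaranteed by (L1)--(L4): since $(w^k_i)\upuparrows$ within each piece (each wind is at least $100$ times longer than everything preceding it), the gaps $n_c^k-n_d^k$ for the various pairs $(d,c)$ are so widely separated that no two of them can be within distance $1$ of one another, so the three-element window $\{n_c^k-n_d^k-1,n_c^k-n_d^k,n_c^k-n_d^k+1\}$ around each pair is disjoint from the corresponding windows of all other pairs. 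The main obstacle is precisely this last step: I must verify that the ``$\gg$'' hypotheses in (L1)--(L4) translate into the separation $|(n_c^k-n_d^k)-(n_{c'}^k-n_{d'}^k)|\geq 2$ for $(d,c)\neq(d',c')$, which amounts to a short but careful computation with partial sums of the wind lengths, using that $n_i^k-n_{i-1}^k+1=w^k_i$ and that each $w^k_i$ dominates the sum of all earlier ones.
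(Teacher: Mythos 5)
Your proposal is correct and follows essentially the same route as the paper's proof: part (1) via the defining property~\eqref{eq: pieces req alt} of the pieces together with the ``general rule'' for $P_2$-projections of consecutive trajectory points, and part (2) via the identity $t=(n_c^k-n_d^k)-(s_l(c)-s_l(d))$ followed by a separation argument from (L4) showing the three-element windows around the differences $n_c^k-n_d^k$ for distinct index pairs $(d,c)$ cannot overlap. You spell out more carefully than the paper why the listed points are the \emph{only} ones in $U^1(a_0)$ and why exactly one candidate per index is hit, which the paper dispatches with ``it immediately follows from the construction,'' but the underlying argument is the same.
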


\begin{proof}
	(1) As in~(\ref{eq:Pkl}), we have $P(k,l) = x_j, x_{j+1}, \dots, x_{j+n_1^k},  \dots, x_{j+n_2^k},  \dots,
	x_{j+n_k^k}.$
	It immediately follows from the construction of the winds in $P(k,l)$ that the
	list of all points from $\tilde{P}(k,l)$ which belong to
	$U^{1}(a_0)$ is~(\ref{eq:list}). If $l=1$ then~(\ref{eq:no0outsidepart}) and~(\ref{eq:tildePk1=Pk1})
	exclude the point $x_{j+n_0^k -1}$ from the list. To be more precise, for each $c\in \{0,1,\dots, k\}$
	we have $x_{j+n^k_c} \in U^1(a_{s_l(c)})$ and so
	$$
	x_{j+n^k_c-s_l(c)} \in U^1(a_0).
	$$
	This means that the list of all elements of $\tilde{P}(k,l) \cap U^{1}(a_0)$ is~(\ref{eq:listnew}).

	(2) So, if $t>0$ and $x_s, x_{s+t}$ are two points in $\tilde{P}(k,l)\cap
	U^{1}(a_0)$, there are	
	$0\le d<c \le k$ such that $x_s$ is one of $x_{j+n_d^k}, x_{j+n_d^k-1}$ and
	$x_{s+t}$ is one of $x_{j+n_c^k}, x_{j+n_c^k-1}$.
	Hence the three possibilities for $t$.
	
	Thus, in each of the pairs $x_{j+n_d^k}, x_{j+n_d^k-1}$ and $x_{j+n_c^k},
	x_{j+n_c^k-1}$, \emph{exactly} one of the points is in $U^1(a_0)$ and their
	iterative distance is $t$. Therefore, if $p<q$ and $x_p, x_q$ is \emph{another}
	pair of points in $\tilde{P}(k,l)\cap U^{1}(a_0)$, i.e. in the
	list~(\ref{eq:list}), then either
	$x_p \notin \{x_{j+n_d^k}, x_{j+n_d^k-1}\}$ or $x_{q} \notin \{x_{j+n_c^k},
	x_{j+n_c^k-1}\}$. Since by (L4)
	the lengths of the winds in $P(k,l)$ satisfy the inequalities
	$$
	n^k_1+1 \ll n^k_2 -n^k_1 +1 \ll \dots \ll n^k_k-n^k_{k-1}+1~,
	$$
	the iterative distance $q-p$ of $x_q$ and $x_p$ is different from $t$.
\end{proof}

\medskip

Notice that, in notation from Lemma~\ref{L: distance of two 0},
$$
E(k,l) = \{x_{j}, x_{j+n_1^k},  \dots, x_{j+n_2^k}\}
$$\index{$E(k,l)$}
is the set of the \emph{endpoints of the winds} in $P(k,l)$.
By Lemma~\ref{L: distance of two 0}(1), the points of $U^1(a_0) \cap \tilde{P}(k,l)$ \emph{almost coincide} \index{almost coincide} with the endpoints of the winds.
By saying that two points almost coincide, we mean that their iterative distance is at most one.

Due to the construction, the independence is `caused' by the trajectory of
$x_0$,
the points from the head $A$ are not those points which visit a tuple of
neighbourhoods
in prescribed times.
Assume again that $\{l_1,l_2,l_3\}$, with $l_1<l_2<l_3$, is an independence set
of times of lengths $3$
for $(U^{1}(a_0),U^{1}(a_j))$.
Then we will have $2^3$ different segments of trajectory with lengths
$l_3-l_1+1$
and corresponding to different choices of functions saying which of the sets
$U^{1}(a_0)$, $U^{1}(a_j)$
are to be visited in the times $l_1,l_2,l_3$ by a point appropriate for that
choice.
Since these segments have the same length,
each one of them can be viewed as a `shift' (along the trajectory $x_0,
x_1,\dots$) of any other one.

Now it will be useful to think of the trajectory $x_0, x_1,\dots$ as a sequence of points going from left to right, along the real line,
with the distance $1$ between every two consecutive elements of the trajectory. The reason is that then for $t>0$ the iterative distance
between $x_s$ and $x_{s+t}$, which is $t$, is the same as the euclidean distance between them. Say, then a gap or a block
is long if and only if it is long also in the sense of the euclidean metric on the real line. Another advantage is that we can
speak on inner and outer gaps to the right of some block, or on all those elements of the trajectory which lie
in $U^1(a_0)$ and are to the left of some element, and the like. Say, $\pre (\mathcal S)$ is the set of all elements which are to the left of $\mathcal S$ (meaning, of course, to the left of
every element of $\mathcal S$).

Notice that if $x_m \in U^1(a_0)$ then it has its right neighbour (immediate successor) in $U^1(a_0)$ and if $m>0$ then it has also its left neighbour (immediate predecessor) in $U^1(a_0)$. If two points $x_i, x_j$ belong to a finite subsequence $\mathcal F$ of the trajectory $x_0, x_1, \dots$, we say
that the pair $x_i, x_j$ lies in $\mathcal F$ (instead of saying that the pair $(x_i,x_j)$ is an element of some cartesian product).

Let $x_i=T^i(x_0)$ and  $x_{j}=T^{j}(x_0)$, $i\neq j$, be two different elements of the trajectory $x_0, x_1, \dots$.
Then we say that the pair $x_i, x_j$ is \emph{$U^1(a_0)$-shiftable} \index{shiftable}, or just that the two points are $U^1(a_0)$-shiftable,
if both $x_i$ and $x_j$ belong to $U^1(a_0)$ and there is $m\neq 0$ such that also both $x_{i+m}$ and $x_{j+m}$ belong
to $U^1(a_0)$. More precisely, if this is true for some $m<0$ or $m>0$ we say that the pair $x_i, x_j$ is \emph{$U^1(a_0)$-left shiftable} \index{left shiftable}
or \emph{$U^1(a_0)$-right shiftable} \index{right shiftable}, respectively. The function sending each point $x_k$ to the point $x_{k+m}$ is called the \emph{shift by $m$} \index{shift by $m$}(right shift or left shift, depending on whether $m>0$ or $m<0$, respectively).

We are going to study the space $\{x_0,x_1,\dots\}$ from the point of view of shiftability.

\begin{lem}\label{L:notL-shiftable}
		Let $s \ge 0$, $t>0$ and $x_s, x_{s+t} \in U^1(a_0)$. If the points $x_s$ and $x_{s+t}$ belong to different blocks,
		then they are not $U^1(a_0)$-left shiftable.
\end{lem}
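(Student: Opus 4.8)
The plan is to argue by contradiction, using the structural description of where points of $U^1(a_0)$ can sit in the trajectory (namely~\eqref{eq:no0outsidepart}: every such point lies in some $\tilde{P}(k,l)$), together with the size inequalities (L1)--(L4) and the fact that outer gaps separate consecutive blocks.

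First I would set up notation: suppose $x_s, x_{s+t}\in U^1(a_0)$ lie in \emph{different} blocks, say $x_s$ in block $B(k)$ and $x_{s+t}$ in block $B(k')$ with $k<k'$ (since $s<s+t$ and the blocks are arranged in increasing order along the trajectory). Suppose for contradiction that the pair is $U^1(a_0)$-left shiftable, i.e. there is $m<0$ with $x_{s+m}, x_{s+t+m}\in U^1(a_0)$. Since $x_s$ is in $B(k)$, the point $x_{s+m}$ lies strictly to the left of $B(k)$, hence in $B(k)$ itself, in a gap to the left of $B(k)$, or in an earlier block. By~\eqref{eq:no0outsidepart}, $x_{s+m}\in U^1(a_0)$ forces it to lie in some $\tilde{P}(k'',l'')$, so in particular not in an outer gap; but the left shift moves $x_{s+t}$ from $B(k')$ down past at least the outer gap $OG(k'-1)$ (which, by (L2)--(L3), is enormous compared to everything to its left, in particular compared to the whole of $B(k)$ and its predecessors). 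I expect the heart of the argument to be a length estimate: because $x_{s+t}$ is in $B(k')$ and $x_s$ is already in the strictly smaller-index block $B(k)$, we have $t = (s+t)-s$ at least the length of the outer gap $OG(k'-1)$ separating the two blocks (more precisely at least the distance from the end of $B(k)$ to the start of $B(k')$, which contains $OG(k'-1)$). Hence the left shift by $|m|$ applied to $x_{s+t}$ lands in or to the left of $B(k')$; for $x_{s+t+m}$ to be in $U^1(a_0)$ it must in particular not fall inside $OG(k'-1)$, which by (L3) is long and, crucially, winds only once around $A$ so it contains no point of $U^1(a_0)$ except possibly near its endpoints. Pushing this through, either $x_{s+m}$ or $x_{s+t+m}$ is forced to lie inside an outer gap, contradicting~\eqref{eq:no0outsidepart}.

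More carefully, the clean way to organize it: let $x_{s+m}$ and $x_{s+t+m}$ both lie in $U^1(a_0)$; since they have the same iterative distance $t$ as the original pair and $x_{s+t+m}$ is obtained by left-shifting a point of $B(k')$, I would track which block or gap contains $x_{s+t+m}$. If $x_{s+t+m}$ is still in $B(k')$, then since $x_{s+m}\prec x_s \prec$ (start of $B(k')$), the pair $x_{s+m}, x_{s+t+m}$ is a pair of $U^1(a_0)$-points one of which is to the left of $B(k')$ and one inside it; but then $t = (s+t+m)-(s+m)$ must exceed the length of $OG(k'-1)$, which by (L3) exceeds $|\pre(OG(k'-1))| \geq |B(k)|+ (\text{stuff before it}) \geq t$ (using that $x_s\in B(k)$ so $s < |\pre(OG(k'-1))|$ and $s+t$ lies inside $B(k')$ just past $OG(k'-1)$, giving $t < |OG(k'-1)| + |B(k')|$ — here I'd need to also invoke (L2)--(L4) to bound $|B(k')|$ against $|OG(k'-1)|$, or simply re-shift the pair further left until it must cross $OG(k'-1)$). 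If instead $x_{s+t+m}$ has left the block $B(k')$, it lands in $OG(k'-1)$ or earlier; but $OG(k'-1)$ contains no point of $U^1(a_0)$ in its interior (outer gaps wind once around $A$ and, by the construction, avoid $U^1(a_0)$ except as noted before~\eqref{eq:no0outsidepart}), so $x_{s+t+m}\notin OG(k'-1)$, forcing it into $B(k'-1)$ or earlier; iterating, one reaches a configuration where the shift amount $|m|$ is so large that $x_{s+m}$ has fallen off the left end past some outer gap while $x_{s+t+m}$ has not, or both land in outer gaps — either way contradicting~\eqref{eq:no0outsidepart}.

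The main obstacle I anticipate is making the length bookkeeping airtight: one must combine (L2), (L3), (L4) to guarantee that the outer gap $OG(k'-1)$ genuinely dominates everything to its left (so that a single left shift by any $m$ with $|m|\le t$ cannot move $x_{s+t}$ past the start of $B(k')$ while keeping $x_{s+m}$ to the right of $OG(k'-1)$'s predecessors in a way consistent with both being in $U^1(a_0)$), and to handle the corner cases where a shifted point lands exactly at a wind-endpoint neighbourhood (the ``almost coincide'' slack of one unit from Lemma~\ref{L: distance of two 0}). I would expect the actual writeup to lean on the inequality $|OG| \gg |\pre(OG)|$ from (L3) as the single decisive estimate, with (L2) and (L4) used only to reduce the various sub-cases (shifted point in a piece, in an inner gap, between blocks) to that one inequality, and with~\eqref{eq:no0outsidepart} doing the work of excluding outer gaps as landing spots.
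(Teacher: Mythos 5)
Your overall intuition is right: the outer gap between the two blocks dominates everything to its left, and this should produce a contradiction via length estimates. But the concrete bookkeeping in your first sub-case is wrong. You propose the chain $t > og(k'-1) > |\pre(OG(k'-1))| \geq \cdots \geq t$, and the last step fails: since $x_s\in B(k)$ lies \emph{before} $OG(k'-1)$ and $x_{s+t}\in B(k')$ lies \emph{after} it, $t$ is at least $og(k'-1)$, whereas $|\pre(OG(k'-1))|$ is strictly \emph{less} than $og(k'-1)$ by (L3). Hence $|\pre(OG(k'-1))| < t$, the opposite of what you need, and no contradiction arises. The iterative patch you gesture at for the remaining sub-cases is not made precise, and the contradiction you aim for (a shifted point landing inside an outer gap) is not the one the construction delivers. (Also, outer gaps contain \emph{no} points of $U^1(a_0)$ at all, not ``except possibly near endpoints''.)

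The paper's proof avoids the case analysis entirely. Set $q_-$ to be the iterative distance from $x_{s+t}$ to its left neighbour in $U^1(a_0)$. Since $x_{s+t+m}\in U^1(a_0)$ lies strictly to the left of $x_{s+t}$, automatically $|m|\geq q_-$. Next, $q_->og(q-1)$: reaching the previous $U^1(a_0)$-point either crosses the whole outer gap $OG(q-1)$, or stays within $B(q)$ and must traverse at least a wind or an inner gap, both of which dominate $og(q-1)$ by (L4) and (L2). Finally $og(q-1)>s$ by (L3), because $x_s\in\pre(OG(q-1))$. Chaining, $-m\geq q_->og(q-1)>s$, so $s+m<0$, contradicting $s+m\geq 0$. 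The idea you are missing is to bound $|m|$ from below directly via $q_-$; once you do that, you never need to track where $x_{s+t+m}$ lands.
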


\begin{proof}
Let $x_s \in B(p)$ and $x_{s+t} \in B(q)$ for some $p\neq q$. Of course, $p < q$ and so at least the whole outer gap $OG(q-1)$ lies between $x_s$ and $x_{s+t}$. 	Suppose on the contrary that there is $m< 0$ such that also $x_{s+m}, x_{s+t+m} \in U_1(a_0)$. This in particular means that $s+m \geq 0$.
Let $q_{-}>0$ be the iterative distance of $x_{s+t}$ from its left neighbour in $U^1(a_0)$, i.e. the least positive integer such that
$x_{s+t-q_{-}} \in U^{1}(a_0)$. By (L4) (no matter whether $x_{s+t}$ is the first point of $B(q)$ or not) and (L3) we know that, respectively,
$$
q_{-}>og(q-1) \qquad \text{and} \qquad og(q-1) > s~.
$$
Since $x_{s+t+m} \in U_1(a_0)$,  by the definition of $q_{-}$ we then have
$|m|=-m \geq q_{-}>og(q-1) >s$. Hence $s+m <0$, a contradiction. 	
\end{proof}

\begin{lem}\label{L:notL-shiftable2}
Let $s \ge 0$, $t>0$ and $x_s, x_{s+t} \in U^1(a_0) \cap \tilde{P}(k,l)$. Then there is no $m<0$ such that $x_{s+m}\in U^{1}(a_0) \cap \bigcup_{i=1}^{k-1}B(i)$ and $x_{s+t+m} \in U^{1}(a_0) \cap P(k,1)$.	
\end{lem}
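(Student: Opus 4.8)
The plan is to argue by contradiction, using the same length estimates (L1)--(L4) that drove Lemma~\ref{L:notL-shiftable}, but now exploiting the much stronger gap/wind dominance available \emph{inside} a single block. Suppose such an $m<0$ exists, so that $x_{s+m}\in U^{1}(a_0)\cap\bigcup_{i=1}^{k-1}B(i)$ and $x_{s+t+m}\in U^{1}(a_0)\cap P(k,1)$. First I would locate $x_s$ and $x_{s+t}$ precisely: both lie in $U^{1}(a_0)\cap\tilde P(k,l)$, so by Lemma~\ref{L: distance of two 0}(1) each of them almost coincides with an endpoint of a wind of $P(k,l)$, i.e.\ $x_s$ is within iterative distance $1$ of some $x_{j+n^k_d}$ and $x_{s+t}$ of some $x_{j+n^k_c}$ with $0\le d<c\le k$. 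Similarly $x_{s+t+m}$, lying in $U^{1}(a_0)\cap P(k,1)$, almost coincides with an endpoint of a wind of $P(k,1)$; since the lengths $w^k_1,\dots,w^k_k$ are the same for $P(k,1)$ and $P(k,l)$, we know $x_{s+t+m}$ is within distance $1$ of $x_{j'+n^k_{c'}}$ for the appropriate $j'$ (the start index of $P(k,1)$) and some $0\le c'\le k$.

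The key step is then a ``conservation of wind-length'' computation. The iterative distance $t$ between $x_s$ and $x_{s+t}$ equals $n^k_c-n^k_d$ up to an additive error of at most $1$ (Lemma~\ref{L: distance of two 0}(2)), i.e.\ $t=\sum_{i=d+1}^{c}w^k_i-(c-d)+O(1)$. On the other hand the shift by $m$ carries $x_s$ to $x_{s+m}$, which sits in $B(i)$ for some $i\le k-1$, hence \emph{strictly to the left} of $P(k,1)$, while it carries $x_{s+t}$ to $x_{s+t+m}\in P(k,1)$. Thus the whole first piece $P(k,1)$ together with the outer gap $OG(k-1)$ preceding the block $B(k)$ (and possibly more of $B(k-1)$) is traversed as we move from $x_{s+m}$ to $x_{s+t+m}$; in particular $t=(s+t+m)-(s+m)\ge og(k-1)$. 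But $t$ is essentially the sum of the wind lengths $w^k_{d+1},\dots,w^k_c$ of the block $B(k)$, and each of these is a length of a wind appearing in a piece of $B(k)$ whose predecessors include $OG(k-1)$; by (L3) and (L4) (applied to the first piece of $B(k)$, whose winds dominate all predecessors) each $w^k_i\ll$ (anything to its left), yet $og(k-1)$ itself lies to the left of all of $B(k)$, so we actually need the reverse: $og(k-1)$ is a predecessor of every wind of $B(k)$, hence (L4) gives $w^k_1\gg og(k-1)$, and since the winds are increasing, $t\ge w^k_{d+1}\ge w^k_1\gg og(k-1)$, which is consistent — so I must instead extract the contradiction from the \emph{left} endpoint.

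The contradiction comes from the position of $x_{s+m}$. Since $x_{s+m}\in\bigcup_{i=1}^{k-1}B(i)$, it lies to the left of $OG(k-1)$, so the distance from $x_{s+m}$ to the first point of $P(k,1)$ is at least $og(k-1)$, i.e.\ $(j'-(s+m))\ge og(k-1)$, where $j'$ is the start index of $P(k,1)$. Combined with $x_{s+t+m}$ being within distance $\dim P(k,1)=n^k_k+1$ to the right of $j'$, we get $t=(s+t+m)-(s+m)\ge og(k-1)$. Now compare with the \emph{right} side: $x_{s}$ and $x_{s+t}$ both lie in $\tilde P(k,l)$, whose total length is $n^k_k+1$ (plus at most one extra predecessor point), so $t\le n^k_k+1=|P(k,l)|+O(1)$. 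Thus $og(k-1)\le |P(k,l)|+O(1)=|P(k,1)|+O(1)$. But $OG(k-1)$ is an outer gap and $\pre(OG(k-1))\supseteq P(k-1,1)\cup\cdots\cup P(k-1,2^{k})\supseteq$ enough of $B(k-1)$; more to the point, (L4) applied to the winds of the first piece $P(k,1)$ of block $B(k)$ gives $|W|\gg|\pre(W)|$, and $\pre(W)$ for the first wind of $P(k,1)$ contains all of $OG(k-1)$, hence $w^k_1\gg og(k-1)$; since $|P(k,l)|=w^k_1+\cdots+w^k_k-(k-1)\ge w^k_1\gg og(k-1)$ this is again consistent, so the genuine contradiction must be squeezed the other way: the above inequality $og(k-1)\le|P(k,1)|+O(1)$ is fine, but we also have from (L3) that $|OG(k-1)|\gg|\pre(OG(k-1))|$ and $\pre(OG(k-1))$ contains $B(k-1)$ which contains $P(k-1,1)$; iterating, $og(k-1)$ dwarfs everything constructed before $B(k)$. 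The actual contradiction is obtained by noting that $x_{s+m}\in B(i)$ for $i\le k-1$ forces $s+m<$ (start of $B(k)$), hence $s+m<$ (start of $OG(k-1)$); but then shifting back, $x_{s}$ would be reached from $x_{s+m}$ by moving right across all of $OG(k-1)$, giving $|m|=-m> og(k-1)$; since $s<$ (start of $B(k)$) fails because $x_s\in P(k,l)\subseteq B(k)$... this is precisely the point where one uses $s+m\ge 0$: from $|m|>og(k-1)$ and (L3) $og(k-1)>|\pre(OG(k-1))|\ge s$ (as $x_s$ is a predecessor of $OG(k-1)$ only if $l$... no, $x_s\in B(k)$ which is \emph{after} $OG(k-1)$). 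I would therefore instead bound $s$ from above by the total length of everything up to and including $\tilde P(k,l)$, and use that $og(k-1)$ exceeds $s$ is false in general; the clean argument is: $|m|>q_{-}$ where $q_-$ is the iterative distance from $x_{s+t}$ to its left neighbour in $U^1(a_0)$, exactly as in Lemma~\ref{L:notL-shiftable}, and since $x_{s+t}\in\tilde P(k,l)$ with $P(k,l)$ not the first piece, (L4) gives $q_-\ge w^k_{c}\gg$ (length of everything before block $B(k)$) $\ge$ (start index of $B(k)$) $> s+m$ would...

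\medskip

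\textbf{Revised plan (cleaner).} I would argue as follows. Suppose $m<0$ is as hypothesized. Since $x_{s+t}\in U^1(a_0)\cap\tilde P(k,l)$ and, by Lemma~\ref{L: distance of two 0}(1), almost coincides with some wind-endpoint $x_{j+n^k_c}$, the iterative distance $q_-$ from $x_{s+t}$ to its nearest left neighbour in $U^1(a_0)$ is, by Lemma~\ref{L: distance of two 0}(2) and (L4), at least $w^k_c-1$ if $c\ge 1$ (the length of the wind $W^{(k,l)}_c$, minus the $O(1)$ slack), and if $c=0$ then $x_{s+t}$ is the first point of $\tilde P(k,l)$ and its left neighbour in $U^1(a_0)$ lies in an earlier part, at iterative distance $\ge$ (length of the inner/outer gap preceding $\tilde P(k,l)$), which by (L2)--(L3) is $\gg$ (all predecessors). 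In either case $q_-\gg s$, because $s$ is at most the total number of points constructed up to the end of $\tilde P(k,l)$, and the relevant wind/gap length dominates all its predecessors by (L2)--(L4). Now $x_{s+t+m}\in U^1(a_0)$ with $m<0$, so $|m|\ge q_-$, hence $|m|\gg s$, forcing $s+m<0$, contradicting $s+m\ge 0$ (which is implicit in $x_{s+m}$ being a genuine point of the trajectory). The only subtlety — and the main obstacle — is the case $c=0$ in the estimate for $q_-$: here I must verify that the gap immediately preceding $\tilde P(k,l)$ (an inner gap $IG(k,l-1)$ if $l\ge 2$, and if $l=1$ there is nothing to prove since $\tilde P(k,1)=P(k,1)$ and we would be asserting $x_{s+t+m}$ is a point of $P(k,1)$ strictly left of $x_{s+t}$, handled by Lemma~\ref{L: distance of two 0}(2) together with (L4)) is long enough; this is exactly (L2), since $|IG(k,l-1)|\gg|\pre(IG(k,l-1))|\ge s$.
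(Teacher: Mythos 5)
Your revised plan hinges on the claim $q_{-} \gg s$, where $q_{-}$ is the iterative distance from $x_{s+t}$ to its nearest left neighbour in $U^1(a_0)$, and this claim is false as soon as $l \geq 2$. When $x_{s+t}$ almost coincides with the $c$-th wind-endpoint of $P(k,l)$, the nearest left neighbour in $U^1(a_0)$ is the almost-coincident point at the $(c-1)$-st wind-endpoint, so $q_{-} \approx w^k_c$. Condition (L4) only gives $w^k_c \gg |\pre(W^{(k,1)}_c)|$ — the predecessor count for a wind in the \emph{first} piece $P(k,1)$, not for $W^{(k,l)}_c$ inside the $l$-th piece. As soon as $l \geq 2$, the point $x_{s+t}$ lies to the right of the inner gaps $IG(k,1),\dots,IG(k,l-1)$, and (L2) gives $ig(k,1) \gg |\pre(IG(k,1))| \geq |P(k,1)| > w^k_c$. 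Hence $s \geq ig(k,1) \gg w^k_c \approx q_{-}$, i.e.\ $s \gg q_{-}$: the opposite of what you need. The inequality $|m| \geq q_{-}$ therefore does not yield $s+m<0$.

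The missing idea is to decompose the shift $m$ into \emph{two} left shifts: first a shift $\sigma$ that moves $x_s, x_{s+t}$ onto the corresponding wind endpoints of $P(k,1)$ (possible because $P(k,l)$ and $P(k,1)$ have winds of the same lengths), and then a residual shift $m-\sigma$ carrying the aligned pair to $x_{s+m}, x_{s+t+m}$. The point is that after the alignment the index $s+\sigma$ is small — it is at most $|\pre(W^{(k,1)}_c)| + O(1)$, which is precisely the quantity that $w^k_c$ dominates via (L4). The residual shift must cross (essentially all of) $OG(k-1)$ since $x_{s+m}\in\bigcup_{i<k}B(i)$ while $x_{s+\sigma}$ sits in $P(k,1)$ or at the last point of $OG(k-1)$, so $|m-\sigma|\gg 1$; and since $U^1(a_0)\cap P(k,1)$ consists of isolated points near wind endpoints, $x_{s+t+m}$ must lie near the $(c-1)$-st wind endpoint of $P(k,1)$ or earlier, forcing $|m-\sigma|\geq w^k_c - O(1) \gg s+\sigma$. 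Then $s+m = (s+\sigma) - |m-\sigma| < 0$, the desired contradiction. Your earlier scattered computation correctly extracted $t = (s+t+m)-(s+m) \geq og(k-1)$ and rightly observed that this alone is not a contradiction, but you abandoned that line before finding the two-step decomposition that compresses $s$ down to the small aligned index $s+\sigma$.
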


\begin{proof}
	Let $k\geq 2$, otherwise there is nothing to prove.
	Suppose, on the contrary, that there is $m<0$ with that property (hence $s+m\geq 0$).
	We have, as in~(\ref{eq:Pkl}),
	$$
	P(k,l) = x_{j(l)}, x_{j(l)+1}, \dots, x_{j(l)+n_1^k},  \dots, x_{j(l)+n_2^k},  \dots, x_{j(l)+n_k^k},
	$$
	where the set $E(k,l) = \{x_{j(l)}, x_{j(l)+n_1^k},  \dots, x_{j(l)+n_k^k}\}$ is the set of the \emph{endpoints of the winds} in $P(k,l)$. Similarly,
	$$
	P(k,1) = x_{j(1)}, x_{j(1)+1}, \dots, x_{j(1)+n_1^k},  \dots, x_{j(1)+n_2^k},  \dots,	x_{j(1)+n_k^k},
	$$
	where $E(k,1) =\{x_{j(1)}, x_{j(1)+n_1^k},  \dots, x_{j(1)+n_k^k}\}$ is the set of the endpoints of the winds in $P(k,1)$.
	
	Since $x_s, x_{s+t} \in U^1(a_0) \cap\tilde{P}(k,l)$, by Lemma~\ref{L: distance of two 0}(1) we know that they almost coincide with the $p$-th and the $q$-th elements in $E(k,l)$, for some $p<q$ (recall that by saying that two points almost coincide we mean that their iterative distance is at most one).  The left shift from $x_s, x_{s+t}$ to $x_{s+m}, x_{s+t+m}$ can be performed as the composition of two shorter left shifts. First, we shift $x_s, x_{s+t}$ to points $x_{s+\sigma}, x_{s+t+\sigma}$ which almost coincide with the $p$-th and the $q$-th elements in $E(k,1)$ (this is possible because the lengths of winds in $P(k,l)$ are the same as in $P(k,1)$). Then the point $x_{s+\sigma}$ is either in $P(k,1)$ or it is the last point of $OG(k-1)$. So we need to shift $x_{s+\sigma}, x_{s+t+\sigma}$ still to the left, now finally to $x_{s+m}, x_{s+t+m}$. Since $OG(k-1)$ does not contain points from $U^1(a_0)$, this shift has to be at least as long as it is the length of $OG(k-1)$, which is much larger than $1$. Since $x_{s+t+m}$ has to be in $U^1(a_0)$ and to the left of $x_{s+t+\sigma}$, this second shift (whose length is much larger than $1$) is of course at least as long as the iterative distance between $(q-1)$-st and $q$-th elements in $E(k,1)$ (see Lemma~\ref{L: distance of two 0}(1)), meant in approximative sense, i.e. an error, now definitely not greater than $2$, is possible when we claim this. This iterative distance is, by (L4), much larger than $|\pre (W)|$ where $W$ is the wind whose endpoints are the $(q-1)$-st and $q$-th elements in $E(k,1)$. Since $x_{s+\sigma}$ almost coincides with the $p$-th element of $E(k,1)$ and $p\leq q-1$, we get that $s+m<0$, a contradiction.
\end{proof}

\begin{lem}\label{L:location}
	Let $s \ge 0$, $t>0$ and let the points $x_s, x_{s+t} \in U^1(a_0)$ be
	$U^1(a_0)$-shiftable, i.e. there exists an $m \neq 0$  such that also
	$x_{s+m} \in U^{1}(a_0)$ and $x_{s+t+m} \in U^{1}(a_0)$. Then the following is true.
	\begin{enumerate}
		\item If $x_s, x_{s+t} \in \tilde{P}(k,i_1)$ for some $k$ and $i_1$, then $x_{s+m}, x_{s+t+m} \in \tilde{P}(k,i_2)$ for some $i_2\neq i_1$.
		\item If $x_s \in \tilde{P}(k,i_1)$, $x_{s+t} \in \tilde{P}(k,i_2)$
		for some $k$ and $i_1<i_2$, then
		$x_{s+m} \in \tilde{P}(k,i_1)$, $x_{s+t+m}\in \tilde{P}(k,i_2)$.
		\item The points $x_s, x_{s+t}, x_{s+m}, x_{s+t+m}$ belong to the same block $B(k)$, for some $k$.
		\item If  $x_s, x_{s+t} \in \tilde{P}(k,i)$ for some $k$ and $i$,
		then
		$x_s, x_{s+m}$ are in the ``similar positions" \index{similar positions}, meaning that if we write, as in~(\ref{eq:listnew}), the point $x_s$ in the form
		$$
		x_s= x_{r+n^k_{c}-s_{i}(c)} \in \tilde{P}(k,i) \quad \text{for some} \quad 0 \le c \le k,
		$$
		then there exists $i'$ such that
		$$
		x_{s+m} =x_{r'+n^k_{c}-s_{i'}(c)} \in \tilde{P}(k,i') \quad \text{with the same} \quad 0 \le c \le k.
		$$
		Here $r,r' \ge 0$ are such that $x_{r}=y_{(k,i),0}$,
		$x_{r'}=y_{(k,i'),0}$ are the first points of the pieces $P(k,i), P(k,i')$, respectively, and $s_{i}, s_{i'} \in F(k)$ are
		the functions corresponding to the pieces $P(k,i), P(k,i')$, respectively.
		\item If $x_s \in \tilde{P}(k,i_1)$ and $x_{s+t} \in \tilde{P}(k,i_2)$
		for some $k$ and $i_1<i_2$,
		then $x_s, x_{s+t}$ are in the ``similar positions".
	\end{enumerate}
\end{lem}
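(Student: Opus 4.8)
The plan is to prove all five parts of Lemma~\ref{L:location} simultaneously, by first establishing a single structural fact that constrains where the shifting points $x_{s+m}$ and $x_{s+t+m}$ can land, and then reading off each of the five conclusions. The key preliminary observations are: (i) by~\eqref{eq:no0outsidepart}, every element of $\{x_0,x_1,\dots\}$ lying in $U^1(a_0)$ belongs to some part $\tilde{P}(k,l)$; (ii) by Lemma~\ref{L: distance of two 0}(1), inside a fixed part the points of $U^1(a_0)$ almost coincide (iterative distance $\le 1$) with the endpoints of the winds, and by (L4) the successive winds in a piece have wildly increasing lengths $n^k_1+1 \ll n^k_2 - n^k_1 + 1 \ll \cdots \ll n^k_k - n^k_{k-1}+1$; (iii) by Lemma~\ref{L:notL-shiftable}, two $U^1(a_0)$-points in different blocks are never left-shiftable, and by the symmetric argument (shifts are reversible: $x_s, x_{s+t}$ shift to $x_{s+m}, x_{s+t+m}$ iff the latter pair shifts back by $-m$) two $U^1(a_0)$-points in different blocks are never right-shiftable either, hence never shiftable. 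Observation (iii) already gives part (3): since $x_s, x_{s+t}$ are shiftable, Lemma~\ref{L:notL-shiftable} forces them into the same block $B(k)$; applying the same reasoning to the shifted pair $x_{s+m}, x_{s+t+m}$ (which is shiftable back to $x_s,x_{s+t}$) puts it too in a single block, and since that block shares the points obtained by one shift step it must be the same $B(k)$.

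Next I would handle the "similar positions" claims (4) and (5), from which (1) and (2) follow immediately. The idea is that a nonzero shift by $m$ that keeps both $x_s$ and $x_{s+t}$ inside $U^1(a_0)$, together with the fact (just proved) that all four points stay in $B(k)$, is extremely rigid because of the length inequalities among the winds. Suppose $x_s$ almost coincides with the $c$-th wind-endpoint in its part $\tilde P(k,i_1)$ and $x_{s+t}$ with the $c'$-th wind-endpoint in $\tilde P(k,i_2)$ (with $c \le c'$; here $i_1 = i_2$ is allowed in case (4)). The iterative distance $t$ between $x_s$ and $x_{s+t}$ is, up to an error of at most $2$, a sum of consecutive wind-lengths spanning from wind $c+1$ through the winds separating the two pieces. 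Since shifting preserves iterative distances, the image pair $x_{s+m}, x_{s+t+m}$ has the same iterative distance $t$, and by Lemma~\ref{L: distance of two 0}(2) (applied within each part) together with the "within-a-block" conclusion, the only way to realize a given iterative distance between two $U^1(a_0)$-points of $B(k)$ is with the same pair of wind-indices $c, c'$ — any other choice differs by at least one full wind-length, which by (L4) dwarfs the permissible error $2$. Hence $x_{s+m}$ almost coincides with the $c$-th wind-endpoint of some part $\tilde P(k,i')$ and $x_{s+t+m}$ with the $c'$-th wind-endpoint of some $\tilde P(k,i'')$; and since the iterative distance from $x_{s+m}$ to $x_{s+t+m}$ equals $t$ and the inter-piece spacing is again governed by the gap lengths via (L2)–(L3), one checks that $i'' $ is forced, and in fact $i''$ plays the role of $i_2$ while $i'$ plays the role of $i_1$ in the original configuration. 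Writing everything in the coordinates of~\eqref{eq:listnew} gives exactly the formulas in (4) and (5), with the caveat that $s_i(c)$ versus $s_{i'}(c)$ accounts for the at-most-one iterative-distance error. Parts (1) and (2) are then the qualitative shadow of (4) and (5): in (1), $x_{s+m}$ and $x_{s+t+m}$ lie in a common part $\tilde P(k,i_2)$ which must differ from $\tilde P(k,i_1)$ because $m \neq 0$ and within a single part the map "shift by a nonzero amount staying in $U^1(a_0)$" cannot fix the wind-endpoint structure (again by the length inequalities, there is at most one point of $\tilde P(k,l)\cap U^1(a_0)$ per wind-endpoint, so a nonzero intra-part shift would have to jump to a different wind-endpoint index, contradicting (4)); in (2) the two images lie in $\tilde P(k,i_1)$ and $\tilde P(k,i_2)$ respectively by the index-preservation just established, together with the ordering $i_1 < i_2$ being preserved because shifts are monotone along the trajectory.

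The main obstacle I expect is the bookkeeping in parts (4) and (5): one has to be genuinely careful about the "$\pm 1$" and "$\pm 2$" errors introduced because points of $U^1(a_0)$ only \emph{almost} coincide with wind-endpoints, and one must verify that the gap lengths (controlled by (L2) and (L3)) and the wind lengths (controlled by (L4)) are large enough relative to these fixed errors that no alternative alignment of wind-indices or piece-indices is possible. Concretely, the argument repeatedly uses statements of the form "an error of at most $2$ cannot bridge a difference of at least $\min_i w^k_i \gg 100$", and "an error of at most $2$ cannot bridge a gap of length $\ge og(k-1) \gg 100$"; assembling these into a clean case analysis (first pin down which block via (3), then which wind-endpoint index via the wind-length inequalities, then which piece index via the gap-length inequalities) is where the real work lies. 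Once the index-preservation is nailed down, re-expressing it in the coordinate notation of~\eqref{eq:listnew} and deducing (1), (2) from (4), (5) is routine. I would also remark that Lemma~\ref{L:notL-shiftable2} is the tool that prevents the degenerate situation in which the shifted pair straddles the boundary between $\bigcup_{i<k} B(i)$ and $P(k,1)$, so it should be invoked explicitly when excluding that alignment in the proof of the index-preservation.
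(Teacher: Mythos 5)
Your proposed order of proof — establish part (3) first, then (4) and (5), then read off (1) and (2) — inverts the paper's actual logical dependence, and the step you rely on to get (3) on its own has a genuine gap. You assert that Lemma~\ref{L:notL-shiftable} plus a ``symmetric argument'' shows that two $U^1(a_0)$-points in different blocks are never right-shiftable, by applying the lemma to the shifted pair $x_{s+m}, x_{s+t+m}$ (which is left-shiftable back to $x_s, x_{s+t}$). But Lemma~\ref{L:notL-shiftable} only forbids left-shifts for pairs that \emph{lie in different blocks}, and the shifted pair need not be in different blocks: once you show (via the wind- and gap-length inequalities) that $x_{s+m}, x_{s+t+m}$ both lie in the block $B(q)$ containing $x_{s+t}$, the lemma is silent about them. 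The constructions (L2)--(L4) are fundamentally asymmetric — lengths grow to the right — so there is no ``mirror'' of Lemma~\ref{L:notL-shiftable} forbidding right-shiftability. To reach a contradiction from the shifted pair being in a single block $B(q)$, you need exactly the within-block rigidity furnished by parts (1) and (2): applying those to the pair $x_{s+m}, x_{s+t+m} \in B(q)$ with the shift by $-m$ shows that $x_s = x_{(s+m)-m}$ would also have to lie in $B(q)$, contradicting $x_s \in B(p)$. Thus (3) genuinely depends on (1) and (2), and the paper proves them in that order.

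A similar concern affects your plan to prove (4) and (5) before (1) and (2): the coordinate computation in (4) starts by using (1) to know that $x_{s+m}$ lies in a well-defined part $\tilde{P}(k,i')$, and (5) is obtained by combining (2) with (4); neither can be carried out ahead of (1) and (2). The remedy is to follow the paper's order: first prove (1) (splitting into $m>0$ and $m<0$, using (L2)--(L4) to rule out the shifted pair landing in different parts or outside $B(k)$, and invoking Lemma~\ref{L:notL-shiftable2} for the boundary case $x_{s+t+m}\in P(k,1)$); then (2) by ruling out $x_{s+t+m}$ to the left or right of $\tilde{P}(k,i_2)$ and reducing to (1); then (3) via the two-step argument sketched above; then (4) by the coordinate bookkeeping; then (5) from (2) and (4). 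Your high-level intuition about rigidity under shifts is correct, but the bookkeeping that justifies it is precisely the content of (1) and (2), not a free consequence of them.
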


\begin{proof}
By~(\ref{eq:no0outsidepart}) and~(\ref{eq:tildePk1=Pk1}), the points from $U^1(a_0)$ belong to $\bigcup_{k=1}^\infty B(k)$. Moreover, since they belong to the union of all parts, Lemma~\ref{L: distance of two 0}(1) shows their location more precisely. Namely, each of them almost coincides with an endpoint of a wind.

	$(1)$ Since $x_s, x_{s+t} \in \tilde{P}(k,i_1)$, we have $t < |\tilde{P}(k,i_1)|$. On the other hand, by combining (L2) and (L4), each inner gap in $B(k)$ is much longer than $|\tilde{P}(k,i_1)|$, hence much longer than $t$. Therefore
	\begin{equation}\label{eq:not diff parts}
	x_{s+m}, x_{s+t+m} \quad \text{cannot lie in different parts of} \quad B(k)~.
	\end{equation}
	Further, we claim that, for $m\neq 0$,
	\begin{equation}\label{eq:not i1}
	x_{s+m}, x_{s+t+m} \quad \text{cannot both lie in} \quad \tilde{P}(k,i_1)~.
	\end{equation}
	This is because by Lemma~\ref{L: distance of two 0}(2), different pairs of point in $\tilde{P}(k,i_1) \cap U^1(a_0)$ have different iterative distances.
	
	By (L2-4), all (inner and outer) gaps which are to the right of $x_{s+t}$ are much longer than $t$. So, if $m>0$ then the points $x_{s+m}, x_{s+t+m}$ cannot be separated by such a gap, i.e. they are in the same part. This cannot be a part to the right of $B(k)$ because, by (L4), every wind in such a part is much longer than $t$ and so the iterative distance of $x_{s+m}, x_{s+t+m}$ would be longer than $t$, a contradiction. We conclude that
	\begin{equation}\label{eq:in same part}
	\text{if $m>0$\, then \, $x_{s+m}, x_{s+t+m} \in \tilde{P}(k,i_2)$ \, for some \, $i_1 < i_2$}
	\end{equation}
	(we have excluded the equality $i_1=i_2$ by~(\ref{eq:not i1})).
	
	Now let $m<0$. Since $x_s, x_{s+t} \in \tilde{P}(k,i_1)$, $t$ is at least as large as the length of the shortest wind in $P(k,i_1)$. Hence, by (L4), $t \gg |\pre (B(k))|$. Therefore, since the iterative distance between $x_{s+m}$ and $x_{s+t+m}$ is $t$, the point $x_{s+t+m}$ cannot be in $\pre (B(k))$. Using this fact as well as~(\ref{eq:not diff parts}) and ~(\ref{eq:not i1}), we get two cases:
	\begin{enumerate}
		\item [(i)]  $x_{s+m}\in \bigcup_{i=1}^{k-1}B(i)$ and $x_{s+t+m} \in \tilde{P}(k,i_2)$ for some $i_2\leq i_1$, or
		\item [(ii)]  $x_{s+m}, x_{s+t+m} \in \tilde{P}(k,i_2)$ for some $i_2< i_1$.
	\end{enumerate}
	
	In the case (i) we have $i_2=1$, otherwise the inner gap $IG(k,1)$ is between $x_{s+m}$ and $x_{s+t+m}$ and so the iterative distance of these two points is $t>ig(k,1)$. However, this contradicts the facts that, by (L2), $ig(k,1)$ is much larger than $|P(k,1)|=|P(k,i_1)|$ and $\tilde{P}(k,i_1)$ contains $x_s$ and $x_{s+t}$.  So, (i) can be replaced by (note that $\tilde{P}(k,1)=P(k,1)$)
	\begin{enumerate}
		\item [(i')]  $x_{s+m}\in \bigcup_{i=1}^{k-1}B(i)$ and $x_{s+t+m} \in P(k,1)$.
	\end{enumerate}
	However, (i') is excluded by Lemma~\ref{L:notL-shiftable2}. So, only the case (ii) is possible and then we are done, just combine it with~(\ref{eq:in same part}).

	\medskip
	
	$(2)$ The inner gap $IG(k,i_2-1)$ lies between $x_s$ and $x_{s+t}$ (with possible exception of the last point of $IG(k,i_2-1)$ which may belong to $\tilde{P}(k,i_2)$ and to be equal to $x_{s+t}$). Hence $t\geq ig(k,i_2-1)$ and so, by~(L2),
	\begin{equation}\label{eq:tggP}
	t\gg |P(k,i_1)| = |P(k,l)|, \quad l=1,2,\dots, 2^{k+1}
	\end{equation}
	(all the pieces in a block have the same length).
	
	We claim that
	\begin{equation}\label{eq:2left}
	\text{$x_{s+t+m}$ \, cannot be to the left of \, $\tilde{P}(k,i_2)$}.
	\end{equation}
	Indeed, $x_{s+t+m}$ cannot be in $IG(k,i_2-1)\setminus \tilde{P}(k,i_2)$, because this set is disjoint with $U^1(a_0)$. Also, $x_{s+t+m}$ cannot be to the left of $IG(k,i_2-1)$, otherwise $|m|\geq ig(k,i_2-1)$ and since $ig(k,i_2-1) \gg |\pre(IG(k,i_2-1))|$ and $x_s \in \pre(IG(k,i_2-1))$, we would get $|m|\gg s$ whence $s+m <0$, a contradiction. 	
	
	Further, we claim that
	\begin{equation}\label{eq:2right}
	\text{$x_{s+t+m}$ \, cannot be to the right of \, $\tilde{P}(k,i_2)$}.
	\end{equation}
	We prove this. First suppose that $x_{s+t+m}\in \tilde{P}(k,i_3)$ for some $i_3>i_2$. By~(\ref{eq:tggP}), $x_{s+m}\notin \tilde{P}(k,i_3)$. Then
$t=(s+t+m)-(s+m) >ig (k,i_3-1)$.  On the other hand, since $x_s\in \tilde{P}(k,i_1)$ and $x_{s+t} \in \tilde{P}(k, i_2)$, (L2) gives $t\ll ig(k, i_3-1)$, a contradiction.

	Now suppose that $x_{s+t+m}\in B(L)$ for some $L>k$. If also $x_{s+m}\in B(L)$ then, since $x_{s+t+m}, x_{s+m} \in U^1(a_0)$, $t$ is at least as long as the first wind in $B(L)$. By (L4), and using that $x_s, x_{s+t} \in \pre (B(L))$, we get $t \gg |\pre(B(L))| \geq t$, a contradiction.
	So assume that $x_{s+m}\notin B(L)$, i.e. $x_{s+m}\in \pre (B(L))$. Since $OG(L-1)$ is disjoint with $U^1(a_0)$, the whole gap $OG(L-1)$ is between $x_{s+m}$ and $x_{s+t+m}$. Hence, using also (L3) and the fact that $x_s, x_{s+t} \in \pre (OG(L-1))$, we get $t>OG(L-1) \gg \pre(OG(L-1)) \geq t$, again a contradiction. We have proved~(\ref{eq:2right}).
	
	By~(\ref{eq:2left}) and ~(\ref{eq:2right}), $x_{s+t+m} \in \tilde{P}(k,i_2)$. Since both $x_{s+t}$ and $x_{s+t+m}$ belong to the same part $\tilde{P}(k,i_2)$, by the already proved claim (1) we get that their shifts by $-t$, i.e. the points $x_{s}$ and $x_{s+m}$,
	belong to the same part $\tilde{P}(k,l)$, for some $l\neq i_2$. However, $x_s\in \tilde{P}(k,i_1)$ by the assumption, therefore $l=i_1$ and we are done.

	\medskip
	
	$(3)$ First we  prove that the points $x_s, x_{s+t}$ are in the same block.
	
	Suppose, on the contrary, that there are $p < q$ such that $x_s \in B(p)$ and $x_{s+t} \in B(q)$. In view of Lemma~\ref{L:notL-shiftable}, these two $U^1(a_0)$-shiftable points are $U^1(a_0)$-right shiftable. So, there is $m>0$ such that also $x_{s+m}, x_{s+t+m} \in U_1(a_0)$. Hence, also these two points belong to the union of blocks. We see that they are $U^1(a_0)$-left shiftable (shift by $-m$ sends them to $x_s$ and $x_{s+t}$). So, by Lemma~\ref{L:notL-shiftable}, $x_{s+m}$ and $x_{s+t+m}$ belong to the same block $B(r)$ for some $q\leq r$.
		
		Suppose that $q<r$. Then $x_{s+m}$ and $x_{s+t+m}$, being two elements of $U^1(a_0)$ in the block $B(r)$, have their iterative distance $t$ at least as large as the length of the first wind of the first piece in $B(r)$ (see (L4)). However, by (L4), this length is larger than the length of $\pre (B(r))$. In particular, $t$ is larger than the iterative distance between $x_s, x_{s+t} \in \pre (B(r))$, a contradiction.
		
		So,  $q=r$. We have four points in $U^1(a_0)$, namely $x_s\in B(p)$, $x_{s+t}, x_{s+m}, x_{s+t+m} \in B(q)$, where $p<q$.
		Since $x_{s+m}, x_{s+t+m} \in B(q)$, then either
		$$
		x_{s+m}, \, x_{s+t+m} \in \tilde{P}(q,i)
		$$
		for some $i \in \N$ or
		$$
		x_{s+m} \in \tilde{P}(q,i_1) \quad \text{and} \quad x_{s+t+m} \in \tilde{P}(q,i_2)
		$$
		for
		some $i_1<i_2 \in \N$. Then, by applying either $(1)$ or $(2)$ to the points $x_{s+m}$ and $x_{s+t+m}$ and the shift by $-m$, we get that
		$$
		x_s= x_{(s+m)-m} \in B_q
		$$
		and this contradicts the assumption that $x_s\in B(p)$.	
		
		So, we already know that $x_s, x_{s+t}$ are in the same block $B(k)$.	They are either in the same part or in different parts of this block. In either case, by (1) or (2), the other two points $x_{s+m}$ and $x_{s+t+m}$ are also in that block $B(k)$. The proof is finished.

	\medskip
	
	$(4)$ By (1), $x_{s+m}\in \tilde{P}(k,i')$ for some $i'$. Denote the firts points of the pieces $P)(k,i)$ and $P(k,i')$ by
	$x_r=y_{(k,i),0}$ and $x_{r'}=y_{(k,i'),0}$, respectively. Since $x_s\in \tilde{P}(k,i)\cap U^1(a_0)$ and $x_{s+m}\in \tilde{P}(k,i')\cap U^1(a_0)$, we can write, as in~(\ref{eq:listnew}),
	$$
	x_{s}= x_{r+n^k_c -s_i(c)}, \quad \text{for some} \quad 0\leq c \leq k
	$$
	and
	$$
	x_{s+m}= x_{r'+n^k_{d} -s_i(d)}, \quad \text{for some} \quad 0\leq d\leq k.
	$$
	We need to show that $d=c$. Using that $x_{s+t} \in \tilde{P}(k,i)$ and, by (1), $x_{s+t+m}\in \tilde{p}(k,i')$, we also have
	$$
	x_{s+t}= x_{r+n^k_\gamma -s_{i'}(\gamma)}, \quad \text{for some} \quad 0\leq c < \gamma \leq k
	$$
	and
	$$
	x_{s+t+m}= x_{r'+n^k_{\delta} -s_{i'}(\delta)}, \quad \text{for some} \quad 0\leq d < \delta \leq k.
	$$
    So, since $t=(s+t)-s = (s+t+m)-(s+m)$, we get
    $$
    t=n^k_{\gamma}-n^k_{c}-(s_i(\gamma)-s_i(c))= n^k_{\delta}-n^k_{d}-(s_{i'}(\delta)-s_{i'}(d)).
    $$
	Recall that $n^k_{\gamma}-n^k_{c}$ and $n^k_{\delta}-n^k_{d}$ are much larger than $s_i(\gamma)-s_i(c)$ and $s_{i'}(\delta)-s_{i'}(d)$. Also, by (L4),
	in the finite sequence $n^k_0, n^k_1, \dots, n^k_k$ we have $n^k_{j+1}\gg n^k_j$. It follows that $c=d$ and $\gamma= \delta$.

	\medskip
	
	$(5)$ By $(2)$, we have $x_{s+m} \in \tilde{P}(k,i_1)$ and $x_{s+t+m} \in \tilde{P}(k,i_2)$. Then
	$$
	x_s, x_{s+m} \in \tilde{P}(k,i_1) \text{\ \ and \ \ } x_{s+t}, x_{s+m+t} \in \tilde{P}(k,i_2).
	$$
	Then by $(4)$,
	$x_s, x_{s+t}$ are in the ``similar positions".
\end{proof}

We are finally able to prove the main result of this section. As already said, it is sufficient to prove (R1) and (R2) from the beginning of this section.

\begin{thm}\label{final-thm} The system $(X_1, T)$ has the following properties.
	\begin{enumerate}
		\item $(a_0,a_1)$ is an IN-pair.
		
		\item $(a_0,a_j)$ is not an IN-pair for any $|j|\geq 2$.
		
		\item $(a_0,a_{\infty})$ is not an IN-pair.
		
	\end{enumerate}
	Hence $h^*(T)=\log 2$.
\end{thm}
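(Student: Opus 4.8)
\emph{Proof proposal.} The displayed derivations preceding Table~\ref{T:BPIGOG} already reduce the whole statement to establishing the two requirements (R1) and (R2): (R1) is equivalent to (1), (R2) implies (2) and (3), and (1)--(3) yield $h^*(T)=\log2$ via~\eqref{Eq:hstarT-IN} together with Proposition~\ref{P}(a) and Proposition~\ref{P2}(b) (used to translate a putative intrinsic IN-tuple of length $3$ so that one coordinate becomes $a_0$). So the plan is to prove (R1) and (R2). Requirement (R1) is immediate from the construction: fix $k$; for each $s_l\in F(k)=\{0,1\}^{\{0,1,\dots,k\}}$ the first point $y_{(k,l),0}$ of the piece $P(k,l)$ lies in $\bigcap_{i=0}^{k}T^{-n_i^k}U^k(a_{s_l(i)})$ by~\eqref{eq: pieces requirment}, and since $l$ runs over all $2^{k+1}$ functions in $F(k)$, the set $N(k)=\{n_0^k=0,n_1^k,\dots,n_k^k\}$ is an independence set of times of length $k+1$ for $(U^k(a_0),U^k(a_1))$ in $(X_1,T)$; as the $U^k(a_i)$ form a neighbourhood basis at $a_i$, letting $k\to\infty$ gives arbitrarily long independence sets for every product neighbourhood of $(a_0,a_1)$, which is (1).

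For (R2), suppose to the contrary that $(U^1(a_0),U^1(a_j))$ has an independence set of times $L=\{l_1<l_2<l_3<l_4<l_5\}$ for some $j$ with $|j|\ge2$ or $j=\infty$. I would first note that any point realizing a choice that prescribes $U^1(a_0)$ at two or more of the times $l_r$ is forced to be a trajectory point, since a point of $A$ visits $U^1(a_0)$ at most once (the $U^1(a_i)$ are pairwise disjoint). Applying independence to the choice ``$U^1(a_0)$ at all five times'' yields a trajectory point $x_p$ with $x_{p+l_r}\in U^1(a_0)$ for $r=1,\dots,5$; by~\eqref{eq:no0outsidepart} each $x_{p+l_r}$ lies in some part $\tilde P(k,\cdot)$. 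For each $r$, applying independence to the choice ``$U^1(a_j)$ at $l_r$, $U^1(a_0)$ at the other four times'' yields another trajectory point $x_{p_r}$, and with $m_r:=p_r-p$ the four points $\{x_{p+l_i}:i\ne r\}$ all remain in $U^1(a_0)$ under the shift by $m_r$, with $m_r\ne0$ because $U^1(a_0)$ is disjoint from $U^1(a_j)$ (for $j=\infty$ by~\eqref{eq:U_0U_infty}). Hence every consecutive pair $x_{p+l_i},x_{p+l_{i+1}}$ is $U^1(a_0)$-shiftable, and chaining Lemma~\ref{L:location}(3) over the four consecutive pairs --- the relevant shifts being among $m_1$ and $m_5$, and invoking Lemmas~\ref{L:notL-shiftable} and~\ref{L:notL-shiftable2} to rule out shifts across blocks or into predecessors --- confines all five points $x_{p+l_1},\dots,x_{p+l_5}$, together with the companion points $x_{p+m_r+l_i}$, to a single block $B(k)$.

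It then remains to contradict the defining property of $L$. By Lemma~\ref{L: distance of two 0}(1) each $x_{p+l_r}$ has $P_2(x_{p+l_r})=a_0$ and lies within iterative distance $1$ of the $c_r$-th endpoint of the winds of its part; by Lemma~\ref{L: distance of two 0}(2) each gap $l_{r+1}-l_r$ either exceeds an inner-gap length (when the two points lie in different parts) or equals $n^k_{c_{r+1}}-n^k_{c_r}$ up to an error of size $1$; and by Lemma~\ref{L:location}(4)--(5) the shifts $m_r$ preserve the wind-indices $c_i$. Fixing $r$ with $1<r<5$, I would look at the companion $z':=x_{p+m_r+l_r}$, which is supposed to lie in $U^1(a_j)$: its predecessor $x_{p+m_r+l_{r-1}}$ lies in $U^1(a_0)$ (so $P_2=a_0$ and it is within iterative distance $1$ of the $c_{r-1}$-th wind endpoint of its part), and advancing it by $l_r-l_{r-1}$ steps --- the same count that carries $x_{p+l_{r-1}}$ back to $a_0$ at $x_{p+l_r}$, a count which for large $k$ stays in the ``post-jump'' portion of the winding since $a_0$ is far from $a_\infty$ --- lands $z'$ within an absolute constant (at most $2$) of a point whose $P_2$ lies in $\{a_{-1},a_0,a_1\}$, using that all winds in $B(k)$ have length independent of the part and place their single jump near $a_\infty$. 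Hence $P_2(z')\in\{a_{-1},a_0,a_1\}$, so $z'\notin U^1(a_j)$ for $|j|\ge2$ by disjointness of the $U^1(a_i)$ and $z'\notin U^1(a_\infty)$ by~\eqref{eq:U_0U_infty}. This contradiction proves (R2), and with (R1) it gives (1)--(3), hence $h^*(T)=\log2$.

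The step I expect to be the main obstacle is this last one: one must carefully propagate the several bounded errors --- the ``$\le1$'' from almost coinciding with a wind endpoint, the ``$\le1$'' from the three possible values of an iterative distance, and the bounded drift caused by winds of equal length in different parts placing their jump near $a_\infty$ in slightly different spots --- and verify that their sum is small enough that a companion point forced near $a_0$ cannot reach $U^1(a_j)$ for \emph{any} admissible $j$, including the borderline values $j=\pm2,\pm3$ and $j=\infty$. The inequalities (L1)--(L4) are exactly what keep all these errors dominated by the wind and gap lengths, and the somewhat awkward bookkeeping near the left end of the trajectory (where an extra point attached to a part, or an immediate predecessor, may fail to exist) is precisely what Lemmas~\ref{L:notL-shiftable} and~\ref{L:notL-shiftable2} are built to absorb.
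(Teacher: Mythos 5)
Your reduction to (R1) and (R2) is correct and matches the paper exactly, as does your proof of (R1). The overall scaffolding of your (R2) argument --- realize the ``all five at $U^1(a_0)$'' pattern at a trajectory point $x_p$, then realize each single-coordinate deviation at $x_{p_r}$, use Lemma~\ref{L:location}(3) to confine everything to one block $B(k)$, and use Lemma~\ref{L:location}(4)--(5) plus Lemma~\ref{L: distance of two 0} to localize --- is also essentially what the paper does (the paper uses times $l_{-1}<l_0<\dots<l_3$ and rows $m_1,\dots,m_4$ rather than your $l_1<\dots<l_5$ and $p,p_1,\dots,p_5$, but this is only relabelling).

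The gap is in the final contradiction. You claim that the companion $z'=x_{p+m_r+l_r}$ is forced to have $P_2(z')\in\{a_{-1},a_0,a_1\}$, which would immediately rule out all $|j|\ge 2$. But this bound is too strong: each factor $s_l(c)-s_l(d)$ takes values in $\{-1,0,1\}$, and the paper's explicit computation (see~\eqref{lastbut1}) shows that the two such contributions can conspire to shift the index by $\pm 2$. Tracking the errors honestly gives only $P_2(z')\in\{a_{-2},a_{-1},a_0,a_1,a_2\}$, and this bound is in general tight, so your argument does not exclude $j=\pm 2$. You rightly flag the error-propagation as ``the main obstacle,'' but no amount of careful bookkeeping of the same one-row comparison will close the case $|j|=2$: the paper needs a genuinely extra step. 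After the $\pm 2$ bound forces $j\le 2$, the paper observes that the borderline case $j=2$ pins down the combinatorics exactly --- it forces $(s_{i'}(c)-s_{i'}(d))-(s_i(c)-s_i(d))=2$ and hence $l_2-l_1=n_c^k-n_d^k+1$ (see~\eqref{eq:l2-l1}) --- and then evaluates a \emph{different} entry of the table, namely $x_{m_3+l_1}$, using the ``similar positions'' property across a \emph{third} row. That cross-reference lands $x_{m_3+l_1}$ in $U^1(a_{-2})\cup U^1(a_{-1})\cup U^1(a_0)$, contradicting $x_{m_3+l_1}\in U^1(a_2)$. Your proposal has only the $x_p$ row and one companion $x_{p_r}$ row available at the moment of contradiction; the additional row and the exact determination of $l_2-l_1$ from the $j=2$ hypothesis are both missing, and they are precisely what handles the borderline values you worry about. (You do have the other companions $x_{p_{r'}}$ at your disposal, so the raw material is present, but the proposal never brings a second comparison row to bear.) The $j=\infty$ case, by contrast, really does follow from the one-row $\pm 2$ bound and~\eqref{eq:U_0U_infty}, as you say.
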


\begin{proof}
	(1) This is by the construction. As already said above, see e.g.~\eqref{eq: pieces requirment}, the set $N(k)$ from~\eqref{eq:notNk} is an independence set of times of length $k+1$ for $(U^k(a_0), U^k(a_1))$. Hence (1).
	
	\medskip

	(2) It is sufficient to show that $(a_0,a_j)$ is not an IN-pair whenever $j\geq 2$ because then, for $j\geq 2$, neither $(a_0,a_{-j})$ is an IN-pair. Indeed, if for some $j\geq 2$ the pair $(a_0,a_{-j})$ were an IN-pair, then by Proposition~\ref{P}(a) also $(a_0,a_j)$ would be an IN-pair.
	
    Fix $j\geq 2$ and suppose, on the contrary, that $(a_0,a_j)$ is an IN-pair. It follows that $(U^1(a_{0}),U^1(a_{j}))$ has an independence set of times of length $5$, i.e. there are pairwise distinct positive integers $l_{-1}<l_0 < l_1 < l_2 <l_3$ such that $\{l_{-1},l_0, l_1, l_2, l_3\}$ is an independence set of times for $(U^1(a_{0}),U^1(a_{j}))$. Then, in particular, there exist pairwise distinct $m_1, m_2, m_3, m_4 \in \N$ such that (notice that in the underlined inclusions we have $a_j$ rather than $a_0$)
	\begin{equation}\label{eq:r1}
	x_{m_1+l_{-1}} \in U^1(a_{0}), \, x_{m_1+l_0} \in U^1(a_{0}), \, x_{m_1+l_1} \in U^1(a_{0}),\ x_{m_1+l_2} \in U^1(a_{0}), \ x_{m_1+l_3} \in U^1(a_{0}),
	\end{equation}
	\begin{equation}\label{eq:r2}
	x_{m_2+l_{-1}} \in U^1(a_{0}), \, x_{m_2+l_0} \in U^1(a_{0}), \, x_{m_2+l_1} \in U^1(a_{0}),\ \underline{x_{m_2+l_2} \in U^1(a_{j})}, \ x_{m_2+l_3} \in U^1(a_{0}),
	\end{equation}
	\begin{equation}\label{eq:r3}
	x_{m_3+l_{-1}} \in U^1(a_{0}), \, x_{m_3+l_0} \in U^1(a_{0}), \, \underline{x_{m_3+l_1} \in U^1(a_{j})},\ x_{m_3+l_2} \in U^1(a_{0}), \ \ x_{m_3+l_3} \in U^1(a_{0}),
	\end{equation}
	\begin{equation}\label{eq:r4}
	x_{m_4+l_{-1}} \in U^1(a_{0}), \, \underline{x_{m_4+l_0} \in U^1(a_{j})}, \, x_{m_4+l_1} \in U^1(a_{0}),\ x_{m_4+l_2} \in U^1(a_{0}), \ \ x_{m_4+l_3} \in U^1(a_{0}).
	\end{equation}

	\noindent Look at the eight points in the first and last columns. If we now take a pair (made of them) lying in one row,
	then it is $U^1(a_0)$-shiftable to a pair (made of them) lying in any other row. Then, by Lemma~\ref{L:location}$(3)$,
	there exists $k \in \N$ such that these eight points lie in $B(k)$. Hence, since $l_{-1} < l_0 < l_1 < l_2 <l_3$, in each row
	the five points are in the same block (i.e., in that block $B(k)$). So,
	$$
	\{x_{m_i+l_j}:\, 1\le i \le 4, \, -1 \le j \le 3\} \subseteq B(k).
	$$
	From now on we are interested in the nine points in the following smaller $3\times 3$ table:
		\begin{equation*}
		x_{m_1+l_1} \in U^1(a_{0}),\ x_{m_1+l_2} \in U^1(a_{0}), \  x_{m_1+l_3} \in U^1(a_{0}),
		\end{equation*}
		\begin{equation*}
		x_{m_2+l_1} \in U^1(a_{0}),\ \underline{x_{m_2+l_2} \in U^1(a_{j})}, \ x_{m_2+l_3} \in U^1(a_{0}),
		\end{equation*}
		\begin{equation*}
		\underline{x_{m_3+l_1} \in U^1(a_{j})},\ x_{m_3+l_2} \in U^1(a_{0}), \ x_{m_3+l_3} \in U^1(a_{0}).
		\end{equation*}
	We show that if two points in the first row of this smaller table are in the same part $\tilde{P}(k,i_1)$ then all the three points in this
	row are in $\tilde{P}(k,i_1)$. Suppose that this is not the case. To get a contradiction, we will use only these three points and the last three points in~\eqref{eq:r4}. All these six points are in $U^1(a_0)$. Therefore, due to this symmetry, we can assume that for instance $x_{m_1+l_1},x_{m_1+l_2} \in \tilde{P}(k,i_1)$ and $x_{m_1+l_3} \in \tilde{P}(k,i_2)$ with $i_1 \neq i_2$. Since $x_{m_1+l_1},x_{m_1+l_3}$ are $U^1(a_0)$-shiftable to $x_{m_4+l_1},x_{m_4+l_3}$, by Lemma~\ref{L:location}(2) we get $x_{m_4+l_1} \in \tilde{P}(k,i_1)$. Also,  $x_{m_1+l_1},x_{m_1+l_2}$ are $U^1(a_0)$-shiftable to $x_{m_4+l_1},x_{m_4+l_2}$ and so, by Lemma~\ref{L:location}(1) we get
	$x_{m_4+l_1} \in \tilde{P}(k,i_3)$ with $i_3 \neq i_1$, a contradiction.
	
	Therefore we have two cases: either all three points in the first row are in the same part, or they are in three different parts.
	
	\medskip
	
	\emph{Case 1: $x_{m_1+l_1}, x_{m_1+l_2},  x_{m_1+l_3}
	\in \tilde{P}(k,i)$ for some $i$.}
	
	In this case, by Lemma~\ref{L:location}$(1)$, we have
	\begin{alignat}{6}
		x_{m_1+l_1}, & \,\, x_{m_1+l_2},  & \,\, x_{m_1+l_3} & \in  & \,\, & \tilde{P}(k,i)   \label{3cases-1} \\
		x_{m_2+l_1}, &                    & \,\, x_{m_2+l_3} & \in  & \,\, & \tilde{P}(k,i')  \label{3cases-2} \\
	                 & \,\, x_{m_3+l_2},  & \,\, x_{m_3+l_3} & \in  & \,\, & \tilde{P}(k,i'') \label{3cases-3}
	\end{alignat}
	where $i,i',i''$ are pairwise different (note that all these seven points are in $U^1(a_0)$).
	According to Lemma~\ref{L:location}$(4)$,  $T^{m_2+l_1}x_0$ and
	$T^{m_1+l_1}x_0$ are in the ``similar positions", i.e.
	$$
	x_{m_1+l_1}= x_{r+n^k_{d}-s_{i}(d)} \quad \text{and} \quad x_{m_2+l_1}= x_{r'+n^k_{d}-s_{i'}(d)}
	$$
	for some $0\leq d\leq k$ (here $x_r$ and $x_{r'}$ are the first points of the pieces $P(k,i)$ and $P(k,i')$, and $s_i$ and $s_{i'}$ are the corresponding functions in $F(k)$). Further, since $x_{m_1+l_2} \in \tilde{P}(k,i)$, we have
	\begin{equation}\label{eq:m1l2}
	x_{m_1+l_2}= x_{r+n^k_{c}-s_{i}(c)}
	\end{equation}
    for some $0\leq c \leq k$ (here $d < c$ and so $n_d^k<n_c^k$, because $m_1+l_1 < m_1+l_2$, but we do not use this property). Then
	$$
	l_2-l_1 = m_1+l_2 - (m_1 +l_1) =n_c^k-n_d^k-(s_i(c)-s_i(d)).
	$$
We are interested in the point $x_{m_2+l_2}$, so let us compute
	\begin{equation}\label{lastbut1}
	\begin{split}
		m_2+l_2 & =   m_2+l_1+(l_2-l_1)= r'+n^k_{d}-s_{i'}(d) + (l_2-l_1)  \\
		        & =   r'+n^k_c-s_{i'}(d)-(s_i(c)-s_i(d))  \\
		        & =   r'+n^k_c-s_{i'}(c)+(s_{i'}(c)-s_{i'}(d))-(s_i(c)-s_i(d))   \\
		        & \in \{r'+n^k_c-s_{i'}(c)+t: -2\le t \le 2\}.
	\end{split}
	\end{equation}
	Note that $x_{r'+n^k_c} \in U^{1}(a_{s_{i'}(c)})$, whence
	$x_{r'+n^k_c-s_{i'}(c)} \in U^{1}(a_0)$. Therefore
	\begin{equation}\label{eq:|j|>2 not}
	x_{m_2+l_2} \in U^1(a_{-2})\cup U^1(a_{-1})\cup U^1(a_{0})\cup
	U^1(a_{1})\cup U^1(a_{2}).
	\end{equation}
	On the other hand, by ~\eqref{eq:r2}, $x_{m_2+l_2} \in U^1(a_j)$.
	So, when $j \ge 3$, we have a contradiction.
	
	If $j=2$, $x_{m_2+l_2} \in U^1(a_j)$ is specified as $x_{m_2+l_2} \in U^1(a_2)$. Then, using~\eqref{lastbut1} and the fact that $x_{r'+n^k_c-s_{i'}(c)} \in U^1(a_0)$, we get
	$(s_{i'}(c)-s_{i'}(d))-(s_i(c)-s_i(d))=2$ and thus
	\begin{equation}\label{eq:l2-l1}
	l_2-l_1=n_c^k-n_d^k-(s_i(c)-s_i(d))=n_c^k-n_d^k+1.
	\end{equation}
	Let $r''$ be the first point of $P(k,i'')$ and let $s_{i''}$ be the function from $F(k)$ which corresponds to $P(k,i'')$.
	Then
	$$
	x_{r''+n^k_u-s_{i''}(u)} \in U^{1}(a_0) \text{ \ \ for \ \ }  0 \le u 	\le k.
	$$
	Recall that, by~\eqref{eq:r1} and ~\eqref{eq:r3},
	$$
	x_{m_1+l_2},  x_{m_1+l_3} \in  U^{1}(a_0) \text{ \ \ and \ \ }
	x_{m_3+l_2},  x_{m_3+l_3} \in U^{1}(a_0).
	$$
	Therefore, by Lemma~\ref{L:location}$(4)$, $x_{m_1+l_2}$ and
	$x_{m_3+l_2}$ are in the ``similar positions". In view of ~\eqref{eq:m1l2},
	$
	x_{m_3+l_2}=x_{r''+n_c^k-s_{i''}(c)}.
	$
	Using this and~\eqref{eq:l2-l1} and taking into account that the values of $s_{i''}$ are just $0$ and $1$, we have
	\begin{equation*}
	\begin{split}
	m_3+l_1&=m_3+l_2-(l_2-l_1)\\
	&=r''+n_c^{k}-s_{i''}(c)-(n_c^k-n_d^k+1)\\
	&=r''+n_d^{k}-s_{i''}(d)+(s_{i''}(d)-s_{i''}(c))-1\\
	&\in \{r''+n^k_d-s_{i''}(d)+t: -2\le t \le 0\}.
	\end{split}
	\end{equation*}
	This implies that
	$$x_{m_3+l_1} \in U^{1}(a_{-2})\cup U^{1}(a_{-1})\cup U^{1}(a_0),$$
	which contradicts that, by~\eqref{eq:r3}, $x_{m_3+l_1} \in U^{1}(a_2)$.
	
	\medskip
	
	\emph{Case 2: $x_{m_1+l_t} \in \tilde{P}(k,i_t)$,  $t=1,2,3$, for pairwise different $i_1, i_2, i_3$.}
	
	In this case, using  Lemma~\ref{L:location}$(2)$, we have
	\begin{alignat}{6}
		x_{l_1+m_1}, & \,\, x_{l_1+m_2}  &                      & \in  & \,\, & \tilde{P}(k,i_1)  \label{3cases-1-1} \\
		x_{l_2+m_1}, &                   & \,\, x_{l_2+m_3}     & \in  & \,\, & \tilde{P}(k,i_2)  \label{3cases-2-2} \\
		x_{l_3+m_1}, & \,\, x_{l_3+m_2}, & \,\, x_{l_3+m_3}     & \in  & \,\, & \tilde{P}(k,i_3)  \label{3cases-3-3}
	\end{alignat}
		
	We have received a system of inclusions very similar to that from Case 1, see~\eqref{3cases-1}-\eqref{3cases-3}. Recall that in Case 1 we have obtained a contradiction by considering two $U^1(a_0)$-shifts. Namely, we shifted $x_{m_1+l_1}, x_{m_1+l_3}$ to $x_{m_2+l_1}, x_{m_2+l_3}$, and
	$x_{m_1+l_2}, x_{m_1+l_3}$ to $x_{m_3+l_2}, x_{m_3+l_3}$. Then we were looking for the possible positions of the points $x_{m_2+l_2}$ and $x_{m_3+l_1}$ (i.e. the points missing in ~\eqref{3cases-1}-\eqref{3cases-3}). This led to contradictions.
	To finish the proof in Case 2, it is sufficient to proceed analogously as in Case 1, but now we consider the following
	$U^1(a_0)$-shifts: the shift of $x_{l_3+m_1}, x_{l_3+m_3}$ to $x_{l_2+m_1}, x_{l_2+m_3}$  and then the shift of
	$x_{l_3+m_1}, x_{l_3+m_2}$ to $x_{l_1+m_1}, x_{l_1+m_2}$. Then contradictions will be obtained by looking for positions of the points
	$x_{m_2+l_2}$ and $x_{m_3+l_1}$. We leave the details to the reader.
	
   	\medskip

	(3) The proof is completely analogous to that of (ii). Replacing always $U^1(a_j)$ by $U^1(a_{\infty})$, but otherwise repeating the proof word by word, one again gets (see~\eqref{eq:|j|>2 not})
	\[
	x_{m_2+l_2} \in U^1(a_{-2})\cup U^1(a_{-1})\cup U^1(a_{0})\cup U^1(a_{1})\cup U^1(a_{2})
	\]
	and simultaneously $x_{m_2+l_2} \in U^1(a_{\infty})$. In view of~\eqref{eq:U_0U_infty}, this already immediately gives a contradiction and finishes the proof of Case 1, so the proof is even easier than in (2). Analogously in Case 2.
\end{proof}

\begin{rem}\label{R:SX1logN}
Theorem~\ref{final-thm} gives a space $X_1$ with $S(X_1) \supseteq \{0,\log 2\}$. Since $X_1$ is a zero-dimensional space with infinite derived set, we in fact have $S(X_1)=\log \N^{*}$, see Table~\ref{T:knownSX}.
\end{rem}


\section{A continuum $X$ with $S(X)=\{0,\log 2\}$}\label{S:cont zero-log2}
In this section, we will construct a continuum $X$ with $S(X)=\{0,\log 2\}$. The main idea of the construction is similar to that of the construction of a continuum $X$ with $S(X)=\{0,\infty\}$ in Section~\ref{S:X}. However, now the construction is more subtle and
the technical details are more complicated.

\subsection{Outline of the construction of $X$}\label{SS:outline 0-log2}

We start with the space $X_1\subseteq \mathbb R^3$ defined in
Section~\ref{S:X1-T1-log2}. So,
\begin{equation}\label{Eq:X1log2}
	X_1 = A \sqcup \{x_0,x_1,\dots\}
\end{equation}
with the set $A=\{a_i:
i\in \Z\}\cup \{a_{\infty}\}$ lying in the circle $\mathbb S^1$ in the vertical plane $\pi_0$ and
$\{x_0,x_1,\dots\} \subseteq  (0,1] \times A$. Recall that we have also
constructed a continuous map
\begin{equation}\label{Eq:T1log2}
	T: X_1 \to X_1 \quad \text{with} \quad h^*(T)=\log 2~.
\end{equation}

We use the same tools as in Section~\ref{S:X}. For terminology, the reader is referred to that section.

While the head in Section~\ref{S:X} (i.e. in the case $S(X)=\{0,\infty\}$) was just one planar Cook continuum $\mathscr K_0$, now (i.e. in the case $S(X)=\{0,\log 2\}$) the head is more complicated. We denote it by $\mathscr A_0$ and we construct it by joining the consecutive points of $A$ by some Cook continua as follows.

We may assume that $a_0$ or $a_{\infty}$ is the north or south pole of  $\mathbb S^1$, respectively, and that the points $a_k$ and $a_{-k}$ are symmetric with respect to the vertical diameter of $\mathbb S^1$, see Figure~\ref{fig:(A,T_1)}. By the arc $\wideparen{a_k a_{k+1}}$  of the circle $\mathbb S^1$ we mean that arc of the circle, which has endpoints $a_k$ and $a_{k+1}$ and does not contain any other point of $A$. Let the lengths of these circle arcs be decreasing for $k=0,1,2, \dots$ and, due to symmetry, also for $k=-1,-2,\dots$. Thus, $\wideparen{a_0 a_{1}}$ and $\wideparen{a_{-1}a_{0}}$ are the longest of them.  We may also assume that the set $A$ is such that the lengths of these arcs are not longer than $1/4$ of the length of the circle $\mathbb S^1$. Then the angle between consecutive chords, i.e. between straight line segments $a_ia_{i+1}$ and $a_{i+1}a_{i+2}$, is not smaller than  $\pi/2$.  For $k\in \mathbb Z$, let $\mathscr S_k$ \index{$\mathscr S_k$} be the sector of the plane obtained as the union of all rays starting at the center of the circle $\mathbb S^1$ and going through the points of the circle arc $\wideparen{a_k a_{k+1}}$, see Figure~\ref{fig:rhombusZ}.

We choose a (so called `big') rhombus $BR_0\subseteq \mathscr S_0 \subseteq \pi_0$  \index{big rhombus} \index{$BR_0$} such that the straight line segment $a_0a_1$ is one of its diagonal, the other diagonal being shorter.\footnote{The strange notation $BR$ means a `big rhombus', in contrast with `small rhombuses' which will appear later. The small rhombuses will be subsets of big rhombuses and the notation $SR$ (with some indices) will be used for them.} Then the angles of this big rhombus at the vertices $a_0$ and $a_1$ are smaller than $\pi/2$. Note that the only points of $BR_0$ lying in the boundary of the sector $\mathscr S_0$ are $a_0$ and $a_1$.

Let $\mathscr H_0 \subseteq BR_0$  \index{$\mathscr H_0$} be a Cook continuum (which is not homeomorphic to any of the Cook continua used below as bricks in the snake) containing the points $a_0$ and $a_1$ and containing no other points in the boundary of $BR_0$. The points $a_0$ and $a_1$ will be called the \emph{extremal points} of $\mathscr H_0$, namely the \emph{first point} and the \emph{last point} of $\mathscr H_0$, respectively. So,
\begin{equation}\label{Eq:H0inIntBR0}
\text{$\mathscr H_0 \subseteq BR_0$ with all non-extremal points in $\Int BR_0$}
\end{equation}
and the distance between extremal points of $\mathscr H_0$ equals the diameter of $\mathscr H_0$ (and this equals the diameter of $BR_0$).

For every integer $k$, let $Z_k$ \index{$Z_k$} be the  direct similitude (similarity transformation)\index{similitude} defined in the plane $\pi_0$, i.e. the composition of a homothety and a direct euclidean motion, which maps the straight line segment $a_0a_1$ onto the straight line segment $a_ka_{k+1}$, with $Z_k (a_0) = a_k$, $Z_k(a_1)=a_{k+1}$ (in particular, $Z_0$ is the identity). Then the big rhombus $BR_0$ containing $a_0, a_1$ is mapped by $Z_k$ onto some rhombus, again call it a big rhombus,  $BR_k$ containing $a_k, a_{k+1}$ and the Cook continuum $\mathscr H_0 \subseteq BR_0$ is mapped by $Z_k$ onto a (homeomorphic) Cook continuum $\mathscr H_k \subseteq BR_k$. So,
\begin{equation}\label{Eq:mathscrHk}
	BR_k := Z_k(BR_0) \quad \text{and} \quad	\mathscr H_k := Z_k(\mathscr H_0).
\end{equation} \index{$BR_k$} \index{$\mathscr H_k$}
The points  $a_k$ and $a_{k+1}$ are said to be the extremal points, or, more precisely, the first point and the last point, respectively, of both $BR_k$ and $\mathscr H_k$. Due to~\eqref{Eq:H0inIntBR0},
\begin{equation}\label{Eq:HkinIntBRk}
\text{$\mathscr H_k \subseteq BR_k$ with all non-extremal points in $\Int BR_k$.}
\end{equation}
So, all the continua $\mathscr H_k$ are copies of $\mathscr H_0$, obtained from it by just `zooming and moving' it. Recall that the angle between two consecutive chords $a_ia_{i+1}$ and $a_{i+1}a_{i+2}$ is at least $\pi/2$, while the vertex angles of our big rhombuses at those vertices which belong to the set $A$ are smaller than $\pi/2$ (for $BR_0$ this is by construction and note that the similitudes preserve angles). Hence the big rhombuses $BR_k$, $k\in \mathbb Z$, are pairwise disjoint, except that $BR_k\cap BR_{k+1} =\{a_{k+1}\}$, $k\in \mathbb Z$. The same is true for the Cook continua $\mathscr H_k$, $k\in \mathbb Z$.

Clearly, $Z_0$ is an isometry and since the arcs $\wideparen{a_0 a_{1}}$ and $\wideparen{a_{-1}a_{0}}$ have the same lengths, also $Z_{-1}$ is an isometry. However,
\begin{equation}\label{Eq:decr}
	\text{for $k\neq 0, -1$, the similitudes $Z_k$ decrease distances.}
\end{equation}
Note also that, given $i,j\in \mathbb Z$, for the homeomorphism (in fact a similitude defined in the whole plane $\pi_0$)
\begin{equation}\label{Eq:mapsZij}
	Z_{i,j} = Z_j \circ (Z_i)^{-1}
\end{equation}\index{$Z_{i,j}$}
we have $Z_{i,j}(a_i) =a_j$, $Z_{i,j}(a_{i+1}) =a_{j+1}$ and
\begin{equation}\label{Eq:HkHk+1}
BR_{j} = Z_{i,j}(BR_i) \quad \text{and} \quad \mathscr H_{j} = Z_{i,j}(\mathscr H_i),
\end{equation}
see Figure~\ref{fig:rhombusZ}.
\begin{figure}
	\centering
	\includegraphics[width=10cm]{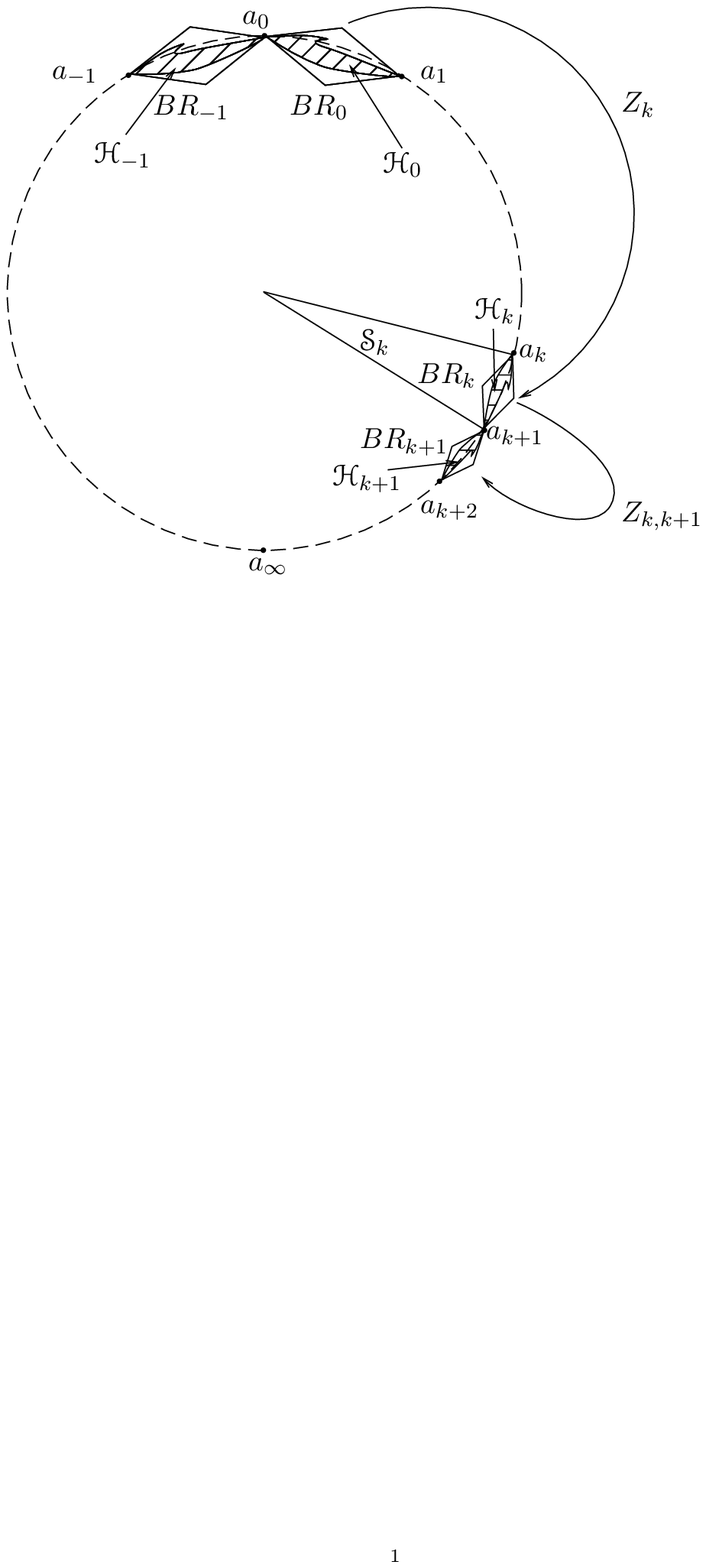}\\
	\caption{Big rhombuses and maps $Z_k$}\label{fig:rhombusZ}
\end{figure}

The \emph{head} \index{head} \index{$\mathscr A_0$} of our space $X$ will then be the set
\begin{equation}\label{Eq:headlog2}
	\mathscr A_0=A \cup \bigcup_{i \in \Z} \mathscr H_i = \{a_{\infty}\} \cup \bigcup_{i \in \Z} Z_i(\mathscr H_0) \subseteq \pi_0
\end{equation}
which looks like a necklace of infinitely many copies of the Cook continuum $\mathscr H_0$, together with the point $a_{\infty}$. Clearly, it is a continuum in the vertical plane $\pi_0$.

In Section~\ref{S:X} (i.e. in the case $S(X)=\{0,\infty\}$) we had planar Cook continua $\mathscr K^m_i$, called bricks,
and the sets $D_1^*, D_2^*, \dots$ made of the bricks, which were used to join the points of the trajectory
$x_1, x_2,\dots$. The union of all sets $D_i^*$ was the snake of~$X$.
Now (i.e. in the case $S(X)=\{0,\log 2\}$) the snake is defined similarly as in Section~\ref{S:X} (and, as before, $X$ is the union of the head and the snake) with only two differences.
\begin{itemize}
\item First, a small technical difference is that now we start with the trajectory $x_0, x_1,\dots$, rather than $x_1, x_2,\dots$, therefore we `shift' the indices in the notations of the bricks and the sets $D_m$. Now a continuum joining $x_m$ and $x_{m+1}$ will be in fact denoted by $D(m)$ \index{$D(m)$}, $m=0,1,\dots$. So,
\begin{equation}\label{Eq:DmDm*} \index{$D^*(m)$}
D(m)= \bigcup_{i=1}^{\infty} \mathscr K_i^m \cup \{x_{m+1}\} \quad \text{ and }
\quad D^*(m) = \bigcup_{i=1}^{\infty} \mathscr K_i^m= D_m \setminus \{x_{m+1}\},
\end{equation}
and
\begin{alignat}{1}
 D^*(0)         & \, = \, \text{copy of } \mathscr K_1 \cup \text{copy of }
                                                           \mathscr K_2 \cup \dots~,   \notag \\
 D^*(1)     & \, = \, \text{copy of } \mathscr K_2 \cup \text{copy of } \mathscr K_4 \cup \dots~, \label{Eq:every2nd}  \\
 D^*(2) & \, = \, \text{copy of } \mathscr K_4 \cup \text{copy of } \mathscr K_8 \cup \dots~,    \notag \\
                & \dots~.   \notag
\end{alignat}
Recall that $\{\mathscr K_1, \mathscr K_2, \dots\}$ is a family of planar non-homeomorphic Cook continua, which are also non-homeomorphic to Cook continua $\mathscr H_k$ used in the head. In the usual sense we speak on the extremal points, i.e. on the first point and the last point, of the copies of $\mathscr K_i$ used to built the snake. As in Subsection~\ref{SS:construction of X}, in every $\mathscr K_i$ we choose two points whose distance equals the diameter of $\mathscr K_i$ and we call them the \emph{extremal points} of $\mathscr K_i$, or the \emph{first point} and the \emph{last point} of $\mathscr K_i$. When considering a homeomorphic copy of $\mathscr K_i$, the images of these two points are still called the first point and the last point, respectively, of this copy. We will call them the \emph{extremal points} of this copy; later it will be seen that, in our construction, these two points will really be extremal points of the copy with respect to the euclidean metric. We will also speak on the extremal points of the set $D(m)$, the first point of $D(m)$ being defined as the first point of the first brick in $D(m)$ and the last point of $D(m)$ being defined as the unique point of $D(m)\setminus D^*(m)$. Again, they will be extremal in the sense of the metric. In $D^*(0)$, see~\eqref{Eq:every2nd}, the last point of the copy of $\mathscr K_i$ coincides with the first point of the copy of $\mathscr K_{i+1}$, $i=1,2,\dots$. The sets $D^*(m)$, $m>0$, are built analogously.

\item Second, while in Section~\ref{S:X} we joined $x_m$ and $x_{m+1}$ by placing the set $D_m$ along the straight line segment $x_mx_{m+1}$, see Figures~\ref{F:solids}-\ref{F:in solid}, now the straight line segments are replaced by (polygonal) arcs. We have to choose these arcs very carefully and to place the sets $D(m)$ into sufficiently small neighborhoods of them. In fact, otherwise the snake could have cluster points in the plane $\pi_0$ also outside the head $\mathscr A_0$ and so $X$ would not be compact. To be sure that the snake does not `produce' cluster points in $\pi_0\setminus \mathscr A_0$, as well as to avoid some other potential problems which will be discussed later, we proceed as shown in the next subsection.
\end{itemize}

Let us remark that not only the mentioned change in notation ($D(m)$, $m=0,1,\dots$ instead of $D_m$, $m=1,2,\dots$) is unimportant, but in fact the whole snake is even a \emph{homeomorphic copy} of the snake from Section~\ref{S:X}, compare Figure~\ref{F:3Dm} and ~\eqref{Eq:every2nd}. However, the two snakes have different positions in $\mathbb R^3$, meaning that they are not equivalently embedded in $\mathbb R^3$, i.e. there is no homeomorphism $\mathbb R^3 \to \mathbb R^3$ mapping one snake onto the other one (note that the two heads are not homeomorphic).

\subsection{More details of the construction of $X$}\label{SS:more details}
We first introduce some terminology.

The trajectory of $x_0$ performs infinitely many jumps in the sense described in Section \ref{S:X1-T1-log2} (formally, a jump is a pair of points).
So, we can use them to separate $\{x_0,x_1,x_2,\dots\}$ into a disjoint union of sets (finite sequences) $J(m)$ \index{$J(m)$}, $m=0,1,\dots$, as follows:
\begin{itemize}
	\item $J(0)=\{x_0,x_1,\dots, x_{j_0}\}=\{x_{k_0},x_{k_0+1},\dots, x_{k_0+j_0}\}$ where $k_0=0$ and $j_0$ is the smallest positive integer such that $x_{j_0}$ is the first point of a jump (then  $x_{j_0+1}$ is the last point of this jump). In Figure~\ref{fig:p11}, $j_0=3$.
	\item $J(1)=\{x_{k_1},x_{k_1+1},\dots, x_{k_1+j_1}\}$ where $k_1=j_0+1$ and $j_1$ is the smallest positive integer such that
	$x_{k_1+j_1}$ is the first point of a jump. In Figure~\ref{fig:p11}, $k_1=4$ and in Figure~\ref{fig:ig11} we see $x_{k_1+j_1}$ as the point on the right hand side of $a_{\infty}$, from which the trajectory jumps to the left hand side of $a_{\infty}$ (to be precise, in the figure we see the $P_2$-projections of those points).
	\item In general, for $m\geq 1$, $J(m)=\{x_{k_m},x_{k_m+1},\dots, x_{k_m+j_m}\}$ where $k_m=k_{m-1}+j_{m-1}+1$, and $j_m$ is the smallest positive integer such that
	$x_{k_m+j_m}$ is the first point of a jump.
\end{itemize}
So, these sets are maximal intervals in $\{x_0,x_1,x_2,\dots\}$ containing no jump. A jump can only be performed from the last point of
$J(m)$  to the first point of $J(m+1)$. The set $J(m)$ is called the \emph{$m$-th jump level} \index{jump level}. Note that it contains exactly one pair of points whose $P_2$-projections are $a_0$ and $a_1$.

\medskip

Fix $m\geq 0$. Recall that $P_2(J(m))$ is a block of consecutive points of $\{a_i:\, i\in \mathbb Z\}$. Let $a_k$ and $a_{k+1}$ be two neighboring points in  $P_2(J(m))$. They are $P_2$-projections of two neighboring points from $J(m)$ which we denote by $x^{[m]}_k$ \index{$x^{[m]}_k$} and $x^{[m]}_{k+1}$, respectively (the latter one is closer to $\pi_0$ than the former one).\footnote{Note that, given $m\geq 0$, $x^{[m]}_k$ exists only for finitely many integers $k$, because $J(m)$ is finite. Given an integer $k$, $x^{[m]}_k$ exists for all sufficiently large $m\geq 0$.}  Among all planes containing the points $x^{[m]}_k$ and $x^{[m]}_{k+1}$, let $\mathscr P ^{[m]}_k$ \index{$\mathscr P ^{[m]}_k$} be the one whose angle with $\pi_0$ is the same as the angle of the line $x^{[m]}_k x^{[m]}_{k+1}$ with $\pi_0$, i.e. the plane whose intersection with $\pi_0$ is the line perpendicular to the line $x^{[m]}_k x^{[m]}_{k+1}$.

If $m$ is given and $\mathscr P^{[m]}_k$ exists, then for every set $M \subseteq \pi_0$ lying in the sector $\mathscr S_k$ we define its lift to $\mathscr P ^{[m]}_k$, or the \emph{$[m]$-lift of $M$} \index{$M^{[m]}$}, by
\begin{equation}\label{Eq:mlift}
M^{[m]} = M^{[m]}_k := \{x\in \mathscr P^{[m]}_k :\, P_2(x) \in M\}~.
\end{equation}
The homeomorphism $Z_k \colon \pi_0 \to \pi_0$ can be lifted to a homeomorphism $Z^{[m]}_k\colon \mathscr P^{[m]}_0 \to \mathscr P^{[m]}_k$ \index{$Z^{[m]}_k$}. It is defined by the equality
\[
P_2\circ Z^{[m]}_k = Z_k \circ P_2
\]
(in particular, $Z^{[m]}_0$ is the identity).

If $\mathscr P^{[m]}_k$ is defined (hence $Z^{[m]}_k$ is defined), then the $Z^{[m]}_k$-image of a set (i.e., a $Z^{[m]}_k$-copy of a set) is sometimes less precisely said to be a \emph{$Z^{[m]}$-copy of that set} (i.e. without specifying $k$). A family of sets is said to be \emph{$Z^{[m]}$-homeomorphic} \index{$Z^{[m]}$-homeomorphic} if each of them is a $Z^{[m]}_k$-copy of the same set (the values of $k$ are different for different sets in the family, while $m$ is the same). Clearly, such sets are pairwise homeomorphic.

\medskip

Now let $H \subseteq \pi_0$ be a polygonal arc with the first point $c_1$ and the last point $c_k$, say the union of straight line segments $[c_1,c_2],[c_2,c_3],\dots, [c_{k-1},c_k]$. If they are maximal straight line segments in $H$, they are called the \emph{links} of $H$ and the points $c_2, \dots, c_{k-1}$ are called the \emph{turning points}\index{turning points} of $H$. The first point $c_1$ and the last point $c_k$ of $H$ are also called the \emph{extremal points} of $H$. We define the \emph{distance of any points $a,b\in H$ along $H$},  denoted by $l_H(a,b)$ \index{$l_H(a,b)$}, as follows:
\begin{itemize}
  \item If $a,b \in [c_i,c_{i+1}]$ for some $i$, then $l_H(a,b)=d(a,b)$, where $d$ is the euclidean metric in $\pi_0$.
  \item If $a \in [c_i, c_{i+1}]$, $b \in [c_j, c_{j+1}]$ with $j>i$, then
  $$
  l_H(a,b)=d(a,c_{i+1})+\sum_{t=i+1}^{j-1}d(c_t,c_{t+1}) +d(c_j,b).
  $$
\end{itemize}
The case $i>j$ is covered by requiring $l_H(a,b)=l_H(b,a)$. Notice that if $a,b \in H$ then $d(a,b)\leq l_H(a,b)$. Given different points $a,b\in H$, let $[a,b]_H$ \index{$[a,b]_H$} be the polygonal arc which is a subset of $H$ and has endpoints $a$ and $b$; so, $l_H(a,b)$ is the length of the arc $[a,b]_H$.

\medskip

Now we are going to choose, in the plane $\pi_0$, some polygonal arcs `close' to $\mathscr H_0$ and joining $a_0$ and $a_1$, then some polygonal arcs `close' to $\mathscr H_k$, $k\neq 0$, and joining $a_k$ and $a_{k+1}$, and finally the polygonal arcs, in this case in fact straight line segments, `along' the $P_2$-projections of jumps (meaning that such a straight line segment joins the $P_2$-projections of the points forming a jump).

\bigskip

First consider $\mathscr H_0$. Let $\mathscr V_0^m$ be the open $(1/2^m)$-neighbourhood of $\mathscr H_0$ in $\pi_0$, $m=0,1,\dots$. Then, since $\mathscr H_0$ is closed,
\begin{equation}\label{Eq:V-H0}
\mathscr V_0^0 \supseteq \mathscr V_0^1 \supseteq \dots \quad \text{and} \quad \bigcap_{m=0}^{\infty} \mathscr V_0^m =\mathscr H_0.
\end{equation}
By Lemma~\ref{L:arc in nbhd}, for any nonnegative $m$, the points $a_0$ and $a_1$ can be joined by a polygonal arc $H_0^m \subseteq \mathscr V_0^m$. \index{$\mathscr V_0^m$} Moreover, since $\mathscr H_0$ is a subset of the big rhombus $BR_0$, there exists even a polygonal arc $H_0^m$  \index{$H_0^m$ } joining $a_0$ and $a_1$ such that
\begin{equation}\label{Eq:HinR}
H_0^m \subseteq \mathscr V_0^m \cap BR_0~.
\end{equation}
In fact, for every point $X_{\rm o} \in \pi_0\setminus BR_0$ there exists its projection into $BR_0$, i.e. a unique point $X_{\rm b}$ in the boundary of $BR_0$ which is closest to $X_{\rm o}$ among all points of $BR_0$. When we replace those points of $H_0^m$ which are outside $BR_0$ by their projections into $BR_0$, we obtain a new polygonal arc  joining $a_0$ and $a_1$, already satisfying~\eqref{Eq:HinR}. Since $\mathscr V_0^m$ is open and $H_0^m$ is compact, we may even assume that, in~\eqref{Eq:HinR}, the polygonal arc $H_0^m$ lies in the interior of $BR_0$ with the exception of its extremal points which coincide with the endpoints of the longer diagonal of $BR_0$. Thus,
\begin{equation}\label{Eq:HinRint}
\text{$H_0^m \subseteq \mathscr V_0^m \cap BR_0$ with all non-extremal points of $H_0^m$ in $\Int (BR_0)$~.}
\end{equation}

If a set $A$ is in the open $\varepsilon$-neighbourhood of a set $B$, we say that $A$ is \emph{$\varepsilon$-close} \index{$\varepsilon$-close} to $B$.\footnote{Note the lack of symmetry: If $A$ is $\varepsilon$-close to $B$ then $B$ need not be $\varepsilon$-close to $A$.} The fact that $H_0^m$ is in the open $(1/2^m)$-neighbourhood $\mathscr V_0^m$ of $\mathscr H_0$ thus means that
\begin{equation}\label{Eq:H0mclosetoH0}
\text{$H_0^m$ is $(1/2^m)$-close to $\mathscr H_0$~.}
\end{equation}
Figure~\ref{fig:inrhombus} illustrates the situation.

\begin{figure}[h]
	\centering
	\includegraphics[width=8cm]{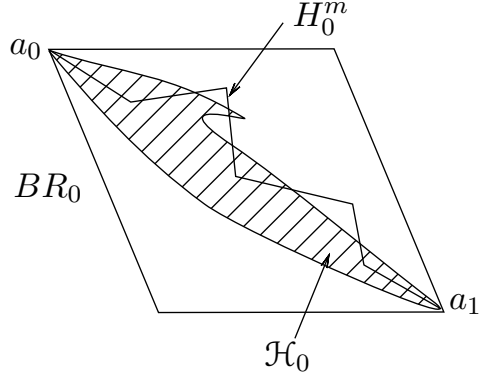}\\
	\caption{The Cook continuum $\mathscr H_0$ and the polygonal arc $H^m_0$ in the big rhombus $BR_0$, with only the extremal points $a_0$ and $a_1$ in its boundary. The arc $H_0^m$ is $(1/2^m)$-close to the continuum $\mathscr H_0$.}\label{fig:inrhombus}
\end{figure}

We choose finitely many points in $H_0^m$ satisfying the following `equidistant points conditions'. \index{equidistant points conditions}

\begin{itemize}
  \item [(EP1)] The points $a_0, (b_0^m)_1, (b_0^m)_2, \dots, (b_0^m)_{n_0^m}, a_1 \in H_0^m$ are equidistant with respect to $l_{H_0^m}$, i.e.
  \[
  l_{H_0^m}(a_0, (b_0^m)_1)= l_{H_0^m}((b_0^m)_1, (b_0^m)_2)= \dots = l_{H_0^m}((b_0^m)_{n_0^m},a_1)= :r_0^m~.
  \]
 This family is said to be the \emph{(distinguished) equidistant family} \index{distinguished equidistant family} in $H_0^m$ or the (distinguished) family of equidistant points in $H_0^m$. The number $r_0^m$ \index{$r_0^m$} is said to be the \emph{equidistance constant} \index{equidistance constant} of $H_0^m$.\footnote{More precisely, we should speak on the equidistance constant of $H_0^m$ with respect to that family of points or on the equidistance constant of the considered family of points with respect to $H_0^m$.} The considered equidistant points divide $H_0^m$ into finitely many (in fact $n_0^m +1$) subarcs. In the sequel we will call them \emph{equi-subarcs} of $H_0^m$. The family of them is naturally ordered, the first equi-subarc being the one containing $a_0$.
 \item [(EP2)] $n_0^m$ is so large, i.e. $r_0^m$ is so small, that each equi-subarc of $H_0^m$ contains at most one turning point of $H_0^m$ in its interior (in the topology of $H_0^m$ inherited from the plane~$\pi_0$).
 \item [(EP3)] $n_0^m \geq 3$ is so large that the first two equi-subarcs and the last equi-subarc of $H_0^m$ are straight line segments.
 \item [(EP4)] $n_0^m$ is so large that
 \[
 [a_0, (b_0^m)_1]_{H_0^m} = \overline{B}_{d}(a_0, r_0^m) \cap H^m_0 \quad \text{ and } \quad [(b_0^m)_{n_0^m}, a_1]_{H_0^m} = \overline{B}_{d}(a_1, r_0^m)\cap H^m_0
 \]
 where $\overline{B}_{d}(x, \delta)$ is the closed ball, with respect to the euclidean metric $d$, with centre $x$ and radius~$\delta$.
 Notice that, by (EP3), on the left hand sides we have straight line segments and not just polygonal arcs.
 \item  [(EP5)] The sequence $(n_0^m)_{m=0}^\infty$ increases so fast that, for any nonnegative $m$,
  \[
  0<r_0^{m+1}<(1/2)r_0^m~, \text{ hence } r_0^m \searrow 0 \text{ as } m\to \infty~.
  \]
\end{itemize}

\medskip

 For $m \geq 0$, the homeomorphism $Z_k$ sends
\begin{itemize}
\item the open neighbourhood $\mathscr V^m_0$ of $\mathscr H_0$ to the open neighbourhood  $\mathscr V^m_k := Z_k(\mathscr V^m_0)$ of $\mathscr H_k$, with (see~\eqref{Eq:V-H0})
\begin{equation}\label{Eq:V-Hk}
\mathscr V_k^0 \supseteq \mathscr V_k^1 \supseteq \dots \quad \text{and} \quad  \bigcap_{m=0}^{\infty}\mathscr V^m_k = \mathscr H_k~;
\end{equation}
\item the polygonal arc $H_0^m$ joining $a_0$ and $a_1$ to the polygonal arc $H_k^m:= Z_k(H_0^m)$ \index{$H_k^m$} joining $a_k$ and $a_{k+1}$ (in particular, the turning points of $H_0^m$ are mapped to the turning points of $H_k^m$), where, due to~\eqref{Eq:HinRint},
\begin{equation}\label{Eq:HkinRint}
\text{$H_k^m \subseteq \mathscr V_k^m \cap BR_k$ with all non-extremal points of $H_k^m$ in $\Int (BR_k)$}
\end{equation}
and, since the similitudes $Z_k$ do not increase distances, due to~\eqref{Eq:H0mclosetoH0} we have that
\begin{equation}\label{Eq:HkmclosetoHk}
\text{$H_k^m$ is $(1/2^m)$-close to $\mathscr H_k$~;}
\end{equation}
\item the points $a_0$, $(b_0^m)_1$, $(b_0^m)_2$, $\dots$, $(b_0^m)_{n_0^m}$, $a_1 \in H_0^m$ with the equidistance constant $r_0^m$ to the same number of points
$a_k$, $(b_k^m)_1$, $(b_k^m)_2$, $\dots$, $(b_k^m)_{n_0^m}$, $a_{k+1} \in H_k^m$.
\end{itemize}

\medskip

Since $Z_k$ is a similitude, we have the following analogues of (EP1)-(EP5).

\begin{itemize}
\item [(EP6)] The family of points $a_k$, $(b_k^m)_1$, $(b_k^m)_2$, $\dots$, $(b_k^m)_{n_0^m}$, $a_{k+1} \in H_k^m$, being the $Z_k$-image of the equidistant family of points in $H_0^m$, is an equidistant family of points in $H_k^m$, with some equidistance constant $r_k^m \leq r_0^m$ (with equality only for $k=0,-1$,  see~\eqref{Eq:decr}) with respect to $l_{H_k^m}$.
\item [(EP7)] Each equi-subarc of $H_k^m$ contains at most one turning point of $H_k^m$ in its interior.
\item [(EP8)] The first two equi-subarcs and the last equi-subarc of $H_k^m$ are straight line segments.
\item [(EP9)] $[a_k, (b_k^m)_1]_{H_k^m} = \overline{B}_{d}(a_k, r_k^m) \cap H^m_k$ and $[(b_k^m)_{n_0^m}, a_{k+1}]_{H_k^m} = \overline{B}_{d}(a_{k+1}, r_k^m)\cap H^m_k$.
\item  [(EP10)] $0<r_k^{m+1}<(1/2)r_k^m$,  hence $r_k^m \searrow 0 \text{ as } m\to \infty$.
\end{itemize}

\medskip

 Finally, for any nonnegative $m$ consider the $m$-th jump, i.e. the jump from $J(m)$ to $J(m+1)$, and let $L^m \subseteq \pi_0$ \index{$L^m$} be the straight line segment whose endpoints are $P_2(x_{k_m + j_m}), P_2(x_{k_{m+1}}) \in A$, i.e. the $P_2$-projections of the two points forming the jump (here $k_{m+1}= k_m + j_m +1$). Let $\mathscr U^m$ \index{$\mathscr U^m$} be the open $(1/2^m)$-neighbourhood of $L^m$  in $\pi_0$ (for each $L^m$ we consider just one neighbourhood $\mathscr U^m$). In an obvious sense,
\begin{equation}\label{Eq:U-ainfty}
\text{the neighbourhoods $\mathscr U^m$ of $L^m$ converge to the point $a_{\infty}$ as $m\to \infty$}.
\end{equation}
We choose finitely many points in $L^m$ satisfying the following conditions.
\begin{itemize}
  \item [(EP11)] The points $P_2(x_{k_m + j_m}),  (c_L^m)_1, (c_L^m)_2, \dots, (c_L^m)_{n_0^m}, P_2(x_{k_{m+1}}) \in L^m$
  are equidistant with respect to (both $d$ and) $l_{L^m}$, with the equidistance constant $r^m_L$. Note that the number of these points is the same as the number of the chosen distinguished points in $H_0^m$ or in $H_k^m$. Notice that the analogues of (EP2)-(EP4), or (EP7)-(EP9), are trivial.
  \item [(EP12)] Since the number of points in (EP11) tends to infinity and the length of $L^m$ converge to zero, we get $r_L^m \to 0$ as $m\to \infty$.
\end{itemize}

\medskip

Now consider the trajectory
$$
\underbrace{x_0,x_1,\dots, x_{j_0}}_{J(0)}, \quad \underbrace{x_{k_1},x_{k_1+1},\dots, x_{k_1+j_1}}_{J(1)}, \quad
\underbrace{x_{k_2},\dots}_{J(2)}, \quad \dots
$$
and also the sequence of $P_2$-projections of these points. These two sequences can be seen in the first and the second row of Table~\ref{T:xP2}, respectively. In the table we of course have $p_0=j_0$. All the integers $n_i$ and $p_i$ are positive. It follows from the construction of the trajectory of $x_0$ that $n_i \to \infty$ and $p_i \to \infty$.
\begin{table}[h]
	\begin{footnotesize}
		\begin{center}
			\begin{tabular}{|c|c|c|c|c?c|c|c|c|c?c|c|}
				\hline
				$x_0$ & $x_1$ & $\dots$ & $x_{j_0-1}$ & $x_{j_0}$ & $x_{k_1}$  & $x_{k_1+1}$ & $\dots$ & $x_{k_1+j_1-1}$
				& $x_{k_1+j_1}$ & $x_{k_2}$  & $\dots$ \\
				\hline
				$a_0$ & $a_1$ & $\dots$ & $a_{p_0-1}$ & $a_{p_0}$ & $a_{-n_1}$ & $a_{-n_1+1}$
				& $\dots$ & $a_{p_1-1}$ & $a_{p_1}$ & $a_{-n_2}$ & $\dots$ \\
				\hline
			\end{tabular}
		\end{center}
	\end{footnotesize}
	\caption{Trajectory of $x_0$ and its $P_2$-projection}\label{T:xP2}
\end{table}

\noindent While the trajectory of $x_0$ is injective, its $P_2$-projection in the second row contains for instance the periodic subsequence $a_0, a_1, a_0, a_1, \dots$. Consider the following sequence of polygonal arcs which join the con\-se\-cu\-ti\-ve points of the sequence $P_2(x_0) =~a_0$, $P_2(x_1) = a_1, \dots$:
\begin{equation}\label{Eq:HLpn}
\underbrace{H_0^0, H_1^0, \dots, H_{p_0-1}^0}_{H_k^0 = Z_k(H_0^0)}, \quad L^0, \quad \underbrace{H_{-n_1}^1, H_{-n_1+1}^1,\dots, H_{p_1-1}^1}_{H_k^1= Z_k(H_0^1)}, \quad L^1, \quad H_{-n_2}^2, \dots~.
\end{equation}
Notice that this sequence contains a subsequence of polygonal arcs $H^0_0, H^1_0, H^2_0, \dots$ which are used to join the pairs of points $a_0$ and $a_1$ from the above mentioned periodic sequence.
In the second row of the following Table~\ref{T:bounds} we summarize the equidistance constants for the arcs from~\eqref{Eq:HLpn}.  By (EP5), (EP10), (EP12) and the fact that $r_k^m \leq r_0^m$ we know that they converge to zero.
\begin{table}[h]
	\begin{footnotesize}
		\begin{center}
			\begin{tabular}{|c|c|c|c?c?c|c|c|c?c?c|c|}
				\hline
				$H_0^0$ & $H_1^0$ & $\dots$ & $H_{p_0-1}^0$ & $L^0$ & $H_{-n_1}^1$  & $H_{-n_1+1}^1$ & $\dots$ & $H_{p_1-1}^1$
				& $L^1$ & $H_{-n_2}^2$  & $\dots$ \\
				\hline
				$r_0^0$ & $r_1^0$ & $\dots$ & $r_{p_0-1}^0$ & $r_L^0$ & $r_{-n_1}^1$ & $r_{-n_1+1}^1$
				& $\dots$ & $r_{p_1-1}^1$ & $r_L^1$ & $r_{-n_2}^2$ & $\dots$ \\
				\hline
			\end{tabular}
		\end{center}
	\end{footnotesize}
	\caption{Polygonal arcs joining the points of the $P_2$-projection of the trajectory of $x_0$ and their equidistance constants (converging to zero)}\label{T:bounds}
\end{table}

Now we are going to lift the polygonal arcs from~\eqref{Eq:HLpn} (together with the distinguished families of equidistant points in them), in order to join the consecutive points of the trajectory $x_0, x_1, \dots$. We use lifts as follows.
\begin{itemize}
	\item For each arc from~\eqref{Eq:HLpn} which is of the form $H_i^m$ (and so is a subset of the big rhombus $BR_i$ in the sector $\mathscr S_i$) we use its $[m]$-lift, see~\eqref{Eq:mlift}.
	\item For each straight line segment $L^m \subseteq \pi_0$ joining the points $a_{p_m}$ and $a_{-n_{m+1}}$, see Table~\ref{T:xP2}, we use, as its lift, the straight line segment joining the points $x_{k_m+j_m}$ and $x_{k_{m+1}}$, i.e. the last point of $J(m)$ and the first point of $J(m+1)$.
\end{itemize}
For  each polygonal arc $H$ from~\eqref{Eq:HLpn} we have thus defined, in a unique way, its \emph{lift} \index{lift}. We denote this lift by $\mathscr L(H)$ and we also call it the \emph{regular polygonal arc above $H$}. However, if $\mathscr L(H)$ has endpoints $x_h$ and $x_{h+1}$, then a `natural' notation for this lift is
$\mathscr L(h)$ \index{$\mathscr L(h)$}, so we have the sequence $\mathscr L(0), \mathscr L(1), \mathscr L(2), \dots$. In Table~\ref{T: L(H)},
each of the considered regular polygonal arcs is placed in the third row just below its first point (the next point to the right is its last point). In the fourth row we write the mentioned `natural' notation. Any two arcs are of course homeomorphic. Some of the considered arcs are even $Z^{[m]}$-homeomorphic for some $m$, as shown in the last row of the table. By the construction, see also \eqref{Eq:HinR} and \eqref{Eq:HkinRint}, the lifts  in the table intersect only in their common endpoints.
\begin{table}[h]
	\begin{footnotesize}
		\begin{center}
			\begin{tabular}{|c|c|c|c|c|c|c|c|c|c|}
		    	\hline
				\multicolumn{4}{|c|}{$J(0)$} &	\multicolumn{4}{c|}{$J(1)$} & \multicolumn{2}{c|}{$\dots$} \\
				\hline
				$x_0$ & $\dots$ & $x_{j_0-1}$ & $x_{j_0}$ & $x_{k_1}$  & $\dots$ & $x_{k_1+j_1-1}$
				& $x_{k_1+j_1}$ & $x_{k_2}$  & $\dots$ \\
				\hline \hline
				$\mathscr L(H_0^0)$ & $\dots$ & $\mathscr L(H_{p_0-1}^0)$ & $\mathscr L(L^0)$ & $\mathscr L(H_{-n_1}^1)$
				& $\dots$ & $\mathscr L(H_{p_1-1}^1)$ & $\mathscr L(L^1)$ & $\mathscr L(H_{-n_2}^2)$ & $\dots$ \\
				\hline
				$\mathscr L(0)$ & $\dots$ & $\mathscr L(j_0-1)$ & $\mathscr L(j_0)$ & $\mathscr L(j_0+1)$ & $\dots$ & $\mathscr L(j_0+j_1)$
				& $\mathscr L(j_0+j_1+1)$ & $\mathscr L(j_0+j_1+2)$  & $\dots$ \\
				\hline
			    \multicolumn{3}{|c|}{$Z^{[0]}$-copies of $\mathscr L(H^0_0)$} &  & \multicolumn{3}{c|}{$Z^{[1]}$-copies of $\mathscr L(H^1_0)$} &  & \multicolumn{2}{c|}{$\dots$}\\
			    \hline
			    \end{tabular}
		\end{center}
	\end{footnotesize}
	\caption{The regular polygonal arcs joining the elements of the trajectory of $x_0$}\label{T: L(H)}
\end{table}

\noindent The distinguished equidistant family of points in a polygonal arc $H$ from~\eqref{Eq:HLpn} is lifted to the family of points in $\mathscr L(H)$ with the same cardinality but, in general, this lifted family of points is not equidistant.

The polygonal arcs from~\eqref{Eq:HLpn} can be viewed as the $P_2$-projections of their lifts. The first and the second row of the following Table~\ref{T:bounds2} show these arcs written in two alternative ways. The third row of the table contains their equidistance constants, but redenoted; this new notation will be more convenient than the one from Table~\ref{T:bounds}.

\begin{table}[h]
	\begin{footnotesize}
		\begin{center}
			\begin{tabular}{|c|c|c?c?c|c|c?c?c|}
				\hline
				$H_0^0$ &  $\dots$ & $H_{p_0-1}^0$ & $L^0$ & $H_{-n_1}^1$  &  $\dots$ & $H_{p_1-1}^1$
				& $L^1$ &  $\dots$ \\
				\hline
				$P_2(\mathscr L(0))$ & $\dots$ & $P_2(\mathscr L(j_0-1))$ & $P_2(\mathscr L(j_0))$ & $P_2(\mathscr L(j_0+1))$  & $\dots$ & $P_2(\mathscr L(j_0+j_1))$
				& $P_2(\mathscr L(j_0+j_1+1))$ &  $\dots$ \\
				\hline
				$r(0)$ &  $\dots$ & $r(j_0-1)$ & $r(j_0)$ & $r(j_0+1)$
				& $\dots$ & $r(j_0+j_1$ & $r(j_0+j_1+1)$ &  $\dots$ \\
				\hline
			\end{tabular}
		\end{center}
	\end{footnotesize}
	\caption{Polygonal arcs joining the points of the $P_2$-projection of the trajectory of $x_0$, an alternative notation. The equidistance constants $r(j) \to 0$.}\label{T:bounds2}
\end{table}

Recall~\eqref{Eq:H0mclosetoH0} and \eqref{Eq:HkmclosetoHk}. Further, since $L^m \subseteq \mathscr U^m$, \eqref{Eq:U-ainfty} shows that, for some $\delta(m)$,
\begin{equation}\label{Eq:L-ainfty}
\text{$L^m$ is $\delta(m)$-close to $a_{\infty}$ and $\delta(m) \to 0$ as $m\to \infty$}.
\end{equation}
The situation is shown in Table~\ref{T:close to head}.

\begin{table}[h]
	\begin{footnotesize}
		\begin{center}
			\begin{tabular}{|c|c|c?c?c|c|c?c?c|}
				\hline
				$H_0^0$ &  $\dots$ & $H_{p_0-1}^0$ & $L^0$ & $H_{-n_1}^1$  &  $\dots$ & $H_{p_1-1}^1$
				& $L^1$ &  $\dots$ \\
				\hline
				$P_2(\mathscr L(0))$ & $\dots$ & $P_2(\mathscr L(j_0-1))$ & $P_2(\mathscr L(j_0))$ & $P_2(\mathscr L(j_0+1))$  & $\dots$ & $P_2(\mathscr L(j_0+j_1))$
				& $P_2(\mathscr L(j_0+j_1+1))$ &  $\dots$ \\
				\hline
				\multicolumn{3}{|c?}{$(1/2^0)$-close to} & $\delta(0)$-close & \multicolumn{3}{c?}{$(1/2^1)$-close to} & $\delta(1)$-close  &  $\dots$ \\
				$\mathscr H_0$ &  $\dots$ & $\mathscr H_{p_0-1}$ & to $\{a_{\infty}\}$ & $\mathscr H_{-n_1}$  &  $\dots$ & $\mathscr H_{p_1-1}$  & to $\{a_{\infty}\}$ &  $\dots$ \\
				\hline
			\end{tabular}
		\end{center}
	\end{footnotesize}
	\caption{Polygonal arcs $P_2(\mathscr L(j)) \subseteq \pi_0$ approach the head. Here $\delta(m) \to 0$.}\label{T:close to head}
\end{table}

We are going to finish the definition of the snake. For every $j\geq 0$, the continuum $D(j)$ joining the points $x_j$ and $x_{j+1}$ will lie in a plane $\mathscr P(j)$:
\[
x_j, x_{j+1} \in D(j) \subseteq \mathscr P(j)~.
\]
With few exceptions (corresponding to the jumps), $\mathscr P(j)$ \index{$\mathscr P(j)$} will be one of the planes $\mathscr P_k^{[m]}$ introduced earlier. To define $\mathscr P(j)$, fix $j$ and take into account that $x_j\in J(m)$ for some $m\geq 0$. There are two possibilities.
\begin{enumerate}
	\item [(i)] If also $x_{j+1}$ belongs to $J(m)$, the $P_2$-projections of $x_j$ and $x_{j+1}$ are some consecutive points $a_k$ and $a_{k+1}$ from $A$. Then put $\mathscr P(j):=\mathscr P_k^{[m]}$.
	\item [(ii)] If $x_j$ is the last point of $J(m)$, then $x_{j+1}$ is the first point of $J(m+1)$ and so these two points form the $m$-th jump. Then let $\mathscr P(j)$ be that plane containing $x_j$ and $x_{j+1}$ whose intersection with $\pi_0$ is the line perpendicular to the line containing $x_j$ and $x_{j+1}$ (recall that $x_{j+1}$ is closer to $\pi_0$ than $x_j$).
\end{enumerate}

So, we have
\begin{equation}\label{Eq:LjinPj}
x_j, x_{j+1} \in \mathscr L(j)\subseteq \mathscr P(j), \quad j=0,1,\dots .
\end{equation}
For each $j$,
\[
\text{we place $D(j)$ in a `nice way' along the regular polygonal arc $\mathscr L(j)$}
\]
(see~\eqref{Eq:DmDm*} and~\eqref{Eq:every2nd}). By this we first of all mean that we choose homeomorphic copies of the sets $D(j)$, but still denoted by the same symbols $D(j)$, satisfying the following three `placing conditions' for every $j$. \index{placing conditions}
\begin{itemize}
	\item [(PL1)] The first point and the last point of $D(j)$ coincide with the first point $x_j$ and the last point $x_{j+1}$ of the polygonal arc $\mathscr L(j)$, respectively.
	
	\item [(PL2)] The set $E(j)$ \index{$E(j)$} consisting of the extremal points of $D(j)$ and the extremal points of all bricks in $D(j)$ is a subset of the polygonal arc $\mathscr L(j)$ and is ordered along $\mathscr L(j)$ in the `natural' way, i.e. as in Section~\ref{S:X} (where, however, the role of $\mathscr L(j)$ is played by a straight line segment): the first point of $D(j)$ = $x_j$ = the 1st point of the 1st brick, then the last point of the 1st brick = the first point of the 2nd brick, then the last point of the 2nd brick, etc., and finally the last point of $D(j)$.
	
	\item [(PL3)] The set $E(j) \subseteq \mathscr L(j)$ contains, among others,
		\begin{itemize}
			\item [$\blacktriangleright$] the set $E_{eq}(j)$ \index{$E_{eq}(j)$} of all the points of $\mathscr L(j)$ which are the lifts of the points from the distinguished equidistant family of points in the polygonal arc $P_2(\mathscr L(j))$,\footnote{In spite of this and in spite of the notation, note that $E_{eq}(j)$ is in general not equidistant with respect to $\ell_{\mathscr L(j)}$.} and also
			\item [$\blacktriangleright$] all the points of $\mathscr L(j)$ which are the lifts of the turning points of the polygonal arc $P_2(\mathscr L(j))$.
		\end{itemize}
\end{itemize}		
		\medskip
			
	Since $E(j)$ is by definition infinite, it has to contain also some other points from  $\mathscr L(j)$. In other words, (PL1)--PL(3) admit many ways of placing the set $D(j)$ along $\mathscr L(j)$  --- the points of the two kinds mentioned in (PL3) divide $\mathscr L(j)$ into finitely many \emph{straight line segments}, and we have to choose how many consecutive bricks of $D(j)$ will be placed along the individual segments. Of course, along each of them we have to place at least one brick and along the last of them we have to place a `tail' consisting of infinitely many bricks. Though the particular way how we do that will be specified below, in Subsection~\ref{SS:geometry}, for a moment suppose that we have already chosen the extremal points of all the bricks in the snake. This assumption allows us to describe in more details how we place the sets $D(j)$ along $\mathscr L(j)$.
	
\medskip
	
Fix $j\geq 0$. By~\eqref{Eq:every2nd} we can write $D^*(j)$ \index{$D^*(j)$} as the union of bricks, call them now $B_{j,n}$, $n=1,2,\dots$\index{$B_{j,n}$} :
\begin{equation}\label{Eq:DviaB}
D^*(j)= \bigcup_{n=1}^{\infty} B_{j,n}~.
\end{equation}
Let $f_{j,n}$ and $\ell_{j,n}$ be the first and the last point of $B_{j,n}$, respectively. The polygonal arc $P_2(\mathscr L(j))$ is one of the polygonal arcs $H^m_k$ or one of the straight line segments $L^m$, see e.g. Table~\ref{T:close to head}.

First suppose that $P_2(\mathscr L(j)) = H^m_k$ for some $m\geq 0$ and $k\in \mathbb Z$. Recall that $H^m_k$ is a subset of the big rhombus $BR_k$ and that even~\eqref{Eq:HkinRint} holds. Due to (PL3), for every $n$ we have that $[f_{j,n}, \ell_{j,n}]_{\mathscr L(j)}$ is a straight line segment in $\mathscr L(j)$ and so $P_2([f_{j,n}, \ell_{j,n}]_{\mathscr L(j)}) \subseteq H^m_k$ is a straight line segment with the endpoints $P_2(f_{j,n})$ and $P_2(\ell_{j,n})$. For every $n$, we choose a
\emph{`small' rhombus}  $SR_{j,n}$ \index{small rhombus} \index{$SR_{j,n}$}
such that the longer diagonal of $SR_{j,n}$ is the straight line segment $P_2([f_{j,n}, \ell_{j,n}]_{\mathscr L(j)})$ and the other diagonal is short enough so that the small rhombuses $SR_{j,n}$ for $n=1,2,\dots$ are pairwise disjoint, except that the consecutive rhombuses have one vertex in common. Moreover, due to~\eqref{Eq:HkinRint} we may choose this `chain' of small rhombuses in such a way that it completely lies in the big rhombus $BR_k$:
\[
\bigcup_{n=1}^{\infty} SR_{j,n} \subseteq BR_k~.
\]
\noindent By lifting the small rhombuses $SR_{j,n}$ to $\mathscr P(j)$ we get quadrilaterals $Q_{j,n} \subseteq \mathscr P(j)$ \index{$Q_{j,n}$}(in fact  parallelograms, but not necessarily rhombuses).\footnote{We do not introduce any notation for the lifts of the big rhombuses $BR_k$.} They are pairwise disjoint except that two consecutive quadrilaterals have one vertex in common, because the small rhombuses have such a property. See Figure~\ref{fig:SRinBR}.

\begin{figure}[h]
	\centering
	\includegraphics[width=14cm]{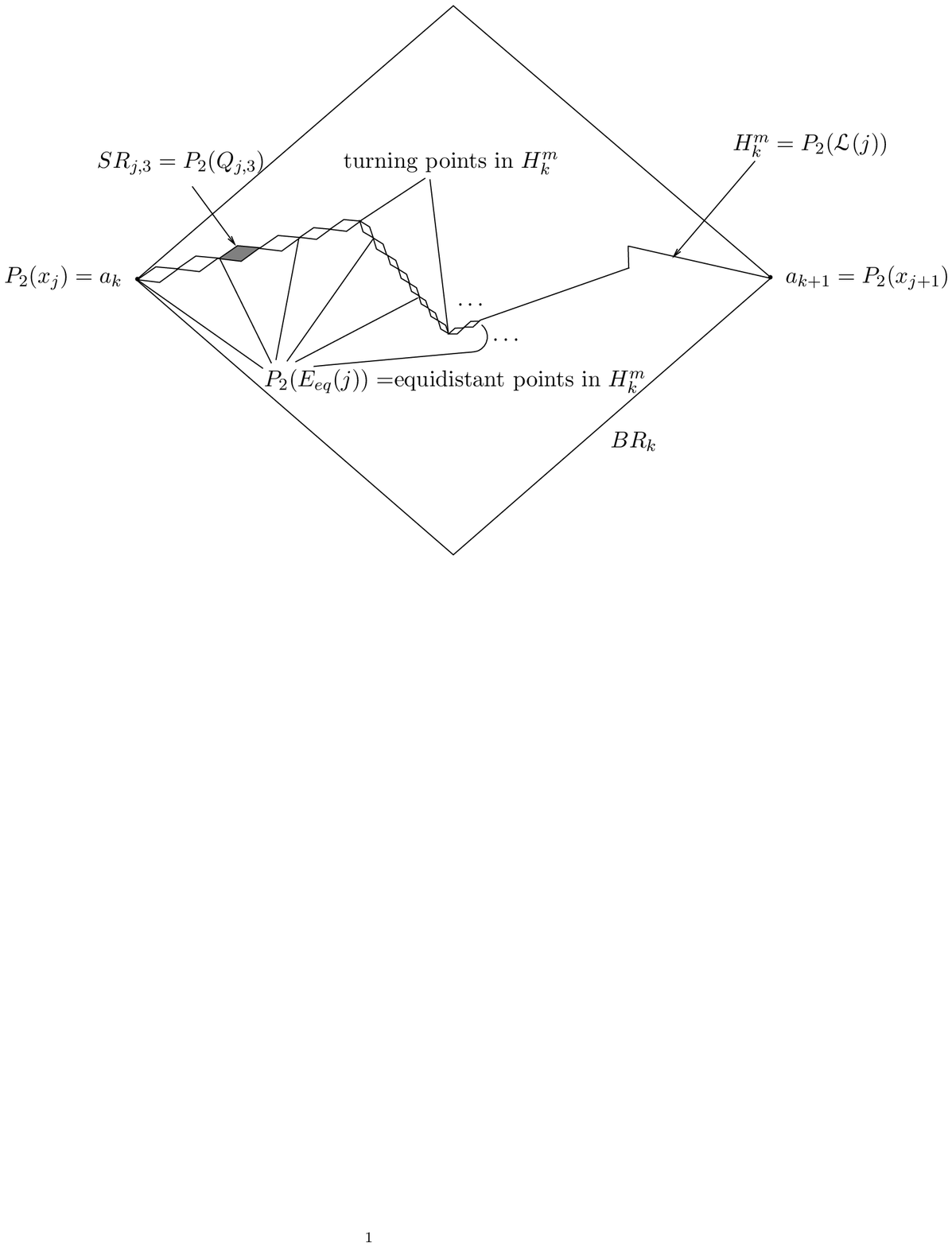}\\
	\caption{The chain of small rhombuses $SR_{j,n}$, $n=1,2,\dots$, placed along the po\-ly\-go\-nal arc $P_2(\mathscr L(j)) = H^m_k$ in the big rhombus $BR_k$. The lifts of $SR_{j,n}$
		to $\mathscr P(j)$ are some quadrilaterals $Q_{j,n}$ placed along the polygonal arc $\mathscr L(j)$ joining the points $x_j$ and $x_{j+1}$. Notice that, by (PL3) and the definition of small rhombuses, every turning point of $H^m_k$ is a vertex of a small rhombus and the same is true for every point from the distinguished family of equidistant points in $H^m_k$.}\label{fig:SRinBR}
\end{figure}

Now suppose that $P_2(\mathscr L(j)) = L^m$  for some $m\geq 0$ (we say that $D(j)$ is going to be placed along the straight line segment $\mathscr L(j)$ or along the $m$-th jump). Then we proceed analogously. Again we choose, now along the straight line segment $L^m$,  small rhombuses $SR_{j,n}$ which are pairwise disjoint, except that two consecutive rhombuses have one point in common. Here $L^m$ corresponds to a jump, so this chain of small rhombuses is between two chains of small rhombuses corresponding to the neighbouring sets $D(j-1)$ and $D(j+1)$, and so placed along polygonal arcs in some big rhombuses as already explained above. Since the jump is almost horizontal (in the sense explained below the formula~\eqref{eq:qth jump}), we can choose the small rhombuses along $L^m$  in such a way that they are disjoint from those corresponding to $D(j-1)$ and $D(j+1)$. Again, by lifting $SR_{j,n}$ to $\mathscr P(j)$ we get quadrilaterals $Q_{j,n} \subseteq \mathscr P(j)$ (one can see that in this case they are in fact rhombuses) which are pairwise disjoint except that two consecutive quadrilaterals have one vertex in common. See Figure~\ref{fig:SRjump}.

\begin{figure}[h]
	\centering
	\includegraphics[width=14cm]{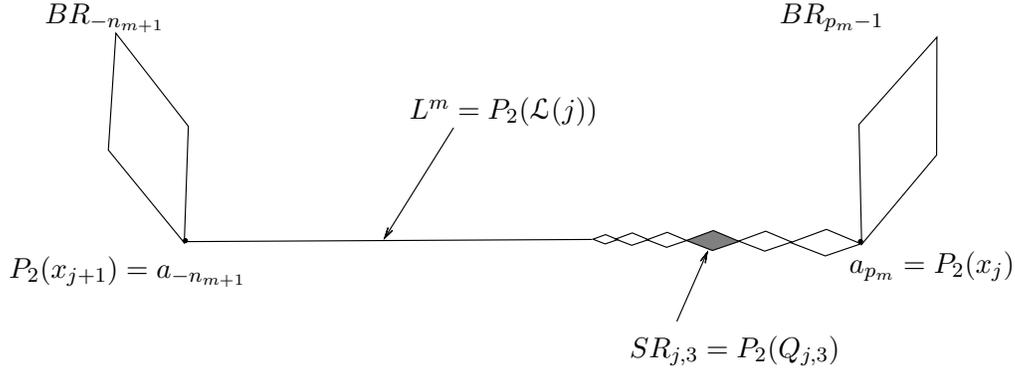}\\
	\caption{The chain of small rhombuses along $P_2(\mathscr L(j))$. It is between two big rhombuses containing their own chains of small rhombuses.
		See Table~\ref{T:xP2} for the notation of the endpoints of $L_m$.}\label{fig:SRjump}
\end{figure}

\medskip

Thus, for every $j\geq 0$, $n\geq 1$ and corresponding brick $B_{j,n}$ as in~\eqref{Eq:DviaB}, we choose a small rhombus $SR_{j,n} \ \subseteq \pi_0$ and a quadrilateral $Q_{j,n} \subseteq \mathscr P(j)$. The consecutive points of the trajectory $x_0, x_1, \dots$ are joined by chains of quadrilaterals, so we have in fact a chain of chains of quadrilaterals. According to our choice of them, and the fact that the distances of the points $x_j$ from the plane $\pi_0$ strictly decrease, it is obvious that all the quadrilaterals are pairwise disjoint except that if two of them are consecutive (and so lie in one of those chains of quadrilaterals) then they have one point in common.\footnote{The family of all small rhombuses $SR_{j,n}$ in $\pi_0$ of course does not have this property. For instance, every point $a_k$, $k\in \mathbb Z$ belongs to infinitely many small rhombuses.}

Finally, we are ready to formulate the fourth `placing condition'.
\begin{itemize}
	\item [(PL4)] Every brick $B_{j,n}$, see~\eqref{Eq:DviaB}, is placed in the corresponding quadrilateral $Q_{j,n}\subseteq \mathscr P(j)$ in such a way that the first point and the last point of $B_{j,n}$ coincide with the first point and the last point of $Q_{j,n}$, respectively (these are the lifts of the first point and the last point of $SR_{j,n}$, respectively).
\end{itemize}	

Note that, by (PL4), all the bricks in the snake are pairwise disjoint with the exception that two neighboring bricks intersect at one point (because the quadrilaterals containing them have such a property).

Clearly, for every $j$ and $n$, $\diam SR_{j,n} = \diam P_2(B_{j,n}) = d(P_2(f_{j,n}), P_2(\ell_{j,n}))$. If the small rhombuses $SR_{j,n}$ are chosen narrow enough, i.e. with shorter diagonals short enough, then also $\diam Q_{j,n} = \diam B_{j,n} = d(f_{j,n},\ell_{j,n})$ and  even the following is true. We formulate it as the fifth `placing condition'.

\begin{itemize}	
	\item [(PL5)] We choose small rhombuses narrow enough to ensure the following. If, for some $j\geq 0$, $\mathscr D$ is a union of (finitely or infinitely many) consecutive bricks of $D^*(j)$, $\mathscr {SR_D}$ and $\mathscr {Q_D}$ are the unions of small rhombuses and quadrilaterals, respectively, corresponding to the bricks in $\mathscr D$, and $\mathscr {E_D}$ is the set of extremal points of the bricks in $\mathscr D$, then
	\[
	\diam \mathscr {SR_D} = \diam P_2(\mathscr D) = \diam P_2(\mathscr {E_D})
	\]
	and
	\[
	\diam \mathscr {Q_D} = \diam \mathscr D = \diam \mathscr {E_D}~.
	\]
\end{itemize}	
Indeed, no matter whether we are in the plane $\pi_0$ or in the plane $\mathscr P(j)$ (i.e. no matter whether we want to prove the first formula or the second one), a simple geometrical argument shows that the diameter of a polygonal arc is realized as the diameter of the finite set of the endpoints of all its links. Then one can see that if the small rhombuses are narrow enough, both formulas in (PL5) work.

\medskip

Having described the head and the snake, we finally define our space, a candidate for the equality $S(X)=\{0,\log 2\}$, by
\begin{equation}\label{X-definition-log2}
X= \text{head } \sqcup \text{ snake} = \mathscr A_0 \sqcup \bigcup_{m=0}^{\infty}D(m)= \mathscr A_0 \sqcup \bigcup_{m=0}^{\infty}D^*(m).
\end{equation}

\begin{lem}\label{L:Xcpct}
The space $X\subseteq \mathbb R^3$ defined by~\eqref{X-definition-log2} is a one-dimensional continuum.	
\end{lem}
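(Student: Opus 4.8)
The plan is to follow the pattern of Lemma~\ref{L:Xiscont} and of parts (1)--(2) of Lemma~\ref{L:space}: verify separately that $X$, as defined in~\eqref{X-definition-log2}, is compact, connected and one-dimensional. Almost all the work goes into compactness, and more precisely into the following claim, which plays here the role that ``$X$ is the closure of the snake'' played in Section~\ref{S:X}. \textbf{Claim.} Every point of $\overline{\text{snake}}\setminus\text{snake}$ lies in the head $\mathscr A_0$; consequently $\overline{\text{snake}}\subseteq X$, so $X$ is closed in $\mathbb R^3$, hence (being bounded) compact.

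To prove the Claim I would take $y=\lim y_n$ with $y_n$ in the snake and $y$ not in the snake. Writing $y_n\in D(j_n)$, the index sequence $(j_n)$ must be unbounded (each $\bigcup_{j\le N}D(j)$ is a compact subset of the snake), so after passing to a subsequence $j_n\nearrow\infty$; since each jump level $J(m)$ is finite, the level $m_n$ with $x_{j_n}\in J(m_n)$ satisfies $m_n\to\infty$. The second coordinate is the easy part: by the placing conditions (PL2) and (PL5) together with the narrowness of the small rhombuses, $P_2(D(j_n))$ lies in a small neighbourhood of the polygonal arc $P_2(\mathscr L(j_n))$, which by construction (cf. Table~\ref{T:close to head}, \eqref{Eq:HkmclosetoHk}, \eqref{Eq:L-ainfty}) is either $(1/2^{m_n})$-close to some $\mathscr H_k\subseteq\mathscr A_0$ or $\delta(m_n)$-close to $a_\infty\in\mathscr A_0$; either way $d(P_2(y_n),\mathscr A_0)\to0$, so $P_2(y)\in\mathscr A_0$ because $\mathscr A_0$ is closed. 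The first coordinate is the crux: one must show $P_1(y_n)\to0$, i.e. that $D(j)$ contracts onto $\pi_0$ as $j\to\infty$. Since $D(j_n)\subseteq\mathscr P(j_n)$, and $\mathscr P(j_n)$ passes through $x_{j_n}$ and $x_{j_n+1}$, both with $P_1\to0$ by~\eqref{Eq:projections}, \eqref{Eq:approachpi0}, a point $q\in D(j_n)$ satisfies $P_1(q)\le P_1(x_{j_n})+\diam\bigl(P_2(D(j_n))\bigr)\cdot\tan\theta_n$, where $\theta_n$ is the angle between $\mathscr P(j_n)$ and $\pi_0$. Now by the very definition of the planes, $\mathscr P(j_n)$ is the plane through $x^{[m_n]}_k,x^{[m_n]}_{k+1}$ (or through the jump pair) whose angle with $\pi_0$ equals that of the corresponding chord, so $\tan\theta_n=|P_1(x^{[m_n]}_k)-P_1(x^{[m_n]}_{k+1})|/d(a_k,a_{k+1})$, while $\diam P_2(D(j_n))$ is comparable to $d(a_k,a_{k+1})$ (resp. to $\diam L^{m_n}$, for a jump); the two occurrences of $d(a_k,a_{k+1})$ cancel, leaving $P_1(q)\le P_1(x_{j_n})+O\bigl(|P_1(x^{[m_n]}_k)-P_1(x^{[m_n]}_{k+1})|\bigr)\to0$. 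Hence $P_1(y)=0$, and together with $P_2(y)\in\mathscr A_0$ this gives $y\in\{0\}\times\mathscr A_0=\mathscr A_0$, proving the Claim.

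With the Claim the rest is routine. For connectedness: each $D(m)$ is a continuum exactly as in the proof of Lemma~\ref{L:space}(1) (consecutive bricks meet, and $D^*(m)$ is bounded and connected), consecutive $D(m),D(m+1)$ meet at $x_{m+1}$, so the snake is connected and $X=\mathscr A_0\cup\overline{\text{snake}}$ is a union of two compact connected sets; these two meet, since a subsequence of the trajectory $\{x_j\}\subseteq\text{snake}$ converges to $a_\infty\in\mathscr A_0$ (the trajectory passes arbitrarily close to $a_\infty$ at the jumps, with $P_1\to0$). A union of two intersecting continua is a continuum, so $X$ is a continuum. Finally, $X$ is a countable union of closed sets — the singleton $\{a_\infty\}$, the planar Cook continua $\mathscr H_k$, $k\in\mathbb Z$, of the head (see~\eqref{Eq:headlog2}), and the planar Cook continua serving as bricks of the snake — each of which is at most one-dimensional by Lemma~\ref{L:1-dim}; hence $\dim X\le1$ by the countable sum theorem (\cite[Theorem III.2]{HW}), while $\dim X\ge1$ because $X$ contains a nondegenerate continuum. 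Thus $\dim X=1$.

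The main obstacle is the first-coordinate estimate inside the Claim, i.e. showing that the pieces $D(j)$ of the snake shrink toward $\pi_0$ as $j\to\infty$: this is precisely what forced the definition of the planes $\mathscr P(j)$ and the placing conditions (PL1)--(PL5) (in particular the narrowness clause of (PL5)), together with the equidistance conditions (EP5), (EP10), (EP12), and it is the one place where the geometric bookkeeping of Subsection~\ref{SS:more details} is genuinely used; the required estimates are elementary trigonometry but must be carried out with some care, distinguishing whether $D(j)$ approaches one of the finitely many ``large'' continua $\mathscr H_0,\mathscr H_{-1},\dots$ or approaches $a_\infty$. Everything else mirrors the proof of Lemma~\ref{L:Xiscont}.
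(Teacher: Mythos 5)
Your proof is correct and follows the same route as the paper: the paper's proof is a brief appeal to the ``closedness conditions'' in Table~\ref{T:close to head} and to~\eqref{Eq:approachpi0} for compactness, together with ``for the same reasons as in Section~\ref{S:X}'' for connectedness and dimension, while you have expanded it into an explicit argument. What you supply that the paper leaves implicit is the estimate that every point $q\in D(j)$ (not just the trajectory endpoints $x_j$, $x_{j+1}$) has $P_1(q)\to 0$ as $j\to\infty$; you derive this from the definition of the slanted planes $\mathscr P(j)$ via $|P_1(q)-P_1(x_j)|\le \diam\bigl(P_2(D(j))\bigr)\cdot\tan\theta_j$ and the cancellation $\diam\bigl(P_2(D(j))\bigr)\cdot\tan\theta_j\le d(a_k,a_{k+1})\cdot\frac{|P_1(x_j)-P_1(x_{j+1})|}{d(a_k,a_{k+1})}=|P_1(x_j)-P_1(x_{j+1})|$, which is exactly what the design of the planes in Subsection~\ref{SS:more details} is set up to deliver. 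The connectedness and dimension arguments match the paper's. One very small loose end: you write that $P_2(D(j_n))$ lies in a small neighbourhood of $P_2(\mathscr L(j_n))$ ``by (PL2) and (PL5)''; it is cleaner to note directly that $P_2(D(j_n))\subseteq\bigcup_n SR_{j,n}\subseteq BR_{k}$ and that $BR_k\subseteq\mathscr V^{m_n}_k$ for large enough $m_n$ is not quite what you need — rather, the small rhombuses are chosen inside the $(1/2^{m_n})$-neighbourhood of $\mathscr H_k$ via $H^{m_n}_k\subseteq\mathscr V^{m_n}_k$ and the narrowness of the small rhombuses, so the closeness to $\mathscr A_0$ indeed comes from~\eqref{Eq:HkmclosetoHk} plus (PL5) rather than from (PL2); but this is a matter of bookkeeping, not a gap.
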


\begin{proof}
Due to the `closedness conditions' which can be seen in Table~\ref{T:close to head} and taking into account~\eqref{Eq:approachpi0}, our construction ensures that all those cluster points of the snake which are in $\pi_0$ belong to the head $\mathscr A_0$. Hence $X$ is compact.\footnote{We do not claim that the closure of the snake is the \emph{whole} $X$,
	as it was the case in Section~\ref{S:X}, since we do not know whether the snake approaches \emph{every} point in every $\mathscr H_k$. However, with a little care we could modify the
	construction to get this -- instead of just using Lemma~\ref{L:arc in nbhd} for joining the endpoints of $\mathscr H_k$, we would combine it with Lemma~\ref{L:chains}.}
Moreover, $X$ is one-dimensional and connected for the same reasons as the space $X$ in Section~\ref{S:X}.
\end{proof}

\subsection{A map $G\colon X\to X$ and a need to specify the geometry of $X$}\label{SS:defG}
The proof that  $S(X)=\{0,\log 2\}$ will be similar to that in Section~\ref{S:X} (though we will still need to specify the geometry of our space $X$ from~\eqref{X-definition-log2} in more details). This means that
\begin{itemize}
	\item we construct a map $G\colon X\to X$, in fact a continuous extension of the map $T \colon X_1\to X_1$ with $h^*(T)=\log 2$, such that also $h^*(G)=\log 2$, and
	\item we show that, for every continuous map $F\colon X\to X$, either $h^*(F)=0$ or $h^*(F)=h^*(G)$.
\end{itemize}
We define, for the space $X$ from~\eqref{X-definition-log2} and the map $T$ from~\eqref{Eq:T1log2}, the map $G\colon X \rightarrow X$ as follows:
\begin{itemize}
		\item $G|_{X_1}=T$;
		\item for every $k\in \mathbb Z$, $G|_{\mathscr H_k}$ is the unique homeomorphism $\mathscr H_k \to \mathscr H_{k+1}$ (i.e.  $G|_{\mathscr H_k} = Z_{k,k+1}|_{\mathscr H_k} = (Z_{k+1}\circ Z_k^{-1})|_{\mathscr H_k}$);
		\item for every $m \geq 0$, $G|_{D(m)}$ is the unique continuous surjective map $D(m) \to D(m+1)$ (see Lemma~\ref{L:mapDD} and Figure~\ref{F:sur}).
\end{itemize}

In Section~\ref{S:X} the map $G$ was trivially continuous and $h^*(G)=\infty$. Contrary to this, now we are facing the following two problems.
\begin{itemize}
\item While our map $G$, just defined above, is obviously continuous at all points of the snake, we still have not proved that it is  continuous also at all points of the head. (The restrictions of $G$ to the head and to the snake are clearly continuous, but we have not proved that these two continuous parts of $G$ fit together to produce a continuous map on $X$. In Section~\ref{S:X} the continuity was trivial because $G$ on the head was the identity and the diameters of $D_m$ tended to zero. Now $G$ on the head is not identity and the diameters of $D(m)$ do not tend to zero.)
\item While it is trivial that $h^*(G)\geq h^*(T)=\log 2$, this inequality alone does not imply that $h^*(G) = \log 2$. (In Section~\ref{S:X}, see Lemma~\ref{L:existsG}, we were in better situation, since $h^*(G)\geq h^*(T)= \infty$ trivially implies that $h^*(G) = \infty$.)
\end{itemize}

To cope with these two problems, we are going to specify the geometry of $X$ in more details, by adding some additional requirements to our construction of $X$.

\subsection{Specification of geometry of $X$}\label{SS:geometry}
We have adopted placing conditions (PL1)--(PL5) for placing $D(j)$ along $\mathscr L(j)$. But still there is much freedom for doing that. Consider the sets $E_{eq}(j) \subseteq E(j)$ in $\mathscr L(j)$ from (PL3). The set $E_{eq}(j)$ is already chosen by our construction, the larger set $E(j)$ will depend on how we now place the bricks from $D(j)$ along $\mathscr L(j)$.\footnote{Here and below we for simplicity sometimes speak on $D(j)$, though it would perhaps be more precise to speak on $D^*(j)$.}

Since the set $E_{eq}(j)$ is obtained as a lift of the distinguished family of equidistant points in $P_2(\mathscr L(j))$, we will sometimes call it a distinguished family of points in $\mathscr L(j)$ (we do not say ``equidistant" points, since the family need not be equidistant in $\mathscr L(j)$). The set $E_{eq}(j)$ divides $\mathscr L(j)$ into finitely many distinguished (polygonal) subarcs. In the sequel we will call them \emph{lequi-subarcs of $\mathscr L(j)$} \index{lequi-subarcs of $\mathscr L(j)$} (meaning ``lifts of equi-subarcs of $P_2(\mathscr L(j))$"). We order this family of lequi-subarcs in the natural way, the first of them being the one containing $x_j$, the first point of $\mathscr L(j)$.
It follows from (EP2) and (EP7) that each of these lequi-subarcs contains at most one turning point. Note the following, which follows from (PL3).
\begin{enumerate}
	\item [(PL3a)] If a lequi-subarc \emph{does} contain a (unique) turning point, in view of (PL3) we will have to place \emph{at least two bricks} of $D(j)$ along this subarc. Moreover, we will have to do it in such a way that this turning point will coincide with the last point of some brick and with the first point of the next brick. However, we will not need to specify this pair of consecutive bricks --- in case of placing more than two bricks along this subarc we have thus some freedom in choosing this pair.
	\item [(PL3b)] On the other hand, if a lequi-subarc contains no turning point, (PL3) gives us a freedom --- we are allowed to place either only one brick or more bricks along this subarc.
\end{enumerate}

We are going to specify the positions, along the polygonal arcs $\mathscr L(j)$, of the bricks from $D(j)$. When speaking on positions of bricks, we are not interested in all the details related to them. We in fact have in mind the positions of the corresponding quadrilaterals (basically just the positions of the extremal points of their longer diagonals), see (PL4). The reason is that everything what is important for us about the position of a brick is determined by the quadrilateral containing it. We hope that no misunderstanding will arise if we speak just on positions of bricks.

We start with $j=0$, i.e. with $\mathscr L(0)$ and $D(0)$. The distinguished family $E_{eq}(0)$ of points in $\mathscr L(0)$ divides $\mathscr L(0)$ into finitely many lequi-subarcs.
We place the first brick of $D(0)$ along the first of these lequi-subarcs. For each of the other lequi-subarcs except the last one, let the number of bricks placed along that lequi-subarc be the same as the total number of bricks placed along all the previous lequi-subarcs of $\mathscr L(0)$. So, the numbers of bricks placed along the lequi-subarcs, except the last one, are powers of two, starting with $1,1,2,4$. All the infinitely many remaining bricks of $D(0)$ are placed along the last lequi-subarc of $\mathscr L(0)$. This is in accordance with (PL3a) and (PL3b) because the first two lequi-subarcs, being lifts of straight line segments by (EP3), are straight line segments.

We will shortly say that the bricks of $D(0)$ are placed along $\mathscr L(0)$ according to \emph{the rule ``1, sums, $\infty$"}. We divide  $D(0)$ into three parts: $D^{\rm front}(0)$, $D^{\rm middle}(0)$ and $D^{\rm end}(0)$ as follows: \index{$D^{\rm front}(0), D^{\rm middle}(0), D^{\rm end}(0)$}
\begin{itemize}
  \item $D^{\rm front}(0)$ is the union of bricks (in fact one brick) placed along the first lequi-subarc of $\mathscr L(0)$,
  \item $D^{\rm end}(0)$ is the union of (infinitely many) bricks placed along the last lequi-subarc of $\mathscr L(0)$, together with the last point of $\mathscr L(0)$,
  \item $D^{\rm middle}(0)$ is the union of (finitely many) other bricks in $D(0)$.
\end{itemize}
Notice that these sets are not pairwise disjoint, because $D^{\rm middle}(0)$ intersects each of the other two sets in one point.

Recall that $r(0)=r_0^0$ (see Table~\ref{T:bounds} and the alternative notation in Table~\ref{T:bounds2}). We may assume that
\begin{equation}\label{Eq:P2front0}
P_2(D^{\rm front}(0)) = \overline{B}_{d}(a_0, r(0)) \cap P_2(D(0)) \quad \text{and} \quad
P_2(D^{\rm end}(0)) = \overline{B}_{d}(a_1, r(0)) \cap P_2(D(0)),
\end{equation}
though (EP1), (EP3) and (EP4) do not automatically ensure this. If necessary, we just choose $n_0^0$ larger enough, i.e. $r_0^0$ smaller enough, to get the first and the last equi-subarcs in (EP3) sufficiently short to get these properties.
Finally, denote by $N_0$ the number of bricks in $D^{\rm front}(0) \cup D^{\rm middle}(0)$.

\medskip

Now consider $0<j<j_0$. We have $\mathscr L(j) = Z_j^{[0]} (\mathscr L(0))$, see Table~\ref{T: L(H)}. The same is true for the distinguished families of points in $\mathscr L(0)$ and $\mathscr L(j)$, i.e. $E_{eq}(j) = Z_j^{[0]} (E_{eq}(0))$, and so the cardinalities of these families are the same (the number of lequi-subarcs in $\mathscr L(0)$ is the same as in $\mathscr L(j)$ for $0<j<j_0$). Following the pattern from the case $j=0$, we place $D(j)$ along the polygonal arc $\mathscr L(j)$ according to the rule ``1, sums, $\infty$". Here $D(j)$ is not a copy of $D(0)$, it is given by~\eqref{Eq:every2nd}.\footnote{This is an exception from the `rule' that for passing from $j=0$ to $0<j<j_0$ one just needs to apply the similitude $Z_j$ or its lift $Z_j^{[0]}$.} In the same way as above for $j=0$, we define the sets $D^{\rm front}(j)$, $D^{\rm middle}(j)$ and $D^{\rm end}(j)$. We may also assume (see (EP8), (EP9) and~\eqref{Eq:P2front0}) that
\begin{equation}\label{Eq:P2frontj}
P_2(D^{\rm front}(j)) = \overline{B}_{d}(a_j, r(j)) \cap P_2(D(j)) \quad \text{and} \quad
P_2(D^{\rm end}(j)) = \overline{B}_{d}(a_{j+1}, r(j)) \cap P_2(D(j)).
\end{equation}

\medskip

Now consider $j=j_0$. Then $\mathscr L(j_0)$ corresponds to the $0$-th jump, see Table~\ref{T: L(H)}, and it is a straight line segment with the cardinality of $E_{eq}(j_0)$ equal to the cardinality of $E_{eq}(j)$ for $0\leq j < j_0$ (the number of lequi-subarcs in $\mathscr L(j_0)$ is the same as in $\mathscr L(j)$ for $0\leq j<j_0$).  Still according to the rule ``1, sums, $\infty$", we place $D(j_0)$, see~\eqref{Eq:every2nd}, along $\mathscr L(j_0)$. In the same way as above, we define the sets $D^{\rm front}(j_0)$, $D^{\rm middle}(j_0)$ and $D^{\rm end}(j_0)$. Since $\mathscr L(j_0)$ is a straight line segment, an analogue of~\eqref{Eq:P2front0} again holds.

\medskip

Notice that for every $j$ considered so far, i.e. for $0\leq j \leq j_0$, the set $D^{\rm front}(j) \cup D^{\rm middle}(j)$ consists of $N_0$ bricks, as in case $j=0$. The situation is summarized in Table~\ref{D_0 along} (where the lequi-subarcs are called just subarcs).
 \begin{table}[h]
 	\begin{footnotesize}
 		\begin{center}
 			\begin{tabular}{|c|c|c|}
 				\hline
 				\multicolumn{3}{|c|}{$D(j)$, \quad $0\leq j \leq j_0$}  \\
 				\hline
 				$D^{\rm front}(j)$ & $D^{\rm middle}(j)$ & $D^{\rm end}(j)$ \\
 				\hline
 				1 brick along 1st lequi-subarc     &  finitely many bricks   &   $\infty$ many bricks along last lequi-subarc                  \\
 				\hline
 				\multicolumn{2}{|c|}{$N_0$ bricks}    \\
 				\cline {1-2}
 			\end{tabular}
 		\end{center}
 	\end{footnotesize}
 	\caption{$D(j)$ along the polygonal arc $\mathscr L(j)$ for $j$ such that $x_j\in J(0)$. The rule is ``1, sums, $\infty$".} \label{D_0 along}
 \end{table}

We have thus put restrictions on the geometry of the continua $D(j)$ joining the consecutive points of the $0$-th jump level $J(0)$ and the $0$-th jump.

\medskip

Now we are going to work with the $1$-st jump level $J(1)$ and the $1$-th jump. We start with placing $D(t_1)$ where $x_{t_1},x_{t_1+1}$ is the unique pair of points in $J(1)$ with $P_2(x_{t_1})=a_0$ and $P_2(x_{t_1+1})=a_1$.
Similarly as we have distributed the bricks of $D(0)$ along $\mathscr L(0)$ by the rule ``1, sums, $\infty$", we proceed with $D(t_1)$, though with a different rule. The family $E_{eq}(t_1)$  divides $\mathscr L(t_1)$ into lequi-subarcs. We place the bricks of $D(t_1)$ along them, but now according to \emph{the rule ``$N_0$, sums, $\infty$"}. This means the following. We place the first $N_0$ bricks of $D(t_1)$ along the first of these lequi-subarcs. Further, for each of the other lequi-subarcs except the last one, let the number of bricks placed along that lequi-subarc be the same as the total number of bricks placed along all the previous lequi-subarcs of $\mathscr L(t_1)$. All the infinitely many remaining bricks of $D(t_1)$ are placed along the last lequi-subarc of $\mathscr L(t_1)$. We also divide $D(t_1)$ into three parts: $D^{\rm front}(t_1)$, $D^{\rm middle}(t_1)$ and $D^{\rm end}(t_1)$ as follows:
\begin{itemize}
  \item $D^{\rm front}(t_1)$ is the union of bricks (in fact $N_0$ bricks) placed along the first lequi-subarc of $\mathscr L(t_1)$,
  \item $D^{\rm end}(t_1)$ is the union of (infinitely many) bricks placed along the last lequi-subarc of $\mathscr L(t_1)$, together with the last point of $\mathscr L(t_1)$,
  \item $D^{\rm middle}(t_1)$ is the union of (finitely many) other bricks of $D(t_1)$.
\end{itemize}
Recall that $r(t_1)=r_0^1$. For analogous reasons as in case $j=0$, we may assume that
\begin{equation}\label{Eq:P2frontt1}
P_2(D^{\rm front}(t_1)) = \overline{B}_{d}(a_0, r(t_1)) \cap P_2(D(t_1)) \quad \text{and} \quad
P_2(D^{\rm end}(t_1)) = \overline{B}_{d}(a_1, r(t_1)) \cap P_2(D(t_1)).
\end{equation}
Finally, denote by $N_1$ the number of bricks in $D^{\rm front}(t_1) \cup D^{\rm middle}(t_1)$.

\medskip

In the same way, according to the rule ``$N_0$, sums, $\infty$" we place $D(j)$ along the polygonal arc $\mathscr L(j)$ not only for $j=t_1$, but also for all other  $j_0+1 \leq  j < j_0+j_1+1$ (so, $N_0$ bricks are placed along the 1st lequi-subarc etc.). For each such $j$, we define the sets $D^{\rm front}(j)$, $D^{\rm middle}(j)$ and $D^{\rm end}(j)$ \index{$D^{\rm front}(j), D^{\rm middle}(j), D^{\rm end}(j)$} in the same way as above for $j=t_1$. We also have an analogue of~\eqref{Eq:P2frontt1}:
\begin{equation}\label{Eq:P2frontakot1}
P_2(D^{\rm front}(j)) = \overline{B}_{d}(P_2(x_j), r(j)) \cap P_2(D(j)) \quad \text{and} \quad
P_2(D^{\rm end}(j)) = \overline{B}_{d}(P_2(x_{j+1}), r(j)) \cap P_2(D(j)).
\end{equation}
Further, still according to the rule ``$N_0$, sums, $\infty$", we place  $D(j_0+j_1+1)$ along the straight line segment $\mathscr L(j_0+j_1+1)$ corresponding to the $1$-st jump (i.e. $N_0$ bricks placed along the 1st lequi-subarc etc.), we define the sets $D^{\rm front}(j_0+j_1+1)$, $D^{\rm middle}(j_0+j_1+1)$ and $D^{\rm end}(j_0+j_1+1)$ and we again have an analogue of~\eqref{Eq:P2frontt1}.

\medskip

Notice that for every $j_0+1 \leq  j \leq j_0+j_1+1$, the set $D^{\rm front}(j) \cup D^{\rm middle}(j)$ consists of $N_1$ bricks.
The situation is shown in Table~\ref{D(t_1) along}.
\begin{table}[h]
	\begin{footnotesize}
		\begin{center}
			\begin{tabular}{|c|c|c|}
				\hline
				\multicolumn{3}{|c|}{$D(j)$, \quad $j_0+1 \leq  j \leq j_0+j_1+1$}  \\
				\hline
				$D^{\rm front}(j)$ & $D^{\rm middle}(j)$ & $D^{\rm end}(j)$ \\
				\hline
				$N_0$ bricks along 1st lequi-subarc     &  finitely many bricks   &   $\infty$ many bricks  along last lequi-subarc                 \\
				\hline
				\multicolumn{2}{|c|}{$N_1$ bricks}    \\
				\cline {1-2}
			\end{tabular}
		\end{center}
	\end{footnotesize}
	\caption{$D(j)$ along the polygonal arc $\mathscr L(j)$ for $j$ such that $x_j\in J(1)$. The rule is ``$N_0$, sums, $\infty$".} \label{D(t_1) along}
\end{table}

We have thus put restrictions on the geometry of the continua $D(j)$ joining the consecutive points of the $1$-st jump level $J(1)$ and the $1$-st jump.

\medskip

We continue by induction. Suppose that, for $s\geq 2$, we have already put the restrictions on the geometry of the continua $D(j)$ joining the consecutive points of the jump levels $J(0), \dots, J(s-1)$ and the $0$-th, $\dots$, $(s-1)$-st jumps. Denote by $x_{t_s},x_{t_s+1}$ the unique pair of points in $J(s)$ with $P_2(x_{t_s})=a_0$ and $P_2(x_{t_s+1})=a_1$, and consider the polygonal arc $\mathscr L(t_s)$ joining $x_{t_s}$ and $x_{t_s+1}$.
The family $E_{eq}(t_s)$ divides $\mathscr L(t_s)$ into lequi-subarcs. We place $D(t_s)$ along $\mathscr L(t_s)$ according to the rule ``$N_{s-1}$, sums, $\infty$", we define the sets $D^{\rm front}(t_s)$, $D^{\rm middle}(t_s)$, $D^{\rm end}(t_s)$ and we require also an analogue of~\eqref{Eq:P2frontt1}. Finally, denote by $N_s$ the number of bricks in $D^{\rm front}(t_s) \cup D^{\rm middle}(t_s)$.

\medskip

In the same way, according to the rule ``$N_{s-1}$, sums, $\infty$" we place $D(j)$ along the polygonal arc $\mathscr L(j)$ not only for $j=t_s$, but also for any other  $k_s \leq  j < k_s +j_s$ (so, $N_{s-1}$ bricks are placed along the 1st lequi-subarc etc.). For each such $j$, we define the sets $D^{\rm front}(j)$, $D^{\rm middle}(j)$ and $D^{\rm end}(j)$ in the same way as above for $j=t_s$. We have also the analogues of~\eqref{Eq:P2frontt1} for
$P_2(D^{\rm front}(j))$ and $P_2(D^{\rm end}(j))$, with $a_0$ and $a_1$ replaced by the first point and the last point of $P_2(\mathscr L(j))$, respectively, and with radius $r(j)$.
Further, still according to the rule ``$N_{s-1}$, sums, $\infty$", we place  $D(k_s+j_s)$ along the straight line segment $\mathscr L(k_s+j_s)$ corresponding to the $s$-th jump (i.e. $N_{s-1}$ bricks placed along the 1st lequi-subarc etc.), we define the sets $D^{\rm front}(k_s+j_s)$, $D^{\rm middle}(k_s+j_s)$ and $D^{\rm end}(k_s+j_s)$ and we require also an analogue of~\eqref{Eq:P2frontt1}.

Notice that for every $k_s \leq  j \leq k_s +j_s$, the set $D^{\rm front}(j) \cup D^{\rm middle}(j)$ consists of $N_s$ bricks.
The situation is illustrated in Table~\ref{D(t_s) along}.
\begin{table}[h]
	\begin{footnotesize}
		\begin{center}
			\begin{tabular}{|c|c|c|}
				\hline
				\multicolumn{3}{|c|}{$D(j), \quad k_s \leq  j \leq k_s +j_s$}  \\
				\hline
				$D^{\rm front}(j)$ & $D^{\rm middle}(j)$ & $D^{\rm end}(j)$ \\
				\hline
				$N_{s-1}$ bricks along 1st lequi-subarc     &  finitely many bricks   &   $\infty$ many bricks along last lequi-subarc                  \\
				\hline
				\multicolumn{2}{|c|}{$N_{s}$ bricks}    \\
				\cline {1-2}
			\end{tabular}
		\end{center}
	\end{footnotesize}
	\caption{$D(j)$ along the polygonal arc $\mathscr L(j)$ for $j$ such that $x_j\in J(s)$. The rule is ``$N_{s-1}$, sums, $\infty$".} \label{D(t_s) along}
\end{table}

We have thus put restrictions on the geometry of the continua $D(j)$ joining the consecutive points of the $s$-th jump level $J(s)$ and the $s$-th jump. The induction is finished.

\subsection{Properties of the map $G$}\label{SS:propertiesG}
In Subsection~\ref{SS:defG} we have defined a map $G\colon X\to X$. Recall that $G|_{X_1}=T$, for every integer $k$ the map $G|_{\mathscr H_k}$ is the unique homeomorphism $\mathscr H_k \to \mathscr H_{k+1}$, i.e. $Z_{k,k+1}|_{\mathscr H_k}$, and, for every $m\geq 0$, $G|_{D^*(m)}$ is the unique \emph{surjective} continuous map $D^*(m) \to D^*(m+1)$.

By~\eqref{Eq:every2nd}, the set $D^*(m)$ is the union of some bricks $B_1, B_2, B_3, B_4, \dots$ and $D^*(m+1)$ is the union of copies of $B_2, B_4, B_6, B_8, \dots$. The map $G$ maps the 1st brick of $D^*(m)$ to the first point of the 1st brick of $D^*(m+1)$, the 2nd brick of $D^*(m)$ onto the 1st brick of $D^*(m+1)$ and, in general,
\begin{equation}\label{Eq:nthBrickTo}
G(\text{$n$-th brick of $D^*(m)$}) \subseteq \text{$\left \lfloor \frac{n+1}{2}\right \rfloor$-th brick of $D^*(m+1)$}
\end{equation}
(if $n$ is even we have equality and if $n$ is odd then the left side is a singleton, namely the first point of the brick on the right side).

We introduce the following notations:
\begin{itemize}
	\item $B^{(j)}_i$ is the union of all bricks placed along the $i$-th lequi-subarc of $\mathscr L(j)$,
	\item ${b}^{(j)}_i$ is the number of all bricks placed along the $i$-th lequi-subarc of $\mathscr L(j)$,
	\item $Q^{(j)}_i$ is the union of all quadrilaterals placed along the $i$-th lequi-subarc of $\mathscr L(j)$,
	\item $SR^{(j)}_i$ is the union of all small rhombuses placed along the $i$-th equi-subarc of $P_2(\mathscr L(j))$.
\end{itemize}

Let $j\geq 0$ be such that both $x_j$ and $x_{j+1}$ belong to $J(s)$, $s\geq 0$. Then the numbers of lequi-subarcs in $\mathscr L(j)$ and in $\mathscr L(j+1)$ are the same, denote them by $t$. Moreover, if we put $N_{-1}=1$, we have (see Table~\ref{D(t_s) along})
\[
{b}^{(j)}_1 = {b}^{(j)}_2 = N_{s-1}, \, {b}^{(j)}_3 = 2N_{s-1}, \, \dots , \, {b}^{(j)}_{t-1} = 2^{t-3}N_{s-1} \, \text{ and }\, {b}^{(j)}_{t} = \infty
\]
and exactly the same equalities for ${b}^{(j+1)}_i$, $i=1,\dots,t$.
A simple computation, using this and~\eqref{Eq:nthBrickTo}, shows that for such $j$ we have
\begin{alignat}{1}
G(B^{(j)}_1 \cup B^{(j)}_2)  & \, = \, B^{(j+1)}_1, \notag \\
G(B^{(j)}_k)    & \, = \, B^{(j+1)}_{k-1}, \quad k=3,4,\dots, t-1, \label{Eq:GAL}  \\
G(B^{(j)}_t) & \, = \,  B^{(j+1)}_{t-1} \cup B^{(j+1)}_{t}~.  \notag
\end{alignat}

\begin{lem}\label{L:fronttofront}
     Let $x_j$ and $x_{j+1}$ belong to the same set $J(s)$. Then
    \begin{alignat}{1}
     G(D^{\rm front}(j)) \,  & \, \subseteq D^{\rm front}(j+1), \notag \\
     G(D^{\rm middle}(j)) \,  & \, \subseteq D^{\rm front}(j+1) \cup D^{\rm middle}(j+1), \notag  \\
     G(D^{\rm end}(j)) \,  & \, \subseteq D^{\rm middle}(j+1) \cup D^{\rm end}(j+1)~.  \notag
     \end{alignat}
\end{lem}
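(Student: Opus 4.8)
The plan is to reduce everything to the combinatorial formula~\eqref{Eq:GAL} together with the ``$N_{s-1}$, sums, $\infty$'' rule used to place the bricks of $D(j)$, which applies uniformly to all $j$ with $x_j, x_{j+1}\in J(s)$. First I would fix such a $j$ and write $t$ for the common number of lequi-subarcs of $\mathscr L(j)$ and $\mathscr L(j+1)$, and recall from the construction in Subsection~\ref{SS:geometry} that $D^{\rm front}(j) = B^{(j)}_1$, that $D^{\rm end}(j)$ is the union of $B^{(j)}_t$ with the last point of $\mathscr L(j)$, and that $D^{\rm middle}(j) = B^{(j)}_2 \cup B^{(j)}_3 \cup \dots \cup B^{(j)}_{t-1}$ (these three parts overlapping only in single points). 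The same holds with $j+1$ in place of $j$, since $x_{j+1}$ (and in the boundary case $x_{j+1}$ is still in $J(s)$ by hypothesis) belongs to the same jump level, so the placing rule and the value $N_{s-1}$ are the same.

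Next I would simply read off the three inclusions from~\eqref{Eq:GAL}. For the front part: $G(D^{\rm front}(j)) = G(B^{(j)}_1) \subseteq G(B^{(j)}_1 \cup B^{(j)}_2) = B^{(j+1)}_1 = D^{\rm front}(j+1)$. For the middle part: by~\eqref{Eq:GAL}, $G(B^{(j)}_2) \subseteq B^{(j+1)}_1$ and $G(B^{(j)}_k) = B^{(j+1)}_{k-1}$ for $3\le k\le t-1$, so $G(D^{\rm middle}(j)) = \bigcup_{k=2}^{t-1} G(B^{(j)}_k) \subseteq B^{(j+1)}_1 \cup \bigcup_{k=2}^{t-2} B^{(j+1)}_k \subseteq D^{\rm front}(j+1) \cup D^{\rm middle}(j+1)$. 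For the end part: $G(B^{(j)}_t) = B^{(j+1)}_{t-1} \cup B^{(j+1)}_t \subseteq D^{\rm middle}(j+1) \cup D^{\rm end}(j+1)$, and the last point of $\mathscr L(j)$ is $x_{j+1}$, which is mapped by $G = T$ on $X_1$ to $x_{j+2}$, the last point of $\mathscr L(j+1)$, which lies in $D^{\rm end}(j+1)$; hence $G(D^{\rm end}(j)) \subseteq D^{\rm middle}(j+1)\cup D^{\rm end}(j+1)$.

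The only genuine point requiring care — and the step I expect to be the main (minor) obstacle — is the bookkeeping at the boundary indices: verifying that $G$ maps $B^{(j)}_2$ into $B^{(j+1)}_1$ rather than producing a brick that escapes $D^{\rm front}(j+1)$, and that the single ``seam'' points where $D^{\rm front}$, $D^{\rm middle}$, $D^{\rm end}$ meet are handled consistently (they are, since $G$ is continuous and these are images of the distinguished extremal points, which~\eqref{Eq:nthBrickTo} and~\eqref{Eq:GAL} track exactly). One should also note the degenerate cases where $t$ is small (so that $D^{\rm middle}$ or the range $3\le k\le t-1$ is empty); in those cases the corresponding inclusion is vacuous or reduces to the front/end cases already handled. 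I would also remark that the case analysis (i) vs. (ii) in the definition of $\mathscr P(j)$ is irrelevant here: the placing rule, and hence~\eqref{Eq:GAL}, is stated uniformly for all $j$ with $x_j,x_{j+1}\in J(s)$, whether $\mathscr L(j)$ lies over an arc $H_k^m$ or over a jump segment $L^m$. This finishes the proof.
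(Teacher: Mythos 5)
Your proof is correct and is essentially the same as the paper's argument: the paper's proof of Lemma~\ref{L:fronttofront} just observes that the number of lequi-subarcs and the brick counts per subarc agree for $\mathscr L(j)$ and $\mathscr L(j+1)$, and then says ``Now use~\eqref{Eq:GAL}.'' You carry out exactly that bookkeeping, correctly identifying $D^{\rm front}(j)=B^{(j)}_1$, $D^{\rm middle}(j)=\bigcup_{k=2}^{t-1}B^{(j)}_k$, $D^{\rm end}(j)=B^{(j)}_t\cup\{x_{j+1}\}$, and reading off the three inclusions directly from~\eqref{Eq:GAL} together with $G(x_{j+1})=x_{j+2}$.
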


\begin{proof}
The number of lequi-subarcs in $\mathscr L(j)$ and in $\mathscr L(j+1)$ are the same and, as mentioned above~\eqref{Eq:GAL}, the number of bricks placed along the i-th lequi-subarc is the same for $\mathscr L(j)$ and for $\mathscr L(j+1)$. Now use~\eqref{Eq:GAL}.	
\end{proof}

\begin{lem}\label{L:G-log2-Dm-3part}
	Let $x_m, x_{m+1}$ either belong to $J(i)$ or form the $i$-th jump, and let $x_k, x_{k+1}$ either belong to $J(s)$ or form the $s$-th jump for some $s>i$. Let
	$$
	x \in D^{\rm front}(m)\cup D^{\rm middle}(m)
	$$
	and let $n$ be the positive integer for which $G^n(x) \in D^*(k)$. Then
	$$
	G^n(x) \in D^{\rm front}(k) \text{ \ and so \ } d(P_2(G^nx),P_2(x_k)) \le r(k).
	$$
\end{lem}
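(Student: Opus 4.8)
The statement is an iterated version of Lemma~\ref{L:fronttofront}: the point $x$ starts in the ``front $+$ middle'' part of $D(m)$ and, after enough applications of $G$ to land in $D^*(k)$, it must be in the ``front'' part $D^{\rm front}(k)$. The plan is to push $x$ one step at a time along the chain $D^*(m), D^*(m+1), \dots, D^*(k)$, keeping track of which of the three parts $D^{\rm front}$, $D^{\rm middle}$, $D^{\rm end}$ contains the current image, and to observe that the ``front $+$ middle'' region is nearly invariant under $G$ as long as we stay inside one jump level, while at the passage from a jump level $J(s-1)$ (or the $(s-1)$-st jump) to the next level $J(s)$ the number $N_{s}$ of bricks in $D^{\rm front}(j)\cup D^{\rm middle}(j)$ grows, and crucially $D^{\rm front}$ of the \emph{new} level already contains $N_{s-1}$ bricks --- exactly the whole ``front $+$ middle'' count of the previous level. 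This is precisely how the rule ``$N_{s-1}$, sums, $\infty$'' was designed in Subsection~\ref{SS:geometry}.

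\textbf{Key steps.} First I would set up the induction along $j = m, m+1, \dots, k-1$, proving the invariant: for each such $j$, writing $x^{(j)}$ for the $G$-image of $x$ that lies in $D^*(j)$ (well-defined since $G$ maps $D^*(j)$ onto $D^*(j+1)$ and the jumps force the orbit to move steadily along the snake, cf. Lemma~\ref{L:existsG}(c) analogue for the present $G$), we have $x^{(j)} \in D^{\rm front}(j) \cup D^{\rm middle}(j)$. The base case $j=m$ is the hypothesis. For the inductive step there are two cases. \emph{Case A: $x_j$ and $x_{j+1}$ lie in the same jump level $J(s)$} (this includes the situation where $x_j, x_{j+1}$ are an interior pair of that level, as well as $x_m,x_{m+1}$ themselves when $i<s$ or the $i$-th jump). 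Then Lemma~\ref{L:fronttofront} gives directly $G(D^{\rm front}(j)\cup D^{\rm middle}(j)) \subseteq D^{\rm front}(j+1) \cup D^{\rm middle}(j+1)$, so $x^{(j+1)} = G(x^{(j)})$ stays in the front$+$middle part of $D(j+1)$. \emph{Case B: $x_j$ is the last point of some jump level $J(s-1)$, hence $x_{j+1}$ is the first point of $J(s)$, and the pair $x_j,x_{j+1}$ forms the $(s-1)$-st jump --- or more precisely $\mathscr L(j)$ is the straight-line segment of that jump.} By the construction in Subsection~\ref{SS:geometry}, $D(j)$ (along the $(s-1)$-st jump) is placed by the rule ``$N_{s-2}$, sums, $\infty$'' and $D^{\rm front}(j)\cup D^{\rm middle}(j)$ has $N_{s-1}$ bricks, while $D(j+1)$ --- the first continuum of the level $J(s)$ --- is placed by the rule ``$N_{s-1}$, sums, $\infty$'', so $D^{\rm front}(j+1)$ already consists of $N_{s-1}$ bricks, and the $G$-image of any $x^{(j)}$ in the $N_{s-1}$-brick front$+$middle portion of $D(j)$ lands inside the first $N_{s-1}$ bricks of $D(j+1)$, i.e. inside $D^{\rm front}(j+1)\subseteq D^{\rm front}(j+1)\cup D^{\rm middle}(j+1)$. (Here I would use the brick-counting formula~\eqref{Eq:nthBrickTo}, which shows $G$ roughly halves the brick-index, together with the fact that the number of bricks placed along the $1$st lequi-subarc of $\mathscr L(j+1)$ is exactly $N_{s-1}$.) Iterating from $j=m$ to $j=k-1$ yields $x^{(k-1)}\in D^{\rm front}(k-1)\cup D^{\rm middle}(k-1)$, and one more application of $G$ --- now invoking Case B at the last step if $x_{k-1}$ closes a level or Case A otherwise, but in all cases using that the front$+$middle part maps into the front part of the next level's first continuum when the level index strictly increases --- gives $G^n(x) = x^{(k)} \in D^{\rm front}(k)$. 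Finally, the metric conclusion $d(P_2(G^n x), P_2(x_k)) \le r(k)$ is immediate from the analogue of~\eqref{Eq:P2frontj}/\eqref{Eq:P2frontt1}/\eqref{Eq:P2frontakot1}, which state $P_2(D^{\rm front}(k)) = \overline{B}_d(P_2(x_k), r(k)) \cap P_2(D(k))$.

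\textbf{Main obstacle.} The delicate point is bookkeeping of the brick counts across a jump: one must verify that the total number of bricks in $D^{\rm front}(j)\cup D^{\rm middle}(j)$ at the end of level $J(s-1)$, which is $N_{s-1}$, coincides with the number of bricks placed along the \emph{first} lequi-subarc of the first $\mathscr L(j+1)$ of level $J(s)$, which by the ``$N_{s-1}$, sums, $\infty$'' rule is also $N_{s-1}$ --- and that after applying $G$ (which by~\eqref{Eq:nthBrickTo} sends the $n$-th brick into the $\lfloor (n+1)/2\rfloor$-th brick) the images of those $N_{s-1}$ bricks do not overshoot into $D^{\rm middle}(j+1)$, let alone into $D^{\rm end}(j+1)$. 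Since $\lfloor (n+1)/2\rfloor \le n$, the image index never exceeds the source index, so the front$+$middle portion of $D(j)$ (occupying the first $N_{s-1}$ bricks, along the first few lequi-subarcs) maps into the first $N_{s-1}$ bricks of $D(j+1)$, which is exactly $D^{\rm front}(j+1)$; this is the crux and it is why the rule was chosen so that $D^{\rm front}$ of a new level equals in cardinality the whole $D^{\rm front}\cup D^{\rm middle}$ of the previous level. The rest is a routine induction.
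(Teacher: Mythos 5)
Your proof identifies the right ingredients — the brick--count comparison $N_i\le N_{s-1}$, the fact that $G$ sends the $p$-th brick of $D^*(j)$ into the $\lfloor (p+1)/2\rfloor$-th brick of $D^*(j+1)$ so that brick indices never increase, and the design of the rule ``$N_{s-1}$, sums, $\infty$'' — but you take a longer, iterative route than the paper and the final step does not close cleanly. The paper's proof is a one-shot comparison: since $s>i$, the set $D^{\rm front}(k)$ consists of the first $N_{s-1}$ bricks of $D^*(k)$ while $D^{\rm front}(m)\cup D^{\rm middle}(m)$ consists of the first $N_i\le N_{s-1}$ bricks of $D^*(m)$; since $G$ does not increase the brick index, $G^n$ sends a point in brick $p\le N_i$ of $D^*(m)$ into a brick $q\le p\le N_{s-1}$ of $D^*(k)$, hence into $D^{\rm front}(k)$, and the metric inequality follows from \eqref{Eq:P2front0} and its analogues. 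Your induction, in contrast, maintains only the weaker invariant $x^{(j)}\in D^{\rm front}(j)\cup D^{\rm middle}(j)$, and at $j=k$ that gives $x^{(k)}\in D^{\rm front}(k)\cup D^{\rm middle}(k)$, not $D^{\rm front}(k)$; your sentence ``but in all cases using that the front$+$middle part maps into the front part of the next level's first continuum when the level index strictly increases'' gestures at the missing piece but does not deploy it — if the step $k-1\to k$ is your Case~A, the quoted fact does not apply to that step. The way to repair the induction is to strengthen the invariant after the first level transition: since $s>i$, there is at least one Case~B step, say $j^*\to j^*+1$, which by your own brick count lands $x^{(j^*+1)}$ in $D^{\rm front}(j^*+1)$; from then on Corollary~\ref{C:fronttofront} (in Case~A) and the brick--count inequality (in any later Case~B) keep the image in $D^{\rm front}(j)$ for all $j\ge j^*+1$, in particular for $j=k$. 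With this fix your argument is correct but strictly longer than the paper's; it buys no extra generality.
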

\begin{proof}
	Since $s>i$, the number of bricks in $D^{\rm front}(k)$ is greater than or equal to the number of bricks in $D^{\rm front}(m)\cup D^{\rm middle}(m)$ (if $s=i+1$, they are equal). Since $G$ preserves the order of bricks in the snake,\footnote{We speak on non-strict order since two consecutive bricks can be mapped to one brick, one of those two bricks being mapped to a point.} the assumption $x\in D^{\rm front}(m)\cup D^{\rm middle}(m)$ implies $G^n(x) \in D^{\rm front}(k)$. The inequality in the lemma then follows from~\eqref{Eq:P2front0} or one of its  analogues mentioned below it, e.g. \eqref{Eq:P2frontj}, \eqref{Eq:P2frontt1}, \eqref{Eq:P2frontakot1}.
\end{proof}

From the previous two lemmas we immediately get the following corollary.

\begin{cor}\label{C:fronttofront}
	For every $j\geq 0$, $G(D^{\rm front}(j)) \subseteq D^{\rm front}(j+1)$.
\end{cor}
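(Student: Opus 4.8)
The statement is Corollary~\ref{C:fronttofront}: for every $j\ge 0$ we have $G(D^{\rm front}(j))\subseteq D^{\rm front}(j+1)$. The text already announces that this "immediately" follows from Lemma~\ref{L:fronttofront} and Lemma~\ref{L:G-log2-Dm-3part}, so the plan is to make that deduction explicit by splitting into the two possible relative positions of $x_j,x_{j+1}$ in the jump structure.

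First I would observe that for any fixed $j\ge 0$ there are exactly two cases, corresponding to the two possibilities in the definition of $\mathscr P(j)$ given just before~\eqref{Eq:LjinPj}: either $x_j$ and $x_{j+1}$ belong to the same jump level $J(s)$, or $x_j$ is the last point of some $J(s)$ and $x_{j+1}$ is the first point of $J(s+1)$, i.e. the pair forms the $s$-th jump. In the first case, the inclusion $G(D^{\rm front}(j))\subseteq D^{\rm front}(j+1)$ is precisely the first assertion of Lemma~\ref{L:fronttofront}, so there is nothing further to do.

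In the second case, $x_{j+1}$ is the first point of $J(s+1)$, so $D(j+1)$ is one of the continua joining consecutive points of the jump level $J(s+1)$ and is placed along $\mathscr L(j+1)$ according to the rule ``$N_s$, sums, $\infty$''; in particular $D^{\rm front}(j+1)$ consists of $N_s$ bricks, and more importantly $D^{\rm front}(j+1)\cup D^{\rm middle}(j+1)$ is the union of the first $N_{s+1}$ bricks of $D(j+1)$, where $N_{s+1}\ge N_s \ge N_{s-1} = $ (number of bricks of $D^{\rm front}(j)\cup D^{\rm middle}(j)$). I would then apply Lemma~\ref{L:G-log2-Dm-3part} with $m=j$, $k=j+1$, $i=s$: taking any $x\in D^{\rm front}(j)\subseteq D^{\rm front}(j)\cup D^{\rm middle}(j)$ and letting $n=1$ (since $G(D^*(j))=D^*(j+1)$ by construction of $G$, so the integer $n$ with $G^n(x)\in D^*(j+1)$ is $n=1$), the lemma yields $G(x)\in D^{\rm front}(j+1)$. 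Since $x$ was an arbitrary point of $D^{\rm front}(j)$, and $G$ of the last point of $D^{\rm front}(j)$ — which is a brick endpoint — lands on a brick endpoint in $D^{\rm front}(j+1)$ by the order-preserving property, we conclude $G(D^{\rm front}(j))\subseteq D^{\rm front}(j+1)$.

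There is no real obstacle here; the only mild subtlety is making sure the hypothesis of Lemma~\ref{L:G-log2-Dm-3part} ("$x_k,x_{k+1}$ either belong to $J(s)$ or form the $s$-th jump for some $s>i$") is met in the jump case — one reads off $s>i$ from $x_{j+1}\in J(s+1)$ while $x_j\in J(s)$ (the text's indexing: $x_j$ in $J(s)$, the pair $x_j,x_{j+1}$ being the $s$-th jump, and $x_{j+1},x_{j+2}$ belonging to $J(s+1)$) — and confirming that the relevant count $N_{s}\ge N_{s-1}$ gives the needed inequality between the number of bricks in $D^{\rm front}$ of the target and in $D^{\rm front}\cup D^{\rm middle}$ of the source. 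Both facts are immediate from the inductive construction of the $N_s$ in Subsection~\ref{SS:geometry}, so the corollary follows.
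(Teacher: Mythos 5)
Your proposal is correct and follows the paper's intended route: split on whether $x_j,x_{j+1}$ lie in a common jump level or form a jump, then apply Lemma~\ref{L:fronttofront} in the first case and Lemma~\ref{L:G-log2-Dm-3part} (with $m=j$, $k=j+1$, $n=1$, and the lemma's $i,s$ being our $s,s+1$) in the second. One small slip in your commentary: in the jump case $D^{\rm front}(j)\cup D^{\rm middle}(j)$ contains $N_s$ bricks, not $N_{s-1}$ (the rule ``$N_{s-1}$, sums, $\infty$'' applies and the cumulative count up to and including $D^{\rm middle}(j)$ is $N_s$), but this bookkeeping is not load-bearing since you only use the conclusion of Lemma~\ref{L:G-log2-Dm-3part}, whose hypothesis ($s>i$) you verify correctly.
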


\begin{lem}\label{L:Gprecont}
	Let $x_j, x_{j+1}, x_{j+2} \in J(s)$ for some $j\geq 0$ and $s\geq 0$ and so, for some $k\in \mathbb Z$, $P_2(x_j)=a_k$, $P_2(x_{j+1})=a_{k+1}$, $P_2(x_{j+2})=a_{k+2}$.
	Let $y\in D^*(j)$ and so $G(y)\in D^*(j+1)$. Then
	\[
	d(Z_{k,k+1}(P_2(y)), P_2(G(y))) \leq 2 \cdot r(j+1).
	\]
\end{lem}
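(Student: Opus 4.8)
\textbf{Proof plan for Lemma~\ref{L:Gprecont}.}

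The plan is to compare two maps defined on $P_2(D^*(j))$: the similitude $Z_{k,k+1}$ restricted to $\pi_0$, and the map $P_2 \circ G \circ (P_2|_{D^*(j)})^{-1}$ (note $P_2$ is injective on each $D^*(j)$ by our placement, at least after we pass to the relevant sets). The key geometric fact is that $G$ was built brick-by-brick so that, up to the ``shift by one brick'' phenomenon in~\eqref{Eq:nthBrickTo}, it mimics on $D^*(j)$ the passage from $\mathscr L(j)$ to $\mathscr L(j+1)$, and the latter is governed by $Z_{k,k+1}$ on the $P_2$-level because $\mathscr L(j)$ lies above $H^s_k$ while $\mathscr L(j+1)$ lies above $H^s_{k+1} = Z_{k,k+1}(H^s_k)$ (see Table~\ref{T: L(H)} and~\eqref{Eq:HkHk+1}). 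So the heart of the matter is bounding the discrepancy caused by that one-brick shift.

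First I would fix $y\in D^*(j)$ and locate it: $y$ lies in some brick $B_{j,n}$, the $n$-th brick of $D^*(j)$, placed along the $i$-th lequi-subarc of $\mathscr L(j)$ for some $i$. By~\eqref{Eq:nthBrickTo}, $G(y)$ lies in the $\lfloor (n+1)/2\rfloor$-th brick of $D^*(j+1)$. Because $x_j,x_{j+1},x_{j+2}\in J(s)$, the numbers ${b}^{(j)}_i$ and ${b}^{(j+1)}_i$ agree for all $i$ (as recorded above~\eqref{Eq:GAL}), so by~\eqref{Eq:GAL} the image brick lies along either the $i$-th or the $(i-1)$-st lequi-subarc of $\mathscr L(j+1)$, and in either case within one brick of the position predicted by ``apply $Z_{k,k+1}$ on the $\pi_0$-side''. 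Now I would use~\eqref{Eq:HkHk+1} together with (PL2)--(PL4): the set $E(j)$ of extremal points of bricks of $D(j)$ sits on $\mathscr L(j)$, its $P_2$-projection sits on $H^s_k$ and contains $E_{eq}(j)$ and the turning points; applying $Z_{k,k+1}$ sends $H^s_k$ to $H^s_{k+1}$ carrying the distinguished equidistant points and turning points of $H^s_k$ exactly to those of $H^s_{k+1}$. Hence the $P_2$-projection of the extremal points of the $p$-th brick of $D^*(j)$ and of the $p$-th brick of $D^*(j+1)$ are related by $Z_{k,k+1}$ \emph{whenever they lie along corresponding lequi-subarcs}; the only defect is the one-brick index shift.

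Therefore the bound I would extract is: $d\big(Z_{k,k+1}(P_2(y)), P_2(G(y))\big)$ is at most the $\pi_0$-diameter of a union of at most two consecutive bricks of $D^*(j+1)$, i.e. by (PL5) at most $\diam P_2(\mathscr D)$ where $\mathscr D$ is such a union of two consecutive bricks. Since each individual brick of $D^*(j+1)$ is placed along $\mathscr L(j+1)$ within a single lequi-subarc (a straight line segment, with at most one turning point; and by (PL3a) a turning-point subarc carries at least two bricks, so the endpoints of any two consecutive bricks are still controlled), the diameter of $P_2$ of two consecutive bricks of $D^*(j+1)$ is at most twice the equidistance-type bound $r(j+1)$ — this is exactly what~\eqref{Eq:P2front0}, \eqref{Eq:P2frontj}, \eqref{Eq:P2frontt1}, \eqref{Eq:P2frontakot1} and the ``$1$, sums, $\infty$'' / ``$N_{s-1}$, sums, $\infty$'' placement rules are designed to give, since every brick's $P_2$-projection is contained in a ball of radius $r(j+1)$ about one of its extremal points. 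Summing two such contributions gives $2\cdot r(j+1)$. I expect the main obstacle to be the careful bookkeeping in this last step: one must check that the ``at most two consecutive bricks'' really is the worst case (handled by~\eqref{Eq:GAL} plus the equality of the ${b}^{(j)}_i$ across one jump level), and that the placement conventions genuinely confine each brick's $P_2$-image to an $r(j+1)$-ball — the turning-point subarcs (PL3a) are the delicate case, but there the subarc is still a union of at most two straight segments meeting at a single turning point, and the two-brick requirement keeps the projected extremal points within the $r(j+1)$-scale by construction (EP2), (EP7), (EP11).
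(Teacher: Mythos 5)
Your plan has the right scaffolding --- locating $y$ by brick, invoking~\eqref{Eq:nthBrickTo} and~\eqref{Eq:GAL} for the image, and using~\eqref{Eq:HkHk+1} to compare $\mathscr L(j)$ and $\mathscr L(j+1)$ via $Z_{k,k+1}$ --- but the quantitative estimate is carried out at the wrong scale, and the gap is real rather than a bookkeeping nuisance.

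The claim that ``the $P_2$-projection of the extremal points of the $p$-th brick of $D^*(j)$ and of the $p$-th brick of $D^*(j+1)$ are related by $Z_{k,k+1}$'' does not hold. By (PL3), only the lifted equidistant family $E_{eq}(j)$ and the turning points of $\mathscr L(j)$ are pinned down; (PL1)--(PL5) fix the \emph{number} of bricks per lequi-subarc but not their positions inside it, so those positions are not $Z_{k,k+1}$-compatible across $j$ and $j+1$. The correspondence under $Z_{k,k+1}$ is therefore lequi-subarc by lequi-subarc, not brick by brick, and the ``defect'' is far bigger than one brick. Concretely, if $y$ lies in the first brick of $B^{(j)}_i$ with $3\le i<t$, then $n=\sum_{p<i}b^{(j)}_p+1$ and~\eqref{Eq:nthBrickTo} puts $G(y)$ in the first brick of $B^{(j+1)}_{i-1}$, whereas $Z_{k,k+1}(P_2(y))$ sits near the start of the $i$-th equi-subarc of $H^s_{k+1}$: the discrepancy is on the order of a whole equi-subarc, i.e. $\sim r(j+1)$, not two bricks. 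Your supporting remark that each brick's $P_2$-projection lies in a ball of radius $r(j+1)$ about an extremal point is true but of the wrong scale --- $r(j+1)$ is the arc-length of an entire equi-subarc, an individual brick is much smaller, and what actually needs bounding is the \emph{union} of all bricks along a lequi-subarc (this is also what~\eqref{Eq:P2front0}, \eqref{Eq:P2frontj}, etc.\ really control: the whole front part, not a single brick). So the chain ``discrepancy $\le$ diam of two consecutive bricks $\le 2r(j+1)$'' has a false first inequality and a vacuous second one.

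The correct estimate is the paper's, at the lequi-subarc scale: $Z_{k,k+1}$ carries the $i$-th equi-subarc of $H^s_k$ onto the $i$-th equi-subarc of $H^s_{k+1}$, hence $Z_{k,k+1}(P_2(y))\in SR^{(j+1)}_i$, while~\eqref{Eq:GAL} gives $P_2(G(y))\in SR^{(j+1)}_{i-1}\cup SR^{(j+1)}_i$. Since these two sets intersect,
\[
d\bigl(Z_{k,k+1}(P_2(y)),P_2(G(y))\bigr)\le\diam SR^{(j+1)}_{i-1}+\diam SR^{(j+1)}_i\le 2\,r(j+1),
\]
where each summand is at most $r(j+1)$ because, by (PL5), $\diam SR^{(j+1)}_{i'}$ equals the diameter of the $P_2$-projections of the extremal points of the bricks in $B^{(j+1)}_{i'}$, and these all lie on the $i'$-th equi-subarc of $H^s_{k+1}$, of arc-length $r(j+1)$ (euclidean distance $\le$ arc-length distance).
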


\begin{proof} The arc $\mathscr L(j)$ joins $x_j$ and $x_{j+1}$, the arc $P_2(\mathscr L(j))$ joins $a_k$ and $a_{k+1}$. Similarly, $\mathscr L(j+1)$ joins $x_{j+1}$ and $x_{j+2}$,
	$P_2(\mathscr L(j+1))$ joins $a_{k+1}$ and $a_{k+2}$. Each of the four arcs is divided into the same number of distinguished subarcs (lequi-subarcs or equi-subarcs).
	
	Assume that $y\in {B}^{(j)}_i$, i.e. $y$ belongs to some brick placed along the $i$-th distinguished subarc of $\mathscr L(j)$. By~\eqref{Eq:GAL},
	\begin{equation}\label{Eq:Gy}
	G(y) \in B^{(j+1)}_{i-1} \cup B^{(j+1)}_{i}
	\end{equation}
	where $B^{(j+1)}_{i-1}$ should be replaced by the empty set provided $i=1$. Suppose that $i >1$ (for $i=1$ the proof is almost the same).
	
	Since $y\in {B}^{(j)}_i$, we have $y \in {Q}^{(j)}_i$. Hence $P_2(y) \in {SR}^{(j)}_i$ and so $Z_{k,k+1}(P_2(y)) \in {SR}^{(j+1)}_i$.
	On the other hand, by~\eqref{Eq:Gy} we have $G(y) \in Q^{(j+1)}_{i-1} \cup Q^{(j+1)}_{i}$ and so $P_2(G(y)) \in SR^{(j+1)}_{i-1} \cup SR^{(j+1)}_{i}$.
	Thus,
	$$
	d(Z_{k,k+1}(P_2(y)), P_2(G(y))) \leq \diam \left ( SR^{(j+1)}_{i-1} \cup SR^{(j+1)}_{i} \right ) \leq \diam SR^{(j+1)}_{i-1} + \diam SR^{(j+1)}_{i}.
	$$
	The equidistance constant of $P_2(\mathscr L(j+1))$ is $r(j+1)$, see e.g. Table~\eqref{T:bounds2}. Further, the euclidean distance of points on a polygonal arc is less than or equal to their distance along the arc. Therefore, by (PL5), each of the diameters on the right-hand side is less than or equal to $r(j+1)$.
\end{proof}

\begin{lem}\label{L:Gcont}
The map $G\colon X\to X$ is continuous.
\end{lem}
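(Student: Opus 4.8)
\textbf{Proof plan for Lemma~\ref{L:Gcont}.}

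The plan is to verify continuity of $G$ separately on the snake and on the head, and then to check that the two pieces fit together at the points of the head, which is where all the work lies. On the snake $G$ is continuous by construction: on each set $D^*(m)$ it coincides with the unique continuous surjection $D^*(m)\to D^*(m+1)$ from Lemma~\ref{L:mapDD}, consecutive sets $D(m)$ and $D(m+1)$ meet only at the point $x_{m+1}$ where the definitions of $G|_{D(m)}$ and $G|_{D(m+1)}$ agree (both send it to $x_{m+2}$), and the snake is a countable union of such closed pieces forming a locally finite cover away from $\pi_0$; alternatively one simply notes that near any point of the snake only finitely many bricks are relevant. Likewise $G$ restricted to the head $\mathscr A_0$ is continuous: on each Cook continuum $\mathscr H_k$ it is the homeomorphism $Z_{k,k+1}|_{\mathscr H_k}$, consecutive $\mathscr H_k$, $\mathscr H_{k+1}$ meet only at $a_{k+1}$ where both formulas give $a_{k+2}$, and at $a_\infty$ continuity follows since $\mathscr H_k\to a_\infty$ as $|k|\to\infty$ and $G(a_\infty)=a_\infty$. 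Also $G(a_\infty)=a_\infty$ and $G(x_j)=x_{j+1}$, so the two partial definitions are consistent on the (empty) overlap of head and snake.

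So the only thing to prove is sequential continuity of $G$ at each point $p$ of the head, when $p$ is approached by a sequence $(y_n)$ of points of the snake. Here I would split into two cases. First, $p\in\mathscr H_k$ for some $k\in\mathbb Z$ (and in fact, by shrinking, I may assume $p$ is a non-extremal point of $\mathscr H_k$, lying in $\Int BR_k$ by~\eqref{Eq:HkinIntBRk}; the extremal points $a_k$ are handled in the second case since they are also limits of the $a_{k'}$). For such $p$, since $p\notin\pi_0$-cluster-set outside $\mathscr A_0$ and the $P_2$-projections of the snake pieces $D(j)$ approaching $\mathscr H_k$ become $(1/2^m)$-close to $\mathscr H_k$ (Table~\ref{T:close to head}), for large $n$ the point $y_n$ lies in some $D^*(j)$ with $x_j,x_{j+1}\in J(s)$, $P_2(x_j)=a_k$, $P_2(x_{j+1})=a_{k+1}$, and $y_n$ is close to $p$; moreover $y_n$ then lies neither in $D^{\rm front}(j)$ nor in $D^{\rm end}(j)$ for $n$ large, because by~\eqref{Eq:P2frontj} (and its analogues) those sets project $r(j)$-close to $a_k$ and $a_{k+1}$ respectively, while $P_2(p)$ is a fixed positive distance from $\{a_k,a_{k+1}\}$ and $r(j)\to0$. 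Thus $y_n\in D^{\rm middle}(j)$, so $x_j,x_{j+1},x_{j+2}\in J(s)$ and Lemma~\ref{L:Gprecont} applies: $d(Z_{k,k+1}(P_2(y_n)),P_2(G(y_n)))\le 2r(j+1)$. Since $r(j+1)\to0$ as $n\to\infty$ and $P_2(y_n)\to P_2(p)$, continuity of $Z_{k,k+1}$ gives $P_2(G(y_n))\to Z_{k,k+1}(P_2(p))=P_2(G(p))$; and $P_1(G(y_n))\to 0=P_1(G(p))$ because $G(y_n)\in D^*(j+1)$ and the distances of the $x_j$ from $\pi_0$ tend to $0$ by~\eqref{Eq:approachpi0}. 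Hence $G(y_n)\to G(p)$.

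The second case is $p=a_\infty$. Then for $\varepsilon>0$ I must show $G(y_n)$ eventually lies in the $\varepsilon$-ball about $a_\infty$. A sequence $y_n\to a_\infty$ in the snake lies, for large $n$, in sets $D^*(j)$ for which the projection $P_2(\mathscr L(j))$ is $\delta(m)$-close to $a_\infty$ with $\delta(m)\to0$ (see~\eqref{Eq:L-ainfty} and Table~\ref{T:close to head}): this happens precisely when $j$ belongs to one of the jumps, or to a jump level $J(s)$ but near its two ends (where $P_2(x_j)$ is $a_{\pm t}$ with $t$ large, which by~\eqref{Eq:HkmclosetoHk} forces $D(j)$ to lie near $\mathscr H_{\pm t}$, hence near $a_\infty$). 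In all these situations $G(y_n)\in D^*(j+1)$, and the projection of $D(j+1)$ is again $\delta(m')$-close to $a_\infty$ with $\delta(m')$ small: for a jump $j\to j+1$ the next set $D(j+1)$ starts $J(s+1)$ near its left end (projecting near $a_{-n_{s+1}}$, hence near $a_\infty$), for $j$ near an end of $J(s)$ the set $D(j+1)$ is the next one in that chain, still near an end of $J(s)$ or the $s$-th jump — in every case $P_2(D(j+1))$ is within some $\delta$ of $a_\infty$ with $\delta\to0$ as $n\to\infty$, while $P_1(G(y_n))\to0$ by~\eqref{Eq:approachpi0}. Therefore $G(y_n)\to a_\infty=G(a_\infty)$, completing the proof. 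The main obstacle is the bookkeeping in this last case: one must be sure that whenever $y_n$ is close to $a_\infty$, the \emph{image} set $D^*(j+1)$ is again close to $a_\infty$, i.e. that $G$ does not map a thin neighbourhood of $a_\infty$ onto something that reaches far along the necklace — and this is exactly guaranteed by the closedness/convergence data recorded in Table~\ref{T:close to head} together with $P_1\to0$.
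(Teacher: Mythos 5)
Your overall plan coincides with the paper's: continuity on the snake and on the head separately, then sequential continuity at points of the head approached from the snake, with the two cases $p\in\mathscr H_k$ and $p=a_\infty$, and Lemma~\ref{L:Gprecont} as the key estimate in the first case. That said, there are two genuine gaps.

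First, your parenthetical claim that the extremal points $a_k$ of $\mathscr H_k$ ``are handled in the second case since they are also limits of the $a_{k'}$'' is false: for finite $k$ the point $a_k$ is an isolated point of $A$ and certainly not $a_\infty$, so the $a_\infty$ argument does not touch it. The extremal points need their own treatment. The paper does this by splitting a sequence $y_n\to a_k$ into (at most) two subsequences, one whose bricks are placed near $\mathscr H_{k-1}$ (so the comparison similitude is $Z_{k-1,k}$) and one whose bricks are placed near $\mathscr H_k$ (comparison similitude $Z_{k,k+1}$), and then running the non-extremal argument on each subsequence; both limits coincide with $G(a_k)=a_{k+1}$ since $Z_{k-1,k}(a_k)=Z_{k,k+1}(a_k)=a_{k+1}$. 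Without this, your proof does not cover the countable dense set $\{a_k\colon k\in\Z\}$ of the head.

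Second, the step ``Thus $y_n\in D^{\rm middle}(j)$, so $x_j,x_{j+1},x_{j+2}\in J(s)$'' is a non-sequitur: whether $y_n$ sits in the front, middle or end of $D(j)$ is a statement about the internal position of $y_n$ within that one chain of bricks, and says nothing about whether the \emph{next} set $D(j+1)$ is placed along a jump. What actually needs to be observed (and is what the paper says) is that the jumps converge to $a_\infty$: since $P_2(x_{j+1})=a_{k+1}$ with $k$ fixed and $p$ at positive distance from $a_\infty$, for all large $n$ the point $x_{j+1}$ cannot be the last point of its jump level, so $D(j+1)$ is not along a jump and $x_{j+2}$ belongs to the same $J(s)$. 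Note also that the $D^{\rm middle}$ observation is unnecessary even if it were valid: Lemma~\ref{L:Gprecont} only requires $y\in D^*(j)$ and $x_j,x_{j+1},x_{j+2}\in J(s)$, not that $y$ lie in the middle part. Your handling of the $p=a_\infty$ case is correct and in fact spells out the geometry more explicitly than the paper does.
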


\begin{proof}
The restrictions of $G$ to the head and to the snake are both continuous by the construction of~$G$. Since the snake is open in $X$ and the head is not, it remains to prove that $G\colon X\to X$ is continuous at every point of the head. So, let $z$ be a point from the head and let $y_k \in D^*(i_k)$, $k=0, 1, \dots$  be a sequence of points from the snake converging to $z$. We want to show that $G(y_k) \to G(z)$.

First suppose that $z=a_{\infty}$. Then, by the construction, the sequence $G(D^*(i_k)) = D^*(i_k +1)$ converges to $a_{\infty}$. Hence $G(y_k)\to a_{\infty}$. Since $a_{\infty}$ is a fixed point of $G$ , the continuity of $G$ at $a_{\infty}$ follows.

Now assume that $z\in \mathscr H_m$ for some $m\in \mathbb Z$ and that $z$ is not an extremal point of $\mathscr H_m$. Then $z$ is an interior point of $\mathscr H_m$ in the topology of the head. It follows that the extremal points $x_{i_k}, x_{i_k+1}$ of $D(i_k)$ are $P_2$-projected onto the extremal points $a_m, a_{m+1}$ of $\mathscr H_m$ for all sufficiently large $k$. We may of course assume that this is the case for all $k$. Moreover, as the sequence of jumps converges in an obvious sense to $a_{\infty}$, for all sufficiently large $k$ we have that neither the set $D(i_k+1)$ is placed along a jump. Again, we may assume that this is the case for every $k$. So, we conclude that for every $k$ there is $s_k$ such that $x_{i_k}, x_{i_k+1}, x_{i_k+2} \in J(s_k)$. Then the $P_2$-projections of these three points are $a_m, a_{m+1}, a_{m+2}$, respectively. Since $y_k \in D^*(i_k)$ and so $G(y_k) \in D^*(i_k+1)$, we may now use Lemma~\ref{L:Gprecont} to get
\begin{equation}\label{Eq:Gcont}
d(Z_{m,m+1}(P_2(y_k)), P_2(G(y_k))) \leq 2 \cdot r(i_k+1).
\end{equation}
On the other hand, since $z\in \mathscr H_m$ and so $G(z)=Z_{m,m+1}(z)$, our task is to show that $G(y_k) \to Z_{m,m+1}(z)$. However, $Z_{m,m+1}(z) \in \pi_0$ and the distance between $G(y_k)$ and $\pi_0$ tends to zero as $k\to \infty$. Therefore it is sufficient to show that $P_2(G(y_k)) \to Z_{m,m+1}(z)$. Using~\eqref{Eq:Gcont}, we get the estimate:
\begin{align}
d(P_2(G(y_k)), Z_{m,m+1}(z)) & \leq d(P_2(G(y_k)), Z_{m,m+1}(P_2(y_k))) + d(Z_{m,m+1}(P_2(y_k)), Z_{m,m+1}(z)) \notag \\
                             & \leq 2 \cdot r(i_k+1) + d(Z_{m,m+1}(P_2(y_k)), Z_{m,m+1}(z))~. \notag
\end{align}
For $k \to \infty$ we have $r(i_k+1) \to 0$ and since $P_2(y_k) \to P_2(z) = z$, just use the continuity of $Z_{m,m+1}$ to see that the right-hand side tends to zero.

Finally, let $z\in \mathscr H_m$ be an extremal point of $\mathscr H_m$, say $z\in  \mathscr H_m \cap \mathscr H_{m+1}$. Then we get $G(y_k)\to G(z)$ by dividing, if necessary, the sequence $y_k$ into two subsequences and then applying the above argument.
\end{proof}

\begin{lem}\label{L:OmegaG}
	The set $\Omega (G)$ of nonwandering points of $G$ equals $A = \{a_i\colon i\in \mathbb Z\} \cup \{a_{\infty}\}$.
\end{lem}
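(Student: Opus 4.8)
The plan is to show the two inclusions $\Omega(G)\subseteq A$ and $A\subseteq \Omega(G)$ separately. For the first inclusion I would take any point $z$ not in $A$ and produce a neighbourhood $W$ of $z$ with $G^n(W)\cap W=\emptyset$ for all $n\geq 1$. There are two cases. If $z$ lies in the snake, say $z\in D^*(m)$ for some $m$, then $z$ belongs to some brick $B$ in the snake, and I pick $W$ to be a small neighbourhood of $z$ contained in $D^*(m)$ (the snake is open in $X$). Since $G(D^*(i))=D^*(i+1)$ for every $i\geq 0$, we get $G^n(W)\subseteq D^*(m+n)$, which is disjoint from $D^*(m)\supseteq W$ for every $n\geq 1$; hence $z$ is wandering. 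If $z$ lies in the head but $z\neq a_\infty$, then $z\in\mathscr H_m$ for some $m\in\mathbb Z$, and since the continua $\mathscr H_k$ are pairwise disjoint except for shared extremal points, I can take $W$ to be a small neighbourhood of $z$ in $X$ whose $P_2$-image meets only $\mathscr H_{m-1}\cup\mathscr H_m\cup\mathscr H_{m+1}$ (or just $\mathscr H_m$ if $z$ is not an extremal point), together with the `nearby' pieces of the snake. Because $G$ acts on the head by $G|_{\mathscr H_k}=Z_{k,k+1}|_{\mathscr H_k}$, the head-part of $W$ is pushed to $\mathscr H_{m+n}$ (roughly) after $n$ iterates; since the points $a_k$ march off toward $a_\infty$ and the $\mathscr H_k$ are disjoint, $G^n$ of the head-part of $W$ is eventually disjoint from $W$. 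For the snake-part of $W$ I use again that $G$ increases the index of $D^*(\cdot)$, and, crucially, that the bricks near $z$ in the snake $P_2$-project close to $\mathscr H_m$ while their $G^n$-images $P_2$-project close to $\mathscr H_{m+n}$ (Lemma~\ref{L:Gprecont} and Corollary~\ref{C:fronttofront} control this drift); hence those images also leave $W$. The one delicate point is that $W$ contains infinitely many snake-fragments accumulating at $z$, so I must check the disjointness uniformly; this follows because for each fixed $n$ only finitely many of these fragments can have $G^n$-image meeting $W$, and one enlarges the finite list of `bad' indices appropriately, or, more cleanly, one argues directly that $P_2(G^n(w))$ stays within distance $O(r(\cdot))\to 0$ of $a_{m+n}$, which is bounded away from $z$ for $n\geq 1$.

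For the reverse inclusion $A\subseteq\Omega(G)$, first note $a_\infty\in\Omega(G)$ trivially since it is a fixed point. For $a_k$ with $k\in\mathbb Z$, I want to show that every neighbourhood $W$ of $a_k$ in $X$ satisfies $G^n(W)\cap W\neq\emptyset$ for some $n\geq 1$. The key observation is that $a_k$ lies in the closure of the snake: by construction (see~\eqref{Eq:approachpi0} and the `closedness conditions' in Table~\ref{T:close to head}), the points $x_j$ with $P_2(x_j)=a_k$ accumulate at $a_k$, i.e. $x^{[m]}_k\to a_k$ as $m\to\infty$. So $W$ contains some $x_j$ with $P_2(x_j)=a_k$, lying in a very late jump level $J(s)$ with $s$ large. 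Now $G$ maps $x_j$ to $x_{j+1}$, still with $P_2$-projection close to $a_{k+1}$, and after one more period around $A$ — i.e. after $w$ steps where $w$ is the length of a wind at that level — $G^w(x_j)$ is a point $x_{j+w}$ again $P_2$-projecting near $a_k$. Since the winds at deep levels are long, the point $G^w(x_j)=x_{j+w}$ is even closer to $\pi_0$ than $x_j$ and has $P_2$-projection within $r(\cdot)$ of $a_k$; for $s$ large enough (hence $x_j$ close enough to $a_k$) this forces $G^w(x_j)\in W$. This uses only the structure of $T$ on the orbit $x_0,x_1,\dots$ recalled from Section~\ref{S:X1-T1-log2}: the trajectory returns near $a_k$ after each complete wind, and these returns approach $\pi_0$.

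I would organise the write-up as: (1) observe $a_\infty$ is fixed, so $a_\infty\in\Omega(G)$; (2) prove $a_k\in\Omega(G)$ for $k\in\mathbb Z$ via the returning-orbit argument above, quoting~\eqref{Eq:approachpi0} and the construction of $J(m)$; (3) prove snake points are wandering using $G(D^*(i))=D^*(i+1)$; (4) prove head points other than $a_\infty$ are wandering, combining the action $G|_{\mathscr H_k}=Z_{k,k+1}$ with the drift estimates in Lemma~\ref{L:Gprecont} and Corollary~\ref{C:fronttofront} to handle the snake-fragments accumulating at such a point. The main obstacle is step (4): unlike in Section~\ref{S:X}, where the head was a single Cook continuum fixed pointwise and the snake pieces had diameters tending to zero, here $G$ genuinely moves the head and the sets $D(m)$ do not shrink, so I must use the fine geometric control (the equidistance constants $r(j)\to 0$, the placement of bricks in small quadrilaterals via (PL1)--(PL5), and Lemmas~\ref{L:fronttofront}--\ref{L:Gprecont}) to guarantee that iterates of a neighbourhood of a head point really do escape it. Once that geometric bookkeeping is in place, the disjointness $G^n(W)\cap W=\emptyset$ follows because $P_2(G^n(W))$ concentrates near $a_{k+n}$ (for the piece near $\mathscr H_k$), which is separated from $a_k$.
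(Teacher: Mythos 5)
Your arguments that $a_\infty$ is fixed and that each $a_k$ is non-wandering (via the subsequence of trajectory points $x_j$ with $P_2(x_j)=a_k$ accumulating at $a_k$) are essentially what the paper does. Step (3), however, has a small flaw: $D^*(m)$ is not open in $X$ for $m\geq 1$, since $x_m$ (its first point) is a limit of the bricks of $D^*(m-1)$, cf.\ Lemma~\ref{L:block}(4). So for $z=x_m$ you cannot choose $W\subseteq D^*(m)$; you must let $W$ spill into $D^*(m-1)$ and then observe that the unique surjection $D(m-1)\to D(m)$ pushes the bricks of $D^*(m-1)$ accumulating at $x_m$ towards $x_{m+1}$, hence still off $W$. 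This is fixable, but not automatic, and the partition $G(D^*(i))=D^*(i+1)$ alone does not close the case.

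Step (4) contains a genuine gap. You assert that $P_2(G^n(w))$ stays within $O(r(\cdot))$ of $a_{m+n}$ and hence drifts towards $a_\infty$. That is false once the orbit of $w$ crosses a jump: the trajectory cycles around $A$, so after crossing a jump $P_2(G^n(w))$ lands near $a_{-N}$ for a large $N$ and then sweeps again through the vicinity of $a_m$ (hence of $z$) on each subsequent pass. So your claimed disjointness fails for large $n$. The paper closes this gap with a mechanism you never invoke: the neighbourhood $U_0$ of $z$ is shrunk, using $r(j)\to 0$ together with~\eqref{Eq:P2front0} and its analogues~\eqref{Eq:P2frontj},~\eqref{Eq:P2frontt1},~\eqref{Eq:P2frontakot1}, so that for every $m$ the implication~\eqref{Eq:DmcapU} holds: if $D(m)\cap U_0\neq\emptyset$ then $D(m)\cap U_0\subseteq D^{\rm middle}(m)\setminus D^{\rm front}(m)$. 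Then for $j$ small enough that $G^j(z)$ stays in the same jump level (or its adjacent jump), $G^j(z)$ projects near a different arc of the head and so misses $U_0$; while for larger $j$, Lemma~\ref{L:G-log2-Dm-3part} forces $G^j(z)\in D^{\rm front}(\cdot)$, and $D^{\rm front}(\cdot)$ never meets $U_0$ by the very choice of $U_0$. Citing Lemma~\ref{L:Gprecont} and Corollary~\ref{C:fronttofront} is not enough: the argument needs $U_0$ to miss every front and end part, and the one-way trap \emph{once in a front part, always in a front part} --- and your stated reason for disjointness delivers neither.
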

\begin{proof}
	Each point of the snake is clearly wandering for $G$. Further, for $i\in \mathbb Z$, infinitely many points of the trajectory $x_0, x_1, \dots $ are $P_2$-projected onto $a_i$. Since this subsequence of the trajectory converges to $a_i$, we have $a_i \in \Omega(G)$.  The point $a_{\infty}$ is also nonwandering, even for $G|_A$.
	
	Fix  $x\in \mathscr A_0 \setminus A$. To finish the proof we need to show that $x$ is wandering. Since $x\in \mathscr H_k \setminus \{a_k, a_{k+1}\}$ for some $k\in \mathbb Z$ and $r(j)\to 0$ as $j\to \infty$, there is $n_0\in \mathbb N$ such that the three open $r(n_0)$-balls centered at the points $x$, $a_k$ and $a_{k+1}$ are pairwise disjoint and we may also assume that the $r(n_0)$-disk $B_d(x, r(n_0))\cap \pi_0$ centered at $x$ lies in the sector $\mathscr S_k$. Hence there is $\varepsilon >0$ such that the open set $U_0:=(-\varepsilon, \varepsilon)\times (B_d(x, r(n_0))\cap \pi_0) \subseteq \mathbb R^3$ satisfies, for every $m\geq 0$, the following implication:
	\begin{equation}\label{Eq:DmcapU}
	D(m) \cap U_0 \neq \emptyset \,\, \Rightarrow \,\, D(m) \cap U_0 \subseteq D^{\rm middle}(m) \setminus D^{\rm front}(m)
	\end{equation}
	(see~\eqref{Eq:P2front0} and its analogues mentioned below it, e.g. \eqref{Eq:P2frontj}, \eqref{Eq:P2frontt1}, \eqref{Eq:P2frontakot1}).
	Note that $U_0$ is an open set containing $x$. To prove that $x$ is wandering, fix any $z\in U_0\cap X$. If $z\in U_0\cap {\rm head}$ then $z$ will never visit $U_0$ again, because $G(\mathscr H_i)=\mathscr H_{i+1}$, for every integer $i$. If $z\in U_0\cap {\rm snake}$, say $z\in U_0\cap D(m)$ with $x_m, x_{m+1} \in J(i)$, then we claim that again all the points $G(z), G^2(z), \dots$ are outside $U_0$. In fact, if $j> 0$ is small enough then $G^j(z)\in D(s)$ with $x_s, x_{s+1}$ either belonging to the same set $J(i)$ or forming the $i$-th jump, whence $G^j(z) \notin U_0$. Otherwise use ~\eqref{Eq:DmcapU} and Lemma~\ref{L:G-log2-Dm-3part} to get that $G^j(z) \notin U_0$. It follows that $x$ is a wandering point of $G$.
\end{proof}

We embark on the proof that $h^*(G)=\log 2$. Of course, $h^*(G)\geq h^*(T) = \log 2$.
By Proposition~\ref{P}(c), when we are looking for IN-tuples of the map $G$, it is sufficient to consider the set $\Omega (G) = A$. Recall also how we proved that $h^*(T)=\log 2$. First we realized that it was sufficient to prove the conditions (1), (2) and (3) below the formula~\eqref{Eq:approachpi0}, cf. Theorem~\ref{final-thm}.

\begin{lem} \label{L:G-log2-2case}
For the map $G$ defined above, the following holds.
	\begin{enumerate}
		\item $(a_i,a_j)$ is not an IN-pair for any $i, j \in \mathbb Z$ with $|j-i| \ge 2$.
		\item $(a_i,a_{\infty})$ is not an IN-pair for any $i \in \Z$.
	\end{enumerate}
\end{lem}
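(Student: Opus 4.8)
The plan is to reduce the statement about $G$ to the already-established statement about $T$, exploiting two facts: first, that the only nonwandering points of $G$ are the points of the head $A$ (Lemma~\ref{L:OmegaG}), so every IN-pair of $G$ consists of points of $A$; and second, that the geometric constraints imposed on $X$ in Subsections~\ref{SS:geometry} and~\ref{SS:propertiesG} force any orbit of $G$ that visits small neighbourhoods of points of $A$ in a prescribed pattern to ``shadow'' a corresponding orbit of $T$ on $X_1$. The heart of the argument is therefore: \emph{an independence set of times for $G$ for a tuple of neighbourhoods of points of $A$ yields an independence set of times of comparable length for $T$ for a tuple of neighbourhoods of the same points.} Once this is in place, Theorem~\ref{final-thm}(2)(3) immediately gives the conclusion.

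Here is how I would carry it out. Fix $i,j$ with $|j-i|\ge 2$ (the case of $a_{\infty}$ is analogous, replacing $a_j$ by $a_{\infty}$ throughout). Suppose, for contradiction, that $(a_i,a_j)$ is an IN-pair for $G$. Since by Proposition~\ref{P}(c) all entries of an IN-tuple lie in $\Omega(G)=A$, and since $G$ is a ``shift'' on $A$ in the sense that $G(a_k)=a_{k+1}$, I would use Proposition~\ref{P}(a) (invariance of $IN_n$ under $G^{(n)}$) together with the fact that on $A$ the map $G|_A$ is conjugate to $n\mapsto n+1$, exactly as in the proof of Theorem~\ref{final-thm}(2), to normalize and reduce to showing that $(U^1(a_0),U^1(a_{j'}))$ with $|j'|\ge 2$ has an independence set of times of length $5$ \emph{for the map $G$}. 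The key step is then to show that such an independence set for $G$ produces an independence set of the same length for $T$ on $X_1$, which contradicts the proof of Theorem~\ref{final-thm}. For this, take a point $z\in X$ realizing the independence (for a fixed choice function $s$): $G^{l_r}(z)\in U^1(a_{s(r)})$, $r=-1,0,1,2,3$. Such a $z$ must lie in the snake (head points never return to a neighbourhood of a non-fixed head point, and $a_{\infty}$ is fixed). Say $z\in D^*(m)$; then $G^l(z)\in D^*(m+l)$ for all $l\ge 0$. I would argue that the condition $G^{l_r}(z)\in U^1(a_{s(r)})$ forces $P_2(G^{l_r}(z))$ to be within $r(m+l_r)$ of $a_{s(r)}$ (this uses Lemma~\ref{L:fronttofront}, Lemma~\ref{L:G-log2-Dm-3part}, and the equidistance bounds, which say the $P_2$-projection of a brick hitting $U^1(a_k)$ is genuinely near $a_k$), and hence that the \emph{corresponding} trajectory point $x_{m+l_r}$ of the auxiliary system $(X_1,T)$ lies in $U^1(a_{s(r)})$ as well, for all sufficiently large $m$ (we may take $m$ as large as we wish by replacing $z$ by $G^M(z)$, since $D^*$-invariance is preserved and the radii $r(\cdot)$ shrink to $0$). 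Since $x_{m+l_r}=T^{l_r}(x_m)$, the point $x_m$ witnesses that $\{l_{-1},l_0,l_1,l_2,l_3\}$ is an independence set of times of length $5$ for $(U^1(a_0),U^1(a_{j'}))$ and the map $T$, contradicting Theorem~\ref{final-thm}(2) (resp. (3) in the $a_{\infty}$ case, using also~\eqref{eq:U_0U_infty}).

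I expect the main obstacle to be the careful bookkeeping in the shadowing step: translating ``$G^l(z)$ hits a $U^1$-neighbourhood'' into ``the honest trajectory point $x_{m+l}$ hits the corresponding $U^1$-neighbourhood'' requires keeping track of which brick of $D^*(m+l)$ the point $G^l(z)$ lands in, and verifying via Lemma~\ref{L:fronttofront} and Corollary~\ref{C:fronttofront} that the brick's $P_2$-projection is close to the corresponding point $P_2(x_{m+l})$ of the head's necklace, with an error controlled by $r(m+l)\to 0$. One must also handle the points where $x_{m+l}, x_{m+l+1}$ form a jump rather than lying in a common $J(s)$, but there the neighbourhoods $\mathscr U^m$ of the jumps shrink to $a_{\infty}$, so such indices are irrelevant for large $m$. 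Apart from this, the reduction mirrors the structure already used for Theorem~\ref{final-thm}, and I would keep the exposition short by invoking ``repeating the proof of Theorem~\ref{final-thm} verbatim, with $x_{m+l}$ in place of the witnessing point'' once the shadowing lemma is established.
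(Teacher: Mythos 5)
Your overall strategy is the right one — reduce the claim for $G$ to Theorem~\ref{final-thm} by turning an independence set of times for $G$ into one for $T$ on $X_1$, using Proposition~\ref{P}(c) with $\Omega(G)=A$ to confine attention to the head — and this is indeed the backbone of the paper's proof. But the ``shadowing step'' you identify as the only serious obstacle is exactly where a genuine gap opens up, and I don't think the sketch you give closes it.

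The issue is the ambiguity you implicitly assume away: a point $G^{l_r}(z)$ lying in $D^*(m+l_r)\cap U^1(a_{s(r)})$ can be close either to the \emph{first} point $x_{m+l_r}$ of $D(m+l_r)$, with $P_2(x_{m+l_r})=a_{s(r)}$, or to the \emph{last} point $x_{m+l_r+1}$, with $P_2(x_{m+l_r+1})=a_{s(r)}$ but $P_2(x_{m+l_r})\ne a_{s(r)}$. So ``$P_2(G^{l_r}(z))$ close to $a_{s(r)}$'' does \emph{not} tell you which trajectory point of $T$ is in the right neighbourhood; the off-by-one index can flip from one $r$ to another and, crucially, from one witness $z_s$ to another (the witness $z_s$ depends on the choice function $s$, and different $s$ give different witnesses falling in different parts of the snake). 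Your remark that one can ``take $m$ as large as we wish by replacing $z$ by $G^M(z)$'' does not help: $G^M(z)$ is no longer a witness for the original time set $\{l_{-1},\dots\}$, since $G^{l_r}(G^M(z))=G^{M+l_r}(z)$ need not lie in $U^1(a_{s(r)})$. This is precisely why the paper does not attempt a length-preserving reduction; instead it proves the weaker ``Claim'' that an independence set of length $2N+4$ for $G$ yields one of length $N$ for $T$. The extra times $l_{-1},l_0,l_{N+1},l_{N+2}$, forced to visit $U(a_i)$, act as anchors that pin down (via conditions (iv)--(vi), Lemma~\ref{L:fronttofront}, Lemma~\ref{L:G-log2-Dm-3part} and Corollary~\ref{C:fronttofront}) which of $D^{\rm front}$, $D^{\rm middle}$, $D^{\rm end}$ contains each iterate, and the argument then splits $\{i,j\}^{\{1,\dots,N\}}$ into four structural sub-classes ($\mathfrak S_1,\mathfrak S_{2ac},\mathfrak S_{2ad},\mathfrak S_{2b}$) with a separate treatment for each; the doubling of the time budget is used so that in the worst case (Case~4) the second half of the pattern still produces a usable witness in $X_1$. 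Without this anchor-plus-case-analysis machinery, the off-by-one ambiguity is not controlled, and your proposed reduction does not go through.
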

\begin{proof}
  $(1)$ Fix $i, j \in \mathbb Z$ such that $|j -i|\geq 2$. We may assume that $i>j$.  By Theorem \ref{final-thm}, $(a_0,a_{i-j})$ is not an IN-pair for $(X_1,T)$. In view of Proposition~\ref{P}(a) and Proposition~\ref{P2}(b) this implies that
  \begin{equation}\label{Eq:aiajnotT1}
  \text{$(a_i,a_j)$ is not an IN-pair for $(X_1,T)$.}
  \end{equation}
  We are going to show that such a pair is neither an IN-pair for $(X,G)$.

   By~\eqref{Eq:aiajnotT1}, there is $N \in \N$ and a pair of neighborhoods of the points $a_i,a_j$ such that it has no independence set of times of length $N$ in $(X_1, T)$. Of course, neither smaller neighbourhoods have such an independence set. Therefore one can fix $\varepsilon_i >0$, $\varepsilon_j >0$ and $r>0$ such that the following holds.
     \begin{enumerate}
      \item[(i)] The closed balls (in the plane $\pi_0$) $\overline B(a_{j-1},r)$, $\overline B(a_j,r)$ and $\overline B(a_{j+1},r)$ are pairwise disjoint,
      \item[(ii)] The closed balls $\overline B(a_{i-1},r)$, $\overline B(a_i,r)$ and $\overline B(a_{i+1},r)$ are pairwise disjoint,
      \item[(iii)] For the open balls $B(a_i,r), B(a_j,r)$ in $\pi_0$ we have that, for $U(a_i)= [0,\varepsilon_i) \times B(a_i,r)$ and $U(a_j)= [0,\varepsilon_j) \times B(a_j,r)$,
      \begin{equation}\label{Eq:UnoNforT1}
      \text{the pair $(U(a_i), U(a_j))$ has no independence set of times of length $N$ in $(X_1,T)$.}
      \end{equation}
     \end{enumerate}
  For other restrictions on the choice of  $\varepsilon_i, \varepsilon_j$ and $r$ see below. (In (iii), the shape of $U(a_i)$ and $U(a_j)$ is due to the fact that $X$ lives in $[0,1] \times \pi_0$; to be precise, we should speak on the pair of sets $(U(a_i)\cap X_1, U(a_j)\cap X_1)$, but we hope that no misunderstanding can arise.) We may also assume that all the six balls from (i) and (ii) are pairwise disjoint (except for the case $i=j+2$ when $B(a_{j+1},r)$ and $B(a_{i-1},r)$ coincide) and also disjoint with the closed $r$-ball centered at any other point from $A$.
  Since we have (EP5), (EP10), (EP12) as well as~\eqref{Eq:P2front0} and its analogues~\eqref{Eq:P2frontj},~\eqref{Eq:P2frontt1},~\eqref{Eq:P2frontakot1}, ..., we may also assume that $r$ is small enough so that the following holds.
    \begin{enumerate}
    	\item[(iv)]  If $z\in D(t) \cap U(a_i)$ then $a_i$ is the $P_2$-projection of the first or the last point of $D(t)$. In the former case  $z\notin D^{\rm end}(t)$ and in the latter case $z\notin D^{\rm front}(t)$. The same is true for $U(a_j)$.
    \end{enumerate}
  By (iv), if $D^{\rm front}(t)$ intersects $U(a_i)$, then $a_i=P_2(x_t)$. Since the point $x_{t+1}$ is closer to $\pi_0$ than $x_t$, it could happen that though some point $z\in D^{\rm front}(t)$ is in $U(a_i)$, the point $x_t$ itself is outside $U(a_i)$ (though $x_t \in [0, \infty)\times B(a_i,r)$). A similar undesirable effect can occur for $U(a_j)$. However, if $D^{\rm front}(t)$ intersects $U(a_i)$ and $x_t \notin U(a_i)$, notice that such $t$ exists only one (if $t'>t$ and again $D^{\rm front}(t')$ intersects $U(a_i)$ then already $x_{t'} \in U(a_i)$). Similarly for $U(a_j)$. Therefore, by an appropriate choice of $\varepsilon_i$, $\varepsilon_j$ and $r$ we may assume the following.

   \begin{enumerate}
  	\item[(v)] If $D^{\rm front}(t)$ intersects $U(a_i)$ then $x_t \in U(a_i)$. Similarly for $U(a_j)$.
  \end{enumerate}
  Again, by (iv), if $D^{\rm end}(t)$ intersects $U(a_i)$, then $a_i=P_2(x_{t+1})$. Since the point $x_{t+1}$ is closer to $\pi_0$ than $x_t$, we then have the following.
  \begin{enumerate}
	\item[(vi)] If $D^{\rm end}(t)$ intersects $U(a_i)$ then $x_{t+1} \in U(a_i)$. Similarly for $U(a_j)$.
\end{enumerate}

\medskip

Now we are ready to prove that $(a_i,a_j)$ is not an IN-pair for $(X,G)$. Suppose on the contrary that this is not the case. Then the pair $(U(a_i), U(a_j))$ has arbitrarily long finite independence sets of times in $(X,G)$. We get a contradiction with~\eqref{Eq:aiajnotT1} by proving, for any fixed $N\in \mathbb N$, the following claim.

\medskip

{\noindent \bf Claim: } If the pair of sets $(U(a_i), U(a_j))$ has an independence set of times of length $2N+4$ in $(X,G)$, then it has an independence set of times of length $N$ in $(X_1,T)$.

{\noindent \bf Proof of Claim: } Suppose that $0\leq l_{-1}<l_0<\dots<l_{2N+2}$ and	$\{l_{-1},l_0,\dots, l_{2N+2}\}$ is an independence set of times of length $2N+4$ for $(U(a_i), U(a_j))$ in $(X,G)$. We are going to show that then $(U(a_i), U(a_j))$ has an independence set of times of length $N$ in $(X_1,T)$.

Fix $t_0 \in \{i,j\}^{\{1,2,\dots,N\}}$. For any $s \in \{i,j\}^{\{1,2,\dots,N\}}$ we can consider
\begin{equation}\label{Eq:2N+3}
(i,i, s(1),s(2),\dots,s(N),i,i,t_0(1),t_0(2),\dots,t_0(N))\in \{i,j\}^{\{-1,0,1,2,\dots,2N+2\}}
\end{equation}
    and denote
    \begin{eqnarray*}
	I(s,t_0)&=&(G^{-l_{-1}}U(a_i)) \cap (G^{-l_{0}}U(a_i)) \cap (\bigcap_{k=1}^{N}G^{-l_k}U(a_{s(k)})) \\
            & & \cap (G^{-l_{N+1}}U(a_i))\cap (G^{-l_{N+2}}U(a_i))\cap(\bigcap_{k=1}^{N}G^{-l_{N+2+k}}U(a_{t_0(k)})).
	\end{eqnarray*}
By the assumption, $I(s,t_0)\neq \emptyset$. In other words,
	\begin{equation}\label{Eq:Ist0nonempty}
	\text{for every $s$ there exists $z_{s,t_0} \in X$ such that $z_{s,t_0} \in I(s,t_0)$.}
	\end{equation}
Notice also that 	
	\begin{equation}\label{Eq:Ist0snake}
    I(s,t_0) \subseteq \text{snake}
	\end{equation}	
because, apart from the fixed point $a_{\infty}$, the dynamics in the head is `clock-wise' and so a point from the head cannot visit $U(a_i)$ (at least) 4-times, as required by~\eqref{Eq:2N+3}.	

Clearly, $\{i,j\}^{\{1,2,\dots,N\}} = \mathfrak S_1(t_0) \sqcup \mathfrak S_2(t_0)$ where
\[
\mathfrak S_1(t_0) := \{s \in \{i,j\}^{\{1,2,\dots,N\}}:\,  I(s,t_0) \cap X_1 \neq \emptyset\}
\]
and
\[
\mathfrak S_2(t_0) := \{s \in \{i,j\}^{\{1,2,\dots,N\}}:\,  I(s,t_0) \subseteq \text{snake} \setminus X_1\}.
\]

If $s\in \mathfrak S_2(t_0)$ and $z_{s,t_0} \in I(s,t_0)$, then there exists a unique $\widetilde m$ such that $z_{s,t_0} \in D(\widetilde m) \setminus X_1$. Then, for $m=\widetilde m +l_{-1}$, using also the definition of $I(s,t_0)$ and the fact that the set $D(m)$ has the first point $x_m$ and the last point $x_{m+1}$, we get two disjoint possibilities:
\begin{enumerate}
	\item [(a)] $G^{l_{-1}}z_{s,t_0} \in D(m) \cap U(a_{i})$, $a_i=P_2(x_{m+1})$, or
	\item [(b)] $G^{l_{-1}}z_{s,t_0} \in D(m) \cap U(a_{i})$, $a_i=P_2(x_m)$
\end{enumerate}
(of course, in (a) and (b) the number $m$ depends on $z_{s,t_0}$ and $P_2(G^{l_{-1}}z_{s,t_0}) \in B(a_i,r)$). Consider the sets
\[
\mathfrak S_{2a}(t_0) := \{s \in \mathfrak S_2(t_0):\, \forall z_{s,t_0} \in I(s,t_0) \text{ the condition (a) holds}\}
\]
and
\[
\mathfrak S_{2b}(t_0) := \{s \in \mathfrak S_2(t_0):\, \exists z_{s,t_0} \in I(s,t_0) \text{ such that the condition (b) holds}\}.
\]

Let $s\in \mathfrak S_{2a}(t_0)$ and $z_{s,t_0} \in I(s,t_0)$. In view of the condition (a) and the definition of $I(s,t_0)$, we have $G^{l_{N+1}}z_{s,t_0} \in  D(m+l_{N+1}-l_{-1}) \cap U(a_i)$ for some $m$. There are two disjoint possibilities:
\begin{enumerate}
	\item [(c)] $G^{l_{N+1}}z_{s,t_0} \in D^{\rm end}(m+l_{N+1}-l_{-1})\cap U(a_i)$, or
	\item [(d)] $G^{l_{N+1}}z_{s,t_0} \in (D(m+l_{N+1}-l_{-1}) \setminus D^{\rm end}(m+l_{N+1}-l_{-1}))\cap U(a_i)$.
\end{enumerate}
Consider the sets
\[
\mathfrak S_{2ac}(t_0) := \{s \in \mathfrak S_{2a}(t_0):\, \forall z_{s,t_0} \in I(s,t_0) \text{ the condition (c) holds}\}
\]
and
\[
\mathfrak S_{2ad}(t_0) := \{s \in \mathfrak S_{2a}(t_0):\, \exists z_{s,t_0} \in I(s,t_0) \text{ such that the condition (d) holds}\}.
\]

Clearly, for every $t_0 \in \{i,j\}^{\{1,2,\dots,N\}}$ we have
\[
\{i,j\}^{\{1,2,\dots,N\}} = \mathfrak S_1(t_0) \sqcup \mathfrak S_{2ac}(t_0) \sqcup \mathfrak S_{2ad}(t_0) \sqcup \mathfrak S_{2b}(t_0).
\]
We consider four cases.

\medskip

\emph{Case 1: There exists $t_0$ such that $\{i,j\}^{\{1,2,\dots,N\}} = \mathfrak S_1(t_0)$.}

In this case $\{l_1,\dots, l_N\}$ is an independence set of times of length $N$ for $(U(a_i), U(a_j))$ in $(X_1, T)$ and we are done.

\medskip

\emph{Case 2: There exists $t_0$ such that $\{i,j\}^{\{1,2,\dots,N\}} = \mathfrak S_1(t_0) \sqcup \mathfrak S_{2b}(t_0)$ with $\mathfrak S_{2b}(t_0) \neq \emptyset$.}

Fix such $t_0$. Let $s\in \mathfrak S_{2b}(t_0)$. Choose $z_{s,t_0} \in I(s,t_0)$ such that (b) holds for some $m$. For such $s$ and $z_{s,t_0}$ we get, by (iv), that $G^{l_{-1}}z_{s,t_0} \in D^{\rm front}(m)\cup  D^{\rm middle}(m)$. Using Lemma \ref{L:G-log2-Dm-3part},
\[
G^{l_{0}}z_{s,t_0}=G^{l_{0}-l_{-1}}G^{l_{-1}}z_{s,t_0} \in D^{\rm front}(m+l_0-l_{-1}).
\]
Then, by Corollary~\ref{C:fronttofront}, the points $G^{l_{0}}z_{s,t_0}$ and $x_{m+l_0-l_{-1}}$ travel together through the front parts, i.e. for every $p\geq 0$ both $G^p(G^{l_{0}}z_{s,t_0})$ and $G^{p}(x_{m+l_0-l_{-1}})$ belong to $D^{\rm front}(m+l_0-l_{-1}+p)$. In particular,
\[
G^{l_{k}}z_{s,t_0}=G^{l_{k}-l_{0}}G^{l_{0}}z_{s,t_0} \in D^{\rm front}(m+l_k-l_{-1}), \qquad k=1,\dots,N.
\]
Since $z_{s,t_0} \in I(s,t_0)$, we also have $G^{l_k}z_{s,t_0} \in U(a_{s(k)})$, $k=1,\dots,N$.  Thus, $G^{l_k}z_{s,t_0}$ is a point of $D^{\rm front}(m+l_k-l_{-1})$ belonging to $U(a_{s(k)})$ and then, by (v),
\[
x_{m+l_k-l_{-1}}\in U(a_{s(k)}),  \qquad k=1,\dots,N.
\]
Then
\begin{equation}\label{Eq:sinS2bviax}
G^{l_k-l_0}x_{m+l_0-l_{-1}} = x_{m+l_k-l_{-1}} \in U(a_{s(k)}), \qquad k=1,\dots,N,
\end{equation}
whence
\begin{equation}\label{Eq:sinS2b}
X_1 \cap \bigcap_{k=1}^{N}G^{-(l_{k}-l_0)}U(a_{s(k)}) \neq \emptyset.
\end{equation}

We have shown that~\eqref{Eq:sinS2b} holds for all $s\in \mathfrak S_{2b}(t_0)$. However, if $s\in \mathfrak S_1(t_0)$ then~\eqref{Eq:sinS2b} is also true, because by definition of $\mathfrak S_1(t_0)$ there is a point $z_{s,t_0}\in I(s, t_0) \cap X_1$ and then the point $G^{l_0}(z_{s,t_0})$ belongs both to the $G$-invariant set $X_1$ and to the set $\bigcap_{k=1}^{N}G^{-(l_{k}-l_0)}U(a_{s(k)})$.
Thus we have~\eqref{Eq:sinS2b} for all $s\in \mathfrak S_1(t_0) \sqcup \mathfrak S_{2b}(t_0) = \{i,j\}^{\{1,2,\dots,N\}}$. It follows that
$\{l_1-l_0, \dots, l_N-l_0\}$ is an independence set of times of length $N$ for $(U(a_i), U(a_j))$ in $(X_1, T)$ and so we are done also in this case.

\medskip

\emph{Case 3: There exists $t_0$ such that $\{i,j\}^{\{1,2,\dots,N\}} = \mathfrak S_1(t_0) \sqcup \mathfrak S_{2ac}(t_0) \sqcup \mathfrak S_{2b}(t_0)$ with $\mathfrak S_{2ac}(t_0) \neq \emptyset$.}

Fix such $t_0$. Let $s\in \mathfrak S_{2ac}(t_0)$. By the condition (a) and by (iv), $G^{l_{-1}}z_{s,t_0} \in (D(m) \setminus D^{\rm front}(m)) \cap U(a_{i})$ for some~$m$. Hence $G^{l_{-1}}z_{s,t_0} \in (D^{\rm middle}(m) \cup D^{\rm end}(m)) \cap U(a_{i})$.
On the other hand, by condition (c) we have $G^{l_{N+1}}z_{s,t_0} \in D^{\rm end}(m+l_{N+1}-l_{-1})\cap U(a_i)$.
Then by Lemma~\ref{L:fronttofront} and Lemma~\ref{L:G-log2-Dm-3part},
\[
G^{l_{k}}z_{s,t_0} \in D^{end}(m+l_{k}-l_{-1}), \qquad k=-1,0,1,2,\dots,N.
\]
Further, since $z_{s,t_0}\in I(s,t_0)$, we have
\[
G^{l_{k}-l_{-1}}G^{l_{-1}}z_{s,t_0} = G^{l_{k}}z_{s,t_0}\in U(a_{s(k)}), \qquad k=1,2,\dots,N.
\]
Thus, the point $G^{l_{-1}}z_{s,t_0} \in D^{end}(m)$ has the $G^{l_{k}-l_{-1}}$-image (for $k=1,2,\dots,N$) both in $D^{end}(m+l_{k}-l_{-1})$ and in $U(a_{s(k)})$. By (vi),
$x_{m+l_k-l_{-1}+1} \in U(a_{s(k)})$. Hence
\[
G^{l_{k}-l_{-1}}x_{m+1} = x_{m+l_k-l_{-1}+1} \in U(a_{s(k)}), \qquad k=1,2,\dots,N.
\]
It follows that
\begin{equation}\label{Eq:sinS2ac}
X_1 \cap \bigcap_{k=1}^{N}G^{-(l_{k}-l_{-1})}U(a_{s(k)}) \neq \emptyset.
\end{equation}

Let us summarize. For any $s \in  \{i,j\}^{\{1,2,\dots,N\}}$ there are the following $3$ possibilities.
\begin{itemize}
	\item If $s\in \mathfrak S_{2ac}(t_0)$ then, as we have just shown, \eqref{Eq:sinS2ac} holds.
	\item If $s\in \mathfrak S_1(t_0)$ then there is $z_{s,t_0} \in I(s,t_0)\cap X_1$. Then $G^{l_{-1}} z_{s,t_0} \in X_1$ and, by the definition of $I_{s,t_0}$, $G^{l_k-l_{-1}} (G^{l_{-1}} z_{s,t_0}) = G^{l_k}(z_{s,t_0}) \in U(a_{s(k)})$ for $k=1,\dots, N$. Hence, again we have~\eqref{Eq:sinS2ac}.
	\item If $s\in \mathfrak S_{2b}(t_0)$ then, as shown in~\eqref{Eq:sinS2bviax} in Case 2, for $k=1,\dots,N$ we have $G^{l_k-l_0}x_{m+l_0-l_{-1}} \in U(a_{s(k)})$. Since $G^{l_k-l_0}x_{m+l_0-l_{-1}} = G^{l_k-l_0} (G^{l_0-l_{-1}}(x_{m})) = G^{l_k-l_{-1}}(x_{m})$, we get~\eqref{Eq:sinS2ac} again.
\end{itemize}
Since~\eqref{Eq:sinS2ac} holds for every $s \in  \{i,j\}^{\{1,2,\dots,N\}}$, $\{l_1-l_{-1}, \dots, l_N-l_{-1}\}$ is an independence set of times of length $N$ for $(U(a_i), U(a_j))$ in $(X_1, T)$. The proof of Claim in Case 3 is finished.

\medskip

It remains to consider the last case, which is the negation of the logical disjunction of the first three cases. It can be formulated as follows.

\medskip

\emph{Case 4: For every $t_0 \in \{i,j\}^{\{1,2,\dots,N\}}$ there exists $s_0 \in  \{i,j\}^{\{1,2,\dots,N\}}$ such that $s_0 \in \mathfrak S_{2ad}(t_0)$.}

Fix any $t_0$ and choose $s_0 \in \mathfrak S_{2ad}(t_0)$. Then there exists $z_{s_0,t_0}$ such that the following holds:
\begin{enumerate}
	\item[($4_1$)] $z_{s_0,t_0} \in I(s_0,t_0)$.
	\item[($4_2$)] There exists $m \in \N$ with $P_2(x_{m+1})=a_{i}$ such that $G^{l_{-1}}z_{s_0,t_0} \in D(m)\cap U(a_{i})$. Moreover, by (iv), $G^{l_{-1}}z_{s_0,t_0} \notin D^{ front}(m)$.
	\item[($4_3$)] $G^{l_{N+1}}z_{s,t_0} \in (D(m+l_{N+1}-l_{-1}) \setminus D^{\rm end}(m+l_{N+1}-l_{-1}))\cap U(a_i)$.
\end{enumerate}
By ($4_3$),
\[
G^{l_{N+1}}z_{s_0,t_0} \in  (D^{\rm front}(m+l_{N+1}-l_{-1})\cup D^{\rm middle}(m+l_{N+1}-l_{-1})) \cap U(a_i)
\]
and so $G^{l_{N+2}}z_{s_0,t_0} \in D(m+l_{N+2}-l_{-1})\cap U(a_i)$. Since $x_{m+l_{N+1}-l_{-1}}$ and $x_{m+l_{N+2}-l_{-1}}$
are not in the same jump level, by Lemma  \ref{L:G-log2-Dm-3part} we in fact have
\begin{equation}\label{eq:11to5-1}
     G^{l_{N+2}}z_{s_0,t_0} \in D^{\rm front}(m+l_{N+2}-l_{-1}) \cap U(a_i).
\end{equation}
Hence, by Corollary~\ref{C:fronttofront}, the points $G^{l_{N+2}}z_{s_0,t_0}$ and $x_{m+l_{N+2}-l_{-1}}$ travel together through the front parts. In particular,
\[
G^{l_{N+2+k}}z_{s_0,t_0}=G^{l_{N+2+k}-l_{N+2}}(G^{l_{N+2}}z_{s_0,t_0}) \in  D^{\rm front}(m+l_{N+2+k}-l_{-1}), \qquad k=1,2,\dots,N.
\]
Further, by ($4_1$) and the definition of $I(s_0,t_0)$ we get
\[
G^{l_{N+2+k}}z_{s_0,t_0} \in U(a_{t_0(k)}), \qquad k=1,2,\dots, N.
\]
By the last two inclusions, (v) gives that $x_{m+l_{N+2+k}-l_{-1}} \in U(a_{t_0(k)})$, $k=1,2,\dots, N$. Equivalently,
\[
G^{l_{N+2+k}-l_{N+2}}x_{m+l_{N+2}-l_{-1}} \in U(a_{t_0(k)}), \qquad k=1,2,\dots,N
\]
and so
\[
X_1 \cap \bigcap_{k=1}^{N}G^{-(l_{N+2+k}-l_{N+2})}U(a_{t_0(k)}) \neq \emptyset.
\]
Since $t_0\in \{i,j\}^{\{1,2,\dots, N\}}$ was arbitrary, $\{l_{N+3}-l_{N+2},l_{N+4}-l_{N+2},\dots,l_{2N+2}-l_{N+2}\}$ is an independence set of times of length $N$ for
$(U(a_i), U(a_j))$ in $(X_1,T)$.

\medskip	

$(2)$ The proof is very similar to that of (1). For completeness, we give an outline of it, emphasizing the differences when compared with the proof of (1). To make the comparison easier, we will use an analogous notation as in $(1)$. Now we will of course have $U(a_{\infty})$ instead of $U(a_j)$. The main difference is that (iv) will be replaced by (iv'$_a$)-(iv'$_d$).

By Theorem \ref{final-thm}, $(a_0,a_{\infty})$ is not an IN-pair for $(X_1,T)$. In view of Proposition~\ref{P}(a) and Proposition~\ref{P2}(b) this implies that
\begin{equation}\label{Eq:aiajnotT1-infty}
\text{$(a_i,a_{\infty})$ is not an IN-pair for $(X_1,T)$.}
\end{equation}
We are going to show that such a pair is neither an IN-pair for $(X,G)$.

By~\eqref{Eq:aiajnotT1-infty}, there is $N \in \N$ and a pair of neighborhoods of the points $a_i,a_{\infty}$ such that it has no independence set of times of length $N$ in $(X_1, T)$. Therefore one can fix $\varepsilon_i >0$, $\varepsilon_{\infty} >0$ and $r>0$ such that the following holds.
\begin{enumerate}
	\item[(i')] The closed balls (in the plane $\pi_0$) $\overline B(a_{i-1},r)$, $\overline B(a_i,r)$ and $\overline B(a_{i+1},r)$ are pairwise disjoint (and also disjoint with the closed $r$-ball centered at any other point $a_j\in A$).
	\item[(ii')] The closed ball $\overline B(a_{\infty},r)$ is disjoint with the three closed balls in (i') and there exists $K\in \N$
	such that $a_j~\in~B(a_{\infty},r)$ if and only is $|j|\geq K$.
	\item[(iii'$_a$)] For the open balls $B(a_i,r), B(a_{\infty},r)$ in $\pi_0$ we have that, for $U(a_i)= [0,\varepsilon_i) \times B(a_i,r)$ and $V(a_{\infty})= [0,\varepsilon_{\infty}) \times B(a_{\infty},r)$,
	\begin{equation}\label{Eq:VnoNforT1-infty}
	\text{the pair $(U(a_i), V(a_{\infty}))$ has no independence set of times of length $N$ in $(X_1,T)$.}
	\end{equation}
\end{enumerate}
For other restrictions on the choice of  $\varepsilon_i, \varepsilon_{\infty}$ and $r$ see below.

	Recall that, by our construction, all the jumps are `almost horizontal' and the jump numbers corresponding to the starting points of the jumps tend to infinity very fast.  Therefore we may assume that $r$ is chosen in such a way that $K$ is  not a jump number, i.e. no jump starts or ends in a point whose $P_2$-projection is $a_K$ or $a_{-K}$, respectively. Again, taking into account that all jumps are `almost horizontal', one can see that by shrinking $B(a_{\infty},r)$ appropriately we get an open neighborhood (not necessarily a ball) $\tilde{B}(a_{\infty})\subseteq B(a_{\infty},r)$ of $a_{\infty}$ such that still $a_j \in \tilde{B}(a_{\infty})$ if and only is $|j|\geq K$ and, moreover,
\[
U(a_{\infty}):= [0,\varepsilon_{\infty}) \times \tilde{B}(a_{\infty})\subseteq V(a_{\infty})
\]
has the following property.
\begin{enumerate}
	\item[(iii'$_b$)] If $D(t)$ is placed along a jump and $D(t) \cap U(a_{\infty}) \neq \emptyset$, then $D(t)\subseteq U(a_{\infty})$.
\end{enumerate}
Since $U(a_{\infty})\subseteq V(a_{\infty})$, it is clear that
\begin{equation}\label{Eq:UnoNforT1-infty}
\text{the pair $(U(a_i), U(a_{\infty}))$ has no independence set of times of length $N$ in $(X_1,T)$.}
\end{equation}

Just as we do in $(1)$, we may also assume that $r$, $\varepsilon_{\infty}$ and $U(a_{\infty})$ are chosen properly so that the following holds.
\begin{enumerate}
	\item[(iv'$_a$)]  If $z\in D(t) \cap U(a_i)$ then $a_i$ is the $P_2$-projection of the first or the last point of $D(t)$. In the former case  $z\notin D^{\rm end}(t)$ and in the latter case $z\notin D^{\rm front}(t)$.
	\item[(iv'$_b$)]  If $z\in D(t) \cap U(a_{\infty})$ and $P_2(x_{t})=a_{-K}$,  then $z \notin D^{\rm end}(t)$.
	\item[(iv'$_c$)]  If $z\in D(t) \cap U(a_{\infty})$ and $P_2(x_{t+1})=a_{K}$, then $z \notin D^{\rm front}(t)$.
	\item[(iv'$_d$)]  If $z\in D(t) \cap U(a_{\infty})$ and  $P_2(x_{t+1})=a_{j}$ where $j>K$ or $j \le -K$, then $ D(t) \subseteq U(a_{\infty})$.
\end{enumerate}

\medskip

By (iv'$_a$), if $D^{\rm front}(t)$ intersects $U(a_i)$, then $a_i=P_2(x_t)$. Since the point $x_{t+1}$ is closer to $\pi_0$ than $x_t$, it could happen that the point $x_t$ itself is outside $U(a_i)$. However, as in (1), we may assume the following.
 \begin{enumerate}
 	\item[(v'$_a$)] If $D^{\rm front}(t)$ intersects $U(a_i)$ then $x_t \in U(a_i)$.
 \end{enumerate}
Similarly, we can claim the following.
 \begin{enumerate}
 	\item[(v'$_b$)] If $D^{\rm front}(t)$ intersects $U(a_{\infty})$ then $x_t \in U(a_{\infty})$.
 \end{enumerate}
Indeed, if $D(t)$ is placed along a jump, then this follows from (iii')(b). Now suppose that $D(t)$ is not placed along a jump, put $P_2(x_{t+1}) = a_j$ and consider all the possible cases as follows. If $j>K$ or $j \le -K$, then $x_t \in U(a_{\infty})$ by (iv')(d). If $j=-(K-1)$, we have $P_2(x_t)=a_{-K}\in \tilde{B}(a_{\infty})$ and then $x_t\in U(a_{\infty})$ due to a slight change of $\varepsilon _{\infty}$ if necessary (see the corresponding discussion in (1), just above (v)). The case $j=K$ is impossible because our assumption that $D^{\rm front}(t)$ intersects $U(a_{\infty})$ contradicts (iv')(c). It remains the case $-(K-1) <j < K$ which implies that both $j$ and $j-1$ have absolute values less than $K$ and so both $a_j, a_{j-1} \notin \tilde{B}(a_{\infty})$, whence $x_{t+1}, x_t \notin U(a_{\infty})$ and so $D(t)\cap U(a_{\infty})=\emptyset$, a contradiction.

\medskip

By (iv'$_a$), if $D^{\rm end}(t)$ intersects $U(a_i)$, then $a_i=P_2(x_{t+1})$. Since the point $x_{t+1}$ is closer to $\pi_0$ than $x_t$, we have the following.
\begin{enumerate}	
 	\item[(vi'$_a$)] If $D^{\rm end}(t)$ intersects $U(a_i)$ then $x_{t+1} \in U(a_i)$.
\end{enumerate}
Similarly, we can claim the following.
\begin{enumerate}	
	\item[(vi'$_b$)] If $D^{\rm end}(t)$ intersects $U(a_{\infty})$ then $x_{t+1} \in U(a_{\infty})$.
\end{enumerate}
Indeed, if $D(t)$ is placed along a jump, then this follows from (iii')(b). Now suppose that $D(t)$ is not placed along a jump, put $P_2(x_{t+1}) = a_j$ and consider all the possible cases as follows. If $j>K$ or $j \le -K$, then $x_{t+1} \in U(a_{\infty})$ by (iv')(d). If $j=-(K-1)$, we have $P_2(x_t)=a_{-K}$ and (iv')(b) shows that this case is impossible. If $j=K$, then $P_2(x_{t+1})=a_{K}\in \tilde{B}(a_{\infty})$ and since $x_{t+1}$ is closer to $\pi_0$ than $x_t$ (and $D^{\rm end}(t)$ intersects $U(a_{\infty})$), this implies $x_{t+1}\in U(a_{\infty})$. Finally, if $-(K-1) <j < K$ then both $j$ and $j-1$ have absolute values less than $K$. Thus both $a_j, a_{j-1} \notin \tilde{B}(a_{\infty})$, whence $x_{t+1}, x_t \notin U(a_{\infty})$ and so $D(t)\cap U(a_{\infty})=\emptyset$, a contradiction.

\medskip

Now we are ready to prove that $(a_i,a_{\infty})$ is not an IN-pair for $(X,G)$. Suppose on the contrary that this is not the case. Then the pair $(U(a_i), U(a_{\infty}))$ has arbitrarily long finite independence sets of times in $(X,G)$. We get a contradiction with~\eqref{Eq:aiajnotT1-infty} by proving, for any fixed $N\in \mathbb N$, the following claim.

\medskip

{\noindent \bf Claim': } If the pair of sets $(U(a_i), U(a_{\infty}))$ has an independence set of times of length $2N+4$ in $(X,G)$, then it has an independence set of times of length $N$ in $(X_1,T)$.

{\noindent \bf Proof of Claim': } Suppose that $0\leq l_{-1}<l_0<\dots<l_{2N+2}$ and	$\{l_{-1},l_0,\dots, l_{2N+2}\}$ is an independence set of times of length $2N+4$ for $(U(a_i), U(a_{\infty}))$ in $(X,G)$. We are going to show that then $(U(a_i), U(a_{\infty}))$ has an independence set of times of length $N$ in $(X_1,T)$.

Fix $t_0 \in \{i,\infty\}^{\{1,2,\dots,N\}}$. For any $s \in \{i,\infty\}^{\{1,2,\dots,N\}}$ we can consider
\begin{equation}\label{Eq:2N+3-infty}
(i,i, s(1),s(2),\dots,s(N),i,i,t_0(1),t_0(2),\dots,t_0(N))\in \{i,\infty\}^{\{-1,0,1,2,\dots,2N+2\}}
\end{equation}
and also denote
\begin{eqnarray*}
	I(s,t_0)&=&(G^{-l_{-1}}U(a_i)) \cap (G^{-l_{0}}U(a_i)) \cap (\bigcap_{k=1}^{N}G^{-l_k}U(a_{s(k)})) \\
	& & \cap (G^{-l_{N+1}}U(a_i))\cap (G^{-l_{N+2}}U(a_i))\cap(\bigcap_{k=1}^{N}G^{-l_{N+2+k}}U(a_{t_0(k)})).
\end{eqnarray*}
By the assumption,
\begin{equation}\label{Eq:Ist0nonempty-infty}
\text{for every $s$ there exists $z_{s,t_0} \in X$ such that $z_{s,t_0} \in I(s,t_0)$}
\end{equation}
and for the same reason as in (1) we have 	
\begin{equation}\label{Eq:Ist0snake-infty}
I(s,t_0) \subseteq \text{snake}.
\end{equation}	

Clearly, $\{i,\infty\}^{\{1,2,\dots,N\}} = \mathfrak S_1(t_0) \sqcup \mathfrak S_2(t_0)$ where
\[
\mathfrak S_1(t_0) := \{s \in \{i,\infty\}^{\{1,2,\dots,N\}}:\,  I(s,t_0) \cap X_1 \neq \emptyset\}
\]
and
\[
\mathfrak S_2(t_0) := \{s \in \{i,\infty\}^{\{1,2,\dots,N\}}:\,  I(s,t_0) \subseteq \text{snake} \setminus X_1\}.
\]

We further define $\widetilde m$, $m$ and also $\mathfrak S_{2a}(t_0)$, $\mathfrak S_{2b}(t_0)$ and $\mathfrak S_{2ac}(t_0)$, $\mathfrak S_{2ad}(t_0)$ as in (1). Then, for every $t_0 \in \{i,\infty\}^{\{1,2,\dots,N\}}$ we have
\[
\{i,\infty\}^{\{1,2,\dots,N\}} = \mathfrak S_1(t_0) \sqcup \mathfrak S_{2ac}(t_0) \sqcup \mathfrak S_{2ad}(t_0) \sqcup \mathfrak S_{2b}(t_0).
\]

To finish the proof, it is sufficient to consider exactly those four cases as in (1). In each of them, the proof is the same as in the corresponding case in (1). Of course, instead of $\{i,j\}^{\{1,2,\dots,N\}}$ and $(U(a_i), U(a_j))$ one has to write $\{i,\infty\}^{\{1,2,\dots,N\}}$ and $(U(a_i), U(a_{\infty}))$, respectively. The list of other differences is:
\begin{itemize}
	\item in Case 2, instead of (iv) use (iv'$_a$) and instead of (v) use (v'$_a$) and (v'$_b$),
	\item in Case 3, instead of (iv) use (iv'$_a$) and instead of (vi) use (vi'$_a$) and (vi'$_b$),
	\item in Case 4, instead of (iv) use (iv'$_a$) and instead of (v) use (v'$_a$).
\end{itemize}
\end{proof}

Recall that in Section~\ref{S:X} dealing with the case $S(X)=\{0,\infty\}$, namely above Lemma~\ref{L:space}, we introduced the sets
$\langle \langle a, b \rangle \rangle$, $\langle \langle \mathscr K_0, b \rangle \rangle$ and $((\mathscr K_0, b \rangle \rangle$. The sets of the form $((\mathscr K_0,
f^m_i\rangle \rangle$ were said to be sub-snakes. Now, when we are working with the case $S(X)=\{0,\log 2\}$, the head $\mathscr K_0$ is replaced by the head $\mathscr A_0$ and the snake is homeomorphic with the snake from Section~\ref{S:X}. Therefore we can again use these notations, with $\mathscr K_0$ replaced by $\mathscr A_0$.

With the help of the above Lemma~\ref{L:G-log2-2case}, we get the following analogue of Lemma~\ref{L:existsG}.	
\begin{lem}\label{L:existsG-log2}
For the map $G: X\to X$ defined above, in Subsection~\ref{SS:defG}, as a continuous extension of the map $T\colon X_1\to X_1$, the following properties hold.
	\begin{itemize}
		\item [(a)] $h^{*}(G)=\log 2$.
		\item [(b)] For every $r\in \N$, the set $S^{\Join}_r= \langle \langle
		\mathscr A_0, x_r  \rangle \rangle$ is $G$-invariant
		and $h^*(G|_{S^{\Join}_r})=\log 2$.
		\item [(c)] For every $m\in \N$, $G(D_m^*) =D_{m+1}^*$.
	\end{itemize}
\end{lem}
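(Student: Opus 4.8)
The plan is to prove Lemma~\ref{L:existsG-log2} exactly along the lines of Lemma~\ref{L:existsG}, the only real novelty being the upper bound $h^*(G)\leq \log 2$, since the lower bound and the structural statements (b)(c) are essentially free. First I would record (c): it is built into the definition of $G$ in Subsection~\ref{SS:defG}, where $G|_{D(m)}$ was declared to be the unique continuous surjection $D(m)\to D(m+1)$, which by Lemma~\ref{L:mapDD} and the construction sends $D^*(m)$ onto $D^*(m+1)$. For the $G$-invariance part of (b), observe that $S^{\Join}_r = \langle\langle \mathscr A_0, x_r\rangle\rangle = \mathscr A_0 \sqcup \bigcup_{i=r}^{\infty} D(i)$ (in the new indexing); since $G(\mathscr A_0)=\mathscr A_0$ (because $G$ maps $\mathscr H_k$ onto $\mathscr H_{k+1}$ and fixes $a_\infty$) and $G(D(i))=D(i+1)\subseteq S^{\Join}_r$ for $i\geq r$, the set $S^{\Join}_r$ is $G$-invariant. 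This matches the role played by the set $S_r$ for $T$, and since $G|_{X_1}=T$ and, by~\eqref{Eq:hstarsub} suitably reindexed, $h^*(T|_{S_r})=\log 2$ is not what we want — here we instead want $h^*(T|_{S_r})=\log 2$ in the log~2 construction; but in fact in Section~\ref{S:X1-T1-log2} the whole orbit $x_0,x_1,\dots$ approaches $A$, so $S_r := A \sqcup \{x_r, x_{r+1},\dots\}$ is $T$-invariant with $h^*(T|_{S_r}) = \log 2$ by the same argument as for $h^*(T)=\log 2$ (Theorem~\ref{final-thm}), because deleting finitely many initial points of the trajectory changes neither the IN-pair $(a_0,a_1)$ nor the non-IN conclusions. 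Thus $h^*(G|_{S^{\Join}_r}) \geq h^*(G|_{S_r}) = h^*(T|_{S_r}) = \log 2$.

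The heart of the proof is (a), and the key point is the opposite inequality $h^*(G)\leq \log 2$. The plan here is to invoke formula~\eqref{Eq:hstarT-IN}: it suffices to show there is no intrinsic IN-tuple of length $3$ for $(X,G)$. By Proposition~\ref{P}(c), any IN-tuple consists of nonwandering points, and by Lemma~\ref{L:OmegaG} we have $\Omega(G) = A = \{a_i\colon i\in\mathbb Z\}\cup\{a_\infty\}$. So any intrinsic IN-tuple of length $3$ is of the form $(a_i, a_j, a_k)$ with $i<j<k$ integers, or contains $a_\infty$ together with two distinct $a_i, a_j$. In the latter case $(a_i, a_\infty)$ (for at least one of the two $a$'s, in fact one can arrange $|i|$ as we like, but we only need existence) is an IN-pair, which is excluded by Lemma~\ref{L:G-log2-2case}(2). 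In the former case, since $k-i\geq 2$, the pair $(a_i,a_k)$ is an IN-pair, which is excluded by Lemma~\ref{L:G-log2-2case}(1). Hence there is no intrinsic IN-tuple of length $3$, so $h^*(G)\leq \log 2$. Combined with $h^*(G)\geq h^*(T)=\log 2$, this gives (a); and the same argument applied to $G|_{S^{\Join}_r}$ (whose nonwandering set is contained in $A$, and which still realizes the IN-pair $(a_0,a_1)$) gives $h^*(G|_{S^{\Join}_r})=\log 2$, completing (b).

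I would organize the write-up as: (i) a short paragraph establishing (c) and the $G$-invariance in (b) directly from the definition of $G$ and the decomposition of $S^{\Join}_r$; (ii) the lower bounds $h^*(G)\geq\log 2$ and $h^*(G|_{S^{\Join}_r})\geq\log 2$ via $G|_{X_1}=T$, $G$-invariance, and Theorem~\ref{final-thm} (noting invariance of the relevant conclusion under deleting finitely many trajectory points); (iii) the upper bound via~\eqref{Eq:hstarT-IN}, Proposition~\ref{P}(c), Lemma~\ref{L:OmegaG}, and Lemma~\ref{L:G-log2-2case}, using Proposition~\ref{P2}(b) or Proposition~\ref{P}(a) to translate a hypothetical IN-pair $(a_i,a_k)$ or $(a_i,a_\infty)$ into the excluded ones. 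The main obstacle is purely bookkeeping: making sure that the reindexing from $x_1,x_2,\dots$ (Section~\ref{S:X}) to $x_0,x_1,\dots$ (Sections~\ref{S:X1-T1-log2}--\ref{S:cont zero-log2}) is handled consistently, and that the statement ``$h^*(T|_{S_r})=\log 2$'' is genuinely justified — this is a minor variant of Theorem~\ref{final-thm} rather than a literal citation, so I would state explicitly why truncating the trajectory does not affect (R1) (the independence sets $N(k)$ occur in blocks $B(k)$ with $k\to\infty$, infinitely many of which survive truncation) nor (R2) (removing points only removes potential independence sets). Everything else is an immediate transcription of the proof of Lemma~\ref{L:existsG}.
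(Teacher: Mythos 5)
Your proposal is correct and follows essentially the same route as the paper's proof: lower bound from $h^*(G)\geq h^*(T)=\log 2$, upper bound via Proposition~\ref{P}(c), Lemma~\ref{L:OmegaG}, Lemma~\ref{L:G-log2-2case} and formula~\eqref{Eq:hstarT-IN} to rule out IN-triples, and the restriction estimates for (b). The only difference is cosmetic — the paper simply asserts $h^*(T|_{S^{\Join}_r\cap X_1})=\log 2$ as ``clear,'' whereas you spell out why truncating finitely many initial trajectory points leaves (R1) and (R2) intact; that extra detail is accurate and harmless.
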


\begin{proof}
Trivially, $h^*(G)\geq h^*(T) = \log 2$. By Proposition~\ref{P}(c), the elements of IN-tuples lie in $\Omega (G) = A$ and it follows from Lemma \ref{L:G-log2-2case} that there is no IN-triple for $G$. Hence (a). By the construction of $G$ we obviously have (c) and also the fact that $S^{\Join}_r= \langle \langle \mathscr A_0, x_r  \rangle \rangle$ is $G$-invariant. Since $h^*(G)= h^*(T) = \log 2$  and clearly also $h^*(T|_{S^{\Join}_r\cap X_1})=\log 2$, we get $h^*(G|_{S^{\Join}_r})=\log 2$, which finishes the proof of (b).
\end{proof}

\subsection{Properties of continuous selfmaps of $X$ and proof that $S(X)=\{0, \log 2\}$}\label{SS:prooflog2}

As in Section~\ref{S:X}, our continuum $X$ is the union of bricks (now with the point $a_{\infty}$ added). The snake is homeomorphic to that from Section~\ref{S:X}, but the heads are substantially different. In Section~\ref{S:X} the head was just one brick, namely the Cook continuum $\mathscr K_0$. Now the head $\mathscr A_0$ is the necklace of homeomorphic Cook continua $\mathscr H_k$ , together with the point $a_{\infty}$.  The list of all bricks is: $\mathscr \mathscr H_k$ ($k\in \mathbb Z$), $\mathscr K_1^0, \mathscr K_2^0, \dots,
\mathscr K_1^1, \mathscr K_2^1, \dots, \dots$. While the construction of $X$ and the proofs of the properties of $G$ were now much more complicated than in the case $S(X)=\{0,\infty\}$, fortunately the analogues of the results from Subsection~\ref{SS:cont-bricks} on properties of continuous selfmaps of $X$ are true, with almost the same proofs (for the results involving only the snake this is trivial, but many of the results involve explicitly or implicitly also the head, i.e. the ifluence of the head on their validity is not apriori excluded). There are only few differences due to the fact that $\mathscr K_0$ is replaced by $\mathscr A_0$. We are going to describe them.

	\begin{quotation}
		{\bf Standing notation for the rest of Section~\ref{S:cont zero-log2}:}
		In the rest of the section, $X$ denotes the space $X$ constructed above
		in~\eqref{X-definition-log2} and $F$ denotes a continuous map $X\to X$.
	\end{quotation}

\medskip

The analogue of Lemma~\ref{L:Kim to Kjn} is clearly true, with the continuum $\mathscr K_0$ in the parts (b) and (c) replaced by any of the continua $\mathscr H_k$. What is really important is that the following complete analogue of Lemma~\ref{L:map} holds.

\begin{lem}\label{L:map-log2}
	If $B$ is a  brick then $F(B)$ is either a singleton or a brick homeomorphic to $B$.		
\end{lem}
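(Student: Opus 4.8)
The plan is to mimic the proof of Lemma~\ref{L:map} word for word, since the snake of the present $X$ is homeomorphic (as an abstract space) to the snake of Section~\ref{S:X}, and the only genuine difference is that the single-brick head $\mathscr K_0$ has been replaced by the necklace head $\mathscr A_0 = \{a_\infty\}\cup\bigcup_{k\in\mathbb Z}\mathscr H_k$. Fix a brick $B$ (either one of the $\mathscr H_k$ or one of the $\mathscr K^m_i$ in the snake), write $G:=F|_B$, and assume $G(B)$ is not a singleton; then $G(B)$ is a nondegenerate continuum, hence, by the analogue of Lemma~\ref{L:space}(4), not contained in $\Sigma$. As in the original proof one distinguishes three cases according to where $G(B)$ lives: (Case 1) $G(B)$ is a nondegenerate subset of the head $\mathscr A_0$; (Case 2) $G(B)$ is a nondegenerate subset of the snake; (Case 3) $G(B)$ meets both the head and the snake.

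In Case 2 the argument is verbatim that of Lemma~\ref{L:map}: pick a brick $B_k$ in the snake that $G(B)$ meets in a non-extremal point; by the analogue of Lemma~\ref{L:space}(6), $G(B)\cap B_k$ is a nondegenerate continuum; the monotone retraction $r_k$ of $X$ onto $B_k$ composed with $G$ is a non-constant continuous map $B\to B_k$, so by the analogue of Lemma~\ref{L:Kim to Kjn} the brick $B_k$ is a copy of $B$ and $r_k\circ G(B)=B_k$; compactness of $G(B)$ then forces $G(B)\supseteq B_k$; and since no two consecutive bricks in the snake are homeomorphic (this is still true here, because the snake is built from the same $\mathscr K_i$ by the same rule~\eqref{Eq:every2nd}, and the $\mathscr H_k$ are not homeomorphic to any $\mathscr K_i$), the projection-is-a-continuum argument shows $G(B)$ meets only one such $B_k$ in a non-extremal point, whence $G(B)=B_k$, a copy of $B$.

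The step that needs a small but real modification is Case 1 and, through it, Case 3. In the original proof, ``$G(B)$ nondegenerate in $\mathscr K_0$'' forced $B=\mathscr K_0$ by Lemma~\ref{L:Kim to Kjn}(b), and then $G$ was the identity by Lemma~\ref{L:Kim to Kjn}(a). Now the head is a union of the continua $\mathscr H_k$ together with the single point $a_\infty$, and $a_\infty$ is totally disconnected from the rest, so a nondegenerate subcontinuum of $\mathscr A_0$ cannot contain $a_\infty$ unless it lies entirely in some single $\mathscr H_k$; indeed, each $\mathscr H_k$ meets the neighboring $\mathscr H_{k\pm1}$ in a single point and the $\mathscr H_k$ shrink toward $a_\infty$, so any nondegenerate subcontinuum of $\mathscr A_0$ is contained in a single $\mathscr H_k$. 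Thus in Case 1 there is $k$ with $G(B)\subseteq\mathscr H_k$ a nondegenerate continuum. By the analogue of Lemma~\ref{L:Kim to Kjn}(b) no brick of the snake admits a non-constant map to $\mathscr H_k$, so $B=\mathscr H_j$ for some $j$; by the analogue of Lemma~\ref{L:Kim to Kjn}(d) and (e), if $j\neq k$ then $G$ is constant (contradiction) and if $j=k$ then $G$ is the identity, so $G(B)=\mathscr H_k=B$, again a brick homeomorphic to $B$. Finally, Case 3 is ruled out exactly as before: if $G(B)$ meets both a point of the head and a brick of the snake, then its projection to the relevant coordinate is a continuum, so it meets two \emph{consecutive} bricks of the snake in non-extremal points, contradicting the fact that no two consecutive bricks of the snake are homeomorphic (via the same retraction argument as in Case 2). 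The main obstacle, then, is purely the topological bookkeeping in Case 1 — verifying that a nondegenerate subcontinuum of the necklace $\mathscr A_0$ lies in a single $\mathscr H_k$ and does not contain $a_\infty$ — after which the proof is a transcription of the $S(X)=\{0,\infty\}$ argument with $\mathscr K_0$ replaced by ``$\mathscr H_k$ for the appropriate $k$''.
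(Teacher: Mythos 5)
Your plan is right in spirit — Cases 2 and 3 do carry over verbatim, and the one genuine change is in Case 1 — but your treatment of Case 1 rests on a false topological claim, and this is precisely the point where the paper has to work harder.

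You assert that ``any nondegenerate subcontinuum of $\mathscr A_0$ is contained in a single $\mathscr H_k$'' (and, as a consequence, that no nondegenerate subcontinuum of $\mathscr A_0$ contains $a_\infty$). Neither is true: $\mathscr H_0\cup\mathscr H_1$ is a nondegenerate subcontinuum of the head contained in no single $\mathscr H_k$, and $\mathscr A_0$ itself is a nondegenerate subcontinuum containing $a_\infty$. The necklace structure does not force subcontinua to stay inside one bead — two consecutive beads share a point, so their union is connected, and so is any longer chain of consecutive beads, possibly together with $a_\infty$. So your Case 1 reduction to ``$G(B)\subseteq\mathscr H_k$ for some $k$'' is unjustified; what needs to be shown is that the \emph{image} $G(B)$ cannot be one of these spread-out subcontinua of $\mathscr A_0$.

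The paper proves exactly this by an extra subcase: assume $G(B)\subseteq\mathscr A_0$ is nondegenerate but contained in no single $\mathscr H_i$. Then $G(B)$ meets the interiors $\mathscr H_k^\circ$ and $\mathscr H_l^\circ$ of two distinct beads, hence contains an extremal point of $\mathscr H_k$. Setting $U=G^{-1}(\mathscr H_k^\circ)\subseteq B$ (a proper open subset), the Boundary Bumping Theorem (Theorem~\ref{T:BBT}) produces a nondegenerate subcontinuum $K\subseteq\overline U$ with $G(K)\subseteq\mathscr H_k$ nondegenerate and containing an extremal point of $\mathscr H_k$. If $B$ is a snake brick, $K$ and $G(K)$ are disjoint nondegenerate subcontinua of one planar Cook continuum mapped onto each other — impossible. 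If $B=\mathscr H_s$, the Cook-continuum rigidity (via the similitudes $Z_k$ and Lemma~\ref{L:Cook-properties}) forces $K$ to contain an extremal point of $B$; since $B$ has only two extremal points, $G(B)$ can meet at most two $\mathscr H_i^\circ$, and a monotone-retraction argument then shows $G(B)=\mathscr H_k$ after all, contradicting the case hypothesis. This chain of reasoning is what replaces your one-line topological claim, and it is not optional: without it the lemma is not established.
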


\begin{proof}
The proof is basically the same as the proof of Lemma~\ref{L:map}, with one exception. The Case 1 is now more complicated, because the head is more complicated. So, let $B$ be a brick, $F|_B=G$ and $G(B)$ be a non-degenerate sub-continuum of the head. We distinguish two possibilities.

If $G(B) \subseteq \mathscr H_i$ for some $i\in \mathbb Z$, , we have a brick $B$ (in the snake or in the head) mapped onto a non-degenerate continuum $G(B)$ in $\mathscr H_i$. Due to the just discussed analogue of Lemma~\ref{L:Kim to Kjn}(b), $B$ is not a brick in the snake. So, $B= \mathscr H_j$ for some $j\in \mathbb Z$ and since $\mathscr H_j$ and $\mathscr H_i$ are copies of the same Cook continuum, we necessarily have $G(B)=\mathscr H_i$ and we are done.

Now asssume that
\begin{equation}\label{Eq:GBnotinHi}
\text{$G(B)\subseteq$ head is not a subset of any of the bricks $\mathscr H_i$~.}
\end{equation}
We are going to show that this assumption leads to a contradiction.

Since we assume that $G(B)$ is non-degenerate, it intersects the interior of a brick in the head (here we speak on the interiors in the topology of the head, so the interior $\mathscr H_i^\circ$ of  $\mathscr H_i$ is obtained from  $\mathscr H_i$ by removing the two extremal points). Fix $k\in \mathbb Z$ such that
\[
\text{$G(B)\cap \mathscr H_k^\circ\,$ contains a point $a$.}
\]
By~\eqref{Eq:GBnotinHi}, there is $l\neq k$ such that also $G(B)\cap \mathscr H_l^\circ \neq \emptyset$.
Then, since $G(B)$ is connected, it necessarily contains at least one extremal point of $\mathscr H_k$.
The set $U:=G^{-1}(\mathscr H_k^\circ)$ is an open subset of $B$ containing a point $a^*$ with $G(a^*)=a$. Since $G(B)$ contains an extremal point of $\mathscr H_k$ and $G(U)$ does not, $U$ is a proper subset of $B$. Let $K$ be the component of $\overline U$ containing $a^*$. Since $K\subseteq \overline U$, we have $G(K) \subseteq \mathscr H_k$. By the Boundary Bumping Theorem, see Theorem~\ref{T:BBT}, $K$ contains a point from the boundary of $U$ and since this point does not belong to the open set $U$, its $G$-image is necessarily an extremal point of $\mathscr H_k$. Thus $K\subseteq B$ and $G(K)\subseteq \mathscr H_k$ are non-degenerate continua, the latter one containing both $a$ and an extremal point of $\mathscr H_k$. Now distinguish two cases.

If $B$ is a brick in the snake then, by our construction of bricks, $K$ and $G(K)$ are copies of disjoint non-degenerate subcontinua of one planar Cook continuum, which gives a contradiction.

If $B=\mathscr H_s$ for some $s\in \mathbb Z$, we get a contradiction as follows. First realize that $G(B)$ contains also an extremal point of $\mathscr H_k$. This is because, by the argument above, $B$ contains a non-degenerate continuum $K$ such that $G(K) \subseteq \mathscr H_k$ contains both a non-extremal point and an extremal point of $\mathscr H_k$. By~\eqref{Eq:mathscrHk},  the sets $B=\mathscr H_s$ and $\mathscr H_k$ are copies, even under similitudes, of the same Cook continuum $\mathscr H_0$ (recall also that these similitudes preserve extremal points). Since $\mathscr H_0$ is Cook, one non-degenerate sub-continuum of $\mathscr H_0$ is a continuous image of another non-degenerate sub-continuum of $\mathscr H_0$ if and only if the two sub-continua coincide. Therefore the fact that $G(K) \subseteq \mathscr H_k$ contains an extremal point of $\mathscr H_k$ implies that also $K\subseteq B=\mathscr H_s$ contains an extremal point of $B$.

We have shown that if $G(B)$ intersects $\mathscr H_k^\circ$ (and we know that in our situation there are at least two such indices $k$) then at least one extremal point of $B$ is mapped by $G$ to an extremal point of $\mathscr H_k$. As a consequence we get that $G(B)$ cannot intersect the interiors of three pairwise disjoint continua $\mathscr H_i$, because $B$ has only two extremal points. Hence there exists $m\in \mathbb Z$ such that $G(B)\cap \mathscr H_m^\circ = \emptyset$. Fix again  $k\in \mathbb Z$ with $G(B)\cap \mathscr H_k^\circ \neq \emptyset$ and denote by $r$ the monotone retract $\mathscr A_0 \setminus \mathscr H_m^\circ \to \mathscr H_k$ sending all the points from
$(\mathscr A_0 \setminus \mathscr H_m^\circ)\setminus \mathscr H_k$ to the extremal points of $\mathscr H_k$. Then $\Phi:=r\circ G: B\to \mathscr H_k$ is continuous and non-constant (because $G(B)$ contains a non-degenerate continuum $G(K)$ in $\mathscr H_k$, as discussed above).
Hence $\Phi$ is the unique homeomorphism from $B = \mathscr H_s$ onto $\mathscr H_k$, in fact $\Phi =  Z_k \circ Z_s^{-1}$, see~\eqref{Eq:mathscrHk}. It follows that for $B^\circ = \mathscr H_s^\circ$ we have $\Phi (B^\circ)= \mathscr H_k^\circ$ and so obviously $G(B^\circ)= \mathscr H_k^\circ$, even $\Phi|_{B^\circ} = G|_{B^\circ}$. Then also $\Phi|_{B} = G|_{B}$ and so $G(B)= \mathscr H_k$, a contradiction with~\eqref{Eq:GBnotinHi}.
\end{proof}

Due to this lemma, also all the other results from Subsection~\ref{SS:cont-bricks} basically work for our candidate $X$ for the equality $S(X)=\{0,\log 2\}$.  It is straightforward to check that the following claims work for our space $X$ with the same proofs, up to easy modifications, as in Section~\ref{S:X}: Corollary~\ref{C:union}, Corollary~\ref{C:brick-brick}, Corollary~\ref{C:hull}, Lemma~\ref{L:const 1} (with a small modification in the proof of (a) and with $\mathscr A_0$ instead of $\mathscr K_0$ in (f)), Lemma~\ref{L:metalemma} (just remember that now $m=0,1,\dots$ and not $m=1,2,\dots$, so in (P5) we have now $m\geq 1$). Besides Lemma~\ref{L:map-log2}, next crucial lemma is the following analogue of Lemma~\ref{L:const 2}.

\begin{lem}\label{L:const 2-log 2}
		\begin{itemize}
			\item [(a)] If the snake is not $F$-invariant then $F$ is constant.
			\item [(b)] If the set $\Sigma$ is not $F$-invariant then $F$ is constant.
			\item [(c)] If $F(\mathscr A_0) = \{z_0\}$ for some $z_0\in \mathscr A_0$,
			then $F(X)=\{z_0\}$ and so $F$ is constant.
		\end{itemize}
\end{lem}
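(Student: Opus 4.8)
The proof will closely parallel that of Lemma~\ref{L:const 2}, the only novelty being that the head $\mathscr A_0$ is now a necklace of copies of $\mathscr H_0$ rather than a single Cook continuum. The plan for (a) is to argue by the same ``propagation'' scheme: if the snake is not $F$-invariant, some brick $B$ in the snake has $F(B)$ meeting the head, hence by Lemma~\ref{L:const 1}(f) (in its present form, with $\mathscr A_0$ in place of $\mathscr K_0$) we get $F(B)=\{z_0\}$ for some $z_0\in \mathscr A_0$. Thus the family $\mathscr P$ of snake bricks mapped by $F$ to $\{z_0\}$ is nonempty, and one checks (P2)--(P5) of Lemma~\ref{L:metalemma} exactly as before. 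The only clause requiring attention is (P5): starting from $F(\mathscr K^m_i)=\{z_0\}$ for all $i$, continuity gives $F(x_m)=z_0$; one then picks a small neighbourhood $W$ of $z_0$ in $X$ of the shape used in the proof of Lemma~\ref{L:const 2}(a), namely $W = X\cap (\text{small nbhd of }z_0\text{ in }\pi_0)$ chosen so that $W\cap \text{head}$ is a proper connected piece of $\mathscr A_0$ (a sub-arc of a single $\mathscr H_k$, or of two adjacent ones) while $W\cap \text{snake}$ is a disjoint union of ``pieces'' each at positive distance from $z_0$ and from each other. Then $F^{-1}(W)$ contains a whole tail $D=\{x_m\}\cup \bigcup_{k\ge N}\mathscr K^{m-1}_k$, whose connected image $F(D)\subseteq W$ has zero distance from $z_0$, hence lies in the head-part of $W$; since that part is a proper connected subset of a single $\mathscr H_k$ (or of $\mathscr H_k\cup \mathscr H_{k+1}$), the analogue of Lemma~\ref{L:Kim to Kjn}(b) forces every brick in $D$ to be mapped to a point, hence (consecutive bricks intersecting) $F(D)$ is a singleton, necessarily $\{z_0\}$. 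This gives $\mathscr K^{m-1}_k\in \mathscr P$ for $k\ge N$, i.e.\ (P5). Then Lemma~\ref{L:metalemma} yields that all snake bricks are mapped to $z_0$, and since $X$ is the closure of the snake (Lemma~\ref{L:Xcpct} and the discussion around~\eqref{X-definition-log2}), $F(X)=\{z_0\}$.

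For (b), the argument of Lemma~\ref{L:const 2}(b) transfers verbatim: if some point of $\Sigma$ is mapped into the head, use (a); otherwise an extremal point of some brick $\mathscr K^r_s$ in the snake is mapped to a non-extremal point $z$ of some brick $\mathscr K^n_j$ in the snake, and then by Lemma~\ref{L:const 1}(d) $F(\mathscr K^r_s)=\{z\}$. The family $\mathscr P$ of snake bricks with $F$-image $\{z\}$ is nonempty and satisfies (P2)--(P5) by the same computations as before (this part never touched the head at all, only the internal structure of the snake, which is unchanged). Hence by Lemma~\ref{L:metalemma} all snake bricks go to $z$ and $F(X)=\{z\}$.

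For (c), I would again follow the proof of Lemma~\ref{L:const 2}(c) with the replacement $\mathscr K_0\rightsquigarrow \mathscr A_0$. Assume $F(\mathscr A_0)=\{z_0\}$. Choose $W$ as above. Since $\mathscr A_0$ is a compact subset of $X$ and $F(\mathscr A_0)=\{z_0\}\subseteq W$, the open set $F^{-1}(W)$ contains $\mathscr A_0$, hence contains a whole sub-snake $S=((\mathscr A_0, f^m_i\rangle\rangle$ for suitable $m,i$ (because the complement of any neighbourhood of $\mathscr A_0$ in $X$ misses only finitely many of the $D(n)$). Now $S$ is connected and, by construction, has zero distance from the head and hence from $z_0$; therefore its connected image $F(S)\subseteq W$ has zero distance from $z_0$ and so lies in the head-part of $W$, which is a proper connected subset of a single $\mathscr H_k$ or of $\mathscr H_k\cup\mathscr H_{k+1}$. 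Consequently (analogue of Lemma~\ref{L:Kim to Kjn}(b)) each brick in $S$ is mapped to a point, and since consecutive bricks intersect, each set $D(n)\subseteq S$ is mapped to a point; since consecutive $D(n)$ intersect, $F(S)$ is a singleton, which at zero distance from $z_0$ must be $\{z_0\}$. Thus the snake is not $F$-invariant, and by part (a) $F$ is constant, with $z_0$ in its range, so $F(X)=\{z_0\}$.

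The main obstacle, as in the $S(X)=\{0,\infty\}$ case, is making the ``shape of $W$'' argument precise: one must be sure that a sufficiently small neighbourhood of a point $z_0\in \mathscr A_0$ meets the head in a \emph{connected} set which is a proper subcontinuum of one $\mathscr H_k$ (or of two adjacent $\mathscr H_k$, $\mathscr H_{k+1}$, when $z_0$ is a junction point $a_{k+1}$), and also that $z_0=a_\infty$ can be excluded or handled — indeed, if $z_0=a_\infty$ then $F(\mathscr A_0)=\{a_\infty\}$ still forces, via the same tail argument together with $r(j)\to 0$, that the sub-snake $S$ maps into a small neighbourhood of $a_\infty$ and hence (the bricks of $S$ being non-homeomorphic to nothing nearby) to a point. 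The connectedness of the head-part of small balls is guaranteed by the necklace structure of $\mathscr A_0$: each $\mathscr H_k$ is a continuum and consecutive ones meet only at $a_{k+1}$, while the diameters of the $\mathscr H_k$ and the equidistance constants $r(j)$ tend to zero, so sufficiently small neighbourhoods cannot straddle three consecutive $\mathscr H_k$'s. Everything else is a routine transcription of Section~\ref{S:X}.
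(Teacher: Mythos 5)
Your proposal follows the right overall plan (replicate the propagation argument of Lemma~\ref{L:const 2} via Lemma~\ref{L:metalemma}), and parts (c) is handled correctly by reducing to (a). However, there is a genuine gap at the final step of both (a) and (b): you conclude $F(X)=\{z_0\}$ from ``all snake bricks are mapped to $z_0$'' by citing that ``$X$ is the closure of the snake (Lemma~\ref{L:Xcpct} and the discussion around~\eqref{X-definition-log2})''. But in the log~$2$ construction this is precisely what is \emph{not} known: the footnote in the proof of Lemma~\ref{L:Xcpct} explicitly disclaims that the snake is dense in $X$, because the polygonal arcs $H^m_k$ used to route the snake near $\mathscr H_k$ need not approach every point of $\mathscr H_k$. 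So the ``closure of the snake'' shortcut, valid in Section~\ref{S:X} where the head is a single Cook continuum approached by the orbit in the sense of Subsection~\ref{SS:cont+orbit}, breaks here.

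The correct finish, which the paper uses, is weaker and two-step. What \emph{is} true is that the set $A=\{a_i:\, i\in\Z\}\cup\{a_\infty\}$ lies in the closure of the snake (the trajectory $x_0,x_1,\dots$ accumulates onto $A$). Hence once all snake bricks are mapped to $z_0$, continuity gives $F(A)=\{z_0\}$. Now each brick $\mathscr H_k$ in the head contains two points of $A$ (its extremal points $a_k, a_{k+1}$) that are both sent to $z_0$; by the analogue of Lemma~\ref{L:const 1}(c) (two points of a brick with the same image force the whole brick to collapse), $F(\mathscr H_k)=\{z_0\}$ for every $k$. This gives $F(X)=\{z_0\}$. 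The same observation is needed at the end of (b). Everything else in your write-up (the propagation verifications of (P2)--(P5), the reduction of (c) to (a) via the tail/sub-snake argument, the careful discussion of the shape of $W$ near a point of the necklace) is in order; only this final density claim has to be replaced by the $A$-plus-Cook-collapse argument.
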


\begin{proof}
The proof is almost the same as that of Lemma~\ref{L:const 2} and so we comment only the differences. First, where the proof of Lemma~\ref{L:const 2} refers to some lemmas from Subsection~\ref{SS:cont-bricks}, now we use the discussed analogues of them. In particular, the proof of (c) does not require any other modifications.

At the very end of the proof of (a), now it is not true that $X$ is the closure of the snake. However, the whole set $A=\{a_i:\, i\in \mathbb Z\} \cup \{a_{\infty}\}$ is in the closure of the snake. Therefore, once we know that the snake is mapped by $F$ to the point $z_0 \in \mathscr A_0$, we also have $F(A)=\{z_0\}$. In particular, every brick $\mathscr H_k$ in the head contains at least two points mapped by $F$ to $z_0$ and so, by the analogue of Lemma~\ref{L:const 1}(c), the whole $\mathscr H_k$ is mapped to $z_0$. Hence $F(X)=\{z_0\}$.

Exactly the same argument as above, can be used at the end of the proof of (b) to get $F(X)=\{z\}$.
\end{proof}

Further, the analogue of Lemma~\ref{L:ord-pres} works with the same proof. Moreover, we also have Lemma~\ref{L:mapDD}.

Also the results from Subsection~\ref{SS:induced} can be carried over to our space $X$. Since now the head is $\mathscr A_0$, in~\eqref{Eq:partition} we replace $\mathscr K_0$ by $\mathscr A_0$. Then Lemmas~\ref{L:antiwell2} and~\ref{L:existsFhat} obviously still work. In particular, we can consider the induced function $\widehat F \colon \widehat X \to \widehat X$ defined by  $\widehat F(A)=B$ when $F(A)\subseteq B$ for $A, B \in \widehat X$.

Recall that just after the proof of Lemma~\ref{L:existsFhat} we remark that $\widehat F(A)=B$ means that $F(A)$ is a singleton in $B$ or coincides with $B$. This is still true, though the case $A=B=\mathscr A_0$ needs an explanation. The following lemma shows slightly more.

\begin{lem}\label{L:phionhead}
	Let $\varphi: \mathscr A_0 \to \mathscr A_0$ is a continuous map. Then there are the following possibilities.
	\begin{itemize}
		\item [(1)] If $\varphi (a_{\infty}) \in \mathscr H_k^{\circ}$ for some $k\in \mathbb Z$, then $\varphi$ is constant.
		\item [(2)] If $\varphi (a_{\infty}) = a_i$ for some $i\in \mathbb Z$, then $\varphi$ is constant.
		\item [(3)] If $\varphi (a_{\infty}) = a_{\infty}$ then $\varphi$ is either constant or surjective.
	\end{itemize}
	So, $\varphi$ is either constant or surjective and the space $\mathscr A_0$ has fixed point property.	
\end{lem}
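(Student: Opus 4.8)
The plan is to analyze a continuous selfmap $\varphi$ of the head $\mathscr A_0$ by first pinning down where $\varphi(a_\infty)$ can go, then using the rigidity of the Cook continua $\mathscr H_k$ together with a Boundary Bumping argument very much like the one in the proof of Lemma~\ref{L:map-log2} (Case 1). Recall that $\mathscr A_0 = \{a_\infty\} \cup \bigcup_{k\in\mathbb Z}\mathscr H_k$, the continua $\mathscr H_k$ are pairwise disjoint except that $\mathscr H_k\cap\mathscr H_{k+1}=\{a_{k+1}\}$, and all of them are similitude-copies of the single planar Cook continuum $\mathscr H_0$, hence pairwise uniquely homeomorphic; moreover $\mathscr H_k\to a_\infty$ as $|k|\to\infty$. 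I will write $\mathscr H_k^\circ$ for $\mathscr H_k$ minus its two extremal points, as in the excerpt.

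First I would prove a key auxiliary claim: \emph{if $K\subseteq\mathscr A_0$ is a nondegenerate subcontinuum, then $K\subseteq\mathscr H_k$ for some $k$}. Indeed, since $\{a_i:i\in\mathbb Z\}$ is a discrete subset of $\mathscr A_0$ accumulating only at $a_\infty$, the complement $\mathscr A_0\setminus\{a_\infty\}$ is locally connected and $K$ connected; if $K$ met two different sets $\mathscr H_k^\circ$ it would have to contain one of the cut points $a_i$ separating them, but removing any $a_i$ from $K$ — wait, $a_i$ need not separate $K$. Instead I would argue directly: the sets $\mathscr H_k$ form a ``chain'' glued at the $a_i$, so the only nondegenerate subcontinua either lie inside a single $\mathscr H_k$ or else contain a whole $\mathscr H_k$; in the latter case $K$ would contain a point arbitrarily far along the chain only if it contained $a_\infty$ together with a tail $\bigcup_{|k|\ge N}\mathscr H_k$. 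This reduces everything to the behaviour at $a_\infty$. Concretely: $\varphi(\mathscr A_0)$ is a subcontinuum of $\mathscr A_0$; if it is degenerate we are in case (1) trivially falsified, so assume $\varphi$ is nonconstant and hence $\varphi(\mathscr A_0)$ is a nondegenerate subcontinuum containing $\varphi(a_\infty)$.

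Now the three cases. In case (1), $\varphi(a_\infty)\in\mathscr H_k^\circ$; I would take a small connected neighbourhood $V$ of $a_\infty$ in $\mathscr A_0$ whose image under $\varphi$ stays inside $\mathscr H_k^\circ$ (possible by continuity), note $V$ is nondegenerate, and then run the Boundary Bumping Theorem~\ref{T:BBT} exactly as in Lemma~\ref{L:map-log2}: if $\varphi$ were nonconstant near $a_\infty$, a nondegenerate subcontinuum of some $\mathscr H_s$ (or of a brick) would map onto a nondegenerate subcontinuum of $\mathscr H_k$ which also contains an extremal point of $\mathscr H_k$, contradicting that the planar Cook continuum $\mathscr H_0$ (hence each $\mathscr H_k$, $\mathscr H_s$) has no two distinct comparable nondegenerate subcontinua (Lemma~\ref{L:Cook-properties}(1), Corollary~\ref{C:Cook-family}); since the restriction of $\varphi$ to the tail $\bigcup_{|m|\ge N}\mathscr H_m$ forces the whole head to be constant once it is constant on a neighbourhood of $a_\infty$ (connectedness plus the chain structure), $\varphi$ is constant. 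In case (2), $\varphi(a_\infty)=a_i$: any point $a_i$ is an extremal point of exactly two bricks $\mathscr H_{i-1},\mathscr H_i$, and a similar neighbourhood-of-$a_\infty$ argument combined with the fact that $a_\infty$ is the \emph{only} accumulation point shows the image $\varphi(V)$ would be a subcontinuum meeting $\mathscr H_{i-1}$ and $\mathscr H_i$ only at $a_i$; again rigidity of the bricks forces $\varphi|_V$ constant, hence $\varphi$ constant. In case (3), $\varphi(a_\infty)=a_\infty$: here either $\varphi$ is constant (equal to $a_\infty$ — but that is subsumed), or $\varphi$ is nonconstant; then for each $k$ the restriction $\varphi|_{\mathscr H_k}$ is a continuous map of the Cook continuum $\mathscr H_k$ into $\mathscr A_0$, whose image is a subcontinuum, so by the auxiliary claim it lies in a single $\mathscr H_{\sigma(k)}$ (else it contains $a_\infty$, but then $\varphi|_{\mathscr H_k}$ maps a Cook continuum onto a continuum containing $a_\infty$ — this can be handled by the same rigidity argument, forcing it onto a single brick plus $a_\infty$ only if it is not injective, a case pushed into the ``surjective'' conclusion), and $\varphi|_{\mathscr H_k}$ is either constant or the unique homeomorphism $\mathscr H_k\to\mathscr H_{\sigma(k)}$ by Lemma~\ref{L:Cook-properties}(3). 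Tracking how $\sigma$ and the extremal points of $\mathscr H_k$ are matched, and using that $\varphi$ fixes $a_\infty$, I would show $\sigma$ is a ``shift'' $k\mapsto k+c$ for some constant $c\in\mathbb Z$ (as for $G$, which is the case $c=1$) unless $\varphi$ collapses, so $\varphi$ is surjective. Finally, the fixed point property of $\mathscr A_0$ follows: an arbitrary continuous $\varphi:\mathscr A_0\to\mathscr A_0$ is, by the trichotomy, either constant (a fixed point) or surjective of shift type; a shift by $c\neq 0$ still fixes $a_\infty$, and a shift by $0$ is a map fixing $a_\infty$ as well, so in every case $\varphi$ has a fixed point.

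The main obstacle I anticipate is the case-(3) bookkeeping: showing that once $\varphi|_{\mathscr H_k}$ is a homeomorphism onto $\mathscr H_{\sigma(k)}$ for some $k$, the matching of extremal points $a_k,a_{k+1}\mapsto a_{\sigma(k)},a_{\sigma(k)+1}$ propagates consistently to all of $\mathbb Z$ (ruling out ``partially constant, partially homeomorphic'' hybrids that are not surjective), and that $\varphi(a_\infty)=a_\infty$ is compatible only with $\sigma$ a genuine translation. This requires carefully using continuity at $a_\infty$ together with the fact that the bricks shrink toward $a_\infty$ in both directions, plus the chain-gluing at the points $a_i$; the Cook-rigidity inputs (Lemma~\ref{L:Cook-properties}, Corollary~\ref{C:Cook-family}) do the heavy lifting elsewhere but the combinatorial consistency at $a_\infty$ is the delicate part.
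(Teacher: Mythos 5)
Your overall strategy (Cook rigidity of the $\mathscr H_k$, Boundary Bumping, continuity near $a_\infty$, and the chain structure) matches the paper's, but there are two concrete problems.

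First, the ``key auxiliary claim'' you open with --- that every nondegenerate subcontinuum of $\mathscr A_0$ lies inside a single $\mathscr H_k$ --- is simply false (e.g.\ $\mathscr H_0\cup\mathscr H_1$), and you yourself backpedal midway through the sentence. The second version you substitute (``lies in one brick or contains a whole brick'') is closer to what is needed but is left unproved, and in fact it is not quite the lemma the argument requires. What you actually need, and what the paper uses, is the brick-level dichotomy: for any continuous $\varphi:\mathscr A_0\to\mathscr A_0$ and any $m$, the image $\varphi(\mathscr H_m)$ is either a singleton or a single brick $\mathscr H_j$, with $\varphi|_{\mathscr H_m}$ the unique homeomorphism sending $a_m\mapsto a_j$ and $a_{m+1}\mapsto a_{j+1}$. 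This is established by the Case-1 argument inside Lemma~\ref{L:map-log2}, which, as you partly anticipate, applies to maps of the head into itself because the monotone retraction $\mathscr A_0\setminus\mathscr H_m^{\circ}\to\mathscr H_k$ lives entirely in $\mathscr A_0$. Once you have this brick dichotomy, cases (1) and (2) become a short computation at $a_\infty$ (for large $|i|$, $\varphi(\mathscr H_i)$ is small and cannot be a whole brick, hence is a point; then a one-sided induction along the finite remaining chain forces constancy). Your boundary-bumping sketch for (1) is essentially this, but the muddled auxiliary claim obscures it.

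Second, case (3) --- which you flag as your main worry --- is genuinely incomplete, and you try to prove more than is needed. You attempt to show the assignment $k\mapsto\sigma(k)$ is a translation; the lemma only asserts \emph{surjectivity}, and the translation claim is not required (nor is it what the paper proves). The clean argument from the brick dichotomy is: since each $\varphi|_{\mathscr H_k}$ is either constant or the orientation-preserving homeomorphism onto a brick, the integer sequence $k\mapsto j(k)$, where $\varphi(a_k)=a_{j(k)}$, is non-decreasing with consecutive differences in $\{0,1\}$. If $\varphi$ is nonconstant, some $\mathscr H_m$ maps homeomorphically onto a brick. Then, going to the right, if only finitely many bricks beyond $\mathscr H_m$ were homeomorphisms, the images would stabilize at some $a_r$ and $\varphi$ could not be continuous at $a_\infty$ with $\varphi(a_\infty)=a_\infty$; so $j(k)\to+\infty$ as $k\to+\infty$, and symmetrically $j(k)\to-\infty$, whence $\varphi(\mathscr A_0)$ covers every brick and contains $a_\infty$, i.e.\ $\varphi$ is surjective. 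Your ``shift'' bookkeeping is not needed and indeed the lemma does not claim $\varphi$ is a translation. Finally, for the fixed-point property, no shift analysis is necessary: cases (1) and (2) give a constant map, which has a fixed point, and in case (3) $a_\infty$ itself is fixed.
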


\begin{proof}
(1) Put $z= \varphi (a_{\infty}) \in \mathscr H_k^{\circ}$.  By continuity, the whole bricks $\mathscr H_i$ with $|i|$ very large are also mapped into $\mathscr H_k^{\circ}$, and so, being copies of the Cook continuum $\mathscr H_k$, they are necessarily mapped to the point $z$. Then, for analogous reasons, also the remaining finitely many bricks are mapped to $z$. 	

(2) The bricks $\mathscr H_i$ with $|i|$ very large are mapped to a small neighbourhood of $a_i$ which does not contain any whole brick. Hence they are mapped to $a_i$. The finitely many other bricks of $\mathscr A_0$ form a continuum whose two `endpoints' are mapped to $a_i$. Each of those bricks is mapped either to a point or onto a brick with sending the first and the last point onto the first and the last point, respectively. It follows that the only possibility is that they all are mapped to $a_i$.

(3) Let $\varphi (a_{\infty}) = a_{\infty}$. Again, each of the bricks is mapped either to a point or onto a brick, with sending the first and the last point onto the first and the last point, respectively. This rule implies that if $\varphi$ is not constant, then $\varphi(\mathscr A_0)$ covers all the bricks in $\mathscr A_0$.
\end{proof}

Let us also remark the difference: If $\varphi (\mathscr K_0)=\mathscr K_0$, then all the points in $\mathscr K_0$ are fixed. Now, with $\mathscr K_0$ replaced by $\mathscr A_0$ this is not automatically the case, in general we have only $\Fix (\varphi) \subseteq \mathscr A_0$.

Checking next results, we see that Lemma~\ref{L:how br to br}, Corollary~\ref{C:gemini}, Lemma~\ref{L:2xFix} and Lemma~\ref{L:Darboux} work without any changes. Also Corollary~\ref{C:arrow} works, with $\mathscr K_0$ replaced by $\mathscr A_0$. Then we have the following complete analogue of Lemma~\ref{L:FixFhat}.

\begin{lem}\label{L:FixFhat-log2}
	The set $\Fix (\widehat F)$	is nonempty, has the smallest element and the
	largest element, and is connected. Moreover, if $\Fix (\widehat F)$ has more
	than one element, then $\Fix (F) = \bigcup \Fix (\widehat F)$.
\end{lem}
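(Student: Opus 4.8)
The plan is to mirror the proof of Lemma~\ref{L:FixFhat} essentially line by line, replacing the head $\mathscr K_0$ by the new head $\mathscr A_0$ and checking that the few places where the head enters still go through. First I would recall that, by the already-established analogues for the present space $X$ (Lemma~\ref{L:map-log2}, the analogue of Lemma~\ref{L:const 2} which is Lemma~\ref{L:const 2-log 2}, the analogue of Corollary~\ref{C:arrow}, Corollary~\ref{C:gemini}, Lemma~\ref{L:2xFix}, Lemma~\ref{L:Darboux}, and the analogues of Corollary~\ref{C:brick-brick} and Corollary~\ref{C:hull}), all the ingredients used in the proof of Lemma~\ref{L:FixFhat} are available verbatim up to the substitution $\mathscr K_0 \leadsto \mathscr A_0$. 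So nonemptiness of $\Fix(\widehat F)$ follows exactly as before: either $\mathscr A_0 \in \Fix(\widehat F)$, or $\mathscr A_0 \prec \widehat F(\mathscr A_0)$, and then the analogue of Corollary~\ref{C:arrow} produces a fixed point. As there, from now on one assumes $\Fix(\widehat F)$ has at least two elements (so $F$ is not constant), the one-element case being trivial. Existence of the largest element is Lemma~\ref{L:antiwell2}.

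For the smallest element I would repeat the argument by contradiction: if there is no smallest element, then by Corollary~\ref{C:gemini} either (i) for some $m$ the set $\Fix(\widehat F)$ omits $\{f^{m+1}_1\}$ while containing $\{f^m_i\}$ with arbitrarily large $i$, or (ii) $\Fix(\widehat F)$ omits $\mathscr A_0$ while containing $\{f^m_i\}$ with arbitrarily large $m$. Case (i) is handled exactly as in Lemma~\ref{L:FixFhat}: $f^{m+1}_1$ is a limit of fixed points of $F$, hence fixed, hence $\{f^{m+1}_1\}\in\Fix(\widehat F)$, a contradiction. Case (ii) is where the head matters: a subsequence of those fixed points $f^m_i$ (all lying in the snake and approaching the head, since $\operatorname{diam} D(m)$ need not shrink but the snake still clusters only on $\mathscr A_0$ by Lemma~\ref{L:Xcpct}) converges to a point of $\mathscr A_0$, which is therefore a fixed point of $F$ in the head; hence $F(\mathscr A_0)\cap \mathscr A_0 \neq\emptyset$, so $\widehat F(\mathscr A_0)=\mathscr A_0$ and $\mathscr A_0\in\Fix(\widehat F)$, again a contradiction. (Here one must be a touch careful: $F(\mathscr A_0)$ could a priori fail to be contained in $\mathscr A_0$, but $\widehat F$ is defined via $F(\mathscr A_0)\subseteq B$ for the unique $B\in\widehat X$, and by Lemma~\ref{L:map-log2} applied to each brick $\mathscr H_k$ together with connectedness, $F(\mathscr A_0)$ meeting $\mathscr A_0$ forces $\widehat F(\mathscr A_0)=\mathscr A_0$; alternatively invoke Lemma~\ref{L:phionhead} for $\varphi := r\circ F|_{\mathscr A_0}$ with a suitable retraction, but the cardinality/limit argument is cleaner.)

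Connectedness of $\Fix(\widehat F)$ is then proved by the same case split as in Lemma~\ref{L:FixFhat}: given $A\prec C\prec B$ with $A,B\in\Fix(\widehat F)$, reduce by Corollary~\ref{C:gemini} to case (I) $A=\{f^{m+r}_j\}\prec\dots\prec\{f^m_i\}=B$ and case (II) $A=\mathscr A_0\prec\{f^m_i\}=B$. In case (I) nothing about the head is used; the cardinality argument on $P_1(F(D_{AB}))\supseteq P_1(D_{AB})$, the observation from~\eqref{Eq:every2nd} that every second brick joining $B$ to $x_{m+1}$ is unique up to homeomorphism among the bricks of $\mathscr B_{AB}$, Lemma~\ref{L:Kim to Kjn}(a), and Lemma~\ref{L:const 1}(b) combine exactly as before to show $F$ is the identity on $\langle\langle f^{m+r}_j,f^m_i\rangle\rangle$, hence $C\in\Fix(\widehat F)$. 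In case (II) one uses that $\mathscr A_0$ has the fixed point property — this is precisely the last sentence of Lemma~\ref{L:phionhead}, which replaces the trivial ``$\mathscr K_0$ has fixed point property'' used in Lemma~\ref{L:FixFhat} — to guarantee that $F(D_{\mathscr A_0 B})=F(\langle\langle\mathscr A_0,f^m_i\rangle\rangle)$ meets $\mathscr A_0$, then runs the identical cardinality argument to conclude $F$ is the identity on the sub-snake $((\mathscr A_0,f^m_i\rangle\rangle$ and hence on $\langle\langle\mathscr A_0,f^m_i\rangle\rangle$. Finally, the last assertion: when $\Fix(\widehat F)$ has at least two elements, the proofs of (I) and (II) show $F$ is the identity on every $A\in\Fix(\widehat F)$, and combined with Lemma~\ref{L:2xFix} this gives $\Fix(F)=\bigcup\Fix(\widehat F)$.

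The only genuine obstacle, and the step I would write most carefully, is the ``fixed point in $\mathscr A_0$'' input now that the head is a nondegenerate necklace rather than a single Cook continuum with the identity: one must be sure that $\mathscr A_0$ has the fixed point property and that ``$F$ restricted to an element of $\Fix(\widehat F)$ equal to $\mathscr A_0$ is forced to be the identity'' — the latter does \emph{not} follow merely from $\widehat F(\mathscr A_0)=\mathscr A_0$ (unlike the $\mathscr K_0$ case), but is obtained in the case-(II) argument from the cardinality/identity-on-bricks mechanism, which pins down $F$ on the sub-snake and then, by continuity and $\mathscr A_0$ being in the closure of any sub-snake, on $\mathscr A_0$ as well. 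Everything else is a routine transcription of the proof of Lemma~\ref{L:FixFhat} with $\mathscr K_0$ replaced by $\mathscr A_0$ and with the already-verified analogues cited in place of the Section~\ref{S:X} lemmas.
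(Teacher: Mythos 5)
Your overall strategy is correct and mirrors the paper's approach: transcribe the proof of Lemma~\ref{L:FixFhat} with $\mathscr K_0 \leadsto \mathscr A_0$, using the already-established analogues from Section~\ref{S:cont zero-log2}. The nonemptiness, largest-element, smallest-element, and case~(I) arguments are all handled correctly, and you rightly identify that the only genuinely new work is forcing $F$ to be the identity on $\mathscr A_0$ once case~(II) shows it is the identity on the sub-snake.

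However, your argument for that last step has a real gap. You write that $F$ is the identity on $\mathscr A_0$ ``by continuity and $\mathscr A_0$ being in the closure of any sub-snake.'' This second claim is \emph{not} established in this setting, and the paper explicitly disclaims it: in the footnote to Lemma~\ref{L:Xcpct} the authors note that they do not know whether the snake approaches every point of every $\mathscr H_k$ (one would need to combine Lemma~\ref{L:arc in nbhd} with Lemma~\ref{L:chains} in the construction to guarantee it, and this is not done). What \emph{is} guaranteed is only that the countable set $A = \{a_k : k \in \Z\} \cup \{a_\infty\}$ lies in the closure of any sub-snake, not the Cook continua $\mathscr H_k$ themselves. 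So your continuity argument pins $F$ down on $A$, but not immediately on all of $\mathscr A_0$.

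The fix is the one the paper uses: from $F$ being the identity on $A$, each brick $\mathscr H_k$ has its two extremal points $a_k, a_{k+1}$ fixed, hence at least two fixed points; by the analogue of Lemma~\ref{L:const 1}(b) (two fixed points in a brick force the brick to be pointwise fixed, since the bricks are Cook continua), $F$ is the identity on each $\mathscr H_k$ and hence on $\mathscr A_0$. With that substitution in place of your closure claim, the rest of your proposal is essentially the paper's proof.
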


The proof is the same, just at the end of the proof, instead of saying that if $F$ is identity on the sub-snake $((\mathscr K_0, f^m_i \rangle
\rangle$ then it is identity also on $\langle \langle \mathscr K_0, f^m_i \rangle \rangle$, we use the following argument: If $F$ is identity on the sub-snake $((\mathscr A_0, f^m_i \rangle \rangle$, it is identity also on the set $A$, because $A$ is in the closure of that sub-snake. Thus, every brick $\mathscr H_k$ in the head contains at least two fixed points. Then obviously all the points in the Cook continua $\mathscr H_k$ are fixed (formally, one can use the analogue of Lemma~\ref{L:const 1}).

The last result in Subsection~\ref{SS:induced}, Corollary~\ref{C:FixF}, has to be modified as follows.

\begin{cor}\label{C:FixF-log2}
		If $F$ is not constant, then the set $\Fix (F)$ is either a subset of $\mathscr A_0$ or of
		the form $\langle \langle \mathscr A_0, f^m_i \rangle \rangle$ or  $\langle
		\langle  f^n_j, f^m_i \rangle \rangle$ for some $f^n_j \preccurlyeq
		f^m_i$.
\end{cor}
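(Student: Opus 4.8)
The plan is to mimic the proof of Corollary~\ref{C:FixF}, using the machinery already transferred to the present setting, with the only genuinely new ingredient being the analysis of what happens on the head $\mathscr A_0$. By Lemma~\ref{L:FixFhat-log2} we know $\Fix(\widehat F) \neq \emptyset$, it has a smallest and a largest element, it is connected, and if it has more than one element then $\Fix(F) = \bigcup \Fix(\widehat F)$. I would split into cases according to which kind of element the smallest element of $\Fix(\widehat F)$ is, exactly as in the proof of Corollary~\ref{C:FixF}.

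First suppose $\mathscr A_0 \in \Fix(\widehat F)$, i.e. $F(\mathscr A_0) \subseteq \mathscr A_0$. Then $F|_{\mathscr A_0}$ is a continuous selfmap of $\mathscr A_0$, so by Lemma~\ref{L:phionhead} it is either constant or surjective. If it is constant, then $F$ is constant on $X$ by (the analogue of) Lemma~\ref{L:const 2}, i.e. Lemma~\ref{L:const 2-log 2}(c), contradicting our hypothesis; hence $F|_{\mathscr A_0}$ is surjective. If $\mathscr A_0$ is the only element of $\Fix(\widehat F)$, then by Lemma~\ref{L:2xFix} (which works unchanged) $\Fix(F)$ intersects $\mathscr A_0$ and is contained in $\bigcup \Fix(\widehat F) = \mathscr A_0$, so $\Fix(F) \subseteq \mathscr A_0$ and we are done. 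Otherwise $\Fix(\widehat F)$ has more than one element; since it is connected and contains $\mathscr A_0$ as its smallest element, combining Corollary~\ref{C:gemini} and Lemma~\ref{L:FixFhat-log2} gives that $\Fix(F) = \bigcup \Fix(\widehat F)$ is of the form $\langle\langle \mathscr A_0, f^m_i\rangle\rangle$ for some $m$ and $i$ (here, unlike in Section~\ref{S:X}, $\Fix(F)$ need not contain all of $\mathscr A_0$, but it does contain at least those points of $\mathscr A_0$ that lie in $\Fix(F|_{\mathscr A_0})$, and since $\mathscr A_0 \in \Fix(\widehat F)$ the union $\bigcup \Fix(\widehat F)$ includes $\mathscr A_0$; note the corollary only claims $\Fix(F)$ is "a subset of $\mathscr A_0$ or of the form $\langle\langle \mathscr A_0, f^m_i\rangle\rangle$", so membership of $\mathscr A_0$ in $\Fix(\widehat F)$ is what matters).

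Next, if some $(\mathscr K^r_s)^\circ \in \Fix(\widehat F)$, then by Corollary~\ref{C:gemini} the set $\Fix(\widehat F)$ has more than one element, and applying Lemma~\ref{L:FixFhat-log2} together with connectedness we get that $\Fix(F)$ is either $\langle\langle \mathscr A_0, f^m_i\rangle\rangle$ or $\langle\langle f^n_j, f^m_i\rangle\rangle$ for some $f^n_j \prec f^m_i$. Finally, if the only element of $\Fix(\widehat F)$ is a singleton $\{f^m_i\}$ (and $\Fix(\widehat F)$ contains neither $\mathscr A_0$ nor any $(\mathscr K^r_s)^\circ$, those cases being already settled), then connectedness forces $\{f^m_i\}$ to be the unique element and Lemma~\ref{L:2xFix} gives $\Fix(F) = \{f^m_i\} = \langle\langle f^m_i, f^m_i\rangle\rangle$, which has the required form. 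The only step that needs real care, rather than a direct transcription of Corollary~\ref{C:FixF}, is the first one: establishing via Lemma~\ref{L:phionhead} that a non-constant $F$ fixing $\mathscr A_0$ setwise cannot collapse the head, and checking that the conclusion is correctly weakened from "$\Fix(F) = \mathscr A_0$" to "$\Fix(F)$ is a subset of $\mathscr A_0$" — this is exactly the discrepancy flagged in the remark just before the statement (namely that $\varphi(\mathscr A_0) = \mathscr A_0$ no longer forces $\varphi = \Id$ on $\mathscr A_0$). Everything else is routine, relying on the already-verified analogues of Lemmas~\ref{L:const 1}, \ref{L:const 2}, \ref{L:how br to br}, \ref{L:2xFix}, Corollaries~\ref{C:gemini}, \ref{C:arrow}, and Lemma~\ref{L:FixFhat-log2}.
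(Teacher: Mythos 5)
Your proof is correct and follows essentially the same approach as the paper's, namely splitting on the type of the smallest element of $\Fix(\widehat F)$ and substituting Lemma~\ref{L:phionhead} for Lemma~\ref{L:Kim to Kjn}(a) to handle the head. One small prose inaccuracy: in the branch where $\Fix(\widehat F)$ contains $\mathscr A_0$ together with at least one more element, the set $\Fix(F)=\bigcup\Fix(\widehat F)$ \emph{does} contain all of $\mathscr A_0$ (the proof of Lemma~\ref{L:FixFhat-log2} shows that in this situation $F$ must be the identity on the whole head, via the closure of the sub-snake), so your parenthetical remark that $\Fix(F)$ ``need not contain all of $\mathscr A_0$'' is misplaced there---that caveat is genuinely relevant only in the branch $\Fix(\widehat F)=\{\mathscr A_0\}$.
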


The proof differs from that of Corollary~\ref{C:FixF} only in the beginning: If $F (\mathscr
A_0)\subseteq \mathscr A_0$ then, by Lemma~\ref{L:phionhead}, $F$ has a fixed point in $\mathscr A_0$.
Then, combining (the analogue of) Corollary~\ref{C:gemini} and Lemma~\ref{L:FixFhat-log2}  we get that either $\Fix (F)
\subseteq  \mathscr A_0$ or $\Fix (F) = \langle \langle \mathscr A_0, f^m_i \rangle
\rangle$ for some $m$ and $i$. The rest of the proof is the same as in the proof of Corollary~\ref{C:FixF}.

\medskip
Let us also remark that in Corollary~\ref{C:FixF-log2}, instead of ``$\Fix (F)$ is either a subset of $\mathscr A_0$" one could in fact write ``$\Fix (F)$ is either $\{a_{\infty}\}$". This follows from Lemma~\ref{L:jumps-log2}(c) below.

\medskip

We have thus shown that all the results from Subsections~\ref{SS:cont-bricks} and~\ref{SS:induced} do work, at least in slightly modified forms, also for our space $X$ which is a candidate for $S(X)=\{0,\log 2\}$. We show that this is sufficient for finishing the proof that really $S(X)=\{0,\log 2\}$.

The analogue of Lemma~\ref{L:left} works with almost the same  proof. (The only modification is needed in the first paragraph of the proof: Now it is not true that the head is in the closure of the snake. However, the set $A$ is, and so every brick in the head contains points from the closure of the sub-snake. It follows that if the sub-snake is mapped by $F^2$ to $p$, in view of Lemma~\ref{L:map-log2} so is the head.)
Further, the analogue of Lemma~\ref{L:right} works even with the same proof. The same is true for the analogue of Lemma~\ref{L:jumptoanotherblock}.

Next lemma is quite obvious but for clarity we state it explicitly.

\begin{lem}\label{L:fromheadtosnake}
	\begin{enumerate}
	\item [(1)] If there exists $a \in \mathscr A_0$ such that $F(a) \notin \mathscr A_0$, then
			$F(\mathscr A_0)=F(a)$.
	\item [(2)] If there exists $b$ in the snake such that $F(b) \in  \mathscr A_0$,
			then $F(X)=F(b)$.
	\end{enumerate}
\end{lem}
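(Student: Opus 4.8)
{\bf Proof plan.} The statement to prove is Lemma~\ref{L:fromheadtosnake}, asserting that (1) if some point $a\in\mathscr A_0$ has $F(a)\notin\mathscr A_0$, then $F$ is constant on $\mathscr A_0$; and (2) if some point $b$ in the snake has $F(b)\in\mathscr A_0$, then $F$ is constant on all of $X$. The plan is to reduce both parts to the already-established machinery: the analogue of Lemma~\ref{L:map} (namely Lemma~\ref{L:map-log2}), the analogue of Lemma~\ref{L:const 2} (Lemma~\ref{L:const 2-log 2}), Lemma~\ref{L:phionhead}, and the basic structural facts about $X$ (the head $\mathscr A_0$ is a continuum, the set $A$ is dense in the closure of every sub-snake, and consecutive bricks intersect in a single point). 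Throughout, ``brick'' means one of the $\mathscr H_k$ in the head or one of the $\mathscr K_i^m$ in the snake, and I will use freely that every brick is a Cook continuum and that by Lemma~\ref{L:map-log2} the $F$-image of a brick is a singleton or a homeomorphic brick.

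\emph{Part (1).} Suppose $a\in\mathscr A_0$ with $F(a)\notin\mathscr A_0$, i.e.\ $F(a)$ lies in the snake. First I observe that $a$ cannot be $a_\infty$: if $a=a_\infty$, then since $a_\infty$ is the limit of points $a_i\in A$ and the $a_i$ have arbitrarily large $|i|$, continuity of $F$ forces $F(a_i)$ to lie in an arbitrarily small neighbourhood of $F(a_\infty)$ inside the snake; but a small snake-neighbourhood of an interior point of a brick $\mathscr K_i^m$ meets only finitely many bricks, and one then gets two consecutive bricks $\mathscr H_k,\mathscr H_{k+1}$ mapped into one snake-brick, contradicting Lemma~\ref{L:const 1}(c) (its analogue) together with non-homeomorphy of consecutive snake-bricks --- actually it is cleaner to argue: $a=a_\infty$ with $F(a_\infty)$ in the snake means $\mathscr A_0$ is mapped by $F$ into a region whose closure meets the snake, and one obtains, via the analogue of Lemma~\ref{L:const 1}(f) and the density of $A$ in the closure of a sub-snake, that a whole brick $\mathscr H_k$ maps to a single point; then Lemma~\ref{L:phionhead} is not applicable since $F$ does not map $\mathscr A_0$ into itself, so instead I use directly that $F(\mathscr H_k)$ being a singleton propagates along the necklace by continuity and by Lemma~\ref{L:const 1}(b)(c)-analogues, giving $F(\mathscr A_0)$ a singleton. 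So assume $a\in\mathscr H_k\setminus\{a_k,a_{k+1}\}$ or $a$ an extremal point; in any case $a$ lies in some brick $\mathscr H_k$ with $F(a)$ in the snake. By Lemma~\ref{L:map-log2}, $F(\mathscr H_k)$ is either a singleton (and then it equals $\{F(a)\}$, a point in the snake) or a homeomorphic copy of $\mathscr H_k$; but no brick in the snake is homeomorphic to $\mathscr H_k$, so $F(\mathscr H_k)=\{F(a)\}$. Now the family $\mathscr P$ of bricks $\mathscr H_j$ in the head with $F(\mathscr H_j)=\{F(a)\}$ is nonempty; using continuity at the shared extremal points of consecutive $\mathscr H_j$ and Lemma~\ref{L:map-log2} (a neighbouring brick $\mathscr H_{j\pm1}$ shares a point with $\mathscr H_j$ hence its image contains $F(a)$, and being non-homeomorphic to any snake-brick and to... one concludes its image is the singleton $\{F(a)\}$), $\mathscr P$ is closed under passing to neighbours, so $\mathscr P$ contains all $\mathscr H_j$, $j\in\mathbb Z$; finally $a_\infty$, being a limit of points in $\bigcup_j\mathscr H_j$, is mapped to $F(a)$ by continuity. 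Hence $F(\mathscr A_0)=\{F(a)\}$, which is (1).

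\emph{Part (2).} Suppose $b$ lies in the snake with $F(b)=:z_0\in\mathscr A_0$. Let $B$ be a brick in the snake containing $b$. By Lemma~\ref{L:map-log2}, $F(B)$ is a singleton or a brick homeomorphic to $B$; but no head-brick is homeomorphic to a snake-brick, and a brick homeomorphic to $B$ which meets $\mathscr A_0$ cannot exist, so $F(B)=\{z_0\}\subseteq\mathscr A_0$. Thus the family $\mathscr P$ of snake-bricks mapped by $F$ to $\{z_0\}$ is nonempty. I claim $\mathscr P$ satisfies (P1)--(P5) of Lemma~\ref{L:metalemma}: (P1) is immediate; (P2),(P3) follow because a neighbouring brick shares a point with a brick in $\mathscr P$, hence its image contains $z_0\in\mathscr A_0$, hence (again by Lemma~\ref{L:map-log2} and non-homeomorphy, exactly as in the analogue of Lemma~\ref{L:const 1}(f), now with $\mathscr K_0$ replaced by $\mathscr A_0$) its image is the singleton $\{z_0\}$; (P4) follows from continuity at $x_{m+1}$; and (P5) is proved exactly as in the proof of Lemma~\ref{L:const 2-log 2}(a) (equivalently Lemma~\ref{L:const 2}(a)), producing a continuum $D=\{x_m\}\cup\bigcup_{k\ge N}\mathscr K_k^{m-1}$ whose image is a continuum in a controlled neighbourhood of $z_0$ inside $\mathscr A_0$, forcing it into one brick $\mathscr H_i$, hence (Cook) into a point, namely $\{z_0\}$. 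By Lemma~\ref{L:metalemma}, every snake-brick maps to $z_0$; hence the whole snake maps to $z_0$, so $F(A)=\{z_0\}$ by density of $A$ in the closure of the snake, and then each head-brick $\mathscr H_k$ contains at least two points mapped to $z_0$, so by the analogue of Lemma~\ref{L:const 1}(c) $F(\mathscr H_k)=\{z_0\}$ for every $k$; finally $F(a_\infty)=z_0$ by continuity. Therefore $F(X)=\{z_0\}$, giving (2). Alternatively and more briefly for (2): once the snake is not $F$-invariant (it maps into $\mathscr A_0$), Lemma~\ref{L:const 2-log 2}(a) already gives that $F$ is constant, and since $z_0$ is in its range, $F(X)=\{z_0\}$.

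\emph{Main obstacle.} The only genuinely delicate point is the propagation step used in both parts: upgrading ``$F$ maps one brick $\mathscr H_k$ (or one snake-brick) to a point'' to ``$F$ maps a neighbouring brick to the same point.'' In Section~\ref{S:X} this was packaged as Lemma~\ref{L:const 1}(f) and used the fact that no brick homeomorphic to a given brick intersects the relevant set; here one must be careful because the head-bricks $\mathscr H_k$ are all mutually homeomorphic, so the phrase ``no brick homeomorphic to $\mathscr H_k$ intersects $\mathscr H_k$'' is false --- its neighbours $\mathscr H_{k\pm1}$ are homeomorphic to it and do meet it. The resolution, already implicit in the proof of Lemma~\ref{L:map-log2}, is that when $F(\mathscr H_k)$ is a single point $p\in\mathscr A_0$, a neighbour $\mathscr H_{k+1}$ has $F(\mathscr H_{k+1})\ni p$; if this image were a homeomorphic brick $\mathscr H_l$ it would equal $\mathscr H_l$ and contain $p$, but then $F$ would map the one-point intersection $\mathscr H_k\cap\mathscr H_{k+1}$ to $p$ while mapping $\mathscr H_{k+1}$ homeomorphically onto $\mathscr H_l$ sending extremal points to extremal points --- one checks via the ordering/position bookkeeping (the analogue of Lemma~\ref{L:const 1}(d)(e)) that this is incompatible with continuity at the shared extremal point unless the image collapses, so $F(\mathscr H_{k+1})=\{p\}$ after all. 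I would write this propagation once, carefully, citing the relevant analogues, and then invoke it in both parts. Everything else is routine application of Lemma~\ref{L:map-log2}, Lemma~\ref{L:metalemma}, and Lemma~\ref{L:const 2-log 2}.
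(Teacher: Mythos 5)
Your proof is essentially correct and follows the same route as the paper, but it is substantially over-engineered, and the ``main obstacle'' paragraph worries about a scenario that does not actually occur in this lemma.

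For Part~(2), the paper's entire proof is your closing ``alternatively'' remark: $F(b)\in\mathscr A_0$ means the snake is not $F$-invariant, so Lemma~\ref{L:const 2-log 2}(a) immediately gives that $F$ is constant, and $F(b)$ is in the range, so $F(X)=\{F(b)\}$. Everything before that in your Part~(2) is a re-derivation of Lemma~\ref{L:const 2-log 2}(a) from scratch and should simply be deleted.

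For Part~(1), the ``main obstacle'' you identify --- that head-bricks are mutually homeomorphic, so $F(\mathscr H_{k+1})$ could a priori be another head-brick rather than a singleton --- cannot arise here. Once $F(\mathscr H_k)=\{F(a)\}$ with $F(a)$ in the snake, the shared point $a_{k+1}$ also maps to $F(a)$, so $F(\mathscr H_{k+1})$ contains a \emph{snake} point. It therefore cannot be any $\mathscr H_l$ (these are disjoint from the snake), nor a snake-brick (wrong homeomorphism type by Lemma~\ref{L:Cook-family}/Corollary~\ref{C:Cook-family}), so Lemma~\ref{L:map-log2} forces $F(\mathscr H_{k+1})=\{F(a)\}$. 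This one-line argument, propagated by induction and closed off at $a_\infty$ by continuity, is the paper's whole proof of~(1). The ``ordering/position bookkeeping'' and the hypothetical $p\in\mathscr A_0$ you invoke are irrelevant to this lemma (the scenario with the image point inside the head does arise in other lemmas, but not here, because the hypothesis of~(1) is precisely that $F(a)$ lies outside $\mathscr A_0$). One small point in your favour: you correctly notice that $a=a_\infty$ is not contained in any brick $\mathscr H_k$, so the phrase ``the brick containing $a$'' needs a word; the clean fix is that by continuity some $a_i$ near $a_\infty$ also has $F(a_i)$ in the (open) snake, and one runs the argument with that $a_i$ instead.
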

	
\begin{proof}
(2) is just a reformulation of Lemma~\ref{L:const 2-log 2}(a). To prove (1), let $\mathscr H_k$ be the brick containing the point $a$. Since the bricks in the snake are not homeomorphic to those in the head, Lemma~\ref{L:map-log2} gives $F(\mathscr H_k) = \{F(a)\}$. For the same reason, the neighboring bricks $\mathscr H_{k-1}$ and $\mathscr H_{k+1}$ are also mapped to $F(a)$. By induction, this is true for all the bricks in the head and the result follows.
\end{proof}

Then we get the following analogue of Lemma~\ref{L:jumps}.

\begin{lem}\label{L:jumps-log2}
	Assume that $F$ is not constant and has no fixed point in the snake.
	\begin{itemize}
		\item [(a)] For every $m$, $\jump(m+1) \in \{\jump(m)-1, \ \jump(m)\}$.
		\item [(b)] The sequence $\jump(1), \jump(2), \dots$ is eventually constant.
		\item [(c)] There exist positive integers $r$ and $N$ such that on
		$S^{\Join}_r= \langle \langle \mathscr A_0, x_r  \rangle \rangle$ we have
		$$
		F|_{S^{\Join}_r} = G^N|_{S^{\Join}_r}
		$$
		where $G$ is the map from Lemma~\ref{L:existsG-log2}.
	\end{itemize}
\end{lem}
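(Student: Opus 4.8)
The plan is to mimic the proof of Lemma~\ref{L:jumps}, relying on the fact that all the structural results about continuous selfmaps of $X$ (the analogues of Lemma~\ref{L:ord-pres}, Lemma~\ref{L:mapDD}, Lemma~\ref{L:map-log2}, Corollary~\ref{C:union}) have already been verified to hold for the present space $X$ with $\mathscr K_0$ replaced by $\mathscr A_0$. The key observation, which I would state first, is that under the hypotheses ($F$ non-constant, no fixed point in the snake) the induced function $\widehat F$ has no fixed point in $\widehat X \setminus \{\mathscr A_0\}$: otherwise Lemma~\ref{L:2xFix} would give a fixed point of $F$ in the snake, contradicting the hypothesis. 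Hence, by (the analogue of) Corollary~\ref{C:arrow}, every element $A \in \widehat X \setminus \{\mathscr A_0\}$ is mapped by $\widehat F$ strictly to the left; combined with the analogue of Lemma~\ref{L:jumptoanotherblock} this already shows $\jump(m) \geq 1$ for all $m$.

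For part (a), I would fix $m$ and use the analogue of Lemma~\ref{L:ord-pres}(d) to write $F(D^*(m)) \subseteq D^*(k(m))$ and $F(D^*(m+1)) \subseteq D^*(k(m+1))$. Since $F|_\Sigma$ preserves the order $\preccurlyeq$ and $D^*(m+1) \prec D^*(m)$, we get $k(m+1) \geq k(m)$. On the other hand, $D^*(m) \cup D^*(m+1)$ is connected, so its $F$-image is connected; if $k(m+1) \geq k(m)+2$ there would be a whole set $D^*(r)$ strictly between $D^*(k(m+1))$ and $D^*(k(m))$ not met by the image, making it disconnected — impossible. So $k(m+1) \in \{k(m), k(m)+1\}$, which translates into $\jump(m+1) \in \{\jump(m)-1, \jump(m)\}$. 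Part (b) is then immediate: a sequence of positive integers which at each step either stays or drops by one must be eventually constant.

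For part (c) I would follow the proof of Lemma~\ref{L:jumps}(c) verbatim. By (b) pick $r \geq 2$ and $N$ with $\jump(i) = N$, i.e. $F(D^*(i)) \subseteq D^*(i+N)$, for all $i \geq r-1$. Then I would show $F(D^*(i)) = D^*(i+N)$ for $i \geq r$: if $F(D^*(i)) \subsetneq D^*(i+N)$ for some $i \geq r$, the analogue of Lemma~\ref{L:ord-pres}(e) forces $F(D^*(i))$ to miss the first point of $D^*(i+N)$ or to have positive distance from the first point of $D^*(i+N+1)$, contradicting connectedness of the $F$-images of $D^*(i) \cup D^*(i\pm 1)$. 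Upgrading via the analogue of Lemma~\ref{L:ord-pres}(e) gives $F(D(i)) = D(i+N)$ for $i \geq r$; since $G^N(D(i)) = D(i+N)$ for every $i$ (by Lemma~\ref{L:existsG-log2}(c)), Lemma~\ref{L:mapDD} says $F$ and $G^N$ agree on $D(i)$ for all $i \geq r$, hence on the sub-snake $((\mathscr A_0, x_r\rangle\rangle$, and by continuity on $S^{\Join}_r = \langle\langle \mathscr A_0, x_r\rangle\rangle$.

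\textbf{Main obstacle.} The genuinely new point compared with Section~\ref{S:X} is controlling the interaction with the head $\mathscr A_0$, which is no longer a single brick. The subtle place is ensuring that "no fixed point of $F$ in the snake" really does force $\widehat F$ to have no fixed point outside $\mathscr A_0$ and to push everything leftward — but this has essentially already been handled by the analogues of Lemma~\ref{L:2xFix}, Corollary~\ref{C:arrow}, and Lemma~\ref{L:jumptoanotherblock} which the text has explicitly declared valid (the latter "works even with the same proof"). Thus the only care needed is to invoke the correct analogues; the combinatorial-topological core (order preservation, connectedness of images, uniqueness of surjections $D(m)\to D(M)$) is insensitive to the replacement of $\mathscr K_0$ by $\mathscr A_0$ and transfers without change.
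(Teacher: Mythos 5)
Your parts (a) and (b) match the paper's proof (which explicitly says the argument of Lemma~\ref{L:jumps}(a),(b) carries over unchanged), and most of your part (c) is also faithful. However, there is a genuine gap at the very last step of (c), and it is in fact the \emph{only} place where the paper's proof differs from that of Lemma~\ref{L:jumps}.

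You conclude by saying that once $F$ and $G^N$ agree on the sub-snake $((\mathscr A_0, x_r\rangle\rangle$, they agree ``by continuity'' on $S^{\Join}_r = \langle\langle \mathscr A_0, x_r\rangle\rangle$. In the $\{0,\infty\}$ case this works because the head $\mathscr K_0$ lies entirely in the closure of any sub-snake (Lemma~\ref{L:space}(1)), so $\langle\langle \mathscr K_0, x_r\rangle\rangle$ is literally the closure of $((\mathscr K_0, x_r\rangle\rangle$. That is no longer true here: as the footnote to Lemma~\ref{L:Xcpct} warns, the construction does not guarantee that the snake approaches every point of every $\mathscr H_k$, so the closure of the sub-snake need not contain all of $\mathscr A_0$ --- it only contains the countable set $A = \{a_i : i\in\Z\}\cup\{a_\infty\}$. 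Hence continuity alone gives agreement of $F$ and $G^N$ on $A$, not on $\mathscr A_0$, and your last sentence asserts more than you have proved.

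The missing argument is the following. By continuity and density, $F$ and $G^N$ agree on $A$. In particular $F$ maps a point of $\mathscr A_0$ into $\mathscr A_0$, so Lemma~\ref{L:fromheadtosnake}(1) gives $F(\mathscr A_0)\subseteq\mathscr A_0$. Now fix $k\in\Z$: the extremal points $a_k, a_{k+1}$ of $\mathscr H_k$ lie in $A$, so $F(a_k)=G^N(a_k)=a_{k+N}$ and $F(a_{k+1})=a_{k+1+N}$; by Lemma~\ref{L:map-log2} the brick $\mathscr H_k$ is mapped onto a brick, which must then be $\mathscr H_{k+N}$, and by Lemma~\ref{L:Cook-properties}(3) there is a unique continuous surjection $\mathscr H_k\to\mathscr H_{k+N}$, namely the one realized by $G^N$. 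Hence $F|_{\mathscr H_k} = G^N|_{\mathscr H_k}$ for every $k$, and together with agreement at $a_\infty$ this finishes the proof of (c). Your ``Main obstacle'' paragraph correctly anticipates that interaction with the more complicated head is the new issue, but then dismisses it, whereas in fact that is exactly where the extra work is needed.
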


The proof differs from that of Lemma~\ref{L:jumps} only at the very end of it. When we already get that $F$ and $G^N$ coincide on
the sub-snake $(( \mathscr A_0, x_r  \rangle \rangle$, we first deduce that, by continuity, they coincide also on the set $A$ which is in the closure of the sub-snake. Hence, by Lemma~\ref{L:fromheadtosnake}(1), $F(\mathscr A_0)\subseteq \mathscr A_0$. For every $k\in \mathbb Z$, $G^N(\mathscr H_k) = \mathscr H_{k+N}$. Since $F$ coincides with $G^N$ at the extremal points of  $\mathscr H_k$, by Lemma~\ref{L:map-log2} also $F(\mathscr H_k) = \mathscr H_{k+N}$. By Lemma~\ref{L:Cook-properties}(3), $F$ coincides with $G^N$ on $\mathscr H_k$.  Hence $F$ and $G^N$ coincide on $S^{\Join}_r$ as required.

\medskip

Finally, we have the following analogue of Proposition~\ref{P:zero inf}.

\begin{prop}\label{P:zero log2}
	For the one-dimensional continuum $X$ constructed above in~(\ref{X-definition-log2}) we have
	$S(X)=\{0,\log 2\}$. Moreover, if $F: X\to X$ is a continuous map then $h^*(F) = \log 2$ if $F$ is
	non-constant and has
	no fixed point in the snake, otherwise $h^*(F) = 0$.
\end{prop}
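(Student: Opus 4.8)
The plan is to mirror the proof of Proposition~\ref{P:zero inf}, now feeding in the analogues of the relevant lemmas established earlier in this section. Let $F\colon X\to X$ be continuous. If $F$ is constant, $h^*(F)=0$, so assume $F$ is non-constant, and split according to whether or not $F$ has a fixed point in the snake. First I would dispose of the case where $F$ \emph{does} have a fixed point in the snake: by Corollary~\ref{C:FixF-log2}, $\Fix(F)$ is then of the form $\langle\langle\mathscr A_0, f^m_i\rangle\rangle$ or $\langle\langle f^n_j, f^m_i\rangle\rangle$, and the analogues of Lemmas~\ref{L:left} and~\ref{L:right} (together with the fact that $F$ is the identity on $\Fix(F)$) produce $M\in\N$ with $F^M(X)=\Fix(F)$. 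Hence, using $h^*(F)=h^*(F^M)$ for $M\geq1$ and Proposition~\ref{P2}(a) applied to $F^M$,
\[
h^*(F)=h^*(F^M)=h^*\big(F^M|_{F^M(X)}\big)=h^*\big(\Id|_{\Fix(F)}\big)=0 .
\]

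Next I would treat the case where $F$ has no fixed point in the snake, where the aim is $h^*(F)=\log 2$. For $h^*(F)\geq\log 2$: Lemma~\ref{L:jumps-log2}(c) supplies $r,N\in\N$ with $F|_{S^{\Join}_r}=G^N|_{S^{\Join}_r}$, and $S^{\Join}_r=\langle\langle\mathscr A_0, x_r\rangle\rangle$ is $G$-invariant by Lemma~\ref{L:existsG-log2}(b), hence also $F$-invariant, so $h^*(F)\geq h^*(F|_{S^{\Join}_r})=h^*(G^N|_{S^{\Join}_r})=h^*(G|_{S^{\Join}_r})=\log 2$ by Lemma~\ref{L:existsG-log2}(b) and $h^*(T^N)=h^*(T)$. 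For the matching upper bound I would trap a forward image of $X$ inside $S^{\Join}_r$: since $F$ is non-constant, Lemma~\ref{L:const 2-log 2}(c) makes $F|_{\mathscr A_0}$ non-constant, so Lemma~\ref{L:fromheadtosnake}(1) gives $F(\mathscr A_0)\subseteq\mathscr A_0$ and Lemma~\ref{L:phionhead} upgrades this to $F(\mathscr A_0)=\mathscr A_0$, while the analogue of Lemma~\ref{L:jumptoanotherblock} gives $F(D^*(m))\subseteq D^*(k(m))$ with $k(m)>m$, whence $F^n(D^*(m))\subseteq D^*(k_n(m))$ with $k_n(m)\geq m+n$; combining, $F^r(X)\subseteq\mathscr A_0\cup\bigcup_{m\geq r}D^*(m)\subseteq S^{\Join}_r$. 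As $F^r(X)$ is a closed $F$-invariant set on which $F$ agrees with $G^N$, one has $F^r|_{F^r(X)}=G^{Nr}|_{F^r(X)}$; then, using $h^*(F)=h^*(F^r)$, Proposition~\ref{P2}(a) for $F^r$, and the monotonicity of $h^*$ under passing from $F^r(X)$ to the larger closed $G^{Nr}$-invariant set $S^{\Join}_r$ (immediate from~\eqref{Eq:hstarT-IN}),
\[
h^*(F)=h^*\big(F^r|_{F^r(X)}\big)=h^*\big(G^{Nr}|_{F^r(X)}\big)\leq h^*\big(G^{Nr}|_{S^{\Join}_r}\big)=h^*\big(G|_{S^{\Join}_r}\big)=\log 2 ,
\]
so $h^*(F)=\log 2$ here.

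Putting the two cases together yields the ``moreover'' statement, hence $h^*(F)\in\{0,\log 2\}$ for every continuous $F\colon X\to X$; since $0$ is realized by constant maps and by the identity and $\log 2$ is realized by $G$ (Lemma~\ref{L:existsG-log2}(a)), we conclude $S(X)=\{0,\log 2\}$. I expect the one genuinely new difficulty, relative to the $S(X)=\{0,\infty\}$ case, to be the upper bound $h^*(F)\leq\log 2$ in the second case: in Section~\ref{S:X} the inequality $h^*(F)\geq h^*(F|_{S^{\Join}_r})=\infty$ sufficed, whereas now one must genuinely confine some iterate $F^r(X)$ to the invariant neighbourhood $S^{\Join}_r$ of the head on which $F$ coincides with a power of $G$, and then appeal to $h^*(G|_{S^{\Join}_r})=\log 2$ --- which in turn rests on Lemma~\ref{L:existsG-log2} and, ultimately, on the long analysis behind Lemma~\ref{L:G-log2-2case}.
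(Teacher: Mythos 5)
Your proof is correct and follows exactly the paper's route: the same split into three cases, the same use of Corollary~\ref{C:FixF-log2} with the analogues of Lemmas~\ref{L:left} and~\ref{L:right} when $\Fix(F)$ meets the snake, and the same two-sided estimate built on Lemma~\ref{L:jumps-log2}(c) together with trapping a forward iterate of $X$ inside $S^{\Join}_r$ to get $h^*(F)\leq h^*(G|_{S^{\Join}_r})=\log 2$. One small looseness in the justification: the contrapositive of Lemma~\ref{L:const 2-log 2}(c) does not by itself force $F|_{\mathscr A_0}$ to be non-constant --- it only excludes a constant value lying \emph{inside} $\mathscr A_0$, not a constant value in the snake --- so the chain through Lemma~\ref{L:fromheadtosnake}(1) and Lemma~\ref{L:phionhead} as you wrote it has a gap; but since you have already invoked Lemma~\ref{L:jumps-log2}(c), the inclusion $F(\mathscr A_0)\subseteq\mathscr A_0$ is immediate from $F|_{\mathscr A_0}=G^{N}|_{\mathscr A_0}$ and $G^{N}(\mathscr A_0)=\mathscr A_0$, with no further lemma needed.
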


\begin{proof}
	Let $F: X\to X$ be a continuous map. If $F$ is constant then $h^*(F)=0$. Now let
	$F$ be non-constant.	
	
	First assume that $F$ has a fixed point in the snake. By Corollary~\ref{C:FixF-log2},
	$\Fix (F)$ is either of the form $\langle \langle \mathscr K_0, f^m_i \rangle
	\rangle$ or  $\langle \langle  f^n_j, f^m_i \rangle \rangle$ for some $f^n_j
	\preccurlyeq f^m_i$. Then, by Lemmas~\ref{L:left} and~\ref{L:right}, there
	exists a positive integer $N$ such that $F^N(X)=\Fix(F)$. This clearly implies
	that $h_A(F)=0$ for any sequence $A$ and so $h^*(F)=0$ (alternatively, use
	Proposition~\ref{P2}(a)).
	
	Now assume that $F$ has no fixed point in the snake. Then, by
	Lemma~\ref{L:jumps-log2}(c), there exist positive integers $r$ and $N$ such that on
	$S^{\Join}_r= \langle \langle \mathscr A_0, x_r  \rangle \rangle$ we have
	$$
	F|_{S^{\Join}_r} = G^N|_{S^{\Join}_r}
	$$
	where $G$ is the map from Lemma~\ref{L:existsG-log2}. So,
	\[
	h^*(F) \geq h^*(F|_{S^{\Join}_r}) = h^*(G^N|_{S^{\Join}_r}) = h^*(G|_{S^{\Join}_r})=\log 2~.
	\]
	On the other hand, the jumps in Lemma~\ref{L:jumps-log2} are positive integers and so, for some positive integer $M$, $F^M(X) \subseteq S^{\Join}_r$.  Then repeated use of Proposition~\ref{P2}(a) gives
	\[
	h^*(F)= h^*(F|_{F^M(X)}) \leq h^*(F|_{S^{\Join}_r}) = \log 2~.
	\]

	We have shown that, for every continuous map $F$ on $X$, either $h^*(F)=0$ or
	$h^*(F)=\log 2$ and so the proposition is proved.
\end{proof}


\section{Generalization from $\log 2$ to $\log m$}\label{S:generalization}
In this section, we will extend the result of Section~\ref{S:X1-T1-log2} and Section~\ref{S:cont zero-log2}. We show
that for any $m \ge 3$, there exists a space $X$ with $S(X)=\{0, \log m\}$.

\subsection{A map $T: X_1 \to X_1$ with $h^*(T)=\log m$ for $m\ge 3$}\label{S:X1-T1-logm}
To get the system $(X_1,T)$ with $h^*(T)=\log m$, $m\ge 3$, one can repeat the construction from Section~\ref{S:X1-T1-log2}
almost word by word, with just small modifications. Therefore we comment only differences here. For terminology, the reader
is referred to Section~\ref{S:X1-T1-log2}.

Just as in Section \ref{S:X1-T1-log2}, $X_1=A\sqcup Y $ where $A=\{a_{i}:i \in \Z\}\cup \{a_{\infty}\}$
and $Y=\{x_0,x_1,\dots\}$ is the trajectory of $x_0$. Again, $T(a_i)=a_{i+1},
T{a_{\infty}}=a_{\infty}$ and $Tx_i=x_{i+1}$, $i=0,1,\dots$.
But this time, the trajectory of $x_0$ is chosen in such a way that
the following requirements are fulfilled:
\begin{enumerate}
	\item [(1')] $(a_0,a_1,a_2,\dots,a_{m-1})$ is an IN-tuple of length $m$.
	
	\item [(2')] For $|j| \geq m$, $(a_0,a_j)$ is not an IN-pair.
	
	\item [(3')] $(a_0,a_{\infty})$ is not an IN-pair.	
\end{enumerate}
Similarly as in the beginning of Section~\ref{S:X1-T1-log2} (see the text between~\eqref{Eq:approachpi0} and~\eqref{Eq:block-outgap}),
one can see that to fulfill these three requirements, it is sufficient to fulfill the following two requirements:
\begin{enumerate}
	\item [(R1')] For every $k$, the tuple $(U^k(a_0),
	U^k(a_1),\dots,U^{k}(a_{m-1}))$ has an independence set of times of cardinality
	$k+1$ .
	\item [(R2')] The tuple $(U^1(a_0), U^1(a_j))$ does not have an independence
	set of times of cardinality $5$ whenever $|j|\geq m$ or $j=\infty$.
\end{enumerate}

Similarly as before, the sequence $x_0, x_1, \dots$ is described as:
\begin{equation}\label{Eq:block-outgap-logn}
x_0, x_1, \dots = \text{1st block}, \,  \text{1st outer gap}, \, \text{2nd
	block}, \, \text{2nd outer gap}, \, \dots \notag
\end{equation}
and for every $k$, the $k$-th block is a concatenation of pieces and inner
gaps:
\begin{equation}\label{Eq:pieces-ingaps-logn}
\text{k-th block} = \text{1st piece}, \, \text{1st inner gap}, \, \text{2nd
	piece}, \, \dots, \text{$(m^{k+1}-1)$st inner gap}, \, \text{$(m^{k+1})$-th
	piece}. \notag
\end{equation}
A proper choice of pieces will ensure (R1') and proper choices of inner gaps and
outer gaps will ensure (R2').

In Section~\ref{S:X1-T1-log2}, i.e. in the case $h^{*}(T)=\log 2$,
to ensure (R1), for each $k$ there were $2^{k+1}$ different pieces. Now, to ensure (R1'),
for each $k$ we need $m^{k+1}$ pieces $P(k,l)$ \index{$P(k,l)$} and to separate these different pieces we
need more inner gaps $ig(k,j)$ in each block $B(k)$. Denote
\begin{align}\label{eq:notFk-logm}
\begin{split}
F(k)= & \{0,1,\dots,m-1\}^{\{0,1,2,\cdots,k\}}=\{s_l: 1 \le l \le m^{k+1}\}, \\
& s_1=(0,0,\dots,0),\, \dots, \, s_{m^{k+1}}=(m-1,m-1,\dots,m-1).
\end{split}
\end{align}
For each $k$ we fix, once and for all, a choice of $m^{k+1}$ functions $s_l$.
Say, let them be
ordered from $s_1$ to $s_{m^{k+1}}$ lexicographically (then $s_1$ and
$s_{m^{k+1}}$ are constant, as written above).\footnote{Since the functions
	$s_l$ depend both on $k$ nad $l$, we should in fact write $s_{(k,l)}$. We abuse
	notation here hoping that no misunderstanding will arise.}

For each $l$, the piece $P(k,l)$ will be a finite sequence of length $n_k^k+1$
of the form
\begin{equation}\label{eq:Pkl-logm}
P(k,l) = x_j, x_{j+1}, \dots, x_{j+n_1^k},  \dots, x_{j+n_2^k},  \dots,
x_{j+n_k^k}
\end{equation}
where $j=j(k,l) \geq 0$ will depend on both $k$ and $l$, but $n_1^k, \dots,
n_k^k$ only on $k$ and not on $l$,\footnote{Therefore we write $n_i^k$ rather
	than  $n_i^{(k,l)}$.} such that
\begin{equation}\label{eq: pieces requirment-logm}
x_j \in U^{k}(a_{s_l(0)}), \,\, T^{n_i^k}x_j=x_{j+n_i^k} \in
U^{k}(a_{s_l(i)}), \quad 1 \le i \le k~.
\end{equation}
Equivalently, if we put $n_0^k:=0$, $x_j \in
\bigcap_{i=0}^{k}T^{-n_i^k}U^k(a_{s_l(i)})$.
Since this will be true for each $s_l$, the set
\begin{equation}\label{eq:notNk-logm}
N(k)=\{n_0^k=0, n_1^k, \dots, n_k^k\}
\end{equation}
will be an independence set of times of length $k+1$ for $(U^k(a_0), U^k(a_1),\dots, U^k(a_{m-1}))$.

Just as in Section \ref{S:X1-T1-log2},
the piece $P(k,l)$ consists of $k$ shorter
sequences, called \emph{winds}:
\begin{equation}\label{eq:winds-logm}
W^{(k,l)}_1= x_j (= x_{j+n_0^k}), x_{j+1}, \dots, x_{j+n_1^k}, \quad  \dots,
\quad W^{(k,l)}_k = x_{j+n_{k-1}^k}, \dots, x_{j+n_k^k}~.
\end{equation}
The wind $W^{(k,l)}_i$ starts in $x_{j+n_{i-1}^k}\in U^k(a_{s_l(i-1)})$ and ends
in $x_{j+n_{i}^k}\in U^k(a_{s_l(i)})$.

The pieces and gaps are built analogously as in Section~\ref{S:X1-T1-log2}.
For proving that $h^*(T)$ is $\log m$ (and not higher), we specify appropriately
the \emph{independence set of times} $N(k)=\{n_0^k=0,n_1^k,n_2^k\cdots,
n_{k}^k\}$ \index{$N(k)$}, the \emph{outer gap length} $og(k)$ \index{$og(k)$} and the \emph{inner gap lengths}
$ig(k,j)$, $ 1 \le j \le m^{k+1}-1$\index{$ig(k,j)$}.
Inductively, we require the following (for $k=1,2,\dots$):
\begin{enumerate}
	\item [(L1')] $w^1_1 = 3m+2$ or, equivalently, $n_1^1 = 3m+1$.
	\item [(L2')] $|IG| \gg |\pre (IG)|$ whenever $IG$ is an inner gap.
	\item [(L3')] $|OG| \gg |\pre (OG)|$ whenever $OG$ is an outer gap.
	\item [(L4')] $|W| \gg  |\pre (W)|$	whenever $W$ is a wind in the first piece of
	a block.
	(Recall that the lengths of the winds in $P(k,l)$ are the same as those in
	$P(k,1)$.)
\end{enumerate}

For any $k$, the  inner gaps $IG(k,j), 1
\le j \le m^{k+1}-1$ \index{$IG(k,j)$} and outer gaps $OG(k)$ \index{$OG(k)$} can be chosen similarly as in
Section \ref{S:X1-T1-log2}, only here we need more inner gaps. Making no difference between
a (finite) sequence and its set of values, for the $k$-th block we have
\[
B(k)=\bigcup_{j=1}^{m^{k+1}-1}(P(k,j)\cup IG(k,j))\cup P(k,m^{k+1}).
\]
As in Section~\ref{S:X1-T1-log2}, the set $Y_k = B(k) \cup OG(k)$ is called the
\emph{$k$-th level} of the set
\[
Y=\bigcup_{k=1}^{\infty}(B(k) \cup OG(k)).
\]
The set $Y$, properly ordered, is the trajectory of the point $x_0$, so $Y=\{x_0,x_1,\dots\}$.

We  define $\tilde{P}(k,l)$ \index{$\tilde{P}(k,l)$}, the  \emph{$l$-th part in the $k$-th
level},  similarly as in Section \ref{S:X1-T1-log2},  but with some modification.
Recall that in Section~\ref{S:X1-T1-log2}, the core of the definition was to make sure that
if $x_i \in U^{1}(a_0)$ then $x_i$ was contained in some part. If we denote by $y_{(k,l),0}$
the first point of $P(k,l)$, then $\tilde{P}(k,l)$ is defined
as follows:
\begin{itemize}
	\item If $y_{(k,l),0} \in U^{1}(a_0)$, then set $\tilde{P}(k,l)=P(k,l) $;
	\item If $y_{(k,l),0} \notin U^{1}(a_0)$ (i.e. $s_{l}(0)\neq 0$), then set
          $$\tilde{P}(k,l)=P(k,l) \cup	\{x_t:\, j-s_l(0) \le t \le j-1 \}$$
          where $x_{j} =y_{(k,l),0}$.
\end{itemize}

We have, analogously as in~\eqref{eq:no0outsidepart}, that
\begin{equation}\label{eq:no0outsidepart-logm}
\text{if $x_i \in U^{1}(a_0)$, then $x_i \in \tilde{P}(k,l)$ for some $k$ and
	$l$.}
\end{equation}

As an analogue of Lemma~\ref{L: distance of two 0}, we have the following lemma. The main difference is in (2);
recall that in Lemma~\ref{L: distance of two 0}(2) we had $t \in \{n_c^k-n_d^k-1,n_c^k-n_d^k, n_c^k-n_d^k+1\}$.
\begin{lem}\label{L: distance of two 0-logm}
	Let $k>0,l\in \{1,\dots, m^{k+1}\}$.
	\begin{enumerate}
		\item The piece $P(k,l)$ is of the form
			$$
			P(k,l) = x_j, x_{j+1}, \dots, x_{j+n_1^k},  \dots, x_{j+n_2^k},  \dots,
			x_{j+n_k^k}~.
			$$
           	If $s_l\in F(k)$ is the function corresponding to $P(k,l)$, then we can write (here $n_0^k=0$)
		\begin{equation}\label{eq:listnew-logm}
		\tilde{P}(k,l) \cap U^{1}(a_0) =\{x_{j+n_0^k-s_l(0)}, x_{j+n_1^k -s_l(1)}, \dots, x_{j+n_k^k-s_l(k)}\}.		
		\end{equation}
		\item If two points in $\tilde{P}(k,l)\cap U^{1}(a_0)$ have iterative distance
		$t>0$, then
		\begin{equation*}
		t \in \{n_c^k-n_d^k-(m-1),n_c^k-n_d^k-(m-1)+1,\dots , n_c^k-n_d^k+(m-1)\}
		\end{equation*}
		for some $0\le d<c \le k$ and no other pair of points in $\tilde{P}(k,l)\cap U^{1}(a_0)$ has the same iterative distance~$t$.
	\end{enumerate}
\end{lem}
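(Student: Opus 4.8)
The statement is Lemma 6.2 (the $\log m$-analogue of Lemma~\ref{L: distance of two 0}). Its proof should follow the same pattern as the proof of Lemma~\ref{L: distance of two 0}, with the single structural change that a piece now winds with "digits" in $\{0,1,\dots,m-1\}$ rather than $\{0,1\}$, so that near each wind-endpoint the element of $U^1(a_0)$ may sit up to $m-1$ steps before the endpoint instead of at most $1$ step.

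\medskip

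\emph{Plan of proof.} For part (1), I would recall from~\eqref{eq:Pkl-logm} that $P(k,l)$ has the stated form, and from~\eqref{eq: pieces requirment-logm} that for each $i\in\{0,1,\dots,k\}$ the endpoint of the $i$-th wind satisfies $x_{j+n_i^k}\in U^1(a_{s_l(i)})$ in the sense of the convention~\eqref{eg:pt-centre} (here using the $k=1$ neighbourhoods $U^1$, which contain the $U^k$'s). By the construction of the winds (each wind passes once through the $P_2$-projections $a_0,a_1,\dots$ around $A$ and is much longer than all its predecessors, by (L4')), the \emph{only} element of a wind whose $P_2$-projection is $a_0$ and which lies in $\tilde P(k,l)$ is the point located exactly $s_l(i)$ steps before the $i$-th wind-endpoint; indeed if $s_l(i)=0$ the endpoint itself projects to $a_0$, and if $s_l(i)=d>0$ the endpoint projects to $a_d$, so one steps back $d$ positions to reach the point projecting to $a_0$, which by the definition of $\tilde P(k,l)$ (adding the points $x_t$, $j-s_l(0)\le t\le j-1$, when $s_l(0)\ne 0$) is still counted in $\tilde P(k,l)$. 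This gives exactly~\eqref{eq:listnew-logm}. One must also note, exactly as in the original proof via~\eqref{eq:no0outsidepart-logm} and the fact that $\tilde P(k,1)=P(k,1)$ starts in $U^1(a_0)$, that the step-back never leaves $P(k,l)$ into a preceding inner/outer gap, since the winds are much longer than any preceding structure.

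\medskip

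For part (2), take two points $x_s,x_{s+t}\in\tilde P(k,l)\cap U^1(a_0)$ with $t>0$. By~(1) they are of the form $x_{j+n_d^k-s_l(d)}$ and $x_{j+n_c^k-s_l(c)}$ for some $0\le d<c\le k$ (the inequality $d<c$ because $t>0$ and by (L4') the $n_i^k$ are strictly, indeed hugely, increasing). Then
\[
t=(n_c^k-s_l(c))-(n_d^k-s_l(d))=n_c^k-n_d^k-\big(s_l(c)-s_l(d)\big),
\]
and since $s_l(c),s_l(d)\in\{0,1,\dots,m-1\}$ we have $|s_l(c)-s_l(d)|\le m-1$, whence
\[
t\in\{n_c^k-n_d^k-(m-1),\,\dots,\,n_c^k-n_d^k+(m-1)\},
\]
as claimed. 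For the uniqueness statement, suppose $x_p,x_q$ ($p<q$) is another such pair, say $x_p=x_{j+n_{d'}^k-s_l(d')}$, $x_q=x_{j+n_{c'}^k-s_l(c')}$ with $0\le d'<c'\le k$ and $\{d',c'\}\ne\{d,c\}$; then its iterative distance is $n_{c'}^k-n_{d'}^k-(s_l(c')-s_l(d'))$, and because by (L4') consecutive gaps $n_{i+1}^k-n_i^k+1$ satisfy $n_1^k+1\ll n_2^k-n_1^k+1\ll\cdots$ (so in particular $n_{c'}^k-n_{d'}^k$ differs from $n_c^k-n_d^k$ by much more than $2(m-1)$ whenever $(d',c')\ne(d,c)$), these two distances cannot coincide; the additive perturbations of size at most $m-1$ are negligible against the $\gg$-gaps. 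Hence the pair realizing distance $t$ is unique.

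\medskip

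\emph{Main obstacle.} There is no real obstacle; the only point requiring care — and the place where the parameter $m$ enters, as opposed to $m=2$ — is the bookkeeping in part~(1): one must be sure that stepping $s_l(i)\le m-1$ positions back from a wind-endpoint always stays inside $\tilde P(k,l)$ and genuinely lands on the unique element of that wind whose $P_2$-projection is $a_0$. This is guaranteed by the definition of $\tilde P(k,l)$ (which prepends exactly $s_l(0)$ extra points when $s_l(0)\ne0$) together with (L1'), which sets $w_1^1=3m+2$ so that even the very first (shortest) wind is long enough to absorb a backward shift of size $m-1$, and (L4'), which makes every later wind much longer still. Once this is in place, the uniqueness argument is the verbatim $\gg$-separation argument of Lemma~\ref{L: distance of two 0}(2), now with error margin $m-1$ instead of $1$.
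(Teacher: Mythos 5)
Your proposal is correct and follows essentially the same route as the paper's proof: part~(1) by noting that $x_{j+n_c^k}\in U^1(a_{s_l(c)})$ forces $x_{j+n_c^k-s_l(c)}\in U^1(a_0)$ and observing that each wind contributes exactly one such point; part~(2) by computing $t=n_c^k-n_d^k-(s_l(c)-s_l(d))$ with $|s_l(c)-s_l(d)|\le m-1$ and invoking the $\gg$-separation of (L4') for uniqueness. The paper's version is terser on the geometric bookkeeping in part~(1) (it simply states the list and asserts completeness), so your extra care about where the step-back lands, and the role of (L1') in absorbing a backward shift of size $m-1$ in the shortest wind, is a correct and slightly more explicit rendering of the same argument, not a different one.
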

\begin{proof}
	(1) As in~(\ref{eq:Pkl-logm}), we have $P(k,l) = x_j, x_{j+1}, \dots, x_{j+n_1^k},  \dots, x_{j+n_2^k},  \dots,
	x_{j+n_k^k}.$ For each $c\in \{0,1,\dots, k\}$
	we have $x_{j+n^k_c} \in U^1(a_{s_l(c)})$ and so
	$$
	x_{j+n^k_c-s_l(c)} \in U^1(a_0).
	$$
	This means that the list of all elements of $\tilde{P}(k,l) \cap U^{1}(a_0)$ is~(\ref{eq:listnew-logm}).

	(2) So, if $t>0$ and $x_s, x_{s+t}$ are two points in $\tilde{P}(k,l)\cap
	U^{1}(a_0)$, there are	
	$0\le d<c \le k$ such that
    $$x_s=x_{j+n^k_d-s_l(d)} \quad \text{and} \quad x_{s+t}=x_{j+n^k_c-s_l(c)}$$
    note that $s_l(c), s_l(d) \in \{0,1,\dots, m-1\}$, hence
    $$t \in \{n_c^k-n_d^k-(m-1),n_c^k-n_d^k-(m-1)+1,\dots , n_c^k-n_d^k+(m-1)\}.$$

    If $p<q$ and $x_p, x_q$ is \emph{another}
	pair of points in $\tilde{P}(k,l)\cap U^{1}(a_0)$, i.e. in the
	list~(\ref{eq:listnew-logm}), then either
	$x_p \neq x_{j+n_d^k-s_l(d)}$ or $x_{q} \neq x_{j+n^k_c-s_l(c)}$. Since by (L4')
	the lengths of the winds in $P(k,l)$ satisfy the inequalities
	$$
	n^k_1+1 \ll n^k_2 -n^k_1 +1 \ll \dots \ll n^k_k-n^k_{k-1}+1~,
	$$
	the iterative distance $q-p$ of $x_q$ and $x_p$ is different from $t$.
\end{proof}

Similarly as in Section \ref{S:X1-T1-log2},
$$
E(k,l) = \{x_{j}, x_{j+n_1^k},  \dots, x_{j+n_2^k}\}
$$ \index{$E(k,l)$}
is the set of the \emph{endpoints of the winds} in $P(k,l)$.
But here we redefine the notion of \emph{almost coincidence}.
By saying that two points almost coincide \index{almost coincide}, we now mean that their iterative distance is at most $m-1$
(rather than at most $1$ as in Section~\ref{S:X1-T1-log2}).
By Lemma~\ref{L: distance of two 0-logm}(1), the points of $U^1(a_0) \cap \tilde{P}(k,l)$ \emph{almost coincide} with the endpoints of the winds.

The following analogue of Lemma \ref{L:notL-shiftable} works with the same proof.
\begin{lem}\label{L:notL-shiftable-logm}
		Let $s \ge 0$, $t>0$ and $x_s, x_{s+t} \in U^1(a_0)$. If the points $x_s$ and $x_{s+t}$ belong to different blocks,
		then they are not $U^1(a_0)$-left shiftable.
\end{lem}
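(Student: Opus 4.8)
\textbf{Proof plan for Lemma~\ref{L:notL-shiftable-logm}.} The plan is to mimic the proof of Lemma~\ref{L:notL-shiftable} almost verbatim, checking that the only quantitative changes (the new wind length $w_1^1 = 3m+2$ from (L1') and the new convention of ``almost coincide'' up to iterative distance $m-1$) do not spoil the argument. The key structural facts used in Lemma~\ref{L:notL-shiftable} are purely ordinal: that the outer gaps contain no point of $U^1(a_0)$, that blocks are separated by outer gaps, and the growth conditions (L3') and (L4'). All of these hold here.

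First I would set up the contradiction exactly as before. Suppose $x_s \in B(p)$, $x_{s+t} \in B(q)$ with $p \neq q$; necessarily $p < q$ since iterative distance is positive, so the entire outer gap $OG(q-1)$ lies strictly between $x_s$ and $x_{s+t}$. Assume for contradiction there is $m_0 < 0$ (I would rename the shift parameter to avoid clashing with the fixed integer $m$) with $x_{s+m_0}, x_{s+t+m_0} \in U^1(a_0)$; in particular $s + m_0 \geq 0$. Let $q_- > 0$ be the least positive integer with $x_{s+t-q_-} \in U^1(a_0)$, i.e. the iterative distance from $x_{s+t}$ to its left neighbour in $U^1(a_0)$.

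The heart of the estimate is the chain of inequalities $q_- > og(q-1) > s$. The first inequality holds because all of $U^1(a_0) \cap B(q)$ that lies to the left of $x_{s+t}$ but within reach of a single wind is at iterative distance controlled by (L4'); more precisely, whether or not $x_{s+t}$ is the first point of $B(q)$, any earlier point of $U^1(a_0)$ either lies in $OG(q-1)$ (impossible, since outer gaps avoid $U^1(a_0)$) or strictly to the left of $OG(q-1)$, forcing $q_- > |OG(q-1)| = og(q-1)$; here I need to note that, unlike Section~\ref{S:X1-T1-log2}, the first wind of $B(q)$ has length $3m+2$, but since $3m+2 \geq 1$ and (L4') still gives $|W| \gg |\pre(W)|$ for the first wind of a block, the argument is unchanged. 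The second inequality $og(q-1) > s$ is immediate from (L3'), since $x_s \in B(p) \subseteq \pre(OG(q-1))$ and hence $s < |\pre(OG(q-1))| \ll og(q-1)$. Finally, since $x_{s+t+m_0} \in U^1(a_0)$ lies to the left of $x_{s+t}$, the definition of $q_-$ forces $|m_0| = -m_0 \geq q_- > og(q-1) > s$, whence $s + m_0 < 0$, contradicting $s + m_0 \geq 0$. This completes the proof.

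I do not expect a genuine obstacle here: the lemma is ``ordinal'' in nature and the $\log m$ modifications only affect the \emph{pieces} and the \emph{number} of inner gaps, not the block/outer-gap skeleton on which Lemma~\ref{L:notL-shiftable} rests. The one point deserving a written remark is that the new ``almost coincide'' tolerance $m-1$ never enters this particular proof (it will matter only later, in the analogues of Lemmas~\ref{L:notL-shiftable2} and~\ref{L:location}), so the statement and proof of Lemma~\ref{L:notL-shiftable-logm} are word-for-word those of Lemma~\ref{L:notL-shiftable} with $|OG|$-growth taken from (L3') and $|W|$-growth from (L4').
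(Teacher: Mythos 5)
Your proposal is correct and takes exactly the route the paper intends: the paper introduces Lemma~\ref{L:notL-shiftable-logm} with the remark that it ``works with the same proof'' as Lemma~\ref{L:notL-shiftable}, and you reproduce that proof, correctly observing that only (L3) and (L4) get replaced by (L3') and (L4') and that the enlarged ``almost coincide'' tolerance $m-1$ plays no role here.

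One small imprecision worth flagging: in the ``more precisely'' clause you assert that the left $U^1(a_0)$-neighbour of $x_{s+t}$ must lie in $OG(q-1)$ or to its left, which overlooks the case where that neighbour lies inside $B(q)$ itself (when $x_{s+t}$ is not the first $U^1(a_0)$-point of $B(q)$). That case is in fact what the earlier half of your sentence and the invocation of (L4') are for — the gap to a neighbour within $B(q)$ is at least a wind length or an inner-gap length, hence $\gg |\pre(B(q))| \geq og(q-1)$ by (L4') (and (L2')) — so the substance is right, but as written the ``more precisely'' clause is less precise than the clause it follows.
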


Analogue to Lemma \ref{L:notL-shiftable2}, we have the following lemma.
\begin{lem}\label{L:notL-shiftable2-logm}
Let $s \ge 0$, $t>0$ and $x_s, x_{s+t} \in U^1(a_0) \cap \tilde{P}(k,l)$. Then there is no $h<0$ such that $x_{s+h}\in U^{1}(a_0) \cap \bigcup_{i=1}^{k-1}B(i)$ and $x_{s+t+h} \in U^{1}(a_0) \cap P(k,1)$.	
\end{lem}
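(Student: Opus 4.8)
The statement to prove is Lemma~\ref{L:notL-shiftable2-logm}, which is the direct analogue of Lemma~\ref{L:notL-shiftable2} from the $\log 2$ construction, now in the $\log m$ setting. Since the paper explicitly says that the $\log m$ construction "can repeat the construction from Section~\ref{S:X1-T1-log2} almost word by word, with just small modifications," the natural approach is to adapt the proof of Lemma~\ref{L:notL-shiftable2} verbatim, tracking the single structural change: the notion of \emph{almost coincidence} now allows iterative distance up to $m-1$ instead of $1$, and the list in Lemma~\ref{L: distance of two 0-logm}(1) has the shape $x_{j+n^k_c - s_l(c)}$ with $s_l(c) \in \{0,\dots,m-1\}$.

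The plan is as follows. First I would assume $k\geq 2$ (otherwise $\bigcup_{i=1}^{k-1}B(i)=\emptyset$ and there is nothing to prove) and argue by contradiction: suppose there is $h<0$ with $x_{s+h}\in U^1(a_0)\cap\bigcup_{i=1}^{k-1}B(i)$ and $x_{s+t+h}\in U^1(a_0)\cap P(k,1)$; note this forces $s+h\geq 0$. Next I would write out the pieces $P(k,l)$ and $P(k,1)$ as in~\eqref{eq:Pkl-logm}, introducing the sets of wind-endpoints $E(k,l)$ and $E(k,1)$. By Lemma~\ref{L: distance of two 0-logm}(1), the two points $x_s, x_{s+t}\in U^1(a_0)\cap\tilde P(k,l)$ almost coincide (iterative distance $\leq m-1$) with the $p$-th and $q$-th elements of $E(k,l)$ for some $p<q$. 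The key is then to decompose the left shift $h$ into two stages, exactly as in the $\log 2$ proof: first shift $x_s, x_{s+t}$ to points $x_{s+\sigma}, x_{s+t+\sigma}$ that almost coincide with the $p$-th and $q$-th elements of $E(k,1)$ — possible because the wind lengths in $P(k,l)$ and $P(k,1)$ agree — so that $x_{s+\sigma}$ lies in $P(k,1)$ or is near the last point of $OG(k-1)$; then shift further left to reach $x_{s+h}$, which must cross the whole outer gap $OG(k-1)$ since $OG(k-1)$ contains no point of $U^1(a_0)$. This second shift has length at least $|OG(k-1)|$, hence (by (L3')) much larger than $1$, and in fact at least the iterative distance between the $(q-1)$-st and $q$-th elements of $E(k,1)$ up to an error now bounded by $2(m-1)$ rather than $2$. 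By (L4') this iterative distance dominates $|\pre(W)|$ for the wind $W$ whose endpoints are those two elements; since $x_{s+\sigma}$ almost coincides with the $p$-th element of $E(k,1)$ with $p\leq q-1$, it follows that $s+h<0$, a contradiction.

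The only genuinely new bookkeeping compared to Lemma~\ref{L:notL-shiftable2} is that all the "error of at most $1$" or "much larger than $1$" estimates become "error of at most $m-1$" or "much larger than $m-1$", but this is absorbed harmlessly by the $\gg$-conditions (L2'),(L3'),(L4'), which use the factor $100$ and hence dominate any fixed error depending only on $m$. So I do not expect a real obstacle here; the step requiring the most care is simply checking that the two-stage decomposition of the shift is still valid — i.e. that after the first stage $x_{s+\sigma}$ is indeed either inside $P(k,1)$ or within $OG(k-1)$ of it — which follows because the winds of $P(k,l)$ and $P(k,1)$ have identical lengths by construction (the lengths $n^k_i$ depend only on $k$, not on $l$). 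Everything else is routine adaptation, so in the write-up I would state the argument compactly and refer the reader to the proof of Lemma~\ref{L:notL-shiftable2} for the details that are literally unchanged.
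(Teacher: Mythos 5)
Your proposal is correct and follows essentially the same two-stage shift decomposition as the paper's own proof, which likewise adapts the proof of Lemma~\ref{L:notL-shiftable2} by replacing the $\log 2$ error bounds (``iterative distance at most $1$'', ``error at most $2$'') with their $\log m$ counterparts ($m-1$ and $2(m-1)$) and noting these are absorbed by (L2'), (L3'), (L4'). The one small point you phrase loosely — ``near the last point of $OG(k-1)$'' — is made precise in the paper as ``one of the last $m-1$ points of $OG(k-1)$'', but that is exactly what you mean, so there is no gap.
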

\begin{proof}
The proof is almost the same as that of Lemma \ref{L:notL-shiftable2} but for completeness we give it here. Recall that when we say that two points almost coincide, it means that their iterative distance is at most $m-1$.

	Let $k\geq 2$, otherwise there is nothing to prove.
	Suppose, on the contrary, that there is $h<0$ with that property (hence $s+h\geq 0$).
	We have, as in~(\ref{eq:Pkl-logm}),
	$$
	P(k,l) = x_{j(l)}, x_{j(l)+1}, \dots, x_{j(l)+n_1^k},  \dots, x_{j(l)+n_2^k},  \dots, x_{j(l)+n_k^k},
	$$
	where the set $E(k,l) = \{x_{j(l)}, x_{j(l)+n_1^k},  \dots, x_{j(l)+n_k^k}\}$ is the set of the \emph{endpoints of the winds} in $P(k,l)$. Similarly,
	$$
	P(k,1) = x_{j(1)}, x_{j(1)+1}, \dots, x_{j(1)+n_1^k},  \dots, x_{j(1)+n_2^k},  \dots,	x_{j(1)+n_k^k},
	$$
	where $E(k,1) =\{x_{j(1)}, x_{j(1)+n_1^k},  \dots, x_{j(1)+n_k^k}\}$ is the set of the endpoints of the winds in $P(k,1)$.
	
	Since $x_s, x_{s+t} \in U^1(a_0) \cap\tilde{P}(k,l)$, by Lemma~\ref{L: distance of two 0-logm}(1) we know that they almost coincide with the $p$-th and the $q$-th elements in $E(k,l)$, for some $p<q$.  The left shift from $x_s, x_{s+t}$ to $x_{s+h}, x_{s+t+h}$ can be performed as the composition of two shorter left shifts. First, we shift $x_s, x_{s+t}$ to points $x_{s+\sigma}, x_{s+t+\sigma}$ which almost coincide with the $p$-th and the $q$-th elements in $E(k,1)$ (this is possible because the lengths of winds in $P(k,l)$ are the same as in $P(k,1)$). Then the point $x_{s+\sigma}$ is either in $P(k,1)$ or it is one of the last $m-1$ points of $OG(k-1)$. So we need to shift $x_{s+\sigma}, x_{s+t+\sigma}$ still to the left, now finally to $x_{s+h}, x_{s+t+h}$. Since $OG(k-1)$ does not contain points from $U^1(a_0)$, this shift has to be at least as long as it is the length of $OG(k-1)$, which is much larger than $m-1$. Since $x_{s+t+h}$ has to be in $U^1(a_0)$ and to the left of $x_{s+t+\sigma}$, this second shift (whose length is much larger than $m-1$) is of course at least as long as the iterative distance between $(q-1)$-st and $q$-th elements in $E(k,1)$ (see Lemma~\ref{L: distance of two 0-logm}(1)), meant in approximative sense, i.e. an error, now definitely not greater than $2(m-1)$, is possible when we claim this. This iterative distance is, by (L4'), much larger than $|\pre (W)|$ where $W$ is the wind whose endpoints are the $(q-1)$-st and $q$-th elements in $E(k,1)$. Since $x_{s+\sigma}$ almost coincides with the $p$-th element of $E(k,1)$ and $p\leq q-1$, we get that $s+h<0$, a contradiction.
\end{proof}	

The following is an analogue of Lemma~\ref{L:location}. The proof is the same as that of Lemma~\ref{L:location}, one just needs to replace the lemmas used in the proof by their analogues discussed above.
\begin{lem}\label{L:location-logm}
	Let $s \ge 0$, $t>0$ and let the points $x_s, x_{s+t} \in U^1(a_0)$ be
	$U^1(a_0)$-shiftable, i.e. there exists an $h \neq 0$  such that also
	$x_{s+h} \in U^{1}(a_0)$ and $x_{s+t+h} \in U^{1}(a_0)$. Then the following is true.
	\begin{enumerate}
		\item If $x_s, x_{s+t} \in \tilde{P}(k,i_1)$ for some $k$ and $i_1$, then $x_{s+h}, x_{s+t+h} \in \tilde{P}(k,i_2)$ for some $i_2\neq i_1$.
		\item If $x_s \in \tilde{P}(k,i_1)$, $x_{s+t} \in \tilde{P}(k,i_2)$
		for some $k$ and $i_1<i_2$, then
		$x_{s+h} \in \tilde{P}(k,i_1)$, $x_{s+t+h}\in \tilde{P}(k,i_2)$.
		\item The points $x_s, x_{s+t}, x_{s+h}, x_{s+t+h}$ belong to the same block $B(k)$, for some $k$.
		\item If  $x_s, x_{s+t} \in \tilde{P}(k,i)$ for some $k$ and $i$,
		then
		$x_s, x_{s+h}$ are in the ``similar positions"\index{similar positions}, meaning that if we write, as in~(\ref{eq:listnew-logm}), the point $x_s$ in the form
		$$
		x_s= x_{r+n^k_{c}-s_{i}(c)} \in \tilde{P}(k,i) \quad \text{for some} \quad 0 \le c \le k,
		$$
		then there exists $i'$ such that
		$$
		x_{s+h} =x_{r'+n^k_{c}-s_{i'}(c)} \in \tilde{P}(k,i') \quad \text{with the same} \quad 0 \le c \le k.
		$$
		Here $r,r' \ge 0$ are such that $x_{r}=y_{(k,i),0}$,
		$x_{r'}=y_{(k,i'),0}$ are the first points of the pieces $P(k,i), P(k,i')$, respectively, and $s_{i}, s_{i'} \in F(k)$ are
		the functions corresponding to the pieces $P(k,i), P(k,i')$, respectively.
		\item If $x_s \in \tilde{P}(k,i_1)$ and $x_{s+t} \in \tilde{P}(k,i_2)$
		for some $k$ and $i_1<i_2$,
		then $x_s, x_{s+t}$ are in the ``similar positions".
	\end{enumerate}
\end{lem}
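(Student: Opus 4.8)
The statement to prove is Lemma~\ref{L:location-logm}, which is the $\log m$ analogue of Lemma~\ref{L:location}. The plan is to repeat the proof of Lemma~\ref{L:location} essentially verbatim, substituting each auxiliary result used there by the $\log m$-version already established in this section. Concretely, Lemma~\ref{L: distance of two 0} is replaced by Lemma~\ref{L: distance of two 0-logm}, Lemma~\ref{L:notL-shiftable} by Lemma~\ref{L:notL-shiftable-logm}, and Lemma~\ref{L:notL-shiftable2} by Lemma~\ref{L:notL-shiftable2-logm}; the conditions (L1)--(L4) are replaced by (L1')--(L4'); and~\eqref{eq:no0outsidepart} and~\eqref{eq:tildePk1=Pk1} are replaced by~\eqref{eq:no0outsidepart-logm} together with the corresponding fact $\tilde P(k,1)=P(k,1)$ (which holds for the same reason: $s_1=(0,\dots,0)$). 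The structural skeleton of the argument — prove (1) first, use it to prove (2), use (1) and (2) to prove (3), then derive (4) and (5) — carries over unchanged.

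First I would prove (1). As in the original, since $x_s,x_{s+t}\in\tilde P(k,i_1)$ we have $t<|\tilde P(k,i_1)|$, while by combining (L2') and (L4') each inner gap in $B(k)$ is much longer than $|\tilde P(k,i_1)|$, so $x_{s+h},x_{s+t+h}$ cannot lie in different parts of $B(k)$; and by Lemma~\ref{L: distance of two 0-logm}(2) distinct pairs of points in $\tilde P(k,i_1)\cap U^1(a_0)$ have distinct iterative distances, so $x_{s+h},x_{s+t+h}$ cannot both lie in $\tilde P(k,i_1)$. For $h>0$ one argues exactly as before using (L2'--L4') to rule out separation by a gap and to rule out landing in a later part whose first wind is too long, obtaining $x_{s+h},x_{s+t+h}\in\tilde P(k,i_2)$ for some $i_1<i_2$. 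For $h<0$ one reduces to two cases (i) and (ii) as in the original; case (i) is narrowed to (i') (the point $x_{s+t+h}$ lies in $P(k,1)$) using that $ig(k,1)\gg|P(k,1)|$ by (L2'), and (i') is excluded by Lemma~\ref{L:notL-shiftable2-logm}, leaving only case (ii). Parts (2) and (3) are then copied verbatim from the proof of Lemma~\ref{L:location}, replacing each cited lemma and each occurrence of (L$i$) by its primed counterpart; for (3) the key subcases about $x_s,x_{s+t}$ lying in the same block and the descent argument are identical. Parts (4) and (5) are pure bookkeeping with the indices $n^k_c$: using~\eqref{eq:listnew-logm} in place of~\eqref{eq:listnew}, writing $x_s,x_{s+h},x_{s+t},x_{s+t+h}$ in the normal form $x_{r+n^k_c-s_i(c)}$ etc., and invoking that $n^k_{j+1}\gg n^k_j$ and that the $s_i$-values lie in $\{0,\dots,m-1\}$ (so the correction terms are $\ll$ the gaps between consecutive $n^k_c$) to conclude $c=d$ and $\gamma=\delta$; part (5) follows from (2) and (4) just as before.

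The only genuine point of care — and the place where something could in principle go wrong — is that ``almost coincide'' now means iterative distance at most $m-1$ rather than at most $1$, so several of the approximate estimates in the original proof (e.g. ``an error not greater than $2$ is possible'') become ``an error not greater than $2(m-1)$''. This is harmless because in every such estimate the quantity being compared is a wind length or a gap length, which by (L2'--L4') is larger than $100$ times the quantity it dominates and in particular larger than $100(m-1)$; since $m$ is a fixed constant, all the strict inequalities that drove the original argument survive with the enlarged error terms. I do not anticipate any other obstacle: the whole lemma is a transcription exercise, and the author indeed states immediately before it that ``the proof is the same as that of Lemma~\ref{L:location}, one just needs to replace the lemmas used in the proof by their analogues discussed above,'' so the proof proposal is simply to carry out that replacement, checking at each approximate step that the constant $100$ in the definition of $\gg$ still beats the now-$m$-dependent error.

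\begin{proof}
The proof is verbatim the same as the proof of Lemma~\ref{L:location}, with the following substitutions: every use of Lemma~\ref{L: distance of two 0} is replaced by the corresponding use of Lemma~\ref{L: distance of two 0-logm}, every use of Lemma~\ref{L:notL-shiftable} by Lemma~\ref{L:notL-shiftable-logm}, every use of Lemma~\ref{L:notL-shiftable2} by Lemma~\ref{L:notL-shiftable2-logm}; the conditions (L1)--(L4) are replaced by (L1')--(L4'); the facts~\eqref{eq:no0outsidepart} and $\tilde P(k,1)=P(k,1)$ are replaced by~\eqref{eq:no0outsidepart-logm} and the equality $\tilde P(k,1)=P(k,1)$ (valid since $s_1=(0,\dots,0)$); and the normal form~\eqref{eq:listnew} is replaced by~\eqref{eq:listnew-logm}. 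In all the approximate estimates of the original proof the admissible error, which there did not exceed $2$, now does not exceed $2(m-1)$; since $m$ is a fixed constant and, by (L2')--(L4'), every relevant wind length or gap length exceeds $100$ times the quantity it is compared with and in particular exceeds $100(m-1)$, all the strict inequalities used in the original argument remain valid. With these replacements the five assertions (1)--(5) follow exactly as in Lemma~\ref{L:location}.
\end{proof}
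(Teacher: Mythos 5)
Your proposal is correct and follows exactly the approach the paper takes: the paper's own proof of this lemma is a one-line remark that one should repeat the proof of Lemma~\ref{L:location} with the primed lemmas and conditions substituted, which is precisely what you do. Your extra note about the error bound growing from $2$ to $2(m-1)$ and remaining dominated by $\gg$ is a correct and useful sanity check that the paper leaves implicit.
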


Finally, as an analogue of Theorem~\ref{final-thm} we have the following theorem.
\begin{thm}\label{final-thm-logm} The system $(X_1, T)$ has the following properties.
	\begin{enumerate}
		\item [(1')] $(a_0,a_1,a_2,\dots,a_{m-1})$ is an IN-tuple of length $m$.
		
		\item [(2')] For $|j|\geq m$, $(a_0,a_j)$ is not an IN-pair.
		
		\item [(3')] $(a_0,a_{\infty})$ is not an IN-pair.
		
	\end{enumerate}
	Hence $h^*(T)=\log m$.
\end{thm}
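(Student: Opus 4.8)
The plan is to mirror, nearly verbatim, the three-part argument of Theorem~\ref{final-thm}, exploiting the fact that the construction of $(X_1,T)$ in the $\log m$ case has been set up to be a word-for-word analogue of the $\log 2$ case, with $2$ replaced by $m$ and the auxiliary lemmas (Lemma~\ref{L: distance of two 0-logm}, Lemma~\ref{L:notL-shiftable-logm}, Lemma~\ref{L:notL-shiftable2-logm}, Lemma~\ref{L:location-logm}) already established. First I would prove $(1')$: this is immediate from the construction, since for every $k$ the set $N(k)=\{n_0^k=0,n_1^k,\dots,n_k^k\}$ from~\eqref{eq:notNk-logm} is, by~\eqref{eq: pieces requirment-logm} and the fact that all $m^{k+1}$ functions $s_l\in F(k)$ occur as pieces $P(k,l)$, an independence set of times of length $k+1$ for $(U^k(a_0),U^k(a_1),\dots,U^k(a_{m-1}))$; letting $k\to\infty$ and shrinking neighbourhoods shows $(a_0,a_1,\dots,a_{m-1})$ is an IN-tuple.

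Next I would prove $(2')$. As in Theorem~\ref{final-thm}, it suffices to treat $j\ge m$, since if $(a_0,a_{-j})$ were an IN-pair with $j\ge m$ then by Proposition~\ref{P}(a) applied with the shift $T^{j}$ so would be $(a_0,a_j)$. Fix $j\ge m$ and suppose for contradiction that $(a_0,a_j)$ is an IN-pair; then $(U^1(a_0),U^1(a_j))$ has an independence set of times of length $5$, say $l_{-1}<l_0<l_1<l_2<l_3$, yielding the same $4\times 5$ table of inclusions~\eqref{eq:r1}--\eqref{eq:r4} as in the $\log 2$ proof. By Lemma~\ref{L:location-logm}$(3)$ the eight points in the first and last columns all lie in a single block $B(k)$, hence so do all twenty points; we then pass to the $3\times 3$ subtable and show, exactly as before using Lemma~\ref{L:location-logm}$(1)$,$(2)$, that the three points in the first row either all lie in one part or lie in three distinct parts. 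In Case~1 (same part), Lemma~\ref{L:location-logm}$(4)$ puts $x_{m_1+l_1}$ and $x_{m_2+l_1}$ in ``similar positions'', and the computation of $m_2+l_2$ proceeds as in~\eqref{lastbut1}, except that the term $(s_{i'}(c)-s_{i'}(d))-(s_i(c)-s_i(d))$ now ranges over $\{-2(m-1),\dots,2(m-1)\}$ rather than $\{-2,\dots,2\}$, so one concludes $x_{m_2+l_2}\in\bigcup_{|r|\le 2(m-1)}U^1(a_r)$; since $j\ge m$ and, crucially, $j\le 2(m-1)$ is impossible only when $m=2$, one must argue more carefully. In fact the right statement (paralleling the $j\ge 3$ versus $j=2$ split in Theorem~\ref{final-thm}) is: if $j\ge 2m-1$ we get an immediate contradiction, and if $m\le j\le 2m-2$ we extract, exactly as in the $j=2$ subcase, a constraint on $l_2-l_1$ of the form $l_2-l_1=n_c^k-n_d^k-(s_i(c)-s_i(d))$ with the parenthesized quantity pinned down, and then transporting this to $P(k,i'')$ via Lemma~\ref{L:location-logm}$(4)$ forces $x_{m_3+l_1}\in\bigcup_{|r|\le 2(m-1)}U^1(a_r)$, contradicting $x_{m_3+l_1}\in U^1(a_j)$ with $j\ge m$ once one uses that $l_1$-position and $l_2$-position differ by a controlled amount. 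Case~2 (three distinct parts) is handled by the symmetric pair of shifts, as in Theorem~\ref{final-thm}, and the details are left to the reader.

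Finally, $(3')$ follows from the proof of $(2')$ verbatim with $U^1(a_j)$ replaced by $U^1(a_\infty)$: one again derives $x_{m_2+l_2}\in\bigcup_{|r|\le 2(m-1)}U^1(a_r)$ together with $x_{m_2+l_2}\in U^1(a_\infty)$, and~\eqref{eq:U_0U_infty} (which in the $\log m$ construction must be arranged with $|j|\le 2(m-1)+1$, or simply $|j|\le 3$ as written provided the jump numbers are large enough, see (L1')--(L4')) gives the contradiction directly; in fact $(3')$ is easier since no analogue of the $j=2$ subcase is needed. Once $(1')$, $(2')$, $(3')$ are established, the equality $h^*(T)=\log m$ follows from formula~\eqref{Eq:hstarT-IN}: $(1')$ gives an intrinsic IN-tuple of length $m$, so $h^*(T)\ge\log m$, while $(2')$ and $(3')$, combined with Proposition~\ref{P}(a) (or Proposition~\ref{P2}(b)) exactly as in the argument between~\eqref{Eq:approachpi0} and~\eqref{Eq:block-outgap}, rule out any intrinsic IN-tuple of length $m+1$, since every entry of an IN-tuple lies in $A$ by Proposition~\ref{P}(c), an IN-tuple cannot contain $a_\infty$ by $(3')$, and among $\{a_i:i\in\mathbb Z\}$ any two entries differ by at least one shift-translate of a pair $(a_0,a_r)$ with $|r|\ge m$ forbidden by $(2')$ once the tuple has length $\ge m+1$, hence is diagonal-avoiding only up to length $m$. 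The main obstacle is purely bookkeeping: tracking how the error bounds $\pm 1,\pm 2$ in the $\log 2$ computation inflate to $\pm(m-1),\pm 2(m-1)$ and checking that the choice (L1') $w_1^1=3m+2$ and the neighbourhood separation in~\eqref{eq:U_0U_infty} still leave enough room for all the ``impossible position'' arguments to go through; no genuinely new idea is required beyond what is already in Section~\ref{S:X1-T1-log2}.
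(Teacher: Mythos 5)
Your proposal follows the paper's strategy closely for $(1')$, $(3')$, the reduction to $j\ge m$, the $4\times 5$ table, and the Case~1/Case~2 dichotomy, and your remarks about strengthening the disjointness requirement on $U^1(a_\infty)$ and about the inflation of the error bounds match what the paper does. However, there is a genuine gap at the end of Case~1 of $(2')$. You correctly obtain $x_{m_2+l_2}\in\bigcup_{|r|\le 2(m-1)}U^1(a_r)$ from the first computation, and you correctly observe that this alone handles only $j\ge 2m-1$, leaving the nontrivial subcase $m\le j\le 2(m-1)$. But the bound you then assert for the second transported point, $x_{m_3+l_1}\in\bigcup_{|r|\le 2(m-1)}U^1(a_r)$, is again the symmetric interval of half-width $2(m-1)$, and it does \emph{not} contradict $x_{m_3+l_1}\in U^1(a_j)$ when $m\le j\le 2(m-1)$. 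Your closing clause ``once one uses that $l_1$-position and $l_2$-position differ by a controlled amount'' gestures at the needed refinement, but you never carry it out, and the interval you actually write down is exactly what one gets \emph{without} that refinement.

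What is missing is the asymmetric sharpening recorded in~\eqref{eq:l2-l1-logm}: from the identity $(s_{i'}(c)-s_{i'}(d))-(s_i(c)-s_i(d))=j\ge m$ together with $s_{i'}(c)-s_{i'}(d)\le m-1$, one must first deduce $s_i(c)-s_i(d)\in\{-(m-1),\dots,-1\}$. Only after that pinning, substituting into
\[
m_3+l_1=r''+n_d^k-s_{i''}(d)+\bigl((s_{i''}(d)-s_{i''}(c))+(s_i(c)-s_i(d))\bigr),
\]
does the bracketed increment lie in the asymmetric interval $[-2(m-1),\,m-2]$, whose \emph{upper} endpoint $m-2<m\le j$ finally forces the contradiction with $x_{m_3+l_1}\in U^1(a_j)$. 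Without this one-sided constraint on $s_i(c)-s_i(d)$ the range stays symmetric, the ``impossible position'' step does not close, and the proof of $(2')$ is incomplete.
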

\begin{proof}
The whole proof is almost the  same as that of Theorem~\ref{final-thm}. However, now $s_l(c) \in \{0,1,2,\dots,m-1\}$ and so we need
to modify the proof of $(2)$ Case $1$ (in a similar way we have modified the proof of Lemma~\ref{L: distance of two 0}(2) to get the proof of
Lemma~\ref{L: distance of two 0-logm}(2)). We are going to describe this modification.

(2') Just as in the proof of Theorem~\ref{final-thm}(2), it is sufficient to prove this for $j \geq m$. So fix $j\geq m$ and assume, on the contrary, that $(a_0,a_j)$ is an IN-pair. It follows that $(U^1(a_{0}),U^1(a_{j}))$ has an independence set of times of length $5$, i.e. there are pairwise distinct positive integers $l_{-1}<l_0 < l_1 < l_2 <l_3$ such that $\{l_{-1},l_0, l_1, l_2, l_3\}$ is an independence set of times for $(U^1(a_{0}),U^1(a_{j}))$. Then, in particular, there exist pairwise distinct $m_1, m_2, m_3, m_4 \in \N$ such that (notice that in the underlined inclusions we have $a_j$ rather than $a_0$)
	\begin{equation}\label{eq:r1-logm}
	x_{m_1+l_{-1}} \in U^1(a_{0}), \, x_{m_1+l_0} \in U^1(a_{0}), \, x_{m_1+l_1} \in U^1(a_{0}),\ x_{m_1+l_2} \in U^1(a_{0}), \ x_{m_1+l_3} \in U^1(a_{0}),
	\end{equation}
	\begin{equation}\label{eq:r2-logm}
	x_{m_2+l_{-1}} \in U^1(a_{0}), \, x_{m_2+l_0} \in U^1(a_{0}), \, x_{m_2+l_1} \in U^1(a_{0}),\ \underline{x_{m_2+l_2} \in U^1(a_{j})}, \ x_{m_2+l_3} \in U^1(a_{0}),
	\end{equation}
	\begin{equation}\label{eq:r3-logm}
	x_{m_3+l_{-1}} \in U^1(a_{0}), \, x_{m_3+l_0} \in U^1(a_{0}), \, \underline{x_{m_3+l_1} \in U^1(a_{j})},\ x_{m_3+l_2} \in U^1(a_{0}), \ \ x_{m_3+l_3} \in U^1(a_{0}),
	\end{equation}
	\begin{equation}\label{eq:r4-logm}
	x_{m_4+l_{-1}} \in U^1(a_{0}), \, \underline{x_{m_4+l_0} \in U^1(a_{j})}, \, x_{m_4+l_1} \in U^1(a_{0}),\ x_{m_4+l_2} \in U^1(a_{0}), \ \ x_{m_4+l_3} \in U^1(a_{0}).
	\end{equation}

   The same arguments as in the proof of Theorem~\ref{final-thm} show that the eight points in the first and last columns are in the same $B(k)$, hence all the points in this $4\times 5$ table are in $B(k)$. Further, in the right-upper $3\times 3$ sub-table, all three points in the first row are in the same part, or they are in three different parts. Then we again consider two cases. Only the proof of Case 1 needs a modification, so we discuss only this case.

   \emph{Case 1: $x_{m_1+l_1}, x_{m_1+l_2},  x_{m_1+l_3}
	\in \tilde{P}(k,i)$ for some $i$.}
	
	In this case, by Lemma~\ref{L:location-logm}$(1)$, we have
	\begin{alignat}{6}
		x_{m_1+l_1}, & \,\, x_{m_1+l_2},  & \,\, x_{m_1+l_3} & \in  & \,\, & \tilde{P}(k,i)   \label{3cases-1-logm} \\
		x_{m_2+l_1}, &                    & \,\, x_{m_2+l_3} & \in  & \,\, & \tilde{P}(k,i')  \label{3cases-2-logm} \\
	                 & \,\, x_{m_3+l_2},  & \,\, x_{m_3+l_3} & \in  & \,\, & \tilde{P}(k,i'') \label{3cases-3-logm}
	\end{alignat}
	where $i,i',i''$ are pairwise different (note that all these seven points are in $U^1(a_0)$).
	According to Lemma~\ref{L:location-logm}$(4)$,  $T^{m_2+l_1}x_0$ and
	$T^{m_1+l_1}x_0$ are in the ``similar positions", i.e.
	$$
	x_{m_1+l_1}= x_{r+n^k_{d}-s_{i}(d)} \quad \text{and} \quad x_{m_2+l_1}= x_{r'+n^k_{d}-s_{i'}(d)}
	$$
	for some $0\leq d\leq k$ (here $x_r$ and $x_{r'}$ are the first points of the pieces $P(k,i)$ and $P(k,i')$, and $s_i$ and $s_{i'}$ are the corresponding functions in $F(k)$). Further, since $x_{m_1+l_2} \in \tilde{P}(k,i)$, we have
	\begin{equation}\label{eq:m1l2-logm}
	x_{m_1+l_2}= x_{r+n^k_{c}-s_{i}(c)}
	\end{equation}
    for some $0\leq c \leq k$ (here $d < c$ and so $n_d^k<n_c^k$, because $m_1+l_1 < m_1+l_2$, but we do not use this property). Then
	\begin{equation}\label{Eq:l2-l1}
	l_2-l_1 = m_1+l_2 - (m_1 +l_1) =n_c^k-n_d^k-(s_i(c)-s_i(d)).
	\end{equation}
We are interested in the point $x_{m_2+l_2}$, so let us compute (here finally the modifications start)
	\begin{equation}\label{lastbut1-logm}
	\begin{split}
		m_2+l_2 & =   m_2+l_1+(l_2-l_1)= r'+n^k_{d}-s_{i'}(d) + (l_2-l_1) \\
		        & =   r'+n^k_c-s_{i'}(d)-(s_i(c)-s_i(d))  \\
		        & =   r'+n^k_c-s_{i'}(c)+(s_{i'}(c)-s_{i'}(d))-(s_i(c)-s_i(d))   \\
		        & \in \{r'+n^k_c-s_{i'}(c)+t: \, -2(m-1)\le t \le 2(m-1)\}.
	\end{split}	
	\end{equation}
	Note that $x_{r'+n^k_c} \in U^{1}(a_{s_{i'}(c)})$, whence
	$x_{r'+n^k_c-s_{i'}(c)} \in U^{1}(a_0)$. Therefore
	\begin{equation}\label{eq:|j|>2 not-logm}
	x_{m_2+l_2} \in \bigcup_{t=-2(m-1)}^{2(m-1)} U^1(a_{t}).
	\end{equation}
	On the other hand, by ~\eqref{eq:r2-logm}, $x_{m_2+l_2} \in U^1(a_j)$ and so $-2(m-1)\le j \le 2(m-1)$. Since in (2') we assume $j\geq m$, we in fact have that $j$ is an integer in the interval $[m, 2(m-1)]$.	

    Recall that by \eqref{eq:r2-logm}, $x_{m_2+l_2} \in U^1(a_j)$.	
	Then, using~\eqref{lastbut1-logm} and the fact that $x_{r'+n^k_c-s_{i'}(c)} \in U^1(a_0)$, we get
	$$(s_{i'}(c)-s_{i'}(d))-(s_i(c)-s_i(d))=j \ge m.$$
    However, we have
    $$
    s_{i'}(c)-s_{i'}(d), \, \, s_i(c)-s_i(d) \in \{-(m-1),-(m-1)+1,\dots, m-1 \}
    $$
    and so
	\begin{equation}\label{eq:l2-l1-logm}
	s_i(c)-s_i(d) \in \{-(m-1),-(m-1)+1,\dots,-1\}.
	\end{equation}
	Let $x_{r''}$ be the first point of $P(k,i'')$ and let $s_{i''}$ be the function from $F(k)$ which corresponds to $P(k,i'')$.
	Then by the construction of each pieces, see \eqref{eq:Pkl-logm} and \eqref{eq: pieces requirment-logm}, we have
	\begin{equation}\label{eq:xr-in-Ua0-logm}
	x_{r''+n^k_u-s_{i''}(u)} \in U^{1}(a_0) \text{ \ \ for \ \ }  0 \le u 	\le k.
	\end{equation}
	Recall that, by~\eqref{eq:r1-logm} and ~\eqref{eq:r3-logm},
	$$
	x_{m_1+l_2}, \, x_{m_1+l_3} \in  U^{1}(a_0) \text{ \ \ and \ \ }
	x_{m_3+l_2},  \, x_{m_3+l_3} \in U^{1}(a_0).
	$$
	Therefore, by Lemma~\ref{L:location-logm}$(4)$, $x_{m_1+l_2}$ and
	$x_{m_3+l_2}$ are in the ``similar positions". In view of ~\eqref{eq:m1l2-logm},
	$$
	x_{m_3+l_2}=x_{r''+n_c^k-s_{i''}(c)}.
	$$
	Using this and~\eqref{Eq:l2-l1} and~\eqref{eq:l2-l1-logm} and taking into account that the values of $s_{i''}$ are in $\{0,1,\dots,m-1\}$, we have
	\begin{equation*}
	\begin{split}
	m_3+l_1&=m_3+l_2-(l_2-l_1)\\
	&=r''+n_c^{k}-s_{i''}(c)-(n_c^k-n_d^k)+(s_i(c)-s_i(d))\\
	&=r''+n_d^{k}-s_{i''}(d)+(s_{i''}(d)-s_{i''}(c))+(s_i(c)-s_i(d))\\
	&\in \{r''+n^k_d-s_{i''}(d)+t: -2(m-1)\le t \le m-2\}.
	\end{split}
	\end{equation*}
	Since $x_{r''+n^k_d-s_{i''}(d)} \in U^1(a_0)$ by~\eqref{eq:xr-in-Ua0-logm}, it follows that
	$$
	x_{m_3+l_1} \in U^{1}(a_{-2(m-1)})\cup U^{1}(a_{-2(m-1)+1})\cup \dots \cup U^{1}(a_{m-2}).
	$$
	However, we have $j \ge m$ and so this contradicts the fact that, by~\eqref{eq:r3-logm}, $x_{m_3+l_1} \in U^{1}(a_j)$.
\end{proof}


\subsection{A continuum $X$ with $S(X)=\{0,\log m\}, m\ge 3$}\label{SS:cont logm}
To get a continuum $X$ with $S(X)=\{0,\log 2\}$ in Section~\ref{S:cont zero-log2}, we used an auxiliary system $(X_1,T)$ from Section~\ref{S:X1-T1-log2}. Now, to get a continuum $X$ with $S(X)=\{0,\log m\}$ for a given $m\ge 3$, we use the same construction, but we replace the auxiliary system $(X_1,T)$ by the system from Theorem~\ref{final-thm-logm}, with $h^*(T)=\log m$. By joining the consecutive points of the trajectory $x_0,x_1,\dots$ by the continua $D_m$ we then get the required continuum $X$ with $S(X)=\{0,\log m\}$. Indeed, to prove the following proposition, it is basically sufficient to repeat the proof of Proposition~\ref{P:zero log2} word by word.

\begin{prop}\label{P:zero logm}
	Let $m\geq 3$. For the one-dimensional continuum $X$ described just above we have
	$S(X)=\{0,\log m\}$. Moreover, if $F: X\to X$ is a continuous map then $h^*(F) = \log m$ if $F$ is 	non-constant and has no fixed point in the snake, otherwise $h^*(F) = 0$.
\end{prop}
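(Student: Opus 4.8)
The statement to prove is Proposition~\ref{P:zero logm}: for the continuum $X$ built from the auxiliary system $(X_1,T)$ of Theorem~\ref{final-thm-logm} (so $h^*(T)=\log m$) by joining consecutive points $x_j,x_{j+1}$ of the trajectory by the Cook-continua chains $D(j)$ exactly as in Section~\ref{S:cont zero-log2}, one has $S(X)=\{0,\log m\}$, and in fact every continuous $F\colon X\to X$ has $h^*(F)=\log m$ if $F$ is nonconstant without a fixed point in the snake, and $h^*(F)=0$ otherwise. The whole point is that the construction of $X$ from $(X_1,T)$ is literally the same as in Sections~\ref{S:cont zero-log2}; only the internal combinatorics of the trajectory (pieces with $m^{k+1}$ choices rather than $2^{k+1}$, independence tuples of length $m$) has changed, and that changed only the computation of $h^*(T)$, which is now $\log m$ by Theorem~\ref{final-thm-logm}. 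So the plan is to point out that every lemma between Lemma~\ref{L:Xcpct} and Proposition~\ref{P:zero log2} transfers verbatim (or with the same trivial substitutions already indicated in Section~\ref{S:cont zero-log2}), and then to run the argument of Proposition~\ref{P:zero log2} with $\log 2$ replaced by $\log m$.

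First I would record that $X$ is again a one-dimensional continuum: the proof of Lemma~\ref{L:Xcpct} used only the closedness conditions of Table~\ref{T:close to head} and \eqref{Eq:approachpi0}, which are unchanged. Next, the analogue of the map $G$ of Subsection~\ref{SS:defG} is defined exactly as before ($G|_{X_1}=T$; $G|_{\mathscr H_k}=Z_{k,k+1}|_{\mathscr H_k}$; $G|_{D(m)}$ the unique surjection $D(m)\to D(m+1)$), and Lemmas~\ref{L:Gcont} and~\ref{L:OmegaG} go through with no change at all, since they never used anything about the value of $h^*(T)$. For the key entropy computation, the analogue of Lemma~\ref{L:G-log2-2case} holds with the same proof: it reduces the statement ``$(a_i,a_j)$ (resp.\ $(a_i,a_\infty)$) is not an IN-pair for $(X,G)$'' to the corresponding statement for $(X_1,T)$, which for $|j-i|\ge m$ is now Theorem~\ref{final-thm-logm}(2') (resp.\ (3')). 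The pigeon-hole ``Claim/Claim$'$'' machinery (splitting the index set into $\mathfrak S_1,\mathfrak S_{2ac},\mathfrak S_{2ad},\mathfrak S_{2b}$ and the four-case analysis) is insensitive to the alphabet size; one only replaces $\{i,j\}$-valued functions by the same $\{i,j\}$- or $\{i,\infty\}$-valued functions (we still only test two neighbourhoods in each IN-pair) and invokes Theorem~\ref{final-thm-logm} instead of Theorem~\ref{final-thm}. Consequently the analogue of Lemma~\ref{L:existsG-log2} gives $h^*(G)=\log m$ and $h^*(G|_{S^{\Join}_r})=\log m$ for every $r$, since $(a_0,\dots,a_{m-1})$ is an IN-tuple by (1') and there is no IN-triple $(a_i,a_j,a_k)$ with pairwise distinct non-$\infty$ entries whose gaps are not too small — but if some gap were $\ge m$ we contradict (2'), and the case of all small gaps is exactly the structure of the $m$-IN-tuple $(a_0,\dots,a_{m-1})$, which caps $h^*$ at $\log m$.

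Then I would observe that the entire ``rigidity of continuous selfmaps'' package of Subsections~\ref{SS:cont-bricks}, \ref{SS:induced} and~\ref{SS:dynamicsF} — Lemma~\ref{L:map-log2}, Corollaries~\ref{C:union}, \ref{C:brick-brick}, \ref{C:hull}, the analogues of Lemma~\ref{L:const 1}, Lemma~\ref{L:metalemma}, Lemma~\ref{L:const 2-log 2}, Lemma~\ref{L:ord-pres}, Lemma~\ref{L:mapDD}, Lemmas~\ref{L:antiwell2}–\ref{L:existsFhat}, Lemma~\ref{L:phionhead}, Lemmas~\ref{L:how br to br}–\ref{L:Darboux}, Corollary~\ref{C:arrow}, Lemma~\ref{L:FixFhat-log2}, Corollary~\ref{C:FixF-log2}, the analogues of Lemmas~\ref{L:left}, \ref{L:right}, \ref{L:jumptoanotherblock}, Lemma~\ref{L:fromheadtosnake}, Lemma~\ref{L:jumps-log2} — depends only on the planar Cook-continuum structure of the bricks, the head $\mathscr A_0$, the snake geometry and the map $G$ (via Lemma~\ref{L:existsG-log2}), none of which changed; so every one of these holds verbatim for the present $X$. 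Finally, to conclude, I would copy the proof of Proposition~\ref{P:zero log2} with $\log 2\rightsquigarrow\log m$: if $F$ is constant, $h^*(F)=0$; if $F$ is nonconstant with a fixed point in the snake, Corollary~\ref{C:FixF-log2} forces $\Fix(F)$ to be of the stated form and Lemmas~\ref{L:left}, \ref{L:right} give $F^N(X)=\Fix(F)$ for some $N$, hence $h^*(F)=0$ by Proposition~\ref{P2}(a); if $F$ is nonconstant with no fixed point in the snake, Lemma~\ref{L:jumps-log2}(c) gives $F|_{S^{\Join}_r}=G^N|_{S^{\Join}_r}$, and then $h^*(F)\ge h^*(F|_{S^{\Join}_r})=h^*(G^N|_{S^{\Join}_r})=h^*(G|_{S^{\Join}_r})=\log m$, while for some $M$ one has $F^M(X)\subseteq S^{\Join}_r$, so $h^*(F)=h^*(F|_{F^M(X)})\le h^*(F|_{S^{\Join}_r})=\log m$ by Proposition~\ref{P2}(a); thus $h^*(F)\in\{0,\log m\}$ in all cases, and both values are attained (constant maps give $0$, $G$ gives $\log m$), so $S(X)=\{0,\log m\}$.

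**Main obstacle.** There is no genuinely new obstacle — the section is explicitly written as ``an analogue of Step 2''. The only thing that requires care is making sure the two combinatorial theorems really do transfer, i.e.\ that the modifications already carried out in Subsection~\ref{S:X1-T1-logm} (the new $F(k)$, the enlarged ``almost coincide'' tolerance $m-1$, Lemmas~\ref{L: distance of two 0-logm}–\ref{L:location-logm}, Theorem~\ref{final-thm-logm}) are exactly what the proof of Lemma~\ref{L:G-log2-2case} needs as input; once that is granted, the appeal to Proposition~\ref{P:zero log2}'s argument is mechanical. So in the write-up I would spend essentially all the space on the sentence ``every lemma of Sections~\ref{S:cont zero-log2} transfers with $\log 2$ replaced by $\log m$, because $X$ and $G$ are built in the same way and $h^*(T)=\log m$ by Theorem~\ref{final-thm-logm}'' and then reproduce the short final argument above.
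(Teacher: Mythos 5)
Your proposal is correct and matches the paper's approach: the paper itself states that Proposition~\ref{P:zero logm} follows by repeating the proof of Proposition~\ref{P:zero log2} word for word, with the auxiliary system $(X_1,T)$ replaced by that of Theorem~\ref{final-thm-logm}. A minor wording slip in your exposition (you write that there is no ``IN-triple'' with large gaps, whereas what the argument actually excludes via the analogue of Lemma~\ref{L:G-log2-2case} is any intrinsic IN-tuple of length $m+1$) does not affect the substance, and the rest transfers exactly as you describe.
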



\section{Proof of Main Theorem. Analogues in other settings}\label{S:proof}
 After a long preparation in previous sections, we are finally ready to prove our Main Theorem (it is repeated below as Theorem~\ref{T:main again}).
We also prove that the same conclusion holds when considering homeomorphisms rather than continuous maps, see Theorem~\ref{T:main-homeo} (Theorem B). The problem is addressed also for \emph{group actions}. In Theorem~\ref{T:action} (Theorem C) we show that the result works for the actions (by homeomorphisms) of the finitely generated groups which have $\mathbb Z$ as a quotient group. Without the assumption that the group is finitely generated, the result works with possible exceptions of some sets $A$. In full generality this problem remains open. Analogous results for \emph{semigroup actions} (by continuous maps) are also true, see Theorem~\ref{T:semigroup} (Theorem D).

\subsection{Proof of Main Theorem}

We already know, from Propositions~\ref{P:zero inf},~\ref{P:zero log2} and~\ref{P:zero logm},
that if $A$ has just two elements, one of them being of course zero, then there exists a
one-dimensional continuum $X_A \subseteq \mathbb R^3$ with $S(X_A)=A$. Moroever, from our
constructions of these continua $X_A$ (and from our results on them) we know the
following facts.
\begin{enumerate}
	\item [(F1)] If $A=\{0,\infty\}$ or $A=\{0, \log k\}$ for $k\in \{2,3,\dots\}$,
	the continuum $X_A$ consists of two parts. The first one (called a head) is a
	planar continuum. The second one (called a snake) is obtained from just one
	sequence $x^A_1, x^A_2, \dots$ (which approaches the first part) by joining any
	two consecutive points of the sequence by an infinite chain of appropriately
	chosen Cook continua. In what follows, the point
	$x_1^A$ will be called the \emph{first point of $X_A$}.
	\item [(F2)] If $T_A$ is a continuous selfmap of such a continuum $X_A$ and the
	first point $x_1^A$ of $X_A$ is fixed for $T_A$ then $h^*(T_A) = 0$ (we have in
	fact proved that if $T_A$ has a fixed point in the snake then $h^*(T_A) = 0$).
	\item [(F3)] In the construction of each of these sets $X_A$ we have used
	copies of some pairwise disjoint nondegenerate subcontinua of a planar Cook
	continuum $\mathscr Q$. Since we have only a countable family of such sets $X_A$
	(note that so far we have considered only sets $A$ with cardinality $2$), we may
	assume that an infinite family of pairwise disjoint nondegenerate subcontinua of
	$\mathscr Q$ was split into infinitely many infinite subfamilies and in the
	constructions of different sets $X_A$ we used different subfamilies. So we may
	assume that if $Q_1$ and $Q_2$ are subcontinua of $Q$ such that their copies are
	used in the constructions of $X_{A_1}$ and $X_{A_2}$ with $A_1\neq A_2$ then
	$Q_1$ and $Q_2$ are disjoint.
\end{enumerate}
The last fact is crucial for the following lemma.

\begin{lem}\label{L:FXA1-point}
	Let $A_1=\{0, \log k_1\}$ and $A_2=\{0, \log k_2\}$ for different $k_1, k_2 \in
	\N^*\setminus \{1\}$. Suppose that $X_{A_1}$ and $X_{A_2}$, with the first
	elements $x_1^{A_1}$ and $x_1^{A_2}$, respectively, are subspaces of a metric
	space $Y$ such that $X_{A_1} \setminus \{x_1^{A_1}\}$ and $X_{A_2} \setminus
	\{x_1^{A_2}\}$ are disjoint open subsets of $Y$. Let $F: Y\to Y$ be a continuous
	map and $F(X_{A_1}) \cap (X_{A_2} \setminus \{x_1^{A_2}\})\neq \emptyset$. Then
	$F(X_{A_1})$ is a singleton.
\end{lem}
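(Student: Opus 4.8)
The key structural fact is (F3): the building bricks (Cook subcontinua of a common planar Cook continuum $\mathscr Q$) used in $X_{A_1}$ and those used in $X_{A_2}$ are pairwise disjoint, and in particular no brick in $X_{A_1}$ is homeomorphic to any brick in $X_{A_2}$, since any continuous map between disjoint nondegenerate subcontinua of $\mathscr Q$ is constant (Lemma~\ref{L:Cook-properties}, Lemma~\ref{L:Cook-family}). I would first record this consequence: if $B_1\subseteq X_{A_1}$ and $B_2\subseteq X_{A_2}$ are bricks (including the heads, which are either single Cook continua or necklaces of Cook continua none of which is used in the other space), then every continuous map $B_1\to B_2$ is constant. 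This is exactly the cross-space analogue of Lemma~\ref{L:Kim to Kjn}(b)(c)(d).

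\emph{Step 1: reduce to bricks.} Let $F(X_{A_1})$ meet $X_{A_2}\setminus\{x_1^{A_2}\}$ in a point $z$. Since $X_{A_1}$ is the closure of its snake (or, in the $\mathscr A_0$ case, contains a set dense-up-to-the-head), $X_{A_1}$ is covered by countably many bricks $B_1,B_2,\dots$. Suppose for contradiction that $F(X_{A_1})$ is nondegenerate; it is then a nondegenerate subcontinuum of $Y$. I would aim to show $F(X_{A_1})\subseteq X_{A_2}$ and then derive a contradiction inside $X_{A_2}$.

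\emph{Step 2: the image stays in $X_{A_2}$, in fact in its snake.} The point $z$ lies in the open set $X_{A_2}\setminus\{x_1^{A_2}\}$ which is a union of open brick-interiors and turning-point cut points; some brick $B_j$ of $X_{A_1}$ has $F(B_j)$ meeting this open set in more than one point (a cardinality argument as in Corollary~\ref{C:union}, since countably many singletons cannot cover a nondegenerate continuum). By the cross-space rigidity from Step~0, a brick of $X_{A_1}$ cannot be mapped onto a brick of $X_{A_2}$; hence $F(B_j)$ cannot be a nondegenerate subcontinuum of any brick of $X_{A_2}$ either (a nondegenerate image would, by the analogue of Lemma~\ref{L:map}/Lemma~\ref{L:map-log2} applied to the quadrilateral/monotone-retraction structure of $X_{A_2}$, force $B_j$ to be homeomorphic to a subcontinuum of a brick of $X_{A_2}$, contradicting disjointness). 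This already contradicts the assumption that $F(B_j)$ meets $X_{A_2}\setminus\{x_1^{A_2}\}$ in more than one point, unless $F(B_j)$ is forced to be a singleton — which is the conclusion we want for that single brick. To upgrade this to \emph{all} of $X_{A_1}$, I would run the "metalemma'' propagation argument (Lemma~\ref{L:metalemma} together with Lemma~\ref{L:const 1}-style singleton-propagation): once one brick of $X_{A_1}$ is sent to a point $z'\in X_{A_2}\setminus\{x_1^{A_2}\}$, continuity at the common points of consecutive bricks and at the accumulation points forces neighbouring bricks, then whole $D(m)$'s, then all bricks of $X_{A_1}$, to be sent to the same point $z'$; since $X_{A_1}$ is the closure of its snake (resp.\ the head lies in the closure of the snake, as in Lemma~\ref{L:const 2}/Lemma~\ref{L:const 2-log 2}), $F(X_{A_1})=\{z'\}$, contradicting nondegeneracy. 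Hence $F(X_{A_1})$ is a singleton.

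\emph{The main obstacle} is Step~2: making the "no brick of $X_{A_1}$ maps nondegenerately into a brick of $X_{A_2}$'' statement airtight in the $\log m$ case, where the head $\mathscr A_0$ of $X_{A_2}$ is a necklace of Cook continua, so the image could a priori spread across several $\mathscr H_k$'s; but since $z\in X_{A_2}\setminus\{x_1^{A_2}\}$ lies in the \emph{snake} (or, if $x_1^{A_2}$ is taken to be the first point of the snake, the head is irrelevant here anyway), only the snake-brick analysis is needed, and the head of $X_{A_2}$ enters only through the singleton-propagation step, which is handled exactly as in Lemma~\ref{L:const 2-log 2}(a). The rest is bookkeeping: invoking the already-proved structural lemmas of Sections~\ref{S:X} and~\ref{S:cont zero-log2} with $\mathscr K_0$ (resp.\ $\mathscr A_0$) replaced by the head of $X_{A_2}$ and with the cross-space rigidity from Step~0 substituting for intra-space non-homeomorphy of distinct bricks.
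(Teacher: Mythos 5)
Your Step~0 (cross‑space rigidity via the Cook property of $\mathscr Q$ and property (F3)) is exactly the right key ingredient, and your Step~1 reduction matches the paper's. However, Step~2 has a genuine gap. After locating a brick $B_j$ of $X_{A_1}$ with $F(B_j)$ meeting the open set $X_{A_2}\setminus\{x_1^{A_2}\}$ in more than one point, you assert a contradiction "unless $F(B_j)$ is a singleton." But all you have established at that point is that $F(B_j)$ cannot be a nondegenerate subcontinuum of a \emph{single} brick of $X_{A_2}$. Nothing so far rules out that $F(B_j)$ is a nondegenerate continuum spanning several bricks of $X_{A_2}$ (or even leaving $X_{A_2}$), so no contradiction has yet been obtained, and the alleged forced singleton-ness does not follow. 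To close this gap one must descend to a nondegenerate \emph{subcontinuum} $K\subseteq B_j$ such that $F(K)$ is a nondegenerate subcontinuum of one brick $B'$ of $X_{A_2}$ (e.g., by pulling back the brick-interior $(B')^\circ$ and applying Boundary Bumping), after which the Cook property of $\mathscr Q$ gives the contradiction immediately, because $K$ and $F(K)$ are homeomorphic to two disjoint nondegenerate subcontinua of $\mathscr Q$ with one mapped onto the other, contradicting Lemma~\ref{L:Cook-properties}(1). This is precisely the paper's argument, and it finishes the proof right there.

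The subsequent "metalemma propagation" is therefore unnecessary; moreover, as presented it is not directly applicable. Lemma~\ref{L:metalemma}, Lemma~\ref{L:const 1} and Lemma~\ref{L:const 2} concern a continuous \emph{selfmap} of one of the continua $X$ whose bricks interlock in the specific way exploited there; here $F|_{X_{A_1}}$ is a map from $X_{A_1}$ into $Y$, the target bricks belong to a different space built from a disjoint family of Cook continua, and the propagation conditions (P1)--(P5) would need to be re-derived in this cross-space setting. Invoking them as-is is not justified. A smaller point: you appeal to "contradicting disjointness" to preclude a homeomorphism $B_j\to$ (subcontinuum of a brick of $X_{A_2}$); disjointness alone is not an obstruction -- what does the work is the Cook property, which forbids any nonconstant continuous map between two distinct nondegenerate subcontinua of $\mathscr Q$.
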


\begin{proof}
	$F(X_{A_1})$ intersects $X_{A_2} \setminus \{x_1^{A_2}\}$ in a point $p$.
	Suppose, on the contrary, that $F(X_{A_1}) \neq \{p\}$. Then the continuum
	$F(X_{A_1})$ contains a nondegenerate subcontinuum lying in $X_{A_2} \setminus
	\{x_1^{A_2}\}$. This nondegenerate subcontinuum has cardinality $\mathfrak c$, therefore
	there is a brick in $X_{A_1}$ mapped onto some nondegenerate subcontinuum lying
	in $X_{A_2} \setminus \{x_1^{A_2}\}$. It follows that there is a nondegenerate
	subcontinuum in a brick of $X_{A_1}$ which is mapped onto a nondegenerate
	subcontinuum in a brick of $X_{A_2}$. However, the Cook continuum $Q$ contains
	disjoint homeomorphic copies of these two subcontinua, which contradicts
	Lemma~\ref{L:Cook-properties}.
\end{proof}

We are ready to prove our ultimate result announced in Introduction as Theorem A (Main Theorem).

\begin{thm}[{\bf Main Theorem}]\label{T:main again}
	For every set $\{0\} \subseteq A \subseteq \log \mathbb N^*$ there exists a
    one-dimensional con\-ti\-nuum $X_A\subseteq \mathbb R^3$ with $S(X_A) = A$.	
\end{thm}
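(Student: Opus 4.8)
The plan is to build $X_A$, for $A$ with $|A|\ge 3$, as a ``flower'' obtained by gluing together, at a single common point, copies of the already-constructed two-element continua $X_{A(k)}$ where $A(k)=\{0,\log k\}$ and $k$ ranges over those elements of $\N^*\setminus\{1\}$ for which $\log k\in A$. The cases $|A|\le 2$ are already done: $A=\{0\}$ is trivial (take a point, or an interval retracting to a point --- a singleton works), and $A$ of cardinality two is Propositions~\ref{P:zero inf},~\ref{P:zero log2} and~\ref{P:zero logm}. So from now on assume $A$ has at least three elements.

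First I would fix, using (F3) and Lemma~\ref{L:Cook-family}, a single planar Cook continuum $\mathscr Q$ and split an infinite family of pairwise disjoint nondegenerate subcontinua of $\mathscr Q$ into infinitely many infinite subfamilies $\mathscr F_\infty, \mathscr F_2, \mathscr F_3,\dots$, one for each $k\in\N^*\setminus\{1\}$, and construct $X_{A(k)}$ using only copies of continua from $\mathscr F_k$. Then I would choose, in $\mathbb R^3$, a ``central'' point $c$ and pairwise disjoint (except at $c$) homeomorphic copies of the continua $X_{A(k)}$, $\log k\in A$, placed so that $c$ is the first point $x_1^{A(k)}$ of each copy and so that $\operatorname{diam} X_{A(k)}\to 0$ along the (at most countable) index set --- this last requirement is only needed when $A$ is infinite, to guarantee that the union is compact; when $A$ is finite it is automatic. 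Define $X_A$ to be the union of these copies. It is a continuum: it is compact (a finite union of continua, or a union of shrinking continua all meeting at $c$, is compact), connected (each petal is connected and all contain $c$), and one-dimensional (countable union of closed one-dimensional sets, cf.\ \cite[Theorem III.2]{HW}). It embeds in $\mathbb R^3$ by construction, and each petal $X_{A(k)}\setminus\{c\}$ is an open subset of $X_A$ disjoint from the others.

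Next comes the computation of $S(X_A)$. For the inclusion $A\subseteq S(X_A)$: for each $k$ with $\log k\in A$, the monotone-type retraction of $X_A$ onto the petal $X_{A(k)}$ (sending everything outside that petal to $c$) shows $S(X_{A(k)})\subseteq S(X_A)$ by Proposition~\ref{P2}(a) together with the fact that a retract-preimage of a map with given $h^*$ realizes that $h^*$; hence $\log k = h^*(T_{A(k)})\in S(X_A)$ for the appropriate $T_{A(k)}$, and $0\in S(X_A)$ trivially. For the reverse inclusion $S(X_A)\subseteq A$: let $F\colon X_A\to X_A$ be continuous; I must show $h^*(F)\in A$. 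If $F$ is constant, $h^*(F)=0\in A$. Otherwise, the key structural fact is that $F$ permutes the petals in a controlled way. Here is where Lemma~\ref{L:FXA1-point} is the workhorse: if $F(X_{A(k_1)})$ meets $X_{A(k_2)}\setminus\{c\}$ for $k_1\ne k_2$, then $F(X_{A(k_1)})$ is a singleton. So for each petal, either $F$ collapses it to a point, or $F$ maps it into its own closure $X_{A(k)}$ (using also that $F$ must fix or move $c$: since $c$ lies in every petal, continuity forces $F(c)\in X_{A(k_0)}$ for some $k_0$, and by the same lemma applied to the other petals, all petals except possibly $X_{A(k_0)}$ are either collapsed or mapped into $X_{A(k_0)}$ --- but a nondegenerate image in $X_{A(k_0)}$ from a petal $X_{A(k)}$, $k\ne k_0$, is again forbidden by the lemma, so those petals are all collapsed to points lying in $X_{A(k_0)}$). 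Thus $F(X_A)\subseteq X_{A(k_0)}$, and moreover one checks $F(c)=c$: indeed $F(c)$ is the common image of all the collapsed petals and of the first point of $X_{A(k_0)}$, and (F2) forces that if the first point of $X_{A(k_0)}$ is not fixed then $h^*(F|_{X_{A(k_0)}})=0$; in the fixed case $F$ restricts to a selfmap of $X_{A(k_0)}$ with first point fixed. Either way, by Proposition~\ref{P2}(a), $h^*(F)=h^*(F|_{F(X_A)})=h^*(F|_{X_{A(k_0)}})\in S(X_{A(k_0)})=A(k_0)\subseteq A$ (if $c$ is fixed) or $h^*(F)=0\in A$ (if not). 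Hence $S(X_A)\subseteq A$, and combining, $S(X_A)=A$.

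I expect the main obstacle to be the bookkeeping in the case analysis showing $F(X_A)\subseteq X_{A(k_0)}$ and $F(c)=c$ --- in particular handling the possibility that $F(c)$ lands at a non-first, non-collapsed point of some petal, and ruling out that a petal is mapped across a ``seam'' in a way not covered directly by Lemma~\ref{L:FXA1-point} (e.g.\ $F(X_{A(k)})$ meeting several petals, or meeting $c$ and a petal interior simultaneously). These are all resolved by repeated application of Lemma~\ref{L:FXA1-point} together with the observation that each petal is a continuum and its image a continuum, so a nondegenerate image forces a nondegenerate sub-brick-to-sub-brick map, contradicting disjointness of the Cook-subcontinua families (F3) via Lemma~\ref{L:Cook-properties}; but assembling these into a clean argument is the part requiring care. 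The compactness argument when $A$ is infinite, requiring $\operatorname{diam} X_{A(k)}\to 0$, is routine once one notes we have freedom in the embedding.
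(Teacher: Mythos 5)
Your construction of $X_A$ as a flower of petals $X_{A(k)}$ ($\log k\in A$) glued at a common first point $c$, with diameters shrinking to zero, is exactly the paper's construction, and your derivation of $A\subseteq S(X_A)$ via retraction onto each petal is also the same.  The gap is in the reverse inclusion, specifically in the claim ``Thus $F(X_A)\subseteq X_{A(k_0)}$.''  Lemma~\ref{L:FXA1-point} yields this \emph{only} when $F(c)\neq c$: then $F(c)$ lies in exactly one set $X_{A(k_0)}\setminus\{c\}$, and since every petal's image contains $F(c)$, all petals $X_{A(k)}$ with $k\neq k_0$ collapse to $\{F(c)\}$, while $F(X_{A(k_0)})$ is either a singleton or lands inside $X_{A(k_0)}$.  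But when $F(c)=c$, the lemma imposes no constraint tying the images of different petals to a single $k_0$: each petal may be $F$-invariant with a nondegenerate image, and then $F(X_A)$ meets several petals in nondegenerate continua, so no reduction to a single $X_{A(k_0)}$ is possible.  Your sentence ``(F2) forces that if the first point of $X_{A(k_0)}$ is not fixed then $h^*(F|_{X_{A(k_0)}})=0$'' is also not what (F2) says (it is stated the other way round), and the attempted derivation of $F(c)=c$ is circular.

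What is missing in the $F(c)=c$ case is a separate argument showing $h^*(F)=0$ directly, and this is precisely what the paper supplies.  In this case every petal is $F$-invariant and each petal has its first point fixed, so (F2) gives $h^*(F|_{X_{A(k)}})=0$ for every $k$; one must then still show that $F$ itself has no nontrivial IN-pair.  The point is that the sets $X_{A(k)}\setminus\{c\}$ are open, pairwise disjoint, and ``one-way'' under $F$ (an orbit starting outside $X_{A(k)}$ can at best reach the fixed point $c$ but never enters $X_{A(k)}\setminus\{c\}$); hence any IN-pair would be forced to lie entirely within a single petal $X_{A(k)}$, and would therefore be an IN-pair for $F|_{X_{A(k)}}$, contradicting (F2).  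The paper packages this as a dichotomy on the index set $I$ into $I_{\rm inv}$ and $I_{\rm not}$ (those $k$ for which $X_{A(k)}$ is or is not $F$-invariant) and reduces recursively to the all-invariant case, rather than splitting on whether $F(c)=c$; your split is acceptable in principle, but you must add the localization-of-IN-pairs argument to handle the fixed-centre branch.  One further small point: for $A=\{0\}$ a singleton is only zero-dimensional and thus does not satisfy the theorem's requirement; what is needed is a nondegenerate rigid continuum (e.g.\ a planar Cook continuum), which is automatically one-dimensional.
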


\begin{proof}
	The case $A=\{0\}$ is trivial, just choose any rigid continuum in $\mathbb
	R^3$. If $A$ has cardinality $2$, the existence of $X_A$ has already been
	proved.
	
	So, from now on let $A$ have cardinality at least $3$. We are going to describe
	the continuum $X_A$ in this case.
	For every $k\in \N^*\setminus \{1\}$ put $A(k) = \{0, \log k\}$ and consider
	the continuum $X_{A(k)}$ we have already constructed in the previous sections.
	Now choose, in $\mathbb R^3$, homeomorphic copies of these continua $X_{A(k)}$
	(without changing the notation, i.e. still denoting these copies by $X_{A(k)}$)
	such that their diameters tend to zero and are pairwise disjoint except of the
	common first point $x_1^{A(2)}= x_1^{A(3)} = \dots = x_1^{A(\infty)}=:z$. What
	we get is a compact space looking like a `flower' with infinitely many smaller
	and smaller `petals' (this will be  our name for the sets $X_{A(k)}$). To obtain
	$X_A$, keep just those petals which correspond to the set $A$, i.e. denote
	$I=\{k\in \N^*\setminus \{1\}: \, \log k \in A\}$ and put	
	$X_A = \bigcup _{k\in I}X_{A(k)}$. This is a flower with finitely many or
	infinitely many petals (there are at least two petals, because we assume that
	$A$ has at least three elements). Since the union of countably many closed
	one-dimensional sets is one-dimensional, also $X_A$ is one-dimensional.
	Clearly, it is a continuum.
	
	We need to show that $S(X_A)= A = \{0\} \cup \{\log k: k\in I\}$. Since for
	each $k\in I$ the petal $X_{A(k)}$ is a retract of $X_A$, we clearly have
	$S(X_A) \supseteq A$. To prove the converse inclusion, fix a continuous map $F:
	X_A \to X_A$ and show that $h^*(F) = 0$ or $\log k$ for some $k\in I$.

	Let $I_{\rm inv}$ and $I_{\rm not}$ be the sets of all $k\in I$ for which
	$X_{A(k)}$ is $F$-invariant or is not $F$-invariant, respectively.
	First assume that $I_{\rm not}=\emptyset$. Then $F(z)=z$. We are going to show
	that in this case $h^*(F)=0$. Suppose on the contrary that for some $x\neq y$ in
	$X_A$, the pair $( x, y )$ is an IN-pair for the whole map $F: X_A
	\to X_A$. Since at least one of the points $x$ and $y$ is different from $z$, we
	may assume that $x\in X_{A(i)}\setminus \{z\}$ (here of course $i\in I_{\rm
		inv}$). Choose open neighbourhoods $U_x$ and $U_y$ (in the topology of $X_A$) of
	$x$ and $y$, respectively, such that $U_x \subseteq X_{A(i)}\setminus \{z\}$.
	Though it is possible that $U_y$ intersects the complement of $X_{A(i)}$, the
	$F$-orbits of points from this complement do not intersect $U_x$. Therefore
	$( x, y )$, an IN-pair for $F$, is an IN-pair even for the
	restriction $F_i$ of $F$ to the set $X_{A(i)}$. Hence $h^*(F_i) > 0$, which
	contradicts the fact (F2).
	
	Now assume that $I_{\rm not} \neq \emptyset$. Denote $X_A^{\rm inv} = \bigcup
	_{i\in I_{\rm inv}} X_{A(i)}$ and $X_A^{\rm not} = \bigcup _{n\in I_{\rm not}}
	X_{A(n)}$. By Lemma~\ref{L:FXA1-point}, $F(X_{A(n)})$ is a singleton whenever
	$n\in I_{\rm not}$. Then, since all the sets $X_{A(n)}$ intersect, $F(X_A^{\rm
		not})$ is a singleton $\{q\}$.  If $q\in A(n_0)$ for some $n_0\in I_{\rm not}$
	then $X_{A(n_0)}$ would be $F$-invariant, a contradiction. Therefore $q\in
	X_A^{\rm inv}$, i.e. $I_{\rm inv} \neq \emptyset$. So, $X_A$ is the union of two
	nonempty sets  $X_A^{\rm inv}$ and $X_A^{\rm not}$ having the point $z$ in
	common, the first of them is $F$-invariant and the second one is mapped by $F$
	to the point $q\in X_A^{\rm inv}$.  By Theorem~\ref{P}(e),
	$h^*(F)=h^*(F_{\rm inv})$ where $F_{\rm inv}$ is the restriction of $F$ to the
	set $X_A^{\rm inv}$. If $I_{\rm inv}$ contains just one element $k$, we have
	$X_A^{\rm inv} = X_{A(k)}$ and then $h^*(F)\in \{0, \log k\}$ and we are done.
	If $I_{\rm inv}$ contains at least two elements, the set $X_A^{\rm inv}$ is a
	flower with at least two petals and by the case considered above (the case
	$I_{\rm not}=\emptyset$) we get that $h^*(F)=0$.
\end{proof}

\subsection{Analogue of Main Theorem for homeomorphisms}\label{S:main-homeo}
As an analogue of the set $S(X)$ from~\eqref{Eq:defSX}, for any compact metric space $X$ put
\[
\Shom (X)=\{h^{*}(T)\colon T\text{ is a homeomorphism } X \rightarrow X \}.
\]
\index{$\Shom (X)$}Our Main Theorem has the following analogue for homeomorphisms (it is in fact Theorem B from Introduction).  The proof needs a construction, due to Hanfeng Li \cite{L}, which did not appear in the proof of Main Theorem.

\begin{thm}[{\bf Theorem B}]\label{T:main-homeo}
For every set $\{0\} \subseteq A \subseteq \log \mathbb N^*$ there exists a one-dimensional continuum $\tilde{X}_A\subseteq \mathbb R^3$ with $\Shom(\tilde{X}_A) = A$.
\end{thm}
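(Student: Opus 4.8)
The plan is to follow the proof of the Main Theorem (Theorem~\ref{T:main again}) almost verbatim, after replacing its two-element building blocks $X_{A(k)}$, $k\in\mathbb N^*\setminus\{1\}$, by homeomorphism-friendly versions $\tilde X_{A(k)}\subseteq\mathbb R^3$ supplied by Li's construction~\cite{L}. What is needed from each $\tilde X_{A(k)}$ is a distinguished point $w_k$ (playing the role that $a_{\infty}$ plays inside the heads of Sections~\ref{S:cont zero-log2} and~\ref{S:generalization}) together with: (i) a \emph{homeomorphism} $\hat G_k\colon\tilde X_{A(k)}\to\tilde X_{A(k)}$ with $\hat G_k(w_k)=w_k$ and $h^{*}(\hat G_k)=\log k$; (ii) the rigidity $S(\tilde X_{A(k)})=\{0,\log k\}$, and more precisely the analogue of fact (F2), namely that every continuous selfmap with a fixed point in the snake has $h^{*}=0$; and (iii) that the continua $\tilde X_{A(k)}$ are built from pairwise disjoint infinite families of nondegenerate subcontinua of one planar Cook continuum, so that the analogue of Lemma~\ref{L:FXA1-point} holds, now with $w_k$ in place of the first point.

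Granting (i)--(iii), I would set $I=\{k\in\mathbb N^*\setminus\{1\}\colon\log k\in A\}$ and let $\tilde X_A$ be the union of the petals $\tilde X_{A(k)}$, $k\in I$, glued along the points $w_k$, all identified to one point $w$ (for $A=\{0\}$ take any rigid continuum in $\mathbb R^3$; for $\#A=2$ take a single $\tilde X_{A(k)}$); as in Theorem~\ref{T:main again}, $\tilde X_A$ is then a one-dimensional continuum in $\mathbb R^3$. The one genuine difference from the continuous-map case is that one must glue at $w$ rather than at the snakes' first points: since $\hat G_k(w)=w$, extending $\hat G_k$ by the identity on the remaining petals yields a \emph{homeomorphism} $\hat G_{k,A}$ of $\tilde X_A$, and the IN-pair localization argument of the Main Theorem gives $h^{*}(\hat G_{k,A})=h^{*}(\hat G_k)=\log k$, so $A\subseteq\Shom(\tilde X_A)$. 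Conversely the proof of Theorem~\ref{T:main again} applies word for word (it uses only properties of the (F1)--(F3) type together with Lemma~\ref{L:FXA1-point}), yielding $S(\tilde X_A)=A$ and hence $\Shom(\tilde X_A)\subseteq S(\tilde X_A)=A$. Thus $\Shom(\tilde X_A)=A$.

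The substantive step, and the main obstacle, is the construction of $\tilde X_{A(k)}$ with (i)--(iii): the realizing maps $G$ of Sections~\ref{S:X}--\ref{S:generalization} are irreversible (they collapse every other brick of each $D_m$ and push the snake into the head), so one must rebuild everything around an invertible skeleton. One natural route, which I would take, is the following. First replace the auxiliary system $(X_1,T)$ of Section~\ref{S:X1-T1-log2}/\ref{S:X1-T1-logm} by an invertible one with the same maximal pattern entropy: keep the head $A=\{a_i\colon i\in\mathbb Z\}\cup\{a_{\infty}\}$ and the homeomorphism $T|_A$, but use a \emph{two-sided} trajectory $(x_i)_{i\in\mathbb Z}$ with $T(x_i)=x_{i+1}$ whose forward half $x_0,x_1,\dots$ is exactly the one built in Section~\ref{S:X1-T1-log2}/\ref{S:X1-T1-logm} (already forcing $h^{*}\ge\log k$ through the blocks--pieces--gaps machinery) and whose backward half $x_{-1},x_{-2},\dots$ is chosen inert, e.g. unwinding monotonically towards $a_{\infty}$, so that no new independence is created; then $T$ is a continuous bijection of a compact metric space, hence a homeomorphism, and Theorems~\ref{final-thm} and~\ref{final-thm-logm}, together with the remark that the forward orbit of any backward-tail point is eventually a forward-tail orbit, give $h^{*}(T)=\log k$ exactly. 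Next, attach the chains of Cook continua as in Section~\ref{S:cont zero-log2}, but now make all the chains $D(m)$, $m\in\mathbb Z$, pairwise homeomorphic (each a copy of one fixed chain of pairwise non-homeomorphic Cook continua), while still using disjoint Cook continua across different $m$ and across different petals; the unique non-constant map $D(m)\to D(m+1)$ (now a homeomorphism, by Lemma~\ref{L:Cook-properties}(3)) together with $\mathscr H_j\mapsto\mathscr H_{j+1}$ extends $T$ to the desired homeomorphism $\hat G_k$.

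Finally the long rigidity argument of Sections~\ref{S:X}--\ref{S:cont zero-log2} must be re-run on this variant, and here it actually \emph{simplifies}: in Lemma~\ref{L:mapDD} ``the unique continuous surjective map $D_m\to D_M$'' becomes ``the unique continuous non-constant map $D_m\to D_M$'', which is a homeomorphism; Lemma~\ref{L:map-log2}, Lemma~\ref{L:const 2-log 2}, the lemmas on continuous selfmaps of $X$ (Subsections~\ref{SS:cont-bricks} and~\ref{SS:induced}) and the jump lemma Lemma~\ref{L:jumps-log2} carry over with only notational changes for the two-sided snake, the sole difference being that the eventually-constant value $N$ of $\jump(m)$ may now be any integer, corresponding to $F$ coinciding with $\hat G_k^{N}$ on a sub-snake; since $h^{*}(\hat G_k^{N})=h^{*}(\hat G_k)=\log k$ for every $N\ne0$ (as $\hat G_k$ is a homeomorphism, cf.~\cite{HYad}) the dichotomy $h^{*}(F)\in\{0,\log k\}$ is unchanged, and rerunning the proof of Lemma~\ref{L:G-log2-2case} (again reducing to Theorems~\ref{final-thm} and~\ref{final-thm-logm}) gives $h^{*}(\hat G_k)=\log k$, establishing (ii). Compactness, one-dimensionality and planarity of the skeleton are handled as in Lemmas~\ref{L:Xcpct} and~\ref{L:1-dim}, the only extra care being to place the two-sided snake in $\mathbb R^3$ so that both of its ends cluster onto the head. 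So the difficulty is essentially bookkeeping — pushing the two-sided, uniformly-$D(m)$ variant through the existing machinery — rather than a new idea.
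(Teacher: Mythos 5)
There is a genuine gap in your Case~2. You glue the petals $\tilde X_{A(k)}$, $k\in I$, at a single common point $w$, obtaining a flower, and then claim that the proof of Theorem~\ref{T:main again} ``applies word for word,'' yielding $S(\tilde X_A)=A$ and hence $\Shom(\tilde X_A)\subseteq A$. But that proof relies crucially on the fact that the central point of the flower is the \emph{first point} of each snake, combined with fact (F2): a continuous selfmap of a petal that fixes the first point has $h^{*}=0$. Your flower is glued at the points $w_k$, which are fixed by the distinguished shift homeomorphisms $\hat G_k$, and those homeomorphisms have $h^{*}(\hat G_k)=\log k>0$. So the $(F2)$-analogue at $w_k$ is false, and the invariant-petals case of the proof of Theorem~\ref{T:main again} does not go through.

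Concretely: the map $H$ that restricts to $\hat G_{k}$ on each petal $\tilde X_{A(k)}$ is a self-homeomorphism of your flower (all $\hat G_k$ fix $w$, and the shifts shrink with the petal diameters), and by the IN-tuple characterization~\eqref{Eq:hstarT-IN} its supremum sequence entropy is $\sup_{k\in I}\log k$. If $A$ is infinite and $\infty\notin A$ (e.g.\ $A=\{0,\log 2,\log 3,\log 4,\dots\}$), then $h^{*}(H)=\infty\notin A$, so $\Shom(\tilde X_A)\supsetneq A$ and your claim $S(\tilde X_A)=A$ is simply false. This is exactly the obstruction discussed in Remark~\ref{R:bad case}. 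The paper resolves it with a genuinely different construction (attributed to H.~Li): each petal $L_m=C_m\cup\tilde X_{\{0,\log m\}}$ is placed in a horizontal layer $[0,1]^2\times(c_m,c_{m+1})$ joined to $w$ through an auxiliary Cook continuum $C_m$, and (for infinite $A$) a limit petal $[w,u]\cup[u,v]$ is added, toward which the $L_m$ converge in the Hausdorff metric. Uniform continuity of a homeomorphism $H$ then forces $H$ to be the identity on all but \emph{finitely many} sets $\tilde X_{\{0,\log m\}}$, whence $h^{*}(H)$ is a finite maximum and lies in $A$. Without that convergence/uniform-continuity trick the theorem does not follow for infinite $A$ with $\infty\notin A$.

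Two smaller remarks. First, in Case~1 the paper never proves $S(\tilde X_{\{0,\log k\}})=\{0,\log k\}$ nor re-runs the rigidity machinery on a two-sided system; it only needs $\Shom(\tilde X_{\{0,\log k\}})=\{0,\log k\}$, which follows from the much simpler observations that (a) the homeomorphism group of $\tilde X_{\{0,\log k\}}$ is the infinite cyclic group generated by $\tilde G_{\{0,\log k\}}$ (because $\mathscr K_0$ is rigidly fixed and each $\tilde D_m$ is a chain of pairwise non-homeomorphic Cook continua, so a homeomorphism must be a global shift), and (b) $h^{*}(\varphi^n)=h^{*}(\varphi)$ for a homeomorphism $\varphi$ and $n\ne 0$. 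Your plan to prove the full $S(\tilde X_{A(k)})=\{0,\log k\}$ via an invertible version of $(X_1,T)$ and a re-run of Sections~\ref{S:X}--\ref{S:generalization} is considerably more work and is not actually needed. Second, it is also a different two-sided continuum: the paper attaches the backward tail $x_0,x_{-1},\dots\to x_{-\infty}$ at the far end of the snake, not ``unwinding towards $a_\infty$'' inside the head as you propose, and it chooses $x_{-\infty}$ as the distinguished fixed point, whereas your $w_k$ ``plays the role of $a_\infty$''; for the case $A=\{0,\infty\}$ the head $\mathscr K_0$ has no $a_\infty$ at all, so this choice is not even defined there.
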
	

\begin{proof}  The proof is to large extent similar to that of Main Theorem.
	
The case $A=\{0\}$ is trivial, any rigid continuum in the plane can be chosen as $\tilde{X}_A$. We are going to consider the nontrivial cases.

\medskip

\emph{Case 1: $A$ has cardinality $2$.}

\medskip

First consider the set $A=\{0, \infty\}$. We begin with the continuum $X$ from Lemma~\ref{L:Xiscont}, see Figure~\ref{F:spaceX} and Proposition~\ref{P:zero inf}. For our purposes we modify this continuum $X$ as follows.
\begin{itemize}
	\item The continuum $X$ has the `first' point $x_1$. We add a converging sequence of points $x_0,x_{-1},x_{-2},\dots$ with
	\[
	P_2(x_{j}) = P_2(x_1), \,\, j=0,-1,-2,\dots \text{ and } P_1(x_1) < P_1(x_0) < P_1(x_{-1}) < P_1(x_{-2})< \dots~.
	\]
	We denote the limit of this sequence by $x_{-\infty}$ and we add also this point to $X$.
	\item For $m\in \mathbb N$, the points $x_m$ and $x_{m+1}$ in $X$ are joined by the continuum $D_m$ from~\eqref{Eq: DD*}, see also Figure~\ref{F:3Dm}.
	We replace the (non-homeomorphic) continua $D_m$, $m\in \mathbb N$, by \emph{homeomorphic} continua $\tilde D_m$, $m\in \mathbb N$, each of them being a homeomorphic copy of, say, $D_1$. Further, also for $m=0,-1,-2, \dots$ we join $x_m$ and $x_{m+1}$ by a continuum $\tilde D_m$ which is again a homeomorphic copy of $D_1$. So, now all the continua $\tilde D_m$, $m\in \mathbb Z$, are homeomorphic and they are copies of $D_1$. We may of course assume that the continua $\tilde D_m$ are pairwise disjoint, except that two consecutive continua have one point in common. This can of course be done in such a way that we still have properties analogous to those in Lemma~\ref{L:block}. In particular, the sequence of Cook continua which forms $\tilde D_m$  `goes' from $x_m$ to $x_{m+1}$ and not in the opposite direction, and the diameters of $\tilde D_m$ tend to zero as $|m|\to \infty$.
\end{itemize}
These modifications yield a one-dimensional continuum $\tilde X_{\{0,\infty\}}$, see Figure~\ref{F:XAi}. The point $x_{-\infty}$ will be called the \emph{starting point} of $\tilde X_{\{0,\infty\}}$.\index{starting point}

	\begin{figure} [h]
		\includegraphics[width=12cm]{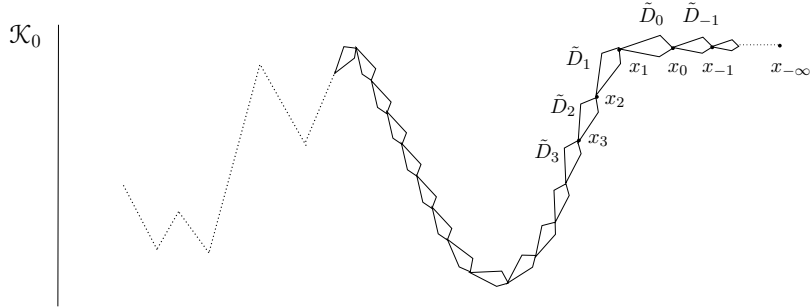}\\
		\caption{$\tilde X_{\{0,\infty\}}=\mathscr K_0 \sqcup \bigcup_{m \in \Z} \tilde{D}_m \sqcup \{x_{-\infty}\}.$}\label{F:XAi}
	\end{figure}

Denote by $\tilde G_{\{0,\infty\}} \colon \tilde{X}_{\{0,\infty\}} \to \tilde{X}_{\{0,\infty\}}$ the homeomorphism which maps $\tilde D_m$ onto $\tilde D_{m+1}$, $m\in \mathbb Z$ (and so is the identity on $\mathscr K_0 \cup \{x_{-\infty}\}$). In particular, $\tilde G_{\{0,\infty\}}$ restricted to the set $\mathscr K_0 \cup \{x_1, x_2, x_3, \dots \}$ coincides with the map $T$ from Subsection~\ref{SS:cont+orbit} (provided $\mathscr C_0$ is chosen to be $\mathscr K_0$). Since we have shown there that $h^*(T)=\infty$, we necessarily have $h^*(\tilde G_{\{0,\infty\}}) =\infty$.

On the other hand, let $G\colon \tilde{X}_{\{0,\infty\}} \to \tilde{X}_{\{0,\infty\}}$ be any homeomorphism. Taking into account the structure of $\tilde X_{\{0,\infty\}}$, $G$ is the identity on $\mathscr K_0$. Suppose it is not the identity on the whole of $\tilde X_{\{0,\infty\}}$.  Then, since each $\tilde D_m$ is homeomorphic to $D_1$ and $D_1$ is the closure of a `concatenation' of pairwise non-homeomorphic Cook continua, there exist $k\in \mathbb N$,  $m\in \mathbb Z$ and $n \in \mathbb Z\setminus \{0\}$ such that some interior point of the $k$-th Cook continuum in $\tilde{D}_m$ is mapped by $G$ to an interior point of the $k$-th Cook continuum in $\tilde{D}_{m+n}$. Hence $G$ maps the $k$-th Cook continuum in $\tilde{D}_m$ onto the $k$-th Cook continuum in $\tilde{D}_{m+n}$. Then it is not difficult to show that $G(\tilde{D}_m) = \tilde{D}_{m+n}$ and in fact that $G$ is the $n$-th iterate of $\tilde G_{\{0,\infty\}}$ on the whole of $\tilde X_{\{0,\infty\}}$. We have thus shown that the only homeomorphisms on $\tilde{X}_{\{0,\infty\}}$ are just all the iterates of $\tilde G_{\{0,\infty\}}$. This together with $h^*(\tilde G_{\{0,\infty\}}) =\infty$ and the fact that $h^*(\varphi^n)=h^*(\varphi)$ whenever $\varphi$ is a homeomorphism and $n\neq 0$, imply that $\Shom(\tilde{X}_{\{0,\infty\}})=\{0,\infty\}$.

Notice that for every homeomorphism on $\tilde{X}_{\{0,\infty\}}$ the starting point $x_{-\infty}$ of $\tilde{X}_{\{0,\infty\}}$ is a fixed point.

Now consider the set $A=\{0, \log m\}$ where $m\geq 2$. The construction of $\tilde{X}_{\{0,\log m\}}$ is the same as above
construction of $\tilde{X}_{\{0,\infty\}}$, the only difference is that now we begin with the continuum $X$ from Section~\ref{S:cont zero-log2} if $m=2$ or
Section~\ref{S:generalization} if $m\geq 3$. For this $X$ we have $S(X)=\{0, \log m\}$ and we modify it similarly as above, to get $\tilde{X}_{\{0,\log m\}}$.
Again, all the homeomorphisms of $\tilde{X}_{\{0,\log m\}}$ are just the iterates of one distinguished homeomorphism $\tilde G_{\{0,\log m\}}$ which maps $\tilde D_m$ onto $\tilde D_{m+1}$, $m\in \mathbb Z$. It follows from Section~\ref{S:cont zero-log2} if $m=2$ or
Section~\ref{S:generalization} if $m\geq 3$ that the restriction of this homeomorphism to the union of the head and the `forward' part of the snake (lying between $x_0$ and the head) has the supremum topological sequence entropy equal to $\log m$. Using Proposition~\ref{P}(c), one can see that adding the `backward' part of the snake does not change it, i.e. $h^*(\tilde G_{\{0,\log m\}}) =\log m$.
Then the proof that $\Shom(\tilde{X}_{\{0,\log m\}}) = \{0,\log m\}$ is completely analogous to the above proof that $\Shom(\tilde{X}_{\{0,\infty\}})=\{0,\infty\}$.

Again, remember that similarly as above, for every homeomorphism on $\tilde{X}_{\{0,\log m\}}$ the naturally defined starting point of $\tilde{X}_{\{0,\log m\}}$ is a fixed point.

Finally, before going to Case 2, note that in Case 1 we have constructed countably many continua $\tilde{X}_{\{0,\log m\}}$, $m\in \{2,3,\dots \} \cup \{\infty\}$, and for the construction of each of them we have used only countably many Cook continua. Therefore we may assume that we have chosen countably many disjoint subcontinua of a Cook continuum in the plane, then we have divided them into countably many disjoint countable families, and different families have been used to construct different spaces $\tilde{X}_{\{0,\log m\}}$,  still keeping one of those families unused, i.e. as a reservoir of Cook continua for further use. To express this fact we will, though not very precisely, just say that the spaces $\tilde{X}_{\{0,\log m\}}$ are constructed by using different families of Cook continua.

\medskip
\medskip

\emph{Case 2: $A$ has cardinality $\geq 3$.}\footnote{The construction in this case was suggested by Hanfeng Li.}

For every $m\in \{2,3,\dots\}\cup \{\infty\}$ consider the continuum $\tilde X_{\{0, \log m\}}$ we have already constructed in Case~1 above. Since we will work with many such continua simultaneously, we will now use double indices; for every $m$ we will write
\[
\tilde X_{\{0, \log m\}} = \underbrace{\{x_{m,-\infty}\} \, \cup \,\bigcup_{k<0} \tilde{D}_{m,k}}_{\tilde{D}_{m}^{-}} \, \cup \,\, \tilde{D}_{m,0} \, \cup \, \underbrace{\bigcup_{k>0} \tilde{D}_{m,k} \, \cup \,\, \text{head}_m}_{\tilde{D}_{m}^{+}}.
\]
\index{$\tilde{D}_{m}^{-}$, $\tilde{D}_{m}^{+}$}Here $x_{m,-\infty}$ is the starting point of $\tilde X_{\{0, \log m\}}$, $\text{head}_m$ is the head of $\tilde X_{\{0, \log m\}}$ (such as the set $\mathscr K_0$ in Figure~\ref{F:XAi}, where $m=\infty$) and recall that  the continuum $\tilde{D}_{m,0}$ with first point $x_{m,0}$ and last point $x_{m,1}$ is the union of a sequence of different Cook continua, together with one limit point (which is the last point $x_{m,1}$). The other sets $\tilde{D}_{m,k}$, $k\in \Z \setminus \{0\}$, are homeomorphic copies of $\tilde{D}_{m,0}$. The first point of $\tilde{D}_{m, k+1}$ coincides with the last point of $\tilde{D}_{m, k}$ for all $k$. Recall also that the homeomorphism group of $\tilde X_{\{0, \log m\}}$ is isomorphic to $\Z$, with the generator $\tilde G_{\{0,\log m\}}$ sending $D_{m, k}$ to $D_{m,k+1}$ for all $k$. For every homeomorphism on
$\tilde X_{\{0, \log m\}}$, the starting point $x_{m,-\infty}$ is a fixed point and the set $\text{head}_m$ is invariant.

Consider numbers $0=c_1<c_2<c_3<\dots$ with $\lim_{m\to\infty} c_m =1$. The sets $[0,1]^2 \times (c_m, c_{m+1})$ will be called  \emph{layers}\index{layer}, see Figure~\ref{F:layers}. For every $m\in \{\infty\} \cup \{2,3,\dots\}$ we choose a copy of $\tilde X_{\{0, \log m\}}$, still denoted by the same symbol, and a continuum $C_m$, satisfying the following conditions, see Figure~\ref{F:spacein1layer}:
\begin{itemize}
	\item $\tilde X_{\{0, \log m\}}$ lies in the layer $[0,1]^2 \times (c_m, c_{m+1})$ if $m$ is finite, $\tilde X_{\{0, \infty\}}$ lies in the layer $[0,1]^2 \times (c_1, c_2)$;
	\item the projections, into the $x,y$-plane, of the first point $x_{m,0}$ and the last point $x_{m,1}$ of $\tilde{D}_{m,0}$ coincide with the points $[1,1]$ and $[0,1]$, respectively;
	\item the continuum $\tilde{D}_{m,0}$ is in the $\varepsilon_m$-neighborhood of the straight line segment whose endpoints are the first point and the last point of $\tilde{D}_{m,0}$, and $\varepsilon_m \to 0$ as $m\to \infty$;
	\item the diameters of $\tilde{D}_{m}^{-}$ and $\tilde{D}_{m}^{+}$ tend to zero as $m\to \infty$;
	\item $C_m$ is a (copy of a) Cook continuum from the reservoir mentioned at the end of Case 1, and for different $m$'s these continua are different (non-homeomorphic);
	\item $C_m$ contains the starting point $x_{m,-\infty}$ of $\tilde X_{\{0, \log m\}}$ and the point $w=[0,0,1]$, and lies in the $\varepsilon_m$-neighborhood of the straight line segment joining these two points, with $\varepsilon_m \to 0$ as $m\to \infty$;
	\item for $m\in \{\infty\} \cup  \{2,3,\dots\}$, the continua
	\[
	L_m = C_m \cup \tilde X_{\{0, \log m\}}
	\]
	are pairwise disjoint, except that each of them contains the point $w$.
\end{itemize}

\begin{figure}[h]
	\begin{minipage}[t]{0.5\linewidth}
		\centering
		\includegraphics[width=8cm]{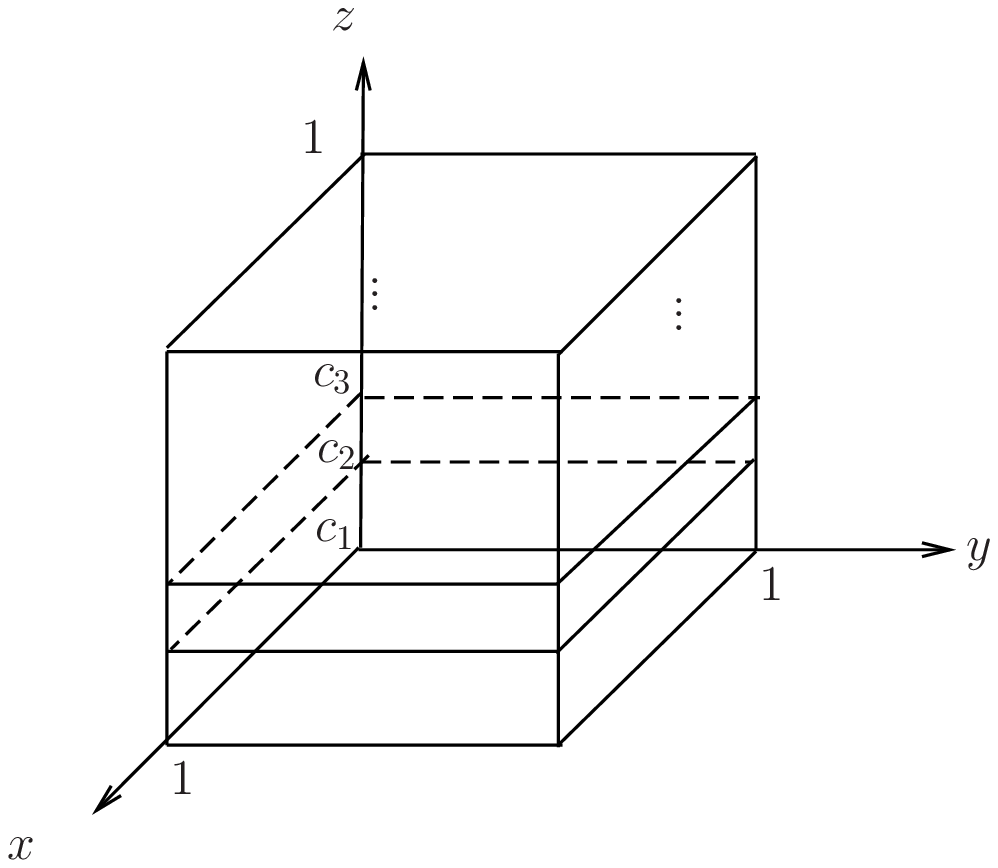}
		\caption{A layer.}\label{F:layers}
	\end{minipage}%
	\begin{minipage}[t]{0.5\linewidth}
		\centering
		\includegraphics[width=9cm]{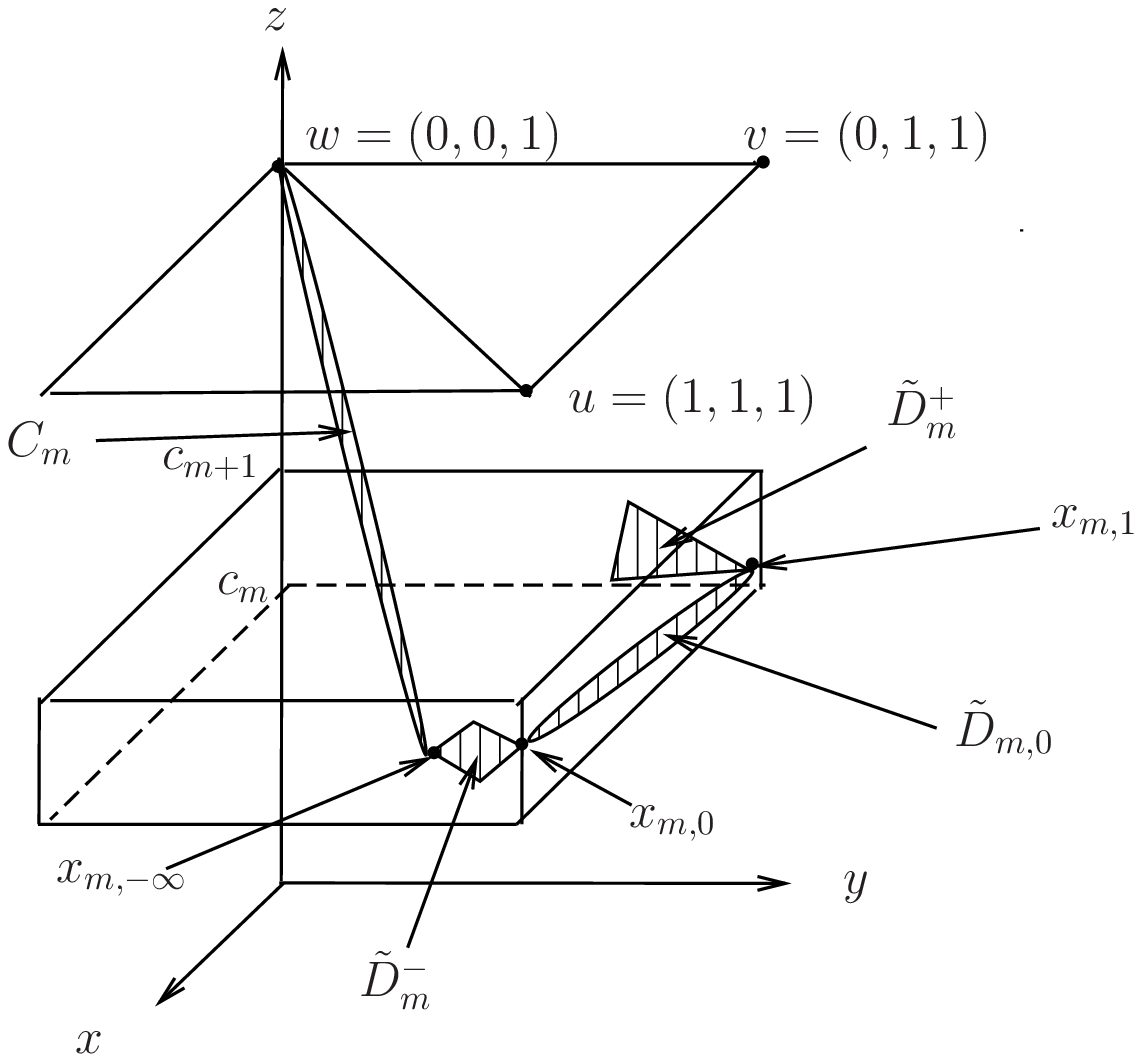}
		\caption{The continuum $L_m$.}\label{F:spacein1layer}
	\end{minipage}
\end{figure}

Consider further the points $u=[1,1,1]$ and $v=[0,1,1]$ and denote by $[u,v]$ the straight line segment from $u$ to $v$. Similarly, let $[w,u]$ be the straight line segment from $w$ to $u$. Notice that, in the Hausdorff metric, the continua $C_m$ converge to $[w,u]$ and the continua $\tilde{D}_{m,0}$, as well as the continua $\tilde X_{\{0, \log m\}}$, converge to $[u,v]$.

Now finally define
\begin{equation}\label{Eq:unionL}
\tilde{X}_A = \begin{cases} \, \bigcup \{L_m \colon \, 2\leq m \leq \infty \text{ and } \log m \in A \}, & \text{if $A$ is finite}, \\
\, \bigcup \{L_m \colon \, 2\leq m \leq \infty \text{ and } \log m \in A \} \, \cup \, [w,u] \, \cup \, [u,v], & \text{if $A$ is infinite}.
\end{cases}
\end{equation}	
Clearly, $\tilde{X}_A$ is a one-dimensional continuum. Notice that it is still a kind of a `flower', with central point $w$ and `petals' $[w,u] \cup [u,v]$ (if $A$ is infinite) and $L_m$ (for $2 \leq m \leq \infty$ such that $\log m \in A$), but now it is a more complicated flower than that from the proof of Theorem~\ref{T:main again}, since in the case of infinite $A$ the petals $L_m$ converge to the petal $[w,u] \cup [u,v]$. Notice also that $\tilde{X}_A$ has at least two petals, since $A$ has cardinality at least three.

We need to show that $\Shom(\tilde{X}_{A})= A$. Trivially, $\Shom(\tilde{X}_{A}) \ni 0$. Fix $m\in \{2,3,\dots\}\cup \{\infty\}$ with $\log m \in A$ and a homeomorphism $G_m$ on $\tilde{X}_{A}$ such that it is the identity on $\tilde{X}_{A} \setminus \tilde X_{\{0, \log m\}}$ and on $\tilde X_{\{0, \log m\}}$ coincides with the distinguished homeomorphism $\tilde G_{\{0,\log m\}}$, the generator of the homeomorphism group of $\tilde X_{\{0, \log m\}}$. Note that $x_{m,-\infty}$ is a fixed point for this generator, so $G_m$ is well defined. Since $h^*(\tilde G_{\{0,\log m\}}) = \log m$, we obviously have $h^*(G_m) = \log m$. We have thus shown that $\Shom(\tilde{X}_{A}) \supseteq A$.

To prove that also $\Shom(\tilde{X}_{A}) \subseteq A$, let $H\colon \tilde{X}_{A} \to \tilde{X}_{A}$ be any homeomorphism different from the identity. We are going to show that $h^*(H) \in A$ (for identity it is trivial). Since the petals $L_m$ of the space $\tilde{X}_{A}$ are constructed by using different families of Cook continua, each of them is $H$-invariant (hence, if $A$ is infinite, also $[w,u]\cup [u,v]$ is $H$-invariant). Moreover, $H$ is obviously the identity on $C_m$ (hence, if $A$ is infinite, also on $[w,u]$) and is a $k_m$-th iterate of $\tilde G_{\{0,\log m\}}$ on $\tilde X_{\{0, \log m\}}$.

Suppose that the set of those $m$'s for which $k_m > 0$ is infinite (i.e. there is a sequence of layers $L_m$ with $k_m>0$, which converges to $[w,u]\cup [u,v]$). For every such $m$, the set $\tilde{D}_{m,-k_m}$ (which is a subset of $\tilde{D}_{m}^{-}$) is mapped by $H$ onto the set  $\tilde{D}_{m,0}$ whose diameter is at least $1$. Since diameters of $\tilde{D}_{m}^{-}$ tend to zero as $m\to \infty$, we have a contradiction with uniform continuity of $H$. (Alternately, one can argue as follows. If $k_m>0$ then $H(x_{m,0}) \in \tilde{D}_{m}^{+}$ and since we have infinitely many such $m$'s, the fact that the diameters of $\tilde{D}_{m}^{+}$ tend to zero and the continuity of $H$ imply that $H(u)=v$. However, we have already shown that $H(u)=u$, a contradiction.) Similarly, we get a contradiction with uniform continuity if $k_m < 0$ for infinitely many $m$'s. (Or, one can show that in such a case we would have $H(v)=u$ which is a contradiction because $H(u)=u$ and $H$ is a homeomorphism.) Thus we conclude that $H$ is identity on the whole space $\tilde{X}_{A}$ except of finitely many petals $L_m$; more precisely, except of \emph{finitely many} continua $\tilde X_{\{0, \log m\}}$ (there is at least one such continuum, because $H$ is different from the identity). Since these continua are pairwise disjoint and $\Shom(\tilde{X}_{\{0,\log m\}}) = \{0,\log m\}$, we get that $h^*(H)$ equals the maximum of $\log m$ for such $m$'s. Hence $h^*(H) \in A$.
\end{proof}

\begin{rem}\label{R:bad case}
 The reader might be curious why, in Case 2, we did not use the simpler construction with a `standard' flower with petals, exactly as in the proof of Theorem~\ref{T:main again}. In fact, it is obvious that such a flower can be used if $A$ is finite. One can show that it works also when $A$ si infinite and closed with respect to the supremum. However, it does not work in other cases, i.e. when $A$ is infinite and $\infty \notin A$. Indeed, let $A$ consist of $0$ and \emph{numbers} $\log k_i$ for some $2\leq k_1 <k_2< \dots$ (note that $\sup_{i=1,2,\dots} \log k_i = \infty \notin A$). Consider the flower $F_{A}$ whose petals are $X_{k_i} = \tilde{X}_{\{0,\log k_i\}}$, $i=1,2,\dots$. Then $F_{A}$ admits a homeomorphism $H$ such that it fixes the central point of the flower, every petal $X_{k_i}$ is $H$-invariant and the restriction $H_i$ of $H$ to that petal has $h^*(H_i)=\log k_i$.  By~\eqref{Eq:hstarT-IN} we easily get $h^*(H)=\infty$ and so
\[
\Shom(F_{A}) = A \cup \{\infty\} \neq A.
\]
So, due to such sets $A$ a more sophisticated construction was needed.
\end{rem}

	\subsection{Analogues of Main Theorem for group actions\index{group action} and semigroup actions}\label{S:group actions}
	
	By a \emph{dynamical system} we now mean a triple $(X, G, \Phi)$, where $X$ is a compact metric space,
	$G$ is a topological group and $\Phi$ is an \emph{action of $G$ on $X$}, i.e. a continuous map
	$\Phi \colon G\times X\rightarrow X$ such that $\Phi(s,\Phi(t,x)) = \Phi(st,x)$ for all $s,t\in G$ and
	$\Phi(e,x)=x$ for every $x\in X$, where $e$ is the neutral element of the group $G$.
	For each $s\in G$, the \emph{acting map}\index{acting map} $\Phi_s \colon X\rightarrow X$ defined by $\Phi_s(x):=\Phi(s,x)=:sx$ can easily be proved to be a
	homeomorphism. To define $\Phi$ is the same as to define all $\Phi_s$. We also say that $\Phi$ is a \emph{$G$-action}
	(on the space $X$). In the sequel, when speaking on $G$-actions, we always assume that $G$ has \emph{discrete topology}.
	Then, to check the continuity of a map $\Phi \colon G\times X\rightarrow X$, it is sufficient to check the continuity
	of all $\Phi_s$.

	Note that the topological sequence entropy is developed in the
	literature for a single continuous map, but we can also work in the framework of a
	dynamical system $(X, G, \Phi)$.
	Following Goodman \cite{Good1} and Kerr-Li \cite{KL}, for a sequence $\sigma = \{s_n\}_{n\in\Z_+}$ in $G$ we define the topological
	sequence entropy of $(X, G,\Phi)$ with respect to $\sigma$ and a finite open cover $\mathcal{U}$ of $X$ by
	\begin{equation}\label{Eq:quantity}
	h^{\sigma}(G, \mathcal{U},\Phi) = \limsup_{n\rightarrow \infty}\frac{1}{n}\log \mathcal{N}\left (\bigvee_{i=0}^{n-1} s_n^{-1}\mathcal{U} \right ),
	\end{equation}
	where $\mathcal{N}(\cdot)$ denotes the minimal cardinality of a subcover. Then the sequence entropy with respect to $\sigma$ is defined by
	$h^{\sigma}(X,G,\Phi)=\sup_{ \mathcal{U}}h^{\sigma}(G, \mathcal{U},\Phi)$, where $\mathcal{U}$ runs over all finite open covers of $X$. Again, by \cite[Theorem~A.1]{HYad}, we can define the \emph{supremum topological sequence entropy}
	\begin{equation}\label{Eq:stse-def}
	h^*(X,G,\Phi) = \sup\{h^{\sigma}(X,G,\Phi) \colon \, \text{$\sigma = \{s_n\}_{n\in\Z_+}$ is a sequence in $G$}\}.
	\end{equation}
	
   \begin{de} Let $(X,G,\Phi)$ be a dynamical system and $\tilde{A}=( A_1,\ldots,A_k )$
		be a tuple of subsets of $X$. We say that a subset $J\subseteq G$ is an
		independence set for $\tilde{A}$ (or that  $\tilde{A}$ has the independence set
		$J$)
		if for any nonempty finite subset $I\subseteq J$, we have
		$$
		\bigcap_{s\in I}s^{-1}A_{\sigma(s)}\not=\emptyset
		$$
		for any function  $\sigma: I \rightarrow \{1,\ldots,k\}$. If such a set $J$ is finite and has $p$ elements, we also say that it is an independence set,  or independence set of times, of length~$p$.
	\end{de}

   \begin{de}\label{D:IN-G}
   Consider a tuple $\tilde{x}=(x_1,\dots, x_k)\in X^k$. If for every product neighbourhood $U_1 \times \dots \times U_k$ of $\tilde{x}$ the tuple
   $(U_1, \dots, U_k)$ has arbitrarily long finite independence sets, then the tuple $\tilde{x}$ is called an IN-tuple in $(X,G,\Phi)$.
   \end{de}	
	
	As before we may define the sequence entropy tuples, see \cite{KL} for details.
	We have the following proposition proved in \cite[Theorem 5.9]{KL}.
	
	\begin{prop}\label{P:G-tuples}
		Let $(x_1, \ldots, x_k )$ be a tuple in $X^k\setminus \Delta_k$ with $k\ge 2$. Then $(x_1, \ldots, x_k )$
		is a sequence entropy tuple if and only if it is an IN-tuple.
	\end{prop}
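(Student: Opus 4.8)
The plan is to reduce Proposition~\ref{P:G-tuples} to the single‑transformation case, Theorem~\ref{KL-thm}(2), which is the same assertion for $G=\Z_{+}$ and is due to Kerr and Li~\cite{KL}; indeed their argument uses nothing about $\Z_{+}$ beyond the fact that one has a family of homeomorphisms of $X$ indexed by the acting set and that the entropy in~\eqref{Eq:quantity} is computed along sequences in that set, so it transfers verbatim to an arbitrary discrete group $G$ with $T^{-i}$ replaced by $\Phi_{s}^{-1}$. Thus the genuine content is a purely combinatorial equivalence: \emph{a tuple of subsets of $X$ has arbitrarily long finite independence sets if and only if it, via complements, produces positive sequence entropy for some sequence $\sigma$ in $G$.} Both directions of the proposition are instances of this equivalence, the first applied to a product neighbourhood of $\tilde x=(x_{1},\dots,x_{k})$, the second to the cover by complements of pairwise disjoint closed neighbourhoods of the distinct coordinates of $\tilde x$.

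For the implication ``IN‑tuple $\Rightarrow$ sequence entropy tuple'', let $U_{1},\dots,U_{t}$ be pairwise disjoint closed neighbourhoods of the distinct points among $x_{1},\dots,x_{k}$, and let $V_{j}=\operatorname{int}U_{j}$. Using Definition~\ref{D:IN-G} for a product neighbourhood of $\tilde x$ compatible with this choice and restricting to the relevant coordinates (as in Remark~\ref{R:indep-subtuple}), one obtains, for every $n$, a finite set $F\subseteq G$ with $\card F=n$ that is an independence set for $(V_{1},\dots,V_{t})$. Put $\mathcal U=\{U_{1}^{c},\dots,U_{t}^{c}\}$, a cover of $X$ because the $U_{j}$ are pairwise disjoint. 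Every member of $\bigvee_{s\in F}s^{-1}\mathcal U$ has the shape $\bigcap_{s\in F}s^{-1}U_{\tau(s)}^{c}$ for some $\tau\colon F\to\{1,\dots,t\}$, and for each $\sigma\colon F\to\{1,\dots,t\}$ a witness point of $\bigcap_{s\in F}s^{-1}V_{\sigma(s)}$ lies in it only when $\tau(s)\neq\sigma(s)$ for all $s$; hence a single member of the cover accounts for at most $(t-1)^{n}$ of the $t^{n}$ maps $\sigma$, giving $\mathcal N\bigl(\bigvee_{s\in F}s^{-1}\mathcal U\bigr)\geq\bigl(\tfrac{t}{t-1}\bigr)^{n}$. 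Enumerating ever longer such $F$'s into a sequence $\sigma$ in $G$ yields $h^{\sigma}(G,\mathcal U,\Phi)\geq\log\tfrac{t}{t-1}>0$, and since $U_{1},\dots,U_{t}$ were arbitrary, $\tilde x$ is a sequence entropy tuple.

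For the converse, suppose $\tilde x$ is not an IN‑tuple: there is a product neighbourhood $U_{1}\times\cdots\times U_{k}$ and an $N$ so that $(U_{1},\dots,U_{k})$ has no independence set of length $N$. The goal is to show that then, for every finite open cover $\mathcal U$ of $X$ refined by the partition‑type cover built from $U_{1},\dots,U_{k}$ and every sequence $\sigma$ in $G$, the quantity $\mathcal N\bigl(\bigvee_{s\in F}s^{-1}\mathcal U\bigr)$ grows only polynomially in $\card F$ (of degree $<N$), so $h^{\sigma}(G,\mathcal U,\Phi)=0$; as sequence entropy tuples are detected precisely by such covers, $\tilde x$ is then not a sequence entropy tuple. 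The polynomial bound is where the combinatorial input enters and is the main obstacle: one must translate ``$(U_{1},\dots,U_{k})$ has no length‑$N$ independence set'' into ``the family of traces on $F$ of the relevant atoms does not shatter any $N$‑subset of $F$'' and then invoke the Karpovsky--Milman extension of the Sauer--Shelah lemma, by which such a family of functions $F\to\{1,\dots,k\}$ has cardinality at most $\bigl(\sum_{j<N}\binom{\card F}{j}\bigr)(k-1)^{N}$, i.e. polynomial in $\card F$. Once this translation is set up carefully, the remaining steps are bookkeeping, and the passage from $\Z_{+}$ to a general discrete group $G$ causes no further difficulty. In the write‑up it is enough to cite \cite[Theorem~5.9]{KL} for the proposition and Theorem~\ref{KL-thm}(2) of the present excerpt for the single‑map prototype, indicating only that the combinatorial core above is group‑independent.
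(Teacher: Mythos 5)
The paper's own ``proof'' of Proposition~\ref{P:G-tuples} is merely the citation to \cite[Theorem~5.9]{KL}, and your proposal ends by invoking that same theorem; so in substance you agree with the paper, and your sketch of why the $\Z_{+}$ argument transfers verbatim to an arbitrary discrete group is correct (the independence and entropy notions only see finite subsets of the acting set). The appeal to the Karpovsky--Milman extension of Sauer--Shelah for the converse direction is also the right tool.

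There is, however, one concrete gap in your forward direction. The step ``Enumerating ever longer such $F$'s into a sequence $\sigma$ yields $h^{\sigma}(G,\mathcal U,\Phi)\geq\log\tfrac{t}{t-1}$'' does not follow as written. The IN hypothesis gives you, for each $m$, some finite independence set $F_{m}$ with $|F_{m}|=m$, but it does \emph{not} give an infinite independence set (that would be the strictly stronger IT property), and in general the $F_{m}$ cannot be nested. If you simply concatenate $F_{1},F_{2},F_{3},\dots$ with $|F_{m}|=m$ to form $\sigma$, then by the end of $F_{m}$ the prefix length is $N_{m}=\sum_{j\le m}j=m(m+1)/2$, and the only lower bound available is $\mathcal N\bigl(\bigvee_{i<N_{m}}s_{i}^{-1}\mathcal U\bigr)\geq\bigl(\tfrac{t}{t-1}\bigr)^{m}$, giving
\[
\frac{1}{N_{m}}\log\mathcal N\;\geq\;\frac{m}{N_{m}}\log\frac{t}{t-1}\;=\;\frac{2}{m+1}\log\frac{t}{t-1}\;\longrightarrow\;0,
\]
so the $\limsup$ in~\eqref{Eq:quantity} may well vanish and the claimed inequality is false under this naive reading. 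To make the argument go through, one must choose the lengths so that $\sum_{j<m}|F_{j}|=o(|F_{m}|)$ (for instance $|F_{m}|=m!$), forcing $|F_{m}|/N_{m}\to1$ and recovering $h^{\sigma}(G,\mathcal U,\Phi)\geq\log\tfrac{t}{t-1}$; alternatively one can settle for any fixed positive proportion (say $|F_{m}|=2^{m}$, giving $h^{\sigma}\geq\tfrac12\log\tfrac{t}{t-1}>0$), which is all that is needed. The fix is small, but the growth condition must be stated explicitly, because without it the construction does not produce a sequence with positive sequence entropy.
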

	
	Now for a dynamical system $(X,G,\Phi)$ we have, by \cite[Theorem A.3]{HYad}, that
	\begin{equation}\label{Eq:hstarT-G}
	h^*(X, G,\Phi)=\sup\{\log n \colon \text{there is an intrinsic sequence entropy tuple of
		length}\ n\}
	\end{equation}
	and, in view of Proposition~\ref{P:G-tuples}, we can also write
	\begin{equation}\label{Eq:hstarT-G-IN}
		h^*(X, G,\Phi)=\sup\{\log n \colon \text{there is an intrinsic IN-tuple of
			length}\ n\}.
	\end{equation}
	
	Finally, for a fixed topological group $G$ and a fixed compact metric space $X$ we put
	\begin{equation}\label{def-g}
	S_G(X)=\{h^*(X, G, \Phi) \colon \, \Phi\ \text{is an action of $G$ on $X$} \}.
	\end{equation}
	We emphasize that in the left side of the definition (\ref{def-g}), $G$ is a fixed group.

    In the sequel we will use simple but useful observations from the following two lemmas.
	\begin{lem}\label{L:1-1 corr}
		Consider the well known one-one correspondence between homeomorphisms on $X$ and $\mathbb Z$-actions on $X$; the $\mathbb Z$-action $\Phi^{\varphi}$ with the acting maps $(\Phi^{\varphi})_m = \varphi^m$, $m\in \mathbb Z$, corresponds to the homeomorphism~$\varphi$. Then
		\begin{equation}\label{Eq:seq-homeo-action}
		h^*(\varphi) = h^*(X, \mathbb Z, \Phi^{\varphi}).
		\end{equation}	
	\end{lem}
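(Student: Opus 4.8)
The plan is to reduce the equality to the IN-tuple characterizations of supremum topological sequence entropy that are already available in both settings. By formula~\eqref{Eq:hstarT-IN}, applied to the homeomorphism $\varphi$ viewed as a $\mathbb Z_+$-system, $h^*(\varphi)$ equals the supremum of $\log k$ over all intrinsic IN-tuples of length $k$ for $\varphi$; by formula~\eqref{Eq:hstarT-G-IN}, $h^*(X,\mathbb Z,\Phi^{\varphi})$ equals the supremum of $\log k$ over all intrinsic IN-tuples of length $k$ for the action $\Phi^{\varphi}$. Hence it suffices to prove that a tuple $\tilde x=(x_1,\dots,x_k)\in X^k$ is an IN-tuple for $\varphi$ if and only if it is an IN-tuple for $\Phi^{\varphi}$, since then the two suprema are taken over the same families of intrinsic tuples.

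The ``only if'' direction is immediate. For every $m\in\mathbb Z$ the acting map $(\Phi^{\varphi})_m$ is exactly $\varphi^m$, and $\mathbb Z_+\subseteq\mathbb Z$; therefore any finite independence set $J\subseteq\mathbb Z_+$ for a tuple of neighbourhoods $(U_1,\dots,U_k)$ with respect to $\varphi$ is, verbatim, a finite independence set for the same tuple with respect to $\Phi^{\varphi}$. Thus arbitrarily long finite independence sets for $\varphi$ yield arbitrarily long finite independence sets for $\Phi^{\varphi}$, and an IN-tuple for $\varphi$ is an IN-tuple for $\Phi^{\varphi}$.

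For the ``if'' direction I would use a translation trick made possible by the invertibility of $\varphi$. Let $\tilde x$ be an IN-tuple for $\Phi^{\varphi}$, fix a product neighbourhood $U_1\times\dots\times U_k$ of $\tilde x$ and a target length $p$, and choose a finite independence set $J=\{j_1<\dots<j_p\}\subseteq\mathbb Z$ for $(U_1,\dots,U_k)$ with respect to $\Phi^{\varphi}$. Put $c=-j_1\ge 0$, so that $J'=J+c\subseteq\mathbb Z_+$. For any nonempty finite $I'\subseteq J'$ write $I'=I+c$ with $I\subseteq J$, and for any $\sigma'\colon I'\to\{1,\dots,k\}$ set $\sigma(s)=\sigma'(s+c)$ for $s\in I$; then
\begin{equation*}
\bigcap_{s'\in I'}\varphi^{-s'}U_{\sigma'(s')}=\bigcap_{s\in I}\varphi^{-(s+c)}U_{\sigma(s)}=\varphi^{-c}\Bigl(\bigcap_{s\in I}\varphi^{-s}U_{\sigma(s)}\Bigr)\ne\emptyset,
\end{equation*}
because $\varphi^{-c}$ is a bijection and $J$ is an independence set for $\Phi^{\varphi}$. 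Hence $J'\subseteq\mathbb Z_+$ is a finite independence set of length $p$ for $(U_1,\dots,U_k)$ with respect to $\varphi$; as $p$ was arbitrary, $\tilde x$ is an IN-tuple for $\varphi$. Combining the two directions, the families of intrinsic IN-tuples coincide, and \eqref{Eq:hstarT-IN} together with \eqref{Eq:hstarT-G-IN} give $h^*(\varphi)=h^*(X,\mathbb Z,\Phi^{\varphi})$. There is essentially no serious obstacle here; the only point needing a little care is that independence sets for $\mathbb Z$-actions are permitted to lie in all of $\mathbb Z$ rather than only in $\mathbb Z_+$, and this discrepancy is exactly what the shift by $c$ absorbs, using that $\varphi$ is a homeomorphism.
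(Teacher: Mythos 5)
Your proof is correct and is essentially the same as the paper's: both reduce the equality to showing the families of IN-tuples coincide, observe that one inclusion is trivial because $\mathbb Z_+\subseteq\mathbb Z$, and handle the other by translating a finite independence set $J\subseteq\mathbb Z$ by a nonnegative constant into $\mathbb Z_+$, using the invertibility of $\varphi$ to preserve nonemptiness of the intersections.
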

	
	\begin{proof}
     We show that every $IN$-tuple in $(X, \mathbb Z, \Phi^{\varphi})$ in the sense of Definition~\ref{D:IN-G} is an $IN$-tuple in $(X,\varphi)$ in the sense of Definition~\ref{D:IE-IT-IN} (the converse is trivial since $\Z_{+} \subseteq \Z$).

     Assume that $\tilde{x}=(x_1,\dots,x_k)$ is an IN-tuple in $(X,\mathbb Z,\Phi^{\varphi})$. Let $U_1 \times \dots \times U_k$ be the product neighbourhood of $\tilde{x}$ and let $J=\{l_1,l_2,\dots, l_t\}\subseteq \mathbb Z$ be an independence set of length $t$ for $(U_1, \dots, U_k)$. If $l_j \ge 0$ for $j=1,2,\dots,t$, then $J$ is also the independence set of length $t$ for $(U_1, \dots, U_k)$ in $(X,\varphi)$. If some of the integers $l_j$ is negative, there exists $m>0$ such that $l_j+m \ge 0$ for all $j =1,2,\dots, t$.
     Let $\tilde{l}_j=l_j+m$ and $\tilde{J}=\{\tilde{l}_j: j=1,2,\dots,t\}$. For any function $\sigma: J \rightarrow \{1,\ldots,k\}$,
     we define $\tilde{\sigma}: \tilde{J} \rightarrow \{1,\ldots,k\}$ by $\tilde{\sigma}(\tilde{l_j})=\sigma(l_j)$, $j=1,2,\dots,t$. Since
     $$	\bigcap_{j=1}^{t}\varphi^{-l_j} U_{\sigma(l_j)}=\bigcap_{j=1}^{t}(\Phi^{\varphi})_{-l_j} U_{\sigma(l_j)}\not=\emptyset,$$
     we also have
     $$\bigcap_{j=1}^{t}\varphi^{-\tilde{l}_j} U_{\tilde{\sigma}(\tilde{l}_j)}=\bigcap_{j=1}^{t}\varphi^{-(l_j+m)} U_{\sigma(l_j)}=\varphi^{-m} \left (\bigcap_{j=1}^{t}\varphi^{-l_j} U_{\sigma(l_j)} \right )\not=\emptyset,$$
	 this implies that $\tilde{J}$ is an independence set of length $t$ for $(U_1, \dots, U_k)$ in $(X,\varphi)$. Hence $\tilde{x}=(x_1,\dots,x_k)$ is an IN-tuple in $(X,\varphi)$.
    \end{proof}

	\begin{lem}\label{L:2actions}
		Let $(X, G_1, \Phi)$ and $(X, G_2, \Psi)$ be dynamicall systems.
		\begin{enumerate}
			\item [(1)] Assume that for every $s\in G_1$ there exists $t\in G_2$ with $\Phi_s = \Psi_t$. Then $h^*(X,G_1,\Phi) \leq h^*(X,G_2,\Psi)$.
			\item [(2)] Assume that for every $s\in G_1$ there exists $t\in G_2$ with $\Phi_s = \Psi_t$ and for every $t\in G_2$ there exists $s\in G_1$ with $\Psi_t = \Phi_s$. Then $h^*(X,G_1,\Phi) = h^*(X,G_2,\Psi)$.
		\end{enumerate}
	\end{lem}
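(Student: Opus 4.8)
The statement to prove is Lemma~\ref{L:2actions}, which compares the supremum topological sequence entropy of two actions that share acting maps. The plan is to reduce everything to the characterization of $h^*$ in terms of intrinsic IN-tuples, namely the formula~\eqref{Eq:hstarT-G-IN}, and then to show directly that an IN-tuple for one system is an IN-tuple for the other. Since (2) is an immediate consequence of (1) applied twice (once in each direction), the real content is part (1).

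For part (1), I would proceed as follows. Fix a tuple $\tilde x=(x_1,\dots,x_k)\in X^k\setminus\Delta_k$ which is an intrinsic IN-tuple for $(X,G_1,\Phi)$; it suffices to prove it is also an IN-tuple for $(X,G_2,\Psi)$, since then every intrinsic IN-tuple of $(X,G_1,\Phi)$ is one of $(X,G_2,\Psi)$, and~\eqref{Eq:hstarT-G-IN} gives $h^*(X,G_1,\Phi)\le h^*(X,G_2,\Psi)$. So take an arbitrary product neighbourhood $U_1\times\cdots\times U_k$ of $\tilde x$ and a positive integer $p$; by Definition~\ref{D:IN-G} there is an independence set $J=\{s_1,\dots,s_p\}\subseteq G_1$ of length $p$ for $(U_1,\dots,U_k)$ in $(X,G_1,\Phi)$. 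By hypothesis, for each $s_i$ choose $t_i\in G_2$ with $\Phi_{s_i}=\Psi_{t_i}$. The set $\{t_1,\dots,t_p\}$ (after discarding repetitions, if any occur, we may even pass to a smaller independence set, but in fact distinctness is not needed for an independence set to be useful once we track multiplicities --- here I would simply note that if $t_i=t_j$ for $i\ne j$, the corresponding constraints on $\sigma$ force $\sigma(s_i)=\sigma(s_j)$ whenever a common point is required, so it is cleanest to observe that the map $s_i\mapsto t_i$ is injective: indeed $\Phi_{s_i}=\Psi_{t_i}=\Psi_{t_j}=\Phi_{s_j}$ would make $s_i,s_j$ act identically, but for an independence set of length $\ge 2$ we need $\bigcap s^{-1}U_{\sigma(s)}\ne\emptyset$ for the choice $\sigma(s_i)\ne\sigma(s_j)$, which is impossible if $\Phi_{s_i}=\Phi_{s_j}$; hence the $t_i$ are distinct). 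Now for any function $\tau:\{t_1,\dots,t_p\}\to\{1,\dots,k\}$, pull it back to $\sigma:J\to\{1,\dots,k\}$ by $\sigma(s_i)=\tau(t_i)$. Since $J$ is an independence set in $(X,G_1,\Phi)$,
\[
\bigcap_{i=1}^p \Phi_{s_i}^{-1}U_{\sigma(s_i)}\ne\emptyset,
\]
and because $\Phi_{s_i}=\Psi_{t_i}$ this is exactly $\bigcap_{i=1}^p\Psi_{t_i}^{-1}U_{\tau(t_i)}\ne\emptyset$. Thus $\{t_1,\dots,t_p\}$ is an independence set of length $p$ for $(U_1,\dots,U_k)$ in $(X,G_2,\Psi)$. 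As $p$ was arbitrary, $\tilde x$ has arbitrarily long finite independence sets for this neighbourhood, and as the neighbourhood was arbitrary, $\tilde x$ is an IN-tuple for $(X,G_2,\Psi)$.

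Part (2) then follows at once: the hypothesis is symmetric, so (1) gives both $h^*(X,G_1,\Phi)\le h^*(X,G_2,\Psi)$ and $h^*(X,G_2,\Psi)\le h^*(X,G_1,\Phi)$.

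I do not expect any serious obstacle here; the lemma is essentially a bookkeeping statement. The only point requiring a moment of care is the handling of possible coincidences $t_i=t_j$ among the chosen preimages, which I would resolve either by the short injectivity argument sketched above or, even more simply, by observing that an independence set for a tuple remains valid after an injective relabelling and that collapsing repeated elements can only shrink the set, so one may instead just work with the (still length-$p$, by injectivity) image set. A second minor point is that the definitions of independence set and IN-tuple for group actions in the excerpt are stated with finite subsets $I\subseteq J$ and arbitrary $\sigma:I\to\{1,\dots,k\}$; since our $J$ is itself finite of size $p$, taking $I=J$ suffices, and the displayed nonemptiness for all $\sigma$ on $J$ automatically yields it for all $\sigma$ on every subset $I$. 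With these routine remarks in place the proof is complete.
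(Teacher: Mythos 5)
Your argument is correct in substance but proceeds along a genuinely different route from the paper's, and the comparison is instructive. The paper works directly with the definition of $h^*$ in~\eqref{Eq:stse-def}: for any sequence $\sigma=\{s_n\}_{n\in\Z_+}$ in $G_1$ one chooses $\tau=\{t_n\}_{n\in\Z_+}$ in $G_2$ with $\Phi_{s_n}=\Psi_{t_n}$ for every $n$; since the quantity in~\eqref{Eq:quantity} depends only on the acting maps, $h^\sigma(X,G_1,\Phi)=h^\tau(X,G_2,\Psi)\le h^*(X,G_2,\Psi)$, and taking the supremum over $\sigma$ finishes. This is shorter than your route and sidesteps the injectivity concern entirely, because a sequence is allowed to repeat its terms. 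Your argument via the intrinsic-IN-tuple formula~\eqref{Eq:hstarT-G-IN} and the transport of independence sets is also valid, but carries the extra bookkeeping you noticed: you need the chosen $t_i$ to be pairwise distinct to produce an independence set of the same length. The injectivity argument you sketch (that $t_i=t_j$ forces $\Phi_{s_i}=\Phi_{s_j}$, contradicting independence for a function $\sigma$ with $\sigma(s_i)\ne\sigma(s_j)$) only yields a contradiction when $U_{\sigma(s_i)}\cap U_{\sigma(s_j)}=\emptyset$; you should state explicitly that, because $\tilde x$ is intrinsic, one may first shrink the product neighbourhood so that $U_1,\dots,U_k$ are pairwise disjoint, and that independence sets for a shrunk neighbourhood remain independence sets for the original one. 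Also, your preliminary remark about ``discarding repetitions and passing to a smaller independence set'' would not serve, since collapsing could prevent you from obtaining arbitrarily long sets; the injectivity argument, once patched as above, is the one that actually carries the proof. With those clarifications your proof goes through, but the paper's direct sup-over-sequences argument is appreciably cleaner.
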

	
	\begin{proof}
		Due to symmetry, it is sufficient to prove (1). By the assumption, for every sequence $\sigma = \{s_n\}_{n\in\Z_+}$ in $G_1$ there is a sequence $\tau = \{t_n\}_{n\in\Z_+}$ in $G_2$ such that $\Phi_{s_n} = \Psi_{t_n}$ for every $n$. Then $h^{\sigma}(X,G_1,\Phi) = h^{\tau}(X,G_2,\Psi)$. It follows that $h^*(X,G_1,\Phi) \leq h^*(X,G_2,\Psi)$.	
	\end{proof}
	
	Based on the above observations, we get the following proposition.
	\begin{prop}\label{G-Z}
		Let $X$ be a compact metric space.
		\begin{enumerate}
			\item [(a)] $\Shom (X)=S_{\Z}(X)$.
			\item [(b)] Let $G_1, G_2$ be topological groups such that there is a surjective group homomorphism $\kappa \colon G_2\rightarrow G_1$. Then $S_{G_2}(X) \supseteq  S_{G_1}(X)$.
			\item [(c)] Let $G$ be a topological group such that there is a surjective group homomorphism $G\rightarrow \mathbb{Z}$. Then for every set $\{0\} \subseteq A \subseteq \log \mathbb N^*$ there exists a one-dimensional continuum $\tilde{X}_A\subseteq \mathbb R^3$ with $S_G(\tilde{X}_A) \supseteq A$.
		\end{enumerate}	
		
	\end{prop}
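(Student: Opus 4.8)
The plan is to prove the three parts in order, each being a fairly direct consequence of Lemmas~\ref{L:1-1 corr} and~\ref{L:2actions} together with Theorem~\ref{T:main-homeo}. For part (a), I would establish the two inclusions $\Shom(X) \subseteq S_{\Z}(X)$ and $S_{\Z}(X) \subseteq \Shom(X)$. Both follow from the one-one correspondence between homeomorphisms $\varphi$ on $X$ and $\Z$-actions $\Phi^{\varphi}$ (with $(\Phi^{\varphi})_m = \varphi^m$): given any homeomorphism $\varphi$, Lemma~\ref{L:1-1 corr} gives $h^*(\varphi) = h^*(X,\Z,\Phi^{\varphi})$, so the value $h^*(\varphi)$ belongs to $S_{\Z}(X)$; conversely, every $\Z$-action on $X$ arises as $\Phi^{\varphi}$ for the homeomorphism $\varphi := \Phi_1$ (since $\Phi_m = (\Phi_1)^m$), and again Lemma~\ref{L:1-1 corr} identifies the two entropy values. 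Hence the two sets of values coincide.

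For part (b), let $\kappa \colon G_2 \to G_1$ be a surjective group homomorphism and let $\Phi$ be any $G_1$-action on $X$. Define the $G_2$-action $\Psi$ by $\Psi_t := \Phi_{\kappa(t)}$ for $t \in G_2$; this is a genuine action because $\kappa$ is a homomorphism with $\kappa(e_{G_2}) = e_{G_1}$, and each acting map is a homeomorphism because $\Phi_{\kappa(t)}$ is. Since $\kappa$ is surjective, for every $s \in G_1$ there is $t \in G_2$ with $\kappa(t)=s$, hence $\Psi_t = \Phi_s$; and trivially for every $t \in G_2$ we have $\Psi_t = \Phi_{\kappa(t)}$ with $\kappa(t) \in G_1$. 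By Lemma~\ref{L:2actions}(2), $h^*(X,G_2,\Psi) = h^*(X,G_1,\Phi)$. Therefore every value in $S_{G_1}(X)$ is realized by some $G_2$-action, i.e.\ $S_{G_2}(X) \supseteq S_{G_1}(X)$.

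For part (c), suppose $G$ admits a surjective group homomorphism $G \to \Z$. Fix $A$ with $\{0\} \subseteq A \subseteq \log\N^*$ and let $\tilde X_A \subseteq \R^3$ be the one-dimensional continuum produced by Theorem~\ref{T:main-homeo}, so that $\Shom(\tilde X_A) = A$. Combining the three pieces: by part (a), $\Shom(\tilde X_A) = S_{\Z}(\tilde X_A) = A$; by part (b) applied with $G_1 = \Z$ and $G_2 = G$ (using the given surjection $G \to \Z$), $S_G(\tilde X_A) \supseteq S_{\Z}(\tilde X_A) = A$. This gives the claimed inclusion $S_G(\tilde X_A) \supseteq A$.

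The only mildly delicate point — and the one I would write out carefully — is the verification in (b) that $\Psi_t := \Phi_{\kappa(t)}$ really defines a continuous $G_2$-action: the cocycle identity $\Psi_s \Psi_t = \Phi_{\kappa(s)}\Phi_{\kappa(t)} = \Phi_{\kappa(s)\kappa(t)} = \Phi_{\kappa(st)} = \Psi_{st}$ uses that $\kappa$ is a homomorphism, $\Psi_{e} = \Phi_e = \mathrm{id}$ uses $\kappa(e)=e$, and continuity is automatic since $G_2$ carries the discrete topology and each $\Psi_t$ is a homeomorphism. Everything else is a bookkeeping consequence of Lemmas~\ref{L:1-1 corr} and~\ref{L:2actions}; no genuine obstacle arises, since the heavy lifting (construction of $\tilde X_A$ with prescribed $\Shom$) was already done in Theorem~\ref{T:main-homeo}. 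Note that part (c) only claims the inclusion $\supseteq$; the reverse inclusion (and hence the exact determination of $S_G(\tilde X_A)$, requiring finite generation of $G$ or $A$ finite / $\infty \in A$) would be treated separately in Theorem~\ref{T:action}.
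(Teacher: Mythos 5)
Your proof is correct and follows essentially the same route as the paper's: part (a) via the bijection in Lemma~\ref{L:1-1 corr}, part (b) via pulling back the action along $\kappa$ and invoking Lemma~\ref{L:2actions}(2), and part (c) by combining (a), (b) and Theorem~\ref{T:main-homeo}. The only difference is that you write out the cocycle verification in (b) and the bijectivity argument in (a) in more detail than the paper, which states them as immediate.
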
	
	\begin{proof}
		(a) This follows from~\eqref{Eq:seq-homeo-action} and the one-one correspondence discussed above in Lemma~\ref{L:1-1 corr}.
		
		(b) Fix $h^*(X, G_1, \Phi) \in S_{G_1}(X)$. To prove that $h^*(X, G_1, \Phi) \in S_{G_2}(X)$,
it is sufficient to find a $G_2$-action $\Psi$ on $X$ with $h^*(X, G_1, \Phi) = h^*(X, G_2, \Psi)$.
		
		The acting maps of $\Phi$ are homeomorphisms $\Phi_s \colon X\to X$, $s\in G_1$. To define $\Psi$,
for $t\in G_2$ put $\Psi_t = \Phi_{\kappa (t)}$. Since the topology in $G_2$ is discrete, $\kappa$ is continuous.
Since also $\Phi$ is continuous, we get that $\Psi$ is continuous. Since $\kappa$ is a homomorphism,
it is straightforward to check that then $\Psi$ is a $G_2$-action on $X$.
		
		Since $\kappa$ is surjective, for every $s\in G_1$ there is $t\in G_2$ with $\kappa (t)=s$
and so $\Psi_{t} = \Phi_{\kappa (t)} = \Phi(s)$.
		On the other hand, for every $t\in G_2$ we have $s:= \kappa (t) \in G_1$ and $\Psi_{t}
= \Phi_{\kappa (t)} = \Phi_{s}$. By Lemma~\ref{L:2actions} then $h^*(X, G_1, \Phi) = h^*(X, G_2, \Psi)$.

		(c) For a given set $\{0\} \subseteq A \subseteq \log \mathbb N^*$, let $\tilde{X}_A$ be the
space defined in the proof of Theorem~\ref{T:main-homeo}. Then $\Shom (\tilde{X}_A) = A$. To finish the
proof, use (b) and (a) to get $S_G(\tilde{X}_A) \supseteq S_{\Z}(\tilde{X}_A) = \Shom (\tilde{X}_A)$.
	\end{proof}

The following theorem on group actions is an analogue of our Main Theorem and Theorem~\ref{T:main-homeo} but it deals only with special kinds of groups (it is Theorem C from Introduction).

 \begin{thm}[{\bf Theorem C}]\label{T:action}
 	Let $G$ be a topological group such that there is a surjective group homomorphism $G\rightarrow \mathbb{Z}$. Then
 	for every set $\{0\} \subseteq A \subseteq \log \mathbb N^*$ with $A$ finite or $\infty\in A$, there exists a one-dimensional continuum
 	$\tilde{X}_A\subseteq \mathbb R^3$ with $S_G(\tilde{X}_A)= A$. If in addition $G$ is also finitely generated, then such a continuum exists
 	for every set $\{0\} \subseteq A \subseteq \log \mathbb N^*$.
 \end{thm}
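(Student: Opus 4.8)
The inclusion $S_G(\tilde X_A)\supseteq A$ is precisely Proposition~\ref{G-Z}(c), whose proof moreover pins down the continuum to be used: the space $\tilde X_A$ built in the proof of Theorem~\ref{T:main-homeo}. So the whole task is to fix that $\tilde X_A$ and establish the reverse inclusion $S_G(\tilde X_A)\subseteq A$, i.e.\ that $h^*(\tilde X_A,G,\Phi)\in A$ for an arbitrary action $\Phi$ of $G$ on $\tilde X_A$. The cases $A=\{0\}$ (here $\tilde X_A$ is rigid, so every acting map is the identity and $h^*=0$) and $A$ of cardinality $2$ (here $H(\tilde X_A,\tilde X_A)\cong\mathbb Z$, generated by the distinguished homeomorphism $\tilde G_{\{0,\log m\}}$ with $h^*(\tilde G_{\{0,\log m\}})=\log m$, so every nontrivial subgroup is some $d\mathbb Z$ and the corresponding action has $h^*=h^*(\tilde G_{\{0,\log m\}}^{d})=\log m$) are immediate, so I would assume $A$ has cardinality at least $3$.

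First I would pass to the image subgroup. Each acting map $\Phi_s$ is a homeomorphism of $\tilde X_A$, so $\Gamma:=\{\Phi_s:s\in G\}$ is a subgroup of $H(\tilde X_A,\tilde X_A)$ and a homomorphic image of $G$; by Lemma~\ref{L:2actions}(2), $h^*(\tilde X_A,G,\Phi)=h^*(\tilde X_A,\Gamma,\iota)$, where $\iota$ is the tautological action of $\Gamma$. I would then invoke the description of $H(\tilde X_A,\tilde X_A)$ obtained inside the proof of Theorem~\ref{T:main-homeo}: $\tilde X_A$ is a ``flower'' with central point $w$ whose petals are the continua $L_m=C_m\cup\tilde X_{\{0,\log m\}}$, one for each $m\in\{2,3,\dots\}\cup\{\infty\}$ with $\log m\in A$ (together with the arc-petal $[w,u]\cup[u,v]$ of trivial dynamics when $A$ is infinite); every self-homeomorphism of $\tilde X_A$ leaves each petal invariant, is the identity on every $C_m$ and on the arc-petal, restricts on each $\tilde X_{\{0,\log m\}}$ to some iterate $\tilde G_{\{0,\log m\}}^{k_m}$, and has $k_m=0$ for all but finitely many $m$. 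Hence $H(\tilde X_A,\tilde X_A)\cong\bigoplus_{\log m\in A,\ 2\le m\le\infty}\mathbb Z$; for our $\Gamma$ the projection onto the factor indexed by $m$ is $d_m\mathbb Z$ for some $d_m\ge 0$, and the restrictions to $L_m$ of the elements of $\Gamma$ form exactly the cyclic group generated by $\tilde G_{\{0,\log m\}}^{d_m}$. Note that $d_m\ne 0$ for only finitely many $m$ whenever $G$, hence $\Gamma$, is finitely generated, since a finitely generated subgroup of a direct sum of copies of $\mathbb Z$ has finite support.

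The heart of the argument is to show that every intrinsic IN-tuple of $(\tilde X_A,\Gamma,\iota)$ is supported in a single petal. I would first observe that no point $p$ fixed by all of $\Gamma$ can be an entry of such a tuple: if $p$ is $\Gamma$-fixed and $x\ne p$, then for suitably small disjoint neighbourhoods $U\ni x$, $V\ni p$ the pair $(U,V)$ has no independence set of length $2$ for $\iota$, because a point $y$ with $s'y\in V$ must lie in $(s')^{-1}V$, a neighbourhood of $p$, whence $sy$ lies near $sp=p$ and so outside $U$. Consequently every entry of an intrinsic IN-tuple lies in the ``dynamic'' part of some petal $L_m$ on which $\Gamma$ acts nontrivially, namely in $\tilde X_{\{0,\log m\}}$ away from its head, and thus admits a neighbourhood disjoint from $w$, from the arc-petal and from all other petals. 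Now if two entries $x_i\in L_a$, $x_j\in L_b$ belonged to different petals, choose such neighbourhoods $U_i\subseteq L_a\setminus\{w\}$, $U_j\subseteq L_b\setminus\{w\}$; any independence set of length at least $2$ would yield, for the function taking value $i$ on one of its elements $s$ and value $j$ on another element $s'$, a point $y$ with $sy\in U_i$ and $s'y\in U_j$, which (each petal being $\Gamma$-invariant) forces $y\in L_a\cap L_b=\{w\}$, contradicting $w\notin U_i$. Hence all entries lie in a single petal $L_m$, and — since $C_m$ is pointwise fixed — in its invariant subcontinuum $\tilde X_{\{0,\log m\}}$, on which $\iota$ restricts to the $\mathbb Z$-action generated by $\tilde G_{\{0,\log m\}}^{d_m}$.

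Combining this with formula~\eqref{Eq:hstarT-G-IN}, with Lemma~\ref{L:1-1 corr}, and with the identity $h^*(\varphi^n)=h^*(\varphi)$ for $n\ne 0$ applied to $\varphi=\tilde G_{\{0,\log m\}}$ (recalling $h^*(\tilde G_{\{0,\log m\}})=\log m$), I obtain
\[
h^*(\tilde X_A,G,\Phi)=h^*(\tilde X_A,\Gamma,\iota)=\sup\{\log m:\ d_m\ne 0\},
\]
with the supremum over the empty set read as $0$. This is a supremum of a subset of $A$; if that subset is finite — which is automatic when $A$ is finite, and also when $G$ is finitely generated by the finite-support remark above — it is attained and lies in $A$, whereas if it is infinite it is unbounded (a bounded set of such $\log m$'s being finite) and hence equals $\infty$, which lies in $A$ precisely when $\infty\in A$. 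In every case asserted this gives $h^*(\tilde X_A,G,\Phi)\in A$, so $S_G(\tilde X_A)\subseteq A$ and therefore $S_G(\tilde X_A)=A$. The step I expect to demand the most care is the single-petal confinement of IN-tuples: one must rule out that, when $A$ is infinite and $\Gamma$ is infinitely generated and acts nontrivially on infinitely many petals at once, some IN-tuple simultaneously ``sees'' points from unboundedly many petals and thereby forces $h^*=\infty$ even when $\infty\notin A$ — this is exactly the mechanism that makes the hypothesis ``$A$ finite or $\infty\in A$'' (or, alternatively, the finite generation of $G$) indispensable, cf.\ Remark~\ref{R:bad case}.
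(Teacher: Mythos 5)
The overall architecture of your proof matches the paper's — get $S_G(\tilde X_A)\supseteq A$ from Proposition~\ref{G-Z}(c), pass to the image subgroup $\Gamma$ inside the homeomorphism group, show IN-tuples are confined to single petals, and compute $h^*$ as a supremum — and your closed-form conclusion $h^*(\tilde X_A,G,\Phi)=\sup\{\log m: d_m\neq 0\}$ plus the check that this supremum lies in $A$ under the stated hypotheses is cleaner than the paper's three-way case split. However, your opening lemma that ``no point $p$ fixed by all of $\Gamma$ can be an entry of an IN-tuple'' is false, and the argument you give for it does not hold up.

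The displayed argument fails because $(s')^{-1}V$ need not be small: $(s')^{-1}$ is itself an acting homeomorphism, so $(s')^{-1}V$ is merely \emph{some} neighbourhood of $p$, and you have no control over $s\bigl((s')^{-1}V\bigr)$. More tellingly, the claim itself is falsified by the petal $\tilde X_{\{0,\infty\}}$: the distinguished generator $\tilde G_{\{0,\infty\}}$ is the identity on the head $\mathscr K_0$ (this is built into Lemma~\ref{L:existsG}, where $G|_{\mathscr K_0}=\Id$), so every point of $\mathscr K_0$ is $\Gamma$-fixed; yet $h^*(\tilde G_{\{0,\infty\}})=\infty$, so by Proposition~\ref{P}(c) and the identification of the nonwandering set there are intrinsic IN-tuples of every length, and they sit precisely in $\mathscr K_0$. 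Your subsequent assertion that IN-tuple entries lie in $\tilde X_{\{0,\log m\}}$ ``away from its head'' has the geography backwards — they must lie \emph{in} the head. This matters because the fixed-point step was load-bearing: you use it to exclude $w$, the arcs $[w,u]$ and $[u,v]$, and the continua $C_m$ as possible entry locations, and without that exclusion the ``neighbourhood disjoint from $w$ and from all other petals'' needed for your single-petal confinement is not available (points of $[u,v]$ and of the $C_m$ near $w$ or $u$ have \emph{no} such neighbourhood, since infinitely many petals accumulate on them when $A$ is infinite). The paper handles these sets with targeted arguments that use more than fixedness: for interior points of $[w,u]$ and of $C_m$ the acting maps are the identity on a whole neighbourhood in $\tilde X_A$ (which genuinely kills IN-pairs), while for $[u,v]$ the paper exploits that any self-homeomorphism sending an interior point of $\tilde D_{m,0}$ to an interior point of $\tilde D_{m,0}$ must be the identity on $\tilde D_{m,0}$, and separately locates a nontrivially-acted petal $L_m$ with $m>4$. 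You would need to replace your fixed-point step with arguments of this kind to close the gap; once that is done, the rest of your proof goes through.
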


	\begin{proof}  Fix a set $A$ such that $\{0\} \subseteq A \subseteq \log \mathbb N^*$. Let $\tilde{X}_A$ be the
space defined in the proof of Theorem~\ref{T:main-homeo} and used also in the proof of
		Proposition~\ref{G-Z}(c). In view of this proposition it remains to check whether $S_G(\tilde{X}_A)\subseteq A$.

		Let  $\Phi\colon G \times \tilde{X}_A \rightarrow \tilde{X}_A$ be a $G$-action on $\tilde{X}_A$. It can be viewed
as a group homomorphism from $G$ into the group $\mathcal H (\tilde{X}_A)$ of all self-homeomorphisms of $\tilde{X}_A$;
the set $\{\Phi_s\colon s \in G\}$ is a subgroup of $\mathcal H (\tilde{X}_A)$. So, when computing $h^*(\tilde{X}_A,G,\Phi)$,
we can view $G$ as a subgroup of $\mathcal H (\tilde{X}_A)$.
		More precisely, by Lemma~\ref{L:2actions}(2) we have
		\begin{equation}\label{Eq:canview}
			h^*(\tilde{X}_A,G,\Phi) = h^*(\tilde{X}_A, \{\Phi_s\colon s \in G\}, \Psi)
		\end{equation}
		where $\Psi$ is an action of the group $\{\Phi_s\colon s \in G\}$ on the space $\tilde{X}_A$ defined by
$\Psi (\Phi_s, x) = \Phi (s,x) = \Phi_s(x)$, i.e. $\Psi_{\Phi_s} = \Phi_s$.
		
		\medskip
		
		\emph{Case 1: $A$ has cardinality $2$.}
		
		\medskip
		
		Aassume that $A=\{0,\log m\}$, $m \in \mathbb N^*$, i.e. $\tilde{X}_{A}=\tilde{X}_{\{0,\log m\}}$. Then the total
homeomorphism group $\mathcal H (\tilde{X}_A) = \{ ( \tilde{G}_{\{0,\log m\}} )^n \colon n\in \Z \}$,
 where $\tilde G_{\{0,\log m\}}$ is the distinguished homeomorphism from the proof of Theorem~\ref{T:main-homeo}.
 Since $\{\Phi_s\colon s \in G\}$ is a subgroup of this (infinite cyclic) group, there are two possibilities.
		\begin{enumerate}
			\item [(1)] The first possibility is that $\{\Phi_s \colon s \in G\}= \{\Id|_{\tilde{X}_A}\}$. Then $h^*(\tilde{X}_A,G,\Phi) =0 \in A$.
			\item [(2)] The second possibility is that  $\{\Phi_s \colon s \in G\}= \{  ( \tilde{G}_{\{0,\log m\}}^d  )^n \colon n\in \Z \}$
for some integer $d>0$. Then, by Lemma~\ref{L:2actions}(2),
			\begin{equation}\label{Eq:canview-2}
		h^*(\tilde{X}_A, \{\Phi_s\colon s \in G\}, \Psi)  =  	h^*(\tilde{X}_A, \{  ( \tilde{G}_{\{0,\log m\}}^d  )^n\colon n \in \Z\}, \Psi)
= h^*(\tilde{X}_A, \Z, \Theta)
		\end{equation}
		with $\Theta$ being a $\Z$-action on $\tilde{X}_A$ defined by $\Theta (n, x) =\Psi ((\tilde{G}_{\{0,\log m\}}^d  )^{n}, x)
= \Psi (\Phi_s, x)$, i.e. $\Theta_{n} = \Psi_{\Phi_s}$, where $s$ is any of those elements of $G$ for which $\Phi_s = (\tilde{G}_{\{0,\log m\}}^d  )^{n}$.
        Moreover, by~\eqref{Eq:seq-homeo-action} we then have
		\begin{equation}
		h^*(\tilde{X}_A, \Z, \Theta) =h^*(\tilde{G}_{\{0,\log m\}}^d) = h^*(\tilde{G}_{\{0,\log m\}}) = \log m.
		\end{equation}
		Combining this with~\eqref{Eq:canview} and~\eqref{Eq:canview-2}, we finally get $h^*(\tilde{X}_A,G,\Phi) = \log m$.
		\end{enumerate}
		We have thus shown that if $A=\{0,\log m\}$, $m \in \mathbb N^*$, then $S_G(\tilde{X}_A)=\{0,\log m\}=A$.
		
		\medskip
		
		\emph{Case 2: $A$ has cardinality $\geq 3$.}
		
		\medskip
		
		So, $\tilde{X}_{A}$ is a `flower' with finitely or infinitely many `petals', namely with the petals $L_m$ for all
		$2 \leq m~\leq~\infty $ such that $\log m \in A$ and, if $A$ is infinite, then also with the `limit' petal $[w,u]\cup [u,v]$.
		
		Recall that if $T$ is a homeomorphism on $\tilde{X}_A$, then every petal is $T$-invariant (so, every petal is
invariant for the action $\Phi$) and $T$ is different from the identity only on the union of finitely many pairwise disjoint sets $\tilde{X}_{\{0,\log k\}}$.
		
		\smallskip
		
		\emph{Subcase 2a: $A=\{0\}\cup\{\log k_i: i=1,2,\dots,l\}$ is a finite set (possibly containing $\infty$).}
		
		\smallskip
		
 Then $\tilde{X}_{A}=\bigcup_{i=1}^{l} L_{k_i}$. For the given action $\Phi$, there are two possibilities.
 \begin{enumerate}
   \item[(1)] The first possibility is that for every $s\in G$, $\Phi_s$ is the identity on $\tilde{X}_{A}$. Then $$h^*(\tilde{X}_A,G,\Phi) =0\in A.$$
   \item[(2)] The second possibility is that there exist some petals $L_{j_1}, \dots, L_{j_t}$ with $\{j_1,\dots,j_t\}\subseteq \{k_1,\dots,k_l\}$
   such that only for  $j\in \{j_1,\dots,j_t\}$ we have $\{\Phi_s|_{L_j}: s\in G\} \neq \{\Id|_{L_j}\}$. We conclude that if $(x_1,\dots,x_n)$ is an IN-tuple for $\Phi$, then $x_1,\dots,x_n \in L_{m}$ for  some $m \in  \{j_1,\dots,j_t\}$. Moreover, by the construction we know that in such a case even $x_1,\dots,x_n \in \tilde{X}_{\{0,\log m\}}$.
   Further, using~\eqref{Eq:canview} and the fact that $S_G(\tilde{X}_{\{0, \log m\}}) = \{0,\log m\}$, we obtain
   $$
   h^*(L_m,G,\Phi|_{L_m}) = h^*(L_m, \{\Phi_s|_{L_m}: s \in G\}, \Psi|_{L_m})=\log m.
   $$
   It follows that
    $h^*(\tilde{X}_A, G,\Phi)=\max_{i=1}^{t}h^*(L_{j_i}, \{\Phi_s|_{L_{j_i}}: s \in G\}, \Psi|_{L_{j_i}})=\max_{i=1}^{t}\log j_i \in A.$
  \end{enumerate}
We have shown that in this subcase $S_G(\tilde{X}_A) \subseteq  A$, as required.

		\smallskip
		
		\emph{Subcase 2b: $A$ is an infinite set with $\infty \in A$.}
		
		\smallskip

It follows from the construction of $\tilde X_A$ and the fact that the petals are invariant for the action $\Phi$, that
\begin{itemize}
	\item each of the IN-tuples of $\Phi$ lies in one of the petals,
\end{itemize}
i.e. in $L_m$ with  $\log m \in A$ or in the limit petal $[w,u]\cup [u,v]$. One can say more. Since every homeomorphism on $\tilde{X}_A$ is identity on $\bigcup_{\log m \in A} C_m \cup [w,u]$ (see Figure~\ref{F:spacein1layer}),
\begin{itemize}
	\item the segment $[w,u]$ does not contain any IN-pair.
\end{itemize}
Further, we claim that
\begin{itemize}
	\item the segment $[u,v]$ does not contain any IN-tuple of length $\geq 4$.
\end{itemize}
To see this, suppose on the contrary that this is not the case. Then there are two different points $c_1, c_2$ in the `interior' of $[u,v]$ which form an IN-pair. However, every homeomorphism $T\colon \tilde X_{A} \to \tilde X_{A}$ is identity on $[u,v]$ and if it sends an interior point of some set $D_{m,0}$ to an interior point of this set (note also that it cannot be mapped into another petal) then $T$ is necessarily the identity on $D_{m,0}$. This already shows that points from a small neighborhood of $c_1$ cannot visit a small neighborhood of $c_2$ under the action $\Phi$. This is a contradiction with the the assumption that $(c_1,c_2)$ is an IN-pair.

Suppose that for the fixed action $\Phi$ we have $h^*(\tilde{X}_A,G,\Phi)=\log t \notin A$. Hence $t$ is finite and $t\geq 2$. Let $( x_1,\ldots, x_t )$ be an intrinsic IN-tuple for $\Phi$. It lies in a petal.

If there exists  $m$ with $\log m \in A$ such that this tuple lies in $L_m$, then obviously all the points $x_1,\ldots, x_t$ belong to the same set $\tilde{X}_{\{0,\log m\}}$. Then we have $t\leq m$ because $S_G(L_m) = S_G(\tilde{X}_{\{0, \log m\}}) = \{0,\log m\}$ and we have $t\neq m$ because $\log t \notin A$. Hence $m>t$. It follows that $h^*(\tilde{X}_A,G,\Phi) \ge \log m > \log t$, a contradiction with our assumption.

If the points $x_1,\ldots, x_t \in [w,u]\cup [u,v]$, then they all have to lie in $[u,v]$ and so $t<4$. By the definition of an IN-tuple and the construction of $\tilde{X}_A$ (see Figure~\ref{F:spacein1layer} and note that $A$ is infinite now), we can find some petal $L_m$ with $ \log m \in A$ and $m>4$ such that $\{\Phi_s|_{L_m}: s \in G\}\neq \{\Id|_{L_m}\}$. Then we get
$$
h^*(\tilde{X}_A,G,\Phi) \ge h^*(L_{m}, \{\Phi_s|_{L_{m}}: s \in G\}, \Psi|_{L_{m}})=\log m >\log 4 > \log t,
$$
and this is again a contradiction with our assumption. This finishes the proof that $S_G(\tilde{X}_A) \subseteq  A$.

\medskip

Finally, assume that $G$ is finitely generated.  We just need to consider the case when $A$ is infinite with $\infty \not\in A$.
Assume that $g_1,g_2,\dots,g_n \in G$ is the list of all generators of $G$. For the given action $\Phi$,  there are two possibilities.
		\begin{enumerate}
			\item [(1)] The first possibility is that $\Phi_{g_i}=\Id|_{\tilde{X}_A}$, $i=1,2,\dots,n$. This implies that
             $\{\Phi_s : s \in G\}= \{\Id|_{\tilde{X}_A}\}$ and then $h^*(\tilde{X}_A,G,\Phi) =0\in A$.
            \item [(2)] The second possibility is that there exists a nonempty subset $J \subseteq \{1,2,\dots,n\}$ such that for every $j\in J$ we have $\Phi_{g_j} \neq \Id|_{\tilde{X}_A}$. However, as we know, each of these finitely many homeomorphisms $\Phi_{g_j}$, $j\in J$ is different from the identity only on the union of finitely many pairwise disjoint sets $\tilde{X}_{\{0,\log k\}}$. It follows that
            there exists a finite set $M \subseteq \N^*$ with $\log M \subseteq A$ such that
            $$
            \{\Phi_s|_{L_j}: s\in G\} \neq \{\Id|_{L_j}\} \, \, \text{ if and only if } \, \,  j\in M.
            $$
            Using the same argument as above (when $A$ is a finite set), we obtain
            $$
            h^*(\tilde{X}_A, G,\Phi)=\max_{i\in M}\log i \in \log M \subseteq A.
            $$
        \end{enumerate}
        Hence $S_G(\tilde{X}_A) \subseteq A$. This ends the proof of the theorem.
    	\end{proof}

\medskip

We add some remarks.

Let $G$ be a finite group. Then, for any action $\Phi$, the set $\{\Phi_{s}\colon s\in G\}$ is finite and so, for any  sequence $\sigma = \{s_n\}_{n\in\Z_+}$ in $G$ and any finite open cover $\mathcal{U}$, the quantity in~\eqref{Eq:quantity} is zero. Hence $S_G(X)=\{0\}$ for any space $X$.

	If we wish to consider groups consisting of elements of finite order, i.e. torsion groups, the following example is instructive.
	
	\begin{example}
		Let $X$ be the space in Figure~\ref{F:block1}. Define $f_1\colon X \to X$ by
		$f_1(x_i)=x_{i+1}, i=1,2,\dots,k_1-1$, $f_1(x_{k_1})=x_1$ and $f_1|_{X \setminus\{x_1,x_2,\dots,x_{k_1}\}}=\Id|_{X \setminus\{x_1,x_2,\dots,x_{k_1}\}}$.
		So, the first block is the `natural' periodic orbit of period $k_1$ (where `natural' means that every point $x_i$ in this block is
		mapped to its `successor' $x_{i+1}$, modulo the number of points in the block), all the other points of $X$ are fixed for $f_1$.
		Similarly, also for any integer $n\geq 2$ we can define a homeomorphism $f_n \colon X \to X$ such that the $n$-th block is
		the `natural' periodic orbit of period  $k_n-k_{n-1}$ and $f_n$ is identity on the rest part of the space $X$.
		Now let $G$ be the group generated by $\{f_1,f_2,f_3,\dots\}$ and $\Phi$ be the natural action. It follows from the construction of the space $X$ and the maps $f_i$ that $(X,G,\Phi)$ has $IN$-tuples of arbitrary lengths; in fact every finite subset of the countable infinite set $\{e_0^1,e_0^2,\dots \}$
        forms an IN-tuple. Thus $h^{*}(X,G,\Phi)=\infty$.
	\end{example}

	\medskip
	
	Finally, consider semigroups rather than groups.

	 Let $P$ be a topological \emph{semigroup with identity}. One can consider $P$-actions on $X$.\index{semigroup action} The difference with the group actions is that now the acting maps are just continuous maps (not necessarily homeomorphisms). This gives more freedom in constructing $P$-actions, therefore it is not surprising that if we repeat the above considerations for $P$-actions rather than for group actions, we get analogous results. Without repeating basically the same definitions, notations and arguments as above, we just say here that now $S_{\mathbb Z_{+}}(X)=S(X)$ (compare this with Proposition~\ref{G-Z}(a)) and that we have the following theorem (it is Theorem D from Introduction).

	 \begin{thm}[{\bf Theorem D}]\label{T:semigroup}
	 Let $P$ be a topological semigroup with identity such that there is a surjective semigroup homomorphism $P\rightarrow \mathbb{Z}_+$. Then
  for every set $\{0\} \subseteq A \subseteq \log \mathbb N^*$ with $A$ finite or $\infty\in A$, there exists a one-dimensional continuum
  $X_A\subseteq \mathbb R^3$ with $S_P(X_A)= A$. If in addition $P$ is also finitely generated, then such a continuum exists for every set $\{0\} \subseteq A \subseteq \log \mathbb N^*$.
 	\end{thm}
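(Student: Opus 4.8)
The plan is to run the proof of Theorem~\ref{T:action} essentially verbatim, replacing throughout ``topological group'' by ``topological semigroup with identity'', ``homeomorphism'' by ``continuous map'', and $\mathbb Z$ by $\mathbb Z_+$, and taking as $X_A$ the continua $X_A\subseteq\mathbb R^3$ produced in the Main Theorem~\ref{T:main again}. First I would record the semigroup counterparts of the notions in Subsection~\ref{S:group actions}: for a $P$-action $\Phi$ with acting maps $\Phi_s\colon X\to X$ (now merely continuous), the quantities $h^{\sigma}(X,P,\Phi)$ and $h^*(X,P,\Phi)$ over sequences $\sigma$ in $P$, independence sets $J\subseteq P$, and IN-tuples; the formula $h^*(X,P,\Phi)=\sup\{\log n\colon\text{there is an intrinsic IN-tuple of length }n\}$ (the analogue of~\eqref{Eq:hstarT-G-IN}), Proposition~\ref{P:G-tuples}, and Lemma~\ref{L:2actions} all transfer with the same proofs, since those arguments use only the acting-semigroup structure.

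Second, two reductions. (i) $S_{\mathbb Z_+}(X)=S(X)$ for every compact metric $X$: the obvious correspondence $f\mapsto\Phi^f$ with $(\Phi^f)_n=f^n$ between continuous self-maps and $\mathbb Z_+$-actions satisfies $h^*(f)=h^*(X,\mathbb Z_+,\Phi^f)$ — even more directly than in Lemma~\ref{L:1-1 corr}, since a sequence in $\mathbb Z_+$ is just a sequence of nonnegative integers and no shift is needed. (ii) If $\kappa\colon P_2\to P_1$ is a surjective semigroup homomorphism — which automatically carries the identity of $P_2$ to that of $P_1$ by surjectivity — then $S_{P_2}(X)\supseteq S_{P_1}(X)$: given a $P_1$-action $\Phi$, put $\Psi_t:=\Phi_{\kappa(t)}$; this is a $P_2$-action (continuity is automatic in the discrete case and the action identities follow since $\kappa$ is a homomorphism) and $h^*(X,P_1,\Phi)=h^*(X,P_2,\Psi)$ by Lemma~\ref{L:2actions} and surjectivity of $\kappa$. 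Composing with the hypothesis that $P$ surjects onto $\mathbb Z_+$ gives, for the Main-Theorem continuum $X_A$ and every $\{0\}\subseteq A\subseteq\log\mathbb N^*$,
\[
S_P(X_A)\ \supseteq\ S_{\mathbb Z_+}(X_A)\ =\ S(X_A)\ =\ A,
\]
which is the inclusion ``$\supseteq$'' in all cases (and $S_{\mathbb Z_+}(X)=S(X)$ here plays the role of Proposition~\ref{G-Z}(a)).

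Third, the reverse inclusion $S_P(X_A)\subseteq A$, obtained by mimicking Theorem~\ref{T:action}. Fix a $P$-action $\Phi$ on $X_A$; by Lemma~\ref{L:2actions} we may replace $P$ by the acting sub-monoid $\{\Phi_s\colon s\in P\}$ of the monoid $C(X_A,X_A)$. The base case $|A|=2$, say $A=\{0,\log m\}$ and $X_A=X_{A(m)}$, is where the work is: the role played in Theorem~\ref{T:action} by the rigidity of the homeomorphism group ($\cong\mathbb Z$) is now played by Propositions~\ref{P:zero inf},~\ref{P:zero log2} and~\ref{P:zero logm}, which describe every continuous self-map of $X_{A(m)}$ as a constant, a non-constant map with a fixed point in the snake (then $h^*=0$, and some iterate collapses $X_{A(m)}$ onto a fixed sub-continuum), or a non-constant map with no fixed point in the snake (then $h^*=\log m$, and by Lemma~\ref{L:jumps-log2}(c) it agrees with an iterate of the distinguished map $G$ on a sub-snake $S^{\Join}_r$). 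From this one wants $S_P(X_{A(m)})=\{0,\log m\}$: if some acting map is of the third kind then $h^*(\Phi)\ge\log m$ (take $\sigma$ running through its powers along an optimal subsequence) and one must also bound $h^*(\Phi)\le\log m$ using the common-iterate-of-$G$ behaviour on a deep sub-snake together with Lemma~\ref{L:existsG-log2} and Proposition~\ref{P2}(a); while if no acting map is of the third kind, each acting map eventually retracts $X_{A(m)}$ onto a fixed sub-continuum and one wants to conclude $h^*(\Phi)=0$. For $|A|\ge 3$ one writes $X_A=\bigcup_{k\in I}X_{A(k)}$, the ``flower'' with central point $z$ and petals $X_{A(k)}$, and classifies petals into those with $\Phi_s(X_{A(k)})\subseteq X_{A(k)}$ for all $s$ ($k\in I_{\mathrm{inv}}$) and the rest ($k\in I_{\mathrm{not}}$), exactly as in the proof of Theorem~\ref{T:main again}. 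The facts (F1)--(F3) from that proof, Lemma~\ref{L:FXA1-point}, and the base case then give: whenever an acting map drags a petal into another petal off $z$ it collapses that petal; hence $\Phi$ sends $\bigcup_{k\in I_{\mathrm{not}}}X_{A(k)}$ to a single point of some invariant petal, $h^*(\Phi)$ is computed on $\bigcup_{k\in I_{\mathrm{inv}}}X_{A(k)}$, and no intrinsic IN-tuple spans two distinct petals, so $h^*(\Phi)=\max_{k\in I_{\mathrm{inv}}}h^*(X_{A(k)},\Phi|_{X_{A(k)}})\le\max_{k\in I_{\mathrm{inv}}}\log k$. This maximum lies in $A$ when $A$ is finite or $\infty\in A$; and when $P$ is finitely generated, each generator moves only finitely many petals nontrivially, so only finitely many petals are nontrivial under the whole action and the maximum is again finite and in $A$ --- this is precisely the last-paragraph argument of Theorem~\ref{T:action}.

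\textbf{Main obstacle.} All the entropy machinery (IN-tuples, Lemma~\ref{L:2actions}, the homomorphism pull-back) transfers verbatim; the genuinely new point is that acting maps are merely continuous, so, unlike homeomorphisms, they need not permute the petals --- a petal may be collapsed or dragged elsewhere. The delicate step is therefore to re-establish, for a whole acting semigroup rather than a single map, both the base-case equality $S_P(X_{A(m)})=\{0,\log m\}$ --- in particular the implication ``all acting maps have $h^*=0$ $\Rightarrow$ $h^*(\Phi)=0$'', where one must rule out that rigid-but-nontrivial zero-entropy acting maps conspire to produce an intrinsic IN-triple --- and the petal dichotomy of the Main-Theorem proof. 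Both are handled by combining the exact eventual-behaviour descriptions in Propositions~\ref{P:zero inf},~\ref{P:zero log2},~\ref{P:zero logm} and Lemma~\ref{L:jumps-log2}(c) with Lemma~\ref{L:FXA1-point} and the facts (F1)--(F3); I expect no essentially new idea beyond those already present in the proof of Theorem~\ref{T:action}, but this is where the writing needs care. (See Remark~\ref{R:bad case} for why the finitely-generated hypothesis, or the restriction to $A$ finite or containing $\infty$, cannot simply be dropped.)
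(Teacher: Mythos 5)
Your proposal follows the same route as the paper: take the continua $X_A$ from the Main Theorem (Theorem~\ref{T:main again}), observe $S_{\mathbb Z_+}(X)=S(X)$, pull back along the surjective semigroup homomorphism $P\to\mathbb Z_+$ to get $S_P(X_A)\supseteq A$, and for the reverse inclusion adapt the petal/IN-tuple localization argument from the proofs of Theorem~\ref{T:main again} and Theorem~\ref{T:action}, with the classification of continuous self-maps in Propositions~\ref{P:zero inf}, \ref{P:zero log2}, \ref{P:zero logm} replacing the cyclic-homeomorphism-group rigidity used in Theorem C. The paper's own proof of Theorem D is just a one-sentence pointer to exactly this adaptation, so your plan, including the steps you flag as needing care, matches what the authors leave implicit.
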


    To prove this theorem, the space $X_A$ can be the space defined in the proof of Theorem~\ref{T:main again}, i.e. the flower with a central point, whose petals are continua $X_{\{0,\log m\}}$ with $2\leq m \leq \infty$ such that $\log m \in A$.


\section{Questions}\label{S:questions}
To end the paper we formulate some open problems.

\addtocontents{toc}{\protect\setcounter{tocdepth}{-1}}

\subsection{The set $S(X)$ for the pseudoarc and the
	pseudocircle}\label{SS:pseudo}
In the continuum theory, the pseudoarc and the pseudocircle are very important
examples of planar continua. The dynamics on them is also more and more studied.
Let us mention at least the interesting question whether every continuous  map of the
pseudo-arc  has either infinite entropy or zero entropy, see~\cite{Mouron}, cf. Barge's question Q19 in~\cite{Wayne}.

\begin{ques}
	What is $S(X)$ if $X$ is the pseudoarc or the pseudocircle?
\end{ques}

\subsection{The set $S(X)$ if $X$ admits a positive entropy
	map}\label{SS:posent}
We have constructed a continuum $X$ with $S(X)=\{0,\infty\}$. Notice that this
continuum admits only continuous selfmaps with zero topological entropy.

\begin{ques}
	Can a continuum/space $X$ with $S(X)=\{0,\infty\}$ admit a continuous selfmap
	with positive topological entropy?
\end{ques}

\subsection{The set $\Smin(X)$ for the 2-torus and other continua}\label{SS:Smin}
If a compact metric space $X$ admits a minimal map (i.e. a continuous map with
every orbit dense), put
$$
\Smin (X)=\{h^*(T)| \,\, T\colon X\ra X\ \text{is minimal}\}.
$$
If $X$ is finite, then $\Smin(X)=\{0\}$. If $X$ is a Cantor set
then for each $n\in\N$
it admits a minimal selfmap $T$ with $h^*(T)=\log n$, see \cite[Example 2]{MS},
and it is also known that a Cantor set
admits minimal systems with positive entropy; therefore $\Smin(X)= \log \N^*$.
If $X$ is a circle then $\Smin(X)=\{0\}$ since
the minimal maps on the circle are just the maps topologically conjugate to
irrational rotations, and the
topological sequence entropy is an invariant of topological conjugacy. However,
already for the torus the question
is nontrivial.

\begin{ques} Is it true that if $\mathbb T^2$ is the 2-dimensional
	torus then $\Smin(\mathbb T^2)=\log \N^*$?
\end{ques}

\subsection{Continua $X_A$ as attractors?}\label{SS:attr}

Recall that, due to Handel~\cite{Hand}, the pseudocircle is known to be an attracting minimal set
for a $C^{\infty}$ diffeomorphism in the plane. Our continua $X_A$ do not admit minimal maps. However,
the following question essentially suggested by Benjamin Weiss is still challenging.

\begin{ques}
	If a continuum $X_A$ is embedded in a manifold $M$, does there exists a dynamics on $M$
	such that $X_A$ is an attractor? Is it at least true that a Cook continuum in the plane can be an attractor
	for a continuous selfmap of the plane?
\end{ques}

\subsection{Possible sets of values of topological entropy}\label{SS:topent}

The present paper deals with possible sets of values of supremum topological sequence entropy. One can consider an analogous problem for the usual topological entropy.
If $X$ is a nonempty compact metric space, consider the set
\[
\Ent (X) = \{h(T):\, T \text{ is a continuous map } X\to X\}.
\]
Of course, the set $\Ent(X)$ always contains $0$. It is also closed with respect to multiples, meaning that if $\alpha \in \Ent (X)$ then $n\alpha \in \Ent (X)$ for $n=1,2\dots$. Apparently, the techniques from our paper can be useful for answering, at least partially, the following problem.

\begin{ques}\label{Q:ent}
What are the possibilities for $\Ent (X)$?	Is it true that for every set $\{0\} \subseteq A \subseteq [0,\infty]$ which is closed with respect to multiples there exists a compact metric space $X$ with $\Ent (X) = A$?
\end{ques}

\subsection{Group/semigroup actions case}\label{SS:actions case}

Our Theorems~\ref{T:action} and~\ref{T:semigroup} deal only with special kinds of groups and semigroups, respectively. We would like to know the answer to the following question (and the answer to its analogue for semigroup actions).

\begin{ques}
	How to determine all groups $G$ such that for every set $\{0\} \subseteq A \subseteq \log \mathbb N^*$, there exists a
	space/a continuum $X_A$ with $S_G(X_A) = A$?
\end{ques}

\end{document}